\spnewtheorem{examples}[theorem]{Examples}{\itshape}{}
\spnewtheorem{examplekh}[theorem]{Example}{\itshape}{}
\newcommand{\scheiding}{ \dotfill } % separates description from notation in index
\newcommand{\pmat}[1]{\begin{pmatrix} #1 \end{pmatrix}}
\font\tengoth=eufm10 at 10pt
\font\sevengoth=eufm7 at 6pt
\newcommand{\mlabel}[1]{\marginpar{#1}\label{#1}}
\newcommand{\fB}{{\mathfrak B}}
\newcommand{\fS}{{\mathfrak S}}
\newcommand{\g}{{\mathfrak g}}
\newcommand{\fa}{{\mathfrak a}}
\newcommand{\fb}{{\mathfrak b}}
\newcommand{\fc}{{\mathfrak c}}
\newcommand{\fe}{{\mathfrak e}}
\newcommand{\fg}{{\mathfrak g}}
\newcommand{\fh}{{\mathfrak h}}
\newcommand{\fj}{{\mathfrak j}}
\newcommand{\fk}{{\mathfrak k}}
\newcommand{\fl}{{\mathfrak l}}
\newcommand{\fm}{{\mathfrak m}}
\newcommand{\fn}{{\mathfrak n}}
\newcommand{\fq}{{\mathfrak q}}
\newcommand{\fp}{{\mathfrak p}}
\newcommand{\fr}{{\mathfrak r}}
\newcommand{\fs}{{\mathfrak s}}
\newcommand{\fu}{{\mathfrak u}}
\newcommand{\fz}{{\mathfrak z}}
\renewcommand\sp{\mathfrak {sp}} 
\newcommand\hsp{\mathfrak {hsp}} 
\newcommand\heis{\mathfrak {heis}}
\renewcommand{\:}{\colon}
\newcommand{\0}{{\bf 0}}
\newcommand{\cA}{\mathcal{A}}
\newcommand{\cB}{\mathcal{B}}
\newcommand{\cC}{\mathcal{C}}
\newcommand{\cD}{\mathcal{D}}
\newcommand{\cE}{\mathcal{E}}
\newcommand{\cF}{\mathcal{F}}
\newcommand{\cG}{\mathcal{G}}
\newcommand{\cH}{\mathcal{H}}
\newcommand{\cK}{\mathcal{K}}
\newcommand{\cL}{\mathcal{L}}
\newcommand{\cM}{\mathcal{M}}
\newcommand{\cN}{\mathcal{N}}
\newcommand{\cO}{\mathcal{O}}
\newcommand{\cP}{\mathcal{P}}
\newcommand{\cR}{\mathcal{R}}
\newcommand{\cS}{\mathcal{S}}
\newcommand{\cU}{\mathcal{U}}
\newcommand{\cW}{\mathcal{W}}
\newcommand{\cZ}{\mathcal{Z}}
\newcommand\bx{{\bf{x}}}
\newcommand\by{{\bf{y}}}
\newcommand\bz{{\bf{z}}}
\newcommand{\bO}{\mathbf{O}}
\newcommand{\eset}{\emptyset}
\newcommand{\derat}[1]{\frac{d}{dt}\big\vert_{t = #1}}
\newcommand{\dd}{{\tt d}}
\newcommand{\trile}{\trianglelefteq}
\newcommand{\subeq}{\subseteq}
\newcommand{\supeq}{\supseteq}
\newcommand{\into}{\hookrightarrow}
\newcommand{\eps}{\varepsilon}
\newcommand{\shalf}{{\textstyle{\frac{1}{2}}}}
\newcommand{\N}{{\mathbb N}}
\newcommand{\Z}{{\mathbb Z}}
\newcommand{\R}{{\mathbb R}}
\newcommand{\C}{{\mathbb C}}
\newcommand{\K}{{\mathbb K}}
\newcommand{\bP}{{\mathbb P}}
\renewcommand{\H}{{\mathbb H}}
\newcommand{\T}{{\mathbb T}}
\newcommand{\bB}{{\mathbb B}}
\newcommand{\bH}{{\mathbb H}}
\newcommand{\bS}{{\mathbb S}}
\renewcommand{\hat}{\widehat}
\renewcommand{\tilde}{\widetilde}
\renewcommand{\L}{\mathop{\bf L{}}\nolimits}
\newcommand{\Aff}{\mathop{{\rm Aff}}\nolimits}
\newcommand{\Deck}{\mathop{{\rm Deck}}\nolimits}
\newcommand{\GL}{\mathop{{\rm GL}}\nolimits}
\newcommand{\SL}{\mathop{{\rm SL}}\nolimits}
\newcommand{\AU}{\mathop{{\rm AU}}\nolimits}
\newcommand{\PSL}{\mathop{{\rm PSL}}\nolimits}
\newcommand{\SO}{\mathop{{\rm SO}}\nolimits}
\newcommand{\SU}{\mathop{{\rm SU}}\nolimits}
\newcommand{\OO}{\mathop{\rm O{}}\nolimits}
\newcommand{\U}{\mathop{\rm U{}}\nolimits}
\newcommand{\Sp}{\mathop{{\rm Sp}}\nolimits}
\newcommand{\Sym}{\mathop{{\rm Sym}}\nolimits}
\newcommand{\gl}  {\mathop{{\mathfrak{gl} }}\nolimits}
\newcommand{\fsl} {\mathop{{\mathfrak{sl} }}\nolimits}
\newcommand{\fsp} {\mathop{{\mathfrak{sp} }}\nolimits}
\newcommand{\su}  {\mathop{{\mathfrak{su} }}\nolimits}
\newcommand{\so}  {\mathop{{\mathfrak{so} }}\nolimits}
\newcommand{\Exp}{\mathop{{\rm Exp}}\nolimits}
\newcommand{\Fix}{\mathop{{\rm Fix}}\nolimits}
\newcommand{\ad}{\mathop{{\rm ad}}\nolimits}
\newcommand{\Ad}{\mathop{{\rm Ad}}\nolimits}
\renewcommand{\Re}{\mathop{{\rm Re}}\nolimits}
\renewcommand{\Im}{\mathop{{\rm Im}}\nolimits}
\newcommand{\tr}{\mathop{{\rm tr}}\nolimits}
\newcommand{\Alt}{\mathop{{\rm Alt}}\nolimits}
\newcommand{\Pol}{\mathop{{\rm Pol}}\nolimits}
\newcommand{\Herm}{\mathop{{\rm Herm}}\nolimits}
\newcommand{\Aherm}{\mathop{{\rm Aherm}}\nolimits}
\newcommand{\Heis}{\mathop{{\rm Heis}}\nolimits}
\newcommand{\Mp}{\mathop{\rm Mp{}}\nolimits}
\newcommand{\<}{<\!\!<}
\renewcommand{\>}{>\!\!>}
\newcommand{\Aut}{\mathop{{\rm Aut}}\nolimits}
\newcommand{\Diff}{\mathop{{\rm Diff}}\nolimits}
\newcommand{\diag}{\mathop{{\rm diag}}\nolimits}
\newcommand{\End}{\mathop{{\rm End}}\nolimits}
\newcommand{\id}{\mathop{{\rm id}}\nolimits}
\newcommand{\rad}{\mathop{{\rm rad}}\nolimits}
\renewcommand{\dim}{\mathop{{\rm dim}}\nolimits}
\newcommand{\im}{\mathop{{\rm im}}\nolimits}
\newcommand{\supp}{\mathop{{\rm supp}}\nolimits}
\newcommand{\Tr}{\mathop{{\rm Tr}}\nolimits}
\newcommand{\Inn}{\mathop{{\rm Inn}}\nolimits}
\newcommand{\Cone}{\mathop{{\rm Cone}}\nolimits}
\newcommand{\cone}{\mathop{{\rm cone}}\nolimits}
\newcommand{\sgn}{\mathop{{\rm sgn}}\nolimits}
\newcommand{\Spann}{\mathop{{\rm span}}\nolimits}
\newcommand{\der}{\mathop{{\rm der}}\nolimits}
\newcommand{\Bil}{\mathop{{\rm Bil}}\nolimits}
\newcommand{\dS}{\mathop{{\rm dS}}\nolimits}
\newcommand{\PSO}{\mathop{{\rm PSO}}\nolimits}
\newcommand{\indlim}{{\displaystyle \lim_{\longrightarrow}}\ }
\newcommand{\Rarrow}{\Rightarrow}
\newcommand{\nin}{\noindent} 
\newcommand{\oline}{\overline}
\newcommand{\Larrow}{\Leftarrow}
\newcommand{\la}{\langle}
\newcommand{\ra}{\rangle}
\newcommand{\up}{\mathop{\uparrow}}
\newcommand{\res}{\vert}
\newcommand{\spann}{{\rm span}}
\newcommand{\Spec}{{\rm Spec}}
\newcommand{\ssssarr}{\hbox to 15pt{\rightarrowfill}}
\newcommand{\sssarr}{\hbox to 20pt{\rightarrowfill}}
\newcommand{\ssarr}{\hbox to 30pt{\rightarrowfill}}
\newcommand{\sarr}{\hbox to 40pt{\rightarrowfill}}
\newcommand{\arr}{\hbox to 60pt{\rightarrowfill}}
\newcommand{\sssslarr}{\hbox to 15pt{\leftarrowfill}}
\newcommand{\ssslarr}{\hbox to 20pt{\leftarrowfill}}
\newcommand{\sslarr}{\hbox to 30pt{\leftarrowfill}}
\newcommand{\slarr}{\hbox to 40pt{\leftarrowfill}}
\newcommand{\larr}{\hbox to 60pt{\leftarrowfill}}
\newcommand{\Arr}{\hbox to 80pt{\rightarrowfill}}
\newcommand{\mapright}[1]{\smash{\mathop{\arr}\limits^{#1}}}
\newcommand{\qand}{\quad\text{and}\quad}
\newcommand{\qfor}{\quad\text{for}\quad}
\newcommand{\pitwo}{\frac{\pi}{2}}
\newcommand{\difftev}{\left.\frac{d}{dt}\right\vert_{t=0}}
\newcommand{\Cay}{\mathop{{\rm Cay}}\nolimits}
\renewcommand{\max}{{\rm max}}
\newcommand\be{{\bf{e}}}
\renewcommand\up{{\uparrow}}
\newcommand{\ie}{i} 
\newcommand{\ee}{e} 
\newcommand{\ood}{{\overline{1}}\,}
\newcommand{\eev}{{\overline{0}}\,}
\newcommand{\Smi}{\mathsf S}
\newcommand{\bone}{\mathbf 1}
\newcommand{\jV}{{\rm V}} % Jordan algebra V
\newcommand{\sH}{{\sf H}}
\newcommand{\sV}{{\tt V}}
\newcommand{\sF}{{\tt F}}
\newcommand{\sE}{{\tt E}}
\newcommand{\PSU}{\mathop{{\rm PSU}}\nolimits}
\newcommand\RR{{\mathbb R}}
\newcommand\bD{{\mathbb D}}
\newcommand{\cY}{\mathcal Y} 
\renewcommand{\bO}{\mathbb O} 
\newcommand{\csp}{{\mathfrak{csp}}} 
\newcommand{\hcsp}{{\mathfrak{hcsp}}}
\newcommand{\Mod}{\mathop{{\rm Mod}}\nolimits}
\renewcommand{\phi}{\varphi}
\newcommand{\AdS}{\mathop{{\rm AdS}}\nolimits}
\newcommand{\Stand}{\mathop{{\rm Stand}}\nolimits}
\newcommand{\bHy}[1]{\mathrm{Hyp}^{#1}}
\begin{document}

\tableofcontents 

\title*{Nets of real subspaces on homogeneous spaces\\ and
  Algebraic Quantum Field Theory} 
% Use \titlerunning{Short Title} for an abbreviated version of
% your contribution title if the original one is too long
\author{Karl-Hermann Neeb} 
%\author{Name of First Author\orcidID{0000-1111-2222-3333} and\\ Name of Second Author\orcidID{1111-2222-3333-4444}}
% Use \authorrunning{Short Title} for an abbreviated version of
% your contribution title if the original one is too long
\institute{Karl-Hermann Neeb \at Department Mathematik, Friedrich--Alexander-Universit\"at Erlangen-N\"urnberg, Cauerstrasse 11, 91056 Erlangen, \email{neeb@math.fau.de}}
%
% Use the package "url.sty" to avoid
% problems with special characters
% used in your e-mail or web address
%
\maketitle

\abstract*{In these notes, we describe an interesting connection between
unitary representations of Lie groups and nets of local algebras,
as  they appear in Algebraic Quantum Field Theory (AQFT).
It is based on first translating the axioms
for nets of operator algebras parameterized by regions in a
space-time manifold into those for nets of real subspaces, 
and then study this structure from a perspective based on
geometry and representation theory of Lie groups.
}

\section*{Introduction} 
\label{sec:1}

In these notes, we describe an interesting connection between
unitary representations of Lie groups and nets of local algebras,
as  they appear in Algebraic Quantum Field Theory (AQFT).
It is based on first translating the axioms
for nets of operator algebras parameterized by regions in a
space-time manifold into those for nets of real subspaces, 
and then study this structure from a perspective based on
geometry and representation theory of Lie groups.

This topic owes much of its fascination to the close relations 
between operator algebraic concepts, such as
Kubo--Martin--Schwinger (KMS) conditions and spectral conditions,
and the complex geometry related to unitary Lie group representations.
To be more concrete, suppose that
$U_t = e^{it H}$ is a unitary one-parameter group on the complex
Hilbert space $\cH$, $H = H^*$ is its selfadjoint generator, 
and $\xi \in \cH$. We are interested in analytic continuations of the
orbit map $U^\xi \: \R \to \cH, t \mapsto U_t \xi$.
If a bounded analytic extension exists on the upper half-plane
$\C_+ = \{ z \in\C \: \Im z > 0\}$, then its range lies in an invariant
subspace on which the operator $H$ is non-negative (spectral condition).
This is rather restrictive, and it is much more common that
$U^\xi$ only extends to the closure of a strip  
$\cS_\beta = \{ z \in \C \: 0 < \Im z < \beta\}$. Here the most interesting
context arises if the upper boundary values are coupled to the
lower ones by a conjugation $J$ via
\[ J U^\xi(i\beta + t) = U^\xi(t) \quad \mbox{ for } \quad t \in \R.\]
This is precisely the situation one finds in the modular theory
of operator algebras if $\xi$ represents a KMS state
(thermal state), and the
case of positive spectrum corresponds to so-called ground states
(\cite{BGN20}, \cite{BN24}, \cite{NO19}, \cite{ANS25}). 
Below we shall see that such conditions also specify so-called standard subspaces $\sV \subeq \cH$ (for $\beta = \pi$) if $(U_t)_{t \in \R}$ is the corresponding
modular group. 

On the geometric side, an action $\sigma$ of a Lie group $G$ on a
manifold $M$ often has a ``complexification'' in the sense that
$M$ sits in the boundary of a complex manifold $\Xi$ that locally
looks like a tube domain $\R^n + i \Omega \subeq \C^n$,
i.e., $\Omega \subeq \R^n$ is a pointed open convex cone.
In this context, one may also ask for extensions of orbit maps  
$\sigma^m \:\R \to M, t \mapsto \exp(tx).m$ ($m \in M, x\in\g = \L(G)$),
to the upper half-plane $\sigma^m \: \C_+ \to \Xi$, or to a strip
$\sigma^m \: \cS_\beta \to \Xi$. In the latter case, we typically
have an antiholomorphic involution $\tau_\Xi$ satisfying
$\tau_\Xi(\sigma^m(i\beta  + t)) = \sigma^m(t)$ for $t \in \R$.
In the context of semisimple Lie groups, such situations are well-known 
for non-compactly causal symmetric spaces $M = G/H$, 
sitting in the boundary of the so-called complex crown of 
the Riemannian symmetric space~$G/K$ (\cite{GK02}).
Then the existence of such analytic
extensions specifies so-called wedge regions $W \subeq M$, 
that can be characterized in many different ways (\cite{NO23a, NO23b, NO23c}). Here 
the ``imaginary tangent cone'', specifying how $M$ sits in the
boundary of $\Xi$, determines the causal structure on~$M$.
So  $M$ carries similar geometric structures as the
spacetimes in Mathematical Physics. Our goal is to connect
the analytic extension phenomena in unitary group
representations and the underlying geometry with
structures in AQFT.

These notes consist of four main sections whose contents are as follows.
In {\bf Section~\ref{sec:2}} we discuss axioms for nets of local observables,
as they appear in Algebraic Quantum Field Theory (AQFT).
This involves a symmetry group $G$ (a connected Lie group) acting on a
manifold $M$ (spacetime in the physics context) and, for each open subset
$\cO \subeq M$, a von Neumann algebra $\cM(\cO)$ on some complex
Hilbert space~$\cH$, on which we also have a unitary representation
$(U,\cH)$ of $G$, i.e., a continuous homomorphism $U \: G \to \U(\cH)$.

Open subsets $\cO \subeq M$ may be considered as laboratories,
in which experiments are performed that correspond to the evaluation
of quantum observables, represented by  hermitian elements of
$\cM(\cO)$. This leads to families, also called {\it nets},
of von Neumann algebras $(\cM(\cO))_{\cO \subeq M}$.

The axioms that we discuss here are: 
\begin{enumerate} 
\item[(Iso)] {\bf Isotony:}   
  $\cO_1 \subeq \cO_2$ implies $\cM(\cO_1) \subeq \cM(\cO_2)$.
\item[(RS)] {\bf Reeh--Schlieder property:} There exists a unit vector
  $\Omega\in \cH$ that is cyclic for $\cM(\cO)$ if $\cO \not=\eset$,
  i.e., $\cM(\cO)\Omega$ is dense in $\cH$ (cf.\ \cite{RS61}, \cite{Ja00}, \cite{Ja00b}),
    \index{Reeh--Schlieder property (RS) \scheiding }
\item[(Cov)] {\bf Covariance:} $U_g \cM(\cO) U_g^{-1} = \cM(g\cO)$
  for $g \in G$. 
\item[(Vi)] {\bf Invariance of the  vacuum:} $U(g)\Omega = \Omega$ for $g \in G$.  
\item[(BW)] {\bf Bisognano--Wichmann property:}
    \index{Bisognano--Wichmann property (BW) \scheiding }
  There exists a Lie algebra element $h \in \g$ and
  an open subset $W \subeq M$ (called a wedge region), %satisfying $\exp(\R h) W =W$
  such that the orbit map $\cM(W) \to \cH, A \mapsto A\Omega$
  is injective with dense range ($\Omega$ is cyclic and separating)
  and the corresponding   modular operator $\Delta$, associated
  to the pair $(\cM,\Omega)$ by the Tomita--Takesaki
  Theorem (Theorem~\ref{thm:tom-tak}), satisfies
  $\Delta^{-it/2\pi} = U(\exp th)$ for $t \in \R$. In this sense, the
  modular group is geometrically implemented by a one-parameter
  subgroup of~$G$ (cf.\ \cite{BDFS00}, \cite{BMS01}, \cite{HL82}).
\end{enumerate}

A first step in our analysis is to simplify this situation
by replacing the algebra $\cM(\cO)$ by the real subspace
\[   \sH(\cO) := \sV_{\cM(\cO),\Omega}
  = \oline{\{ A\Omega \: A = A^* \in \cM(\cO)\}}.\]
To formulate our axioms for real subspaces,
recall that a closed real subspace $\sV\subeq \cH$ is called
{\it standard} if $\sV + i \sV$ is dense and $\sV \cap i \sV= \{0\}$.
For any standard subspace, there exists a unique
positive selfadjoint operator $\Delta_\sV$ and a conjugation
(an antilinear involutive isometry) $J_\sV$, such that
$\sV= \Fix(J_\sV \Delta_\sV^{1/2})$
(see Definition~\ref{def:mod-obj} for details).

We are now ready to formulate the basic axioms for the family
$(\sH(\cO))_{\cO \subeq M}$:
\begin{enumerate}
\item[(Iso)] {\bf Isotony}: $\cO_1 \subeq \cO_2$ implies $\sH(\cO_1) \subeq \sH(\cO_2)$
\item[(RS)] {\bf Reeh--Schlieder property:} $\sH(\cO)$ is cyclic  if $\cO \not=\eset$. 
\item[(Cov)] {\bf Covariance:}
  $U_g \sH(\cO) = \sH(g\cO)$ for $g \in G$. 
\item[(BW)] {\bf Bisognano--Wichmann property:} 
  There exists a Lie algebra element $h \in \g$ and an open connected subset 
  $W \subeq M$, such that 
  $\sH(W)$ is standard and the corresponding modular operator
  satisfies $\Delta^{-it/2\pi} = U(\exp th)$ for $t \in \R.$ 
\end{enumerate}
Our goal is to understand such nets and the requirements on the
$G$-space $M$, its geometry, the structure of $G$ and the representation
$(U,\cH)$ for which such nets exist. Eventually, one would like
to ``classify'' all these nets in a suitable sense,
but first one has to specify which structures
we are dealing with. Key questions are: 
\begin{enumerate}
\item[(Q1)] Which elements $h \in \g$ can arise in the
Bisognano--Wichmann (BW) condition?
\item[(Q2)] What $G$-invariant structure do we need on
  $M$ as a fertile ground for nets of real subspaces? 
\item[(Q3)] How to find the domains $W \subeq M$ arising in the
  (BW) condition?
\end{enumerate}
In these notes we shall not go deeper into locality
requirements, but see Subsection~\ref{subsec:6.2}. We refer to \cite{MNO26} and \cite{NO25}
for recent progress in this direction. 

A key result, described in {\bf Section~\ref{sec:3}}, answers (Q1), namely that
$h$ has to be an {\it Euler element}, i.e., $\ad h$
is non-zero and diagonalizable with $\Spec(\ad h) \subeq \{-1,0,1\}$.
In the physical context of the Lorentz and Poincar\'e group,
these are suitably normalized generators of Lorentz boosts. 

In {\bf Section~\ref{sec:3b}} we further argue that it is natural to require
$M$ to carry a {\it causal structure}, i.e., a field of pointed generating
closed convex  cones $C_m \subeq T_m(M)$, invariant under the $G$-action. 
Given an Euler element $h$ and a causal structure on $M$,
the natural candidates for $W$ are the connected components of the
{\it positivity region} 
\[ W_M^+(h) =
  \Big\{ m \in M \: \frac{d}{dt}\Big|_{t = 0} \exp(th).m \in C_m^\circ \Big\}\]
of the vector field on $M$ corresponding to~$h$. 
We discuss these structures for various examples.
Since it will play an important role later in
the construction of nets of real subspaces,
we describe the compression semigroups
\[ S_W := \{ g \in G \: g.W \subeq W \} \]
for several types of wedge regions~$W$. The most important
examples of causal homogeneous spaces~$M$ are causal
symmetric spaces and causal flag manifolds.

In {\bf Section~\ref{sec:4}} we turn to constructions of nets
for a given antiunitary representation $(U,\cH)$ and an
Euler element $h\in\g$.
This is motivated by the consequence
of the Euler Element Theorem~\ref{thm:2.1}, according to which we may assume
that the Lie algebra involution $\tau_h^\g = e^{\pi i \ad h}$
integrates to a group involution $\tau_h$ (e.g., 
if $G$ is simply connected), so that we can form the group
\[ G_{\tau_h} = G \rtimes \{\id_G, \tau_h \} \]
and assume that $U$ extends to an antiunitary representation of~$G_{\tau_h}$. 
This specifies in particular a
standard subspace $\sV = \sV(h,U)$ by 
\begin{equation}
  \label{eq:def-V(h,U)}
 \Delta_\sV = e^{2 \pi i \cdot \partial U(h)} \quad \mbox{ and } \quad
 J_\sV = U(\tau_h)
\end{equation}
(Definitions~\ref{def:mod-obj} and~\ref{def:bgl-net}). 

To find a net $\sH$ satisfying (BW) with $\sH(W) = \sV$,
it is instructive to observe that the elements of $\sV$ are characterized
by the (abstract) Kubo--Martin--Schwinger (KMS) condition:
The orbit map $U^v(t) := U(\exp th)v$ extends analytically to the
closure of the strip $\cS_\pi = \{ z \in \C \: 0 < \Im z< \pi\}$, such that
\[  U^v(\pi i) = J_\sV v\]
(cf.\ Proposition~\ref{prop:standchar}). 

This suggests to look for domains $W \subeq M$
and a complex manifold $\Xi$ with $M \subeq \partial \Xi$
on which $G$ acts by holomorphic maps, such that
$W$ consists of elements $m \in M$ whose orbit map
$\alpha^m(t) = \exp(th).m$ yields by analytic extension
a map 
$\cS_\pi \to \Xi$ satisfying $\alpha^m(\pi) = \oline\tau_h(m)$, where
$\oline\tau_h$ is an antiholomorphic involution on
$\Xi$ satisfying $\oline\tau_h(g.z) = \tau_h(g).\oline\tau_h(z)$ for
$z \in \Xi$. We call these points the {\it KMS points of $M$}. 

For the case where $G$ is contained in its universal complexification 
$G_\C$, we describe in Section~\ref{sec:4} conditions on a domain
$\Xi \subeq G_\C$ (crown domains for $G$), so that the following construction
leads to nets. We start with a real subspace $\sF$ of
$J_\sV$-fixed vectors $v \in \cH$, whose orbit map $U^v \: G \to \cH$
extends analytically to a map $U^v \:\Xi \to \cH$ such that the limit 
\begin{equation}
  \label{eq:betalim}
  \beta^+(v) = \lim_{t \to \frac{\pi}{2}} U^v(\exp(-it h))
\end{equation}
exists in the space $\cH^{-\infty}(U_h)$ of distribution vectors
for the one-parameter group $U_h(t) = U(\exp th)$. We have natural inclusions
\[ \cH^{\infty} \subeq \cH^{\infty}(U_h) \subeq \cH
  \subeq \cH^{-\infty}(U_h) \subeq \cH^{-\infty}\]
(see Appendix~\ref{subsec:app1} for details).
Then 
\[ \sE := \beta^+(\sF) \subeq \cH^{-\infty}(U_h) \subeq \cH^{-\infty} \]
is a real subspace. For $\phi \in C^\infty_c(G,\R)$, the 
operator $U^{-\infty}(\phi)= \int_G \phi(g) U^{-\infty}(g)\, dg$ 
maps $\cH^{-\infty}$ to $\cH$. We thus obtain by 
\begin{equation}
  \label{eq:HEnet}
  \sH_\sE^G(\cO) := \oline{\spann_\R\{  U^{-\infty}(\phi)\sE \:
    \phi \in  C^\infty_c(\cO,\R)\}},
\end{equation} 
a net of real subspaces on $G$ satisfying
(Iso) and (Cov) for trivial reasons, but also (RS) and (BW).
Here the main point is to show that
$\sH_\sE^G(W^G) = \sV$ for a suitable open subset $W^G \subeq G$.\\

\nin{\it Example.} Elementary
  particles in the sense of E.~Wigner \cite{Wgn39} (see also \cite{Ni20})  
  are classified by irreducible unitary
  representations of the Poincar\'e group
  $G = \R^{1,d-1} \rtimes \SO_{1,d-1}(\R)_e$. We write $\jV := \R^{1,d-1}$
  for the corresponding translation group.
  For scalar particles, the Hilbert space is of the form
  $\cH = L^2(\R^{1,d-1},\mu)$, where $\mu$ is a Lorentz invariant measure
  on the dual space $\jV^*$ (often identified with $\jV$ via the Lorentzian form).
  Here the space  $\sE = \R 1$ of real-valued constant functions
  represents distribution vectors, 
  and for test functions $\phi \in C^\infty_c(\jV,\R)$, 
    we have $U^{-\infty}(\phi)\bone = \hat\phi$ (Fourier transform).
  So the real subspace $\sH_\sE^\jV(\cO)$ from \eqref{eq:HEnet} is generated by  Fourier transforms of
  test functions supported in~$\cO$.\\

This leaves us with the question of how to find the crown domains
$\Xi$ and subspaces $\sF \subeq \cH^J$.
For semisimple groups, this can be done with the theory of
crown domains for Riemannian symmetric spaces $G/K$.
They provide natural domains $\Xi \subeq G_\C$ to which
orbit maps of $K$-finite vectors\footnote{Here $K \subeq G$ is a
    maximal compact subgroup and $K$-finiteness means that
    $U(K)\xi$ is contained in a finite-dimensional subspace.}
of irreducible representations
extend, and a recent result by T.~Simon (\cite{Si24}) ensures
that they have a sufficiently well-behaved boundary behavior
at $\partial \Xi$ to ensure \eqref{eq:betalim}
and $\sE = \beta^+(\sF) \subeq \cH^{-\infty}(U_h)$.
Here an important point is that {\bf no further requirement beyond
  irreducibility} is needed for $U$ 
to obtain these nets, and they all descend in a natural way to
the {\it non-compactly causal symmetric spaces}
$M = G/H$, associated to the Euler element~$h$
(cf.\ Sections~\ref{subsec:css}, \ref{subsubsec:reg-reduct},
\cite[Thm.~4.21]{MNO23}).

{\bf Section~\ref{sec:5}} develops a global perspective on
these results. Here we are dealing with representations that are
not necessarily irreducible. 
Starting with a homogeneous space $M = G/H$, a domain $W \subeq M$ and
an antiunitary representation $(U,\cH)$, we directly obtain two nets
$\sH_M^{\rm max}$ and $\sH_M^{\rm min}$  on~$M$, such that any net
$\sH$ on $M$ satisfying (Iso), (Cov) and $\sH(W) = \sV :=  \sV(h,U)$ 
(cf.~\eqref{eq:def-V(h,U)}) also satisfies
\[ \sH_M^{\rm min}(\cO) \subeq \sH(\cO) \subeq \sH_M^{\rm  max}(\cO) \]
for every open subset $\cO \subeq M$ (Lemma~\ref{lem:maxnet-larger}).
From this perspective, the question is, whether a net~$\sH$
satisfying (Iso), (Cov) and (BW) exists at all. This is equivalent
to $\sH^{\rm max}(W) = \sV$, which in turn is equivalent to
the inclusion of semigroups
\begin{equation}
  \label{eq:swinsv}
 S_W = \{ g \in G \: g.W \subeq W \}
 \subeq S_\sV = \{g \in G \:U(g)\sV \subeq \sV\}.
\end{equation}
The semigroup $S_W$ has already been described in Section~\ref{sec:3}
for several types of wedge regions.
If $\ker U$ is discrete, then  
\begin{equation}
  \label{eq:sv-intro}
 S_\sV= \exp(C_+) G_\sV \exp(C_-) \quad\mbox{ holds for } \quad
 C_\pm = \pm C_U \cap \g_{\pm 1}(h),
\end{equation}
where
\[  C_U := \{ x \in \g \: -i \cdot \partial U(x) \geq 0\}, \quad \mbox{ with }
  \qquad 
  \partial U(x) = \frac{d}{dt}\Big|_{t = 0} U(\exp tx),\]
is the {\it positive cone of the unitary representation $U$}, and $\g_\lambda(h)
= \ker(\lambda \bone - \ad h)$ are the eigenspaces of~$\ad h$
(Section~\ref{subsec:endo-v}).

If $G$ is semisimple and $M$ the non-compactly causal symmetric space
associated to the Euler element $h$, then $S_W$ is a group,
hence equal to $G_W$, 
so that \eqref{eq:swinsv} reduces to the inclusion $G_W \subeq G_\sV$,
which boils down to the implication
\[ g.W = W \Rarrow U(g) J = J U(g),\]
which is
equivalent to $\tau_h(g)^{-1}g \in \ker U$ for $g \in G_W$. 

If $S_W$ is not a group, it is of the form
\[ S_W = \exp(C_+) G_W \exp(C_-), \]
where the convex cones $C_\pm$ are specified in terms of an 
$\Ad(G)$-invariant cone $C_\g \subeq \g$ by
\[ C_\pm = \pm C_\g \cap \g_{\pm 1}(h).\]
Therefore \eqref{eq:sv-intro} implies that $S_W \subeq S_\sV$
is equivalent to 
$G_W \subeq G_\sV$ and the {\it spectral condition} 
\[ C_\pm \subeq C_U \]
on the representation $U$, i.e., the operators
$-i \partial U(x)$ are positive for $x \in \pm C_\pm$.
For the Poincar\'e group, acting on Minkowski space
(Remark~\ref{rem:poin}), this corresponds to the positivity of the energy. 

We conclude these notes with a discussion of perspectives and open
problems in {\bf Section~\ref{sec:6}.} \\

\nin {\bf Some history:} The starting point for the development that led to 
fruitful applications of modular theory in QFT 
was the Bisognano--Wichmann Theorem, asserting that  
the modular automorphisms $\alpha_t(M) = \Delta^{-it/2\pi}M \Delta^{it/2\pi}$ 
associated to the algebra $\cM(W_R)$ of observables corresponding 
to the {\it Rindler wedge} 
\[ W_R = \{ (x_0,x_1, \ldots, x_{d-1}) \: x_1 > |x_0|\} \]
in $d$-dimensional Minkowski space $\R^{1,d-1}$ are implemented by the
action of a 
one-parameter group of Lorentz boosts preserving $W_R$. 
This geometric implementation of modular automorphisms in terms of 
Poincar\'e transformations was a key step in a 
rich development based on the work of Borchers and Wiesbrock 
in the 1990s \cite{Bo68, Bo92, Bo95, Bo97, Wi92, Wi93a, Wi93b, Wi98};
see also \cite{LRT78}, \cite{Lo82}, \cite{Fr85} and \cite{Ha96}.
They managed to distill the abstract essence from the Bisognano--Wichmann 
Theorem which led to a better understanding of the 
basic configurations of von Neumann algebras 
in terms of half-sided modular inclusions 
and modular intersections (see also \cite[\S 4.4]{NO17}). 
In his survey \cite{Bo00}, Borchers described how 
these concepts revolutionized quantum field theory. 
Subsequent developments can be found in
\cite{Ar99, Schr99, BGL02, Su96, Su05, Lo08, LMaR09, LW11, LL15, JM18, Mo18}. 

%\vspace{4mm}
%
%\nin {\bf How to read these notes?}
%Each of the four sections has a main part and appendices.
%The appendices contain more details and discussion of related issues.
% So they can be skipped on first reading. 

\vspace{4mm}

\nin {\bf Notation} 

\begin{itemize}
\item $\cH$ denotes a complex Hilbert space.
\item Strips in the complex plane:\
  \[ \cS_\beta = \{  z \in \C \: 0 < \Im z < \beta \} \quad \mbox{ and } \quad
    \cS_{\pm\beta} = \{  z \in \C \: |\Im z| < \beta \} \] 
\item The neutral element of a Lie group $G$ is denoted $e$, and
  $G_e$ is the identity component.
    \index{identity component $G_e$ of Lie group $G$ \scheiding }
\item The Lie algebra of a Lie group $G$ is denoted $\L(G)$ or $\g$.   
    \index{Lie algebra $\L(G),\g$ of Lie group $G$ \scheiding }
\item For an involutive automorphism $\sigma$ of $G$, we write
  $G^\sigma =  \{ g\in G \:  \sigma(g) = g \}$ for the subgroup of fixed points
  and $G_\sigma := G \rtimes \{\id_G, \sigma\}$ for the corresponding
 group extension. 
    \index{group $G_\sigma$ extended by involution $\sigma$  \scheiding } 
\item $\AU(\cH)$ is the group  of unitary or antiunitary operators
    on a complex Hilbert space~$\cH$. 
    \index{antiunitary group  $\AU(\cH)$  \scheiding} 
\item An antiunitary representation of $G_\sigma$ is a homomorphism
    $U \: G_\sigma \to \AU(\cH)$ with $U(G) \subeq \U(\cH)$ for which 
  the involution  $J := U(\sigma)$ is antiunitary, i.e., a {\it conjugation.}
    We denote representations as pairs $(U,\cH)$.
        \index{conjugation $J$ \scheiding } 
        \index{(anti-)unitary representation $(U,\cH)$ \scheiding } 
\item If $G$ is a group acting on a set $M$ and $W \subeq M$ a subset, then
  the stabilizer subgroup of~$W$ in $G$ is denoted 
  $G_W := \{ g \in G \:  g.W = W\}$, and the compression semigroup by
        \index{stabilizer group $G_W$ of subset $W$ \scheiding} 
        \index{compression semigroup $S_W$ of subset $W$ \scheiding} 
  $S_W := \{ g \in G \:  g.W\subeq W\}$.
  \item If $\g$ is a Lie algebra and $h \in \g$, then
    $\g_\lambda(h) = \ker(\ad h - \lambda \bone)$ is the $\lambda$-eigenspace of $\ad h$ and $\g^\lambda(h) = \bigcup_k \ker(\ad h - \lambda \bone)^k$ is the
    generalized $\lambda$-eigenspace.
\index{eigenspace $\g_\lambda(h)$ of $\ad h$ \scheiding}
g  \item An element $x$ of a Lie algebra $\g$ is called
    \begin{itemize}
    \item[$\diamond$] {\it hyperbolic}, if $\ad x$ is diagonalizable over $\R$. 
    \item[$\diamond$] {\it elliptic} or {\it compact}, if $\ad x$ is semisimple
      with purely imaginary spectrum, i.e., $\oline{e^{\R \ad x}}$ is a
      compact subgroup of $\Aut(\g)$. 
    \end{itemize}
    \index{hyperbolic element of a Lie algebra \scheiding} 
\index{elliptic element of a Lie algebra \scheiding} 
\item We write $\cE(\g)$ for the set of 
{\it Euler elements} $h \in \g$, i.e., $\ad h$ is non-zero and diagonalizable 
with $\Spec(\ad h) \subeq \{-1,0,1\}$. We call $h$ {\it symmetric} if 
$-h \in \cO_h := \Inn(\g)h$. We write $\tau_h := e^{\pi i \ad h} \in \Aut(\g)$ for the
involution of $\g$ specified by~$h$.
\index{Euler element!set $\cE(\g)$ of Euler elements \scheiding} 
\index{Euler element\scheiding} 
  \item {A {\it causal $G$-space} is a smooth $G$-space $M$,
      endowed with a $G$-invariant {\it causal structure}, i.e.,
    a field $(C_m)_{m \in M}$ of pointed generating closed convex cones
    $C_m \subeq T_m(M)$.}
\index{causal structure \scheiding} 
  \item For a unitary representation $(U,\cH)$ of $G$ we write:
  \begin{itemize}
  \item[$\diamond$] $\partial U(x) = \derat0 U(\exp tx)$ for the infinitesimal
    generator of the unitary one-parameter group $(U(\exp tx))_{t \in\R}$
    in the sense of Stone's Theorem. 
\index{infinitesimal generator $\partial U(x)$ \scheiding } 
  \item[$\diamond$] $\dd U \colon \g \to \End(\cH^\infty)$ for the representation of
    the Lie algebra $\g$ on the space $\cH^\infty$ of smooth vectors. Then
    $\partial U(x) = \oline{\dd U(x)}$ (operator closure) for $x \in \g$.
\index{derived representation $\dd U$ \scheiding } 
  \end{itemize}
  \item For a $*$-algebra $\cM \subeq B(\cH)$, we write
    $\cM_h := \{ A \in \cM \: A^* = A \}$ for the real subspace
    of hermitian elements, and for $\Omega \in \cH$, we put
    $\sV_{M,\Omega} := \oline{\cM_h\Omega}$. 
\index{real subspace! $\sV_{M,\Omega}$ associated to $(M,\Omega)$ \scheiding}
\end{itemize}

\nin{\bf Acknowledgment:} We thank
Michael Preeg, Jonas Schober and Tobias Simon for
reading preliminary versions of these notes, for spotting many typos and,
last but not least, for many useful comments. 

The author acknowledges support of the Institut Henri Poincar\'e
(UAR 839 CNRS-Sorbonne Universit\'e), and LabEx CARMIN (ANR-10-LABX-59-01).

\section{Nets of operator algebras and AQFT}
\label{sec:2}

\subsection{Standard subspaces  of Hilbert spaces}

In this subsection, we introduce the key concept
of a standard subspace $\sV$ of a complex Hilbert space~$\cH$. 
Standard subspaces are ``slanted'' real forms in the sense that
$\sV + i \sV$ is dense in $\cH$ and $\sV \cap i \sV = \{0\}$. 
As we shall see below, they are parametrized by
pairs $(\Delta, J)$, where $\Delta > 0$ is a selfadjoint operator
and $J$ is a {\it conjugation} 
(an antilinear isometric
involution) satisfying the modular relation $J \Delta  J = \Delta^{-1}.$
Standard subspaces appear naturally in the modular theory
of operator algebras (Tomita--Takesaki Theorem~\ref{thm:tom-tak})
and also in antiunitary representations of Lie groups, where they
correspond to antiunitary representations of the multiplicative
group $\R^\times \cong \R \times \{\pm 1\}$. This establishes an
important link between operator algebras and antiunitary
representations.

\begin{definition} \label{def:1.1}
  (a) A closed real subspace $\sV \subeq \cH$
  is called
\begin{itemize}
\item {\it separating} if $\sV \cap i \sV = \{0\}$, 
\item {\it cyclic} if $\sV + i \sV$ is dense in $\cH$,
\item {\it standard} if it is cyclic and separating.   
\end{itemize}
We write $\Stand(\cH)$ for the set of standard subspaces of~$\cH$.
\index{real subspace!separating\scheiding}
\index{real subspace!cyclic\scheiding}
\index{real subspace!standard\scheiding}

\nin (b) For a separating subspace $\sV$, we define the antilinear 
{\it Tomita involution}
\[ T_\sV \: \sV + i \sV \to \cH, \quad T_\sV(v + iw) = v - i w \quad
\mbox{ for } \quad  v,w \in \sV.\] 
\index{Tomita involution $T_{\sV}$ \scheiding } 

\nin (c) We write $\gamma(v,w) := \Im \la v,w \ra$ for the canonical symplectic
form on $\cH$. For a real subspace $\sV \subeq \cH$, we
define  its {\it symplectic orthogonal space} by 
\[ \sV' := \sV^{\bot_\gamma} = \{ w \in \cH \: \Im \la v, w \ra = 0\}
  = i \sV^{\bot_\R},\]
where $\sV^{\bot_\R}$ is the real orthogonal space of $\sV$ with respect to the 
real-valued scalar product $\Re \la v,w\ra$.
Note that $\la \sV, \sV' \ra \subeq \R$. 
\index{symplectic orthogonal space $\sV'$ \scheiding }
\end{definition}

\begin{lemma} \label{lem:tv-closed}
If $\sV$ is standard, then $T_\sV$ is closed and densely defined.
\end{lemma}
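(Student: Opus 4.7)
The plan is to verify three things in turn: that $T_\sV$ is well-defined, that it is densely defined, and that it is closed. The first two are immediate from the two parts of the standardness hypothesis, and the real work is in the closedness, which comes down to a simple linear algebra trick plus the closedness of $\sV$.

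First I would record well-definedness. If $v_1 + iw_1 = v_2 + iw_2$ with $v_j,w_j \in \sV$, then $v_1 - v_2 = i(w_2 - w_1)$ lies in $\sV \cap i\sV$, which is $\{0\}$ because $\sV$ is separating. Hence $v_1 = v_2$ and $w_1 = w_2$, so $T_\sV(v+iw) = v - iw$ genuinely depends only on the sum. Dense definition is then just the cyclicity half of the standard condition: by definition $\sV + i\sV$ is dense in $\cH$.

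For closedness, suppose $(v_n + i w_n) \to \xi$ and $T_\sV(v_n + iw_n) = v_n - i w_n \to \eta$. The key observation is that we can recover the real and imaginary parts separately by adding and subtracting:
\[
v_n = \tfrac{1}{2}\bigl[(v_n + iw_n) + (v_n - i w_n)\bigr] \longrightarrow \tfrac{\xi + \eta}{2}, \qquad
iw_n = \tfrac{1}{2}\bigl[(v_n + iw_n) - (v_n - i w_n)\bigr] \longrightarrow \tfrac{\xi - \eta}{2}.
\]
Since $\sV$ is a closed real subspace, the limit $v := \tfrac{1}{2}(\xi + \eta)$ lies in $\sV$, and $w := -\tfrac{i}{2}(\xi - \eta)$ also lies in $\sV$. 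A direct check gives $v + iw = \xi$ and $v - iw = \eta$, so $\xi \in \dom(T_\sV)$ and $T_\sV \xi = \eta$.

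I do not anticipate any real obstacle: the argument uses nothing beyond the definitions of cyclic, separating, and the closedness of $\sV$ as a real subspace. The only subtle point worth flagging is that one must remember $T_\sV$ is antilinear, so closedness is phrased as closedness of the graph in the real Hilbert space $\cH \oplus \cH$; the computation above is insensitive to this distinction.
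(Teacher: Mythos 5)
Your proof is correct and follows essentially the same route as the paper's: cyclicity gives dense domain, and closedness is obtained by recovering $v_n = \tfrac12(\xi_n + T_\sV\xi_n)$ and $w_n = \tfrac{1}{2i}(\xi_n - T_\sV\xi_n)$ and using that $\sV$ is closed. The only addition is your explicit check that $T_\sV$ is well-defined, which the paper delegates to its earlier definition of the Tomita involution on a separating subspace.
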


\begin{proof} As $\sV$ is cyclic, the operator $T_\sV$ is densely defined. 
  To see that the graph of $T_\sV$ is closed,
suppose that $\xi_n = a_n + i b_n$ is a sequence in
$\cD(T_\sV) =\sV + i \sV$ with $a_n, b_n \in \sV$, such that
$(\xi_n, T_{\sV} \xi_n) = (a_n + i b_n, a_n - i b_n) \to (\xi,\eta)$ in
$\cH \times \cH$.
As $\sV$ is closed, 
\[ a_n = \frac{1}{2}(a_n + i b_n + (a_n - i b_n))
  = \frac{1}{2}(\xi_n + T_\sV \xi_n) \to \frac{1}{2}(\xi+\eta)
=: a   \in \sV, \]
and 
\[ b_n = \frac{1}{2i}(a_n + i b_n - (a_n - i b_n))
  = \frac{1}{2i}(\xi_n - T_\sV \xi_n) \to \frac{1}{2i}(\xi-\eta)
=: b  \in \sV. \]
Therefore $\xi = a + i b \in\cD(T_\sV)$ satisfies
$T_\sV \xi = a - i b = \eta$. This means that $T_\sV$ is closed.
\end{proof}

\begin{definition}  \label{def:mod-obj}
  We have seen in Lemma~\ref{lem:tv-closed} that,
  for every standard subspace $\sV \subeq \cH$,
  the Tomita operator
\[ T_\sV  \: \cD(T_\sV) := \sV + i \sV \to \cH, \qquad 
T_\sV(v + i w) := v - i w \] 
is closed, hence has a polar decomposition
(\cite[Thm.~7.2]{Sch12}, \cite[Thm.~9.29]{SZ79}\footnote{To obtain the polar decomposition of a closed operator
    $T$, the main step is to show that the operator $T^*T$ is selfadjoint.
    Then the unique positive square root $|T| := \sqrt{T^*T}$ satisfies
    $\|\, |T|\xi\| = \|T\xi\|$ for all $\xi \in \cD(T)$, which easily
    leads to a partial isometry $U$ from $\oline{\cR(|T|)}
    =\cN(|T|)^\bot    =\cN(T)^\bot$
    to $\oline{\cR(T)}$ with $T = U |T|$.}), i.e., 
%\cite[Satz~7.20(b)]{We76}, \cite[Thm.~VIII.32]{RS73}), i.e.,
\[ \Delta_\sV := T_\sV^*T_\sV \] 
is a positive selfadjoint operator, and there exists an antilinear
isometry $J_\sV$ such that 
\[ T_\sV = J_\sV \Delta_\sV^{1/2}.\] 
The isometry $J_\sV$ is defined on all of $\cH$ because
$\Delta_\sV$ has dense range, which in turn follows from 
$\cR(\Delta_\sV)^\bot = \ker(\Delta_\sV) = \ker(T_\sV) = \{0\}$. 
The relation
\[ J_\sV \Delta_\sV^{1/2} = T_\sV = T_\sV^{-1}
  = \Delta_\sV^{-1/2} J_\sV^{-1} = J_\sV^{-1} (J_\sV \Delta_\sV^{-1/2} J_\sV^{-1}) \]
and the uniqueness of the polar decomposition 
now imply $J_\sV^2 = \bone$ and the {\it modular relation}
\begin{equation} \index{modular!relation \scheiding} 
  \label{eq:modrel}
J_\sV \Delta_\sV J_\sV = \Delta_\sV^{-1}.  
\end{equation}

The unitary one-parameter group $(\Delta_\sV^{it})_{t \in \R}$
is called the {\it modular group of $\sV$}.
\index{modular!group (of standard subspace) \scheiding} 
It has the important property that it preserves $\sV$
(Remark~\ref{rem:NO15-prop-3.1}(b)) and its true importance is revealed in 
the Tomita--Takesaki Theorem~\ref{thm:tom-tak}.  
\end{definition}

\begin{remark} \label{rem:NO15-prop-3.1}
(a)   The modular group $\Delta_\sV^{it}$
  commutes with the antiunitary conjugation $J_\sV$.
  In fact, the antilinearity of $J_\sV$ implies that
  \[ J_\sV \Delta_\sV^z J_\sV = \Delta_\sV^{-\oline z} \quad \mbox{ for } \quad
    z \in \C.\]   
  In view of \cite[Prop.~3.1]{NO15}, 
  a unitary one-parameter group $(U_t= e^{itH})_{t \in \R}$
  commutes with some conjugation~$J$ if and only if
  $H$ is {\it symmetric} in the sense that there exists a unitary involution
  $S$ satisfying $S H S^{-1} = -H$.

  \nin (b) The fact that the operators $\Delta_\sV^{it}$ commute with $J_\sV$
  implies that they also commute with $T_\sV$, hence leave $\sV$ invariant. 
\end{remark}

\begin{definition} We write $\Stand(\cH)$ for the set of standard subspaces
  of the complex Hilbert space $\cH$ and
  $\Mod(\cH)$ for the set of all pairs $(\Delta, J)$,
  where $J$ is a conjugation and 
  $\Delta > 0$ a positive selfadjoint operator satisfying the modular
  relation $J\Delta J =\Delta^{-1}.$ 
\end{definition}

\begin{proposition}\label{prop:11}
  The map
  \[ \Mod(\cH) \to \Stand(\cH), \quad
    (\Delta,J) \mapsto \Fix(J\Delta^{1/2}) \]
  is a bijection. Its inverse is given 
  by $\sV \mapsto (\Delta_\sV, J_\sV)$.
\end{proposition}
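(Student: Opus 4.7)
The plan is to verify that the two stated maps are inverses of each other. The easier direction $\sV \mapsto (\Delta_\sV, J_\sV) \mapsto \Fix(J_\sV \Delta_\sV^{1/2})$ is immediate from the definition of the Tomita operator $T_\sV = J_\sV \Delta_\sV^{1/2}$: every $v \in \sV$ satisfies $T_\sV v = v$, so $\sV \subeq \Fix(T_\sV)$; conversely, if $v + iw \in \Fix(T_\sV)$ with $v, w \in \sV$, then $v - iw = v + iw$ forces $w = 0$, whence $v + iw = v \in \sV$.

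For the other direction, starting from $(\Delta, J) \in \Mod(\cH)$, I set $S := J\Delta^{1/2}$ on $\cD(S) := \cD(\Delta^{1/2})$. Since $J$ is a bounded antilinear involution and $\Delta^{1/2}$ is closed, $S$ is a closed densely defined antilinear operator. The modular relation $J\Delta J = \Delta^{-1}$ transfers via Borel functional calculus to $J\Delta^{1/2} J = \Delta^{-1/2}$, from which a direct computation gives $S^2\xi = \xi$ on the dense subspace $\cD(S^2)$. I then set $\sV := \Fix(S)$, which is a closed real subspace because $S$ is closed and antilinear. Separation $\sV \cap i\sV = \{0\}$ follows from antilinearity: for $\xi = iv \in i\sV$ with $v \in \sV$, one gets $S\xi = -iSv = -iv = -\xi$, so $\sV$ and $i\sV$ are the $+1$ and $-1$ eigenspaces of $S$ and intersect only at $0$.

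For cyclicity, I introduce the $\Delta$-spectral subspaces $\cH_N := \chi_{[1/N, N]}(\Delta)\cH$. The modular relation forces the spectral projections $P_N := \chi_{[1/N,N]}(\Delta)$ to commute with $J$ (as $\{s : 1/s \in [1/N,N]\} = [1/N,N]$), and each $\cH_N$ lies in $\cD(\Delta^k)$ for all $k \in \Z$, hence in $\cD(S^2)$. For $\xi \in \cH_N$ the decomposition
\[
\xi = \tfrac{1}{2}(\xi + S\xi) \,+\, i \cdot \tfrac{1}{2i}(\xi - S\xi) \,=:\, a + ib
\]
satisfies $Sa = a$ and $Sb = b$ once $S^2\xi = \xi$ is used, so $a,b \in \sV$; thus $\sV + i\sV \supeq \bigcup_N \cH_N$, which is dense in $\cH$. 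This proves $\sV \in \Stand(\cH)$.

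To recover the modular data, I need $T_\sV = S$, after which uniqueness of the polar decomposition of the closed antilinear operator $T_\sV$ forces $J_\sV = J$ and $\Delta_\sV = \Delta$. The inclusion $T_\sV \subeq S$ is clear by construction. For equality of domains, given any $\xi \in \cD(S)$, approximate by $P_N \xi \in \cH_N$: since $P_N$ commutes with $\Delta^{1/2}$ and with $J$, one has $P_N\xi \to \xi$ and $\Delta^{1/2}P_N\xi \to \Delta^{1/2}\xi$, so $P_N\xi \to \xi$ in the graph norm of $S$. The components $a_N := \tfrac{1}{2}(P_N\xi + SP_N\xi)$ and $b_N := \tfrac{1}{2i}(P_N\xi - SP_N\xi)$ then converge in $\cH$ to $a := \tfrac{1}{2}(\xi + S\xi)$ and $b := \tfrac{1}{2i}(\xi - S\xi)$; closedness of $\sV$ puts $a,b \in \sV$, yielding $\xi = a + ib \in \sV + i\sV$. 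Hence $\cD(T_\sV) = \cD(S)$ and $T_\sV = S$. The main obstacle is precisely this last step — upgrading from the easy dense inclusion $\bigcup_N \cH_N \subeq \sV + i\sV$ to the full equality of domains $\sV + i\sV = \cD(\Delta^{1/2})$, which is what makes the uniqueness of polar decomposition applicable and pins down $(\Delta, J)$ from $\sV$.
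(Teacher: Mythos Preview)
Your proof is correct and follows the same outline as the paper's (which merely sketches the argument and defers to Longo). The one difference is that you route through spectral truncation to sidestep the question of whether $\cD(S^2) = \cD(S)$, whereas the paper simply asserts that $T = J\Delta^{1/2}$ is an antilinear \emph{involution} on $\cD(\Delta^{1/2})$. That stronger fact holds and removes the need for approximation: since $\Delta$ is positive, selfadjoint, and injective with dense range, $\Delta^{1/2}$ maps $\cD(\Delta^{1/2})$ bijectively onto $\cR(\Delta^{1/2}) = \cD(\Delta^{-1/2})$, and the modular relation gives $J\cD(\Delta^{-1/2}) = \cD(\Delta^{1/2})$; hence $S\xi \in \cD(S)$ for every $\xi \in \cD(S)$, and $S^2\xi = J\Delta^{1/2}J\Delta^{1/2}\xi = \Delta^{-1/2}\Delta^{1/2}\xi = \xi$. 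With this in hand, the decomposition $\xi = \tfrac{1}{2}(\xi + S\xi) + i\cdot\tfrac{1}{2i}(\xi - S\xi)$ works directly for all $\xi \in \cD(S)$, giving both cyclicity and $\cD(T_\sV) = \cD(S)$ in one stroke. Your spectral-subspace argument is a valid alternative that avoids this domain bookkeeping, at the cost of a limiting step; both reach the same conclusion.
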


\begin{proof} (\cite[Prop.~3.2]{Lo08})
  To see that we obtain a bijection, suppose that
  $(\Delta, J)$ is a pair of modular objects,
i.e., a positive operator and a conjugation, satisfying the modular relation 
\eqref{eq:modrel}. Then 
$T := J \Delta^{1/2}$ is a closed, densely defined antilinear involution and 
\[ \sV := \Fix(T) := \{ \xi \in \cD(T) \: T\xi = \xi \} \] 
is a standard subspace with $J_\sV = J$ and $\Delta_\sV = \Delta$.
Here closedness of $T$ follows from the closedness of the selfadjoint
operator $\Delta^{1/2}$, and this implies the closedness of $\Fix(T)$.
\end{proof}

\subsection{More background on standard subspaces}

\begin{lemma} \label{lem:vv'} The map $\sV \mapsto \sV'$ has the following properties:
  \begin{description}
  \item[\rm(a)] $\sV'' = \sV$. 
  \item[\rm(b)] $\sV$ is cyclic if and only if $\sV'$ is separating. 
  \item[\rm(c)] $\sV$ is standard if and only if $\sV'$ is standard. 
  \item[\rm(d)] $T_\sV^* = T_{\sV'}$, i.e., $\cD(T_\sV^*) = \sV' + i \sV'$ and 
    \[ \la T_\sV \xi, \eta \ra = \oline{\la \xi, T_{\sV'} \eta \ra}
      \quad \mbox{ for }  \quad
      \xi \in \sV + i \sV, \ \ \eta \in \sV' + i \sV'.\] 
 \item[\rm(e)] $\Delta_{\sV'} = \Delta_{\sV}^{-1}$ and $J_{\sV'}= J_{\sV}$. 
 \item[\rm(f)] $J_\sV \sV = \sV'$. 
  \end{description}
\end{lemma}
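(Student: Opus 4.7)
The unifying idea is to exploit the identity $\sV' = i\,\sV^{\bot_\R}$ together with the fact that multiplication by $i$ is a real isometry for the inner product $\Re\la\cdot,\cdot\ra$, so $(iW)^{\bot_\R} = i\,W^{\bot_\R}$ for every real subspace $W$. For (a) this gives $\sV'' = i(i\sV^{\bot_\R})^{\bot_\R} = -\sV^{\bot_\R\bot_\R} = \sV$, since $\sV$ is closed. For (b) one computes
\[ (\sV + i\sV)^{\bot_\R} = \sV^{\bot_\R} \cap (i\sV)^{\bot_\R} = \sV^{\bot_\R} \cap i\,\sV^{\bot_\R} = i\sV' \cap \sV', \]
so cyclicity of $\sV$ is equivalent to $\sV' \cap i\sV' = \{0\}$, i.e.\ to $\sV'$ being separating. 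Part (c) is then immediate from (a) and (b).

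For (d), I would first check $T_{\sV'} \subseteq T_\sV^*$ by a direct calculation: for $\xi = v + iw \in \sV + i\sV$ and $\eta = v' + iw' \in \sV' + i\sV'$, the four pairings $\la v,v'\ra,\la v,w'\ra,\la w,v'\ra,\la w,w'\ra$ are \emph{real} by definition of $\sV'$, and then a brief computation shows $\la T_\sV \xi,\eta\ra = \overline{\la \xi, T_{\sV'}\eta\ra}$. The key step — which I expect to be the main obstacle — is the reverse inclusion $\cD(T_\sV^*) \subseteq \sV' + i\sV'$. For this I would use the general fact that if $T$ is a closed, densely defined antilinear involution then $\cD(T) = \Fix(T) + i\,\Fix(T)$ (write $\xi = \tfrac12(\xi + T\xi) + i\cdot\tfrac{1}{2i}(\xi - T\xi)$). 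Applied to the closed antilinear involution $T_\sV^*$, this reduces the problem to identifying $\Fix(T_\sV^*)$. But $\xi \in \Fix(T_\sV^*)$ means $\la T_\sV v,\xi\ra = \overline{\la v,\xi\ra}$ for all $v \in \sV + i\sV$, and taking $v \in \sV$ (so $T_\sV v = v$) this becomes $\Im\la v,\xi\ra = 0$, i.e.\ $\xi \in \sV'$. Hence $\Fix(T_\sV^*) = \sV'$ and (d) follows.

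For (e), take adjoints of the polar decomposition $T_\sV = J_\sV \Delta_\sV^{1/2}$: since $J_\sV$ is an antiunitary involution we have $J_\sV^* = J_\sV$, and for an antilinear product one computes $(J_\sV A)^* = A J_\sV$ when $A$ is positive selfadjoint. This yields $T_\sV^* = \Delta_\sV^{1/2}J_\sV$. Combining with the modular relation $J_\sV \Delta_\sV J_\sV = \Delta_\sV^{-1}$ gives $\Delta_\sV^{1/2}J_\sV = J_\sV \Delta_\sV^{-1/2}$, so $T_{\sV'} = J_\sV \Delta_\sV^{-1/2}$. Uniqueness of the polar decomposition then forces $J_{\sV'} = J_\sV$ and $\Delta_{\sV'} = \Delta_\sV^{-1}$.

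For (f), I would use the fixed-point description $\sV = \{\xi \in \cD(\Delta_\sV^{1/2}) : \Delta_\sV^{1/2}\xi = J_\sV \xi\}$ and the analogous description of $\sV' = \Fix(J_\sV \Delta_\sV^{-1/2})$ from (e). Given $v \in \sV$, set $\eta := J_\sV v$; the modular relation rewritten as $\Delta_\sV^{-1/2}J_\sV = J_\sV \Delta_\sV^{1/2}$ gives $\Delta_\sV^{-1/2}\eta = J_\sV\Delta_\sV^{1/2}v = v = J_\sV\eta$, so $\eta \in \sV'$. Hence $J_\sV \sV \subseteq \sV'$, and applying the same argument with $\sV$ replaced by $\sV'$ (and using (a), (e)) yields the reverse inclusion.
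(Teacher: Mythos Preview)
Your proof is correct and largely parallels the paper's, with one genuine difference worth noting in part~(d).

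For the reverse inclusion $\cD(T_\sV^*) \subseteq \sV' + i\sV'$, the paper avoids your detour through $\Fix(T_\sV^*)$: it takes $\eta \in \cD(T_\sV^*)$, tests against $\xi \in \sV$ to get $\la \xi, T_\sV^*\eta\ra = \overline{\la \xi, \eta\ra}$, and then reads off from the real and imaginary parts that $T_\sV^*\eta + \eta \in \sV'$ and $T_\sV^*\eta - \eta \in i\sV'$, whence $\eta \in \sV' + i\sV'$. This is more direct because it never needs to know that $T_\sV^*$ is itself an involution. Your route works too, but you silently use that fact when you invoke the decomposition $\cD(T) = \Fix(T) + i\,\Fix(T)$ for $T = T_\sV^*$; you should justify it, e.g.\ via $(T_\sV^{-1})^* = (T_\sV^*)^{-1}$ combined with $T_\sV^{-1} = T_\sV$. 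Once that is filled in, both arguments are equally valid; the paper's is a line shorter, yours isolates a reusable general principle.

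Parts (a)--(c), (e), (f) match the paper's argument essentially verbatim. In (e) you might flag, as the paper does by citing an exercise, that the identity $(J_\sV A)^* = A^* J_\sV^*$ for unbounded~$A$ needs the invertibility of $J_\sV$ to upgrade from $\supseteq$ to equality.
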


\begin{proof} \begin{enumerate}
  \item[(a)]
follows immediately from the Hahn--Banach Theorem.
  Alternatively, we can use that $\sV' = i \sV^{\bot_\R}$ and that multiplication with $i$ is isometric, to obtain $\sV'' = i^2 (\sV^{\bot_\R})^{\bot_\R}
  = \sV$. 

  \item[(b)] The subspace $(\sV + i  \sV)' = \sV' \cap i \sV'$ vanishes
  if and only if $\sV'$ is separating if and only if $\sV$ is cyclic. 

\item[(c)] If $\sV$ is standard, then (b) implies that $\sV'$ is separating.
  That $\sV'$ is also  cyclic follows from (b) and $(\sV')' = \sV$
  being separating.
Hence $\sV'$ is standard if $\sV$ has this property. If $\sV'$ is
standard, then we now see with (a) that $\sV = \sV''$ is also standard. 

\item[(d)] First we show that $T_{\sV'} \subeq T_\sV^*$.
In fact, for $a,b \in \sV'$ and $v,w \in \sV$, we derive
from $\la \sV,\sV'\ra \subeq \R$ that 
\begin{align*}
&  \la T_{\sV'}(a + i b), v + i w \ra
  =  \la a- i b, v + i w \ra
  = \la a,v \ra - \la b,w \ra + i (\la b,v \ra + \la a, w\ra)\\
&  =  \oline{\la a+ i b, v - i w \ra}
  = \oline{\la a + i b, T_\sV(v + i w) \ra}
  = \la T_{\sV}^*(a + i b), v + i w \ra.
\end{align*}

Next we observe that, for $\xi \in \sV$ and $\eta \in \cD(T_\sV^*)$, we have
\[ \la \xi, T_\sV^* \eta \ra = \oline{\la T_{\sV} \xi, \eta \ra}
  = \oline{\la \xi,\eta \ra}.\]
From the equality of real and imaginary part, we derive that 
\[ T_\sV^* \eta - \eta \in \sV^{\bot_\R} = i \sV' \quad \mbox{ and } \quad 
  T_\sV^* \eta + \eta \in \sV'.\]
Therefore $\eta \in \sV' + i \sV' = \cD(T_{\sV'})$,
and hence that $T_{\sV'} = T_\sV^*$.

\item[(e)] From (d) we derive with Exercise~\ref{exer:abstar} that
\[ T_{\sV'} = (T_\sV)^*
  = (J_\sV \Delta_\sV^{1/2})^*
\ \ {\buildrel \ref{exer:abstar} \over  =}\ \  \Delta_\sV^{1/2} J_\sV^*  
  = \Delta_\sV^{1/2} J_\sV  
  = J_\sV  \Delta_\sV^{-1/2}.\]
Thus (e) follows from the uniqueness of the polar decomposition.

\item[(f)] If $v \in \sV$, then
\[ T_{\sV'} J_\sV v = J_\sV \Delta_\sV^{-1/2} J_\sV v
  = \Delta_\sV^{1/2} v = J_\sV v\]
shows that $J_\sV \sV \subeq \sV'$.
Likewise $J_{\sV} \sV' = J_{\sV'}\sV' \subeq \sV'' = \sV$, so that
$\sV' \subeq J_\sV \sV$, and thus $\sV' = J_\sV \sV$.
\end{enumerate}
\end{proof}

\begin{lemma} \label{lem:core} {\rm(\cite[Prop.~3.11]{Lo08})} 
  Let $U_t = e^{itA}$ be a unitary
  one-parameter group on $\cH$, and
  $f \: \R \to \C$ a locally bounded Borel measurable function.
  If $\cD \subeq \cD(f(A))$ is a $U$-invariant
  linear subspace dense in $\cH$, then it is a core for $f(A)$,
  i.e., the graph of $f(A)$ is the closure of its restriction
  to $\cD$.   
\end{lemma}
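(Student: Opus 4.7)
The plan is to approximate any $\xi \in \cD(f(A))$ in the graph norm of $f(A)$ by elements of $\cD$, via a three-stage cascade: a spectral cutoff reducing to vectors with bounded spectral support, a norm approximation from within $\cD$, and finally a convolution smoothing that exploits the $U$-invariance of $\cD$.

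Stage one uses only spectral calculus. Pick $h \in C_c^\infty(\R)$ with $h(0)=1$ and $0 \le h \le 1$, set $h_n(x) := h(x/n)$, and observe that $|h_n| \le 1$ pointwise with $h_n \to 1$. Since $\xi \in \cD(f(A))$, dominated convergence against the spectral measure of $\xi$ yields $h_n(A)\xi \to \xi$ and $(fh_n)(A)\xi \to f(A)\xi$ in $\cH$; as bounded Borel functions of $A$ commute with $f(A)$ on its domain, $(fh_n)(A)\xi = f(A)h_n(A)\xi$, so $h_n(A)\xi \to \xi$ in graph norm. In stage two, one exploits that $fh_n$ is bounded (because $f$ is locally bounded and $h_n$ has compact support), so $B_n := (fh_n)(A)$ is a bounded operator. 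Choosing $w_k \in \cD$ with $w_k \to \xi$ in $\cH$, one obtains $h_n(A)w_k \to h_n(A)\xi$ and $B_n w_k = f(A)h_n(A)w_k \to B_n \xi = f(A)h_n(A)\xi$ in $\cH$; hence $h_n(A)w_k \to h_n(A)\xi$ in graph norm. It remains to approximate each $h_n(A)w_k$ by elements of $\cD$ in graph norm.

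Stage three invokes $U$-invariance. Since $h_n \in C_c^\infty(\R)$, its inverse Fourier transform $g_n$ is Schwartz, and Stone's theorem combined with the functional calculus gives $h_n(A) = \int_\R g_n(t)U_t\,dt$ as a strong integral. Truncating to a bounded interval $[-R,R]$, the vector $U(g_n \chi_{[-R,R]})w_k$ is a Bochner integral that arises as an $\cH$-limit of Riemann sums $S_j = \sum_i g_n(t_i)U_{t_i}w_k\,\Delta t_i$; by $U$-invariance each $S_j$ lies in $\cD$. Since $w_k \in \cD \subeq \cD(f(A))$ and $U_t$ commutes with $f(A)$ on $\cD(f(A))$, we have $f(A)S_j = \sum_i g_n(t_i)U_{t_i}f(A)w_k\,\Delta t_i$, which converges in $\cH$ to $U(g_n\chi_{[-R,R]}) f(A) w_k$, so $S_j \to U(g_n\chi_{[-R,R]})w_k$ in graph norm. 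Letting $R \to \infty$ and using $\|g_n - g_n\chi_{[-R,R]}\|_1 \to 0$ recovers $h_n(A)w_k$ as a graph-norm limit of $\cD$-vectors, and a diagonal extraction produces a sequence in $\cD$ converging to $\xi$ in graph norm.

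The main obstacle is upgrading the Bochner-integral approximation in stage three from $\cH$-norm to graph-norm convergence: this requires $f(A)$ to commute through the integral, which fails in general for unbounded $f(A)$. The decisive point is that local boundedness of $f$ combined with compact support of $h_n$ renders the compressed operator $(fh_n)(A)$ bounded, so that applying $f(A)$ to a $U$-smeared element of $\cD$ reduces to a bounded operation, making the whole cascade compatible with the graph topology.
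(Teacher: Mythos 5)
Your proof is correct and genuinely different in structure from the paper's. The paper's argument factorizes $f = f_0 f_1$ with $f_0$ unimodular and $f_1 \geq 0$, reducing the core question to showing that $\cD$ is a core for the \emph{selfadjoint} operator $f_1(A)$; it then invokes the standard criterion that essential selfadjointness of $f_1(A)\res_\cD$ is equivalent to density of $\cR(f_1(A)\res_\cD + i\bone)$, and checks this density by smearing with compactly supported $\psi(A)$ and exploiting that $f_1\psi$ is bounded. Your argument bypasses the $f = f_0 f_1$ factorization and the dual (range-density) criterion entirely, instead building an explicit graph-norm approximating sequence via spectral cutoff, bounded-operator density, and Riemann-sum smearing. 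Both routes rest on the same two pivots --- local boundedness of $f$ turning $(fh_n)(A)$ into a bounded operator, and $U$-invariance of $\cD$ entering via $\int \phi(t) U_t\,dt$ --- but yours is direct where the paper's is dual. One point you should make explicit: in stage three, passing from $\cH$-convergence of both $S_j$ and $f(A)S_j$ to graph-norm convergence uses the closedness of the spectral integral $f(A)$; you invoke this silently, and it is the reason the limits $f(A)v_R$ and $f(A)h_n(A)w_k$ are defined and equal to the $\cH$-limits of $f(A)S_j$ and $f(A)v_R$ respectively. The trade-off: the paper's dual argument is shorter once essential selfadjointness is on the table, but requires the $f_0 f_1$ detour because $f(A)$ itself is generally not selfadjoint; your constructive version works uniformly for arbitrary Borel $f$ at the cost of tracking a three-stage approximation cascade and a diagonal extraction over the indices $n$, $k$, $R$, $j$.
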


\begin{proof}   We factorize $f = f_0 f_1$ with $f_0(\R) \subeq \T$ and $f_1 \geq 0$,
  so that $f(A) = f_0(A) f_1(A)$. Then $f_0(A)$ is bounded and
  $\cD \subeq \cD(f(A))= \cD(f_1(A))$. It therefore suffices to show that
  $\cD$ is a core for $B := f_1(A)$, resp., that 
 the graph $\Gamma(B_0)$ of $B_0 := B\res_{\cD}$ is dense in the graph
  of $B$. This is equivalent to $B_0$ being essentially selfadjoint. 

  Replacing $B_0$ by its closure, whose domain is also $U$-invariant,
  we may assume that $B_0$ is closed and we have to show that
  $B_0 = B$. As $B$ is selfadjoint, it suffices to verify that
  $\cR(B_0 + i \bone)$ is dense in $\cH$. So let $v \in \cR(B_0 + i \bone)^\bot$.
  We have to show that $v = 0$.

  The closed subspace $\Gamma(B_0) \subeq \cH^2$ is invariant under the diagonal
  action of the operators $(U_t)_{t \in \R}$, hence also under the operators
  $U(\phi) = \int_\R \phi(t)U_t\, dt$ for $\phi \in L^1(\R)$.
  In view of the relation
  $U(\phi) = \hat\phi(A)$, these include the operators 
  $\psi(A), \psi \in \cS(\R,\R)$ (Schwartz functions). For all $w \in \cD$ and
  $\psi \in \cS(\R,\R)$, we thus have
  \[  v \bot (B_0 + i \bone) \psi(A) \cD = (f_1(A) + i \bone) \psi(A) \cD.\]
  If $\psi$ has compact support, then the operator
  $f_1(A)\psi(A)$ is bounded because $f_1$ is locally bounded.
  So the density of $\cD$ in $\cH$ implies that
$v \bot (B + i \bone) \psi(A) \cH.$   This in turn implies that
  \[ \psi(A)v \in  \cR(B + i \bone)^\bot = \{0\}.\]
  Choosing $\psi_n$ in such a way that
  $0 \leq \psi_n  \leq 1$ and $\psi_n\res_{[-n,n]} = 1$,
  then the relation $0 = \psi_n(A)v \to v$ entails that $v = 0$.
\end{proof}

\begin{lemma} \label{lem:lo08-3.10} 
  {\rm(Equality Lemma)}
  Let $\sH_1 \subeq \sV \subeq \sH_2$ be closed real subspaces such that
  $\sV$ is standard, $\sH_1$ is cyclic and $\sH_2$ separating.
    If $\Delta_\sV^{it}\sH_j =  \sH_j$ holds for all $t \in \R$,
  then $\sH_1 = \sV = \sH_2$.   
\end{lemma}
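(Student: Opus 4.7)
The plan is to prove $\sH_1 = \sV$ first, and then deduce $\sH_2 = \sV$ by passing to symplectic orthogonals. The heart of the argument is the observation that the hypotheses on $\sH_1$ make $\sH_1 + i\sH_1$ a core for the Tomita operator $T_\sV$, so that the graph of $T_\sV$ (and hence its fixed-point space $\sV$) can be recovered from its values on $\sH_1$.

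First I would invoke Lemma~\ref{lem:core} with $A := \log \Delta_\sV$, so that the modular group appears as $\Delta_\sV^{it} = e^{itA}$, applied to the locally bounded Borel function $f(x) = e^{x/2}$, which yields $f(A) = \Delta_\sV^{1/2}$. The subspace $\cD := \sH_1 + i\sH_1$ is dense (since $\sH_1$ is cyclic), invariant under $\Delta_\sV^{it}$ by hypothesis, and contained in $\cD(\Delta_\sV^{1/2}) = \cD(T_\sV)$ because $\sH_1 \subseteq \sV \subseteq \cD(T_\sV)$ and the latter is a complex subspace. Lemma~\ref{lem:core} then gives that $\cD$ is a core for $\Delta_\sV^{1/2}$, and since $J_\sV$ is a bounded antilinear isometry, also for $T_\sV = J_\sV \Delta_\sV^{1/2}$. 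Granted the core property, the equality $\sH_1 = \sV$ follows by a short graph-approximation: given $v \in \sV = \Fix(T_\sV)$, there exist $a_n, b_n \in \sH_1$ with $a_n + i b_n \to v$ and $a_n - i b_n = T_\sV(a_n + ib_n) \to T_\sV v = v$; adding and subtracting yields $a_n \to v$ and $b_n \to 0$, hence $v \in \oline{\sH_1} = \sH_1$.

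For the second equality I would dualize. The inclusion $\sH_1 \subseteq \sV \subseteq \sH_2$ reverses to $\sH_2' \subseteq \sV' \subseteq \sH_1'$. By Lemma~\ref{lem:vv'}, $\sV'$ is standard with $\Delta_{\sV'} = \Delta_\sV^{-1}$, and $\sH_2'$ is cyclic because $\sH_2$ is separating. Since $\Delta_\sV^{it}$ is unitary and therefore preserves the symplectic form $\gamma$, the identity $(U\sK)' = U\sK'$ applied to $U = \Delta_\sV^{it}$ shows that the hypothesis $\Delta_\sV^{it}\sH_2 = \sH_2$ propagates to $\Delta_{\sV'}^{it}\sH_2' = \sH_2'$. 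The first half of the proof, applied to the triple $(\sH_2', \sV', \sH_1')$ in place of $(\sH_1, \sV, \sH_2)$, yields $\sH_2' = \sV'$, and Lemma~\ref{lem:vv'}(a) gives $\sH_2 = \sV''= \sV$. The only real obstacle is verifying the core property in Step~1; once that is available, both inclusions fall out essentially for free.
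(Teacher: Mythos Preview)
Your proof is correct and follows essentially the same route as the paper: use Lemma~\ref{lem:core} to see that $\sH_1 + i\sH_1$ is a core for $\Delta_\sV^{1/2}$ (hence for $T_\sV$), conclude $\sH_1 = \sV$, and then dualize via $\sH_2' \subeq \sV'$ to handle $\sH_2$. The only cosmetic difference is that the paper packages your graph-approximation step as the statement that $T_\sV$ extends the closed operator $T_{\sH_1}$ and agrees with it on a core, forcing $T_{\sH_1} = T_\sV$; your explicit computation with $a_n \to v$, $b_n \to 0$ is just the unwinding of that closedness argument.
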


\begin{proof}  (\cite[Prop.~3.10]{Lo08}) 
  Our assumption implies that
  $\sH_1 + i \sH_1 = \cD(T_{\sH_1}) = \cD(\Delta_{\sH_1}^{1/2})$
  is a dense subspace of $\cH$, invariant under the modular group
  $U_t = \Delta_\sV^{it}$, $t \in \R$. This subspace is contained in 
  $\sV + i \sV = \cD(T_\sV) = \cD(\Delta_\sV^{1/2})$,
  hence a core of $\Delta_{\sV}^{1/2}$ by Lemma~\ref{lem:core},
  and therefore also a core of $T_\sV$.
  Since $T_\sV$ is an extension of~$T_{\sH_1}$,
  the closedness of $T_{\sH_1}$ implies that $T_{\sH_1} = T_\sV$,
  hence that $\sH_1 = \sV$.

  To deal with $\sH_2$, we note that $\sH_2' \subeq \sV'$
  is cyclic by Lemma~\ref{lem:vv'}(b).
  Our assumption now implies that the cyclic subspace
  $\sH_2'$ is invariant under the modular
  group of~$\sV'$, and the first part of the proof thus entails
  $\sH_2' = \sV'$. Finally, $\sH_2 = \sH_2'' = \sV'' = \sV$
  (Lemma~\ref{lem:vv'}).
\end{proof}

\begin{proposition} \label{prop:equality-prop}
  Let $\sV_1$ and $\sV_2$ be  two standard subspaces
    with $\Delta_{\sV_1} = \Delta_{\sV_2}$. If $\sV_1 \cap \sV_2$ is cyclic,
    then $\sV_1 = \sV_2$.
\end{proposition}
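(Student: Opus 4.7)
The plan is to derive the proposition as a direct corollary of the Equality Lemma (Lemma~\ref{lem:lo08-3.10}), with $\sV_1 \cap \sV_2$ playing the role of the cyclic inner subspace. Set $\sH := \sV_1 \cap \sV_2$. This is a closed real subspace of $\cH$, it is cyclic by hypothesis, and it sits inside each of the standard subspaces $\sV_1$ and $\sV_2$.

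The key point is that the common modular group $U_t := \Delta_{\sV_1}^{it} = \Delta_{\sV_2}^{it}$ preserves $\sH$. Indeed, by Remark~\ref{rem:NO15-prop-3.1}(b), $\Delta_{\sV_j}^{it}\sV_j = \sV_j$ for $j = 1,2$; since the two modular groups coincide by assumption, $U_t$ preserves both $\sV_1$ and $\sV_2$, hence also their intersection $\sH$. In particular, $U_t \sH = \sH$ and $U_t \sV_1 = \sV_1$.

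I would then apply Lemma~\ref{lem:lo08-3.10} with the triple
\[
  \sH_1 := \sH \ \subeq\ \sV := \sV_1\ \subeq\ \sH_2 := \sV_1,
\]
noting that $\sH_1$ is cyclic, $\sV$ is standard, $\sH_2$ is separating (being standard), and the modular group $\Delta_\sV^{it} = U_t$ of $\sV = \sV_1$ preserves both $\sH_1$ and $\sH_2$ by the previous paragraph. The Equality Lemma then yields $\sH = \sV_1$, which is precisely the inclusion $\sV_1 \subeq \sV_2$. Exchanging the roles of $\sV_1$ and $\sV_2$ (the hypotheses are symmetric in the two subspaces), the same argument gives $\sV_2 \subeq \sV_1$, and therefore $\sV_1 = \sV_2$.

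There is no real obstacle here: once one observes that the common modular group automatically preserves $\sV_1 \cap \sV_2$, the proposition reduces to an immediate invocation of the Equality Lemma, with no further analytic input required.
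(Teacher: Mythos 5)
Your proof is correct and takes the same route as the paper: observe that $\sV_1 \cap \sV_2$ is invariant under the common modular group and then invoke the Equality Lemma~\ref{lem:lo08-3.10}. Your explicit choice of triple and the symmetry argument simply spell out what the paper's one-line proof leaves implicit.
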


\begin{proof} The subspace $\sV := \sV_1 \cap \sV_2$ is invariant
  under the modular group $\Delta_{\sV_1}^{i\R} = \Delta_{\sV_2}^{i\R}$.
  Hence the assertion follows from the Equality Lemma.  
\end{proof}

\subsection{Standard subspaces and graphs}

Let $\sV \subeq \cH$ be a standard subspace and recall that
$\sV + i \sV = \cD(\Delta^{1/2}).$ 
The natural Hilbert space structure on this dense subspace of $\cH$
is obtained from the isomorphism with the graph
\[ \Gamma(\Delta^{1/2}) = \{(v,\Delta^{1/2}v) \: v \in \cD(\Delta^{1/2})\}
\subeq \cH \oplus \cH, \]
which is a closed subspace.

\begin{proposition} \label{prop:2.27} Let $\cH$ be a complex Hilbert space.
  Consider the complex structure on $\cH^{\oplus 2}$, defined by
  $I(v,w) := (iv,-iw)$ and a densely defined operator
  $A \: \cD(A) \to \cH$ with closed graph $\Gamma(A) \subeq \cH^{\oplus 2}$.
  Then the following assertions hold:
  \begin{description}
  \item[\rm(a)] The graph $\Gamma(A)$ is separating if and only if
    $A$ is injective. 
  \item[\rm(b)] The graph $\Gamma(A)$ is cyclic if and only if
    $A$ has dense range. 
  \item[\rm(c)] The graph $\Gamma(A)$ is standard if and only if
    $A$ is injective with dense range. 
  \item[\rm(d)] If $\sV  :=
    \Gamma(A)$ is standard, then its Tomita operator is given
    by $T_\sV(v,w) = (A^{-1}w,Av)$, and if $A = U |A|$ is the polar decomposition
    of $A$, then
    \[ \Delta_\sV= A^*A \oplus (A^{-1})^*A^{-1}, \quad
      J_\sV(v,w) = (U^{-1}w, Uv).\] 
  \item[\rm(e)] If $A > 0$ is strictly positive,
    then its graph $\Gamma(A) \subeq \cH^{\oplus 2}$ is a
    standard subspace with
  \[ \Delta_\sV= A^2 \oplus A^{-2} \quad \mbox{ and } \quad J_\sV(v,w) = (w,v).\]
  \end{description}
\end{proposition}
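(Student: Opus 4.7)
The plan is to reduce all the topological/geometric statements about the real subspace $\Gamma(A) \subseteq \cH^{\oplus 2}$ (equipped with the complex structure $I(v,w) = (iv,-iw)$) to linear-algebraic statements about $A$, by computing the sum and intersection of $\Gamma(A)$ with $I\Gamma(A)$. Writing a typical element of $\Gamma(A) + I\Gamma(A)$ as
\[ (v_1, Av_1) + I(v_2, Av_2) = (v_1 + iv_2,\, Av_1 - iAv_2),\]
and substituting the complex-linearly independent coordinates $v_\pm := v_1 \pm iv_2 \in \cD(A)$ (using the complex linearity of $\cD(A)$), one recognizes $Av_1 - iAv_2 = Av_-$, so
\[ \Gamma(A) + I\Gamma(A) = \{(v_+, Av_-) \: v_\pm \in \cD(A)\} = \cD(A) \oplus \cR(A).\]
A parallel check identifies $\Gamma(A) \cap I\Gamma(A)$ with $\ker(A) \oplus \{0\}$: an element $(v, Av)$ equals $I(w, Aw)=(iw,-iAw)$ iff $v = iw$ and $iAw = -iAw$, i.e., $Aw = 0$.

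Parts (a), (b), (c) then fall out immediately: $\Gamma(A)$ is separating iff $\ker(A) = \{0\}$; cyclic iff $\cD(A) \oplus \cR(A)$ is dense, which, since $\cD(A)$ is already dense by hypothesis, amounts to $\cR(A)$ being dense; standard iff both hold together.

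For (d), the parametrization of $\cD(T_\sV) = \sV + I\sV$ by pairs $(v_+, v_-) \in \cD(A) \times \cD(A)$ makes the Tomita involution transparent: $T_\sV$ acts as $v_1 + Iv_2 \mapsto v_1 - Iv_2$, which in the new coordinates swaps $v_+ \leftrightarrow v_-$, yielding
\[ T_\sV(\xi,\eta) = (A^{-1}\eta,\, A\xi) \qquad \text{on } \cD(A) \oplus \cR(A).\]
From here I would insert the polar decomposition $A = U|A|$ (noting that, under the hypotheses of (d), $A$ is injective with dense range, so that $U$ extends to a genuine unitary $\cH \to \cH$), use the standard intertwining $U|A| = |A^*|U$ to rearrange, and verify that the candidates $\Delta_\sV := A^*A \oplus (A^{-1})^*A^{-1} = |A|^2 \oplus |A^*|^{-2}$ together with $J_\sV(v,w) := (U^{-1}w, Uv)$ satisfy $J_\sV \Delta_\sV^{1/2} = T_\sV$ on a dense core of $\cD(T_\sV)$. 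Uniqueness of the polar decomposition of closed (antilinear) operators then promotes this to equality, delivering both formulas simultaneously, and an invocation of Proposition~\ref{prop:11} confirms that $(\Delta_\sV, J_\sV)$ is a modular pair.

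Part (e) is then the specialization $A = A^* > 0$: the polar decomposition trivializes as $A = \bone \cdot A$ with $|A| = A$, so the formulas from (d) collapse to $\Delta_\sV = A^2 \oplus A^{-2}$ and $J_\sV(v,w) = (w,v)$. The main technical obstacle lies in (d): one must verify that the proposed $J_\sV$ really extends to an isometric antilinear involution on all of $\cH^{\oplus 2}$ satisfying the modular relation $J_\sV \Delta_\sV J_\sV = \Delta_\sV^{-1}$, and that $T_\sV = J_\sV \Delta_\sV^{1/2}$ holds as an equality of closed operators rather than merely on the core. Once injectivity and dense range of $A$ make $U$ an honest unitary, these verifications reduce to straightforward manipulations with the intertwiner $U|A| = |A^*|U$, and the argument closes.
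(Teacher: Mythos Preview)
Your proposal is correct and largely parallel to the paper's argument. For (a)--(c) you compute $\sV + I\sV = \cD(A) \oplus \cR(A)$ directly via the substitution $v_\pm = v_1 \pm iv_2$; the paper performs exactly this computation, but only in part~(d), while for (b) it instead passes to the real orthogonal complement $(\sV + I\sV)^{\bot_\R}$, identifies it with the intersection of flipped graphs of $\pm A^*$, and reduces cyclicity to injectivity of~$A^*$. Your route is slightly more direct here.

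The genuine methodological difference is in (d). The paper proceeds in the opposite order: it computes the adjoint $T_\sV^*(v,w) = (A^*w,\,(A^{-1})^*v)$ explicitly, forms $\Delta_\sV = T_\sV^* T_\sV = |A|^2 \oplus |A^{-1}|^2$, and then reads off $J_\sV = T_\sV \Delta_\sV^{-1/2}$. This derives the modular objects from $T_\sV$ rather than guessing them, and the polar decomposition theorem for closed operators automatically handles the domain issues you correctly identify as the main obstacle. Your guess-and-verify approach is perfectly valid, but it requires separately checking that your candidate $J$ is an $I$-antilinear isometric involution and that $\cD(\Delta^{1/2}) = \cD(A) \oplus \cR(A)$ exactly matches $\cD(T_\sV)$ (not merely contains a core); the paper's forward computation sidesteps this bookkeeping. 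Either way the argument closes, and your invocation of the intertwining relation $U|A| = |A^*|U$ is precisely what is needed to reconcile the two formulations $(A^{-1})^*A^{-1} = |A^*|^{-2}$.
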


\begin{proof} (a) Let $\sV := \Gamma(A)$. We first observe that
  \[ I\sV
  = \{ (iv,-i Av) \: v \in \cD(A)\} 
  = \{ (v,-Av) \: v \in \cD(A)\} = \Gamma(-A),\]
and thus 
  \begin{equation}
    \label{eq:graph-intersect}
    \sV \cap I \sV = \Gamma(A) \cap \Gamma(-A)
    = \ker(A) \oplus \{0\}. 
  \end{equation}
  Therefore $\sV$ is separating if and only if $A$ is injective.

\nin (b)   Next we observe that
  \[ \Gamma(A)^{\bot_\R}
  = \{ (-A^*v,v) \: v \in \cD(A^*) \} =:\Gamma^{\rm flip}(-A^*).\]
So 
  \[ (\sV + I \sV)^{\bot_\R}
  = \sV^{\bot_\R} \cap  I \sV^{\bot_\R}
  = \Gamma^{\rm flip}(-A^*) \cap  I  \Gamma^{\rm flip}(-A^*)
  = \Gamma^{\rm flip}(-A^*) \cap  \Gamma^{\rm flip}(A^*) \]
is trivial if and only if $A^*$ is injective, which is equivalent
to $\cR(A) = A \cD(A)$ being dense.

\nin (c) follows from (a) and (b).

\nin (d)   To identify the corresponding modular objects, we claim that
  \[ \sV + I \sV = \cD(A) \oplus \cD(A^{-1}).\]
  Clearly, $\Gamma(\pm A) \subeq  \cD(A) \oplus \cR(A)
  = \cD(A) \oplus \cD(A^{-1})$,
  so that ``$\subeq$'' holds.
  For the converse, let $v \in \cD(A)$, $w \in \cD(A^{-1})$
  and put $u := A^{-1}w$. Then
  \[ (v,w) = (v,Au)
  = \Big(\frac{v + u}{2}, A\frac{v+u}{2}\Big)
  + \Big(\frac{v - u}{2}, -A\frac{v-u}{2}\Big)
  \in \sV + I \sV.\]

  The domain of the modular operator $T_\sV$ is
  $\sV + I \sV = \cD(A) \oplus \cD(A^{-1})$.
  On this domain the prescription
  \[ T(v,w) := (A^{-1}w, A v) \]
  defines an $I$-antilinear involution with $\Fix(T) = \Gamma(A) = \sV.$ 
  This implies that $T = T_\sV$ is the Tomita operator of the standard
  subspace~$\sV$.

  It is easy to see that the adjoint operator is given by
  \[ T^*(v,w) = (A^*w, (A^{-1})^*v) \quad \mbox{ with domain\ }\quad
  \cR(A^*) \oplus \cD(A^*).\]
  We thus obtain
  \[ \Delta_\sV(v,w) = (T^*T)(v,w) = T^*(A^{-1}w,Av)
    = (A^*Av, (A^{-1})^*A^{-1}w),\]
  and therefore $\Delta_\sV = |A|^2 \oplus |A^{-1}|^2$.
  Finally, we obtain
  \begin{align*}
 J_\sV(v,w)
&  = T_\sV \Delta_\sV^{-1/2}(v,w)
= T_\sV(|A|^{-1}v, |A^{-1}|^{-1}w) \\
    &= (A^{-1}|A^{-1}|^{-1}w, A|A|^{-1}v) = (U^{-1}w,Uv).
 \end{align*}

\nin (e) is a special case of (d). 
\end{proof}

 \begin{examplekh} 
Let $\sV \subeq \cH$ be a standard subspace.
  Then $\sV + i \sV = \cD(\Delta^{1/2})$ and the embedding
  \[ \sV_\C  \to \Gamma(\Delta^{1/2}), \quad
  (v,w) \mapsto (v+iw, \Delta^{1/2}(v+iw)) \]
  identifies $\sV_\C$ with a standard subspace of
  $\cH^{\oplus 2}$, endowed with the complex structure
  $I(v,w) = (iv,-iw)$. Its modular operator takes the
  form
  \[ \Delta_{\sV_\C} = \Delta_\sV \oplus \Delta_{\sV}^{-1}.\] 
\end{examplekh}

\begin{examplekh} \label{ex:5.12} 
  (Standard subspaces for the translation representation)
  We consider $\cH = L^2(\R)$, $\beta > 0$, and
  the standard subspace $\sV \subeq L^2(\R)$, specified by
  \[ Jf = \oline f \quad \mbox{ and }  \quad
    (\Delta^{-it/2\beta} f)(x) = f(x + t), \quad x,t \in \R.\]
  Then $\cD(\Delta^{1/2})$ consists of the space of boundary values of
  elements of the Hardy space
\[ H^2(\cS_\beta) := \Big\{ F \in \cO(\cS_\beta)
  \: \sup_{0 < y < \beta} \|F(\cdot + i y)\|_2 < \infty \Big\} \]
(cf.\ \cite[Prop~5.1]{Go69}). 
For $f \in \cD(\Delta^{1/2})$ we then have
(almost everywhere in the sense of $L^2$-functions) 
\[ (\Delta^{1/2}f)(x) = f(x + i \beta)  \]
(the upper boundary values on $\R + i \beta$), 
    so that $f$ is fixed by $J \Delta^{1/2}$ if and only if
    $f^\sharp = f$, where
    \[ f^\sharp(x) := \oline{f(x + i \beta )}
      \quad \mbox{ for }  \quad x \in \R.\]
    This shows that 
  \begin{equation}
    \label{eq:5.12}
    \sV = \{ f \in \cD(\Delta^{1/2}) \: f^\sharp = f\}. 
  \end{equation}
Endowed with the graph topology, we have
$\cD(\Delta^{1/2}) \cong \Gamma(\Delta^{1/2})$, and this further leads to 
\[ \Gamma(\Delta^{1/2}) \cong H^2(\cS_\beta) \subeq L^2(\R)^{\oplus 2},\]
where we identify $H^2(\cS_\beta)$
via the boundary value map $F \mapsto (F\res_{\R}, F\res_{\R + i \beta})$
 with a closed subspace of $L^2(\R)^{\oplus 2}$.

In this picture, the Tomita involution $T_\sV$ corresponds to the
involution on $H^2(\cS_\beta)$, given by
\begin{equation}
  \label{eq:tomita-hardy}
 f^\sharp(z) = \oline{f(\beta i + \oline z)}
 \quad \mbox{ for } \quad z \in \cS_\beta, 
\end{equation}
and the lower boundary value map thus induces an isometry
\begin{equation}
  \label{eq:h2sharp}
 H^2(\cS_\beta)^\sharp := \{ f \in H^2(\cS_\beta) \: f^\sharp = f\}
 \to \sV, \quad  f \mapsto f\res_\R
\end{equation}
 (cf.\ \cite[Ex.~3.16]{NO17}).
 On the pairs $(f_1, f_2) = (f, \Delta^{1/2}f)
 \in \Gamma(\Delta^{1/2}) \subeq L^2(\R)^{\oplus 2}$
 of boundary values of elements of $H^2(\cS_\beta)$, the
 involution $\sharp$ then takes the form 
    \[ (f_1, f_2)^\sharp = (\oline{f_2}, \oline{f_1}).\] 
  \end{examplekh}

\subsection{Modular theory and the Tomita--Takesaki Theorem}

The correspondence between modular objects and standard subspaces 
is the core of the modular theory of operator algebras.
It is the key structure in the Tomita--Takesaki Theorem discussed below.

\begin{definition} For a subset $S \subeq B(\cH)$, we write
\[ S' := \{ A \in B(\cH) \: (\forall M \in S)\,
    AM = MA \}\]
  for its {\it commutant}. It is a closed subalgebra
  and $*$-invariant if $S$ has this property.

  A {\it von Neumann algebra} is a $*$-invariant complex subalgebra
  $\cM \subeq B(\cH)$ satisfying $\cM = \cM''$.
For a von Neumann algebra $\cM$, a unit vector $\Omega \in \cH$ is called 
  \begin{itemize}
  \item {\it cyclic,} if $\cM \Omega$ is dense in $\cH$. 
  \item {\it separating,} if the orbit map $\cM \to \cH, M \mapsto M\Omega$
    is injective,
  \item {\it standard}, if it is cyclic and separating. 
  \end{itemize}
\index{vector!cyclic \scheiding} 
\index{vector!separating \scheiding} 
\index{vector!standard \scheiding} 
\end{definition}

\begin{lemma} \label{lem:1.1} 
  $\Omega \in \cH$ is cyclic for $\cM$ if and only if it is
  separating for $\cM'$. 
\end{lemma}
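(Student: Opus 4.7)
The plan is to prove both directions by exploiting the interplay between the commutant condition and the orbit $\cM\Omega$, using that any von Neumann algebra contains $\bone$ (since $\bone \in \cM''=\cM$).

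For the forward implication, assume $\Omega$ is cyclic for $\cM$, and take $M' \in \cM'$ with $M'\Omega = 0$. For every $M \in \cM$, I compute $M' M \Omega = M M'\Omega = 0$, so $M'$ vanishes on the dense subspace $\cM\Omega$. By continuity of the bounded operator $M'$, it follows that $M' = 0$, which is exactly the separating property for $\cM'$.

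For the converse, assume $\Omega$ separates $\cM'$ and let $P$ be the orthogonal projection onto the closed subspace $\oline{\cM\Omega}$. The key observation is that $\cM\Omega$ is invariant under left multiplication by any $M \in \cM$, hence so is its closure, and therefore $P \in \cM'$. Moreover, since $\bone \in \cM$, we have $\Omega = \bone\cdot\Omega \in \cM\Omega$, so $P\Omega = \Omega$, which gives $(\bone - P)\Omega = 0$. Since $\bone - P \in \cM'$ and $\Omega$ is separating for $\cM'$, this forces $\bone - P = 0$, i.e., $\oline{\cM\Omega} = \cH$, proving cyclicity.

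Neither direction presents a real obstacle: the only subtlety is knowing that a von Neumann algebra is unital (needed so that $\Omega \in \cM\Omega$), and that the projection onto an $\cM$-invariant closed subspace belongs to $\cM'$. Both are immediate from the definition $\cM = \cM''$ and elementary properties of projections.
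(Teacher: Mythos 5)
Your proof is correct and follows the same route as the paper's: the forward direction uses that $\cM'$ commutes with $\cM$ to show a vanishing $M'$ kills the dense orbit $\cM\Omega$, and the converse uses the orthogonal projection $P$ onto $\oline{\cM\Omega}$, which lies in $\cM'$ and satisfies $(\bone-P)\Omega=0$. You add the useful (and correct) detail that $\bone \in \cM$ is needed to get $\Omega \in \cM\Omega$, and that $P\in\cM'$ relies on $\cM$ being $*$-invariant so that an $\cM$-invariant closed subspace reduces $\cM$.
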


\begin{proof} Suppose first that $\Omega$ is cyclic for $\cM$.
  For $A \in \cM'$ with $A\Omega = 0$, we then obtain
  $A\cM\Omega = \cM A \Omega = \{0\}$, and since $\cM\Omega$ is
  dense in $\cH$, it follows that $A = 0$. So $\Omega$ is
  separating for $\cM'$.

  Suppose, conversely, that $\Omega$ is separating for $\cM'$.
  Let $P \: \cH \to \cH$ be the orthogonal projection
  onto $\oline{\cM\Omega}$. Then $P \in \cM'$ and
  $(\bone - P)\Omega = 0$ imply $\bone = P$, so that $\Omega$ is cyclic for $\cM$.
\end{proof}

\begin{definition} \label{def:vmomega}
For $\Omega \in \cH$ and $\cM \subeq B(\cH)$, we consider the closed
real subspace
\begin{equation}
  \label{eq:vmomegadisp}
 \sV := \sV_{\cM,\Omega} := \oline{\cM_h \Omega}
 \subeq \cH,
\end{equation}
where $\cM_h := \{ M \in \cM \: M^* = M\}$ is the real subspace of
hermitian elements in~$\cM$.
\end{definition}

\begin{lemma} \label{lem:1.2} The following assertions hold
  for a von Neumann algebra $\cM \subeq B(\cH)$ and
a unit vector  $\Omega \in \cH$. 
  \begin{description}
  \item[\rm(a)]   $\sV_{\cM,\Omega}$ is cyclic if and only if
    $\Omega$ is cyclic for $\cM$. 
  \item[\rm(b)] $\sV_{\cM,\Omega}$ is separating if and only if
    $\Omega$ is separating for the restriction of
    $\cM$ to  the cyclic subspace $\oline{\cM\Omega}$, i.e., for $A \in \cM$
    the relation 
    $A \Omega = 0$ implies $A\cM\Omega = \{0\}$. 
  \item[\rm(c)] $\sV_{\cM,\Omega}$ is standard if and only if $\Omega$ is
    a standard vector for~$\cM$.
  \end{description}
\end{lemma}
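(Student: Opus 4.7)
The proof rests on the Cartesian decomposition $\cM = \cM_h + i\cM_h$ (every $A \in \cM$ writes as $H + iK$ with $H = \tfrac{A+A^*}{2}$ and $K = \tfrac{A-A^*}{2i}$ in $\cM_h$) together with an interchange between separating/cyclic properties of $\sV$ in $\cH$ and the corresponding properties of $\Omega$ for the reduction of $\cM$ to $\cK := \oline{\cM\Omega}$. Part (a) is immediate: the decomposition gives $\cM\Omega = \cM_h\Omega + i\cM_h\Omega$, and passing to closures in $\cH$ yields $\oline{\sV + i\sV} = \oline{\cM\Omega}$, so density of either side is equivalent.

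For $(\Rightarrow)$ of (b), take $A \in \cM$ with $A\Omega = 0$ and split $A = H + iK$ in $\cM_h$. Then $H\Omega = -iK\Omega$ belongs to $\sV \cap i\sV = \{0\}$, so $H\Omega = K\Omega = 0$, reducing us to the hermitian case $H\Omega = 0$. The crucial trick is that, for any $B \in \cM$, the combinations $HB + B^*H$ and $i(HB - B^*H)$ both lie in $\cM_h$; when applied to $\Omega$ the $B^*H\Omega$-terms vanish, yielding $HB\Omega \in \sV$ and $iHB\Omega \in \sV$ simultaneously. Hence $HB\Omega \in \sV \cap i\sV = \{0\}$ for every $B$, so $H|_\cK = 0$ by density of $\cM\Omega$ in $\cK$, whence $A|_\cK = 0$.

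For $(\Leftarrow)$ of (b), I would pass to the reduced von Neumann algebra $\cM_P := \cM P \subseteq B(\cK)$, where $P \in \cM'$ is the orthogonal projection onto $\cK$. The hypothesis makes $\Omega$ both cyclic (by (a)) and separating for $\cM_P$ on $\cK$: if $A|_\cK = 0$ then $A^*|_\cK = 0$, because $A\cK = 0$ forces $A^*\cH \subseteq \cK^\perp$, while $\cM$-invariance of $\cK$ gives $A^*\cK \subseteq \cK$, so $A^*\cK \subseteq \cK \cap \cK^\perp = \{0\}$. Consequently the anti-linear Tomita map $S_0\colon A\Omega \mapsto A^*\Omega$ is well-defined on $\cM_P\Omega$; its closability will follow from the standard calculation showing that the anti-linear adjoint $S_0^*$ extends the analogously defined involution on $\cM_P'\Omega$, which is dense in $\cK$ by Lemma~\ref{lem:1.1} applied on $\cK$. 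Given $v \in \sV \cap i\sV$, I write $v = \lim H_n\Omega$ and $iv = \lim K_n\Omega$ with $H_n, K_n \in \cM_h$; the sequence $A_n := H_n - iK_n$ then satisfies $A_n\Omega \to 0$ while $A_n^*\Omega = (H_n + iK_n)\Omega = 2H_n\Omega - A_n\Omega \to 2v$, and closability of $S_0$ forces $v = 0$. Part (c) is the conjunction of (a) and (b). The main obstacle throughout is the closability of $S_0$: this is the heart of Tomita--Takesaki theory and requires the reduction to $\cK$ so that the symmetry of Lemma~\ref{lem:1.1} between a von Neumann algebra and its commutant becomes available.
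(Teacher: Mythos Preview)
Your argument is essentially correct, with one computational slip and one route that is more circuitous than necessary.

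\textbf{The slip.} In the $(\Leftarrow)$ direction of (b), with $H_n\Omega\to v$ and $K_n\Omega\to iv$ and $A_n=H_n-iK_n$, you get $A_n\Omega\to v-i(iv)=2v$ and $A_n^*\Omega\to v+i(iv)=0$, not the other way round. The conclusion survives: $(A_n\Omega,S_0 A_n\Omega)\to(2v,0)$ lies in the closure of the graph, so $\bar S_0(2v)=0$; since $\bar S_0$ is an antilinear involution (hence injective), $v=0$.

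\textbf{Comparison with the paper.} For (a) and for the implication ``$\sV$ separating $\Rightarrow$ $\Omega$ separating on $\cK$'' you do exactly what the paper does, only splitting $A$ into hermitian parts first; the paper runs the same $A^*B+B^*A\in\cM_h$ trick directly on $A$ and observes that $A^*\cM\Omega$ is a complex subspace of $\sV$, hence zero.

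For the converse implication in (b) the paper takes a shorter path. Instead of invoking closability of $S_0$, it uses the elementary identity $\langle A\Omega,B\Omega\rangle\in\R$ for $A\in(\cM_\cK)_h$, $B\in(\cM_\cK')_h$, which gives $\sV\cap i\sV\subseteq \sV_{\cM_\cK',\Omega}^\bot\cap\cK=(\cM_\cK'\Omega)^\bot\cap\cK=\{0\}$, the last step by Lemma~\ref{lem:1.1}. Your closability argument is correct, but note that the verification $S_0^*\supseteq F_0$ \emph{is} precisely this same pairing computation; you are packaging the identical ingredient through an extra layer. Also, closability of $S_0$ is not ``the heart of Tomita--Takesaki theory''---that title belongs to $J\cM J=\cM'$; closability is the easy density statement you sketched.
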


Note that $\sV_{\cM, \Omega}$ being separating only contains information
on the representation of $\cM$ on the cyclic subspace
$\cK := \oline{\cM\Omega} \subeq \cH$, but not on the representation
of $\cM$ on $\cK^\bot$. If $\cH = \C^2$,
$\cM \cong \C^2$ is the subalgebra of diagonal operators, 
and $\Omega = \be_1$, then $\sV_{\cM, \Omega} = \R \be_1$ 
is separating, but $\Omega$ is {\bf not} separating for $\cM$.
This subtlety does not play a role for (c) because we
also assume cyclicity. 

\begin{proof} (a) follows immediately from the definitions.

  \nin (b) Let $\cK := \oline{\cM\Omega}$.
  Suppose first that $\Omega$ is separating for the 
  von Neumann algebra $\cM_\cK := (\cM\res_\cK)''$.
    Then $\Omega$ is  cyclic for the commutant $\cM_\cK'
  \subeq B(\cK)$ of $\cM\res_{\cK}$ (Lemma~\ref{lem:1.1}).
  We have for $A \in \cM_{\cK,h}$ and $B \in \cM'_{\cK,h}$  the relation
  \[  \la A\Omega, B \Omega \ra
    = \la \Omega, AB \Omega \ra
    = \la \Omega, BA \Omega \ra
    = \la B\Omega, A \Omega \ra, \]
  so that
  \[ \la \sV_{\cM,\Omega}, \sV_{\cM_\cK', \Omega}\ra \subeq \R.\] 
  We conclude that
  \[ \sV_{\cM, \Omega} \cap i \sV_{\cM, \Omega}
    \subeq \sV_{\cM_\cK',\Omega}^\bot \cap \cK
    = (\cM_\cK'\Omega)^\bot \cap \cK = \{0\},\]
  i.e., $\sV_{\cM,\Omega}$ is separating. 
  
  Now we assume, conversely, that $\sV_{\cM, \Omega}$ is separating.
We show that $\Omega$ is separating for $\cM_\cK$.
  So let $A \in \cM_\cK$ with $A \Omega= 0$.
    For $B \in \cM_\cK$, we then have
  \[ A^*B\Omega = (A^*B + B^*A)\Omega \in \sV_{\cM,\Omega}, \]
  so that $A^*\cM_\cK\Omega \subeq \sV_{\cM, \Omega}$
  is a complex linear subspace,
  hence trivial because $\sV_{\cM,\Omega}$ is separating.
  Thus $A^*\cK = \{0\}$, and this implies that  $A = 0$.
  Therefore $\Omega$ is separating for $\cM_\cK$. 

\nin (c) follows from (a) and (b).   
\end{proof}

\begin{remark} \label{rem:1.10}
  (a) Cyclic vectors play an important role in representation
  theory because every $*$-representation on a Hilbert space
  is a direct sum of cyclic representations. Moreover, representations 
  with cyclic unit vector $\Omega$
  can be reconstructed completely from the corresponding state
  \[ \omega \: \cM \to \C, \quad \omega(M) := \la \Omega, M \Omega\ra.\]
  The map $\iota \: \cH \to \cM^*, \iota(v)(M) := \la \Omega, Mv \ra$
  is injective and intertwines the representation on $\cH$ with the
  right translation representation on $\cM^*$. The Hilbert space structure
  on $\iota(\cH)$, for which $\iota$ is isometric, is given by
  \[ \la \iota(M\Omega), \iota(N\Omega) \ra =  \omega(M^*N), \]
  exhibiting $\iota(\cH)$ as a reproducing kernel Hilbert space of linear
  functionals $f$, satisfying 
  \[ f(M) = \la M^*\Omega, f \ra \quad \mbox{ for } \quad
    M \in \cM, f \in \iota(\cH)\]
  (cf.~\cite[Ch.~I]{Ne99}). 

  \nin (b) If $\Omega$ is standard, then the orbit map
  $\pi^\Omega \: \cM \to \cH, M \mapsto M\Omega$ is a dense linear
  embedding of $\cM$ into $\cH$, so that we may consider $\cH$ as the
  completion of $\cM$ with respect to the scalar product
  $\la M, N \ra := \omega(M^*N)$.

  That $\Omega$ is separating corresponds to the property of the state
  $\omega$ that $\omega(M^*M) = 0$ implies $M = 0$
  ($\omega$ is {\it faithful}). One can show that all 
  normal  faithful states on a von Neumann algebra
  (cf.\ Appendix~\ref{app:wstar}) lead to equivalent
  GNS representations, called the {\it standard form representation}
  (\cite[Thm.~III.2.6.7]{Bla06}). For more details on these issues,
  see also the discussion of {\it symmetric form representations}
  in \cite{NO17}, \cite{Bla06}, \cite[\S 3.1]{BGN20} 
  and Remark~\ref{rem:weights-gns}. 
\end{remark}

\begin{theorem}  \label{thm:tom-tak} {\rm(Tomita--Takesaki Theorem; 
Tomita, 1967; Takesaki, 1970)}
Let $\cM \subeq B(\cH)$ be a von Neumann algebra and 
$\Omega \in\cH$ be a standard vector for $\cM$. 
%Write $\cM_h := \{ M \in \cM \: M^* = M\}$ for the real subspace of hermitian 
%elements in $\cM$. 
Then $\sV := \sV_{\cM,\Omega} := \oline{\cM_h \Omega}$ is a standard subspace. 
The corresponding modular objects $(\Delta, J)$ satisfy
\begin{description}
\item[\rm(a)] $J \cM J = \cM'$ and $\Delta^{it} \cM \Delta^{-it} = \cM$
  for $t \in \R$.
\item[\rm(b)] $J \Omega = \Omega$, $\Delta \Omega = \Omega$ and 
$\Delta^{it}\Omega = \Omega$ for $t \in \R$.
\item[\rm(c)] For $M \in \cZ(\cM) := \cM \cap \cM'$, the
  center of $\cM$, we have 
$JMJ = M^*$ and $\Delta^{it} M \Delta^{-it} = M$ for $t \in \R$. 
\end{description}
\end{theorem}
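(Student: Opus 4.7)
The proof splits naturally into an easy part establishing standardness and the invariance~(b), and a much deeper part~(a), from which~(c) then follows quickly.

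That $\sV := \sV_{\cM,\Omega}$ is standard is immediate from Lemma~\ref{lem:1.2}(c). For~(b), I would first observe that every $A \in \cM$ admits the decomposition $A = A_1 + iA_2$ with $A_j \in \cM_h$, so $A\Omega \in \sV + i\sV$ and
\[
T_\sV(A\Omega) \;=\; A_1\Omega - i A_2 \Omega \;=\; A^*\Omega.
\]
Taking $A = \bone$ yields $T_\sV \Omega = \Omega$. By Lemma~\ref{lem:1.1}, $\Omega$ is also standard for $\cM'$, and the same computation applied to $\cM'$ produces $T_{\sV'}\Omega = \Omega$. Together with $T_\sV = J\Delta^{1/2}$ and $T_{\sV'} = J\Delta^{-1/2}$ (Lemma~\ref{lem:vv'}(e)), this forces $\Delta^{1/2}\Omega = \Delta^{-1/2}\Omega$. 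Applying $\Delta^{1/2}$ to both sides yields $\Delta\Omega = \Omega$, hence $\Delta^{1/2}\Omega = \Omega$, and therefore $J\Omega = T_\sV \Omega = \Omega$. Spectral calculus then gives $\Delta^{it}\Omega = \Omega$ for all $t \in \R$.

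The main obstacle is~(a), asserting that $J$ interchanges $\cM$ and $\cM'$ and that $\Delta^{it}$ normalizes $\cM$. My plan is the classical analytic-continuation approach of Tomita and Takesaki. For $A \in \cM$ and $B \in \cM'$, one studies matrix coefficients $t \mapsto \la B^*\Omega, \Delta^{it} A \Omega\ra$. Since $\cM\Omega \subeq \cD(\Delta^{1/2})$ and, by the symmetric argument above, $\cM'\Omega \subeq \cD(\Delta^{-1/2})$, these pairings admit analytic continuations to a strip, with the key boundary identity
\[
\la B^*\Omega, \Delta^{1/2} A\Omega\ra \;=\; \la B^*\Omega, J A^*\Omega\ra
\]
encoding the modular condition in a form amenable to manipulation. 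A Phragm\'en--Lindel\"of-type argument, carried out after replacing $A$ by bounded/entire substitutes of the form $\int_\R e^{-t^2}\Delta^{it}A\Delta^{-it}\,dt$ to secure regularity in the strip, yields that $\Delta^{it}A\Delta^{-it}$ commutes with every $B \in \cM'$, so $\Delta^{it}A\Delta^{-it} \in \cM'' = \cM$; analysis at the boundary $\Im z = 1/2$ delivers $JAJ \in \cM'$. Symmetry between $(\cM,\Omega)$ and $(\cM',\Omega)$ --- which has the same modular conjugation and whose modular operator is $\Delta^{-1}$ --- then upgrades these inclusions to equalities. The true technical heart lies in constructing and controlling these analytic extensions.

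Granting~(a), part~(c) is short. For $M \in \cZ(\cM) = \cM \cap \cM'$, the identity $T_\sV(N\Omega) = N^*\Omega$ applied within both $\cM$ and $\cM'$ gives $T_\sV(M\Omega) = M^*\Omega = T_{\sV'}(M\Omega)$, so $M\Omega \in \sV \cap \sV'$. For any $v \in \sV \cap \sV'$, the equations $J\Delta^{1/2}v = v = J\Delta^{-1/2}v$ force $\Delta v = v$ and $Jv = v$; applied to the self-adjoint real and imaginary parts $M_j \in \cZ(\cM)_h$ of $M$ and recombined, this yields $\Delta^{it}M\Omega = M\Omega$ and $JM\Omega = M^*\Omega$. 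By~(a), $\Delta^{it}M\Delta^{-it} \in \cM$ agrees with $M \in \cM$ on the separating vector $\Omega$, so $\Delta^{it}M\Delta^{-it} = M$; likewise $JMJ \in \cM'$ agrees with $M^* \in \cM'$ on $\Omega$, which is separating for $\cM'$ by Lemma~\ref{lem:1.1}, so $JMJ = M^*$.
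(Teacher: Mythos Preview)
The paper's own proof is intentionally minimal: it establishes standardness via Lemma~\ref{lem:1.2}(c) and then refers to \cite[Thm.~2.5.14]{BR87} for everything else, noting only that the verification of~(a) is the main work. Your proposal goes considerably further, sketching the classical analytic-continuation approach that \cite{BR87} (and the original Tomita--Takesaki arguments) follow, and supplying explicit derivations of~(b) and~(c). In that sense your outline is compatible with, but more detailed than, what the paper actually does.

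One small circularity in your argument for~(b): when you write ``the same computation applied to $\cM'$ produces $T_{\sV'}\Omega = \Omega$'', that computation actually yields $T_{\sV_{\cM',\Omega}}\Omega = \Omega$, and identifying $\sV_{\cM',\Omega}$ with the symplectic complement $\sV'$ is Lemma~\ref{lem:2.10}, whose proof \emph{uses} part~(a) of the present theorem. The fix is easy and avoids $\cM'$ altogether: since $\Omega = \bone\Omega \in \sV$ and $\la A\Omega, \Omega\ra \in \R$ for every $A \in \cM_h$, one has $\Omega \in \sV \cap \sV'$ directly, whence $T_\sV\Omega = \Omega$ and $T_{\sV'}\Omega = \Omega$ both hold, and your argument proceeds as written. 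Your derivation of~(c) from~(a) and~(b) is clean and correct.
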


It follows in particular, that
\begin{equation}
  \label{eq:alphat}
  \alpha_t(A) := \Delta^{it} A \Delta^{-it}
\end{equation}
defines a one-parameter group of automorphisms of $\cM$, called the
{\it modular automorphism group associated to $\Omega$}.
\index{modular automorphism group \scheiding}

\begin{proof} By Lemma~\ref{lem:1.2}(c),
$\sV$ is a standard subspace. We refer to 
\cite[Thm.~2.5.14]{BR87} for the other assertions,
whose proof is rather involved. The standard subspace
$\sV$ already provides~$\Delta$ and $J$. The main work consists in the
verification of (a).
\end{proof}

An approach to the Tomita--Takesaki Theorem
through bounded operators can be found in \cite{RvD77}.
For a rather  general approach to modular operators for
pairs of subspaces of real Hilbert spaces, we refer to \cite{NZ24}.

The passage to the commutant of an algebra translates easily
into the symplectic orthogonal space $\sV'$ (cf.~Definition~\ref{def:1.1}).  

\begin{lemma} \label{lem:2.10}  For a standard vector $\Omega$ of $\cM$, we have $(\sV_{\cM,\Omega})' = \sV_{\cM',\Omega}$.
\end{lemma}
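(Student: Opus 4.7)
The plan is to combine the Tomita--Takesaki Theorem (Theorem~\ref{thm:tom-tak}) with the general symplectic duality identity $J_\sV \sV = \sV'$ from Lemma~\ref{lem:vv'}(f).

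First I would note that by Lemma~\ref{lem:1.2}(c) the subspace $\sV := \sV_{\cM,\Omega}$ is standard, so it has a well-defined modular conjugation $J$, and by Theorem~\ref{thm:tom-tak} this same $J$ satisfies $J\cM J = \cM'$ and $J\Omega = \Omega$. The key algebraic observation is that, since $(JAJ)^* = JA^*J$, the antilinear involution $A \mapsto JAJ$ sends hermitian elements of $\cM$ bijectively to hermitian elements of $\cM'$, i.e., $J\cM_h J = \cM'_h$. Applying this at $\Omega$ and using $J\Omega = \Omega$ gives
\[
 J\,\cM_h\Omega \;=\; J\cM_h J\,\Omega \;=\; \cM'_h\,\Omega.
\]

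Next, since $J$ is an antilinear isometry, it commutes with closure, so
\[
 J\sV_{\cM,\Omega} \;=\; J\,\overline{\cM_h\Omega} \;=\; \overline{J\cM_h\Omega}
 \;=\; \overline{\cM'_h\Omega} \;=\; \sV_{\cM',\Omega}.
\]
Finally, Lemma~\ref{lem:vv'}(f) applied to the standard subspace $\sV_{\cM,\Omega}$ yields $J\sV_{\cM,\Omega} = (\sV_{\cM,\Omega})'$, and comparing the two identities gives the claim.

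The only nontrivial input is the Tomita--Takesaki Theorem itself; given (a) and (b) of that theorem, everything else is formal manipulation. One could alternatively prove the easier inclusion $\sV_{\cM',\Omega} \subseteq (\sV_{\cM,\Omega})'$ elementarily (for $A \in \cM_h$, $B \in \cM'_h$, the pairing $\langle A\Omega, B\Omega\rangle = \langle \Omega, AB\Omega\rangle = \langle\Omega, BA\Omega\rangle$ is real), but the reverse inclusion genuinely requires the Tomita--Takesaki identity $J\cM J = \cM'$, so there is no real shortcut around that theorem.
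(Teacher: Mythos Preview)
Your proof is correct and follows essentially the same route as the paper: both combine Lemma~\ref{lem:vv'}(f) (giving $J\sV = \sV'$) with the Tomita--Takesaki identities $J\cM J = \cM'$ and $J\Omega = \Omega$ to compute $J\,\overline{\cM_h\Omega} = \overline{\cM'_h\Omega}$. The paper's version is just a one-line display of the same chain of equalities, without the extra commentary.
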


\begin{proof} Let $J = J_{\cM,\Omega}$ and $\sV := \sV_{\cM,\Omega}$.
In view of  $J\Omega = \Omega$ and $J \cM J = \cM'$
  (Theorem~\ref{thm:tom-tak}), 
Lemma~\ref{lem:vv'}(f) yields
  \[ \sV'\ \ {\buildrel {\ref{lem:vv'}(f)}\over =}\ J \sV = J \oline{\cM_h \Omega}
    = \oline{ \cM'_h J\Omega} 
    = \oline{ \cM'_h \Omega}  = \sV_{\cM',\Omega}, 
  \]
and this implies the assertion. 
\end{proof}

\begin{examples} \label{ex:3.2} 
(a) Let $\cH = L^2(X,\fS,\mu)$ for a $\sigma$-finite measure space 
$(X,\fS,\mu)$ and $\cM = L^\infty(X,\fS,\mu)$, 
acting on $\cH$ by multiplication operators. 
Then the normal states of $\cM$ (cf.\ Appendix~\ref{app:wstar}) are of the form 
\[ \omega_h(f) = \int_X fh\, d\mu, \]
where $0 \leq h$ satisfies 
$\int_X h\, d\mu = 1$. Such a state is faithful if and only if 
$h\not=0$ holds $\mu$-almost everywhere. Then $\Omega := \sqrt{h} \in \cH$ 
is a corresponding standard unit vector. Let
$\sV = \sV_{\cM,\Omega}$ be the corresponding standard subspace.
As it consists of real-valued functions,  we obtain 
$T_\sV(f) = \oline f$, which is isometric and therefore 
$T_\sV = J$ and $\Delta_\sV = \bone$. 

\nin (b) Let $\cH = B_2(\cK)$ be the space of Hilbert--Schmidt operators on the complex 
separable Hilbert space $\cK$ and consider the von Neumann algebra 
$\cM = B(\cK)$, acting on $\cH$ by left multiplications. 
Then $\cM' \cong B(\cK)^{\rm op}$, the opposite algebra, 
acting by right multiplications. 
Normal states of $\cM$ are of the form 
\[ \omega_S(A) = \tr(AS), \quad \mbox{ where } \quad 0 \leq S
  \quad \mbox{ with } \quad \tr S = 1.\] 
Such a state is faithful if and only if 
$\ker S = \{0\}$ (which requires $\cK$ to be separable), 
and then $\Omega := \sqrt{S} \in \cH$ 
is a cyclic separating unit vector. 
Then $T_\sV(M\Omega) = M^*\Omega = (\Omega M)^*$ implies that 
\[ JA = A^* \quad \mbox{ and } \quad 
\Delta(A) = \Omega^{2} A \Omega^{-2}= S A S^{-1} 
\quad \mbox{ for } \quad A \in B_2(\cK).\] 

\nin (c) The prototypical pair $(\Delta, J)$ of a modular operator 
and a modular conjugation arises from the regular representation 
of a locally compact group $G$ on the Hilbert space $\cH = L^2(G, \mu_G)$ 
with respect to 
a left Haar measure~$\mu_G$. 
Here the modular operator is given by the multiplication 
\[ \Delta f = \Delta_G \cdot f,\] 
where $\Delta_G \: G \to \R^\times_+$ is the modular function of $G$
(cf.\ \eqref{eq:modfunc}),  
and the modular conjugation is given by 
\[ (Jf)(g) = \Delta_G(g)^{-\frac{1}{2}}  \oline{f(g^{-1})}.\] 
Accordingly, we have for $T = J \Delta^{1/2}$: 
\[ (Tf)(g) = \Delta_G(g)^{-1} \oline{f(g^{-1})} = f^*(g).\] 

The corresponding von Neumann algebra is the algebra
$\cM \subeq B(L^2(G,\mu_G))$, generated by the left regular representation. 
If $M_f h =f * h$ is the left convolution with $f \in C_c(G)$, then the value of 
the corresponding normal weight $\omega$ on $\cM$ (Remark~\ref{rem:weights-gns})
is given by 
$\omega(M_f) = f(e),$ so that $\omega$ corresponds to evaluation in~$e$, 
which is defined on the weakly dense subalgebra of~$\cM$
coming from the representation of the convolution algebra $(C_c(G),*)$.
\end{examples} 

\begin{remark} \label{rem:dirint} 
  Theorem~\ref{thm:tom-tak}(c) asserts that the modular group
  and $J$ commute with all central projections, and this entails
  that the whole structure adapts to the canonical central disintegration
  \[ \cM = \int_X^\oplus\, \cM_x\, d\mu(x) \]
  of $\cM$,   for which $\cZ(\cM) = L^\infty_{\rm loc}(X,\fS,\mu)$
  are the scalar decomposable operators on a locally finite measure space
  $(X,\fS,\mu)$, 
  and almost every von Neumann algebra $\cM_x$ is a factor, i.e.,
  $\cZ(\cM_x) = \C \bone$ 
(cf.\ Examples~\ref{exs:vN-exs}(b) and \cite[\S 5.4]{MN24} for more details).

  So the modular groups are ``direct integrals'' of modular groups
  of factors. For factors, the modular operators and their spectra 
  are a key tool in Connes'  classification of factors and in the characterization 
  of von Neumann algebras in terms of
  their natural cones by A.~Connes \cite{Co73, Co74}
  (see also \cite[\S 4.4]{NO17} and \cite{BR87}).  
\end{remark}

\subsection{Pairs of von Neumann algebras} 

\begin{proposition} \label{prop:vNincl} {\rm(\cite[Prop.~3.24]{Lo08})}
  Let $\cM \subeq B(\cH)$ be a von Neumann algebra with standard vector~$\Omega$.
  \begin{description}
  \item[\rm(a)] If $\cN_1, \cN_2 \subeq \cM$ are von Neumann algebras with
    $\sV_{\cN_1, \Omega} \subeq \sV_{\cN_2, \Omega}$, then $\cN_1 \subeq \cN_2$. 
  \item[\rm(b)] If $\cN$ is a von Neumann algebra commuting with $\cM$
    and $\sV_{\cN,\Omega} = \sV_{\cM,\Omega}'$, then $\cN = \cM'$. 
  \end{description}
\end{proposition}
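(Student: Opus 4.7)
The plan is to prove both parts by reducing to a comparison of commutants and then exploiting the Tomita--Takesaki data of the ambient pair $(\cM,\Omega)$. A preliminary observation: since $\Omega$ is separating for $\cM \supseteq \cN_i$, it is separating for each $\cN_i$; by Lemma~\ref{lem:1.2}(b) each $\sV_{\cN_i,\Omega}$ is separating in $\cH$, though only cyclic in $\sK_i := \oline{\cN_i\Omega}$. For part (a), I will show $\cN_2' \subseteq \cN_1'$, which is equivalent to $\cN_1 \subseteq \cN_2$ by the double commutant theorem. The first step is a direct computation: for any von Neumann algebra $\cN$ with $\Omega$ separating, and $B \in \cN_h$, $A \in \cN'_h$, one has
\[
\la B\Omega, A\Omega \ra = \la \Omega, BA\Omega \ra = \la \Omega, AB\Omega \ra = \oline{\la B\Omega, A\Omega \ra} \in \R,
\]
which shows $\sV_{\cN',\Omega} \subseteq \sV_{\cN,\Omega}'$. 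Combined with the order-reversing symplectic orthogonal and the hypothesis $\sV_{\cN_1,\Omega} \subseteq \sV_{\cN_2,\Omega}$, we obtain $\sV_{\cN_2',\Omega} \subseteq \sV_{\cN_2,\Omega}' \subseteq \sV_{\cN_1,\Omega}'$, and hence $\omega([B,A]) = 0$ for all $A \in (\cN_2')_h$ and $B \in (\cN_1)_h$, where $\omega = \la\Omega, \cdot\, \Omega\ra$.

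The second and main step is to upgrade the scalar identity $\omega([B,A])=0$ to the operator identity $[B,A]=0$, which forces $B \in (\cN_2')' = \cN_2$. My approach: by the hypothesis, write $B\Omega = \lim_n C_n\Omega$ with $C_n \in (\cN_2)_h$. For $A \in \cN_2'$, commutation $AC_n = C_nA$ gives $AB\Omega = \lim_n AC_n\Omega = \lim_n C_n A\Omega$. If the $C_n$ can be chosen norm-bounded (say by $\|B\|$), then weak-$*$ compactness of the ball in $(\cN_2)_h$ produces a weak cluster point $B' \in (\cN_2)_h$ with $B'\Omega = B\Omega$, and since $\Omega$ separates $\cM \supseteq \cN_2$, necessarily $B'=B$, so $B \in \cN_2$. \textbf{The main obstacle} is manufacturing the norm-bounded approximation: the hypothesis only provides $\cH$-norm convergence $C_n\Omega \to B\Omega$ with no a priori control of the operator norms $\|C_n\|$. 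I would resolve this via a Kaplansky-type density argument adapted to standard subspaces, leveraging the ambient Tomita--Takesaki data of $(\cM,\Omega)$ --- concretely, one can use $J_{\cM}$ and the Connes natural positive cone to replace the $C_n$ by a norm-bounded sequence that still converges to $B\Omega$ on $\Omega$.

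For part (b), Lemma~\ref{lem:2.10} gives $\sV_{\cM,\Omega}' = \sV_{\cM',\Omega}$, so the hypothesis reduces to $\sV_{\cN,\Omega} = \sV_{\cM',\Omega}$. Since $\Omega$ is standard for $\cM$, it is also standard for $\cM'$ by Lemma~\ref{lem:1.1}, and $\cN \subseteq \cM'$ because $\cN$ commutes with $\cM$. Thus $\cN$ and $\cM'$ are both von Neumann subalgebras of the algebra $\cM'$ (which has $\Omega$ as a standard vector), and they give rise to the same standard subspace. Applying part (a) inside $\cM'$ to the two inclusions $\sV_{\cN,\Omega} \subseteq \sV_{\cM',\Omega}$ and $\sV_{\cM',\Omega} \subseteq \sV_{\cN,\Omega}$ yields $\cN \subseteq \cM'$ and $\cM' \subseteq \cN$, hence $\cN = \cM'$.
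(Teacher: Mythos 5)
Your part (b) matches the paper. For part (a) you correctly pinpoint the obstacle --- the hypothesis only gives $C_n\Omega \to B\Omega$ in $\cH$-norm, with no a priori control of $\|C_n\|$ --- but your proposed fix does not close the gap. A Kaplansky density argument produces norm-bounded strongly convergent nets precisely when one already knows that $B$ lies in the strong closure of $\cN_2$, which is the very conclusion you are trying to reach; invoking it here is circular, and the natural positive cone does not obviously break the circle.

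The paper sidesteps the boundedness issue entirely by passing to resolvents. After extending the convergence $A_n\Omega \to A\Omega$ (in its notation) via $\cM'$-commutativity to strong convergence on the dense subspace $\cM'\Omega$ --- the step you also perform --- it observes that since $A_n$ and $A$ are all bounded selfadjoint operators, $\cM'\Omega$ is a common core, so by \cite[Thm.~VIII.25]{RS73} one gets strong \emph{resolvent} convergence $(i\bone + A_n)^{-1} \to (i\bone + A)^{-1}$. The resolvents are automatically uniformly bounded by $1$, so strong-operator closedness of $\cN_2$ gives $(i\bone + A)^{-1} \in \cN_2$, hence $A \in \cN_2$. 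No uniform bound on the $A_n$ themselves is needed. This is the idea your approach is missing.

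A smaller remark: your preliminary step deriving $\omega([B,A])=0$ from $\sV_{\cN_2',\Omega} \subseteq \sV_{\cN_1,\Omega}'$ does not feed into your second step, which works directly from the approximation $C_n\Omega \to B\Omega$; you could drop it without loss.
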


\begin{proof} (a) Let $A \in \cN_1$ be selfadjoint. As $\cN_{1,h} \Omega
  \subeq \oline{\cN_{2,h}\Omega}$, there exists a sequence of hermitian
  elements $A_n \in \cN_2$ with $A_n\Omega \to A\Omega$.
  Then $A_n A' \Omega \to AA'\Omega$ for every $A' \in \cM'$.
  Thus $A_n \to A$ strongly on the dense subspace $\cM'\Omega$.
  Since the hermitian operators $A_n$ and $A$ are bounded,
  and $\Omega$ is separating, hence cyclic for~$\cM'$, 
the dense subspace $\cM'\Omega$ is a common core for all of them. 
With \cite[Thm.~VIII.25]{RS73} it now follows that $A_n \to A$ 
holds in the strong resolvent sense, i.e., that 
$(i\bone + A_n)^{-1} \to  (i\bone + A)^{-1}$ in the strong operator topology. 
This implies that $(i\bone + A)^{-1} \in \cN_2$, which entails $A \in \cN_2$.

  \nin (b) From $\cN \subeq \cM'$ and $\sV_{\cN,\Omega} = \sV_{\cM,\Omega}'
  = \sV_{\cM',\Omega}$ (Lemma~\ref{lem:2.10}) we derive with (a) that $\cN = \cM'$.   
\end{proof}

\begin{corollary} \label{cor:4.14} 
Let $\cM \subeq B(\cH)$ be a von Neumann algebra and 
$\Omega \in \cH$ separating for $\cM$.
To every von Neumann subalgebra $\cN \subeq \cM$ we associate
the closed real subspace 
$\sV_\cN := \oline{\cN_h \Omega}$. Then $\sV_{\cN_1} = \sV_{\cN_2}$
implies $\cN_1 = \cN_2$ for $\cN_1, \cN_2 \subeq \cM$. 
\end{corollary}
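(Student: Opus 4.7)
The plan is to reduce to Proposition~\ref{prop:vNincl}(a) by cutting $\cH$ down to the cyclic subspace $\cK := \oline{\cM\Omega}$, on which $\Omega$ becomes a standard vector for the reduced algebra. The obstacle motivating this detour is that in the corollary, $\Omega$ is only assumed separating for $\cM$, not cyclic, so Proposition~\ref{prop:vNincl} does not apply to $(\cM,\Omega)$ on all of $\cH$.

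First I would let $P \in B(\cH)$ denote the orthogonal projection onto $\cK$. Since $\cK$ is $\cM$-invariant, $P \in \cM'$, and hence $P \in \cN_j'$ for $j = 1,2$ as well. Reduction by $P$ then produces von Neumann algebras $\cM_P$ on $\cK$ and $(\cN_j)_P \subeq \cM_P$, together with surjective $*$-homomorphisms $\rho \: \cM \to \cM_P$, $M \mapsto MP|_\cK$. The key observation is that $\rho$ is injective on $\cM$: if $\rho(M) = 0$, then $M\Omega = 0$, and the separating property of $\Omega$ for $\cM$ forces $M = 0$.

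Next I would verify that $\Omega$ is a standard vector for $\cM_P$ on $\cK$, since it is cyclic by the very definition of $\cK$ and separating by the injectivity of $\rho$ just established. The injective $*$-homomorphism $\rho$ restricts to a bijection $\cN_{j,h} \to ((\cN_j)_P)_h$: surjectivity onto hermitian parts follows by symmetrization, namely if $B = \rho(N) = B^*$ then $B = \rho(N^*)$, so $B = \rho\bigl((N+N^*)/2\bigr)$. Because $\rho(A)\Omega = A\Omega$ for all $A \in \cN_j$, one has, inside $\cK$,
\[ \sV_{\cN_j} = \oline{\cN_{j,h}\Omega} = \oline{((\cN_j)_P)_h\,\Omega} = \sV_{(\cN_j)_P,\,\Omega}. \]

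Finally, the hypothesis $\sV_{\cN_1} = \sV_{\cN_2}$ translates to $\sV_{(\cN_1)_P,\Omega} = \sV_{(\cN_2)_P,\Omega}$ in $\cK$, and Proposition~\ref{prop:vNincl}(a), applied twice to the standard vector $\Omega$ for $\cM_P$ with the two inclusions, yields $(\cN_1)_P = (\cN_2)_P$. Since $\rho$ is injective on $\cM$ and $\cN_1, \cN_2 \subeq \cM$, this equality lifts back to $\cN_1 = \cN_2$, completing the argument.
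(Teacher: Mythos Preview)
Your proof is correct. The paper gives no explicit proof for this corollary, treating it as immediate from Proposition~\ref{prop:vNincl}(a). The simplest justification, and presumably the one intended, is to notice that the proof of Proposition~\ref{prop:vNincl}(a) never uses cyclicity of $\Omega$ for $\cM$: the only role of the hypothesis on $\Omega$ is that it is separating for $\cM$, hence cyclic for $\cM'$, which makes $\cM'\Omega$ dense in $\cH$. So the argument applies verbatim under the weaker hypothesis of the corollary, and two applications (one for each inclusion) give $\cN_1 = \cN_2$.

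Your route is different: rather than inspecting the proof of the proposition, you reduce to the cyclic subspace $\cK = \oline{\cM\Omega}$, where $\Omega$ becomes standard for $\cM_P$, and then invoke the proposition as a black box. This is slightly more work but has the merit of using only the \emph{statement} of Proposition~\ref{prop:vNincl}(a). The reduction is clean because the restriction map $\rho\colon \cM \to \cM_P$ is injective (by the separating assumption), so equality of the reduced subalgebras lifts back to~$\cM$.
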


Note that the subspace $\sV_{\cN,\Omega}$ is standard if $\Omega$
is also cyclic for~$\cN$.

\subsection{The axioms for nets of local observables}

States of quantum mechanical systems are represented by 
one-dimensional subspaces $\C \Omega$ of a complex
Hilbert space $\cH$  and selfadjoint elements $A \in B(\cH)$
represent observables. The evaluation of an observable in a state
$[\Omega] := \C \Omega$ corresponds
to the evaluation of the corresponding state
\[ \omega(A) =   \frac{\la \Omega, A \Omega \ra}{\la \Omega, \Omega\ra}.\]
For some systems, the observables are restricted to selfadjoint
elements of a proper von Neumann subalgebra $\cM \subeq B(\cH)$.

In Algebraic Quantum Field Theory (AQFT) one
starts with a ``spacetime manifold'' $M$,
which, in the simplest case is Minkowski space $M = \R^{1,d-1}$.
We write its elements as pairs
\[ x = (x_0, \bx) = (x_0, x_1, \ldots, x_{d-1}) \]
and define the Lorentzian form by
\begin{equation}
  \label{eq:beta}
 \beta(x,y) = x_0 y_0 - \bx \by = x_0 y_0 - x_1 y_1 - \cdots
 - x_{d-1} y_{d-1}.
\end{equation}
We call $x \in \R^{1,d-1}$  {\it timelike} if $\beta(x,x) > 0$,
{\it lightlike} if $\beta(x,x) = 0$, and 
{\it spacelike} if $\beta(x,x) < 0$.
\index{vector!timelike \scheiding} 
\index{vector!lightlike\scheiding} 
\index{vector!spacelike \scheiding} 
The convex cone
\[ \jV_+ := \{ x \in \R^{1,d-1} \: x_0 > 0, \beta(x,x) > 0\} \]
is called the {\it positive lightcone}.
\index{lightcone \scheiding} 
Positive timelike vectors are possible tangent vectors
$\gamma'(t)$ to {\it worldlines}
$\gamma \: \R\to M$ of massive particles and positive lightlike vectors
are tangent vectors to light-rays (moving with the speed of light).
{\it Causal curves} are specified by $\gamma'(t) \in \oline{\jV_+}$ for
every~$t$, i.e., they correspond to movements not faster than light. 
\index{causal curve\scheiding} 

\begin{examples} \label{exs:ds-ads} \index{de Sitter space $\dS^d$ \scheiding } 
(a) There are also curved homogeneous spacetimes, such as {\it de Sitter space} 
\[ \dS^d = \{ (x_0, \bx) \in \R^{1,d} \: x_0^2 - \bx^2 = - 1\}.\]
It describes a model of a spherical (positively curved) expanding universe. 
It is a hypersurface in the $(d+1)$-dimensional Minkowski space~$\R^{1,d}$.
For $x\in \dS^d$, the
tangent space $T_x(\dS^d)$ can be identified with the hyperplane
\[ x^{\bot_\beta} = \{ y \in \R^{1,d} \: \beta(x,y) = 0\}.\]
Since $x$ is spacelike, the restriction of $\beta$ as  in
\eqref{eq:beta} to this hyperplane
is Lorentzian, and this specifies a Lorentzian metric on $\dS^d$.
The causal structure on $\dS^d$ is specified by
%invariance under $\SO_{1,d}(\R)_e$ and 
\[ C_x = \oline{\jV_+} \cap T_x(\dS^d).\]

\nin (b) \index{Anti-de Sitter space $\AdS^d$ \scheiding} 
{\it Anti-de Sitter space} is the hypersurface 
\[\AdS^d = \{ (x_1,x_2, \bx) \in \R^{2,d-1} \: x_1^2 + x_2^2 - \bx^2 =  1\} \]
in $\R^{2,d-1}$, endowed with the symmetric bilinear form 
\begin{equation}
  \label{eq:gamma-2-d}
 \gamma(x,y) = x_1 y_1 + x_2 y_2 - \bx \by \quad \mbox{ for } \quad
 x = (x_1, x_2, \bx) \in \R^{2,d-1}.
\end{equation}
Again, for $x \in \AdS^d$,
the tangent space $T_x(\AdS^d)$ can be identified 
with the hyperplane
\[ x^{\bot_\gamma} = \{ y \in \R^{2,d-1} \: \gamma(x,y) = 0\}.\]
Since $\gamma(x,x) = 1$, the restriction of $\gamma$ to this hyperplane 
is Lorentzian, and it is easy to verify that $\AdS^d$ is time-orientable
(there exists a continuous selection of  ``positive'' light cones) 
(cf.~\cite[\S 11]{NO23a}), so that there exists 
a causal structure on $\AdS^d$, for which
\[ C_{\be_1} = \{ (0,x_2, \bx) \: x_2 \geq \sqrt{\bx^2} \}.\] 
One can also argue by the connectedness
of the stabilizer group $(\SO_{2,d-1}(\R)_e)^{\be_1} \cong \SO_{1,d-1}(\R)_e$
to see that it leaves both light cones in $T_{\be_1}(\AdS^d)$ invariant. 
\end{examples}

For a family of von Neumann algebras $\cM(\cO) \subeq B(\cH)$ and a
unitary representation $(U, \cH)$ of a Lie group $G$ on $\cH$,
acting also on $M$, we  consider the following axioms:

\begin{enumerate}
\item[(Iso)] {\bf Isotony:}   
  $\cO_1 \subeq \cO_2$ implies $\cM(\cO_1) \subeq \cM(\cO_2)$.
\item[(RS)] {\bf Reeh--Schlieder property:} There exists a unit vector
  $\Omega\in \cH$ that is cyclic for $\cM(\cO),\cO \not=\eset$.
\item[(Cov)] {\bf Covariance:} $U_g \cM(\cO) U_g^{-1} = \cM(g\cO)$
  for $g \in G$. 
\item[(Vi)] {\bf Invariance of the  vacuum:} $U(g)\Omega = \Omega$ for $g \in G$.  
\item[(BW)] {\bf Bisognano--Wichmann property:}
  There exists a Lie algebra element $h \in \g$ and
  a connected open subset $W \subeq M$ (called a wedge region), such that
  $\Omega$ is cyclic and separating for $\cM(W)$, and the corresponding 
  modular operator $\Delta = \Delta_{\cM(W),\Omega}$ is given by 
  \[ \Delta = e^{2\pi i\cdot \partial U(h)}, \quad \mbox{ i.e., } \quad
    \Delta^{-it/2\pi} = U(\exp th) \quad \mbox{ for } \quad  t \in \R.\]
\item[(Loc)] {\bf Locality:} There exists an open non-empty $G$-invariant
  subset $\cD_{\rm loc} \subeq M \times M$ such that 
  $\cO_1 \times \cO_2 \subeq \cD_{\rm loc}$ implies
  $\cM(\cO_1) \subeq \cM(\cO_2)'$. 
\item[(Add)] {\bf Additivity:}
  The von Neumann algebra
  $\cM(\bigcup_j \cO_j)$ is generated by the algebras $\cM(\cO_j), j \in J$
  (cf.\ \cite{SW87}). 
\end{enumerate}

\begin{remark} \label{rem:poin} These axioms are an abstract form of the
  axioms imposed on nets of local algebras
  on Minkowski space $M = \R^{1,d}$
  \index{Poincar\'e group \scheiding} 
  and the {\it Poincar\'e group}
  \[ G = \R^{1,d-1} \rtimes \SO_{1,d-1}(\R)_e,\] 
  acting by affine isometries. We now explain the differences,
  resp., the specifics of the Minkowski case. 

\nin (a)   Here $h$ is a generator of a Lorentz
  boost:
  \begin{equation}
    \label{eq:l-boos}
    h.(x_0, x_1, \ldots, x_{d-1}) = (x_1, x_0, 0,\ldots, 0),
  \end{equation}
  and the corresponding wedge region is the {\it Rindler wedge}
\index{Rindler wedge $W_R$\scheiding} 
\begin{equation}
    \label{eq:WR}
    W_R := \{ x \in \R^{1,d-1} \: x_1 > |x_0| \},
  \end{equation}
  the set of all points $x$, where the linear vector field
  $x \mapsto h.x$ is positive timelike.
  The corresponding one-parameter group of $G$ consists of
  Lorentz boosts
  \[ e^{th} = \pmat{ \cosh t  & \sinh t  \\
      \sinh t  & \cosh t } \oplus \bone_{\R^{d-2}}.\] 

\nin  (b) The physical interpretation of the Reeh--Schlieder condition
  is that every state can be measured with arbitrary precision in any
  laboratory $\cO$.

\nin (c) In AQFT, one sometimes assumes, in addition to (Vi), the
  ``irreducibility condition''  that the fixed point space $\cH^G$ of $G$
  is one-dimensional, i.e., $\cH^G = \C \Omega$.
  We refer to \cite[\S 5.4]{MN24} for results concerning direct
  integral decompositions reducing to this case. 

  \nin (d) For Minkowski space, the subset $\cD_{\rm loc} \subeq M \times M$
  is the set  of spacelike pairs
  \[ \{ (x,y) \in \R^{1,d-1} \times \R^{1,d-1} \:
    \beta(x-y,x-y) < 0 \} \]
  for the Lorentzian form $\beta(x,y) = x_0 y_0 - \bx \by.$
  These are the pairs of spacetime events that cannot ``exchange'' information
  traveling not faster than light:
  \[ \cD_{\rm loc}
    = G.\{ (x,0) \: x \in \R^{1,d-1}, \beta(x,x) < 0\}
    = G.(\R_{> 0} \be_1 \times \{0\}).\]
  As a consequence, 
  observables in $\cO_1$ and $\cO_2$ are not subject
  to an uncertaintly relation if
  \[ \cO_1 \times \cO_2 \subeq~\cD_{\rm loc}.\] 
  To make this more precise, we recall that,
  for two selfadjoint operators $A_1$ and $A_2$, commuting is equivalent
   to the non-existence of uncertainties in common measurements
   (Exercise~\ref{exer:uncertain}). Then there exists a spectral measure
   $P$ on $\R^2$ with 
   \[ A_1 = \int_{\R^2} x_1\, dP(x) \quad \mbox{ and } \quad 
     A_2 = \int_{\R^2} x_2\, dP(x).\]
   As a consequence, states can be localized simultaneously with respect to $A_1$
   and $A_2$ with arbitrary precision.

   The monographs of Varadarajan \cite{Va85} and Mackey \cite{Ma78} 
   are excellent references for the connection between observables
   in Quantum Physics
   and selfadjoint operators.  We also recommend the recent 
paper \cite{Ba24} by J.~Baez on Jordan and Lie structures related to 
classical and quantum observables.

\end{remark}

We would like to understand the configurations
specified by the $G$-action on~$M$, the geometry of $M$,
the unitary representation $U \: G \to \U(\cH)$ and the
von Neumann algebras $\cM(\cO)$, satisfying
these axioms. As the algebra structure of the local algebras
$\cM(\cO)$ only enters through the modular groups, it makes sense to
strip it off to simplify the situation, with the hope that we arrive at 
more tractable structures. 

So we consider the family 
\begin{equation}
  \label{eq:ho1}
  \sH(\cO) = \sV_{\cM(\cO),\Omega} = \oline{\cM(\cO)_h\Omega} \subeq  \cH
\end{equation}
of closed real subspaces.
If $\Omega$ is standard for $\cM(\cO)$, then $\sH(\cO)$ is standard
(Lemma~\ref{lem:1.2}(c)),
and the corresponding modular
objects can be recovered from $\sH(\cO)$ (Definition~\ref{def:mod-obj}).
So we do not lose any
information on them.

The axioms for the algebras
$\cM(\cO)$ thus turn into the following axioms
for the net $\sH(\cO)$ of real subspaces:

\begin{enumerate}
\item[(Iso)] {\bf Isotony}: $\cO_1 \subeq \cO_2$ implies $\sH(\cO_1) \subeq \sH(\cO_2)$
\item[(RS)] {\bf Reeh--Schlieder property:} $\sH(\cO)$ is cyclic  if $\cO \not=\eset$. 
\item[(Cov)] {\bf Covariance:}
  $U_g \sH(\cO) = \sH(g\cO)$ for $g \in G$. 
\item[(BW)] {\bf Bisognano--Wichmann property:} 
  There exists a Lie algebra element $h \in \g$ and an open connected subset
   $W \subeq M$,  such that $\sH(W)$ is standard and the corresponding modular operator is
  \[ \Delta_{\sH(W)} = e^{2\pi i \cdot \partial U(h)},
    \quad \mbox{ i.e., } \quad
    \Delta_{\sH(W)}^{-it/2\pi} = U(\exp th), t \in \R.\]
\item[(Loc)] {\bf Locality:} There exists an open non-empty $G$-invariant
  subset $\cD_{\rm loc} \subeq M \times M$ such that 
  $\cO_1 \times \cO_2 \subeq \cD_{\rm loc}$ implies
  $\sH(\cO_1) \subeq \sH(\cO_2)'$.
\item[(Add)] {\bf Additivity:}
  $\sH(\bigcup_j \cO_j) = \oline{\sum_{j \in J} \sH(\cO_j)}$.
\end{enumerate} 

\begin{remark} (a) The covariance condition (Cov) for real subspaces
  follows from the $G$-invariance of $\Omega$
  and the covariance condition $U_g \cM(\cO) U_g^{-1} = \cM(g\cO)$.

  \nin (b) The subspace $\sH(M)$ is $G$-invariant by (Cov) and cyclic by (RS).
  If it is also separating, hence standard, then its modular operator
  $\Delta_{\sH(M)}$ and the conjugation $J_M := J_{\sH(M)}$ commute with $U(G)$.
  If (BW) holds, then the Equality Lemma~\ref{lem:lo08-3.10} below implies $\sH(W) = \sH(M)$,
  and thus $h$ is central in $\g$, provided $\ker U$ is discrete.
  This shows that $\sH(M)$ cannot be standard if the net is not very
  degenerate.
\end{remark}

\subsection{Operator algebras on the symmetric Fock spaces}

The passage from a net of algebras $\cM(\cO)$ 
to a net of real subspace $\sH(\cO)$ (which
is similar to a forgetful functor) can be ``inverted''
(in the spirit of an adjoint functor) by
procedures of second quantization assigning
operator algebras $\Gamma(\sH)$ to real subspaces $\sH \subeq \cH$
(see also \cite{Ar63} and \cite{NO17}).
Therefore any result on nets of real subspaces can be
transformed into a result on nets of local algebras
obtained by  second quantization (see also \cite[Rem.~4.10]{NO17}).
We note, however, that most second quantization procedures 
(such as the bosonic and fermionic one) 
are ``free'' in the sense that they do not take interaction
between particles into account.
For a recent systematic construction of twisted second quantization
functors, we refer to \cite{CSL23}. In any case,
{\bf the requirements on $G$, the $G$-action on $M$,
and the antiunitary representation of $G_{\tau_h}$
are the same for the existence of nets of real subspaces
and nets of von Neumann algebras. }

As far as the symmetries and the modular groups are concerned, 
the algebra axioms are faithfully represented by the axioms
for their associated real subspaces.
Even inclusions are rather well-behaved
(cf.\ Proposition~\ref{prop:vNincl}). 

\subsubsection{Weyl operators on the symmetric Fock space} 
\label{subsec:5.2}

In this subsection, we consider the symmetric (bosonic) Fock space 
\[ S(\cH) := \hat\bigoplus_{k \in \N_0} S^k(\cH) \]
of the complex Hilbert space $\cH$. We want to define natural 
unitary operators on this space, called the {\it Weyl operators}. 
They will form a unitary representation of the {\it Heisenberg group 
$\Heis(\cH)$}. 

We start by observing that, for every $v \in \cH$, the series 
\begin{equation}
  \label{eq:Exp}
  \Exp(v) := \sum_{n = 0}^\infty \frac{1}{n!} v^n\, ,
\end{equation}
defines an element in $S(\cH)$ and that by
\[ \la v^n, w^n \ra = n! \la v, w \ra^n \quad \mbox{ and } 
  \quad \|v^n\| = \sqrt{n!} \|v\|^{n}\]
(\cite[\S 6.1]{NO17}),  the scalar product of two 
exponentials is given by 
\[ \la \Exp(v), \Exp(w) \ra 
= \sum_{n = 0}^\infty \frac{n!}{(n!)^2} \la v, w\ra^n = e^{\la v, w \ra}.\] 

\begin{lemma} $\Exp(\cH)$ is total in $S(\cH)$,
  i.e., it spans a dense subspace. 
\end{lemma}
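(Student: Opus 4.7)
The plan is to prove density by a duality argument: show that any $\psi \in S(\cH)$ orthogonal to every $\Exp(v)$, $v \in \cH$, must vanish. Write $\psi = \sum_{n \geq 0} \psi_n$ with $\psi_n \in S^n(\cH)$ and $\sum_n \|\psi_n\|^2 < \infty$. The idea is to extract each $\psi_n$ by a Taylor expansion trick in a complex parameter.

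First I would replace $v$ by $tv$ with $t \in \C$ and consider
\[
F(t) := \la \Exp(tv), \psi \ra = \sum_{n=0}^\infty \frac{\bar t^{\,n}}{n!} \la v^n, \psi_n \ra.
\]
Using $\|t^n v^n/n!\| = |t|^n \|v\|^n/\sqrt{n!}$ together with Cauchy--Schwarz, the series converges absolutely and uniformly on compact subsets of $\C$, so $F$ is antiholomorphic (equivalently, $\oline F$ is entire in $t$). If $F \equiv 0$ then all Taylor coefficients vanish, so $\la v^n, \psi_n \ra = 0$ for every $n \geq 0$ and every $v \in \cH$.

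Next I would polarize. Over a field of characteristic zero the standard identity
\[
n!\, w_1 \cdots w_n = \sum_{\emptyset \neq S \subseteq \{1,\ldots,n\}} (-1)^{n-|S|} \Big(\sum_{i \in S} w_i\Big)^{\! n}
\]
holds in $S^n(\cH)$, so from $\la v^n, \psi_n\ra = 0$ for all $v$ one gets $\la w_1 \cdots w_n, \psi_n \ra = 0$ for all $w_1,\ldots,w_n \in \cH$. Since such simple symmetric tensors span a dense subspace of $S^n(\cH)$, this forces $\psi_n = 0$ for every $n$, and hence $\psi = 0$. By the Hahn--Banach theorem (or rather its Hilbert-space incarnation), $\Spann \Exp(\cH)$ is dense in $S(\cH)$, which is the claim.

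The only point needing a little care is the interchange of summation and scalar product used in writing $F(t)$ as a power series: this is justified by the absolute convergence of $\Exp(tv) = \sum_n (tv)^n/n!$ in $S(\cH)$ for each $t$, so no serious obstacle arises. The whole argument is essentially the principle that holomorphic (here, antiholomorphic) functions on $\C$ are determined by their Taylor coefficients, combined with polarization to pass from diagonal values $v^n$ to the full symmetric tensor product.
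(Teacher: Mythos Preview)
Your proof is correct. It is essentially dual to the paper's argument: the paper works directly in $S(\cH)$, using the $\T$-action $U_z \Exp(v) = \Exp(zv)$ and Fourier-averaging to show that each $v^n$ lies in the closed span $\cK$ of $\Exp(\cH)$, then invokes polarization (Exercise~3.2) to conclude $\cK = S(\cH)$. You instead work on the orthogonal complement, extracting the components $\psi_n$ via the Taylor coefficients of the antiholomorphic map $t \mapsto \la \Exp(tv), \psi\ra$; the Taylor expansion here plays exactly the role of the paper's Fourier projection. Both routes reduce the problem to the totality of $\{v^n : v\in\cH\}$ in $S^n(\cH)$, i.e.\ to polarization. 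Your version is slightly more self-contained (no need to verify continuity of $\T$-orbit maps or argue about Riemann integrals landing in a closed subspace), while the paper's version highlights the underlying circle symmetry of the Fock space grading.
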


\begin{proof} Let $\cK \subeq S(\cH)$ be the closed subspace 
generated by $\Exp(\cH)$. We consider the unitary representation 
of the circle group $\T \subeq \C^\times$ on $S(\cH)$ by 
\[ U_z (v_1 \vee \cdots \vee v_n) := z^n (v_1 \vee \cdots \vee v_n)
  \quad \mbox{ for } \quad n \in \N_0, v_j \in \cH.\]
In particular we have
\begin{equation}
  \label{eq:usexp}
U_z \Exp(v) = \Exp(zv) \quad\mbox{ for } \quad   z\in \T, v \in \cH. 
\end{equation}
The decomposition $S(\cH) = \hat\bigoplus_{n \in \N_0} S^n(\cH)$ is the 
eigenspace decomposition with respect to the operators $U_z$ 
and it is easy to see that the action of $\T$ on $S(\cH)$ has 
continuous orbit maps (Exercise~\ref{exer:ker-cont}). 
For $\xi \in S(\cH)$ with $\xi = \sum_{n = 0}^\infty \xi_n$ and 
$\xi_n \in S^n(\cH)$, we have 
$U_z \xi = \sum_{n =0}^\infty z^n \xi_n$, so that 
\[ \xi_n = \frac{1}{2\pi} \int_0^{2\pi} e^{-2\pi i nt} U_{e^{it}} \xi\, dt \] 
(observe the analogy with Fourier coefficients). It follows that,
for $\xi \in \cK$, the existence of the above Riemann integral in the closed 
subspace $\cK$, which is invariant under $(U_z)_{z \in \T}$
by \eqref{eq:usexp},
implies $\xi_n \in \cK$. We conclude that 
$v^n \in \cK$ for $v \in \cH$ and $n \in \N_0$. Therefore it suffices
to observe that the subset $\{ v^n \: v \in \cH\}$ is total in $S^n(\cH)$
(Exercise~\ref{exer:3.2}).
\end{proof}

For $v,x \in \cH$ we have 
\[ \la \Exp(v+x), \Exp(w+x) \ra 
= e^{\la v+x, w + x\ra} 
= e^{\la v,w \ra} 
e^{ \la x, w\ra + \frac{\|x\|^2}{2}} 
e^{ \la v,x\ra + \frac{\|x\|^2}{2}},\] 
so that there exists a well-defined and uniquely determined 
unitary operator $U(x)$ on $S(\cH)$ satisfying  
\begin{equation}
  \label{eq:Ux-ops}
 U(x) \Exp(v) = e^{ -\la x, v\ra - \frac{\|x\|^2}{2}} \Exp(v+x) \quad 
\mbox{ for } \quad x,v \in \cH  
\end{equation}
(Exercise~\ref{exer:5.5}; the surjectivity of $U(x)$ follows from the totality of
$\Exp(\cH)$). 
A direct calculation then shows that 
\begin{equation}
  \label{eq:comm-rel-U}
U(x) U(y) = e^{-i\Im \la x, y \ra} U(x+y) \quad \mbox{ for  } \quad 
x, y \in \cH.
\end{equation}
In fact, for $v \in \cH$, we have 
\begin{align*}
U(x) U(y) \Exp(v) 
&= U(x) e^{ -\la y, v\ra - \frac{\|y\|^2}{2}} \Exp(v+y)\\
&= e^{ -\la y, v\ra - \frac{\|y\|^2}{2}} e^{-\la x, v + y \ra - \frac{\|x\|^2}{2}}\Exp(v+y+x)\\
&= e^{ -\la x+y, v\ra} e^{- \frac{\|y\|^2}{2} - \frac{\|x\|^2}{2} - \la x, y \ra}\Exp(v+y+x)
\end{align*}
and 
\begin{align*}
 U(x+y)  \Exp(v) 
&=  e^{ -\la x+y, v\ra - \frac{\|x+y\|^2}{2}} \Exp(v+y+x)\\
&=  e^{ -\la x+y, v\ra - \frac{\|x\|^2}{2} - \frac{\|y\|^2}{2} - \Re \la x,y \ra} \Exp(v+y+x)
\end{align*}
The relation \eqref{eq:comm-rel-U} shows that the map 
$U \: (\cH,+) \to \U(S(\cH))$ 
is not a group homomorphism. Instead, we have to replace the 
additive group $(\cH,+)$ by the {\it Heisenberg group}
\[ \Heis(\cH) := \T\times \cH \quad \mbox{ with } \quad 
(z,v)(z',v') := (zz' e^{-i\Im \la v,v' \ra}, v + v')  \] 
\index{Heisenberg group \scheiding} 
to obtain a unitary representation 
\[ \hat U \: \Heis(\cH) \to \U(S(\cH)) \quad \mbox{ by } \quad 
\hat U(z,v) := z U(v).\] 

The operators 
\[ W(v) := U\big(iv/\sqrt{2}\big), \qquad v \in \cH, \] 
are called {\it Weyl operators}. They satisfy the {\it Weyl relations} 
\begin{equation}
  \label{eq:weyl}
  W(v) W(w) = e^{-i \Im \la v,w \ra/2} W(v+w) \quad \mbox{ for } \quad v,w \in \cH. 
\end{equation}
They are an exponentiated form of the ``canonical commutation relations''
for the corresponding infinitesimal generators.

The {\it Weyl algebra} 
\[ W(\cH) := C^*(\{ W(v) \: v \in \cH\}) \subeq B(S(\cH)) \] 
is the $C^*$-subalgebra of $B(S(\cH))$ generated by the Weyl operators.  
It plays an important role in Quantum (Statistical) Mechanics 
and Quantum Field Theory. This is partly due to the fact that 
it is a simple $C^*$-algebra (all ideals are trivial), which implies 
in particular that all its representations are faithful. Closely related is its 
universal property: If $\cA$ is a unital $C^*$-algebra and 
$\phi \: \cH \to \U(\cA)$ a map satisfying the Weyl relations in the form 
\begin{equation}
  \label{eq:weyl2}
  \phi(v) \phi(w) = e^{-i \Im \la v,w \ra/2} \phi(v+w)\quad \mbox{ for }  
\quad v,w \in \cH,
\end{equation}
then there exists a unique homomorphism $\Phi \: W(\cH) \to \cA$ 
of unital $C^*$-algebras with $\Phi \circ W = \phi$. 
An excellent discussion of the Weyl algebra and its properties 
can be found in the monograph \cite{BR96} which also describes the 
physical applications in great detail. 

\subsubsection{From real subspaces to von Neumann algebras} 
\label{subsec:5.3}

In this subsection, we describe a mechanism that associates to real subspaces 
of a Hilbert space $\cH$ von Neumann algebras on the symmetric Fock space 
$S(\cH)$. This construction plays an important 
role in recent developments in  Algebraic Quantum Field Theory (AQFT) 
because it provides natural links between the geometric structure of 
spacetime and operator algebras (see in particular \cite{Ar99, Lo08, Le15}). 
It has also been of great interest for the classification 
of von Neumann factors because it provides very controlled constructions 
of factors whose type can be determined in some detail 
(\cite{AW63, AW68}). 

We write 
\[ \gamma(v,w) := \Im \la v, w \ra \quad \mbox{ for } \quad v, w \in \cH \] 
and observe that $\gamma$ is skew-symmetric and non-degenerate, so that 
the underlying real Hilbert space $\cH^\R$ carries the structure 
of a {\it symplectic vector space} $(\cH^\R, \gamma)$. 

Using the Weyl operators, we associate to every real 
linear subspace $V \subeq \cH$ a von Neumann subalgebra 
\begin{equation}
  \label{eq:R(V)}
  \cR(V) := W(V)'' = \{ W(v) \: v \in V \}''\subeq B(S(\cH)).
\end{equation}

\begin{lemma} \label{lem:6.3} We have 
  \begin{description}
  \item[\rm(i)] $\cR(V) \subeq \cR(W)'$ if and only if 
$V \subeq W'$. 
\item[\rm(ii)] $\cR(V)$ is commutative if and only if $V \subeq V'$. 
  \item[\rm(iii)] $\cR(\cH) = B(S(\cH))$, i.e., the representation 
of $\Heis(\cH)$ on $S(\cH)$ is irreducible. 
  \item[\rm(iv)] $\cR(V) = \cR(\oline V)$. 
\item[\rm(v)] $\Omega = \Exp(0) \in S(\cH)$ is cyclic for $\cR(V)$ if and only 
if $V + i V$ is dense in~$\cH$.
  \item[\rm(vi)] $\Omega = \Exp(0) \in S(\cH)$ is separating 
for $\cR(V)$ if and only if $\oline V \cap i \oline V = \{0\}$. 

  \end{description}
\end{lemma}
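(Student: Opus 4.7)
The foundation for everything is the commutation relation one extracts from the Weyl relations \eqref{eq:weyl}: multiplying the identities for $W(v)W(w)$ and $W(w)W(v)$ gives
\[
W(v)W(w) = e^{-i\Im\la v,w\ra}\, W(w)W(v),
\]
so two Weyl operators commute iff $\Im\la v,w\ra \in 2\pi\Z$. Because $V$ is a \emph{real} linear subspace, $v\in V$ implies $tv\in V$ for all $t\in\R$, and this forces $\Im\la v,w\ra = 0$, i.e.\ $v\in\{w\}'$. This immediately handles (i): by the bicommutant theorem, $\cR(V)\subeq \cR(W)'$ iff every generator $W(v)$, $v\in V$, commutes with every $W(w)$, $w\in W$, which by the scaling argument is exactly $V\subeq W'$. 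Part (ii) is the special case $V=W$.

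For (iv), I would use the strong continuity of $v\mapsto W(v)$, clear from \eqref{eq:Ux-ops} and continuity of $\Exp$; since $\cR(V)$ is strongly closed, $W(v)\in \cR(V)$ for every $v\in\oline V$, yielding $\cR(\oline V)\subeq\cR(V)$, and the reverse inclusion is trivial. For (iii), the ingredients are cyclicity of $\Omega$ for $\cR(\cH)$, immediate from $W(v)\Omega = e^{-\|v\|^2/4}\Exp(iv/\sqrt 2)$ together with the totality of $\Exp(\cH)$ proved just above, and non-degeneracy of $\gamma=\Im\la\cdot,\cdot\ra$, which gives $\cH'=\{0\}$. I would then invoke the standard Stone--von Neumann style argument: any $T\in \cR(\cH)'$ is determined by the single vector $T\Omega$ through $T W(v)\Omega = W(v) T\Omega$, and a coherent-state reproducing-kernel computation forces $T\Omega\in\C\Omega$, whence $T\in\C\bone$.

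For (v), I would compute the closure of $\cR(V)\Omega$. From $W(v)\Omega = e^{-\|v\|^2/4}\Exp(iv/\sqrt 2)$, the closed real-linear span of $W(V)\Omega$ coincides with that of $\{\Exp(z):z\in iV/\sqrt 2\}$; using analyticity of $v\mapsto\Exp(v)$ (power-series expansion at $0$), the closed \emph{complex}-linear span of exponentials over any subset of $\cH$ equals the symmetric Fock space over the closed complex span of that subset. Hence $\oline{\cR(V)\Omega} = S(\cK)$ with $\cK = \oline{V+iV}$, and this coincides with $S(\cH)$ iff $V+iV$ is dense in $\cH$.

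Part (vi) I would prove by combining (v) with Lemma~\ref{lem:1.1} and~(i), plus a direct obstruction argument. For ``$\Leftarrow$'': if $\oline V\cap i\oline V=\{0\}$, then by Lemma~\ref{lem:vv'}(b) applied to $V'$ the subspace $V'+iV'$ is dense, so by (v) $\Omega$ is cyclic for $\cR(V')\subeq\cR(V)'$ (inclusion by (i)), and Lemma~\ref{lem:1.1} yields that $\Omega$ is separating for $\cR(V)$. For ``$\Rarrow$'': pick $w\ne 0$ with $w,iw\in\oline V$; then by (iv) and (iii) applied in dimension one, $\cR(V)\supeq\cR(\C w) = B(S(\C w))$. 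In the tensor decomposition $S(\cH) = S(\C w)\otimes S((\C w)^{\bot})$, the vacuum splits as $\Omega_w\otimes\Omega'$, and the projection $(\bone-|\Omega_w\ra\la\Omega_w|)\otimes\bone$ is a nonzero element of $\cR(V)$ annihilating $\Omega$, contradicting separability. The main obstacle is~(iii): irreducibility of the Weyl representation is the only step going beyond formal manipulations with the Weyl relations, and it is the deep ingredient that also drives the reduction in~(vi). A secondary care-point is the duality identity relating $\oline V\cap i\oline V=\{0\}$ to density of $V'+iV'$, which rests on $V' = iV^{\bot_\R}$.
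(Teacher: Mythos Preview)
Your argument is correct and follows essentially the same route as the paper's proof: (i) and (ii) from the Weyl relations with the scaling trick, (iv) from strong continuity, (v) via analyticity of $v\mapsto\Exp(v)$, and (vi) by combining (i), (v), and Lemma~\ref{lem:1.1} for one direction and the factor $B(S(\C w))$ obstruction for the other. The only substantive difference is (iii): the paper simply cites \cite[Prop.~5.2.4(3)]{BR96}, whereas you sketch a direct argument; your reduction to showing $T\Omega\in\C\Omega$ is the right strategy, but the ``coherent-state reproducing-kernel computation'' you invoke for that step is not spelled out and is not a one-liner (one clean way is to show that the rank-one projection onto $\C\Omega$ lies in the strong closure of $\spann\{W(v):v\in\cH\}$ via a Gaussian average of Weyl operators). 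Since you already flag (iii) as the deep ingredient, this is a presentation gap rather than a conceptual one.
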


\begin{proof} (i) follows directly from the Weyl relations \eqref{eq:weyl}. 

\nin (ii) follows from (i). 

\nin (iii) follows from \cite[Prop.~5.2.4(3)]{BR96}. 

\nin (iv) follows from the fact that $\cH \to B(S(\cH)), v \mapsto W_v$ 
is strongly continuous and $\cR(V)$ is closed in the weak operator topology. 

\nin (v) Let $\cK := \oline{V + i V}$. Then $\cR(V)\Omega \subeq S(\cK)$, 
so that $\Omega$ cannot be cyclic if $\cK \not=\cH$. 

Suppose, conversely, that $\cK = \cH$ and that $f \in (\cR(V)\Omega)^\bot$. 
Then the holomorphic function $\hat f(v) := \la f, \Exp(v) \ra$ on 
$\cH$ vanishes on $iV$, hence also on $V + i V$, and since this subspace is 
dense in~$\cH$, we obtain $f = 0$ because $\Exp(\cH)$ is total in $S(\cH)$. 

\nin (vi) In view of (iv), we may assume that $V$ is closed. 
Let $0 \not= w \in \cK := V \cap i V$. To see that $\Omega$ is not separating 
for $\cR(V)$, it suffices to show that, for the one-dimensional Hilbert space 
$\cH_0 := \C w$, the vector $\Omega$ is not separating for 
$\cR(\C w) = B(S(\C w))$ (see (iii)). 
This is obviously the case because $\dim S(\C w) > 1$. 

Suppose, conversely, that $\cK = \{0\}$. As $\cK = V'' \cap (iV'') = (V' + i V')'$, 
it follows that $V' + i V'$ is dense in $\cH$. By (v), 
$\Omega$ is cyclic for $\cR(V')$ which commutes with $\cR(V)$. Therefore 
$\Omega$ is separating for $\cR(V)$ (Lemma~\ref{lem:1.1}). 
\end{proof}

\begin{theorem} \label{thm:araki-1} {\rm(\cite{Ar63})} 
{\rm(Araki's Duality Theorem)}  
For closed real subspaces $V,W, V_j$ of $\cH$, the following assertions hold: 
\begin{description}
\item[\rm(i)] $\cR(V) \subeq \cR(W)$ if and only if $V \subeq W$. 
\item[\rm(ii)] $\cR\big(\bigcap_{j \in J} V_j\big) = \bigcap_{j \in J} \cR(V_j)$. 
\item[\rm(iii)] $\cR(V)' = \cR(V')$ (Duality). 
\item[\rm(iv)] $\cZ(\cR(V)) = \cR(V \cap V')$
  and $\cR(V)$ is a factor if and only if $V \cap V' = \{0\}$. 
\end{description}
\end{theorem}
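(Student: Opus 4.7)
The plan is to treat (iii) first, since (i), (ii), and (iv) follow rather formally from it together with Lemma \ref{lem:6.3}.

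For (iii), I would first handle the case that $V$ is \emph{standard}. By Lemma \ref{lem:6.3}(v)--(vi), the Fock vacuum $\Omega = \Exp(0)$ is then cyclic and separating for $\cR(V)$, so the Tomita--Takesaki Theorem applies to $(\cR(V),\Omega)$. The crucial step is to identify its modular objects with the second quantizations of the modular objects of the subspace $V$ itself. Writing $\Gamma$ for the canonical (anti)unitary lift to $S(\cH)$ of a unitary or antiunitary on $\cH$, one checks from \eqref{eq:Ux-ops} and the Weyl relations \eqref{eq:weyl} that
\[ \Gamma(\Delta_V^{it}) W(v) \Gamma(\Delta_V^{it})^{-1} = W(\Delta_V^{it} v) \qquad \text{and} \qquad \Gamma(J_V) W(v) \Gamma(J_V) = W(-J_V v). \]
Combined with $\Gamma(\Delta_V^{it})\Omega = \Omega = \Gamma(J_V)\Omega$ and the computation that $\Gamma(J_V)\Gamma(\Delta_V)^{1/2}$ fixes the dense real subspace $\cR(V)_h \Omega$, Proposition \ref{prop:11} identifies this operator with the Tomita operator of $(\cR(V),\Omega)$. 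Tomita--Takesaki then yields
\[ \cR(V)' = \Gamma(J_V)\, \cR(V)\, \Gamma(J_V) = \cR(-J_V V) = \cR(J_V V) = \cR(V'), \]
using $-V = V$ for any real subspace and $J_V V = V'$ from Lemma \ref{lem:vv'}(f).

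For a general closed real subspace $V$, I would reduce to the standard case by the complex-orthogonal decomposition $\cH = \cK_0 \oplus \cH_s \oplus \cK_1$ with
\[ \cK_0 := V \cap iV, \quad \cH_1 := \overline{V + iV}, \quad \cH_s := \cH_1 \ominus \cK_0, \quad \cK_1 := \cH \ominus \cH_1. \]
One checks directly that $V = \cK_0 \oplus V_s \oplus \{0\}$ with $V_s := V \cap \cH_s$ standard in $\cH_s$, and a short symplectic-orthogonality calculation gives the dual decomposition $V' = \{0\} \oplus V_s' \oplus \cK_1$. Under the canonical factorization $S(\cH) \cong S(\cK_0) \otimes S(\cH_s) \otimes S(\cK_1)$, Weyl operators decompose tensorially, and combined with Lemma \ref{lem:6.3}(iii) applied on the full factors this yields
\[ \cR(V) = B(S(\cK_0)) \otimes \cR(V_s) \otimes \C\bone, \qquad \cR(V') = \C\bone \otimes \cR(V_s') \otimes B(S(\cK_1)). \]
Taking commutants and invoking the standard case on $\cH_s$ gives $\cR(V)' = \cR(V')$.

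With (iii) in hand, the remaining parts are essentially formal. For (i), the nontrivial direction: if $v \notin W$, Hahn--Banach produces $w \in W'$ with $\Im\langle v,w\rangle \neq 0$; then $W(w) \in \cR(W') = \cR(W)'$ by (iii), while \eqref{eq:weyl} shows $W(v)$ fails to commute with $W(w)$, so $W(v) \notin \cR(W)$. For (ii), one inclusion is immediate from Lemma \ref{lem:6.3}; for the other, take commutants of both sides and combine (iii) with the standard symplectic identity $(\bigcap_j V_j)' = \overline{\sum_j V_j'}$ and the fact that $\bigcup_j \cR(V_j')$ generates $\cR(\overline{\sum_j V_j'})$. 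For (iv), the identity $\cZ(\cR(V)) = \cR(V) \cap \cR(V)' = \cR(V) \cap \cR(V') = \cR(V \cap V')$ follows from (iii) and (ii), and $\cR(W) = \C\bone$ iff $W = \{0\}$ since any nonzero $w \in W$ yields a nonscalar unitary $W(w)$. The main obstacle is the standard case of (iii): rigorously verifying on the total set $\Exp(\cH)$ that the Tomita operator of $(\cR(V),\Omega)$ really equals $\Gamma(J_V)\Gamma(\Delta_V)^{1/2}$. This is the functorial compatibility of second quantization with polar decomposition that underlies Araki's original approach; the non-standard case is then routine bookkeeping provided one tracks the three tensor factors carefully.
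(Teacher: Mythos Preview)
Your proposal is correct and in fact considerably more detailed than the paper's own treatment: the paper does not prove (iii) at all but simply labels it ``a deep theorem'' and defers to Araki's original article \cite{Ar63}, then remarks that (i) follows from (iii), notes only the trivial inclusion in (ii), and derives (iv) from (ii) and (iii) exactly as you do.

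Your route to (iii) --- identifying the modular objects of $(\cR(V),\Omega)$ as the second quantizations $\Gamma(J_V)$, $\Gamma(\Delta_V)$ of the one-particle modular objects, then invoking Tomita--Takesaki to get $\cR(V)' = \Gamma(J_V)\cR(V)\Gamma(J_V) = \cR(V')$ --- is the standard modern argument, essentially due to Eckmann--Osterwalder and worked out in \cite{LRT78} (which the paper cites elsewhere). It is different from Araki's original 1963 proof, which predates Tomita--Takesaki theory and instead analyzes the Weyl CCR structure directly; the modular approach you outline is both shorter and more conceptual, at the cost of importing the Tomita--Takesaki machinery. Your reduction of the general case to the standard one via the decomposition $\cH = \cK_0 \oplus \cH_s \oplus \cK_1$ and the tensor factorization of Fock space is also the usual device here. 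The one point you rightly flag as the crux --- checking that $\Gamma(J_V)\Gamma(\Delta_V)^{1/2}$ really is the Tomita operator of $(\cR(V),\Omega)$ --- is indeed where the work lies; the cleanest verification computes $\Gamma(J_V \Delta_V^{1/2})\, W(v)\Omega = W(v)^*\Omega$ for $v \in V$ directly from \eqref{eq:Ux-ops} and the analytic continuation of $t \mapsto \Gamma(\Delta_V^{it})$.
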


\begin{proof} We only comment on some of these statements: 

\nin (i) That $V \subeq W$ implies $\cR(V) \subeq \cR(W)$ is clear, but the converse 
is non-trivial. It can be derived from the duality property (iii), 
which is the main result of~\cite{Ar63}. 

\nin (ii) here ``$\subeq$'' is obvious. 

\nin (iii) is a deep theorem. 

\nin (iv) follows from (ii) and (iii) via
$\cR(V \cap V') =\cR(V) \cap \cR(V)' = \cZ(\cR(V))$. 
\end{proof}

The preceding theorem asserts in particular that 
\begin{itemize}
\item $\cR(V)$ is a factor if and only if $V \cap V' = \{0\}$. This means that the 
form $\gamma\res_{V \times V}$ is non-degenerate, i.e., that 
$(V,\gamma)$ is a symplectic vector space. 
\end{itemize}
Subspaces with this property are easy to construct. 
In \cite{Ar64b} many results on the types of the so-obtained factors
can be found.  In particular, 
it is shown that factors of type II do not occur,
and \cite{Ar64} provides an explicit criterion for 
$\cR(V)$ to be of type I. ``Generically'', the so-obtained 
factors are of type III. We refer to \cite{Sa71} for more
on types of von Neumann algebras.

\subsection{Appendices to Section~\ref{sec:2}}

  \subsubsection{Endomorphisms of standard subspaces and von Neumann algebras}

  Let $\Omega \in \cH$ be a standard vector for the von Neumann algebra
  $\cM$, let $\sV = \sV_{\cM,\Omega}$ be the corresponding standard subspace,
  and let $G \subeq \U(\cH)$ be a subgroup. 

The following example shows that the inclusion 
\[ S_{\cM,\Omega} = \{ g \in G \: g \cM g^{-1} \subeq \cM, g\Omega = \Omega\} 
\subeq S_{\sV,\Omega} = \{ g \in G\: g \sV\subeq \sV, g \Omega = \Omega \} \] 
may be proper. 

  \begin{examples}  \footnote{We thank Yoh Tanimoto for the discussion that led to this example.}
(a) We consider the Hilbert space 
$\cH := M_n(\C)$ of matrices, 
endowed with the Hilbert--Schmidt scalar product $\la A, B \ra := \tr(A^*B)$. 
By matrix multiplications from the left, we obtain a von Neumann 
subalgebra $\cM \subeq B(\cH)$, isomorphic to $M_n(\C)$, and its 
commutant $\cM'$ consists of right multiplications
(cf.\ Example~\ref{ex:3.2}(b)). 
The unit vector $\Omega := \frac{1}{\sqrt n}\bone_n$ is cyclic and separating, and 
the corresponding standard subspaces for $\cM$ and $\cM'$ coincide with 
\[ \sV_\cM = \sV_{\cM'} = \Herm_n(\C)\] 
of hermitian matrices. Now $\theta(A) := A^\top$ defines a unitary 
operator on $\cH$, preserving $\Omega$ and the standard subspace
$\sV_\cM = \sV_{\cM'}$, and satisfying $\theta \cM \theta^{-1} = \cM'$. 
For $G = \U(\cH)$, we therefore have $S_{\sV,\Omega} \not= S_{\cM, \Omega}$. 

\nin (b) In the situation above, when $\cM$ is given, 
the $G$-orbit of $\cM$ in the space of von Neumann subalgebras of $B(\cH)$ 
can be identified with the homogeneous space 
$G/G_\cM$, and similarly, $G/G_\sV \into \Stand(\cH), g G_\sV \mapsto g\sV$ is an 
embedding. The discrepancy between both spaces comes from the 
fact that the von Neumann algebra~$\cM$ need not be invariant 
under the stabilizer group $G_\sV$ of~$\sV$. 

Related questions have been analyzed by Y.~Tanimoto in \cite{Ta10}. 
He refines the picture by considering the closed convex cone 
\[ \sV_\cM^+ = \oline{\{ M\Omega \: 0 \leq M = M^* \in \cM\}} \subeq \sV_\cM,\] 
which leads to the inclusions 
\[ S_{\cM,\Omega} \into S_{\sV_\cM^+,\Omega}
  = \{ g \in G \: g \sV_\cM^+ \subeq \sV_{\cM}^+, g \Omega = \Omega\} 
\subeq S_{\sV_\cM}.\] 
The semigroup $S_{\sV_{\cM}^+,\Omega}$ appears to be much closer to $S_{\cM,\Omega}$ 
than $S_{\sV,\Omega}$. From \cite[Thm.~2.10]{Ta10} it follows in particular 
that, if $\cM$ is purely infinite, then 
$S_{\sV_{\cM}^+,\Omega} = S_{\cM,\Omega}$.
Note that the examples under (a) are of finite type. 

Let $\cM_*$ denote the predual of the von Neumann algebra $\cM$ 
(the space of normal linear functionals) and $\cM_*^+$ 
the convex cone of positive normal functionals.
In this context, it is also interesting to note that the map 
\[ \sV_\cM^+ \to \cM_*^+, \quad \xi \mapsto \omega_\xi, \qquad 
\omega_\xi(M) = \la \xi, M\xi \ra \] 
is bijective by \cite[Thm.~1.2]{Ko80}. Accordingly, 
every element $g \in S_{\sV_\cM^+}$ induces a continuous map on the
convex cone~$\cM_*^+$.
We refer to \cite{Ta10} and \cite{Co74} for more details. 
\end{examples}

\subsubsection{Positive definite functions on $\R$ satisfying a KMS condition}

This subsection has only illustrative character.
It explains how the KMS condition that classically
appears in the context of KMS states for $C^*$-algebraic 
dynamical systems (\cite{Ku57}, \cite{MS59}, \cite{HHW67}, \cite{BB94}, \cite{AA+20}, \cite{Ku02}, \cite{PW78},
\cite{Str08}, \cite{Ni23b}),\begin{footnote}
  {The fundamental KMS equilibrium condition originated in the context
    of analytic properties of Green's functions in
    \cite{Ku57} and \cite{MS59} and was formulated in the present form
    by Haag, Hugenholtz and Winnink in \cite{HHW67}.}
\end{footnote}can be formulated independently
of $C^*$-algebras as a condition 
for functions on $\R$ with values in spaces of bilinear forms.

\begin{definition} \label{def:6}
  Let $V$ be a real vector space 
and $\Bil(V)$ be the space of real bilinear maps $V \times V \to \C$.
A function $\psi \: \R \to \Bil(V)$ is said to be {\it positive 
definite} if the kernel \index{function!positive definite \scheiding} 
$\psi(t-s)(v,w)$ on $\R \times V$ is positive definite. 
We say that a positive definite 
function $\psi \: \R \to \Bil(V)$ satisfies the {\it KMS condition} 
for $\beta > 0$ if $\psi$  \index{KMS condition!for function \scheiding}
extends to a function $\oline{\cS_\beta} \to \Bil(V)$ which is pointwise 
continuous and pointwise holomorphic on the interior $\cS_\beta$, and satisfies 
\begin{equation}
  \label{eq:pd-kms}
  \psi(i \beta+t) = \oline{\psi(t)}\quad \mbox{ for } \quad t \in \R.
\end{equation}
\end{definition}

The central idea in the classification of positive definite functions 
satisfying a KMS condition is to relate them to
standard  subspaces.
A key result is the following characterization of the 
KMS condition in terms of standard subspaces (\cite[Thm.~2.6]{NO19}). 
Here we write $\Bil^+(V) \subeq \Bil(V)$ for the convex cone of all those 
bilinear forms $f$ for which the sesquilinear extension to $V_\C \times V_\C$ 
is positive semidefinite. 

\begin{theorem} \label{thm:1-intro}{\rm(Characterization of the KMS condition)} 
Let $V$ be a real vector space and $\psi \:  \R \to \Bil(V)$ be a pointwise continuous positive definite function. 
Then the following are equivalent: 
\begin{enumerate}
\item[\rm(i)] $\psi$  satisfies the 
KMS condition for $\beta > 0$. 
\item[\rm(ii)] There exists a standard real subspace 
$\sV$ in a Hilbert space $\cH$ and a linear map 
$j \: V \to \sV$ such that 
\begin{equation}
  \label{eq:pdform}
 \psi(t)(v,w) = \la  j(v), \Delta^{-it/\beta} j(w) \ra \quad \mbox{ for } \quad 
t \in \R,v,w \in V.
\end{equation}
\item[\rm(iii)] There exists a $\Bil^+(V)$-valued regular Borel measure $\mu$ 
on $\R$ satisfying 
\[ \psi(t) = \int_\R e^{it\lambda}\, d\mu(\lambda), 
\quad \mbox{ where  } \quad 
d\mu(-\lambda) = e^{-\beta\lambda}d\oline\mu(\lambda).\] 
\end{enumerate}
If these conditions are satisfied, then the function 
$\psi \: \oline{\cS_\beta} \to \Bil(V)$ is pointwise bounded. 
\end{theorem}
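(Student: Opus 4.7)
The plan is to establish the implications (ii)$\Rightarrow$(i), (i)$\Rightarrow$(iii), and (iii)$\Rightarrow$(ii), with the last being the main work, and then to deduce the pointwise boundedness from representation~(ii). The conceptual point is that the modular identities $J\Delta J=\Delta^{-1}$ and $J\Delta^{it}=\Delta^{it}J$ (cf.\ Remark~\ref{rem:NO15-prop-3.1}(a)) are precisely what encode the KMS identity at the level of standard subspaces. For (ii)$\Rightarrow$(i), given $j(v),j(w)\in\sV\subseteq\cD(\Delta^{1/2})$, the spectral calculus shows that $z\mapsto \Delta^{-iz/\beta}j(w)$ extends, weakly against vectors of $\cD(\Delta^{1/2})$, to a holomorphic function on $\cS_\beta$ with continuous boundary values, and the KMS relation at the upper boundary reduces to the one-line computation
\[
\psi(i\beta+t)(v,w) = \la j(v),\Delta^{1-it/\beta}j(w)\ra = \la \Delta^{1/2}j(v),\Delta^{-it/\beta}\Delta^{1/2}j(w)\ra = \la Jj(v),J\Delta^{-it/\beta}j(w)\ra = \oline{\psi(t)(v,w)},
\]
using $\Delta^{1/2}j(\cdot)=Jj(\cdot)$ (since $j(\cdot)\in\sV=\Fix(J\Delta^{1/2})$) together with the antilinearity of $J$ and $J\Delta^{it}=\Delta^{it}J$.

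For the equivalence with (iii), I would apply a $\Bil^+(V)$-valued Bochner--Naimark theorem to represent $\psi(t)=\int_\R e^{it\lambda}\,d\mu(\lambda)$ uniquely with a $\Bil^+(V)$-valued regular Borel measure~$\mu$ on~$\R$. Substituting this into the KMS identity $\psi(i\beta+t)=\oline{\psi(t)}$ and changing variables $\lambda\leftrightarrow -\lambda$ on one side yields $\int e^{it\lambda}e^{-\beta\lambda}\,d\mu(\lambda)=\int e^{it\lambda}\,d\oline\mu(-\lambda)$, from which uniqueness of the Fourier representation gives the weighted symmetry $d\mu(-\lambda)=e^{-\beta\lambda}\,d\oline\mu(\lambda)$; this proves (i)$\Rightarrow$(iii). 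For the converse (iii)$\Rightarrow$(ii), I would construct the triple $(\cH,\sV,j)$ directly from~$\mu$: after polarizing to a scalar control measure $\nu$, form $\cH:=L^2(\R,V_\C;d\nu)$ modulo the null directions of~$\mu$, let $\Delta$ act as multiplication by $e^{\beta\lambda}$, and define the antilinear operator $(Jf)(\lambda):=e^{-\beta\lambda/2}\,\oline{f(-\lambda)}$; the weighted symmetry of~$\mu$ is exactly the identity that makes $J$ an isometric involution satisfying $J\Delta J=\Delta^{-1}$. Then $\sV:=\Fix(J\Delta^{1/2})$ is standard by Proposition~\ref{prop:11}, the map $j\: V\to\cH$ is defined as the (suitably regularized) image of $v\otimes\bone$, and $\la j(v),\Delta^{-it/\beta}j(w)\ra=\psi(t)(v,w)$ is a direct measure-theoretic computation. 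The main obstacle is handling the $\Bil(V)$-valued nature of~$\mu$, i.e., controlling a whole family of scalar measures indexed by pairs from~$V$ while preserving positivity and the KMS symmetry; this is what forces the polarization and the GNS-style quotient.

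Finally, the pointwise boundedness on $\oline{\cS_\beta}$ follows from representation~(ii) via Hadamard's three-lines theorem applied to the holomorphic function $z\mapsto\psi(z)(v,w)$: on both boundary lines one has $|\psi(t)(v,w)|,|\psi(i\beta+t)(v,w)|\leq \|j(v)\|\,\|j(w)\|$, the second bound being immediate from the KMS identity just established, and admissible growth in the open strip comes from the spectral-calculus estimate $\|\Delta^{s/\beta}j(w)\|\leq\|j(w)\|$ for $s\in[0,\beta/2]$ together with the symmetric estimate, applied to $j(v)$, on the other half of the strip.
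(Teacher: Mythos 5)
The paper does not actually prove Theorem~\ref{thm:1-intro}; it quotes it from \cite[Thm.~2.6]{NO19}, so there is no in-paper proof to compare against. Assessing your argument on its own terms: the implication (ii)$\Rightarrow$(i) is correct and is essentially the computation underlying Proposition~\ref{prop:standchar}, and the direction (iii)$\Rightarrow$(ii) is a plausible $L^2$-multiplication-operator model (modulo the care needed because $\mu$ is $\Bil^+(V)$-valued rather than scalar, so that the weighted symmetry relates $\mu$ to its \emph{transpose} $\oline\mu$ and the isometry check for $J$ uses this nontrivially).

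The genuine gap is in your (i)$\Rightarrow$(iii). You write ``Substituting [the Bochner representation] into the KMS identity $\psi(i\beta+t)=\oline{\psi(t)}$,'' i.e.\ you evaluate $\psi(z)=\int_\R e^{iz\lambda}\,d\mu(\lambda)$ at $z = i\beta+t$. But Bochner's theorem only gives you this integral representation on the real line. For it to continue to hold on the upper boundary of $\cS_\beta$, you need the exponential moment condition
\[
\int_\R e^{\beta\lambda}\,d\mu(\lambda)(v,v) < \infty \qquad\text{for all } v\in V,
\]
together with the fact that the holomorphic extension of $\psi(\cdot)(v,w)$ coincides with the Fourier--Laplace transform of $\mu_{v,w}$ on the whole strip. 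Establishing this is precisely where the analytic content of the KMS condition lives, and it is essentially equivalent to the ``pointwise bounded on $\oline{\cS_\beta}$'' conclusion that the theorem only asserts at the very end; it cannot be treated as an automatic substitution. (This is what \cite[Lemma~A.2.5]{NO18} and \cite[Lemma~10.7]{NO15}, cited elsewhere in the paper, are doing under the hood.) The cleaner cycle that avoids this circularity is (i)$\Rightarrow$(ii) first, via the GNS/RKHS construction on the positive definite kernel $K\bigl((t,v),(s,w)\bigr)=\psi(t-s)(v,w)$ on $\R\times V$: translation invariance in $t$ gives a unitary one-parameter group, the modular operator is $\Delta = e^{\beta A}$ with $U_t=e^{itA}$, and the KMS boundary identity is exactly the statement that the image of $V$ lands in $\Fix(J\Delta^{1/2})$. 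Then (ii)$\Rightarrow$(iii) is a harmless application of the spectral theorem, and the finiteness of $\int e^{\beta\lambda}\,d\mu$ drops out from $\|\Delta^{1/2}j(v)\|<\infty$ rather than having to be supplied by hand; (iii)$\Rightarrow$(ii) and your (ii)$\Rightarrow$(i) close the loop.

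One minor point on the boundedness claim: your appeal to Hadamard's three-lines theorem needs an a priori growth bound on $\psi(\cdot)(v,w)$ in the open strip, which you do not have from the hypotheses alone. The spectral estimate you mention in the same sentence is the right tool and is self-contained: writing $z=t+is$ with $0\le s\le\beta$ and splitting $\Delta^{s/\beta}=\Delta^{s/2\beta}\Delta^{s/2\beta}$ gives $|\psi(z)(v,w)|\le \|\Delta^{s/2\beta}j(v)\|\,\|\Delta^{s/2\beta}j(w)\|$, and each factor is bounded uniformly in $s\in[0,\beta]$ because $j(v),j(w)\in\cD(\Delta^{1/2})$. You should drop the three-lines argument and keep only this one.
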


The equivalence of (i) and (ii) in the preceding theorem 
 describes the tight connection 
between the KMS condition and the modular objects associated to a standard 
real subspace. Part (iii) provides an integral representation that can be 
viewed as a classification result.

\begin{corollary} For a standard subspace $\sV \subeq \cH$ and the modular
  operator $\Delta_\sV$, the function
  \[ \psi \: \R \to \Bil(\sV), \quad
    \psi(t)(v,w) := \la v, \Delta_\sV^{-it/2\pi} w\ra \]
  satisfies the KMS condition for $\beta = 2\pi$.   
\end{corollary}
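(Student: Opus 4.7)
The plan is to derive the corollary as a direct application of Theorem~\ref{thm:1-intro}, specifically the implication (ii)$\Rightarrow$(i), using the data $(\cH,\sV)$ themselves as the witnesses for condition~(ii). Before invoking the theorem, I would verify its hypothesis, namely that $\psi$ is pointwise continuous and positive definite as a $\Bil(\sV)$-valued function on $\R$. Positive definiteness is a one-line computation: for any finite families $(t_i) \subeq \R$, $(v_i) \subeq \sV$ and $(c_i) \subeq \C$, setting $F_i := \Delta_\sV^{-it_i/2\pi} v_i \in \cH$---which is well-defined because $(\Delta_\sV^{-it/2\pi})_{t \in \R}$ is a unitary one-parameter group by Stone's theorem---one obtains
\[
\sum_{i,j} \oline{c_i} c_j\, \psi(t_j - t_i)(v_i, v_j)
= \sum_{i,j} \oline{c_i} c_j\, \la F_i, F_j\ra
= \Big\|\sum_i c_i F_i\Big\|^2 \geq 0,
\]
while pointwise continuity of $t \mapsto \la v, \Delta_\sV^{-it/2\pi} w\ra$ follows from the strong continuity of that same one-parameter group.

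With the hypothesis verified, I would observe that condition~(ii) of the theorem, taken at $\beta = 2\pi$, holds trivially, witnessed by $\cH$, the standard subspace $\sV$ itself, and the identity map $j := \id_\sV \: \sV \to \sV$: formula~\eqref{eq:pdform} in this case reads $\psi(t)(v,w) = \la v, \Delta_\sV^{-it/2\pi} w\ra$, which is the very definition of $\psi$. The equivalence (i)$\Leftrightarrow$(ii) of Theorem~\ref{thm:1-intro} therefore yields the KMS condition at $\beta = 2\pi$, completing the argument.

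I do not foresee any genuine obstacle: the corollary is essentially the tautological half of Theorem~\ref{thm:1-intro}. If a self-contained proof bypassing the theorem were desired, the plan would be to extend $\psi$ to $\oline{\cS_{2\pi}}$ via the spectral calculus of $\Delta_\sV$, splitting the exponent symmetrically in order to remain within $\cD(\Delta_\sV^{1/2}) \supeq \sV$; holomorphy on $\cS_{2\pi}$ and pointwise continuity on the closure would then follow from dominated convergence using the elementary bound $\lambda^{s/2\pi} \leq 1 + \lambda$ for $s \in [0,2\pi]$. The boundary identity $\psi(t + 2\pi i)(v,w) = \oline{\psi(t)(v,w)}$ would reduce to a short calculation combining $\Delta_\sV^{1/2} v = J_\sV v$ for $v \in \sV$ (from $\sV = \Fix(J_\sV \Delta_\sV^{1/2})$), the commutation $J_\sV \Delta_\sV^{it} = \Delta_\sV^{it} J_\sV$ of Remark~\ref{rem:NO15-prop-3.1}(a), and the antiunitary identity $\la J_\sV a, J_\sV b\ra = \oline{\la a, b\ra}$.
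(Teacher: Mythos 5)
Your proof is correct and follows the intended route: the corollary is indeed just the implication (ii)$\Rightarrow$(i) of Theorem~\ref{thm:1-intro} applied with $V = \sV$, $j = \id_\sV$, $\beta = 2\pi$, and verifying the continuity/positive-definiteness hypothesis by the same one-line Gram-matrix computation you give (the paper states this as an immediate corollary without writing out a proof). The only thing worth noting is that your verification of positive definiteness is strictly speaking redundant once you observe that the witnessing data of condition~(ii) already produce the realization $\psi(t_j-t_i)(v_i,v_j) = \la F_i, F_j\ra$ that gives positive definiteness for free; but checking the hypothesis explicitly is harmless and good hygiene.
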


\begin{remark} (KMS states of $C^*$-algebras) Important special cases arise 
  from $C^*$-dynamical systems $(\cA,\R,\alpha)$,
  where $\cA$ is a $C^*$-algebra and
  $\alpha \:\R \to \Aut(\cA)$ defines a strongly continuous
  $\R$-action on $\cA$.
  Let
  \[ V := \cA_h := \{A  \in \cA \:  A^* = A\} \]  and 
consider an $\alpha$-invariant state $\omega$ on $\cA$. 
Such a state is a {\it $\beta$-KMS state} \index{KMS state\scheiding} 
if and only if 
\[ \psi \: \R \to \Bil(\cA_h), \quad 
\psi(t)(A,B) :=\omega(A\alpha_t(B)) \] 
satisfies the KMS condition for $\beta > 0$  
(cf.\  \cite[Prop.~5.2]{NO15}, \cite[Thm.~4.10]{RvD77}, \cite{BR96}). 
If $(\pi_\omega, U^\omega, \cH_\omega, \Omega)$ 
is the corresponding covariant GNS representation of $(\cA,\R)$, 
then 
\[ \omega(A) = \la \Omega, \pi_\omega(A) \Omega \ra \quad \mbox{ for }\quad 
A \in \cA \quad \mbox{ and }\quad 
U^\omega_t \Omega = \Omega \quad \mbox{ for }\quad t \in \R.\] 
For $A, B \in \cA_h$, we thus obtain 
\begin{align*}
  \psi(t)(A,B)
  &= \omega(A\alpha_t(B)) 
= \la \Omega, \pi_\omega(A\alpha_t(B)) \Omega \ra \\
&= \la \Omega, \pi_\omega(A) U^\omega_t \pi_\omega(B) U^\omega_{-t} \Omega \ra 
  = \la \pi_\omega(A)\Omega,  U^\omega_t \pi_\omega(B) \Omega \ra
\end{align*}
The corresponding standard real subspace of 
$\cH_\omega$ is $\sV_{\cA,\Omega} := \oline{\pi_\omega(\cA_h)\Omega}$.
Here we use that the KMS condition implies that
$\Omega$ is a separating vector for the
von Neumann algebra~$\pi_\omega(\cA)''$
(cf.\ \cite{Si23} and \cite{BR87}). 
\end{remark}

\subsubsection{KMS vectors for $1$-parameter groups} 
\label{subsec:4.1}

In this subsection, we collect some general tools
concerning holomorphic extensions of
orbit maps of one-parameter groups on locally convex spaces
to strips in the complex plane. They are instrumental
in formulating Kubo--Martin--Schwinger (KMS) boundary conditions
that are related to the construction of standard subspaces
(see Definition~\ref{def:6}).

\begin{definition} \label{def:linear-KMS} Let $(U_t)_{t \in \R}$
  be a one-parameter subgroup of $\GL(\cY)$ for a
  topological vector space~$\cY$
  and $J$ an antilinear operator on $\cY$, commuting with $(U_t)_{t \in \R}$. 

  We write $\cY_{\rm KMS}$ for the subspace
  of those $y \in \cY$, whose orbit map
\[ U^v \: \R \to \cY, t \mapsto U_t v \]  extends to a continuous map on  
  $\oline{\cS_\pi}:=\R+\ie[0,\pi]$,
  holomorphic on the interior $\cS_\pi$,  such that\footnote{By equivariance, it actually suffices that $U^v(\pi \ie) = Jv$.}    
\begin{equation}
  \label{eq:kms}
  U^v(\pi\ie + t) = J U^v(t) = J U_t v\quad \mbox{ for }\quad t \in \R.
\end{equation}
We call the elements of this space {\it KMS vectors} (with respect to $U$
and $J$). 
\index{KMS vectors!in space $\cY$, $\cY_{\rm KMS}$  \scheiding } 
\end{definition}

To connect with standard subspaces, we first
derive a characterization of the elements 
of a standard subspace $\sV$ specified by the pair $(\Delta, J)$
as the space $\cH_{\rm KMS}$ 
for the unitary one-parameter group $(\Delta^{\frac{-it}{2\pi}})_{t \in \R}$ 
and the conjugation~$J$. 

\begin{proposition} \label{prop:standchar} Let $\sV \subeq \cH$ be a standard subspace 
with modular objects $(\Delta, J)$. For 
$\xi \in \cH$, we consider the orbit map $\alpha^\xi \: \R \to \cH, t \mapsto 
\Delta^{-it/2\pi}\xi$. Then the following are equivalent:
\begin{enumerate} 
\item[\rm(a)] $\xi \in \sV$. 
\item[\rm(b)] $\xi \in \cD(\Delta^{1/2})$ with $\Delta^{1/2}\xi = J\xi$. 
\item[\rm(c)] The orbit map $\alpha^\xi \: \R \to \cH$ 
extends to a continuous map  $\oline{\cS_\pi} \to \cH$ which is 
holomorphic on $\cS_\pi$ and satisfies $\alpha^\xi(\pi i) = J\xi$. 
\item[\rm(d)] There exists an element $\eta \in \cH^J$ 
whose orbit map $\alpha^\eta$ 
extends to a continuous map  $\oline{\cS_{\pm\pi/2}} \to \cH$ which is 
holomorphic on the interior and satisfies $\alpha^\eta(-\pi i/2) =~\xi$. 
\end{enumerate}
\end{proposition}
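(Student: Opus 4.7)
The plan is to establish the equivalences in the order (a)$\Leftrightarrow$(b), (b)$\Leftrightarrow$(c), (b)$\Leftrightarrow$(d), each relying on the polar decomposition $T_\sV = J\Delta^{1/2}$ and the spectral theorem applied to the positive selfadjoint operator~$\Delta$.

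The equivalence (a)$\Leftrightarrow$(b) is immediate from Proposition~\ref{prop:11}: $\sV = \Fix(J\Delta^{1/2})$, so $\xi \in \sV$ iff $\xi \in \cD(\Delta^{1/2})$ and $J\Delta^{1/2}\xi = \xi$, which, after applying the involution $J$, is the same as $\Delta^{1/2}\xi = J\xi$. For (b)$\Leftrightarrow$(c) I would invoke the standard spectral-theoretic fact that, for a positive selfadjoint~$\Delta$, the orbit map $t \mapsto \Delta^{-it/2\pi}\xi$ extends to a continuous function on $\oline{\cS_\pi}$, holomorphic on the interior, precisely when $\xi \in \cD(\Delta^{1/2})$, with boundary value $\alpha^\xi(t + i\pi) = \Delta^{1/2}\Delta^{-it/2\pi}\xi$. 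Evaluating at $t=0$ shows that the single boundary condition $\alpha^\xi(i\pi) = J\xi$ coincides with $\Delta^{1/2}\xi = J\xi$, while the equality at general $t$ then follows automatically since $J$ commutes with the modular group (Remark~\ref{rem:NO15-prop-3.1}(a)).

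For (b)$\Leftrightarrow$(d) I would use the half-step trick. Assuming (b), set $\eta := \Delta^{1/4}\xi$, which is defined since $\cD(\Delta^{1/2}) \subseteq \cD(\Delta^{1/4})$. The modular relation in the form $J\Delta^z = \Delta^{-\oline z}J$ gives
\[ J\eta = J\Delta^{1/4}\xi = \Delta^{-1/4}J\xi = \Delta^{-1/4}\Delta^{1/2}\xi = \Delta^{1/4}\xi = \eta, \]
so $\eta \in \cH^J$. The orbit map rewrites as $\alpha^\eta(z) = \Delta^{-iz/2\pi}\Delta^{1/4}\xi = \Delta^{1/4 - iz/2\pi}\xi$, which is holomorphic on $\cS_{\pm\pi/2}$ since the real part of the exponent lies in $(0,1/2)$ and $\xi \in \cD(\Delta^{1/2})$; it extends continuously to the closed strip and satisfies $\alpha^\eta(-i\pi/2) = \Delta^{0}\xi = \xi$. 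Conversely, if (d) holds, the boundary values of the holomorphic extension at $z = \pm i\pi/2$ force $\eta \in \cD(\Delta^{-1/4})\cap \cD(\Delta^{1/4})$ with $\xi = \Delta^{-1/4}\eta$; combining with $J\eta = \eta$ and the modular relation yields $J\xi = J\Delta^{-1/4}\eta = \Delta^{1/4}\eta \in \cH$, so $\xi \in \cD(\Delta^{1/2})$ and $\Delta^{1/2}\xi = \Delta^{1/4}\eta = J\xi$, which is~(b).

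The only technical ingredient worth singling out is the strip-analyticity lemma characterizing when the orbit of a unitary one-parameter group extends holomorphically to a strip $\cS_\beta$ in terms of membership of the initial vector in the domain of the corresponding fractional power of the generator. This is a routine consequence of the spectral theorem combined with dominated convergence and Morera's theorem, so it is not really an obstacle once cited; after that, the proof is a short manipulation based on the modular relation $J\Delta J = \Delta^{-1}$ and the involutivity of~$J$.
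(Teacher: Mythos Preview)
Your proof is correct and follows essentially the same route as the paper's, with only a minor organizational difference: the paper cycles through (b)$\Rightarrow$(c)$\Rightarrow$(d)$\Rightarrow$(b), whereas you prove (b)$\Leftrightarrow$(c) and (b)$\Leftrightarrow$(d) separately. Your direct construction $\eta := \Delta^{1/4}\xi$ in (b)$\Rightarrow$(d) is exactly the paper's $\eta := \alpha^\xi(\pi i/2)$, and both arguments for (d)$\Rightarrow$(b) rest on the same strip-analyticity characterization of $\cD(\Delta^{\pm 1/4})$ via the spectral theorem (the paper spells this out through Laplace transforms of the spectral measure, which is what your cited lemma amounts to).
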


\begin{proof} The equivalence of (a) and (b) follows from the definition 
of $\Delta$ and $J$. 

\nin (b) $\Rarrow$ (c): If $\xi \in \cD(\Delta^{1/2})$, then 
$\xi \in \cD(\Delta^z)$ for $0 \leq \Re z \leq 1/2$, so that the map 
\[ f \: \oline{\cS_\pi} \to \cH, \quad 
f(z) := \Delta^{-\frac{iz}{2\pi}} \xi \] 
is defined. 
Let $P$ denote the spectral measure of the selfadjoint operator  
\[ H := - \frac{1}{2\pi} \log\Delta \quad \mbox{ and let } \quad 
P^\xi = \la \xi, P(\cdot) \xi\ra \] 
denote the corresponding positive 
measure on $\R$ defined by $\xi \in \cH$. 
Then \cite[Lemma~A.2.5]{NO18}  shows that 
\[ \cL(P^\xi)(2\pi) = \int_\R e^{-2\pi\lambda}\, dP^\xi(\lambda)
  = \|e^{-\pi H} \xi\|^2  =  \|\Delta^{1/2} \xi\|^2 < \infty.\] 
This implies that the kernel 
\begin{align*}
 \la f(w), f(z) \ra 
&= \la \Delta^{-\frac{iw}{2\pi}} \xi, \Delta^{-\frac{iz}{2\pi}} \xi \ra 
= \la  \xi, \Delta^{-\frac{i(z-\oline w)}{2\pi}} \xi \ra \\
&= \la  \xi, e^{(z-\oline w)i H} \xi \ra 
     = \cL(P^\xi)\Big(\frac{z-\oline w}{i}\Big)
\end{align*}
is continuous on $\oline{\cS_{\pi}} \times \oline{\cS_{\pi}}$ 
by the Dominated Convergence Theorem,  
holomorphic in $z$, and antiholomorphic in $w$ on 
the interior (\cite[Prop.~V.4.6]{Ne99}). 
This implies (c) because it shows that 
$f$ is holomorphic on $\cS_\pi$ (\cite[Lemma~A.III.1]{Ne99}) 
and continuous on $\oline{\cS_\pi}$
(Exercise~\ref{exer:ker-cont}). 

\nin (c) $\Rarrow$ (d): For $\alpha^\xi \: \oline{\cS_\pi} \to \cH$ 
as in (c), we have 
\begin{equation}
  \label{eq:jequiv}
J \alpha^\xi(z) = \alpha^\xi(\pi i + \oline z) 
\end{equation}
by analytic continuation, so that 
\[ \eta := \alpha^\xi(\pi i/2) \in \cH^J \quad \mbox{ with } \quad 
\alpha^\eta(z) = \alpha^\xi\Big(z + \frac{\pi i}{2}\Big).\] 

\nin (d) $\Rarrow$ (b): We abbreviate $\cS := \cS_{\pm\pi/2}$. 
The kernel 
$K(z,w) := \la \alpha^\eta(w), \alpha^\eta(z) \ra$ 
is continuous on $\oline{\cS} \times \oline{\cS}$ and 
holomorphic in $z$ and antiholomorphic in $w$ on $\cS$. 
It also  satisfies 
$K(z + t, w) = K(z,w - t)$ for $t \in \R$. Hence there exists a 
continuous function $\phi$ on $\oline{\cS}$, holomorphic on $\cS$, such that 
\[ K(z,w) = \phi\Big(\frac{z - \oline w}{2}\Big).\] 
For $t \in \R$, we then have 
$\phi(t) = \la \eta, \alpha^\eta(2t) \ra = \int_\R e^{2it\lambda}\, dP^\eta(\lambda)$, 
so that \cite[Lemma~A.2.5]{NO18} yields 
$\cL(P^\eta)(\pm\pi) < \infty$ and 
$\eta \in \cD(\Delta^{\pm 1/4})$. This implies that 
$\alpha^\eta(z) = \Delta^{-iz/2\pi} \eta$ for $z \in \oline{\cS}$. 

From 
$\xi = \alpha^\eta(-\pi i/2) = \Delta^{-1/4} \eta$ we derive that 
\[  \alpha^\xi(z) = \alpha^\eta\Big(z- \frac{\pi i}{2}\Big) 
=  \Delta^{-iz/2\pi} \xi \quad \mbox{ for } \quad z \in \oline{\cS_\pi}.\] 
Further, $J\eta = \eta$ implies  
\[  J\alpha^\xi(z) 
= J\alpha^\eta\Big(z- \frac{\pi i}{2}\Big)
= \alpha^\eta\Big(\oline z+  \frac{\pi i}{2}\Big)
= \alpha^\xi(\pi i + \oline z).\] 
For $z = 0$, we obtain in particular 
$J\xi = \alpha^\xi(\pi i) = \Delta^{1/2}\xi$. 
\end{proof}

In \cite{BN24} we study for an antiunitary representation
$(U,\cH)$ of $G_{\tau_h}$ the space
$\cH^{-\infty}_{\rm KMS} := (\cH^{-\infty})_{\rm KMS}$
of KMS distribution vectors (see Appendix~\ref{app:c} for details),
on which we have the one-parameter group $U^{-\infty}(\exp th)$
generated by the Euler element $h$ and the action of the conjugation
$J^{-\infty} = U^{-\infty}(\tau_h)$. 

Our general results imply in particular 
the following theorem, which is a key tool to verify that
nets of real subspaces satisfy the Bisognano--Wichmann property
$\sH(W) \subeq \sV$. 
  \begin{theorem} \label{thm:BN24}
    Let $(U,\cH)$ be an antiunitary representation of
    $G_{\tau_h}$ and $\sV \subeq \cH$
    the standard subspace specified by $\Delta_\sV = e^{2\pi i \cdot \partial U(h)}$
    and $J_\sV = U(\tau_h)$.
Then the following assertions hold: 
\begin{description}
\item[\rm(a)]  $\cH^{-\infty}_{\rm KMS} := (\cH^{-\infty})_{\rm KMS}$
  is a weak-$*$ closed subspace of $\cH^{-\infty}$. 
\item[\rm(b)]  $\cH^{-\infty}_{\rm KMS}\cap\cH=\sV$. 
\item[\rm(c)] $\sV$ is weak-$*$ dense in $\cH^{-\infty}_{\rm KMS}$.
\end{description}  
  \end{theorem}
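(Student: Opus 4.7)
The plan is to prove the assertions in the order (b), (c), (a). The first two follow relatively directly from Proposition~\ref{prop:standchar} combined with standard smoothing arguments, while (a) requires genuine analytic input on the strip and will be the main obstacle.

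For (b), the inclusion $\sV \subeq \cH \cap \cH^{-\infty}_{\rm KMS}$ is immediate from the implication (a)$\Rightarrow$(c) of Proposition~\ref{prop:standchar}: for $\xi \in \sV$ the norm-continuous, norm-holomorphic strip extension with boundary value $J_\sV \xi$ at $\pi i$ is a fortiori a weak-$*$ extension into $\cH^{-\infty}$. Conversely, let $\xi \in \cH \cap \cH^{-\infty}_{\rm KMS}$. For each $\eta \in \cH^\infty$, the scalar function $F_\eta(z) := \la U_z^{-\infty}\xi, \eta\ra$ is continuous on $\oline{\cS_\pi}$, holomorphic on $\cS_\pi$, and has boundary values $\la U(\exp th)\xi, \eta\ra$ on $\R$ and $\oline{\la U(\exp th)\xi, J_\sV\eta\ra}$ on $\pi i + \R$, where I use that $J_\sV$ preserves $\cH^\infty$ since it implements the automorphism $\tau_h$ of $G$. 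Both boundary functions are bounded by $\|\xi\|\,\|\eta\|$, and a Phragm\'en--Lindel\"of argument on $\cS_\pi$ (the interior growth of $F_\eta$ being controlled by the weak-$*$ continuity of the $\cH^{-\infty}$-valued extension) yields $|F_\eta(z)| \le \|\xi\|\,\|\eta\|$ throughout the strip. Thus the strip extension actually takes values in $\cH$ with uniformly bounded norm; the spectral characterization of $\cD(\Delta_\sV^{1/2})$ via the spectral measure of $\partial U(h)$ then gives $\xi \in \cD(\Delta_\sV^{1/2})$ with $\Delta_\sV^{1/2}\xi = J_\sV \xi$, so $\xi \in \sV$ by Proposition~\ref{prop:standchar}(b).

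For (c), I would approximate a given $\alpha \in \cH^{-\infty}_{\rm KMS}$ by smoothings $\alpha_\phi := U^{-\infty}(\phi)\alpha \in \cH$ for $\phi \in C^\infty_c(G,\R)$, taking $\phi$ to run through an approximate identity at $e$, so that $\alpha_\phi \to \alpha$ weak-$*$. The strip extension of the orbit map of $\alpha_\phi$ is obtained by pushing the integral through: convolving the weak-$*$ continuous, strip-holomorphic $\cH^{-\infty}$-valued map $z \mapsto U_z^{-\infty}\alpha$ against a compactly supported smooth test function yields a norm-continuous, strip-holomorphic $\cH$-valued map, and the KMS boundary identity is inherited from that of~$\alpha$. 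Hence $\alpha_\phi \in \cH \cap \cH^{-\infty}_{\rm KMS} = \sV$ by (b), which proves the claimed weak-$*$ density.

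The main obstacle is (a). Pointwise weak-$*$ limits of holomorphic functions are generally not holomorphic without uniform control on compacta, so a weak-$*$ convergent net $\alpha_i \to \alpha$ with $\alpha_i \in \cH^{-\infty}_{\rm KMS}$ does not obviously inherit a strip extension. My plan is to combine two ingredients. First, the Banach--Steinhaus theorem, applied to the Fr\'echet space $\cH^\infty$, yields equicontinuity of $(\alpha_i)$: there is a continuous seminorm $p$ on $\cH^\infty$ with $|\alpha_i(\xi)| \le p(\xi)$ for all $i$ and all $\xi \in \cH^\infty$. For each fixed $\xi \in \cH^\infty$ this bounds the two boundary values of $F_i^\xi(z) := \la U_z^{-\infty}\alpha_i, \xi\ra$ uniformly in $i$ and $t \in \R$ by $p(\xi) + p(J_\sV\xi)$. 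Second, a Phragm\'en--Lindel\"of argument on the strip---with interior growth of $F_i^\xi$ controlled by the same seminorms applied to the smooth vectors $U(\exp zh)\xi$ lifted via equicontinuity---promotes the boundary bounds to uniform bounds on compact subsets of $\cS_\pi$. Vitali's theorem then upgrades pointwise convergence on $\R$ to locally uniform convergence on $\cS_\pi$, so each limit $F^\xi(z) := \lim_i F_i^\xi(z)$ is holomorphic on $\cS_\pi$, continuous on $\oline{\cS_\pi}$, and satisfies the KMS boundary identity. Varying $\xi$ over $\cH^\infty$ assembles these into a weak-$*$ continuous, strip-holomorphic $\cH^{-\infty}$-valued extension of $z \mapsto U_z^{-\infty}\alpha$ with the KMS boundary relation, proving $\alpha \in \cH^{-\infty}_{\rm KMS}$.
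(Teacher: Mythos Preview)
The paper itself only cites \cite{BN24} for all three parts, so your attempt to supply actual arguments is welcome. Part (b) is essentially correct, though you should make explicit that the interior growth hypothesis for Phragm\'en--Lindel\"of is available: writing $U^{-\infty,\xi}_h(t+iy) = U_h^{-\infty}(t)\,U^{-\infty,\xi}_h(iy)$ and using that $\{U^{-\infty,\xi}_h(iy) : y\in[0,\pi]\}$ is weak-$*$ compact (hence equicontinuous on the Fr\'echet space $\cH^\infty$), one gets $|F_\eta(t+iy)| \le p(U_h(-t)\eta)$; since $h$ is Euler, $p_D(U_h(-t)\eta) = p_{\Ad(\exp th)D}(\eta)$ grows at worst like $e^{N|t|}$, which is far below the critical order for the strip.

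Part (c), however, has a genuine error. You form $\alpha_\phi = U^{-\infty}(\phi)\alpha$ for $\phi\in C^\infty_c(G,\R)$ and claim its $U_h$-orbit map inherits a strip extension by ``pushing the integral through'', i.e., via $z\mapsto U^{-\infty}(\phi)\,U^{-\infty,\alpha}_h(z)$. But this map restricts on $\R$ to $t\mapsto U^{-\infty}(\phi)\,U_h^{-\infty}(t)\alpha$, whereas the orbit map of $\alpha_\phi$ is $t\mapsto U_h(t)\,U^{-\infty}(\phi)\alpha$; these differ because $U^{-\infty}(\phi)$ does not commute with $U_h^{-\infty}(t)$ for general $\phi$ on $G$. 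Smoothing only along $\exp(\R h)$ does preserve the KMS relation, but then $U_h^{-\infty}(\psi)\alpha$ need not land in $\cH$ when $\alpha$ is merely a $G$-distribution vector. So neither smoothing alone closes the gap; the argument as written does not establish $\alpha_\phi \in \cH^{-\infty}_{\rm KMS}$.

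Part (a) has a related weakness: your uniform-in-$i$ interior bound is asserted via ``seminorms applied to $U(\exp zh)\xi$'', but $U(\exp zh)\xi$ is undefined for complex $z$ unless $\xi$ is $U_h$-analytic. The equicontinuity seminorm $p$ you extract from Banach--Steinhaus bounds the \emph{boundary} values uniformly in $i$, but transferring this to the interior requires controlling $\{U^{-\infty,\alpha_i}_h(iy) : i, y\in[0,\pi]\}$ jointly, which does not follow from equicontinuity of $\{\alpha_i\}$ alone. One way to repair both (a) and (c) simultaneously is to prove the annihilator characterization $\cH^{-\infty}_{\rm KMS} = (i\sV'\cap\cH^\infty)^{\circ}$ directly: for $\xi\in\sV'\cap\cH^\infty$ the orbit map $t\mapsto U_h(-t)\xi$ extends to $\oline{\cS_\pi}$ with $U_h(-\pi i)\xi = J\xi$, so $F(z) = \alpha(U_h(-z)\xi)$ gives a second extension of $F$ that, compared with the KMS extension at $z=\pi i$, forces $\alpha(J\xi)\in\R$. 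This yields weak-$*$ closedness immediately and density of $\sV$ by bipolar.
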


  \begin{proof} (a)-(c) follow from \cite[Thm.~6.2]{BN24},
    \cite[Thm.~6.4]{BN24}, and \cite[Thm.~6.5]{BN24}, respectively. 
  \end{proof}

  \subsubsection{Boundary values for one-parameter groups}
\label{subsubsec:bv-d=1} 

In this section, we collect some useful
facts on boundary values of analytically
extended orbit maps of unitary one-parameter groups~$(U_t)_{t \in \R}$
and a conjugation $J$, commuting with $U$.
The main point is to identify the subspace
$\cH^{-\infty}_{\rm KMS}$ of distribution vectors, satisfying the
KMS condition (cf.\ Definition~\ref{def:linear-KMS}) 
with elements of the real subspace $\cH^J_{\rm temp}$, specified in
terms of the spectral measure~$P$ of $U$.

Let $P$ be the uniquely determined spectral measure
on $\R$ for which
\[ U_t = \int_\R e^{itx}\, dP(x), \quad \mbox{ resp.} \quad
  U_t = e^{itA}, \ t \in \R,  \quad \mbox{ with } \quad
  A = \int_\R p\, dP(p).\] 
For $\xi \in \cH$, we thus obtain finite positive measures $P^\xi := \la \xi, P(\cdot) \xi\ra$,
and we define
\begin{equation}
  \label{eq:hjtemp}
 \cH^J_{\rm temp} := \{ \xi \in \cH^J \: e^{ \pi p}\, dP^\xi(p)\ 
  \mbox{ temp.}\}
  = \{ \xi \in \cH^J \: e^{-\pi p}\, dP^\xi(p)\
  \mbox{ temp.}\}.
\end{equation}
The equality of both spaces on the right follows from the symmetry
of the measures $P^\xi$, which is a consequence of $J\xi = \xi$.
For the positive selfadjoint operator $\Delta := e^{-2\pi A}$, we have
$J\Delta J = \Delta^{-1}$, so that
$J\cD(\Delta^{1/4}) = \cD(\Delta^{-1/4})$ implies that
\begin{align} 
  \label{eq:defdelta14}
  \cD(\Delta^{1/4}) \cap \cH^J
 & = \cD(\Delta^{-1/4}) \cap \cH^J 
= \Big\{ \xi \in \cH^J \: \int_\R e^{\pm \pi p}\, dP^\xi(p) < \infty\Big\}
   \notag\\
 &\subeq \cH^J_{\rm temp}.
\end{align}

\begin{theorem}   \label{thm:6-1-fno24} {\rm(\cite[Thm.~6.1]{FNO25b})}
  For $\xi \in \cH^J \cap \bigcap_{|t| < \pi/2} \cD(e^{tA})$, 
the following are equivalent:
\begin{description}
\item[\rm(a)] $\xi \in \cH^J_{\rm temp}$. 
\item[\rm(b)] The limits $\beta^{\pm}(\xi) := \lim_{t \to \pm\pi/2} e^{-tA} \xi$
  exist in $\cH^{-\infty}(U)$.
\item[\rm(c)] There exist $C,N > 0$ such that 
  $\|e^{\pm tA}\xi\|^2 \leq C \big(\frac{\pi}{2}-|t|\big)^{-N}$
  for $|t| < \pi/2$.   
\end{description}
\end{theorem}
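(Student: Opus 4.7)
The plan is to reduce everything to spectral theory of the generator $A$ and to exploit the symmetry of $P^\xi$. Since $J$ is antilinear and commutes with $U_t = e^{itA}$, one has $JAJ^{-1} = -A$; together with $J\xi = \xi$ this forces $P^\xi(-B) = P^\xi(B)$ for every Borel set $B$, so that $\|e^{tA}\xi\|^2 = \|e^{-tA}\xi\|^2 = \int e^{2tp}\,dP^\xi(p)$ and the two temperedness conditions defining $\cH^J_{\rm temp}$ coincide. It therefore suffices to control matters on one side of the strip. Everywhere below the contribution of $\{p \leq 0\}$ is trivially bounded because $P^\xi$ is finite and the relevant exponential weights are bounded by $1$ there, so I concentrate on the $p > 0$ half-line.

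For (a) $\Leftrightarrow$ (c) I work with $s := \pi/2 - |t|$ and use elementary Laplace estimates. For (a) $\Rightarrow$ (c) the calculus bound $(1+p)^{N_0} e^{-2sp} \leq C_{N_0}\, s^{-N_0}$ (maximum at $p = N_0/(2s) - 1$), rewritten as $e^{-2sp} \leq C_{N_0}\, s^{-N_0}(1+p)^{-N_0}$, inserted into
\[
\int_0^\infty e^{2tp}\,dP^\xi(p) = \int_0^\infty e^{\pi p} e^{-2sp}\,dP^\xi(p),
\]
yields the bound in (c) as soon as $N_0$ is taken large enough that $\int e^{\pi p}(1+p)^{-N_0}\,dP^\xi < \infty$, which holds by (a). Conversely, for (c) $\Rightarrow$ (a) I integrate $\|e^{tA}\xi\|^2$ against $(\pi/2 - t)^k\,dt$ over $(0, \pi/2)$: by (c) the resulting integral is finite for $k > N - 1$, and Fubini together with $s = \pi/2 - t$ rewrites it as $\int dP^\xi(p) \int_0^{\pi/2} e^{\pi p}e^{-2sp} s^k\,ds$, whose inner factor is bounded below by $c\,e^{\pi p}/(1+p)^{k+1}$ for $p > 0$. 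Hence $e^{\pi p}(1+p)^{-(k+1)}$ is integrable against $dP^\xi$ on $p > 0$, proving (a).

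For (a) $\Rightarrow$ (b) I define the candidate limit $\beta^-(\xi) \in \cH^{-\infty}(U)$ on smooth vectors $\eta \in \cH^\infty(U)$ by
\[
\beta^-(\xi)(\eta) := \int_\R e^{\pi p/2}\,dP^{\xi,\eta}(p), \qquad P^{\xi,\eta}(\cdot) := \la \xi, P(\cdot)\eta\ra.
\]
The splitting $e^{\pi p/2} = e^{\pi p/2}(1+|p|)^{-M} \cdot (1+|p|)^M$ combined with the Cauchy--Schwarz inequality for spectral bimeasures gives
\[
|\beta^-(\xi)(\eta)|^2 \leq \Big(\int e^{\pi p}(1+|p|)^{-2M}\,dP^\xi\Big) \cdot \|(1+|A|)^M\eta\|^2,
\]
finite for $M$ large by (a); thus $\beta^-(\xi)$ is a continuous functional on $\cH^\infty(U)$. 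The same Cauchy--Schwarz argument applied to $e^{-tp} - e^{\pi p/2}$ plus dominated convergence on the finite measure $(1+|p|)^{-2M}\,dP^\xi$ (using $|e^{-tp}| \leq e^{\pi p/2}$ on $p > 0$ and $|e^{-tp}| \leq 1$ on $p \leq 0$ for $t \in (-\pi/2, 0)$) shows $\la e^{-tA}\xi, \eta\ra \to \beta^-(\xi)(\eta)$ as $t \to -\pi/2^+$. The case of $\beta^+$ is symmetric.

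For (b) $\Rightarrow$ (a) I appeal to Banach--Steinhaus: convergence of $\la e^{-tA}\xi,\eta\ra$ at $t = \pm\pi/2$ for every smooth $\eta$ makes the family $\{e^{-tA}\xi\}_{|t|<\pi/2}$ pointwise bounded on the Fr\'echet space $\cH^\infty(U)$, so it is equicontinuous and satisfies $|\la e^{-tA}\xi,\eta\ra| \leq C\,\|(1+|A|)^k\eta\|$ uniformly in $t$ for some $C, k$. This is equivalent to $\int (1+|p|)^{-2k} e^{-2tp}\,dP^\xi \leq C^2$, and Fatou's lemma as $t \to -\pi/2^+$ delivers $\int (1+|p|)^{-2k} e^{\pi p}\,dP^\xi \leq C^2$, i.e., (a). The main technical obstacle throughout is the careful bookkeeping of weights and exponents: one must juggle the polynomial factors $(1+|p|)^M$ against the exponential weights $e^{\pi p}$ so that the Cauchy--Schwarz splittings land in subspaces where the integrals are actually finite. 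Once the spectral symmetry reduces the problem to a one-sided Laplace estimate, the rest is a clean cycle of Abelian/Tauberian arguments together with Banach--Steinhaus and Fatou.
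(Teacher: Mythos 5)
Your argument is correct, but it takes a genuinely different route from the paper. The paper's proof is almost entirely a citation argument: it equates (a) with (c) by invoking \cite[Prop.~4]{FNO25a} (which packages the Laplace estimate relating temperedness of $e^{\pi p}\,dP^\xi$ to polynomial growth of $\int e^{(\pi-t)p}\,dP^\xi$) and equates (a) with (b) by invoking \cite[Lemma~10.7]{NO15} (which characterizes when $e^{\pi p/2}$ defines a distribution vector for the multiplication representation on $L^2(\R,P^\xi)$, identified with the cyclic subrepresentation generated by $\xi$). You instead reprove the underlying one-dimensional estimates directly: the calculus bound $(1+p)^{N_0}e^{-2sp}\lesssim s^{-N_0}$ and its Tauberian converse via integration against $(\pi/2-t)^k\,dt$ and a lower bound on the incomplete Gamma integral give (a) $\Leftrightarrow$ (c); Cauchy--Schwarz splittings $e^{\pi p/2}=[e^{\pi p/2}(1+|p|)^{-M}]\cdot[(1+|p|)^M]$ plus dominated convergence give (a) $\Rightarrow$ (b); and Banach--Steinhaus on the Fr\'echet space $\cH^\infty(U)$ together with Fatou give (b) $\Rightarrow$ (a). The spectral-symmetry observation $P^\xi(-B)=P^\xi(B)$ (coming from $JAJ^{-1}=-A$ and $J\xi=\xi$), which you use to reduce to the half-line $p>0$, is exactly the ``Here we use that the measure $P^\xi$ is symmetric'' remark in the paper's last step. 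What your approach buys is self-containedness: a reader need not chase the two external lemmas, and the estimates driving the equivalence are visible. What the paper's approach buys is brevity and the explicit identification of (b) with the abstract notion of a distribution vector for the cyclic subrepresentation, which is structurally how the result is actually used downstream. One minor bookkeeping issue: since $\cH^{-\infty}(U)$ consists of \emph{antilinear} functionals, the pairing $\beta^-(\xi)(\eta)$ should carry a complex conjugate (it equals $\overline{\int e^{\pi p/2}\,dP^{\xi,\eta}}$ with your convention $P^{\xi,\eta}=\la\xi,P(\cdot)\eta\ra$); this does not affect any of the estimates.
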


\begin{proof} (a) $\Leftrightarrow$ (b): 
From \cite[Prop.~4]{FNO25a}, we recall that the temperedness of the
measure $\nu_\xi$, given by  $d\nu_\xi(p) := e^{\pi p}\, dP^\xi(p)$, 
is equivalent to the existence of $C, N > 0$ with 
\[ \int_\R e^{(\pi - t)p}\, dP^\xi(p) \leq C t^{-N}
  \quad \mbox{ for }  \quad 0 \leq t < \pi.\] 
Further, \cite[Lemma~10.7]{NO15} shows that this condition is equivalent
to the function $e^{\pi p/2}$ to define a distribution vector
for the canonical multiplication representation on $L^2(\R, P^\xi)$.
This representation is equivalent to the subrepresentation of $(U,\cH)$, 
generated by~$\xi$, where the constant function~$1$ corresponds to~$\xi$.

\nin (b) $\Rarrow$ (c): If $\lim_{t \to \pi/2} e^{tA} \xi$ exist in $\cH^{-\infty}(U)$, 
then   \cite[Lemma~10.7]{NO15}, applied to the cyclic subrepresentation 
generated by $\xi$, implies that the measure $\nu_\xi$ 
is tempered. Then the argument from above implies the existence
of $C, N > 0$ with 
\begin{equation}
  \label{eq:exp-esti}
 \|e^{tA}\xi\|^2 = \int_\R e^{2tx}\, dP^\xi(x)  
  \leq C \Big(\frac{\pi}{2}-t\Big)^{-N}
  \quad \mbox{ for }  \quad
  |t| < \pi/2.
\end{equation}
If $\lim_{t \to -\pi/2} e^{tA} \xi$ also exists in $\cH^{-\infty}(U)$,
then the same argument  applies again and we obtain (c). 

\nin (c) $\Rarrow$ (a): With the leftmost equality in
\eqref{eq:exp-esti}, we see that (c) implies that
the measures $d\nu_\xi(x) := e^{\pm \pi x}\, dP^\xi(x)$ are tempered
(\cite[Prop.~4]{FNO25a}). 
  Here we use that the measure $P^\xi$ is symmetric because $J\xi = \xi$.
\end{proof}

\begin{proposition} The map $\beta^+$ defines a bijection 
$\beta^+ \: \cH^J_{\rm temp} \to  \cH^{-\infty}_{\rm KMS}.$ 
\end{proposition}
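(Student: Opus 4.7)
The plan is to deduce the three assertions behind the bijection --- image contained in $\cH^{-\infty}_{\rm KMS}$, injectivity, and surjectivity --- from the analytic-continuation machinery furnished by Theorem~\ref{thm:6-1-fno24}, combined with the reflection symmetry induced by $J$-invariance.

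First, for $\xi \in \cH^J_{\rm temp}$, Theorem~\ref{thm:6-1-fno24} provides the holomorphic orbit map $F(z) := U_z \xi$ on $\cS_{\pm\pi/2}$, $\cH$-valued in the interior and with $\cH^{-\infty}(U)$-valued boundary traces $\beta^\pm(\xi)$ on $\R \pm i\pi/2$. Since $J$ commutes with $U$ and anticommutes with the generator~$A$, the identity $J\xi = \xi$ upgrades this to the reflection $JF(z) = F(\oline{z})$, so the two boundary traces are $J$-conjugate. Shifting $F$ produces a holomorphic map on the wider strip $\cS_\pi$ with values in $\cH^{-\infty}(U)$ whose upper and lower edges recover $U_s \beta^+(\xi)$ and its $J$-image; this supplies exactly the holomorphic extension required by Definition~\ref{def:linear-KMS}, witnessing $\beta^+(\xi) \in \cH^{-\infty}_{\rm KMS}$.

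Injectivity is then easy: in the spectral picture for $(U,A)$, $\beta^+$ acts as multiplication by the nowhere-vanishing function $e^{-\pi p/2}$ on the spectral measure $dP^\xi$, so $\beta^+(\xi) = 0$ forces $dP^\xi = 0$ and hence $\xi = 0$. The same conclusion can be extracted analytically from uniqueness of continuation of the $\cH$-valued map $t \mapsto e^{-tA}\xi$ through the strip $|t| < \pi/2$, given vanishing of its boundary distribution at $t = \pi/2$.

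For surjectivity, start with $\alpha \in \cH^{-\infty}_{\rm KMS}$ and its holomorphic extension $\Phi \colon \oline{\cS_\pi} \to \cH^{-\infty}(U)$ with $\Phi(i\pi + s) = J U_s \alpha$. Set $\xi := \Phi(i\pi/2)$. The KMS identity allows a Schwarz-type reflection of $\Phi$ through the midline $\Im z = \pi/2$ via $J$, which both forces $J\xi = \xi$ and, by the standard interpolation gain for strip-valued holomorphic maps with distributional boundary data, places $\xi$ in $\cH$. The tempered growth of $\Phi$ at the two boundary edges passes through this reflection to yield temperedness of $e^{\pi p}\,dP^\xi$, so $\xi \in \cH^J_{\rm temp}$, and retracing the extension shows $\beta^+(\xi) = \alpha$. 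The main obstacle I anticipate is this regularity claim --- that $\Phi(i\pi/2)$, built from purely distributional boundary data, actually lies in $\cH$ with tempered spectral measure --- which requires combining the two edge bounds for $\Phi$ through the KMS symmetry and the reflection principle.
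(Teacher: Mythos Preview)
For injectivity and surjectivity your plan matches the paper. In particular, for surjectivity the paper sets $\xi := U^\gamma(i\pi/2)$, derives $J\xi = \xi$ from the reflection $JU^\gamma(z) = U^\gamma(\oline z + i\pi)$, and resolves the obstacle you flag---that $\xi \in \cH$ rather than only $\cH^{-\infty}$---by passing to the cyclic subrepresentation $L^2(\R,\nu)$ generated by $\gamma$ (with $\gamma \leftrightarrow 1$), where $\xi$ becomes the explicit function $e^{-\pi p/2}$ and the $L^2$ condition becomes a concrete estimate on $\nu$. Your ``interpolation via KMS reflection'' intuition is exactly right here: the boundary identity $J\gamma = e^{-\pi p}$ together with $J$ isometric forces the symmetry $d\nu(-p) = e^{-2\pi p}\,d\nu(p)$, which reduces $\int e^{-\pi p}\,d\nu$ to $2\int_{p>0}e^{-\pi p}\,d\nu$, finite by temperedness of $\nu$. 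The spectral model is what makes this step tractable.

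Your route for (a) genuinely differs from the paper's. The paper does not build the $\cH^{-\infty}$-valued KMS extension directly; it instead proves the smearing characterization
\[ \cH^{-\infty}_{\rm KMS} = \{\alpha \in \cH^{-\infty} : U^{-\infty}(\phi)\alpha \in \sV \text{ for all } \phi \in C^\infty_c(\R,\R)\} \]
via Theorem~\ref{thm:BN24}, then verifies it for $\beta^+(\xi)$ by showing $U(\phi)\xi \in \cD(\Delta^{\pm 1/4}) \cap \cH^J$ and invoking Proposition~\ref{prop:standchar}. Your direct shift works too, with two caveats. First, Theorem~\ref{thm:6-1-fno24} assumes $\xi \in \bigcap_{|t|<\pi/2}\cD(e^{tA})$, which you must first deduce from $\xi \in \cH^J_{\rm temp}$ (combine the symmetry of $P^\xi$ with temperedness of $e^{\pi p}\,dP^\xi$ to bound $\int e^{2tp}\,dP^\xi$). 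Second, and more delicate: for the shift to land in $\oline{\cS_\pi}$ with $U_t\beta^+(\xi)$ on the \emph{lower} edge (as Definition~\ref{def:linear-KMS} requires), $\beta^+(\xi)$ must be the boundary trace at $\R - i\pi/2$, i.e., $\lim_{t\to\pi/2} e^{tA}\xi = \Delta^{-1/4}\xi$ as in \eqref{eq:beta+} and the surjectivity computation. Placing it at $\R + i\pi/2$ (as the sign in the statement of Theorem~\ref{thm:6-1-fno24}(b) suggests, contrary to its own proof) shifts to the strip $\{-\pi \le \Im z \le 0\}$ and yields the anti-KMS condition instead. The paper's indirect route sidesteps this sign sensitivity and delivers the smearing characterization as a reusable tool (cf.~Proposition~\ref{prop:4.8}).
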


\begin{proof} (a) Let $\xi \in \cH^J_{\rm temp}$.
  First we show that  $\beta^+(\xi) \in \cH^{-\infty}_{\rm KMS}$.
To this end, note that, for a real-valued test function
$\phi \in C^\infty_c(\R,\R)$, we have $J U(\phi) = U(\phi)J$.
For $\xi \in \cH^J_{\rm temp}$ we therefore have
$\eta := U(\phi)\xi \in \cH^J_{\rm temp}$.

For $\xi \in \cH$ we also note that the spectral integral
$U_t = \int_\R e^{itp}\, dP(p)$ representation yields with
\[   \hat\phi(x) = \int_\R e^{itx}\phi(t)\, dt\]
the relation 
\begin{align*}
 U(\phi)
 & = \int_\R \phi(t) U_t \, dt 
   = \int_\R \phi(t) \int_\R e^{itp}\, dP(p)\, dt
   = \int_\R \phi(t) \int_\R e^{itp}\, dt\, \, dP(p)\\
&  = \int_\R \Big(\int_\R \phi(t) e^{itp}\, dt\Big)\, dP(p) = \int_\R \hat\phi(p)\, dP(p).
\end{align*}
Therefore 
\[ dP^\eta(x) = |\hat\phi(x)|^2 dP^\xi(x),\]
where $\hat\phi$ is a Schwartz function.
The temperedness of $P^\xi$ hence implies that the measure
  \[ e^{\pi x}\, dP^\eta(x) = e^{\pi x} |\hat \phi(x)|^2\, dP^\xi(x) \]
  is finite, and thus $\eta \in \cD(\Delta^{1/4}) \cap \cH^J$. 
  This implies with Proposition~\ref{prop:standchar} that 
  \begin{equation}
    \label{eq:eta-prop}
 U^{-\infty}(\phi) \beta^+(\xi) = \beta^+(U(\phi)\xi)
 = \beta^+(\eta)  = \Delta^{1/4}\eta   \in \sV.
\end{equation}

Next we claim that 
\begin{equation}
  \label{eq:kmsrel1}
 \cH^{-\infty}_{\rm KMS}= \{ \alpha \in \cH^{-\infty} \:
 (\forall \phi \in C^\infty_c(\R,\R))\,   U^{-\infty}(\phi) \alpha \in \sV \}.
\end{equation}
In fact, if $\alpha \in \cH^{-\infty}$ satisfies 
$U^{-\infty}(\phi) \alpha \in \sV$ for all $\phi \in C^\infty_c(\R,\R)$,
we apply this to a $\delta$-sequence $\phi =  \delta_n$ in $0$ to obtain
\[ \alpha= \lim_{n \to \infty} U^{-\infty}(\delta_n) \alpha
  \in \oline{\sV}^{w-*} = \cH^{-\infty}_{\rm KMS}\] 
by the weak-$*$ closedness of $\cH^{-\infty}_{\rm KMS}$ (Theorem~\ref{thm:6-1-fno24}). 
Conversely, $\alpha \in \cH^{-\infty}_{\rm KMS}$ implies
\[ U^{-\infty}(\phi) \alpha \in \cH \cap \cH^{-\infty}_{\rm KMS} = \sV,\]
again by the closedness and $U(\R)$-invariance of $\cH^{-\infty}_{\rm KMS}$. 
With \eqref{eq:eta-prop} we thus obtain that
$\beta^+(\xi) \in \cH^{-\infty}_{\rm KMS}$.
 
 \nin (b) To see that $\beta^+$ is injective, we assume that $\beta^+(\xi) = 0$.
 Then the above argument implies that 
 $U(\phi)\xi \in \cH^J \cap \cD(\Delta^{1/4})$ vanishes for every
 $\phi \in C^\infty_c(\R,\R)$ because $\Delta^{1/4}$ is injective.
 Using an approximate identity in this space,
 $\xi = 0$ follows. 
 
 \nin (c) To see that $\beta^+$ is surjective, let
 $\gamma \in \cH^{-\infty}_{\rm KMS}$. Replacing $\cH$ by the
 cyclic subrepresentation generated by $\gamma$, resp., the subspace
 $U^{-\infty}(C^\infty_c(\R,\C))\gamma \subeq \cH$, we may w.l.o.g.\
 assume that $\cH = L^2(\R,\nu)$ for a positive Borel measure,
 where the constant function~$1$ corresponds to $\gamma$.
 Hence the measure $\nu$ on $\R$ is tempered (\cite[Lemma~10.7]{NO15}).
 Then, for $z = x + iy \in \cS_\pi$, the analytic continuation of the orbit
 map of $\gamma = 1$ takes the form 
 \[ U^\gamma \: \oline{\cS_\pi} \to L^2(\R,\nu)^{-\infty},
   \quad U^\gamma(z)(p) = e^{izp} = e^{ixp} e^{-yp}.\]
 Therefore all measures $e^{-yp}\, d\nu(p), 0 \leq y \leq \pi$, are tempered.
 It follows in particular  that they are actually finite for $0 < y < p$. 
 Hence $\xi(p) := e^{- \pi p/2}$ is an $L^2$-function, and
 $\xi = U^\gamma(\pi i/2)$ implies that $J\xi = \xi$. As a consequence, the measure
$dP^\xi(p) = e^{- \pi p}\, d\nu(p)$ 
 is finite and $e^{\pi p}\, dP^\xi(p) =\, d\nu(p)$ is tempered, so that
 $\xi \in \cH^J_{\rm temp}$. Therefore $\beta^+(\xi) = 1$ shows that
 $\beta^+$ is surjective.
\end{proof}

For $\xi, \eta\in \cH^J_{\rm temp}$, we consider the complex-valued measure
\[ P^{\xi,\eta}(E) := \la \xi, P(E) \eta \ra, \quad E \subeq \R.\]
Then
\begin{equation}
  \label{eq:pwv}
 \oline{P^{\xi,\eta}(E)} = \oline{\la \xi, P(E) \eta \ra}
 = \la \eta, P(E) \xi \ra = P^{\eta,\xi}(E)
\end{equation}
and  the relation 
  $J P(E) J = P(-E)$ implies that
  \begin{align} 
    \label{eq:pvw-symm}
    P^{\xi,\eta}(E)
    &= \la J\xi, P(E) J\eta \ra 
    = \la J\xi, J P(-E) \eta \ra \notag \\
& = \la P(-E) \eta, \xi \ra = P^{\eta,\xi}(-E) = \oline{P^{\xi,\eta}(-E)}.
  \end{align}
In particular, the measures $P^{\xi,\xi}$ are symmetric and positive.

We obtain on the strip $\cS_{\pm \pi}$ the
holomorphic function
\[ \phi^{\xi,\eta}(z) :=\hat{P^{\xi,\eta}}(z) = \int_\R e^{izp}\, dP^{\xi,\eta}(p),\]
and the temperedness of the measures $e^{\pm \pi p}\, dP^{\xi,\eta}(p)$
implies that this function has boundary values that are
tempered distributions on $\pm\pi i + \R$. For $t \in \R$, we have
$\phi^{\xi,\eta}(t) =\la \xi, U_t \eta \ra.$ 
Hence
\[ \phi^{\eta,\xi}(-t)  = \oline{\phi^{\xi,\eta}(t)}
  = \la U_t \eta, \xi \ra
  =  \la U_t J \eta, J \xi \ra 
  =  \la J U_t \eta, J \xi \ra 
  =  \la \xi, U_t \eta \ra = \phi^{\xi,\eta}(t),\]
and therefore
\begin{equation}
  \label{eq:phivwoline}
 \oline{\phi^{\xi,\eta}(z)} = \phi^{\eta,\xi}(-\oline z) = \phi^{\xi,\eta}(\oline z)
 \quad \mbox{ for }\quad   z \in \cS_{\pm \pi}.
\end{equation}

For $\alpha := \beta^+(\xi)$ and $\gamma:= \beta^+(\eta)$
the distribution
\[ D_{\alpha,\gamma}(\xi) := \gamma(U^{-\infty}(\xi)\alpha) \]
can be represented by the boundary values of a holomorphic function
\begin{align*}
 D_{\alpha,\gamma}(x)
&  = \lim_{t \to \pi/2}  \la U_x e^{tA}\xi,  e^{tA}\eta \ra
                        = \lim_{t \to \pi/2}  \int_\R e^{(2t-ix)p}\, dP^{\xi,\eta}(p)\\
& = \phi^{\xi,\eta}(-\pi i -x)  = \phi^{\eta,\xi}(\pi i +x).                                   
\end{align*}

\begin{small}
\subsection{Exercises for Section~\ref{sec:2}}

\begin{exercise} \label{exer:ker-cont} 
Let $X$ be a topological space, $\cH$ be a Hilbert space 
and $\gamma \: X \to \cH$ be a map. Show that $\gamma$ is continuous if and 
only if the corresponding kernel function 
\[ K \: X \times X \to \C, \quad K(x,y) := \la \gamma(x), \gamma(y) \ra \] 
is continuous.   
\end{exercise}

\begin{exercise} Let $(U_t= e^{itA})_{t \in \R}$ be a unitary one-parameter group
  on the complex Hilbert space~$\cH$ and consider on the complex Hilbert space
  $\tilde\cH := \cH \oplus \oline\cH$ the unitary one-parameter group
  \[ \tilde U_t := U_t \oplus U_t.\]
  Show that the flip involution $\tilde J(v,w) := (w,v)$
  and the positive operator $\tilde\Delta := e^A \oplus e^{-A}$ 
  form a modular pair of a standard subspace $\sV \subeq \tilde\cH$
  (cf.~Proposition~\ref{prop:2.27}).   
\end{exercise}

\begin{exercise} If $\sV \subeq \cH$ is a standard subspace, we consider
  the antiunitary representation of $\R^\times$, defined by
  \[ \gamma_\sV(e^t) := \Delta_\sV^{it}, \quad
    \gamma_\sV(-1) := J_\sV.\]
  Show that we thus obtain a bijection between 
  the set $\Stand(\cH)$ of standard subspaces of $\cH$ and
  the set of antiunitary (strongly continuous) representations
  $\gamma \: \R^\times \to \AU(\cH)$.
\end{exercise}

\begin{exercise} \label{holomext} (Holomorphic extensions of orbit maps) 
Let $\cH$ be a Hilbert space, let $A$ be a selfadjoint operator and let $U_t=e^{itA}$ be the corresponding one-parameter group.
\begin{enumerate} 
\item[\rm(a)] Show that the orbit map $U^v:\R\to \cH,\;t\mapsto U_t v$ extends to a holomorphic map
$$\cS_{\pm \beta}:=\{z\in \C\;|\;|\Im(z)|<\beta\}\to \cH
$$
if and only if $v\in \cD(e^{tA})$, for all $|t|<\beta$. \textit{Hint:} Consider the kernel
$$K^{v,v}:\R\times \R\to \C,\quad (s,t)\mapsto \langle U^v(s),U^v(t)\rangle.
$$
\item[\rm(b)] Suppose  $v\in \cD(e^{tA})$, for all $|t|<\beta$. Show that $v\in \cD(e^{\pm \beta A})$ if and only if the map \break
  $U^v:\cS_{\pm \beta}\to \cH$ has continuous boundary values on $\R\pm i\beta$.
\end{enumerate} 
\end{exercise}

\begin{exercise} Let $\cM \subeq B(\cH)$ be a von Neumann algebra. 
  For two unit vectors $\Omega_1,\Omega_2 \in \cH$,
  the states $\omega_{\Omega_1}$ and $\omega_{\Omega_2}$ coincide if and only
  if there exists an $\cM$-equivariant isometry
  \[ U \: \oline{\cM\Omega_1} \to \oline{\cM\Omega_2}
    \quad \mbox{ with } \quad U\Omega_1 = \Omega_2.\]

  Conclude further that, if $\cM \not=B(\cH)$, then there exist
  linearly independent unit vectors $\Omega_1$ and $\Omega_2$,
  defining the same state on $\cM$. Hint: $\cM\not= B(\cH)$ is equivalent to
$\cM'$ being non-trivial.
\end{exercise}

\begin{exercise} \label{exer:bgl} (The Brunetti--Guido--Longo (BGL)
  construction, \cite{BGL02}) Let $G$ be a Lie group,
  $\sigma \in \Aut(G)$ be an involution and
  $G_\sigma := G \rtimes \{ \bone,\sigma\}$ the corresponding semidirect
  product. We consider an antiunitary representation
  $U \: G_\sigma \to \AU(\cH)$, i.e., $U(G) \subeq \U(\cH)$ and
  $U(\sigma)$ antilinear.

  We consider the set
  \[ \cG(G_\sigma) := \{ (x,\tau) \in \g \times G\sigma \: \Ad(\tau)x = x,
    \tau^2 = e\}.\]
  Show that:
  \begin{description}
  \item[\rm(a)] Each $(x,\tau)$ defines a morphism
    \[ \gamma \: \R^\times \to G_\sigma, \quad
      \gamma(e^t) := \exp(tx), \quad \gamma(-1) := \tau.\]
  \item[\rm(b)] For each pair $(x,\tau)$ there exists a unique
    standard subspace $\sV  \subeq \cH$ with
    \[ J_\sV = U(\tau) \quad \mbox{ and }  \quad
      \Delta_\sV = e^{2\pi i\cdot \partial U(x)}.\]
  \end{description}
\end{exercise}

\begin{exercise} \label{exer:5.5} 
Let $\cH_1$ and $\cH_2$ be Hilbert space, 
$X$ be a set and $\gamma_j \: X \to \cH_j$, $j =1,2$, be maps with total range.
Then the following are equivalent: 
\begin{description}
\item[\rm(a)] There exists a unitary operator 
$U \: \cH_1 \to \cH_2$ with $U \circ \gamma_1 = \gamma_2$. 
\item[\rm(b)] 
$\la \gamma_2(x), \gamma_2(y) \ra = \la \gamma_1(x), \gamma_1(y) \ra$ 
for all $x,y \in X$. 
\end{description}
\end{exercise}

\begin{exercise}
  \label{exer:3.1} {\rm(Polarization)} 
Let $V$ and $W$ be $\K$-vector spaces, $\beta \: V^n \to W$ be a symmetric 
$n$-linear map and $\gamma(v) := \beta(v,\cdots, v)$. 
Show that $\beta$ is completely determined 
by the values on the diagonal 
$\beta(v,\ldots, v)$, $v\in V$. \\
Hint: Consider 
\[ \gamma(t_1 v_1 + \ldots + t_n v_n)
= \sum_{m_1 + \ldots + m_n = n} \frac{n!}{m_1!\cdots m_n!}  t_1^{m_1} \cdots t_n^{m_n} \beta(v_1^{m_1}, \ldots, v_n^{m_n})\]
and recover $\beta(v_1,\ldots, v_n)$ as a suitable partial derivative. 
Alternatively, one can verify the following explicit formula: 
\begin{equation}\label{polarformulvar}
\beta(v_1,\ldots,v_n) = 
\frac{1}{n!\, 2^n}\sum_{\eps_1,\ldots,\eps_n\in \{1,-1\}}
\eps_1\cdots\eps_n\, \gamma(\eps_1 v_1+\cdots+\eps_n v_n).
\end{equation}
\end{exercise}

\begin{exercise}
  \label{exer:3.2} 
Let $V$ be $\K$-vector space and $S^n(V) := (V^{\otimes n})^{S_n}$ be the 
$n$th symmetric power of~$V$. Show that 
\[ S^n(V) = \Spann \{ v^{\otimes n}\: v \in V \}.\] 
Hint: Use the same technique as in Exercise~\ref{exer:3.1}.
\end{exercise}

\begin{exercise} \label{exer:uncertain} (Abstract uncertainty principle)
  Let $A$ and $B$ be bounded selfadjoint operator on~$\cH$ and 
  $\Omega \in \cH$ a unit vector.
  Then $\Omega$ defines a state whose expectation
  values for the observable $A$ is given by
\[ c_A := \omega_\Omega(A) = \la \Omega, A \Omega \ra.\] 
The variance of the observable $A$ in the state $\omega_\Omega$ is
given by the expectation   value
\[ \sigma_A := \omega_\Omega( (A- c_A\bone)^2)^{1/2} 
  = \|(A - c_A\bone) \Omega \|.\]
  It vanishes if and only if $A \Omega  = c_A \Omega$,
  i.e., if $\Omega$ is an eigenvector of~$A$. 

  Verify the abstract uncertainty principle (\cite{Ro29}):
\begin{equation}
  \label{eq:uncert}
  \sigma_A \sigma_B  \geq \frac{1}{2}   |\la \Omega,[A, B] \Omega \ra|. 
\end{equation}
\end{exercise}

\begin{exercise}
  \label{exer:abstar}
  Let $A \: \cD(A) \to \cH$ and $B \: \cD(B) \to \cH$
  be densely defined unbounded operators on the real Hilbert space~$\cH$,
  so that their adjoints
  \[  A^* \: \cD(A^*) \to \cH, \quad B^* \: \cD(B^*) \to \cH \]
  are also defined by 
  \[ \la A^* v, w \ra = \la v, A w \ra \quad \mbox{ for } \quad
    w \in \cD(A), v \in \cD(A^*).\]
  The product $AB$ is defined on $\cD(AB) = B^{-1}\cD(A)
  \subeq \cD(B)$ by composition.
  Show that:
  \begin{description}
  \item[(a)] If $\cD(AB)$ is dense, then $(AB)^*$ is an extension of~$B^*A^*$. 
  \item[(b)] If $A$ is invertible, then $(AB)^* = B^*A^*$. 
  \end{description}
\end{exercise}

\begin{exercise} \label{exer:sym}  % \label{lem:sym}
%\begin{lemma}\label{lem:sym}{\rm(\cite[Lemma 2.2]{Mo18})}
Let $\sV\subset\cH$ be a standard subspace  and $U\in\AU(\cH)$ 
be a unitary or an antiunitary operator. 
Show that $U\sV$ is also standard and 
$U\Delta_\sV U^{-1}=\Delta_{U\sV}$ and $UJ_\sV U^{-1}=J_{U\sV}$. 
\end{exercise}

%\begin{proof} That $\sH := U\sV$ is also standard follows from
%  $\sH + i \sH = U(\sV + i \sV)$ and 
%  $\sH \cap i \sH = U(\sV \cap i \sV) = \{0\}$.
%  It is also clear that
%  \[\Fix(J_\sH \Delta_\sH^{1/2})
%    =  \sH= \Fix(U J_\sV \Delta_\sV^{1/2} U^{-1}), \]
%  so that the uniqueness of polar decomposition implies
%$J_\sH = U J_\sV U^{-1}$ and $\Delta_\sH = U J_\sV U^{-1}.$
%\end{proof}
\end{small}

\section{Euler elements} 
\label{sec:3}

We have seen in Section~\ref{sec:2}
how nets of real subspaces arise from nets of algebras
of local observables. Eventually, one would like
to ``classify'' all these nets in a suitable sense,
but first one has to specify which structures
we are dealing with. Key points are 
\begin{description}
\item[(Q1)] Which elements $h \in \g$ appear in the
Bisognano--Wichmann (BW) condition?
\item[(Q2)] Which $G$-invariant structure do we need on
  $M$ as a fertile ground for nets of real subspaces? 
\item[(Q3)] How to find suitable domains $W \subeq M$ for which the
  (BW) condition may be satisfied?
\end{description}
As we shall see below, these questions are highly intertwined,
in particular when we discuss in Section~\ref{sec:4} 
which unitary representations $(U,\cH)$ of $G$
admit nets satisfying the axioms
(Iso), (Cov), (RS) and~(BW).

\subsection{The Euler Element Theorem}

\begin{definition}
  Let $\g$ be a finite-dimensional Lie algebra.
  We call $h \in \g$ an {\it Euler element} if \index{Euler element\scheiding}
  $\ad h$ is {\bf non-zero} and diagonalizable 
  with $\Spec(\ad h) \subeq \{-1,0,1\}$, i.e., if
  \[ \g = \g_1(h) \oplus \g_0(h) \oplus \g_{-1}(h).\] 
  Then $\tau_h := e^{\pi i \ad h} \in \Aut(\g)$ is an involutive automorphism
  of~$\g$. 
 We write $\cE(\g)$ for the set of Euler elements in $\g$.
An Euler element $h$ is called {\it symmetric} if $-h \in \cO_h := \Inn(\g)h$.
\index{Euler element!symmetric \scheiding}
\end{definition}

\begin{remark} We observe that $\cE(\g) + \fz(\g) = \cE(\g)$,
  where $\fz(\g) = \{ x \in \g \: [x,\g] = \{0\}\}$ is the center of $\g$.   
\end{remark}

The following theorem (\cite[Thm.~3.1]{MN24}) provides a very satisfying
answer to question (Q1). 

\begin{theorem} \label{thm:2.1} {\rm(Euler Element Theorem)} 
  Let $G$   be a connected finite-dimensional Lie group with 
  Lie algebra $\g$ and $h \in \g$. 
  Let $(U,\cH)$ be a unitary 
  representation of $G$ with discrete kernel. 
  Suppose that $\sV\subeq \cH$ is a standard subspace
    and $N \subeq G$ an identity neighborhood such that 
  \begin{description}
  \item[\rm(a)] $U(\exp(t h)) = \Delta_\sV^{-it/2\pi}$ for $t \in \R$,
    i.e., $\Delta_\sV = e^{2\pi i \, \partial U(h)}$, and 
  \item[\rm(b)] $\sV_N := \bigcap_{g \in N} U(g)\sV$ is cyclic.
  \end{description}
  Then $h$ is an Euler element or central, and the conjugation $J_\sV$ satisfies
  \begin{equation}
    \label{eq:J-rel}
 J_\sV U(\exp x) J_\sV = U(\exp \tau_h(x)) \quad \mbox{ for } \quad
 \tau_h = e^{\pi i \ad h}, x \in \g.
  \end{equation}
\end{theorem}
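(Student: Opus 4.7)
The plan is to proceed in three stages: (i) derive the intertwining relation on the one-parameter subgroup $\exp(\R h)$, (ii) bootstrap this into an intertwining of $U$ with $U \circ \sigma$ for a Lie-group involution $\sigma$ of $G$, and (iii) exploit hypothesis (b) together with the KMS characterisation to force $\Spec(\ad h) \subeq \{-1,0,1\}$.

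Stage (i) is immediate: antilinearity of $J_\sV$ together with the modular relation $J_\sV \Delta_\sV J_\sV = \Delta_\sV^{-1}$ yields $J_\sV \Delta_\sV^{it} J_\sV = \Delta_\sV^{-it}$, and hypothesis (a) then gives
\begin{equation*}
J_\sV U(\exp th) J_\sV = U(\exp(-th)) \qquad (t \in \R),
\end{equation*}
which is \eqref{eq:J-rel} restricted to $\exp(\R h)$ (using $\tau_h(h) = -h$).

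For stage (ii) I would introduce the continuous unitary representation $\beta(g) := J_\sV U(g) J_\sV$ of $G$ and aim to show $\beta = U \circ \sigma$ for a Lie-group involution $\sigma$ of $G$. Hypothesis (b) enters here as follows: for $g \in N$ the subspace $U(g)\sV$ contains the cyclic subspace $\sV_N$, so applying $J_\sV$ and using $J_\sV \sV = \sV'$ from Lemma~\ref{lem:vv'}(f), the subspace $\beta(g)\sV'$ contains $J_\sV \sV_N$, which is also cyclic. Comparing the modular data of $\sV$ with those of $\beta(g)\sV$ (respectively of $U(g)\sV$ with $\beta(g^{-1})\sV$), and invoking Proposition~\ref{prop:equality-prop} together with the $\Delta_\sV^{it}$-invariance of the relevant intersections, one should deduce $\beta(g) \in U(G)$ for $g$ in a neighbourhood of $e$. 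Discreteness of $\ker U$ then lifts this pointwise fact to a local, hence global, continuous homomorphism $\sigma \colon G \to G$ satisfying $\beta = U \circ \sigma$. The equality $\beta^2 = \id$ forces $\sigma^2 \in \ker U$, whence $\sigma^2 = \id$ by continuity and connectedness of $G$, so $d\sigma \in \Aut(\g)$ is an involution with $d\sigma(h) = -h$ by stage (i).

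In stage (iii) I would leverage the KMS characterisation (Proposition~\ref{prop:standchar}): for each $v \in \sV_N$, the orbit map $t \mapsto U(\exp th) v$ extends continuously to $\oline{\cS_\pi}$, holomorphically on the interior, with boundary value $U(\exp(i\pi h))v = J_\sV v$. Fix an eigenvector $x \in \g_\lambda(h)$ of $\ad h$ (after complexification if necessary) and take $s$ small enough that $\exp(sx) \in N$. Then for $v \in \sV_N$ one has both $v$ and $U(\exp(-sx))v$ in $\sV$. The identity $U(\exp th)U(\exp sx)U(\exp(-th)) = U(\exp(s e^{t\lambda} x))$, analytically continued to $t = i\pi$, combined with the stage-(ii) identity $J_\sV U(\exp sx) J_\sV = U(\exp(s\, d\sigma(x)))$ and the $\R$-linearity of $d\sigma$, forces $e^{i\pi\lambda} \in \R$, so $\lambda \in \Z$. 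A finer analysis of the growth of the holomorphic extension inside the strip $\cS_\pi$ then rules out $|\lambda| \geq 2$, leaving $\Spec(\ad h) \subeq \{-1, 0, 1\}$. Hence either $\ad h = 0$ (so $h$ is central) or $h$ is an Euler element; in the latter case $d\sigma = \tau_h = e^{i\pi \ad h}$ matches the required involution, and \eqref{eq:J-rel} follows from $\beta = U \circ \sigma$.

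The principal obstacle is stage (ii): producing the Lie-group involution $\sigma$ from the antiunitary conjugation data requires delicate use of the Equality Proposition and hypothesis (b), because $\sV$ and $\beta(g)\sV'$ are not automatically preserved by the same one-parameter group, so one must carefully manufacture pairs of subspaces to which the uniqueness statement applies. The stage-(iii) growth estimate for $|\lambda| \leq 1$ is also delicate, but is of a more standard analytic nature.
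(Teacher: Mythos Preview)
The paper does not prove this theorem; it is quoted from \cite[Thm.~3.1]{MN24} with no argument reproduced here. I can therefore only assess the internal coherence of your proposal.

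Stage~(i) contains a sign error: antilinearity together with $J_\sV \Delta_\sV J_\sV = \Delta_\sV^{-1}$ gives $J_\sV \Delta_\sV^z J_\sV = \Delta_\sV^{-\oline z}$ (see Remark~\ref{rem:NO15-prop-3.1}(a)), so for $z = it$ one obtains $J_\sV \Delta_\sV^{it} J_\sV = \Delta_\sV^{it}$, not $\Delta_\sV^{-it}$. Thus $J_\sV$ \emph{commutes} with $U(\exp th)$, consistently with $\tau_h(h) = h$ (note $\ad h(h) = 0$, so $e^{\pi i \ad h}h = h$). This is easily repaired and does not affect the architecture.

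The substantive gap is Stage~(ii). Your passage from ``$\beta(g)\sV'$ contains the cyclic subspace $J_\sV\sV_N$'' to ``$\beta(g) \in U(G)$'' is not a valid inference. Proposition~\ref{prop:equality-prop} requires two standard subspaces sharing the \emph{same} modular operator, and nothing in your setup arranges for $\beta(g)\sV$ and some $U(g')\sV$ to do so before the candidate $g' = \sigma(g)$ is already in hand; the argument is circular at that point. More fundamentally, there is no abstract reason why conjugation by an antiunitary involution should carry $U(G)$ into itself---this is exactly what the theorem asserts and must be \emph{derived} from hypotheses (a) and~(b), not assumed. The strategy in \cite{MN24} runs in the opposite order: one first constrains $\Spec(\ad h) \subeq \{-1,0,1\}$ directly from modular-theoretic inequalities (the order-reversing behaviour of $\sV \mapsto \log\Delta_\sV$ under inclusion, applied to the family $U(g)\sV \supeq \sV_N$ for $g \in N$, combined with commutator calculations in~$\g$), and only afterwards, once $\tau_h = e^{\pi i \ad h}$ is known to be a genuine involution of~$\g$, establishes~\eqref{eq:J-rel}. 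Since your Stage~(iii) explicitly invokes the output of Stage~(ii), it cannot rescue the argument as it stands.
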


\begin{corollary} If $\sH(\cO)_{\cO \subeq M}$ is a
  net of real subspaces on open subsets of $M$
  satisfying {\rm(Iso), (Cov), (RS)} and {\rm(BW)}, 
  and $U$ has discrete kernel, then $h \in \g$ is an Euler element
  or central.
\end{corollary}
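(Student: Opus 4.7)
The plan is to reduce the statement to a direct application of the Euler Element Theorem~\ref{thm:2.1} with $\sV := \sH(W)$. Hypothesis (a) of that theorem is exactly the content of (BW), so the whole task is to exhibit an identity neighborhood $N \subeq G$ for which $\sV_N := \bigcap_{g \in N} U(g)\sV$ is cyclic. Once (a) and (b) are verified, Theorem~\ref{thm:2.1} gives the conclusion (and as a bonus the involution relation~\eqref{eq:J-rel}).

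To construct $N$, I would use that $W \subeq M$ is a non-empty open subset and $G$ acts continuously on $M$. Pick any point $m_0 \in W$. By joint continuity of the action $G \times M \to M$, there exist an identity neighborhood $N \subeq G$ and an open neighborhood $\cO$ of $m_0$ with $\cO \subeq W$ such that $g^{-1}.\cO \subeq W$ for all $g \in N$; equivalently, $\cO \subeq g.W$ for every $g \in N$.

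Now combine (Iso) and (Cov): for each $g \in N$,
\[
\sH(\cO) \;\subeq\; \sH(g.W) \;=\; U(g)\sH(W) \;=\; U(g)\sV.
\]
Intersecting over $g \in N$ yields $\sH(\cO) \subeq \sV_N$. Since $\cO$ is a non-empty open subset of $M$, the Reeh--Schlieder property (RS) ensures that $\sH(\cO)$ is cyclic in $\cH$, and therefore $\sV_N$ is cyclic as well. This verifies hypothesis~(b) of Theorem~\ref{thm:2.1}, and the discrete-kernel assumption on $U$ is part of the corollary's hypothesis, so Theorem~\ref{thm:2.1} applies and $h$ is either an Euler element or central.

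The only potential obstacle is the existence of the neighborhood $N$ with the described property, but this is a routine consequence of continuity of the $G$-action on $M$; no deeper geometric input on $W$ beyond openness is needed. Thus the whole corollary really is a bookkeeping consequence of the axioms (Iso), (Cov), (RS) combined with (BW), channelled through Theorem~\ref{thm:2.1}.
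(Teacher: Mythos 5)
Your proposal is correct and takes essentially the same route as the paper: both fix a small open $\cO \subeq W$, produce an identity neighborhood $N$ with $N^{-1}.\cO \subeq W$, derive $\sH(\cO) \subeq \sV_N$ from (Iso), (Cov), (BW), and invoke (RS) to feed the cyclicity hypothesis of Theorem~\ref{thm:2.1}. The only cosmetic difference is that the paper obtains $N$ by taking $\cO$ relatively compact and using compactness of $\oline{\cO}$, while you use joint continuity of the action at a single point $(e,m_0)$; both are equivalent elementary arguments.
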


\begin{proof} Let $\cO \subeq W$ be a non-empty open, relatively compact subset.
  Then $\oline\cO$ is a compact subset of the open set $W$, so that
  \[ N := \{ g \in G \: g^{-1}.\oline\cO \subeq  W \} \]
  is an open $e$-neighborhood in $G$. For every $g \in N$ we
  have by (Cov) and (Iso), 
  \[  g^{-1}.\sH(\cO) = \sH(g^{-1}.\cO) \subeq
    \sH(W)\ {\buildrel {\rm(BW)}\over =}\ \sV.\]
  This implies that $\sH(\cO) \subeq \sV_N$,
  and (RS) entails that $\sH(\cO)$ is cyclic.
  Now the assertion follows from Theorem~\ref{thm:2.1}.    
\end{proof}

\begin{remark} \label{rem:antiuni-extend}   
(a) The relation \eqref{eq:J-rel} implies that, for the representations
we are dealing with, we may replace $G$
by its simply connected covering group
$\tilde G$ or by the quotient group $G/\ker(U)$ to ensure that the involution
$\tau_h^\g = e^{\pi i \ad h}$ on $\g$ integrates to an involution
$\tau_h$ on $G$, so that we can form the semidirect product
\[ G_{\tau_h} = G \rtimes \{\id_G, \tau_h \}. \]
Then \eqref{eq:J-rel} ensures that
$U$ extends to an antiunitary representation of $G_{\tau_h}$ by
$U(\tau_h) :=~J.$

\nin (b) If $\sV_N = \sV$ holds in the Euler Element Theorem,
then $U(g)\sV \supeq \sV$ for all $g \in N$, hence 
$U(g)\sV = \sV$ for all $g \in N \cap N^{-1}$. If $G$ is connected, this
implies that $U(G)$ fixes $\sV$ and hence that $h$ is central in $\g$.

\nin (c) Suppose that $h$ is central in $\g$ and that
$\Delta_\sV = e^{2\pi i \cdot \partial U(h)}$. Then all standard subspaces
$U_g \sV$ have the same modular group.
If there exists an $e$-neighborhood $N \subeq G$ for which
$\sV_N$ is cyclic, then we may assume that $N =N^{-1}$, and
this subspace is invariant under
the modular group $\Delta_\sV^{i\R} = U(\exp(\R h))$.
The Equality Lemma~\ref{lem:lo08-3.10}  thus shows that
$\sV_N = \sV$. Hence $U_g \sV \supeq \sV$ for all $g \in N$,
and the symmetry of $N$ now implies that $U_g\sV = \sV$, so that
$U_g$ commutes with $J_\sV$. If $G$ is connected, hence generated by $N$,
it follows that $J_\sV \in U(G)'$.
\end{remark}

 \begin{problem}
  In view of the preceding discussion, the following
  question is fundamental: Suppose that 
  $h \in \g$ is an Euler element,  $G$ is a corresponding
  connected Lie group, for which $G_{\tau_h}$ exists,
  and $M = G/H$ a homogeneous space. When does there exist
  an antiunitary   representation $(U,\cH)$ of~$G_{\tau_h}$,   
  a connected open subset $W \subeq M$
  and a net $\sH(\cO)_{\cO \subeq M}$
  on open subsets of $M$, satisfying
  (Iso), (Cov), (RS) and (BW)? 
\end{problem}

Below we shall see that this is  the case if $G$ is
reductive and $M$ is the non-compactly causal symmetric space
associated to $G$ and $h$ (cf.~Theorem~\ref{thm:4.9-semisimp}).
If $G$ is solvable, the corresponding question is open (cf.~\cite{BN25}).
In this context it is also interesting to consider
Theorem~\ref{thm:reg-net} that connects the existence of nets to a
regularity condition.

\subsection{First examples of Euler elements}

Before we descend deeper into structures related to Euler elements,
let us discuss some key examples.

\begin{examplekh} \label{ex:3.10}
 If $E$ is a finite-dimensional vector space
  and $0 \not=D \in \End(E)$ a diagonal endomorphism with eigenvalues
  contained in $\{1,0,-1\}$, then we form the solvable
  Lie algebra $\g := E \rtimes_D \R$. Here $h := (0,1)$ is an
  Euler element of $\g$.
\end{examplekh} 

  \begin{examples} \label{ex:3.10b}
(a) In $\g= \fsl_2(\R)$ the diagonal matrix 
\begin{equation}
  \label{eq:euler-h-sl2}
  h = \frac{1}{2}\pmat{1 & 0 \\ 0 & -1}
\end{equation}
is an Euler element.
  Conversely, every Euler element $h' \in \fsl_2(\R)$
  must be diagonalizable on $\R^2$ (Exercise~\ref{exer:diag}) 
  and the difference between its eigenvalues
  must be~$1$. In view of $\tr(h') = 0$, it is conjugate to~$h$.
  The set of Euler elements in $\fsl_2(\R)$ is
  \[\cE(\fsl_2(\R))
    = \big\{ x \in \fsl_2(\R) \: \det(x) = -{\textstyle\frac{1}{4}}\big\} = 
    \Big\{ \pmat{a & b \\ c & -a} \: a^2 + bc = \frac{1}{4}\Big\}\]
  and $\Inn(\g) \cong \SO_{1,2}(\R)_e$ acts transitively on this set. 
  In the following, we shall also use the Euler element 
  \begin{equation}
    \label{eq:def-sl2-k}
    k = \frac{1}{2} \pmat{ 0 & 1 \\ 1 & 0}.
  \end{equation}
The element 
\[ z_\fk := \frac{1}{2} \pmat{ 0 & 1 \\ -1 & 0} = [h, k]
  \quad \mbox{ satisfies  } \quad
  [z_\fk, h] = -k, \] 
    so that we have 
    \begin{equation}
      \label{eq:90deg-rot}
      e^{-\frac{\pi}{2} \ad z_\fk}h= - [z_\fk,h]  = k
      \quad \mbox{ and } \quad e^{\pm\pi\ad z_\fk}h= -h.
\end{equation}

\nin (b) If $\cA$ is a real unital associative algebra, then 
$h = \shalf\diag(1,-1)$ is also Euler in the Lie algebra 
$\gl_2(\cA)$. If $\sigma \in \Aut(\cA)$ is an involutive automorphism,
then $\sigma$ extends to a Lie algebra automorphism of $\gl_2(\cA)$
and $\g = \gl_2(\cA)^\sigma$ contains the Euler element $h$ with
$\g_1(h) \cong \cA^\sigma$. For the involution $\tau := \sigma \tau_h$, we
also find a Lie algebra with $\g_1(h) \cong \cA^{-\sigma}$.

This construction
provides a rich supply of Lie algebras with Euler elements. 
It even works for Jordan algebras~$\cA$
(cf.\ \cite{JvNW34}), hence in particular also for alternative algebras.
We refer to \cite{KSTT19}, \cite{dG17} and \cite{Be25}
for recent classification results in small dimensions. 
  \end{examples}

  \begin{examples} \label{ex:3.10cd}
(a) In the simple Lie algebra $\g := \fsl_n(\R)$, we write 
$n \times n$-matrices as block $2 \times 2$-matrices 
according to the partition $n = k + (n-k)$. Then 
\begin{equation}
  \label{eq:hk-sln}
  h_k := \frac{1}{n}\pmat{(n-k) \bone_k & 0 \\ 0 & -k\bone_{n-k}}
\end{equation}
is diagonalizable with the two eigenvalues $\frac{n-k}{n} = 1 - \frac{k}{n}$ 
and $-\frac{k}{n}$. 
Therefore $h_k$ is an Euler element (Exercise~\ref{exer:diag})
whose $3$-grading is given by 
\begin{align*}
\g_0(h) &= \Big\{ \pmat{a & 0 \\ 0 & d} \: 
a \in \gl_k(\R), d \in \gl_{n-k}(\R), \tr(a) + \tr(d) = 0\Big\},  \\ 
\g_1(h) &= \pmat{ 0 & M_{k,n-k}(\R)\\ 0 & 0}, \quad 
\g_{-1}(h) \cong \pmat{0 & 0 \\ M_{n-k,k}(\R) & 0}.
\end{align*}

It is easy to see that $h_1, \ldots, h_{n-1}$ represent all 
conjugacy classes of Euler elements in $\fsl_n(\R)$, whose restricted
root system is of type $A_{n-1}$, cf.~the general
Classification Theorem~\ref{thm:classif-symeuler} below.

The Euler element $h_k$ is symmetric, i.e., $-h_k \in \Inn(\g)h_k$,
if and only if $n = 2k$. In fact,
if $h_k$ is symmetric, then its eigenvalues have to be symmetric,
which is equivalent to $n = 2k$. That this condition is
sufficient follows by
embedding $h_k$ into an $\fsl_2(\R)$-subalgebra of block matrices with
entries in $M_k(\R)$ and using Example~\ref{ex:3.10b}. 

\nin (b) In the reductive Lie algebra $\gl_n(\R)$, we infer from
(a) that all conjugacy classes of Euler elements are represented by
elements of the form
\[ h  =\lambda \bone + h_k, \quad k = 1,\ldots, n-1.\]
Then $h$ is symmetric if and only if $\lambda = 0$ and $n = 2k$. 

These elements are also Euler in the semidirect sum
$\g := \R^n\rtimes \gl_n(\R)$ if and only if
$\lambda = \frac{k}{n}$ or $\lambda = \frac{k}{n}-1$,
which leads to the two Euler elements 
\[  h' = \pmat{\bone_k & 0 \\ 0 & 0} \quad \mbox{ and } \quad 
 h'' = \pmat{0 & 0 \\ 0 & -\bone_{n-k}}.\] 
In the first case,
\[ \g_1 \cong \R^k \oplus M_{k,n-k}(\R),\ \ 
 \g_0 = \gl_k(\R) \oplus (\R^{n-k} \rtimes \gl_{n-k}(\R))\ \ 
\mbox{ and } \ \  \g_{-1} =  M_{n-k,k}(\R), \]
whereas in the second case
\[ \g_1 \cong M_{k,n-k}(\R),\ \ 
 \g_0 = (\R^k \rtimes \gl_k(\R)) \oplus \gl_{n-k}(\R)\ \ 
 \mbox{ and } \ \ \g_{-1} =  \R^{n-k} \oplus M_{n-k,k}(\R). \]
None of these Euler elements is symmetric because
$\lambda \not=0$. 
\end{examples}
 
\begin{examples} \label{ex:poincare-1}
  (a)  In the Poincar\'e Lie algebra
  $\g = \R^{1,d} \rtimes \so_{1,d}(\R)$, every
  Euler element $h$ is conjugate
  to the generator $h \in \so_{1,d}(\R)$ of a Lorentz boost:
  \[ h.\be_0 = \be_1, \quad h.\be_1 = \be_0 \quad \mbox{ and } \quad
    h.\be_j = 0\quad \mbox{ for } \quad j > 1\]
 and $\fz(\g)= \{0\}$;
  see also Remark~\ref{rem:poin}.
  In fact,   Lemma~\ref{lem:levi-euler} below 
  reduces the assertion to the simple Lie algebra
  $\so_{1,d}(\R)$, and in this case the assertion
  follows from the Classification Theorem~\ref{thm:classif-symeuler}.

  \nin (b) (cf.\ \cite{BN25})  The $4$-dimensional
  split oscillator group is
  \[ G := \Heis(\R^2) \rtimes_\alpha \R
    \quad \mbox{ with } \quad
    \alpha_t = e^{th}, \quad h = \pmat{1 & 0 \\ 0 & -1},\]
so that
\[ \g = \heis(\R^2) \rtimes \R h\]
and $h$ is an Euler element in $\g$.
We choose a basis $p,q,z \in \heis(\R^2)$
with
\[ [q,p] = z, \quad [h,q] = 1, \quad [h,p] = -1, \quad [h,z] = 0.\]
The corresponding involution satisfies 
\[ \tau_h(z,q,p,t) = (z,-q,-p,t).\]

The Euler element $h$ is not symmetric, and all Euler elements
of $\g$ are, up to sign, conjugate to elements of the form
\[ h_\lambda = \lambda z + h.\] 

This Lie algebra can be realized as a subalgebra of $\fsl_3(\R)$, where 
\[ h = \frac{1}{3}
  \pmat{1 & 0 & 0 \\ 0 & -2 & 0 \\ 0& 0 & 1}, \quad
q = \pmat{0 & 1 & 0 \\ 0 & 0 & 0 \\ 0& 0 & 0}, \quad
p = \pmat{0 & 0 & 0 \\ 0 & 0 & 1 \\ 0& 0 & 0}, \quad
z = \pmat{0 & 0 & 1 \\ 0 & 0 & 0 \\ 0& 0 & 0}.\]
Note that $h$ is an Euler element of $\fsl_3(\R)$, 
i.e., $V := \R^3$ is a $2$-graded $\g$-module
(cf.\ Example~\ref{ex:3.10cd}(a)):
\[ V = V_{1/3} \oplus V_{-2/3}, \quad
 V_{1/3} = \R \be_1 + \R \be_3, \quad 
 V_{-2/3} = \R \be_2.\] 
\end{examples}

  \begin{remark}
  \nin (a) If $V$ is a non-trivial irreducible $\fsl_2(\R)$-module
  and
  \[ h \in \g := V \rtimes \fsl_2(\R) \]  an Euler element, then
the semisimple element $h$ is conjugate to an element of $\fsl_2(\R)$
(Lemma~\ref{lem:levi-euler} and $\fz(\g) = \{0\}$), 
so that we may assume that $h =  \shalf \diag(1,-1)$
(Example~\ref{ex:3.10b}(a)). 
This leaves only the possibility that $\dim V = 3$ is the adjoint module.

We obtain more freedom if we replace $\fsl_2(\R)$ by $\gl_2(\R)$.
Then Example~\ref{ex:3.10cd}(b)
also provides Euler elements in $\R^2 \rtimes \gl_2(\R)$
(cf.\ \cite[Lemma~2.15]{MN22} or Lemma~\ref{lem:slgl}).
We may also consider non-trivial $1$-dimensional representations of
$\gl_2(\R)$, for which
\[ \R \rtimes \gl_2(\R) \cong (\R \rtimes \R\bone) \oplus \fsl_2(\R).\] 
This example shows already how restrictive
the existence of a $3$-grading is for semidirect sums.

\nin (b) If $h \in \g$ is an Euler element contained in
a subalgebra $\fs \cong \fsl_2(\R)$, then all simple $\fs$-submodules
of $\g$ must be $1$ or $3$-dimensional.
If $h$ is contained in a subalgebra $\fl \cong \gl_2(\R)$,
then also $2$-dimensional irreducible submodules may occur
(cf.\ Lemma~\ref{lem:slgl} below). 
  \end{remark}

\subsection{Euler elements in simple Lie algebras}
\label{app:eul-simp}

We now present a classification
of Euler elements in simple real Lie algebras,
following \cite{MN21} (see also \cite{Mo25}). 
As they correspond to $3$-gradings,
it can also be derived from \cite{KA88}.
We also reproduce the list of the $18$ types 
from \cite[p.~600]{Kan98} and Kaneyuki's lecture notes \cite{Kan00}. 

\index{Cartan!involution \scheiding} 
Let $\g$ be a real semisimple Lie algebra. 
An involutive automorphism $\theta \in \Aut(\g)$ is called a {\it 
Cartan involution} if its eigenspaces 
\[ \fk := \g^\theta = \{ x \in\g \: \theta(x) = x \} \quad \mbox{ and }\quad 
\fp := \g^{-\theta} = \{ x \in\g \: \theta(x) = -x \} \] 
have the property that they are orthogonal with respect to the
Cartan--Killing form
\[ \kappa(x,y) = \tr(\ad x \ad y),\] 
which is  negative definite on $\fk$ and 
positive definite on $\fp$. This implies that
\begin{equation}
  \label{eq:kappa-pos}
  \kappa(x,\theta x) < 0 \quad \mbox{ for } \quad x \not=0.
\end{equation}
Then 
\begin{equation}
  \label{eq:cartandec} 
\g = \fk \oplus \fp 
\end{equation}
is called a {\it Cartan decomposition}. \index{Cartan!decomposition\scheiding} 
Cartan involutions always exist and two such involutions are conjugate 
under the group $\Inn(\g)$ of inner automorphism, so they produce 
isomorphic decompositions (\cite[Thm.~13.2.11]{HN12}). 
The subalgebra $\fk$ is maximal {\it compactly embedded}.
\begin{footnote}{We call a subalgebra $\fc\subeq \g$
    compactly embedded if the closure of the subgroup generated by
    $e^{\ad \fc}$ in $\Aut(\g)$ is compact.}  
\end{footnote}
An element $x \in \g$ is elliptic 
if and only if its adjoint orbit $\cO_x =\Inn(\g)x$ intersects $\fk$, 
and $x \in \g$ is hyperbolic if and only if 
$\cO_x \cap \fp \not=\eset$. 

For the finer structure theory, we start with a Cartan involution $\theta$ and 
fix a maximal abelian subspace $\fa \subeq \fp$. As $\fa$ is abelian, 
$\ad \fa$ is a commuting set of diagonalizable operators, hence 
simultaneously diagonalizable. 
For a linear functional $0 \not=\alpha \in \fa^*$, the simultaneous eigenspaces 
\[ \g_\alpha := \g_\alpha(\fa) := \{ y \in \g \: (\forall x \in \fa) \ [x,y] = \alpha(x)y\} \] 
are called {\it root spaces} and
\index{root system!restricted $\Sigma(\g,\fa)$ \scheiding }
\[ \Sigma := \Sigma(\g,\fa) := \{ \alpha \in \fa^* \setminus \{0\}  \: 
\g_\alpha \not=0\} \] 
is called the set of {\it restricted roots}. 
We pick a set 
\[ \Pi := \{ \alpha_1, \ldots, \alpha_n \} \subeq \Sigma \] 
of {\it simple roots}. This is a subset with the property that every 
root $\alpha \in \Sigma$ is a linear combination 
$\alpha = \sum_{j =1}^n n_j \alpha_j$, where the coefficients 
are either all in $\Z_{\geq 0}$ or in $\Z_{\leq 0}$. The convex cone 
\[ \Pi^\star := \{ x \in \fa \: (\forall \alpha \in \Pi) \ \alpha(x) \geq 0\} \] 
is called the {\it closed positive (Weyl) chamber corresponding to $\Pi$}. 
We have the {\it root space decomposition} 
\[ \g = \g_0 \oplus \bigoplus_{\alpha \in \Sigma} \g_\alpha 
\quad \mbox{ and }\quad 
\g_0 = \fm \oplus \fa, \quad \mbox{ where } \quad 
\fm = \g_0 \cap \fk.\] 
Now $\theta(\g_\alpha) = \g_{-\alpha}$, and for 
a non-zero element $x_\alpha \in \g_\alpha$, the 
$3$-dimensional subspace spanned by $x_\alpha, \theta(x_\alpha)$ and 
$[x_\alpha, \theta(x_\alpha)] \in \g_0 \cap\fp =  \fa$ is a Lie subalgebra 
isomorphic to $\fsl_2(\R)$. In particular, it contains 
a unique element $\alpha^\vee \in \fa$
with $\alpha(\alpha^\vee) = 2$.\begin{footnote}{
    To see this, we observe that $0 \not= x \in \g_\alpha$ implies
    $\theta(x) \in \g_{-\alpha}$, so that
    $[x,\theta(x)] \in \g_0 \cap \fp = \fa$.
    As $\kappa$ is positive definite on $\fa$, there exists a unique
    $h_\alpha \in \fa$ with $\kappa(\cdot,h_\alpha) = \alpha$.
    For $h \in \fa$, we then have
    \[ \kappa(h,[x,\theta(x)]) =  \kappa([h,x], \theta(x))
      = \alpha(h) \kappa(x,\theta(x))
      = \kappa(h, \kappa(x,\theta(x) h_\alpha),\]
    so that we must have $[x,\theta(x)] = \kappa(x,\theta(x)) h_\alpha$.
    As $\kappa(x,\theta(x)) < 0$ and
    $\alpha(h_\alpha) = \kappa(h_\alpha, h_\alpha) >~0$, we obtain 
 ${\alpha([x,\theta(x)]) < 0}$. 
}\end{footnote}
Then 
\[ r_\alpha \: \fa \to \fa, \quad r_\alpha(x) := x - \alpha(x) \alpha^\vee \] 
is a reflection in the hyperplane $\ker \alpha$, and the subgroup 
\[ \cW := \la r_\alpha \: \alpha \in \Sigma \ra \subeq \GL(\fa) \]
is called the {\it Weyl group}. 
\index{Weyl group $\cW$ !of root system\scheiding} 
Its action on $\fa$ provides a good description of the adjoint 
orbits of hyperbolic elements: Every hyperbolic 
element in $\g$ is conjugate to a unique element in the Weyl chamber
$\Pi^\star \subeq \fa$,
a fundamental domain for the $G$-action
on the subset of hyperbolic elements in $\g$
and a fundamental domain for the $\cW$-action on~$\fa$.
For $x \in \fa$, the intersection  
$\cO_x \cap \fa = \cW x$ is the Weyl group orbit 
(\cite[Thm.~III.10]{KN96}). 

{\bf From now on we assume that $\g$ is simple.} 
Then $\Sigma$ is an irreducible root system, hence of one of the following types: 
\begin{equation}
  \label{eq:dunkin}
 A_n,  \quad  
 B_n, \quad  
 C_n,  \quad  
 D_n, \quad  
 E_6, E_7, E_8,\quad  F_4,\quad  G_2
 \quad \mbox{ or } \quad BC_n, n \geq 1
\end{equation}
(cf.\ \cite{Bou90}).

The adjoint orbit of an Euler element in $\g$ contains  
a unique $h \in \Pi^\star$. For any Euler element 
$h \in \Pi^\star$, we have $\alpha(h) \in \{0,1\}$ for $\alpha \in \Pi$ 
because the values of the roots on $h$ are the eigenvalues of $\ad h$. 
If such an element exists, then the irreducible root system 
$\Sigma$ must be reduced. Otherwise, for any root $\alpha$ with 
$2\alpha \in \Sigma$, we must have $\alpha(h) = 0$ because 
$\ad x$ has only three eigenvalues. As the set of such roots 
generates the same linear space as $\Sigma$, this leads to~$h = 0$. 
This excludes the non-reduced irreducible root systems of type~$BC_n$.

To see how many possibilities we have for Euler elements in $\fa$, 
we recall that $\Pi$ is a linear basis of $\fa^*$, so that, for each $j \in \{1,\ldots, n\}$, there exists a uniquely determined element 
\begin{equation}
  \label{eq:hj}
  h_j \in \fa, \quad \mbox{  satisfying  } \quad \alpha_k(h_j) = \delta_{kj}. 
\end{equation}

The following theorem lists for each irreducible root system $\Sigma$ 
the possible Euler elements in the positive chamber $\Pi^\star$. 
Since every adjoint orbit in $\cE(\g)$ has a unique 
representative in $\Pi^\star$, this classifies the 
$\Inn(\g)$-orbits in $\cE(\g)$ for any non-compact simple real Lie algebra. 
For {\bf semisimple} Lie algebras $\g = \g_1 \oplus \cdots \oplus \g_k$, an 
element $x = (x_1, \ldots, x_k)$ is an Euler element if and only if its 
non-zero components $x_j \in \g_j$ are Euler elements, and its orbit is 
\[ \cO_x = \cO_{x_1} \times \cdots \times \cO_{x_k}.\] 
Therefore it suffices to consider simple Lie algebras, 
and for these the root system~$\Sigma$ is irreducible. 
As every complex simple Lie algebra $\g$ 
is also a real simple Lie algebra, our discussion also 
covers  complex Lie algebras.

\begin{theorem} \label{thm:classif-symeuler}
Suppose that $\g$ is a non-compact simple 
real Lie algebra, with restricted root system 
$\Sigma \subeq \fa^*$ of type $X_n$ as in \eqref{eq:dunkin}. 
We follow the conventions of the tables in {\rm\cite{Bou90}}
for the classification of irreducible root systems and the enumeration 
of the simple roots $\alpha_1, \ldots, \alpha_n$. 
Then every Euler element $h \in \fa$ on which 
$\Pi$ is non-negative is one of  $h_1, \ldots, h_n$, and for 
every irreducible root system, the Euler elements among the $h_j$ are 
 the following: 
\begin{align} %\label{eq:eulelts}
&A_n: h_1, \ldots, h_n, \qquad 
\ \ B_n: h_1, \qquad 
\ \ C_n: h_n, \qquad \ \ \ D_n: h_1, h_{n-1}, h_n, \notag\\ 
&E_6: h_1, h_6, \qquad 
E_7: h_7.\label{eq:eulelts2}
\end{align}
For the root systems $BC_n$, $E_8$, $F_4$ and $G_2$ no Euler element exists 
(they have no $3$-grading). 
The Euler elements which are symmetric in the sense that
$-h \in \cO_h = \Inn(\g)h$, are 
\begin{align} 
  \label{eq:symmeuler}
& A_{2n-1}: h_n, \qquad 
B_n: h_1, \qquad C_n: h_n, \qquad 
D_{2n}: h_1, h_{2n-1},h_{2n}, \qquad 
D_{2n+1}: h_1, \notag \\ 
& E_7: h_7.  
\end{align}
\end{theorem}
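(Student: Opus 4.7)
The plan is to classify adjoint orbits of Euler elements by reducing to the closed Weyl chamber $\Pi^\star$ and using the highest root to pin down which fundamental coweights $h_j$ can occur. First, since every Euler element $h$ has $\ad h$ diagonalizable with real eigenvalues, it is hyperbolic, so its orbit meets $\Pi^\star$ in exactly one point, which we may assume is $h$ itself. The eigenvalues of $\ad h$ on root spaces are the values $\alpha(h)$, so the Euler condition translates into $\alpha(h)\in\{-1,0,1\}$ for all $\alpha\in\Sigma$. For $\alpha\in \Pi$ the chamber condition $\alpha(h)\ge 0$ then forces $\alpha_i(h)\in\{0,1\}$, so $h=\sum_{i\in J} h_i$ for some subset $J\subseteq\{1,\ldots,n\}$ (recall $\alpha_i(h_j)=\delta_{ij}$). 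Moreover, if $BC_n$ occurred, then roots $\alpha$ with $2\alpha\in\Sigma$ would force $\alpha(h)=0$, and since such roots span $\fa^*$ one would get $h=0$; hence $\Sigma$ must be reduced.

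Next I would bring in the highest root $\tilde\alpha=\sum_i m_i\alpha_i$ of $\Sigma$. For $h=\sum_{i\in J}h_i$ we get $\tilde\alpha(h)=\sum_{i\in J}m_i$, which must lie in $\{0,1\}$. Since all $m_i\ge 1$, either $J=\emptyset$ (yielding $h=0$, excluded) or $J=\{j\}$ with $m_j=1$. Conversely, if $m_j=1$ then for any positive root $\alpha=\sum_i n_i\alpha_i$ one has $0\le \alpha(h_j)=n_j\le m_j=1$, so $h_j$ is Euler. Consulting Bourbaki's tables of marks $m_i$ for each irreducible reduced type then yields \eqref{eq:eulelts2}: in type $A_n$ all $m_i=1$; in $B_n$, $C_n$, $E_6$, $E_7$ the coefficient $1$ occurs only at the advertised nodes; in $D_n$ the three extremal nodes $1, n-1, n$ have $m_i=1$; and in $E_8$, $F_4$, $G_2$ no $m_i$ equals $1$, so no Euler element exists.

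For the symmetric Euler elements I would argue via the longest Weyl group element $w_0$ and the opposition involution $\sigma$ on simple roots, defined by $w_0(\alpha_i)=-\alpha_{\sigma(i)}$. A direct computation using $\alpha_i(w_0 h_j)=(w_0^{-1}\alpha_i)(h_j)=-\alpha_{\sigma(i)}(h_j)=-\delta_{\sigma(i),j}$ gives $w_0(h_j)=-h_{\sigma(j)}$. The key reduction is then that $-h_j\in \cO_{h_j}$ is equivalent to $-h_j\in \cW h_j$ (since orbits of hyperbolic elements meet $\fa$ in Weyl orbits), and since $-h_j\in -\Pi^\star = w_0\Pi^\star$ while $h_j$ lies in $\Pi^\star$, any Weyl element mapping $h_j$ to $-h_j$ can be written as $w_0$ times a stabilizer of $h_j$; it thus produces $-h_{\sigma(j)}$. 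Hence $h_j$ is symmetric if and only if $\sigma(j)=j$. Inserting the well-known opposition involutions (identity for $B_n$, $C_n$, $D_{2n}$, $E_7$; $j\mapsto n+1-j$ for $A_n$; swap of the two tail nodes for $D_{2n+1}$; swap $1\leftrightarrow 6$ for $E_6$) yields exactly the list \eqref{eq:symmeuler}.

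The main obstacle I expect is the last reduction: showing that symmetry is detected by $\sigma$ rather than by some other Weyl element. This relies on the fact that the stabilizer of $h_j$ in $\cW$ is the parabolic subgroup $\cW_j$ generated by the reflections $r_{\alpha_i}$ with $i\ne j$, and that $\cW$ acts simply transitively on chambers, so any $w$ with $w(h_j)=-h_j$ must lie in the double coset $w_0\,\cW_j$, reducing the question to the action of $w_0$ itself. The verification of the explicit opposition involutions in each type is a table look-up (Bourbaki, \cite{Bou90}), so no genuine calculation is needed beyond identifying which fundamental coweights are fixed by $-w_0$.
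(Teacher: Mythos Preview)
Your argument is correct and follows essentially the same route as the paper: use the highest root's coefficients to single out the $h_j$ with $c_j=1$, then detect symmetry via $-w_0$ and the opposition involution on the Dynkin diagram. Your discussion of the ``main obstacle'' is more cautious than necessary---once you know $\Pi^\star$ is a fundamental domain for $\cW$, the equivalence $-h_j\in\cW h_j \Leftrightarrow h_{\sigma(j)}=h_j$ is immediate, without invoking the parabolic stabilizer $\cW_j$.
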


\begin{proof} 
Writing the highest root in $\Sigma$ with respect to the simple system 
$\Pi$  as $\alpha_{\rm max} = \sum_{j = 1}^n c_j \alpha_j$, 
we have $c_j \in \Z_{>0}$ for each $j$. 
If $h \in \Pi^\star$ is an Euler element, then $\Pi(h) \subeq \{0,1\}$, 
and $1 = \alpha_{\rm max}(h) = \sum_{j = 1}^n c_j \alpha_j(h)$ implies
that at most one value $\alpha_j(h)$ can be $1$,  and then the others are~$0$, 
i.e., $h = h_j$ for some $j \in \{1,\ldots, n\}$. 
Conversely, $h_j$ is an Euler element if and only if $c_j = 1$. 
Consulting the tables on the irreducible root systems 
in \cite{Bou90}, we obtain the Euler elements listed in 
\eqref{eq:eulelts2}. 

To determine the symmetric ones, let $w_0 \in \cW$ be 
the element of the Weyl group, which is uniquely determined by 
$w_0^*\Pi = - \Pi$ for the dual action of $\cW$ on $\fa^*$. 
Then $h_j' := w_0(-h_j)$ is the Euler element in 
the positive chamber representing the orbit $\cO_{-h_j}$. 
Therefore $h_j$ is symmetric if and only if 
$-h_j \in \cW h_j$, which is equivalent to $h_j' = h_j$. 
Using the description of $w_0$ and the root systems in \cite{Bou90},  
now leads to 
\begin{align}
& A_{n-1}: h_j' = h_{n-j}, \quad 
B_n: h_1' = h_1, \quad C_n: h_n' = h_n, \\ 
& D_n: h_1' = h_1, h_n' =
\begin{cases}
  h_{n-1} & \text{ for } n \ \text{ odd},\\ 
  h_n & \text{for } n \ \text{ even}, 
\end{cases}\\
&E_6: h_1' = h_6, \quad E_7: h_7' = h_7. 
\end{align}
Hence the symmetric Euler elements are those
listed in~\eqref{eq:symmeuler}.
\end{proof}

There are many types of simple $3$-graded Lie algebras that are neither 
complex nor hermitian (cf.\ Proposition~\ref{prop:herm} below);
for instance the Lorentzian algebras 
$\so_{1,n}(\R)$. We refer to 
\cite[p.~600]{Kan98} or \cite{Kan00}. 
for the list of all $18$ types which is reproduced below
in a different order. We identify $\so^*(4n)$ with the Lie algebra
$\fu_{2r}(\bH,\Omega)$ of the isometry
group of the non-degenerate skew-hermitian form on~$\H^{2r}$
defined by the matrix $\Omega = \pmat{ 0 & \bone \\ -\bone & 0}$. 

\hspace{-4mm}
\begin{tabular}{||l|l|l|l|l||}\hline
& $\g$  & $\Sigma(\g,\fa)$  & $h$ & $\g_1(h)$  \\ 
\hline
& \text{Complex Lie algebras}&&& \\  \hline
1 & $\fsl_n(\C)$ & $A_{n-1}$ & $h_j, 1 \leq j \leq n-1$ & $M_{j,n-j}(\C)$  \\ 
2 & $\sp_{2n}(\C)$ & $C_{n}$ & $h_n$ & $\Sym_n(\C)$   \\ 
3a & $\so_{2n+1}(\C)$ & $ B_{n}$ & $h_1$ & $\C^{2n-1}$   \\ 
3b & $\so_{2n}(\C)$ & $ D_{n}$ & $h_1$ & $\C^{2n-2}$   \\ 
4 & $\so_{2n}(\C)$ & $ D_{n}$ & $h_{n-1}, h_n$ & $\Alt_n(\C)$   \\ 
5 & $\fe_6(\C)$ & $E_6$ & $h_1 = h_6' $ & $M_{1,2}(\bO)_\C$   \\ 
6 & $\fe_7(\C)$ & $E_7$ & $h_7 $ & $\Herm_3(\bO)_\C$   \\ 
\hline
& \text{Hermitian tube type Lie algebras}&&& \\  \hline
7 & $\su_{n,n}(\C)$ & $C_{n}$ & $h_n$ & $\Herm_n(\C)$  \\ 
8 & $\sp_{2n}(\R)$ & $C_{n}$ & $h_n$ & $\Sym_n(\R)$   \\ 
9a  & $\so_{2,d}(\R)$ & $ C_2\ (2<d)$ & $h_1$ & $\R^{1,d-1}$   \\ 
10  & $\so^*(4n) \cong \fu_{2r}(\H,\Omega)$ & $C_n$ & $h_n$ & $\Herm_n(\H)$   \\ 
  11 & $\fe_{7(-25)}$ & $C_3$ & $h_3$ & $\Herm_3(\bO)$   \\
  \hline
& \text{Non-hermitian split forms} &&& \\  \hline
  12 & $\fsl_n(\R)$ & $A_{n-1}$ & $h_j, 1 \leq j \leq n-1$ & $M_{j,n-j}(\R)$  \\ 
9b  & $\so_{n,n+1}(\R)$ & $B_n$ & $h_1$ & $\R^{2n-1}$   \\ 
13  & $\so_{n,n}(\R)$ & $D_n$ & $h_{n-1}, h_n$ & $\Alt_n(\R)$   \\ 
14 & $\fe_6(\R)$ & $E_6$ & $h_1=h_6' $ & $M_{1,2}(\bO_{\rm split})$   \\ 
15 & $\fe_7(\R)$ & $E_7$ & $h_7 $ & $\Herm_3(\bO_{\rm split})$   \\ 
  \hline
& \text{Non-hermitian non-split forms} &&& \\  \hline
  16 & $\fsl_n(\H)$ & $A_{n-1}$ & $h_j, 1 \leq j \leq n-1$ & $M_{j,n-j}(\H)$  \\ 
17 & $\fu_{n,n}(\H)$ & $C_{n}$ & $h_n$ & $\Aherm_n(\H)$  \\ 
9c  & $\so_{p,q}(\R), 2\not=p \not=q-1$ & $ B_p\ (p<q)$ & $h_1$ & $\R^{p+q-2}$   \\ 
  & & $D_p\ (p = q)$ &  &    \\ 
18 & $\fe_{6(-26)}$ & $A_2$ & $h_1$ & $M_{1,2}(\bO)$   \\ 
  \hline\hline
\end{tabular} \\[2mm] {\rm Table 1: Simple $3$-graded Lie algebras}\\

In our context, hermitian simple Lie algebras are of particular interest.
Recall that a simple real Lie algebra $\g$ is called {\it hermitian}
if the corresponding Riemannian symmetric space $G/K$
is a complex bounded symmetric domain, which is equivalent
to $\fz(\fk) \not=\{0\}$. It is said to be {\it of tube type}
if $G/K \cong \R^n + i \Omega$ for an open convex cone $\Omega \subeq \R^n$.
We collect some properties of hermitian Lie algebras 
in the following  proposition.

\begin{proposition} \label{prop:herm} 
For a simple real Lie algebra, the following assertions hold: 
  \begin{description}
  \item[\rm(a)] $\g$ is hermitian if and only if it contains a
    non-trivial closed convex $\Inn(\g)$-invariant cone $C_\g$. 
  \item[\rm(b)] A simple hermitian Lie algebra contains an Euler 
element if and only if it is of tube type, and in this case $\Inn(\g)$ 
acts transitively on $\cE(\g)$. 
  \end{description}
\end{proposition}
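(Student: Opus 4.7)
For part (a), the plan is to invoke the classical Kostant--Vinberg characterization of simple real Lie algebras that admit a non-trivial invariant convex cone. In the ``hermitian $\Rightarrow$ invariant cone'' direction, I would start with a non-zero central element $Z \in \fz(\fk)$ and form
\[ C_\g := \overline{\conv\bigl(\Inn(\g).Z\bigr)}. \]
Because $\ad Z$ is elliptic with purely imaginary spectrum, and in the hermitian case $(\ad Z)\res_\fp$ is a non-zero multiple of a complex structure on $\fp$, the orbit $\Inn(\g).Z$ lies in a closed invariant region that does not contain any full line through the origin, so $C_\g$ is a non-trivial pointed invariant cone. For the converse, Vinberg's theorem on the automorphism groups of proper convex cones yields that interior points of any non-trivial pointed invariant cone $C_\g$ are elliptic; choosing such an interior elliptic element $h_0$ of minimal closed orbit, $h_0$ is central in a maximal compact subalgebra, and conjugating by a Cartan involution we can arrange this subalgebra to be $\fk$, so that $\fz(\fk) \neq 0$ and $\g$ is hermitian.

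For part (b), the plan is to relate real Euler elements in a simple hermitian Lie algebra to the Cayley transform of a generator of $\fz(\fk)$. Writing $\g_\C = \fp^+ \oplus \fk_\C \oplus \fp^-$ for the canonical $3$-grading given by the $\pm i$-eigenspace decomposition of $\ad Z$, an Euler element $h \in \cE(\g)$ produces a real $3$-grading of $\g$ and endows $\g_1(h)$ with a Jordan algebra structure via the Kantor--Koecher--Tits construction; the positivity properties of the trace form then force this Jordan algebra to be formally real with a unit, which is precisely the tube-type condition. Conversely, if $\g$ is of tube type one selects a maximal set of strongly orthogonal long non-compact positive roots, assembles from their root vectors an $\fsl_2$-triple $(e,f,h)$ in $\g$, and checks that $h$ is hyperbolic with $\Spec(\ad h) \subeq \{-1,0,1\}$; equivalently, the Cayley transform $c := e^{(\pi/4)\ad(e-f)}$ conjugates $Z$ to a non-zero multiple of $h$. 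For the transitivity of $\Inn(\g)$ on $\cE(\g)$, I would appeal directly to Theorem~\ref{thm:classif-symeuler}: the tube-type simple hermitian Lie algebras (entries~7--11 of Table~1) have restricted root systems of type $C_n$ with $h_n$ the unique Euler element in $\Pi^\star$, except for the case $\so_{2,d}(\R)$ where the Euler element in $\Pi^\star$ is $h_1$; in each case $\cE(\g)$ is therefore a single $\Inn(\g)$-orbit.

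The main obstacle is the ``only if'' direction of~(a), which is not elementary and relies on Vinberg's structural results for invariant proper convex cones, together with the identification of minimal elliptic orbits with central elements of maximal compact subalgebras. A secondary delicate point in~(b) is verifying that the real $3$-grading attached to an Euler element $h$ forces the induced Jordan algebra on $\g_1(h)$ to be Euclidean with a unit, which amounts to showing positivity of the trace form built from the bracket $\g_1(h) \times \g_{-1}(h) \to \g_0(h)$; this follows from $\kappa(x,\theta x) < 0$ for $x \neq 0$ together with the fact that the Cartan involution can be chosen to reverse the grading, i.e.\ $\theta(\g_{\pm 1}(h)) = \g_{\mp 1}(h)$.
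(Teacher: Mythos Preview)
Your approach to (a) is the same as the paper's: both invoke the Kostant--Vinberg Theorem (the paper simply cites it as \cite[Lem.~2.5.1]{HO97}), and your sketch of how that theorem is proved is reasonable extra detail.

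For (b), your argument is correct but takes a substantial detour compared with the paper. The paper's proof is a two-line reduction: by Harish-Chandra's theorem the restricted root system of a simple hermitian Lie algebra is of type $C_r$ or $BC_r$, and tube type corresponds precisely to $C_r$. Theorem~\ref{thm:classif-symeuler} then finishes everything at once, since $BC_r$ admits no Euler elements while $C_r$ admits exactly one conjugacy class. This yields both the equivalence and the transitivity simultaneously. Your route through the Kantor--Koecher--Tits construction, Euclidean Jordan algebras, and the Cayley transform is valid and conceptually illuminating (it explains \emph{why} tube type is the relevant condition), but it reproves by hand what the root-system classification already packages; since you appeal to Theorem~\ref{thm:classif-symeuler} anyway for the transitivity, you could have invoked it from the outset and avoided the Jordan-theoretic machinery altogether. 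The apparent discrepancy you noticed for $\so_{2,d}(\R)$ (Table~1 lists $h_1$ for $C_2$) is a labeling convention tied to the isomorphism $B_2 \cong C_2$ and does not affect the argument.
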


\begin{proof} (a) is a consequence of the Kostant--Vinberg Theorem 
(cf.\ \cite[Lem.~2.5.1]{HO97}). 

\nin (b) Since the restricted root system of a hermitian simple Lie algebra 
is of type $C_r$ or $BC_r$ (see \cite[\S 3.1]{MNO23} or
Table 2 below), and the first case characterizes the algebras of tube 
type, the assertion follows from 
Theorem~\ref{thm:classif-symeuler} because the root system 
$C_r$ only permits one class of Euler elements. 
\end{proof}

\begin{remark} As $h \in \fa$ implies $\theta(h) = -h$, the Cartan 
involution $\theta$ always maps $h$ into $-h$, but this only 
implies that $h$ is symmetric if $\theta \in \Inn(\g)$.
This is the case if $\g$ is hermitian, so that in these Lie algebras 
all Euler elements are symmetric
(cf.\ Proposition~\ref{prop:herm}).
\end{remark}

The classification of Euler elements requires
some interpretation. So let us first 
see what it says about complex simple Lie algebras~$\g$. In \eqref{eq:eulelts2} 
we see that, only if $\g$ is not of type $E_8, F_4$ or $G_2$, the Lie algebra $\g$ 
contains an Euler element. Euler elements correspond to $3$-gradings 
of the root system and these in turn to hermitian real forms $\g^\circ$, 
where $ih_j \in \fz(\fk^\circ)$ generates the center of a maximal 
compactly embedded subalgebra $\fk^\circ$ 
(\cite[Thm.~A.V.1]{Ne99}). We thus obtain the 
following possibilities. In Table 2, we write 
$\g^\circ$ for the hermitian real form, $\g$ for the complex Lie algebra, 
$\Sigma$~for its restricted root system, and $h_j$ for the corresponding Euler element. \\[5mm]
%\hspace{-8mm}
\begin{tabular}{||l|l|l|l|l||}\hline
{} $\g^\circ$ \mbox{(hermitian)} & $\Sigma(\g^\circ, \fa^\circ)$  & $\g = (\g^\circ)_\C$ & $\Sigma(\g,\fa)$ & 
{\rm Euler element} \\ 
\hline\hline 
$\su_{p,q}(\C), 1 \leq p\leq q$ & $BC_p (p < q)$, $C_p (p=q)$ & $\fsl_{p+q}(\C)$ & $A_{p+q-1}$ & 
$h_p$ \\ 
 $\so_{2,d}(\R), d > 2$ & $C_2$ & $\so_{2+d}(\C)$ & $B_{\frac{d+1}{2}}$, $d$ odd & $h_1$ \\ 
 &  & & $D_{1 + \frac{d}{2}}$, $d$ even &  \\ 
$\sp_{2n}(\R)$ & $C_n$ & $\fsp_{2n}(\C)$ & $C_n$ & $h_n$ \\ 
 $\so^*(2n)$ & $BC_{\lfloor\frac{n}{2}\rfloor} (n$ odd), $C_{\frac{n}{2}} (n$ even) & $\so_{2n}(\C)$  & $D_{n}$ 
& $h_{n-1}, h_n$ \\ 
$\fe_{6(-14)}$ & $BC_2$ & $\fe_6$ & $E_6$ & $h_1 = h_6'$ \\ 
$\fe_{7(-25)}$ & $C_3$ & $\fe_7$ & $E_7$ & $h_7$ \\ 
\hline
\end{tabular} \\[2mm] {\rm Table 2: Simple hermitian Lie algebras $\g^\circ$
($\g$ as in (1)-(6) in Table 1).}\\

Note that $\fsl_2(\R) \cong \so_{2,1}(\R) \cong \su_{1,1}(\C)$. More exceptional isomorphisms 
are discussed in some detail in \cite[\S 17]{HN12}.

In the correspondence of Euler elements in simplex complex Lie algebras
and their hermitian real forms, those real forms 
corresponding to symmetric Euler elements are of particular interest. 
Comparing with the list of hermitian simple Lie algebras 
of tube type (cf.~\cite[p.~213]{FK94}), we see that they 
correspond precisely to $3$-gradings specified by symmetric Euler elements, 
as listed in~\eqref{eq:symmeuler}. 
Since the Euler elements $h_{n-1}$ and $h_n$ for the root system of type 
$D_n$ are conjugate under a diagram automorphism, they correspond to 
isomorphic hermitian real forms. \\[2mm] 
%\hspace{-8mm}
\begin{tabular}{||l|l|l|l|l||}\hline
{} $\g^\circ$ \mbox{(hermitian)}  & $\Sigma(\g^\circ, \fa^\circ)$ & $\g = (\g^\circ)_\C$ & $\Sigma(\g,\fa)$ & {\rm symm.\ Euler element}\  $h$ \\ 
\hline\hline 
$\su_{n,n}(\C)$ & $C_n$ & $\fsl_{2n}(\C)$ & $A_{2n-1}$ & $h_n$ \\ 
  $\so_{2,d}(\R), d > 2$ \phantom{\ \ }& $C_2$ & $\so_{2+d}(\C)$
                                                                                         \phantom{\ }& $B_{\frac{d+1}{2}}$, $d$ odd & $h_1$ \\ 
 &  & & $D_{1 + \frac{d}{2}}$, $d$ even \phantom{\ \ }&  \\ 
$\sp_{2n}(\R)$ & $C_n$ & $\fsp_{2n}(\C)$ & $C_n$ & $h_n$ \\ 
 $\so^*(4n)$ & $C_n$ & $\so_{4n}(\C)$  & $D_{2n}$ & $h_{2n-1},  h_{2n}$ \\ 
$\fe_{7(-25)}$ & $C_3$ & $\fe_7$ & $E_7$ & $h_7$ \\ 
\hline
\end{tabular} \\[2mm] {\rm Table 3: Simple hermitian Lie algebras $\g^\circ$ 
of tube type ((7)-(11) in Table 1)}

\subsection{Conjugacy classes of Euler elements in general Lie algebras}

  To analyze Euler elements in general
  Lie algebra, it is instructive to consider abelian
  subalgebras $\fa \subeq \g$ which are maximal with respect to the
  property that $\ad \fa$ is diagonalizable. It follows from
  \cite[Thm.~III.3]{KN96}, applied to the symmetric Lie algebra
  $(\g^{\oplus 2}, \tau_{\rm flip})$, that they are conjugate
  under $\Inn(\g)$. Moreover, there always exists an
  $\ad\fa$-invariant Levi complement $\fs$
  (\cite[Prop.~I.2]{KN96}), so that
  \[ \fa = \fa_\fr \oplus \fa_\fs \quad \mbox{ for } \quad
    \g = \fr \rtimes \fs, \quad
    \fa_\fr = \fa \cap \fr, \ \ \fa_\fs = \fa \cap \fs.\]
  Then $[\fa_\fr,\fs] \subeq \fr \cap \fs = \{0\}$.
  As $\g$ is a nilpotent module of the ideal $[\g,\fr]$,
  it further follows that
  \[ \fa_\fr \cap [\g,\g] = \fa \cap [\g,\fr] \subeq \fz(\g) \cap [\g,\g],\]
  so that 
  \begin{equation}
    \label{eq:fa-dag}
    \fa = \fz(\g) \oplus \fa_\fr^c \oplus \fa_\fs,
  \end{equation}
where $\fa_\fr^c \subeq \fa_\fr$ is a complement of $\fz(\g)$ in $\fa_\fr$.

\begin{lemma} \label{lem:levi-euler}
For an Euler element $h \in \g$, the following assertions hold:
  \begin{enumerate}
  \item[\rm(a)] $\cO_h$ intersects $\fa$, hence also
    $\fz(\g) + \fl$, where $\fl = \fa_\fr^c \oplus \fs$ is a
    reductive subalgebra of $\g$.
    Moreover, $\cO_h \cap \fa = \cW.h$, where
    $\cW := \cW(\fs,\fa_\fs)$ is the Weyl group of the restricted
    root system $\Sigma(\fs,\fa_\fs)$. 
%   If $\g$ is the Lie algebra of an algebraic group
%    $G$ with Levi decomposition $U \rtimes L$ ($U$ unipotent and $L$ reductive)%,
%    then $\cO_h$ intersects the Lie subalgebra~$\fz(\g) + \fl$, where
%  $\fl = \L(L)$. 
\item[\rm(b)] If $h \in [\g,\g]$ is an Euler element contained
  in the commutator algebra, then $\cO_h + \fz(\g)$ 
  intersects every Levi complement. 
  \end{enumerate}
  \end{lemma}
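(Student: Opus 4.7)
The plan is to combine the conjugacy of maximal abelian $\ad$-diagonalizable subalgebras with the structural decomposition \eqref{eq:fa-dag} already at hand. Part (a) splits into showing $\cO_h \cap \fa \neq \eset$, the inclusion $\fa \subeq \fz(\g) + \fl$, reductivity of $\fl$, and the Weyl orbit description. Since $h$ is an Euler element, $\R h$ is abelian with $\ad$-diagonalizable elements. Embedding it in a maximal such subalgebra and invoking the conjugacy statement from \cite[Thm.~III.3]{KN96} (applied as in the preamble to $(\g^{\oplus 2}, \tau_{\rm flip})$) yields $g \in \Inn(\g)$ with $g.h \in \fa$, so $\cO_h \cap \fa \neq \eset$. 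The inclusion $\fa \subeq \fz(\g) + \fl$ is then immediate from \eqref{eq:fa-dag}. The subalgebra property of $\fl = \fa_\fr^c \oplus \fs$ follows from the preamble identity $[\fa_\fr, \fs] \subeq \fr \cap \fs = \{0\}$, so $\fl$ is the direct Lie-algebra product of the abelian $\fa_\fr^c$ and the semisimple~$\fs$, hence reductive.

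For the Weyl-group identification, take $h, h' \in \cO_h \cap \fa$ and decompose each according to $\fa = \fz(\g) \oplus \fa_\fr^c \oplus \fa_\fs$. The $\fz(\g)$-components agree since $\Inn(\g)$ fixes $\fz(\g)$ pointwise. The $\fa_\fr^c$-components also agree: as $\Inn(\g)$ acts trivially on the abelianization $\g/[\g,\g]$, we have $h - h' \in [\g,\g] \cap \fa$, and the containment $\fa_\fr \cap [\g,\g] \subeq \fz(\g) \cap [\g,\g]$ from the preamble together with $\fa_\fr^c \cap \fz(\g) = \{0\}$ forces the $\fa_\fr^c$-components to match. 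Hence $h - h' \in \fa_\fs$. Projecting modulo $\fr$ to $\fs \cong \g/\fr$, the images $h_\fs, h'_\fs \in \fa_\fs$ remain $\Inn(\fs)$-conjugate, so the classical semisimple theory (\cite[Thm.~III.10]{KN96}) supplies $w \in \cW(\fs, \fa_\fs)$ with $w.h_\fs = h'_\fs$; extending $w$ by the identity on $\fa_\fr$ sends $h$ to $h'$, which proves $\cO_h \cap \fa = \cW.h$.

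For (b), assume $h \in [\g,\g]$. By (a) we may replace $h$ by a conjugate lying in $\fa$, and decompose $h = h_\fr + h_\fs$ with $h_\fr \in \fa_\fr$, $h_\fs \in \fa_\fs \subeq \fs$. Since $\fs = [\fs,\fs] \subeq [\g,\g]$, we have $h_\fr = h - h_\fs \in \fa_\fr \cap [\g,\g] \subeq \fz(\g)$ by the preamble, so $h \in \fz(\g) + \fs$. For any other Levi complement $\fs'$, Malcev's theorem supplies $g \in \Inn(\g)$ with $g.\fs = \fs'$, and since $g$ fixes $\fz(\g)$ pointwise, $g.h \in \fz(\g) + \fs'$, showing that $\cO_h + \fz(\g)$ meets every Levi complement.

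The main obstacle is the rigidity of the $\fa_\fr^c$-component in (a): verifying that two conjugate elements of $\fa$ share this component rests on combining the preamble's containment $\fa_\fr \cap [\g,\g] \subeq \fz(\g)$ with the triviality of $\Inn(\g)$ on $\g/[\g,\g]$; once this is in place, reduction to the classical Weyl-group theory on~$\fs$ is routine.
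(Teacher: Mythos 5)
Your overall architecture for part (a) is a reasonable attempt to derive $\cO_h \cap \fa = \cW.h$ by reducing to the semisimple theory on $\fs$ via the projection $q\colon \g \to \g/\fr \cong \fs$, rather than citing \cite[Thm.~III.10]{KN96} directly for $\g$ as the paper does. Your existence and reductivity arguments, your argument that the $\fa_\fr^c$-components of two elements $h, h' \in \cO_h \cap \fa$ agree, and all of part (b) are correct and close in spirit to the paper's.

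There is, however, a genuine gap in part (a): the sentence ``The $\fz(\g)$-components agree since $\Inn(\g)$ fixes $\fz(\g)$ pointwise'' is not a valid inference. That $\Inn(\g)$ fixes $\fz(\g)$ pointwise tells you $\phi(z) = z$ for $z \in \fz(\g)$; it does \emph{not} tell you that, with respect to the chosen (non-canonical, non-$\Inn(\g)$-invariant) decomposition $\fa = \fz(\g) \oplus \fa_\fr^c \oplus \fa_\fs$, the $\fz(\g)$-component of $\phi(h)$ equals that of $h$. The correct consequence of your own argument is weaker: from $h - h' \in [\g,\g] \cap \fa$ and $\fa \cap [\g,\g] = (\fa_\fr \cap [\g,\g]) \oplus \fa_\fs \subeq (\fz(\g)\cap[\g,\g]) \oplus \fa_\fs$ one only gets $h_z - h_z' \in \fz(\g) \cap [\g,\g]$, and this intersection is nonzero in relevant examples — e.g.\ $\g = \heis(\R^2) \rtimes_{\ad h} \R$ from Example~\ref{ex:poincare-1}(b) has an Euler element, $\fz(\g) = \R z$, and $z \in [\g,\g]$, so $\fz(\g)\cap[\g,\g] = \R z \neq 0$. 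Ruling out $h' = h + c$ for $0 \neq c \in \fz(\g)\cap[\g,\g]$ is exactly the delicate content of \cite[Thm.~III.10]{KN96}; it does not follow from the triviality of the $\Inn(\g)$-action on $\fz(\g)$ or on $\g/[\g,\g]$, and without it your deduction ``Hence $h - h' \in \fa_\fs$'' is unjustified. The paper avoids this by citing \cite[Thms.~III.3, III.10]{KN96} for the full equality $\cO_h \cap \fa = \cW.h$; if you want to keep your reduction-to-$\fs$ strategy, you still need that theorem (or an equivalent argument) to control the $\fz(\g)\cap[\g,\g]$-ambiguity.
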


  \begin{proof} (a)  That $\cO_h \cap \fa = \cW.h$ 
    follows from \cite[Thms.~III.3, III.10]{KN96}, applied to the symmetric
    Lie algebra $(\g^{\oplus 2}, \tau_{\rm flip})$. 
    The rest of (a) now follows  from \eqref{eq:fa-dag}
    and the fact that $[\fa_\fr,\fs] =\{0\}$.

    \nin (b) In view of (a), we may assume that $h \in \fa$.
    Then $h \in \fa \cap [\g,\g] \subeq \fz(\g) + \fa_\fs
    \subeq \fz(\g) +  \fs$ implies~(b). Now the assertion follows 
from the fact that $\cO_h + \fz(\g)$ is invariant under 
$\Inn(\g)$ and that any two Levi complements are conjugate under this group. 
\end{proof}

For refinements of the following proposition we refer to \cite[\S 2.1]{MNO25}. 

\begin{proposition}   \label{prop:MN21:3.2} {\rm(\cite[Prop.~3.2]{MN21})} 
The following assertions hold: 
\begin{description}
\item[\rm(i)] An Euler element $h \in \g$ is symmetric if and only 
if $h$ is contained in a Levi complement $\fs$ and 
$h$ is a symmetric Euler element in~$\fs$.
\item[\rm(ii)] Let $\g = \fr \rtimes \fs$ be a Levi decomposition. 
\begin{description}
\item[\rm(a)]  If $h \in \g$ is a symmetric Euler element, then 
$\cO_h = \cO_{q(h)}= \Inn(\g)(\cO_h \cap \fs) $, 
where $q \: \g \to \fs$ is the projection along $\fr$. 
\item[\rm(b)] Two symmetric Euler elements are conjugate under $\Inn(\g)$ 
if and only if their images in $\fs$ are conjugate under~$\Inn(\fs)$. 
\end{description}
\end{description}
\end{proposition}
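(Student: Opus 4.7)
The plan is to prove (i) by pushing $h$ into the maximal abelian subspace $\fa$ via Lemma~\ref{lem:levi-euler}(a), writing it in the decomposition \eqref{eq:fa-dag} as $h = z + h_\fr + h_\fs$, and using the structure of the Weyl group $\cW = \cW(\fs,\fa_\fs)$ to show that the symmetry condition forces $z = h_\fr = 0$. The key observation is that, because $\fs$ is $\ad\fa$-invariant, one has $[\fa_\fr, \fs] \subseteq \fs \cap \fr = \{0\}$, and central elements are fixed by all of $\Inn(\g)$; hence $\cW$ acts trivially on $\fz(\g) \oplus \fa_\fr^c$, so that $\cW \cdot h = z + h_\fr + \cW h_\fs$. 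After conjugating $h$ into $\fa$, the symmetry condition $-h \in \cO_h$ combined with $\cO_h \cap \fa = \cW \cdot h$ gives $-h \in \cW \cdot h$, which forces $z = 0$, $h_\fr = 0$, and $-h_\fs \in \cW h_\fs$. Hence $h = h_\fs \in \fs$ is a nonzero symmetric Euler element of $\fs$ (nonzero because $\ad h \neq 0$). The backward direction of (i) is immediate from $\Inn(\fs) \subseteq \Inn(\g)$.

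For (ii)(a), the key fact is that $q$ is a Lie algebra homomorphism whose induced map on $\Inn(\g)$-orbits factors through $\Inn(\fs)$ acting on $\fs$. By (i), any symmetric Euler $h$ is $\Inn(\g)$-conjugate to some $h_0 \in \fa_\fs \subseteq \fs$; writing $h = \Ad(g) h_0$ and applying $q$ yields $q(h) \in \Inn(\fs) h_0 \subseteq \cO_h$, so $\cO_{q(h)} \subseteq \cO_h$. The reverse inclusion follows from $h_0 \in \Inn(\fs) q(h) \subseteq \cO_{q(h)}$, and the equality $\cO_h = \Inn(\g)(\cO_h \cap \fs)$ is a direct consequence of $h \in \Inn(\g) h_0$ with $h_0 \in \cO_h \cap \fs$. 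For (ii)(b), the easy direction follows from $\Inn(\fs) \subseteq \Inn(\g)$ together with (ii)(a); for the converse, if $\cO_{h_1} = \cO_{h_2}$, I select $h_{0,i} \in \fa_\fs \cap \cO_{h_i}$ via (i), whereupon Lemma~\ref{lem:levi-euler}(a) gives $\cW h_{0,1} = \cO_{h_1} \cap \fa = \cO_{h_2} \cap \fa = \cW h_{0,2}$. Hence $h_{0,1}$ and $h_{0,2}$ are $\Inn(\fs)$-conjugate, and combining with (ii)(a) yields the $\Inn(\fs)$-conjugacy of $q(h_1)$ and $q(h_2)$.

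The main technical subtlety lies in part (i): the triviality of the $\cW$-action on $\fz(\g) \oplus \fa_\fr^c$ is what converts the global symmetry $-h \in \cO_h$ into the vanishing of the central and radical components of $h$ in $\fa$. This in turn depends crucially on the existence of an $\ad\fa$-invariant Levi complement $\fs$, which underlies the decomposition \eqref{eq:fa-dag} and ensures that reflections generating $\cW$ commute with the complementary directions in $\fa$.
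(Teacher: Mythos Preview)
Your proof of (i) is correct and takes a genuinely different, cleaner route than the paper. The paper first shows $h \in [\g,\g]$ via $\cO_h \subeq h + [\g,\g]$, then uses Lemma~\ref{lem:levi-euler}(b) to place $h$ (after conjugation) in $\fz(\g) + \fs$, writes $h = h_z + h_s$, and then runs a hands-on argument: it decomposes $\Inn(\g) = \Inn_\g([\g,\fr])\,\Inn_\g(\fs)\,\Inn(\fr_0(h))$, writes $-h = e^{\ad x} s.h$ with $x \in [\g,\fr]$ and $s \in \Inn_\g(\fs)$, and analyzes how $e^{\ad x}$ acts on the plane $\R h_s + \R h_z$ to force $h_z = 0$. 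Your route via Lemma~\ref{lem:levi-euler}(a) is much shorter: since $[\fa_\fr,\fs]=0$ means $\Inn_\g(\fs)$ fixes $\fa_\fr$ pointwise, the Weyl group $\cW = \cW(\fs,\fa_\fs)$ acts trivially on $\fz(\g)\oplus\fa_\fr^c$, and $-h \in \cO_h \cap \fa = \cW h$ immediately forces the $\fa_\fr$-component of $h$ to vanish. This buys brevity at the cost of relying on the deeper cited result $\cO_h \cap \fa = \cW h$ from~\cite{KN96}.

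For (ii) there is a small ambiguity you should address: the $\fa_\fs$ produced in your argument for (i) lives in the $\ad\fa$-invariant Levi complement, which need not coincide with the \emph{given} $\fs$ of part~(ii), so the step ``$q(h_0) = h_0$'' is not yet justified. The fix is one line: since all Levi complements are $\Inn(\g)$-conjugate, conjugate $\fa$ so that the given $\fs$ becomes $\ad\fa$-invariant; then $\fa_\fs = \fa \cap \fs$ and everything you wrote goes through. Also, your proof of (ii)(b) is more circuitous than needed. The paper just uses the orbit-projection formula $q(\cO_x) = \cO^\fs_{q(x)}$ (which you already stated as ``$q$ factors through $\Inn(\fs)$''), so that $\cO_{h_1} = \cO_{h_2}$ gives $\cO^\fs_{q(h_1)} = q(\cO_{h_1}) = q(\cO_{h_2}) = \cO^\fs_{q(h_2)}$ directly, without returning to the Weyl group.
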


\begin{proof} (i) As $\cO_h \subeq h + [\g,\g]$ follows from the 
invariance of the affine subspace $h + [\g,\g]$ under 
$\Inn(\g)$, the relation 
$-h \in\cO_h$ implies $h \in [\g,\g]$.
In view of Lemma~\ref{lem:levi-euler}(b),
there exists a Levi complement $\fs$ with
$h \in \fz(\g) + \fs$.
Then $\fr$ and $\fs$ are $\ad h$-invariant, so that the 
$\ad h$-eigenspaces of the restrictions satisfy 
\[ \fr = \fr_1(h) + \fr_0(h) + \fr_{-1}(h) \quad \mbox{ and } \quad 
\fs = \fs_1(h) + \fs_0(h) + \fs_{-1}(h), \] 
and define $3$-gradings of $\fr$ and~$\fs$.
Further $\g_{\pm 1}(h) \subeq [h,\g] \subeq [\g,\g]$ and 
$\fs = [\fs,\fs] \subeq [\g,\g]$ imply that 
$\g =  \fr_0(h) + [\g,\g]$. The fact that $[\g,\g]$ is an ideal and $\fr_0(h)$ 
is a subalgebra of $\g$ entails that
the subgroup $\Inn_\g([\g,\g])$ of $\Inn(\g)$ is normal, 
and that  $\Inn(\g) = \Inn_\g([\g,\g]) \Inn(\fr_0(h))$. 
As $\Inn(\fr_0(h))$ fixes $h$, this in turn shows that 
$\cO_h = \Inn_\g([\g,\g])h = \Inn_\g([\g,\fr]) \Inn_\g(\fs)h$. 
Writing $h = h_z + h_s$ with $h_z \in \fz(\g)$ and $h_s \in \cE(\fs)$, we thus 
find $x \in [\g,\fr]$ and $s \in \Inn_\g(\fs)$ such that
\begin{footnote}{Here we use that the ideal $[\g,\fr]$ is nilpotent, 
so that the exponential function of the corresponding group 
$\Inn_\g([\g,\fr])$ is surjective, see \cite[Cor.~11.2.7]{HN12}.}  
\end{footnote}
\begin{equation}
  \label{eq:hz}
  -h_z- h_s = -h = e^{\ad x} s.h = h_z + e^{\ad x} s.h_s.
\end{equation}
Applying the Lie algebra 
homomorphism $q \: \g \to \fs$ to both sides, we derive from 
$q(h_z) = 0$ and $q \circ e^{\ad x} = q$ that $-h_s = s.h_s$, and therefore
by \eqref{eq:hz} 
\[e^{\ad x} h_s = h_s + 2h_z.\] 
We conclude that the unipotent linear map $e^{\ad x}$ preserves 
the plane $\R h_s + \R h_z$, and this implies that 
$\ad x = \log(e^{\ad x})$ also has this property. 
We thus arrive at 
\[   [h,x] = [h_s, x] \subeq \R h_s + \R h_z \subeq \g_0(h),\] 
so that $x \in \g_0(h) = \g_0(h_s)$, which in turn leads to 
$0 = e^{\ad x}h_s  - h_s =~2h_z$, i.e., $h = h_s\in \fs$. 

To prove the second assertion of (i), we observe that the 
projection $q \: \g \to \fs \cong \g/\fr$ satisfies 
\begin{equation}
  \label{eq:orbproj}
q(\cO_x) = \cO^\fs_{q(x)} \quad \mbox{ for } \quad x \in \g.
\end{equation}
Writing $\cE_{\rm sym}(\g)$ for the set of symmetric
Euler elements in $\g$, we obtain
$q(\cE_{\rm sym}(\g)) \subeq \cE_{\rm sym}(\fs)$. 
If, conversely, $h \in \cE_{\rm sym}(\fs)$, then we clearly 
have $-h \in \Inn_\g(\fs)h \subeq \Inn(\g)h$, so that $h \in \cE_{\rm sym}(\g)$.

\nin (ii)(a) As $\cO_h$ intersects $\fs$ by (i), 
$q(\cO_h) \cap \cO_h \not=\eset$, and since $\Inn(\fs)$ acts transitively 
on $q(\cO_h)$ by \eqref{eq:orbproj}, we obtain $q(\cO_h) \subeq \cO_h$ 
and thus $q(\cO_h) = \cO_h \cap \fs$. This further leads to 
\[ \cO_h = \Inn(\g)(\cO_h \cap \fs) = \Inn(\g)q(\cO_h) 
= \Inn(\g) \cO^\fs_{q(h)} = \cO_{q(h)}.\] 

\nin (ii)(b) follows immediately from (a). 
\end{proof}

Proposition~\ref{prop:MN21:3.2} reduces, for a given Lie algebra $\g$, 
the description of symmetric Euler elements 
up to conjugation by inner automorphisms to the case of simple Lie algebras.

It would be nice to have a classification of Euler elements
in any Lie algebra~$\g$, but, due to the complexity of Levi decompositions
$\g = \fr \rtimes \fs$, this is not a well-posed problem. 
If $\g$ is reductive, then the classification of Euler elements in~$\g$ follows immediately from the case of simple Lie algebras,
which is described in Theorem~\ref{thm:classif-symeuler}. 
For symmetric Euler elements $h$, Proposition~\ref{prop:MN21:3.2} 
reduces the classification to the semisimple case,
but then one has to describe the module structure of the radical.
\begin{footnote}{The role of the symmetry of $h$
    for the existence of nets of real subspaces
    is still not completely understood. It certainly plays an important
    role in specifying locality conditions (cf.~Section~\ref{subsec:locality}).
    If $h$ is not connected,
    one may be forced to also take non-connected causal manifolds $M$
    into consideration, resp., to replace $G$ by a suitable non-connected
    group.}
\end{footnote}

\begin{examplekh} \label{ex:hcsp}
  (An example from symplectic geometry)  A particularly interesting
  Lie algebra which is 
  neither semisimple nor solvable is the
  {\it conformal Jacobi--Lie algebra} 
\[ \g = \hcsp(V,\omega) := \heis(V,\omega) \rtimes \csp(V,\omega), \] 
where 
$(V,\omega)$ is a symplectic vector space, 
$\heis(V,\omega) = \R \oplus V$ is the corresponding Heisenberg algebra 
with the bracket $[(z,v),(z',v')] = (\omega(v,v'),0)$, and 
\[ \csp(V,\omega) := \sp(V,\omega) \oplus \R \id_V \] 
is the {\it conformal symplectic Lie algebra} of $(V,\omega)$.
\index{Lie algebra!conformal symplectic \scheiding}
The hyperplane ideal
\[ \fj := \heis(V,\omega) \rtimes \sp(V,\omega) \] 
(the {\it Jacobi--Lie algebra})  
\index{Lie algebra!Jacobi \scheiding}
can be identified by the linear isomorphism 
\[ \phi \colon \fj \to \Pol_{\leq 2}(V), \qquad 
\phi(z,v,x)(\xi) := z + \omega(v,\xi) + \frac{1}{2} \omega(x\xi,\xi), \quad 
\xi \in V \] 
with the Lie algebra of polynomials 
$\Pol_{\leq 2}(V)$ of degree $\leq 2$ on $V$, 
endowed with the Poisson bracket (\cite[Prop.~A.IV.15]{Ne99}).  
The set 
\[ C_\g := \{ f \in \Pol_{\leq 2}(V) \colon f \geq 0 \}  \] 
is a pointed generating invariant cone in~$\fj$. 
The element $h_0 := \id_V$ defines a derivation on $\fj$ by 
$(\ad h_0)(z,v,x) = (2z,v,0)$ for $z \in \R, v \in V, x \in \sp(V,\omega)$. 
Any involution $\tau_V$ on $V$ satisfying $\tau_V^*\omega = - \omega$ 
defines by 
\begin{equation}
  \label{eq:invtau}
\tilde\tau_V(z,v,x) := (-z,-\tau_V(v), \tau_V x \tau_V) 
\end{equation}
an involution on $\g$ with $\tilde\tau_V(h_0) = h_0$,  
and $-\tilde\tau_V(C_\g) = C_\g$ follows from 
\[ \phi(\tilde \tau_V(z,v,x)) = - \phi(z,v,x) \circ \tau_V.\] 
Considering $h_\fs :=\shalf\tau_V$ as an element of $\sp(V,\omega)$, 
the element
\begin{equation}
  \label{eq:hhs}
  h := h_\fs + \shalf\id_V\in \csp(V,\omega)
\end{equation}
is Euler  in $\g$. Writing $V = V_1 \oplus V_{-1}$ for 
the $\tau_V$-eigenspace decomposition, we have 
\begin{align*}
 \g_{-1} &= 0 \oplus 0 \oplus \sp(V,\omega)_{-1}, \quad 
\g_0 = 0 \oplus V_{-1}\oplus \sp(V,\omega)_0 
\cong V_{-1} \rtimes \gl(V_{-1}), \\ 
  \g_1 &= \R \oplus V_1\oplus \sp(V,\omega)_1.
\end{align*}
Note that 
\begin{equation}
  \label{eq:tauh=tauv}
  \tau_h =  e^{\pi i \ad h} = \tilde\tau_V.
\end{equation}
Here $\g_1$ can be identified with the space 
$\Pol_{\leq 2}(V_{-1})$ of polynomials of 
degree $\leq 2$ on $V_{-1}$ and 
\[ C_+ = C_\g \cap \g_1 = \{ f \in \Pol_{\leq 2}(V_{-1}) \colon f \geq 0\}.\] 
This  cone is invariant 
under the natural action of the 
affine group $G_0 \cong \Aff(V_{-1})_0 \cong V_{-1} \rtimes \GL(V_{-1})_0$ 
whose Lie algebra is $\g_0$. We also note that 
\[ \g_{-1} \cong \Pol_2(V_1) \quad \mbox{ and } \quad 
  C_- = - C_\g \cap \g_{-1} = \{ f \in \Pol_2(V_1) \colon f \leq 0\},\]
so that $C_-$ is also pointed and generating.

The Euler element $h$ is not symmetric because $\dim \g_1 \not=\dim \g_{-1}$. 

We also claim that the Lie algebra $\hsp(V,\omega)$ contains
{\bf no Euler element}. In fact, as it is perfect,
and $\heis(V,\omega) \rtimes \sp(V,\omega)$
is a Levi decomposition, it suffices by Lemma~\ref{lem:levi-euler}
to show that no  Euler element of $\g$ is contained in 
$\R \oplus \{0\} \oplus \sp(V,\omega)$. Since all Euler elements $h$ in
the hermitian Lie algebra $\sp(V,\omega)$ are conjugate
(Proposition~\ref{prop:herm}), it suffices to consider 
$h = h_\fs + (\lambda,0,0), \lambda \in \R$. As 
\[ \Spec(\ad h) = \Spec(\ad h_\fs) = \{ \pm 1, \pm \shalf, 0\},\]
$h$ is not Euler in $\heis(V,\omega) \rtimes \sp(V,\omega)$. 
\end{examplekh}

\subsection{Euler elements in low-dimensional subalgebras}

\begin{lemma} \label{lem:slgl}
  Let $\g$ be a finite-dimensional Lie algebra and 
$h \in \cE(\g)$ an Euler element. 
If $h$ is not contained in the solvable radical $\rad(\g)$, 
then there exists a 
Lie subalgebra $\fb \subeq \g$ containing $h$ such that 
\begin{description}
\item[\rm(a)] $\fb \cong \fsl_2(\R)$ if and only if $h$ is symmetric, and 
\item[\rm(b)] $\fb \cong \gl_2(\R)$ if $h$ is not symmetric. 
\item[\rm(c)] If $h$ is symmetric, 
then {$\Inn_\g(\fb) \cong \PSL_2(\R)$.}
\item[\rm(d)] If $h$ is not symmetric and $\g$ is simple, then 
$\Inn_\g([\fb,\fb]) \cong \SL_2(\R)$. 
\end{description}
\end{lemma}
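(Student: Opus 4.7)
The plan is to reduce to a Levi complement of~$\g$ via Lemma~\ref{lem:levi-euler}, then build~$\fb$ explicitly using an $\fsl_2$-triple arising from the $3$-grading. First I would conjugate~$h$ into the form $h = z + h_\fs$ with $z \in \fz(\g)$ and $h_\fs$ in a fixed Levi complement~$\fs$. Because $h \notin \rad(\g)$ and $\ad z = 0$, $h_\fs$ will be a nonzero Euler element of the semisimple algebra~$\fs$, and Proposition~\ref{prop:MN21:3.2}(i) will tell us that $h$ is symmetric in $\g$ exactly when $z = 0$ and $h_\fs$ is symmetric in~$\fs$, decoupling the two cases.

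For (a), assuming $h$ is symmetric, I would exhibit $\fb \cong \fsl_2(\R)$ by showing that $h = h_\fs$ is the neutral element of an $\fsl_2$-triple $(e, 2h, f)$ with $e \in \fs_1(h)$, $f \in \fs_{-1}(h)$. Existence of such a triple can be read off from the explicit models for symmetric Euler elements listed in Theorem~\ref{thm:classif-symeuler}, which always arise from a tube-type structure; for instance $h_n = \shalf\diag(\bone_n, -\bone_n) \in \fsl_{2n}(\R)$ sits in the block $\fsl_2$-subalgebra spanned by $\pmat{0 & \bone_n \\ 0 & 0}$ and $\pmat{0 & 0 \\ \bone_n & 0}$. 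The converse direction is routine: any $\fb \cong \fsl_2(\R)$ containing the Euler element $h$ forces $\ad_\fb h$ to be a nonzero semisimple element with spectrum in $\{-1, 0, 1\}$, so $h$ is itself an Euler element of~$\fb$, and the Weyl reflection of~$\fb$---an inner involution of~$\g$---carries $h$ to~$-h$.

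For (b), with $h$ not symmetric, I would define $\fb := \R h \oplus \fs''$, where $\fs''$ is an $\fsl_2$-subalgebra of the semisimple part of the centralizer $\fs_0(h_\fs)$. Such $\fs''$ exists because $\fs_0(h_\fs)$ is reductive and non-abelian in rank~$\geq 2$ (any simple root~$\alpha_k$ with $\alpha_k(h_\fs) = 0$ contributes a root-$\fsl_2$), while in rank one every Euler element is automatically symmetric and so falls outside~(b). Since $z$ is central in~$\g$ and $\fs''$ commutes with~$h_\fs$, the element~$h$ commutes with~$\fs''$; and since $\fs''$ lies in the semisimple part of $\fs_0(h_\fs)$, which meets the central line~$\R h_\fs$ trivially, we get $h \notin \fs''$, so $\fb \cong \R \oplus \fsl_2(\R) \cong \gl_2(\R)$ as Lie algebras.

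For (c) and (d), I would use that $\ad_\g|_\fb$ is injective---its kernel is an ideal of the (semi)simple algebra~$\fb$ and cannot be all of~$\fb$---so both claims reduce to determining whether $-\bone \in \SL_2(\R)$ acts trivially on~$\g$. Part (c) is immediate: $\ad_\g(2h)$ has spectrum $\{-2, 0, 2\}$, so every $\fsl_2$-constituent of~$\g$ has even highest weight and $\{\pm\bone\}$ acts trivially. For (d), where $\g$ is simple, the task is to choose $\fs''$ so that its action on $\g_1(h)$ carries an odd weight. I expect this to be the main obstacle: a generic $\fsl_2 \subeq \fs_0(h)$ can fail this (for example, the principal $\fsl_2 \subeq \fsl_3 \subeq \fs_0(h_1)$ inside $\fsl_4$ acts on $\g_1$ as~$V_2$, which has only even weights). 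I would resolve it by selecting $\fs''$ as the root-$\fsl_2$-triple attached to a restricted root $\alpha$ of~$\g$ with $\alpha(h) = 0$ having odd Cartan pairing $\langle \beta, \alpha^\vee\rangle$ with some $\beta(h) = 1$; verifying the existence of such an~$\alpha$ for every non-symmetric Euler element in a simple Lie algebra is the case-by-case step of the argument, carried out via inspection of the marked Dynkin diagrams underlying Theorem~\ref{thm:classif-symeuler}.
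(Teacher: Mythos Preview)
Your approach to (a) and (c) is essentially the paper's: in the symmetric case $h$ lies in a Levi complement and is the neutral element of an $\fsl_2$-triple, and the eigenvalue constraint on $\ad h$ forces all $\fb$-submodules of $\g$ to be trivial or adjoint, so $-\bone$ acts trivially.

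The construction in (b), however, diverges from the paper in a way that introduces a genuine gap. You place the $\fsl_2$-part $\fs''$ inside the \emph{centralizer} $\fs_0(h_\fs)$, so that $h$ is central in your $\fb$. The paper does the opposite: it picks a restricted root $\alpha$ with $\alpha(h)=1$ and builds $\fb_\alpha = \R x_\alpha + \R y_\alpha + \R h_\alpha$ from root vectors in $\g_{\pm 1}(h)$, then sets $\fb = \R h + \fb_\alpha$; here $h$ acts nontrivially on $\fb_\alpha$, and $h\notin\fb_\alpha$ follows from non-symmetry. The advantage of the paper's choice is that $\g_{\pm 1}(h)\not=\{0\}$ is guaranteed by the Euler property, whereas your $\fs_0(h_\fs)$ can be abelian even when $h$ is non-symmetric. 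Concretely, take $\g = \R \oplus \fsl_2(\R)$ and $h = (1,h_0)$ with $h_0 = \shalf\diag(1,-1)$: then $h$ is an Euler element, $h\notin\rad(\g)=\R$, and $h$ is non-symmetric (its orbit stays in $\{1\}\times\cO_{h_0}$, which does not contain $(-1,-h_0)$), but $\fs_0(h_\fs) = \R h_0$ is one-dimensional and admits no $\fsl_2$-subalgebra. Your justification ``in rank one every Euler element is automatically symmetric'' conflates symmetry of $h_\fs$ in $\fs$ with symmetry of $h$ in $\g$; these differ precisely when the central component $z$ is nonzero, and the same failure occurs more generally whenever $h_\fs$ is symmetric in $\fs$ but $z\not=0$ (e.g.\ $\fs$ a sum of copies of $\fsl_2$).

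This also affects (d): the paper's argument there is tied to its $\fb_\alpha$ with $\alpha(h)=1$. It uses that the Weyl reflection $s_\alpha(h)=h-\alpha^\vee$ is again an Euler element, and non-symmetry forces $s_\alpha(h)\notin\R h$, so some $\beta\in\Delta_0$ satisfies $\beta(\alpha^\vee)\not=0$; the Euler bound then gives $|\beta(\alpha^\vee)|=1$, hence $e^{\pi i \ad\alpha^\vee}$ acts nontrivially on $\g_\beta$. This is a uniform argument, not a case check. Your version requires instead a root $\alpha$ with $\alpha(h)=0$ and odd $\langle\beta,\alpha^\vee\rangle$ for some $\beta(h)=1$; this does happen to hold (non-symmetric Euler elements in simple $\g$ occur only for simply-laced restricted root systems $A_n$, $D_{2m+1}$, $E_6$), but you should either note this or switch to the paper's $\fb_\alpha$, which makes (d) conceptual and simultaneously repairs the gap in~(b).
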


\begin{proof} (a) If $h \in \fb \cong \fsl_2(\R)$,
  then $h$ is symmetric because all
  Euler elements in $\fsl_2(\R)$ are symmetric by Example~\ref{ex:3.10b}.
  If, conversely, $h$ is symmetric, then
  Proposition~\ref{prop:MN21:3.2} implies that
  $h$ is contained in a Levi complement~$\fs$.
  Therefore \cite[Thm.~3.13]{MN21}   %Theorem~\ref{thm:automaticsym}(b)
  implies that $h$ is contained in an $\fsl_2$-subalgebra.

\nin (b) Suppose that $h$ is not symmetric and 
pick a maximal abelian hyperbolic subspace $\fa \subeq \g$ containing~$h$. 
With \cite[Prop.~I.2]{KN96} we find an $\fa$-invariant 
Levi complement $\fs \subeq \g$. Then 
$\fa_\fs := \fa \cap \fs$ is maximal hyperbolic in $\fs$ 
and $\fa = \fa_\fs + \fz_\fa(\fs)$. As $h$ is not contained in
$\rad(\g)$, there exists a root 
$\alpha \in \Delta(\fs,\fa)$ with $\alpha(h) = 1$ and 
root vectors $x_\alpha \in \fs_\alpha$ and $y_\alpha \in \fs_{-\alpha}$ 
with $h_\alpha := [x_\alpha, y_\alpha] \not=0$.  
We stress that $x_\alpha\in\fs_1(h)$. We use that 
\[ [x_\alpha, y_\alpha] = \kappa(x_\alpha, y_\alpha) a_\alpha, \] 
where $a_\alpha \in \fa$ is the unique element with 
$\alpha(a) = \kappa(a_\alpha,a)$ for all $a \in \fa$,  
and that the Cartan--Killing form
 $\kappa$ induces a dual pairing $\fs_\alpha \times \fs_{-\alpha} \to \R$. 
Then 
\[ \fb_\alpha := \R x_\alpha + \R y_\alpha + \R h_\alpha \cong \fsl_2(\R)\] 
and $[h,\fb_\alpha] \subeq \fb_\alpha$. 
Hence $\fb := \R h +  \fb_\alpha$ is a Lie subalgebra of $\fg$. 
As $h$ is not symmetric, $h \not\in \fb_\alpha$, and therefore 
$\fb \cong \gl_2(\R)$. 

\nin (c) If $h$ is symmetric and $\fb = [\fb,\fb] \cong \fsl_2(\R)$
as in (a), then the fact that $\fb$ contains an Euler element of $\g$ 
implies that all simple $\fb$-submodules of $\g$ are either trivial 
of isomorphic to the adjoint representation of $\fsl_2(\R)$ 
(consider eigenspaces of $\ad h$). 
This implies that $\Inn_\g(\fb) \cong \PSL_2(\R)$. 

\nin (d) Suppose that $\g$ is simple. 
If $h$ is not symmetric, then the Weyl group reflection 
$s_\alpha$ corresponding to the root $\alpha$ from above satisfies 
\[ s_\alpha(h) 
= h - \alpha(h) \alpha^\vee = h - \alpha^\vee.\] 
As $h$ is not contained in $\R \alpha^\vee \subeq \fb_\alpha$, we have 
$s_\alpha(h) \not\in \R h$. 

The simplicity of $\g$ ensures that the root system $\Delta = \Delta(\g,\fa)$ 
is irreducible and $3$-graded by~$h \in \fa$. 
Therefore 
\[ \Delta_0 := \{\alpha \in \Delta \: \alpha(h)=0\} \] 
spans a hyperplane in $\fa^*$, which coincides with $h^\bot$, and 
thus $\R h = \Delta_0^\bot$ by duality. 
Since $s_\alpha(h)$ is not contained in $\R h$, there exists a 
$\beta \in \Delta_0$ with $\beta(s_\alpha(h)) \not=0$. 
Now $\beta(h) = 0$ implies 
\[ 0 \not= \beta(s_\alpha(h)) = -\beta(\alpha^\vee).\] 
As $s_\alpha(h)$ is an Euler element, we obtain 
$|\beta(\alpha^\vee)| = 1$. Therefore the central element
$e^{\pi i \ad \alpha^\vee}$ of $\Inn_\g(\fb_\alpha)$ acts non-trivially,
and this implies that 
$\Inn_\g(\fb_\alpha) \cong \SL_2(\R)$ because it is a linear Lie 
group with non-trivial center (\cite[Ex.~9.5.18]{HN12}). 
\end{proof}

\subsection{The Brunetti--Guido--Longo (BGL) net}

In this subsection we describe 
a construction that generalizes the algebraic
construction of free fields for AQFT models presented in \cite{BGL02}.
We refer to \cite{MN21} and \cite{MNO26}
for a detailed discussion of this construction;
see also Exercise~\ref{exer:bgl}.

\begin{definition} \label{def:euler}
For an involution $\sigma \in \Aut(G)$, we write
 $G_\sigma := G \rtimes \{\id_G, \sigma\}$ for the corresponding
 group extension. 

The set 
  \[ \cG {:= \cG({G_\sigma})} := \{ (h,\tau)\in\g \times G\sigma 
      \: \tau^2 = e, \Ad(\tau)h = h\}\] 
  is called the {\it abstract wedge space of $G_\sigma$}.
\index{wedge space!abstract, $\cG(G_\sigma)$ \scheiding }
  An element $(h,\tau) \in \cG$ is called an 
{\it Euler couple} if  $h\in\cE(\fg)$ and 
\begin{equation}\label{eq:eul}
  \Ad(\tau)=\tau_h. 
\end{equation} 
Then $\tau$ is called an {\it Euler involution} on~$G$. 
We write $\cG_E\subeq \cG$ for the subset of Euler couples.
\index{Euler!involution \scheiding} 

\nin (a) Consider the homomorphism $\eps \: G_\sigma \to \{\pm 1\}$, defined
by $\ker \eps = G$. On $\g$ we consider the 
{\it twisted adjoint action} of $G_\sigma$ which changes the sign on odd group elements:  \index{adjoint action!twisted \scheiding} 
\begin{equation}
  \label{eq:adeps}
  \Ad^\eps \: G_\sigma \to \Aut(\g), \qquad 
\Ad^\eps(g) := \eps(g) \Ad(g).
\end{equation}
It extends to an action of $G_\sigma$ on $\cG$  by 
\begin{equation}
  \label{eq:cG-act}
 g.(h,\tau) := (\Ad^\eps(g)h, g\tau g^{-1}).
\end{equation}

\nin (b) (Duality operation) 
The notion of a  
``causal complement'' is defined on the abstract wedge space as follows: 
For $W = (h,\tau) \in \cG$, we define the {\it dual wedge} by   
\[ W' := (-h,\tau) {= \tau.W}.\] 
\index{wedge!dual $W'$ \scheiding}
Note that $(W')' = W$ and $(gW)' = gW'$ for $g \in G$ 
by \eqref{eq:cG-act}.  
This relation fits the geometric interpretation in the context
of wedge domains in spacetime manifolds (see also Subsection~\ref{subsec:locality}). 
\end{definition}

\begin{definition} \label{def:bgl-net}
If $(U,\cH)$ is an antiunitary representation of $G_\sigma$,   
then we obtain a standard subspace $\sH_U(W)$, determined for $W
  = (h, \tau) \in \cG$ 
by the couple of operators (cf.~Proposition \ref{prop:11}):
\begin{equation}
  \label{eq:bgl}
J_{\sH_U(W)} = 
U(\tau) \quad \mbox{ and } \quad \Delta_{\sH_U(W)} = e^{2\pi i \cdot\partial U(h)}, 
\end{equation}
and thus a $G$-equivariant map $\sH_U \:  \cG \to \Stand(\cH)$
(cf.\ Exercise~\ref{exer:sym}). 
This is the so-called {\it Brunetti--Guido--Longo (BGL) net} 
  \[ \sH_U^{\rm BGL} \: \cG(G_\sigma) \to \Stand(\cH).\]
\index{BGL net \scheiding} 
\end{definition}
For a detailed discussion of the properties of this
net and the structures on $\cG$,
we refer to \cite{MN21} and \cite{MNO25}.

\begin{small}
  \subsection{Exercises for Section~\ref{sec:3}}

\begin{exercise}
  \label{exer:diag}
  Let $h \in \fsl_n(\R)$. Show that $h$ is an Euler element
  if and only if $h$ is diagonalizable with $2$ eigenvalues
  $\lambda$, $\mu$ satisfying $\lambda - \mu = 1$.
\end{exercise}

\begin{exercise} Describe the conjugacy classes of
  Euler elements in the Lie algebras
  $\g = \fsl_n(\R),$ $\gl_n(\R)$ and $\so_{1,n}(\R)$ up to
  conjugation.   
\end{exercise}
\end{small}

\section{Causal homogeneous spaces and wedge regions} 
\label{sec:3b}

The Euler Element Theorem~\ref{thm:2.1} provides
us with the information that Euler elements
are the natural candidates for the elements $h$ in
the Bisognano--Wichmann property (BW),
but it provides no information on
how to find appropriate regions  $W \subeq M$?

Motivated by the Bisognano--Wichmann property (BW) in AQFT,
the modular flow on $W \subeq M$, given by
$\alpha^W_t(m) = \exp(th).m$
should, in a suitable sense, correspond to the ``flow of time''
on the spacetime region~$W$. This is based on the interpretation
of the modular group
in the context of the Tomita--Takesaki Theorem as
the dynamics of the corresponding quantum system,
the {\bf thermal time hypothesis}, 
a point of view advocated by A.~Connes and C.~Rovelli 
(cf.\ \cite{CR94}). References for the AQFT perspective
on this issue are \cite{BB99, BY99, BMS01, Bo98, SW03, Bo09}, \cite[\S 3]{CLRR22}.
For a perspective from non-commutative geometry,
see \cite{KG09}, \cite{Kot19} and \cite{He25}. 

To formulate what it means that a vector field generates
on an open domain $W \subeq M$ a flow that qualifies as a ``flow of time''
requires a {\it causal structure on the manifold $M$}, i.e.,
in each tangent space $T_m(M)$, we specify a
pointed, generating, closed convex cone $C_m \subeq T_m(M)$. 
\begin{footnote}{A closed convex cone $C$ in a finite-dimensional
    vector space $V$ is called {\it pointed} if $C \cap - C = \{0\}$,
    and {\it generating} if $C - C = V$, i.e., if $C$ has
    interior points.}
  \index{convex cone!pointed \scheiding}
  \index{convex cone!generating\scheiding}
\end{footnote}
We think of elements in the interior $C_m^\circ$ as
{\it timelike}, i.e., tangent vectors to curves
describing the dynamics on a region in~$M$
(following the ``flow of time''). \\

\nin {\bf Assumption:} For simplicity, we also assume that
$M$ is a homogeneous space, i.e., $M \cong  G/H$ for a closed subgroup
$H \subeq G$ with Lie algebra $\fh$. Then the tangent space
$T_{eH}(M)$ in the base point identifies naturally with
the quotient space $\fq := \g/\fh$. Hence the existence
of a $G$-invariant causal structure on $M$ is equivalent to
the existence of an $\Ad_\fq(H)$-invariant pointed
generating cone $C_\fq \subeq \fq$ (cf.~\cite{HN93}, \cite{HO97}, \cite{Se71, Se76}). Then
\[ C_{gH} := g.C_{eH} = g.C_\fq \quad \mbox{ for  } \quad g \in G, \]
is the corresponding causal structure on~$M = G/H$.
Here we write
\[ G \times TM \to TM, \quad (g,v) \mapsto g.v \]
for the induced
action of $G$ on the tangent bundle $TM$ of~$M$. 

\subsection{Causal structures and wedge regions}
\label{sec:3.2}

Coming back to the question of how to find~$W$,
let us fix an Euler element $h \in \g$.
Then we call 
\begin{equation}
  \label{eq:xhdef}
 X_h^M(m) 
 :=  \frac{d}{dt}\Big|_{t = 0} \exp(th).m 
\end{equation}
the corresponding {\it modular vector field}.
  \index{vector field!modular \scheiding}
In view of the ``flow of time''-philosophy, the subset 
$W$ should be contained in the {\it positivity region}
  \index{vector field!positivity region of \scheiding}
\begin{equation}\label{def:WM}
  W_M^+(h) := \{ m \in M \: X^M_h(m) \in C_m^\circ \},
\end{equation}
  which is the largest open subset on which the flow is ``future-directed''.
  For $m = gH \in M = G/H$ and the projection $p_\fq \: \g \to \fq= \g/\fh
  \cong T_{eH}(M)$,
  we have
\begin{equation}
  \label{eq:xmh}
 X^M_h(gH)
  =  \frac{d}{dt}\Big|_{t = 0} \exp(th).gH  
=  \frac{d}{dt}\Big|_{t = 0} gg^{-1}\exp(th).gH 
=  g.p_\fq(\Ad(g)^{-1}h).
\end{equation}
By $G$-invariance of the causal structure, this calculation shows that
$X^M_h(gH) \in C_{gH}^\circ$ is equivalent to
$p_\fq(\Ad(g)^{-1}h) \in C^\circ$, 
so that we obtain the Lie algebraic description
\begin{equation}\label{def:WM2}
  W_M^+(h) = \{ g H \in G/H \: \Ad(g)^{-1} h \in p_\fq^{-1}(C^\circ) \}
\end{equation}
of the positivity region. 

\begin{definition} \label{def:wedge}
  \index{wedge region \scheiding}
  A {\it wedge region} for $h$ on the causal homogeneous
  space $M$ is a connected component $W$ of the positivity region 
  $W_M^+(h)$.  
\end{definition}

At this point it is not clear why to focus on connected
components and not the whole positivity region. As the concrete examples, where
$W_M^+(h)$ is not connected, show, the inclusions
$\sH(W) \subeq \sH(W_M^+(h))$ are often proper.
If this is the case and $\sH(W) = \sV$, then the subspace 
$\sH(W_M^+(h))$ can not be separating
by the Equality Lemma~\ref{lem:lo08-3.10}. Therefore the
connected components turn out to be the natural choice for wedge regions.
In this context, Theorem~\ref{thm:reg-net}
shows that small open $\exp(\R h)$-invariant subsets may already
satisfy~(BW).

\begin{examplekh}
  In Minkowski space $M = \R^{1,d-1}$ (Remark~\ref{rem:poin}), the causal
  structure is given by the constant cone field $C_x = C$ for $x \in M$ and 
  \[ C = \oline{\jV_+} = \{ x \in \R^{1,d-1} \: x_0 \geq  \sqrt{\bx^2}\}. \]

$M$ is a homogeneous space of the Poincar\'e group
  $G = \R^{1,d-1} \rtimes \SO_{1,d-1}(\R)_e$ with base point $0$, whose
  stabilizer is the Lorentz group $\SO_{1,d-1}(\R)_e$.

  For the Lorentz boost $h(x) = (x_1, x_0, 0,\cdots, 0)$,
  the corresponding vector field is linear, i.e.,
  \[ X^M_h(x) = h(x),\]
  and its values are positive timelike, i.e., contained in
  $C^\circ = \jV_+$ if and only if $x_1 > |x_0|$, which specifies
  the Rindler wedge $W_R = \{ (x_0, \bx) \: x_1 > |x_0|\}$. 
\end{examplekh}

\begin{lemma} Any wedge region $W \subeq W_M^+(h)$ is invariant under
   the identity component $G^h_e$ of the centralizer
   \[ G^h := \{ g \in G \: \Ad(g)h = h\} \]
   of the Euler element~$h$, hence in particular
   under $\exp(\R h)$.
 \end{lemma}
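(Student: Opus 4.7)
The strategy is to first establish invariance of the whole positivity region $W_M^+(h)$ under the centralizer $G^h$, and then pass to connected components via a standard connectedness argument.

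\textbf{Step 1: The modular vector field $X_h^M$ is $G^h$-equivariant.} If $g \in G^h$, then $\Ad(g)h = h$ implies $g \exp(th) g^{-1} = \exp(t\Ad(g)h) = \exp(th)$ for all $t\in\R$, so the action of $g$ on $M$ commutes with the flow generated by $h$. Differentiating the identity $\exp(th).(g.m) = g.(\exp(th).m)$ at $t=0$ yields
\[
X_h^M(g.m) = g_*\bigl|_m X_h^M(m) \qquad\text{for all } m \in M.
\]

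\textbf{Step 2: $W_M^+(h)$ is $G^h$-invariant.} The $G$-invariance of the causal structure means $g.C_m = C_{g.m}$ as subsets of $T_{g.m}(M)$, and this restricts to a homeomorphism of the interiors $C_m^\circ \to C_{g.m}^\circ$. Combining with Step~1, the condition $X_h^M(m)\in C_m^\circ$ is equivalent to $X_h^M(g.m)\in C_{g.m}^\circ$ for every $g\in G^h$. Hence $g.W_M^+(h) = W_M^+(h)$.

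\textbf{Step 3: Each connected component is preserved by $G^h_e$.} Each $g\in G^h$ acts as a homeomorphism of $W_M^+(h)$, hence permutes its set $\pi_0(W_M^+(h))$ of connected components. This yields a group homomorphism $\Phi\colon G^h \to \mathrm{Sym}(\pi_0(W_M^+(h)))$. For a fixed wedge component $W$, the orbit map $G^h_e \to \pi_0(W_M^+(h))$, $g\mapsto g.W$, is continuous when the codomain carries the discrete topology (since $g\mapsto g.m$ is continuous for every $m\in W$, and $W$ is open). As $G^h_e$ is connected and $e.W = W$, the map is constant, so $g.W = W$ for every $g\in G^h_e$.

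\textbf{Step 4: Conclude.} Since $h \in \g^h = \L(G^h)$, the one-parameter subgroup $\exp(\R h)$ lies in $G^h_e$, and the claim $\exp(\R h).W = W$ follows from Step~3. The only genuinely non-routine point is Step~3, and even there the argument is standard once Step~2 is in place; the core content of the lemma really lies in the commutation computation of Step~1.
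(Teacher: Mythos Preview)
Your proof is correct. The paper states this lemma without proof, treating it as elementary, and your argument supplies exactly the routine verification one would expect: $G^h$-equivariance of the modular vector field yields $G^h$-invariance of the whole positivity region, and connectedness of $G^h_e$ then pins down each connected component.
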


 The following proposition provides a sufficient criterion
 for the positivity region on $M$ being non-empty.
The condition $h \in \fh$ is equivalent  to the
 base point being fixed under the modular flow. 

 \begin{proposition} \label{prop:2.13} {\rm(Sufficient conditions for
   the existence of wedge regions)} 
   Suppose that $M = G/H$, that
   $h \in \fh$ is an Euler element and that 
   $\tau_h \in \Aut(G)$ fixes $H$ and induces an anti-causal 
   map, i.e.,  $\tau_h^M(C_m) = - C_{\tau_h^M(m)}$ for $m \in M$.
   Then $W_M^+(h) \not=\eset$.    
 \end{proposition}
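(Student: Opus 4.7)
The approach is to exploit that $h \in \fh$ fixes the base point $m_0 = eH$ under the modular flow, and to produce wedge points in an infinitesimal neighborhood of $m_0$ by combining the eigenspace decomposition $\g = \g_1(h) \oplus \g_0(h) \oplus \g_{-1}(h)$ with the anti-causal condition on $\tau_h$.

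The first step is to squeeze structural information from the anti-causal hypothesis at $m_0$. Since $\tau_h$ fixes $H$, it fixes $m_0$, so the differential $d\tau_h^M|_{m_0}$ is a linear involution on $T_{m_0}M \cong \fq$, and $\tau_h^M(C_{m_0}) = -C_{m_0}$ forces it to act as $-\mathrm{id}$ on $C_\fq$. Decomposing any $v \in C_\fq$ as $v_+ + v_-$ along the $\pm 1$-eigenspaces of $d\tau_h^M|_{m_0}$, the identity $d\tau_h(v) = -v$ becomes $v_+ - v_- = -v_+ - v_-$, hence $v_+ = 0$. Thus $C_\fq$ lies in the $(-1)$-eigenspace, and because $C_\fq$ is generating, $d\tau_h^M|_{m_0} = -\mathrm{id}_\fq$. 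Under $\tau_h = e^{\pi i \ad h}$ the $(+1)$-eigenspace in $\g$ is $\g_0(h)$, so this amounts to the key inclusion $\g_0(h) \subseteq \fh$.

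The second step rewrites membership in $W_M^+(h)$ as a statement about the map
\[
F \colon \g \to \fq, \qquad F(x) := p_\fq\bigl(e^{-\ad x}\, h\bigr).
\]
Indeed, formula \eqref{eq:xmh} gives $X_h^M(\exp(x).m_0) = \exp(x).F(x)$, so by $G$-invariance of the cone field, $\exp(x).m_0 \in W_M^+(h)$ if and only if $F(x) \in C_\fq^\circ$. Since $h \in \fh$ we have $F(0) = 0$, and the differential at the origin is $dF|_0(x) = p_\fq([h,x])$. This vanishes on $\g_0(h)$, acts as $p_\fq$ on $\g_1(h)$, and as $-p_\fq$ on $\g_{-1}(h)$. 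Therefore the image of $dF|_0$ is $p_\fq(\g_1(h) + \g_{-1}(h))$, and by Step~1 this subspace surjects onto $\fq$; that is, $dF|_0$ is a linear surjection.

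The final step is a submersion argument: because $dF|_0$ is surjective, $F$ maps every open neighborhood of $0 \in \g$ onto an open neighborhood of $0 \in \fq$. The pointed generating open convex cone $C_\fq^\circ$ has $0$ in its boundary, so every such neighborhood meets $C_\fq^\circ$. Picking $x$ with $F(x) \in C_\fq^\circ$ yields $\exp(x).m_0 \in W_M^+(h)$, whence $W_M^+(h) \neq \emptyset$. I expect no serious obstacle: the only delicate point is the passage from $C_\fq \subseteq \fq_-$ to $\fq_- = \fq$, which genuinely uses that a causal cone is generating, and the submersion-plus-open-cone conclusion is elementary.
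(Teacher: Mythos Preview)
Your Step~1 contains a genuine error. The anti-causal hypothesis at the fixed point $m_0$ gives the \emph{set} equality $d\tau_h^M(C_\fq) = -C_\fq$, not the pointwise identity $d\tau_h^M(v) = -v$ for each $v \in C_\fq$. A linear involution can map a pointed generating cone to its negative without being $-\mathrm{id}$. Concretely, take $\fq = \R^3$ with $h$ acting by $\diag(1,0,-1)$, so $\tau_h = \diag(-1,1,-1)$, and let
\[
C = \{(a,b,c) : a,c \ge 0,\ b^2 \le ac\}.
\]
Then $C$ is pointed, generating, invariant under $e^{\R h}$, and satisfies $\tau_h(C) = -C$; yet $(1,1,2) \in C$ has nonzero $\fq_0$-component. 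This arises from an actual causal homogeneous space: set $G = \R^3 \rtimes_h \R$, $H = \{0\} \times \R$, and use the constant cone field $C$ on $M = \R^3$. All hypotheses of the proposition hold but $\fq_0 = \R \ne 0$, so $\g_0(h) \not\subseteq \fh$.

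This breaks Step~2: the image of $dF|_0$ is only $\fq_1 \oplus \fq_{-1}$, not all of $\fq$, so the submersion argument in Step~3 does not apply. What you are missing is precisely the content of Lemma~\ref{lem:Project}: the condition $-\tau_h(C) = C$ (together with $e^{\R h}$-invariance) guarantees that $C^\circ$ meets the subspace $\fq_1 \oplus \fq_{-1}$, namely $C^\circ \cap (\fq_1 \oplus \fq_{-1}) = C_+^\circ \oplus (-C_-^\circ)$ where $C_\pm = \pm C \cap \fq_{\pm 1}$. With this in hand your idea can be repaired: choose $x_1 \in \g_1$, $x_{-1} \in \g_{-1}$ with $p_\fq(x_1 - x_{-1}) \in C^\circ$; then $F(t(x_1+x_{-1}))/t \to p_\fq(x_1 - x_{-1})$ as $t \to 0$, and since $C^\circ$ is an open cone this forces $F(t(x_1+x_{-1})) \in C^\circ$ for small $t>0$. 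This is essentially what the paper does, computing $\Ad(g_t)^{-1}h$ explicitly for $g_t = \exp(tx_{-1})\exp(tx_1)$ and invoking Lemma~\ref{lem:Project} for the crucial inclusion $C_+^\circ - C_-^\circ \subseteq C^\circ$.
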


 \begin{proof} For the action of the one-parameter group
   $e^{\R \ad h}$ on $\fq := \g/\fh$,
   we write $\fq_j$, $j =1,0,-1$, for the corresponding
  $\ad h$-eigenspace and   \begin{footnote}{
   For the linear vector field defined by $h$ on $\fq$, the positivity region is 
$W_\fq^+(h) = C_+^\circ + \fq_0 + C_-^\circ$ 
   (cf.~\eqref{eq:w-decomp0}). This is why we consider these two cones.     
}   \end{footnote}
   \[ C_\pm := \pm C \cap \fq_{\pm 1}.\]
   In view of \eqref{def:WM2},
   it suffices to show that, for $x_{\pm 1} \in C_\pm^\circ$, there exists
   $t > 0$ such that
   \[ g_t := \exp(t x_{-1}) \exp(t x_1)\]
   satisfies $\Ad(g_t)^{-1}h \in p_\fq^{-1}(C^\circ)$.
   With Lemma~\ref{lem:Project} below, we see that
   $-\tau_h(C) = C$ implies that
   \[ C_+^\circ - C_-^\circ = (C_+ - C_-)^\circ \subeq C^\circ.\]

   For $t > 0$ we then have
   $e^{-t \ad x_{-1}} h = h - t [x_{-1}, h] = h -t x_{-1}$ because
   $(\ad x_{-1})^2 h \in \g_{-2}(h) = \{0\}$. We thus obtain 
\begin{align*}
 \Ad(g_t)^{-1}h
&     = e^{-t \ad x_1} e^{-t \ad x_{-1}} h
     = e^{-t \ad x_1} (h - t x_{-1}) 
     = h + t x_1 -t e^{-t \ad x_1} x_{-1} \\
  &     = h + t (x_1 - x_{-1}) - t(e^{-t \ad x_1} -\bone) x_{-1}.
\end{align*}
   As $p_\fq(h) = 0$,
   this element is contained in $p_\fq^{-1}(C^\circ)$ if and only if this is the case 
   for 
\[   x_1 - x_{-1} - (e^{t \ad x_1} -\bone) x_{-1}.\]
For $t \to 0$, this expression tends to $x_1 - x_{-1} \in  C^\circ$,
so that for some $t > 0$, we have $g_t H \in W_M^+(h)$
by \eqref{def:WM2}.
 \end{proof}
 
 \begin{remark} For a homogeneous space $M = G/H$, the positivity
   region $W_M^+(h)$ is non-empty if there exists an open subset
   $\cO \subeq G$ such that
   $p_\fq(\Ad(H\cO)h) \subeq \fq$ is contained in a pointed open
   convex cone. This depends very much on the geometry of the adjoint
   orbit $\cO_h$, the $H$-action on this orbit and its  position 
   with respect to $\fh= \ker p_\fq$.
 \end{remark}

To understand how wedge regions look like, we first discuss some simple classes
of examples.

\subsubsection{One-parameter groups on affine causal spaces}
\label{subsubsec:one-par-affine}

To develop the key facts on modular flows on causal homogeneous
spaces, we start in this subsection with the case of causal
affine spaces, i.e., pairs $(E,C)$, where $E$ is a finite-dimensional
vector space and $C \subeq E$ a pointed generating closed convex cone.

Specifically, we consider the following data (cf.~\cite{NOO21}):
\begin{description}
\item[\rm(A1)] $E$ is a finite-dimensional real vector space.
\item[\rm(A2)] $h \in \End(E)$ is 
diagonalizable with eigenvalues $\{-1,0,1\}$ and 
$\tau_h := e^{\pi i h}$.
\item[\rm(A3)] $C \subeq E$ is a pointed, generating 
  closed convex cone invariant under the one-parameter group $e^{\R h}$ and
  the involution $-\tau_h$.
\end{description}

Writing $E_\lambda = E_\lambda(h) := \ker(h - \lambda\bone)$ for the $h$-eigenspaces 
and $E^\pm := \ker(\tau_h \mp \bone)$ for the $\tau_h$-eigenspaces, 
(A2) implies 
\begin{equation}\label{eq:decomp}
E=E_{1}\oplus E_0 \oplus E_{-1}, \quad
E^- = E_1\oplus E_{-1}, \quad \mbox{ and } \quad
E^+ =E_0.
\end{equation}
We  put $C_\pm := C \cap E_{\pm 1}$.
For $x \in E$, we write $x= x_1 + x_0+ x_{-1}$ for the decomposition
into $h$-eigenvectors. 

\begin{lemma}\label{lem:Project}
For the projections 
\[ p_{\pm 1}: E\to E_{\pm 1}, x \mapsto x_{\pm 1}, \quad \mbox{ and } \quad 
  p^-:E\to E_1\oplus E_{-1}=E^-, x \mapsto x_1 + x_{-1},\]
the  following assertions hold:
\begin{description}
\item[\rm (i)] $p_{\pm 1}(C)=\pm C_\pm$ 
and $p_{\pm 1}(C^\circ)=\pm C_\pm^\circ\not=\eset$. 
\item[\rm (ii)] $p^- (C)=C\cap E^-=C_+\oplus -C_-$ and 
$p^- (C^\circ)=C^\circ\cap E^-=C^\circ_+\oplus -C_-^\circ$. 
\item[\rm (iii)] $C\subeq C_+\oplus E_0 \oplus -C_-$. 
\end{description}
\end{lemma}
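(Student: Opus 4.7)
The plan is to prove (i)--(iii) in sequence; part (iii) will be an immediate consequence of (i), so the essential work lies in (i) and in the interior statement of (ii).

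For (i), the inclusions $\pm C_\pm \subseteq p_{\pm 1}(C)$ are trivial because $\pm C_\pm = C \cap E_{\pm 1} \subseteq C$ and $p_{\pm 1}$ restricts to the identity on $E_{\pm 1}$. For the converse, given $x = x_1 + x_0 + x_{-1} \in C$, the $e^{\R h}$-invariance of the cone yields
\[ e^{-t} e^{th} x = x_1 + e^{-t} x_0 + e^{-2t} x_{-1} \in C \qquad \text{for every } t \in \R, \]
and closedness of $C$ gives $x_1 \in C \cap E_1 = C_+$ as $t \to +\infty$; the assertion for $p_{-1}$ is symmetric, via $e^{-t}e^{-th}x \to x_{-1}$. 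The inclusion $p_1(C^\circ) \subseteq C_+^\circ$ follows from the openness of the linear surjection $p_1 \colon E \to E_1$. For the reverse, given $y_1 \in C_+^\circ$ and any $z \in C^\circ$ (which exists by (A3)), one chooses $\eps > 0$ so small that $y_1 - \eps\, p_1(z) \in C_+$ and sets $w := (y_1 - \eps\, p_1(z)) + \eps z \in C + C^\circ \subseteq C^\circ$, noting that $p_1(w) = y_1$. Non-emptiness of $C_\pm^\circ$ is then automatic from $C^\circ \neq \eset$.

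For (ii), the $(-\tau_h)$-invariance of $C$ gives $-\tau_h(x) = x_1 - x_0 + x_{-1} \in C$ for every $x \in C$, so $2\, p^-(x) = x + (-\tau_h)(x) \in C$. Combined with $p^-|_{E^-} = \id$, this proves $p^-(C) = C \cap E^-$, and the same averaging inside $C^\circ$ (which is closed under addition and scaling by $\shalf$) yields $p^-(C^\circ) = C^\circ \cap E^-$. The identity $C \cap E^- = C_+ \oplus -C_-$ is then immediate: ``$\supseteq$'' is clear, and ``$\subseteq$'' uses part (i) via $y = p_1(y) + p_{-1}(y)$ for $y \in C \cap E^-$. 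The inclusion $C^\circ \cap E^- \subseteq C_+^\circ \oplus -C_-^\circ$ follows in the same way from (i).

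The remaining and hardest step is the reverse inclusion $C_+^\circ \oplus -C_-^\circ \subseteq C^\circ$, which I would establish by duality. Since $C$ is pointed and generating, the dual cone $C^* \subseteq E^*$ is generating and pointed, and one has the standard characterization $C^\circ = \{x \in E \: \phi(x) > 0 \ \forall\, \phi \in C^* \setminus \{0\}\}$. With respect to the grading $E^* = E_1^* \oplus E_0^* \oplus E_{-1}^*$ dual to $E$, the adjoint involution $(-\tau_h)^*$ preserves $C^*$ and acts as $+\id$ on $E_{\pm 1}^*$ and as $-\id$ on $E_0^*$. Consequently, any $\phi_0 \in E_0^* \cap C^*$ satisfies $\pm\phi_0 \in C^*$, forcing $\phi_0 = 0$ by pointedness of $C^*$. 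Hence every $\phi = \phi_1 + \phi_0 + \phi_{-1} \in C^* \setminus \{0\}$ has $(\phi_1,\phi_{-1}) \neq (0,0)$. The restrictions $\phi_1 \in (C_+)^*$ and $\phi_{-1} \in (-C_-)^*$ lie in the duals of the pointed cones $C_+ \subseteq E_1$ and $-C_- \subseteq E_{-1}$, which are generating by (i); the dual characterization of interiors then gives $\phi_1(y_1) > 0$ whenever $\phi_1 \neq 0$ and $y_1 \in C_+^\circ$, and analogously for $\phi_{-1}(w_{-1})$, so that
\[ \phi(y_1 + w_{-1}) = \phi_1(y_1) + \phi_{-1}(w_{-1}) > 0 \]
for every $\phi \in C^* \setminus \{0\}$. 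This yields $y_1 + w_{-1} \in C^\circ$ and completes (ii). Part (iii) is then immediate from (i): every $x \in C$ decomposes as $x = x_1 + x_0 + x_{-1}$ with $x_{\pm 1} = p_{\pm 1}(x) \in \pm C_\pm$ and $x_0 \in E_0$ unconstrained.
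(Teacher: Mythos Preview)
Your proof is correct, but it takes a different route from the paper in two places. For the reverse inclusion $C_+^\circ \subeq p_1(C^\circ)$ in (i), the paper simply invokes the inclusion $C_+^\circ \oplus -C_-^\circ \subeq C^\circ$ (which needs the $-\tau_h$-invariance), whereas your translation trick $w = (y_1 - \eps\,p_1(z)) + \eps z$ is more elementary and uses only that $C$ is generating and $e^{\R h}$-invariant. For the hard inclusion $C_+^\circ \oplus -C_-^\circ \subeq C^\circ$ in (ii), the paper argues geometrically: the left-hand side is open in $E^-$ and contained in $C\cap E^-$, hence lies in the relative interior $(C\cap E^-)^\circ$; since $-\tau_h(C)=C$ forces $C^\circ\cap E^-\neq\eset$ via $p^-(C^\circ)\subeq C^\circ$, the relative-interior lemma (Lemma~\ref{lem:coneint}) gives $(C\cap E^-)^\circ = C^\circ\cap E^-$. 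Your duality argument is an equally clean alternative: pointedness of $C^*$ together with $(-\tau_h)^*$-invariance forces $C^*\cap E_0^* = \{0\}$, so any nonzero dual functional sees the $E_1$- or $E_{-1}$-component strictly. The paper's route is shorter once the relative-interior lemma is available; yours is self-contained and makes the role of the $-\tau_h$-symmetry more explicit.
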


\begin{proof}  (i) From $\pm C_\pm \subset C$, we get 
$\pm C_\pm \subset p_{\pm 1}(C)$. Using the $e^{th}$-invariance of $C$
and writing $x=x_1+x_0+x_{-1}$ as before, 
$e^{th}x=e^{t}x_1 + x_0 + e^{-t} x_{-1}.$ 
Now take the limit $t\to \infty$ to see that 
\[ C \ni e^{-t}e^{th}x = x_1 +e^{-t}x_0+e^{-2t}x_{-1}\to x_1  \quad \mbox{as } \quad t\to \infty.\]
We likewise get 
$x_{-1} = \lim_{t \to -\infty}   e^{t}e^{th}x \in C$. 
It follows that $x_\pm \in \pm C_\pm$, so that 
$p_{\pm 1}(C) = \pm C_\pm$. 
As $p_{\pm 1}$ are projections and $C^\circ \not=\eset$, it follows that 
$p_{\pm 1}(C^\circ) \subeq \pm C_{\pm}^\circ$. To obtain equality, it suffices to observe that 
$C_+^\circ \oplus - C_-^\circ \subeq (E^- \cap C)^\circ \subeq C^\circ$ follows from 
$-\tau_h(C) = C$. 

\nin (ii) The two leftmost equalities follow from $-\tau_h(C) = C$, 
and the second two rightmost equalities from (i) and $p^- = p_1 + p_{-1}$. 

\nin (iii) follows from (ii). 
\end{proof}

As the linear vector field on $E$ corresponding to $h$ is given by
$X^E_h(x) = x_1- x_{-1}$, Lemma~\ref{lem:Project}(ii) implies that 
its  positivity domain is the wedge region  
\begin{equation}
  \label{eq:w-decomp0}
W_E^+(h) =  C_+^\circ \oplus E_0 \oplus C_-^\circ \quad \mbox{ for } \quad 
C_\pm = \pm C \cap E_{\pm 1}.\end{equation}
In particular, it is not empty.
Here (A3) ensures that $C^\circ$ intersects $E^- = \im(h)$.
Otherwise we would include cones
of the form $C = C_1 + C_0 + C_{-1}$ with $C_j \subeq E_j$. Any such cone
is invariant under $e^{\R h}$, but for such cones
${C^\circ \cap (E_{+1} + E_{-1})=\eset}$ implies that $W_E^+(h) = \eset$.

\begin{examplekh} \label{ex:causal1b} 
 (The affine group on $\R$) 
We endow $M = \R$ with the canonical causal structure given by
$C_x = \R_{\geq 0}$ for $x \in \R$. Then the connected {\it affine group} 
  \index{affine group  \scheiding}
$G = \Aff(\R)_e = \R \rtimes \R_+$ is $2$-dimensional. 
Its elements are denoted $(b,a)$, and they act by the affine,
orientation preserving maps $(b,a)x = ax + b$ on the real line.

Here $h = (0,1) \in \g$ is an Euler element whose flow is
given by $\alpha_t(x) = e^t x$. Therefore its positivity region is 
\[ W_\R^+(h) = \{x\in\R: x > 0 \} = \R_+ \]
and the corresponding reflection is $\tau_h(x)= -x$.

All other Euler elements in $\g$ are of the form $h' = (x,\pm 1)$,
where $\cO_h = \R \times \{1\}$ and $\cO_{-h} = \R \times \{-1\}$.
The corresponding positivity regions are the proper unbounded open intervals
in $\R$. 
\end{examplekh}

\subsubsection{More examples of  wedge regions}

The first example refers also to an affine causal space, but now the 
linear part of the automorphism group is larger. 

\begin{examplekh} \label{ex:causal1}
  (Poincar\'e group and Rindler wedges)
The example arising most prominently in physics is the 
connected {\it Poincar\'e group}   \index{Poincar\'e group \scheiding}
\[ G := \cP_+^\up := \R^{1,d-1} \rtimes \SO_{1,d-1}(\R)_e. \]
It acts on $d$-dimensional Minkowski space $\R^{1,d-1}$ 
as an isometry group of the Lorentzian metric given by 
$(x,y)= x_0y_0-\bx\by$ for $x = (x_0, \bx) \in \R^{1,d-1}$.
The $G$-action preserves the constant cone field defined by the closure 
$C = \oline{\jV_+}$ of the open future light cone
\[ \jV_+ = \{ (x_0, \bx) \in \R^{1,d-1} \: x_0 > \sqrt{\bx^2}\}.\] 

The generator
\[  h(x_0,x_1,x_2, \ldots, x_{d-1}) = (x_1, x_0, 0, \ldots, 0)\]
of the Lorentz boost in the  $(x_0,x_1)$-plane 
is an Euler element in $\so_{1,d-1}(\R)$, and $e^{\pi i h}$ acts by
the reflection
\[ \tau_h(x) =(-x_0,-x_1,x_2,\ldots,x_{d-1}),\]  for which
$-\tau_h(C) = C$.
In view of the Classification Theorem~\ref{thm:classif-symeuler},
the fact that the restricted root system of $\so_{1,d-1}(\R)$
is of type $A_1$ implies 
that there exists only one conjugacy class of Euler elements
in $\so_{1,d-1}(\R)$. With Lemma~\ref{lem:levi-euler} it follows
that the same holds for the Poincar\'e algebra because its center
is trivial.
So Euler elements in this Lie algebra are precisely the Lorentz boosts
in different affine coordinate systems.

By \eqref{eq:w-decomp0}, the positivity region of $h$ is
\[ W_M^+(h) = \R_+ (\be_0 + \be_1) - \R_+ (\be_0 - \be_1) + \Spann\{\be_2,
  \ldots, \be_{d-1}\} =  \{x\in\R^{1,d-1}: |x_0|<x_1\}. \]
It is called the {\it standard right wedge} or {\it Rindler wedge $W_R$}
  \index{Rindler wedge $W_R$\scheiding}
  and plays a key role in AQFT as a localization region
  for a uniformly accelerated observer, represented
  by an orbit of the modular flow in $W_R$ 
  (\cite{BGL02, LL15}; see also Remark~\ref{rem:poin}).
\end{examplekh}

The following example is the smallest compact one.
It is a causal flag manifold. We refer to Subsection~\ref{subsec:causal-flag-man}
for more on this class of examples. 

\begin{examplekh} \label{ex:causal1c} 
  (The action of $\PSL_2(\R)$ on $\bS^1 \cong \R_\infty$)
The group $G := \SL_2(\R)$ acts on the one-point compactification
$M = \R_\infty = \R \cup \{\infty\} \cong \bS^1$ of $\R$  
by 
\[ g.x := \frac{a x + b}{cx + d} \quad 
  \mbox{ for }\quad g = \pmat{a & b \\ c & d}\in \SL_2(\R), \quad
  x \in \R.\]
The subgroup $\SO_2(\R)$ acts transitively by 
  \[ \rho(t).z:=
    \pmat{ \cos(t/2) & \sin(t/2) \\ - \sin(t/2)
      & \cos(t/2)}.x
    = \frac{\cos(t/2) \cdot x + \sin(t/2)}{- \sin(t/2) \cdot x + \cos(t/2)}
    = \frac{x + \tan(t/2)}{1- \tan(t/2) \cdot x},\]
  generated by the vector field $X^M(x) = \frac{1}{2}(1 + x^2)$.
  As this flow is $2\pi$-periodic, it induces a diffeomorphism
  $\R/2\pi \Z \to \R_\infty$. This shows that the natural causal
  structure on $\R$ extends to $M$ in a $G$-invariant fashion. 

  In $\g= \fsl_2(\R)$ we consider the Euler element 
  $h = \frac{1}{2}\diag(1,-1)$   (cf.\ Example~\ref{ex:3.10b}). 
  The flow it generates on $\R_\infty$ is given by
$\alpha_t(x) = e^t x$, where $0$ and $\infty$ are fixed. Accordingly,
\[ W_M^+(h) = \R_+ \subeq \R_\infty.\]
As $G$ acts transitively of the set $\cO_h = \cE(\fsl_2(\R))$
of Euler elements in $\fsl_2(\R)$  (Example~\ref{ex:3.10b}),  their positivity regions
in $\bS^1$ are precisely the non-dense open intervals.

The Cayley transform 
\[ C: \R_\infty \to \bS^1 := \{z\in\C: |z|=1\}, \quad 
  C(x) := \frac{i-x}{i+x} = \frac{1+ ix}{1-ix},
  \qquad C(\infty) := -1, \] 
is a homeomorphism, identifying $\R_\infty$ with the circle. 
Its inverse is 
\[ C^{-1}:\bS^1 \to \R_\infty, \quad C^{-1}(z) = i\frac{1-z}{1+z}\]
(cf.\ Exercise~\ref{exer:stereo}). 
It maps the upper semicircle $\{ z \in \bS^1 \: \Im z > 0\}$ 
to the positive half line $\R_+$. 
The Cayley transform intertwines the action of
$\SL_2(\R)$ with the action of $\SU_{1,1}(\C)$ on
the circle $\bS^1 \subeq \C$ by fractional linear transformations.
This action preserves the causal structure on $\bS^1$, specified by
$C_z = \R_{\geq 0} i z \subeq T_z(\bS^1) = i \R z$ for $z \in \bS^1$.
 \end{examplekh}

 \begin{examplekh} \label{ex:causal2}
   The Lie group $G := \SL_2(\R)$
   has three classes of causal homogeneous spaces.
      In Example~\ref{ex:causal1c} we have already
   seen its action on the $1$-dimensional circle~$\bS^1$,
   a flag manifold of~$\SL_2(\R)$.

   Observing that $\Ad(\SL_2(\R)) \cong \SO_{1,2}(\R)_e$
   (Exercise~\ref{exer:sl2-Lorentz}), we obtain two
   other examples:
   \begin{itemize}
   \item Two-dimensional de Sitter space 
     \[ \dS^2 = \{ (x_0, x_1, x_2)\in \R^{1,2} \: x_0^2 - x_1^2 - x_2^2= - 1\}\]
     carries an $\SO_{1,2}(\R)_e$-invariant causal structure
     with the positive cone in the base point $\be_1$ given by 
     \[ C_{\be_1} := \{ (x_0, 0,x_2) \: x_0 \geq |x_2| \}
       \subeq T_{\be_1}(\dS^2) = \R \be_0 + \R \be_2. \]
The inversion $-\bone$ on $\dS^2$ is an anti-causal map. 

\begin{multicols}{2}
  \vbox{
         For the Euler element defined by
     $h(x_0, x_1, x_2) = (x_1, x_0,0)$, we obtain the connected
     wedge region
     \[W_{\dS^2}^+(h) = \{ (x_0, x_1, x_2) \in \dS^2 \: x_1 > |x_0|\}.\]
The wedge region $W := W_{\dS^2}^+(h)$ and the
  orbits of the modular flow in $W$ are marked in the picture
  on the right. }
\columnbreak
  \includegraphics[width=0.4\textwidth]{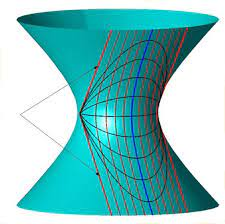}\\
\end{multicols}

   \item Two-dimensional anti-de Sitter space 
     \[\AdS^2 = \{ (x_1,x_2, x_3) \in \R^{2,1} \: x_1^2 + x_2^2 - x_3^2=  1\} \]
         carries an $\SO_{2,1}(\R)_e$-invariant causal structure
     with the positive cone in the base point $\be_2$ given by 
     \[ C_{\be_2} := \{ (x_1, 0,x_3) \: x_3 \geq |x_1| \}
       \subeq T_{\be_2}(\AdS^2) = \R \be_1 + \R \be_3. \]
The inversion $-\bone$ on $\AdS^2$ is a causal map. 
     For the Euler element defined by
     $h(x_1, x_2, x_3) = (0, x_3,x_2)$, we obtain the positivity region 
     \[ W_{\AdS^2}^+(h) = \{ (x_1, x_2, x_3) \in \AdS^2 \:
       x_1 x_3 > 0, |x_2| < |x_3| \}. \]
     It has two connected components, specified by the sign of~$x_1$
     (\cite[Lem.~11.3]{NO23a}). Note that $|x_2| < |x_3|$ specifies
     the region on which $h$, as a vector field on $\AdS^2$, is timelike.
     This region has four connected components, and $x_1 x_3 > 0$ selects
     the two on which it is positive. They are exchanged by
     the inversion~$-\bone$.
\end{itemize}
As homogeneous spaces, $\AdS^2$ and $\dS^2$ can be identified
with the adjoint orbit 
$\cO_h \cong G/G^h \cong \cE(\fsl_2(\R))$,  where $h := \shalf\diag(1,-1)$
is an Euler element in $\fsl_2(\R)$
(cf.~Example~\ref{ex:3.10b}). However, both carry natural causal
structures, and these are non-isomorphic
because $\AdS^2$ admits closed causal curves and
$\dS^2$ does not.
\end{examplekh}

\subsection{The compression semigroup of a wedge region}

Let $M = G/H$ be a causal homogeneous space
with causal structure given by the cone field $(C_m)_{m \in M}$.
For the vector field $X^M_y(m)
=  \frac{d}{dt}\big|_{t = 0} \exp(ty).m$,
the set
\begin{equation}
  \label{eq:cm}
 C_M
  := \{ y \in \g \:  (\forall m \in M) X_y^M(m) \in C_m \} 
\ {\buildrel \eqref{eq:xmh} \over  =}\  \bigcap_{g \in G} \Ad(g) p_\fq^{-1}(C)
\end{equation}
of those Lie algebra elements whose vector fields on $M$ are everywhere
positive is a closed convex $\Ad(G)$-invariant cone in $\g$.
It consists of all $y \in \g$ corresponding
to everywhere  ``positive'' vector fields on~$M$. If $G$ acts
effectively on~$M$, then it is also pointed
because elements in $C_M \cap - C_M$ correspond to vanishing vector fields 
on $M$. This cone is a geometric analog of the positive cone $C_U$
of a unitary representation of~$G$ (see \eqref{eq:CU}).\begin{footnote}
  {Note that the existence of a pointed generating invariant cone in
    a Lie algebra $\g$ has strong structural implications (cf.\ \cite{Ne99}). If, f.i.,
    $\g$ is simple, then it must be hermitian.}\end{footnote}
The following observation shows that it behaves in many
respects similarly (cf.\  \cite{Ne22}). 

As any connected component $W \subeq W_M^+(h) \subeq M$ is invariant
under $G^h_e \supeq \exp(\R h)$,\begin{footnote}{Recall that
    $G^h =\{ g\in G \: \Ad(g)h = h\}$.}\end{footnote}
the same holds for the closed convex cone 
\begin{equation}
  \label{eq:def-cw}
  C_W := \{ y \in \g \:  (\forall m \in W)\ X_y^M(m) \in C_m \} \supeq  C_M.
\end{equation}
Below we show that this cone determines the tangent wedge of the compression
semigroup of~$W$.

\begin{proposition} \label{prop:LSW}
  For a connected component $W \subeq W_M^+(h)$, its compression semigroup
  \[ S_W := \{ g \in G\: g.W \subeq W \} \]
  is a closed subsemigroup of $G$ with
  $G_W := S_W \cap S_W^{-1} \supeq G^h_e$ and
\[  \L(S_W) := \{  x \in \g \:  \exp(\R_+ x) \subeq S_W \}
  =  \g_0(h) + C_{W,+} + C_{W,-}, \]
with
\[ C_{W,\pm} := \pm C_W \cap \g_{\pm 1}(h).\] 
In particular, the convex cone $\L(S_W)$ has interior points if $C_M$ does.   
\end{proposition}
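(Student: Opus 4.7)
\textbf{Proof plan for Proposition~\ref{prop:LSW}.}

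The plan is to verify the elementary properties of $S_W$ first, then obtain the formula for $\L(S_W)$ by proving two inclusions, and finally treat the interior assertion as a consequence of the direct sum decomposition $\g = \g_{-1}(h) \oplus \g_0(h) \oplus \g_1(h)$. Closedness of $S_W$ under composition is automatic; closedness as a subset of $G$ will follow from continuity of the $G$-action together with the fact that a connected component $W$ of the open set $W_M^+(h)$ is regular open, so $g\cdot\oline{W}\subeq\oline{W}$ forces $g\cdot W\subeq W$. The inclusion $G^h_e \subeq G_W$ was already observed: since $W$ is a connected component of a $G^h_e$-invariant set, $G^h_e$ permutes components and fixes $W$ because it is connected. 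All these points are routine.

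The key calculation driving both inclusions of the tangent wedge is that for $x_{\pm 1} \in \g_{\pm 1}(h)$ one has $(\ad x_{\pm 1})^2 h = 0$, so $e^{-t\ad x_{\pm 1}}h = h \mp t x_{\pm 1}$; combined with $G$-equivariance \eqref{eq:xmh} this gives
\[
 X^M_h(\exp(tx_{\pm 1}).m) \;=\; \exp(tx_{\pm 1}).\bigl(X^M_h(m) \mp t X^M_{x_{\pm 1}}(m)\bigr).
\]
For the inclusion ``$\supeq$'', take $x_1 \in C_{W,+}$, $m\in W$: then $X^M_h(m)\in C_m^\circ$ and $X^M_{x_1}(m)\in C_m$ imply $X^M_h(m)+tX^M_{x_1}(m)\in C_m^\circ$ for every $t\ge 0$, and $G$-invariance of the cone field then yields $\exp(tx_1).m\in W_M^+(h)$. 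The trajectory $t\mapsto \exp(tx_1).m$ is continuous and starts in the connected component $W$, hence remains in $W$. The analogous argument for $x_{-1}\in C_{W,-}$, using $-x_{-1}\in C_W$, and the trivial observation that $\exp(\R x_0) \subeq G^h_e \subeq G_W$ for $x_0\in \g_0(h)$, show that $\g_0(h)+C_{W,+}+C_{W,-}\subeq \L(S_W)$.

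The reverse inclusion is the one requiring extra care, and it will be the main technical step. Given $x \in \L(S_W)$, decompose $x = x_{-1}+x_0+x_1$ along the $\ad h$-eigenspaces. Since $\exp(sh) \in G_W$, conjugation shows $\exp\bigl(t(e^s x_1+x_0+e^{-s}x_{-1})\bigr)\in S_W$ for every $s\in\R$ and $t\ge 0$. Replacing $t$ by $te^{-s}$ and letting $s\to +\infty$ yields $\exp(tx_1)\in S_W$ by closedness of $S_W$; symmetrically $\exp(-tx_{-1})\in S_W$ for $t\ge 0$, i.e.\ $\exp(tx_{-1}) \in S_W$ for $t\ge 0$ after relabelling. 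Applying to $m\in W$ the identity displayed above and using that $\exp(tx_1).m\in W\subeq W_M^+(h)$, one obtains $X^M_h(m)+tX^M_{x_1}(m)\in C_m^\circ$ for all $t\ge 0$; dividing by $t$ and passing to the limit inside the closed cone $C_m$ gives $X^M_{x_1}(m)\in C_m$ for every $m\in W$, i.e.\ $x_1\in C_{W,+}$. The analogous argument yields $-x_{-1}\in C_W$, that is $x_{-1}\in C_{W,-}$.

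For the final assertion, the direct sum $\g = \g_1(h)\oplus \g_0(h)\oplus\g_{-1}(h)$ makes $\g_0(h)+C_{W,+}+C_{W,-}$ have interior in $\g$ if and only if each $C_{W,\pm}$ has interior in $\g_{\pm 1}(h)$. Since $C_M\subeq C_W$ and $C_M$ is $\Ad(G)$-invariant, hence $e^{\R\ad h}$-invariant, the projection argument of Lemma~\ref{lem:Project} applied to the $\ad h$-graded space $\g$ and the cone $C_M$ shows that $C_M^\circ\cap\g_{\pm 1}(h)$ is non-empty once $C_M^\circ\neq\emptyset$, yielding interior points of $C_{W,\pm}$ in $\g_{\pm 1}(h)$. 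The main obstacle is the delicate interplay in the reverse inclusion between the limit $s\to\pm\infty$ and the closedness of $S_W$; this is why the regularity of $W$ (ensuring $S_W$ is closed) must be established at the outset.
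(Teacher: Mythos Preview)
Your overall strategy is essentially the paper's: the ``$\supeq$'' inclusion via the nilpotency $(\ad x_{\pm1})^2h=0$, the ``$\subeq$'' inclusion by using $e^{\R\ad h}$-invariance to project onto the eigenspaces, and the interior assertion via $C_{M,\pm}=\pm p_{\pm}(C_M)\subeq C_{W,\pm}$. The paper carries out the projection step at the Lie-algebra level (using that $\L(S_W)$ is a closed convex cone invariant under $e^{\R\ad h}$, then $x_{\pm1}=\lim_{t\to\infty}e^{-t}e^{\pm t\ad h}x\in\L(S_W)\cap\g_{\pm1}(h)$), whereas you do the equivalent limit in the group; and the paper shows $\L(S_W)\cap\g_1(h)\subeq C_{W,+}$ by the recession-cone contrapositive from \cite[Prop.~V.1.6]{Ne99} rather than your division-by-$t$ limit. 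These are equivalent.

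There is, however, a genuine gap in your closedness argument. The claim that a connected component of an open set is regular open is false in general (e.g.\ on $\R^2$ with the constant cone $C=\R_{\ge0}^2$ and the vector field $X(x,y)=(1,x^2+y^2)$, the positivity domain $\R^2\setminus\{0\}$ is connected but not regular open), and you give no reason why the specific structure of $W_M^+(h)$ would force it. Since you explicitly invoke closedness of $S_W$ in the limit $s\to\pm\infty$, this step matters. The paper's argument is cleaner and avoids the issue: write $S_W=\{g\in G: g^{-1}.W^c\subeq W^c\}$ with $W^c=M\setminus W$ closed; then for $g_n\to g$ in $S_W$ and $y\in W^c$ one has $g^{-1}.y=\lim g_n^{-1}.y\in W^c$ by continuity, so $g\in S_W$.

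Two minor corrections: your displayed identity has the wrong sign, since $(\ad x_{\pm1})h=\mp x_{\pm1}$ gives $e^{-t\ad x_{\pm1}}h=h\pm t x_{\pm1}$ and hence $X^M_h(\exp(tx_{\pm1}).m)=\exp(tx_{\pm1}).\bigl(X^M_h(m)\pm tX^M_{x_{\pm1}}(m)\bigr)$; your subsequent text in fact uses the correct sign. And in the $x_{-1}$ case, the rescaling $t\mapsto te^{s}$, $s\to-\infty$ gives $\exp(tx_{-1})\in S_W$ directly for $t\ge0$; no ``relabelling'' is needed.
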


\begin{proof} As $W \subeq M$ is an open subset, its complement $W^c := M \setminus W$
  is closed, and thus 
  \[ S_W = \{ g \in G \: g^{-1}.W^c \subeq W^c \} \]
  is a closed subsemigroup of $G$, so that its tangent wedge $\L(S_W)$ is a closed
  convex cone in $\g$ (\cite{HHL89}, \cite[\S 1.4]{HN93}).

  Let $m = gH \in W$, so that $p_\fq(\Ad(g)^{-1}h) \in C^\circ$.
  For $x \in \g_{\pm 1}(h)$ we then derive from $\g_{\pm 2}(h) = \{0\}$ that 
  \[ e^{\ad x} h = h + [x,h] = h \mp x.\] 
  This leads to 
  \begin{align*}
 p_\fq(\Ad(\exp(x)g)^{-1}h)
&    = p_\fq(\Ad(g)^{-1} e^{- \ad x}h)
    = p_\fq(\Ad(g)^{-1} ( h \pm x))\\
&      = p_\fq(\Ad(g)^{-1}h) \pm  p_\fq(\Ad(g)^{-1} x).
  \end{align*}
  For $x \in C_{W,\pm}$, we have $p_\fq(\pm\Ad(g)^{-1}x) \in C$, so that
  $p_\fq(\Ad(\exp(x)g)^{-1}h) \in C^\circ$, 
 which in turn implies that
    $\exp(x).m \in W$ for $m \in W$. 
    So $\exp(C_{W,\pm}) \subeq S_W$, and thus $C_{W,\pm} \subeq  \L(S_W)$.
 The invariance of $W$ under the identity component
  $G^h_e$ of the centralizer of $h$ further entails
  $\g_0(h) \subeq \L(S_W)$, so that
  \begin{equation}
    \label{eq:incl1}
    C_{W,+} + \g_0(h) + C_{W,-} \subeq \L(S_W).
  \end{equation}
  We now prove the converse inclusion. Let $x \in \g_1(h)$.  
  If $X^M_x(m) \not \in C_m$, i.e., $p_\fq(\Ad(g)^{-1}x) \not\in C$,
  then there exists a $t_0 > 0$ with 
\[ p_\fq(\Ad(g)^{-1}h) + t_0 \cdot p_\fq(\Ad(g)^{-1} x)\not\in C \]  (\cite[Prop.~V.1.6]{Ne99}),
  so that $\exp(t_0 x).m \not \in W$. We conclude that
\[     \L(S_W) \cap \g_1(h) = C_{W,+}.\] 
Further, the  invariance of the closed convex cone
  $\L(S_W)$ under $e^{\R \ad h}$ implies that, for
  \[ x = x_{-1} + x_0 + x_1 \in \L(S_W) \quad \mbox{ and } \quad
    x_j \in \g_j(h),\] we have
  \[ x_{\pm 1} = \lim_{t \to \infty} e^{-t} e^{\pm t \ad h} x \in \L(S_W) \cap
    \g_{\pm 1}(h) = C_{W,\pm}, \]
  which implies the other inclusion 
$\L(S_W) \subeq C_{W,+} + \g_0(h)+ C_{W,-},$ 
  hence equality by \eqref{eq:incl1}.
  
  Let $p_\pm \:  \g \to \g_{\pm 1}(h)$ denote the projection along the other
  eigenspaces of $\ad h$. Then
  \[ C_{W,\pm} \supeq C_{M,\pm} := \pm C_M \cap \g_{\pm 1}(h) = \pm p_{\pm}(C_M) \]
  also follows from Lemma~\ref{lem:Project}. % \cite[Lemma~3.2]{NOO21}.
  Therefore $C_M^\circ \not=\eset$
  implies $C_{W,\pm}^\circ \not=\eset$, and this is equivalent to
  $\L(S_W)^\circ \not=\eset$.   
\end{proof}

\begin{remark} In many situations, such as the action
    of $\PSL_2(\R)$ on the circle $\bS^1 \cong \bP_1(\R)$,
  the cones $C_{W,\pm} \supeq C_{M,\pm}$ 
    coincide, and we believe that this is probably always the case.
If $x \in C_{W,+}$, then the positivity region 
    \[ \Omega_x := \{ m \in M \: X^M_x(m) \in C_m \} \]
    contains $W$ (by definition), and it is also invariant under
    the identity component $N_x = \la \exp \fn_\g(\R x) \ra$
    of the normalizer or $\R x$, to that
    \begin{equation}
      \label{eq:omegax}
      \Omega_x \supeq N_x.W \supeq \bigcup_{t > 0} \exp(-tx).W.
    \end{equation}
    Clearly, $\Omega_x = M$ follows if $N_x.W$     is dense in $M$,
    and we are not aware of examples, for which this is not the case. 

    If $G$ is the connected Poincar\'e group acting on Minkowski
    space $M = \R^{1,d}$ and
    \[ W = W_R = \{ (x_0, \bx) \: x_1 > |x_0| \},  \]
    then
    \[ S_{W} = \oline{W} \rtimes
      \big(\SO_{d-1}(\R) \times \SO_{1,1}(\R)^\uparrow\big)\]
    (Lemma~\ref{lem:4.17})     implies that
    \[ C_{W,\pm} = \L(S_{W})\cap \g_1(h)
      = \R_+ (\pm\be_0 + \be_1) \]
    consists of constant vector fields, so that
    $C_{W,\pm} = C_{M,\pm}$ in this case.
    For $x = \be_0 + \be_1 \in C_{W,+}$, the domain 
    $W - \R_+ x$ is an open half space,
    hence in particular not dense in~$M$,
    but $N_x.W \supeq W + \R^{1,d} = \R^{1,d}$ is. 
\end{remark}

\subsubsection{The Rindler wedge in Minkowski space}

Let $G  = P(d)_e$ be the identity component of the
{\it Poincar\'e group}  \index{Poincar\'e group \scheiding} 
\[ P(d) := \R^{1,d-1} \rtimes \OO_{1,d-1}(\R) \] 
and $h \in\g$ the Euler element corresponding  
to the Lorentz boost in the $(\be_0,\be_1)$-plane
with wedge region
\[     W_R = \{ x \in \R^{1,d-1} \: x_1 > |x_0| \}\] 
(Example~\ref{ex:causal1}). The corresponding reflection is
$\tau_h= \diag(-1,-1,1,\ldots, 1)$. 

\begin{lemma}
  \label{lem:4.17} 
  The stabilizer group of $W_R$ is
\begin{equation}
  \label{eq:stabgrp2}
  G_{W_R} \cong {\rm SE}(d-2) \times \SO_{1,1}(\R)_e
  \cong (E_R \rtimes \SO_{d-2}(\R)) \times \SO_{1,1}(\R)_e,
\end{equation}
where ${\rm SE}(d-2)$ denotes the connected group of proper euclidean motions on
\[ E_R := \Spann\{\be_2, \ldots, \be_{d-1}\} \cong \R^{d-2} \] 
and $\SO_{1,1}(\R)$ acts on $\Spann\{ \be_0, \be_1\}$.
The compression semigroup of $W_R$ is 
\[ S_{W_R} := \{ g \in P(d) \: gW_R \subeq W_R\}
=  \oline{W_R} \rtimes \OO_{1,d-1}(\R)_{W_R}.\] 
\end{lemma}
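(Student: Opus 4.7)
The proof rests on two algebraic features of $\oline{W_R}$. First, its lineality space, i.e.\ the largest linear subspace it contains, is
\[ E_R = \oline{W_R} \cap (-\oline{W_R}) = \Spann\{\be_2,\dots,\be_{d-1}\},\]
with $\beta$-orthogonal complement $F := \Spann\{\be_0,\be_1\}$, giving a Lorentz-orthogonal splitting $\R^{1,d-1} = F \oplus E_R$. Second, the two extreme rays $\R_+(\be_0+\be_1)$ and $\R_+(-\be_0+\be_1)$ of $\oline{W_R}\cap F$ are lightlike. Both facts are intrinsic to $\oline{W_R}$ and will be preserved by any map stabilizing it.

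To establish the stabilizer formula, write $g(x) = \Lambda x + b$ with $g \in G_{W_R}$. Since $g$ and $g^{-1}$ both preserve $\oline{W_R}$, the linear part $\Lambda$ preserves $E_R$, and hence $\Lambda F = F$ by $\beta$-orthogonality; write $\Lambda = \Lambda_F \oplus \Lambda_{E_R}$. Applying the chain $b = g(0) \in \oline{W_R}$ and $-\Lambda^{-1}b = g^{-1}(0)\in \oline{W_R}$ gives $b \in E_R$. Imposing that $\Lambda \in \SO_{1,d-1}(\R)_e$ is orthochronous and has determinant $1$, and that $\Lambda_F$ must preserve the right wedge in $F$, forces $\Lambda_F \in \SO_{1,1}(\R)_e$ (the other component of $\OO_{1,1}(\R)_{W_R \cap F}$, generated by the $\be_0$-reflection, reverses time) and consequently $\Lambda_{E_R} \in \SO_{d-2}(\R)$. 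Because $\SO_{1,1}(\R)_e$ acts trivially on $E_R$, the semidirect product
\[ E_R \rtimes (\SO_{1,1}(\R)_e \times \SO_{d-2}(\R))\]
collapses to the direct product $(E_R \rtimes \SO_{d-2}(\R)) \times \SO_{1,1}(\R)_e = \SE(d-2)\times \SO_{1,1}(\R)_e$, as claimed.

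For the compression semigroup, take $g = (b,\Lambda) \in P(d)$ with $g.W_R \subeq W_R$; by continuity $g.\oline{W_R} \subeq \oline{W_R}$, so $b = g(0) \in \oline{W_R}$. A recession argument gives the linear inclusion $\Lambda \oline{W_R} \subeq \oline{W_R}$: for $x \in \oline{W_R}$ and $t > 0$,
\[ \Lambda x + t^{-1}b = t^{-1}g(tx) \in t^{-1}\oline{W_R} = \oline{W_R},\]
and letting $t\to\infty$ yields the claim. The same argument applied to $-\oline{W_R}$ then gives $\Lambda E_R \subeq E_R$, hence $\Lambda E_R = E_R$ by finite-dimensional injectivity, whence $\Lambda F = F$ and $\Lambda = \Lambda_F \oplus \Lambda_{E_R}$. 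The delicate step is upgrading $\Lambda_F(\oline{W_R}\cap F) \subeq \oline{W_R}\cap F$ to an equality; here the Lorentz-isometry property of $\Lambda_F$ is essential, for it forces $\Lambda_F$ to carry lightlike vectors to lightlike vectors, and thus to permute the two lightlike extreme rays of $\oline{W_R}\cap F$. An invertible linear map in dimension two that permutes the extreme generators of a simplicial cone maps the cone bijectively onto itself, so $\Lambda \oline{W_R} = \oline{W_R}$, i.e.\ $\Lambda \in \OO_{1,d-1}(\R)_{W_R}$.

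Conversely, given $\Lambda \in \OO_{1,d-1}(\R)_{W_R}$ and $b \in \oline{W_R}$, the estimate
\[ (x+b)_1 = x_1 + b_1 > |x_0| + |b_0| \geq |x_0 + b_0| \quad \text{for } x \in W_R,\; b \in \oline{W_R}\]
shows $W_R + \oline{W_R} \subeq W_R$, whence $g(W_R) = \Lambda W_R + b = W_R + b \subeq W_R$. Combining both directions yields $S_{W_R} = \oline{W_R} \rtimes \OO_{1,d-1}(\R)_{W_R}$, the semidirect product structure being that given by the action of $\OO_{1,d-1}(\R)_{W_R}$ on the semigroup $(\oline{W_R},+)$. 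The only non-formal step in the whole argument is the upgrade of $\Lambda\oline{W_R}\subeq\oline{W_R}$ to equality, which is the reason the Lorentz isometry condition enters essentially.
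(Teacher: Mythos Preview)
Your proof is correct and follows essentially the same route as the paper's. The paper packages your recession argument (extracting $\Lambda\oline{W_R}\subeq\oline{W_R}$ and $b\in\oline{W_R}$ from $g.W_R\subeq W_R$) into a general lemma on compression semigroups of cones in affine groups, but the content is identical; and your upgrade step---$\Lambda_F$ preserves lightlike vectors, hence permutes the two extreme rays of $\oline{W_R}\cap F$---is exactly the paper's observation that $g_1 W_R^2$ is ``a quarter plane bounded by light rays'' and so cannot be strictly smaller. One small wording issue: ``the same argument applied to $-\oline{W_R}$'' is not quite right, since you have no hypothesis that $g$ compresses $-W_R$; what you actually use is linearity of $\Lambda$, which gives $\Lambda(-\oline{W_R})=-\Lambda\oline{W_R}\subeq-\oline{W_R}$ immediately, and hence $\Lambda E_R\subeq E_R$ from $E_R=\oline{W_R}\cap(-\oline{W_R})$.
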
 

\begin{proof} The stabilizer group $P(d)_{W_R}$ 
contains the translation group corresponding to the edge $E_R$, 
and $gW_R = W_R$ implies $g(0) \in E_R$, so that 
\[ P(d)_{W_R} \cong E_R \rtimes \OO_{1,d-1}(\R)_{W_R}.\] 
Further, each $g \in \OO_{1,d-1}(\R)$ preserving $E_R$ also preserves its
orthogonal complement, so that 
\[ \OO_{1,d-1}(\R)_{W_R} 
= \OO_{d-2}(\R) \times \OO_{1,1}(\R)_{W_R} 
= \OO_{d-2}(\R) \times (\SO_{1,1}(\R)_e \{\bone, r_1\}), \] 
where $r_1 = \diag(1,-1,1,\ldots, 1)$.

Next we use Lemma~\ref{lem:cone-endos} in Appendix~\ref{app:E} to see that 
\[ S_{W_R} = \oline{W_R} \rtimes
  \{ g \in \SO_{1,d-1}(\R)_e \: gW_R \subeq W_R\}.\]
Any $g \in \SO_{1,d-1}(\R)_e$ with $gW_R \subeq W_R$
satisfies $gE_R = E_R$ because $g$ is injective 
and $\dim E_R < \infty$. This in turn implies that $g$ commutes with 
$\tau_h= \diag(-1,-1,1,\ldots, 1)$, so that $g = g_1 \oplus g_2$  
with $g_1 \in \OO_{1,1}(\R)$ preserving the wedge region
$W_R^2 \subeq \R^{1,1} = \Spann \{\be_0, \be_1\}$.
As $g_1 W_R^2$ is a quarter plane bounded by light rays,
it cannot be strictly smaller than $W_R^2$, hence $g_1 W_R^2 = W_R^2$,
and finally $g W_R = W_R$. This completes the proof. 
\end{proof}

\subsection{Causal Lie groups}
\label{subsec:group-type}

The most structured examples of causal homogeneous spaces
are causal groups with a biinvariant causal structure.

   Let $G$ be a connected Lie group and $C_\g \subeq \g$
  be a pointed generating closed convex cone.
  Then $C_g := g.C_\g \subeq T_g(G)$ defines on $G$ a left-invariant
  causal structure. These structures become more interesting
  if $C_\g$ is also $\Ad(G)$-invariant, so that
  the action of $G \times G$ by $(g_1, g_2).g = g_1 g g_2^{-1}$
  preserves the causal structure.  \begin{footnote}{That a $G$ action
     on  $M$ preserves the causal structure $(C_m)_{m \in M}$  means that
      $g.C_m = C_{g.m}$ for $g \in G, m \in M$.}  \end{footnote}
  If $h_0 \in \g$ is an Euler element,
  then $h := (h_0, h_0) \in \g^{\oplus 2}$ is Euler as well.
  It generates the flow
  \[ \alpha_t(g) = \exp(th_0) g \exp(-th_0).\]
  The corresponding vector field is
  \[  X_h^G(g)
    =   \frac{d}{dt}\Big|_{t = 0} \exp(th_0)g \exp(-t h_0) 
    = h_0.g - g.h_0 = g.(\Ad(g)^{-1}h_0 - h_0),\]
so that 
  \begin{equation}
    \label{eq:wg+}
    W_G^+(h) = \{ g \in G \: \Ad(g)^{-1}h_0 - h_0 \in C_\g^\circ \} 
   =  \{ g \in G \: \Ad(g) h_0 - h_0 \in - C_\g^\circ \}. 
  \end{equation}
  It is easy to see that this is an open subsemigroup of $G$, contained in
  the closed subsemigroup 
  \begin{equation}
    \label{eq:sh0g}
    S(h_0, C_\g)
    :=  \{ g \in G \: h_0 - \Ad(g) h_0 \in C_\g \}.
  \end{equation}
For the $G$-invariant order structure on $\g$, defined by 
  \[ x \leq_{C_\g} y \quad \mbox{ if } \quad y -x \in C_\g, \]
  this means that
  \[ S(h_0, C_\g)
    =  \{ g \in G \: \Ad(g) h_0 \leq_{C_\g} h_0 \}.\]
  We likewise have for the strict order, defined by 
  \[ x <_{C_\g} y \quad \mbox{ if } \quad y -x \in C_\g^\circ\]
that
\[ W_G^+(h)  =  \{ g \in G \: \Ad(g) h_0 <_{C_\g} h_0 \}.\]
To describe this domain, we need the two pointed generating
$\Ad(G^{h_0})$-invariant cones 
\begin{equation}
  \label{eq:cpm}
  C_\pm = \pm C_\g \cap \g_{\pm 1}
\end{equation}
(cf.\ Lemma~\ref{lem:Project}). 

  We claim that
  \begin{equation}
    \label{eq:sw-incl}
 \exp(C_+^\circ) G^{h_0} \exp(C_-^\circ) \subeq W_G^+(h_0), 
  \end{equation}
  which, by passing to the closure, implies
  \begin{equation}
    \label{eq:sw-incl2}
 \exp(C_+) G^{h_0} \exp(C_-) \subeq S(h_0, C_\g).    
  \end{equation}
  As the centralizer $G^{h_0}$ of $h_0$ is obviously contained in
  $S(h_0,C_\g)$ and $\exp(C_+) G^h = G^h \exp(C_+)$, it suffices to  show that
  $\exp(C_+^\circ) \exp(C_-^\circ) \subeq W_G^+(h_0)$.
For $x_\pm \in C_\pm^\circ$, this follows from 
\begin{align*}
& e^{\ad x_+} e^{\ad x_-} {h_0} - {h_0}
    = e^{\ad x_+} ({h_0} + [x_-,{h_0}])-{h_0}
    = e^{\ad x_+} ({h_0} + x_-)-{h_0}\\
&    = [x_+, {h_0}] + e^{\ad x_+} x_-  
    =  - x_+ + e^{\ad x_+} x_-
    =   e^{\ad x_+} (x_- - x_+) \\
&    \in  - e^{\ad x_+} (C_+^\circ - C_-^\circ) 
  \subeq   - C_\g^\circ\notag
\end{align*}
(cf.\ the proof of Proposition~\ref{prop:2.13}). 
Here we used $-\tau_h(C_\g) = C_\g$ for the inclusion
$C_+^\circ - C_-^\circ  \subeq  C_\g^\circ$
(Lemma~\ref{lem:Project}(ii)).

\begin{definition} \label{def:olsh-sgrp} (The semigroups of KMS points)
  Assume that $\tau_h^\g = e^{\pi i \ad h}$ integrates
  to an automorphism $\tau_h$ of $G$. 
  Using the complex Olshanski semigroup 
  $S(iC_\g) := G \Exp(iC_\g)$
  (see \cite[\S IX.1]{Ne99}, \cite[3.20]{HN93} and
  also   \cite[\S 2.4]{Ne22} for a detailed
  discussion),  \begin{footnote}{If $G$ is simply connected
      and $\eta_G \: G \to G_\C$ its universal complexification,
      then $S(iC_\g)$ is the simply connected covering of the subsemigroup 
      $\eta_G(G) \exp(i C_\g) \subeq G_\C$, and if
      $G$ is not simply connected, it is the quotient of
      $S_{\tilde G}(i C_\g)$ by the kernel
      of the covering map $q_G \: \tilde G \to G$.
      The map $\Exp \: iC_\g \to S(iC_\g)$ is the map corresponding
      in this context to the exponential function
      $iC_\g\to G_\C$ and $G \times C_\g \to S(iC_\g), (g,x) \mapsto
      g \Exp(ix)$ is a homeomorphism. } \end{footnote}
  we define the  subsemigroup
  $G_{\rm KMS} \subeq G$ as the set of those elements $g \in G$ 
for which the orbit map 
\[ \alpha^g \colon \R \to G, \quad \alpha^g(t) = \alpha_t(g) \] 
extends analytically to a map $\oline{\cS_{\pi}} \to S(iC_\g)$
with $\alpha^g(\cS_\pi) \subeq  S(iC_\g^\circ)$, such that
$\alpha^g(\pi i) = \tau_h(g)$. 
\end{definition}

\begin{theorem} \label{thm:semigroups-equal}
  If $G$ is simply connected, $h \in \g$ an Euler element,
  and $C_\g \subeq \g$ a pointed closed convex invariant cone
  with $-\tau_h^\g(C_\g) = C_\g$, then 
  \begin{equation}
    \label{eq:allsemigroupsequl}
 S(h,C_\g) = \exp(C_+) G^h \exp(C_-) = G^h \exp(C_+ + C_-), %= G_{\pi i}(C_\g), 
  \end{equation}
  the positivity domain
  \[ W_G^+(h) = \exp(C_+^\circ) G^h \exp(C_-^\circ)\]
  is a subsemigroup, 
  and
  \[ G_{\rm KMS} = \exp(C_+^\circ) G^h_e \exp(C_-^\circ)
    =  G^h_e\exp(C_+^\circ + C_-^\circ) = S(h,C_\g)^\circ_e\]
  is a connected component of $W_G^+(h)$. 
\end{theorem}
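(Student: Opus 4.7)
The inclusions $\exp(C_+)G^h\exp(C_-)\subseteq S(h,C_\g)$ and $\exp(C_+^\circ)G^h\exp(C_-^\circ)\subseteq W_G^+(h)$ are already given in \eqref{eq:sw-incl2} and \eqref{eq:sw-incl}, so the task splits into: (i) the reverse inclusion for $S(h,C_\g)$; (ii) the identification $\exp(C_+)G^h\exp(C_-)=G^h\exp(C_++C_-)$; and (iii) the structural description of $G_{\rm KMS}$. Throughout, the $3$-grading $\g=\g_{-1}\oplus\g_0\oplus\g_1$ together with simple connectedness of $G$ yields an open dense Bruhat cell $\Omega=\exp(\g_1)\,G^h\,\exp(\g_{-1})\subseteq G$ on which the triple product map is a diffeomorphism (cf.\ \cite[Ch.~VII]{Ne99}); this is the backbone of the argument.

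For (i), I would first show $S(h,C_\g)\subseteq\Omega$ by an asymptotic/contraction argument: if $g$ contracts $h$ into $h-C_\g$, the curve $\exp(-th)g\exp(th)$ must remain in a fixed proper cell, forcing $g\in\Omega$. For $g=\exp(x_+)g_0\exp(x_-)\in\Omega$, using $(\ad x_\pm)^2 h=0$ and $\Ad(g_0)h=h$, one computes
\[
  h-\Ad(g)h = x_+ - \Ad(g_0)x_- - [x_+,\Ad(g_0)x_-] - \tfrac12\bigl[x_+,[x_+,\Ad(g_0)x_-]\bigr],
\]
where the three summands lie in $\g_1$, $\g_0$ and $\g_{-1}\oplus\g_1$ respectively. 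Projecting onto $\g_{\pm1}$ via Lemma~\ref{lem:Project} and isolating leading terms (by applying $e^{\pm t\ad h}$ and letting $t\to\infty$) recovers $x_+$ and $\Ad(g_0)x_-$ as the $\g_{\pm1}$-pieces of an element of $C_\g$; Lemma~\ref{lem:Project}(i) then yields $x_+\in C_+$ and $x_-\in C_-$ (using $G^h$-invariance of $C_-$). Replacing $C_\g$ by $C_\g^\circ$ throughout gives the strict version $W_G^+(h)\subseteq\exp(C_+^\circ)G^h\exp(C_-^\circ)$, which in combination with the reverse inclusion already shows $W_G^+(h)$ is a semigroup.

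For (ii), using $G^h$-invariance of $C_\pm$ rewrite $\exp(C_+)G^h\exp(C_-)=G^h\exp(C_+)\exp(C_-)$; the claim is that $\exp(C_+)\exp(C_-)$ and $\exp(C_++C_-)$ agree modulo $G^h$. This is a Cartan-type polar decomposition for the symmetric pair $(\g,\g_0)$ with involution $\tau_h$ and ``non-compact part'' $\g_1+\g_{-1}=\g^{-\tau_h}$: every Bruhat factorization $\exp(x_+)g_0\exp(x_-)$ should admit a unique re-expression $g_0'\exp(y_++y_-)$ with $g_0'\in G^h$, where the Baker--Campbell--Hausdorff correction between $\exp(x_+)\exp(x_-)$ and $\exp(x_++x_-)$---whose bracket terms lie in $\g_0\cup\g_{\pm1}$---is absorbed by adjusting $g_0$ and shifting $x_\pm$ inside $C_\pm$. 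The cone condition $-\tau_h(C_\g)=C_\g$ and convexity results of \cite[Ch.~VIII]{Ne99} ensure this reparametrization stays within $C_+\oplus C_-$.

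For (iii), write $g=\exp(x_+)g_0\exp(x_-)$ with $x_\pm\in C_\pm^\circ$ and $g_0\in G^h_e$, and extend $\alpha^g(t)=\exp(th)g\exp(-th)$ by
\[
  \alpha^g(z) := \exp(e^z x_+)\,g_0\,\exp(e^{-z}x_-)\in S(iC_\g),\qquad z\in\oline{\cS_\pi},
\]
interpreted inside the complex Olshanski semigroup. For $0<\Im z<\pi$ the vectors $e^z x_+$ and $e^{-z}x_-$ have imaginary parts in $C_\g^\circ$ (using $\pm C_\pm^\circ\subseteq C_\g^\circ$ and invariance), so $\alpha^g(\cS_\pi)\subseteq S(iC_\g^\circ)$; at $z=\pi i$, the identities $e^{\pi i}=-1$, $\tau_h(x_\pm)=-x_\pm$ and $\tau_h(g_0)=g_0$ give $\alpha^g(\pi i)=\tau_h(g)$. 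Thus $\exp(C_+^\circ)G^h_e\exp(C_-^\circ)\subseteq G_{\rm KMS}$; the reverse uses uniqueness of holomorphic extension combined with the Bruhat factorization from~(i). Openness of $W_G^+(h)$ in $G$ combined with Proposition~\ref{prop:LSW} identifies this connected open subset as $S(h,C_\g)^\circ_e$, a connected component of $W_G^+(h)$. The principal obstacle is part~(ii): controlling the BCH corrections precisely enough to keep the shifted factors inside the cones $C_\pm$ is a delicate convexity issue that relies on the full Olshanski semigroup machinery; verifying $S(h,C_\g)\subseteq\Omega$ in~(i) is the second non-trivial step.
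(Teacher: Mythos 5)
Your plan deliberately departs from the paper's proof: the paper dispatches parts (i) and (ii) by invoking the Decomposition Theorem \cite[Thms.~2.16, 2.21]{Ne22}, which supplies both the equality $S(h,C_\g)=\exp(C_+)G^h\exp(C_-)=G^h\exp(C_++C_-)$ and the characterization of $S(h,C_\g)$ as those $g$ whose orbit map extends to $\oline{\cS_\pi}\to S(iC_\g)$; the only work carried out in the text is the openness refinement, done exactly as you do in your part~(iii). By contrast, you attempt to reconstruct the cited decomposition from scratch. That is a genuinely different route, and it is not complete.

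There is a concrete error in your part~(i). For $g=\exp(x_+)g_0\exp(x_-)$ one has
\[
  h-\Ad(g)h
  = x_+ - \Ad(g_0)x_- - [x_+,\Ad(g_0)x_-]
    - \tfrac12\bigl[x_+,[x_+,\Ad(g_0)x_-]\bigr],
\]
and the $\g_1$-component of this element is $x_+ - \tfrac12[x_+,[x_+,\Ad(g_0)x_-]]$, not $x_+$. Hence ``isolating leading terms by applying $e^{\pm t\ad h}$'' recovers $p_1(h-\Ad(g)h)\in C_+$, which is \emph{not} $x_+$, and you cannot conclude $x_+\in C_+$ this way. The clean fix is to conjugate first: by $\Ad(G)$-invariance of $C_\g$,
\[
  \Ad(\exp(-x_+))\bigl(h-\Ad(g)h\bigr) = x_+ - \Ad(g_0)x_- \in C_\g\cap\g^{-\tau_h^\g} = C_+\oplus(-C_-)
\]
by Lemma~\ref{lem:Project}(ii), from which $x_+\in C_+$ and $\Ad(g_0)x_-\in C_-$ (hence $x_-\in C_-$) follow in one stroke, with the strict version giving the open cones.

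Beyond this local error, the two steps you yourself flag as obstacles are genuine gaps. The inclusion $S(h,C_\g)\subseteq\Omega=\exp(\g_1)G^h\exp(\g_{-1})$ is not a soft ``the curve stays in a proper cell'' fact; it is the substance of the Decomposition Theorem, proved in \cite{Ne22} via the structure of Olshanski semigroups and the compression semigroup of a cone, and your one-sentence asymptotic heuristic does not establish it. Likewise the re-parametrization $\exp(C_+)G^h\exp(C_-)=G^h\exp(C_++C_-)$ in your part~(ii) is not a routine BCH bookkeeping exercise; controlling the correction terms so that the adjusted factors stay inside the cones $C_\pm$ is exactly what the cited theorem delivers. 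Finally, your reverse inclusion in part~(iii) tacitly assumes $G_{\rm KMS}\subseteq S(h,C_\g)$, which in the paper comes from \cite[Thm.~2.21]{Ne22}; ``uniqueness of holomorphic extension combined with the Bruhat factorization from~(i)'' does not by itself justify this. Your forward direction in part~(iii), with the extension $\alpha^g(z)=\exp(e^zx_+)g_0\exp(e^{-z}x_-)$ and the check that $\alpha^g(\pi\ie)=\tau_h(g)$ forces $g_0\in G^{\tau_h}=G^h_e$, is correct and equivalent to the paper's computation with $g=g_0\Exp(x_1+x_{-1})$.
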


\begin{proof} The first two equalities in \eqref{eq:allsemigroupsequl}
  are the Decomposition Theorem \cite[Thm.~2.16]{Ne22}.
  Further \cite[Thm.~2.21]{Ne22} shows that
  $S(h,C_\g)$ coincides with the set of all $g \in G$ for which
  $\alpha^g$ extends to a map $\oline{\cS_\pi} \to S(iC_\g)$.

Next we show that the additional requirement that
  $\alpha^g(\cS_\pi) \subeq S(iC_\g^\circ)$ specifies 
the open subset  $G^h \exp(C_+^\circ + C_-^\circ) = S(h,C_\g)^\circ$.
  For $g = g_0 \exp(x_1 + x_{-1})$ with
  $x_{\pm 1} \in C_\pm$, we have
  \[ \alpha^g(z) = g_0 \Exp(e^z x_1 + e^{-z} x_{-1}).\]
  For $z = a + ib$ with $0 < b < \pi$, we have for
$x_{\pm 1} \in C_\pm^\circ$ 
  \[ \Im(e^z x_1 + e^{-z} x_{-1}) = \sin(y)( x_1 - x_{-1})
    \in (C_+ + C_-)^\circ. \]
  This shows that
  \[   G^h_e\exp(C_+^\circ + C_-^\circ) = S(h,C_\g)^\circ_e
    = \exp(C_+^\circ) G^h_e\exp(C_-^\circ)  \subeq G_{\rm KMS}.\]
%  Here we use that $G^{\tau_h}$ is connected because $G$ is simply
%  connected (\cite{Lo69}). 
Here we used that, for $g = g_0 \exp(x_1 + x_{-1})$ we have
$\tau_h(g) = \tau_h(g_0) \exp(-x_1 - x_{-1})$,
so that we find for $G_{\rm KMS}$ the additional condition
that $g_0 \in G^{\tau_h}= G^h_e$ (cf.\ \cite[Cor.~2.22]{Ne22}). 

  If, conversely, $x_{\pm 1} \in C_\pm$ and
  $\alpha^g(\pi i/2) = g_0 \Exp(i(x_1 - x_{-1})) \in S(iC_\g^\circ),$ 
then
\[ x_1 - x_{-1} \in C_\g^\circ \cap \g^{-\tau_h}
  = C_+^\circ - C_-^\circ\]
(Lemma~\ref{lem:Project}). 
\end{proof}

\begin{remark} For the antiholomorphic extension
  $\oline\tau_h$ of $\tau_h$ to the complex semigroup $S(iC_\g)$, 
  the fixed point set
\[ S(iC_\g)^{\oline \tau_h}
  = G^{\tau_h} \Exp(i C_\g^{-\tau_h})
  = G^{\tau_h} \Exp(i (C_+ - C_-))\]
is a real Olshanski semigroup
in the c-dual group  $G^c$ (with respect to $\tau_h$)
with Lie algebra $\g^c = \g_0 + i \g_0^{-\tau_h}$.
The invariance condition $- \tau_h^\g(C_\g) = C_\g$ implies that
$C_\g^{-\tau_h} = C_+ - C_-$
has interior points (cf.~Lemma~\ref{lem:Project}).
\end{remark}

\begin{remark}
In the context of causal Lie groups, specified by a pair
$(G,C_\g)$ as above, $\g$ may not contain an
Euler element, but there may be an {\it Euler derivation}
  \index{Euler!deriviation  \scheiding}
$D \in \der(\g)$, i.e., $D$ is diagonalizable with eigenvalues contained
in $\{-1,0,1\}$ (see Example~\ref{ex:hcsp} below, and
Example~\ref{ex:3.10} for the case where $G = E$ is a vector space).
Then $\tau_D := e^{\pi i D}$ defines an involutive
automorphism of $\g$, and compatibility with the causal structure corresponds
to the requirements
\begin{equation}
  \label{eq:euler-der-cond}
  e^{\R D} C_\g = C_\g \quad \mbox{ and } \quad
  -\tau_D C_\g = C_\g.
\end{equation}
To implement a modular flow on $G$, we assume that
all automorphisms $\alpha_t^\g := e^{tD}$ of $\g$ integrate to
automorphisms $\alpha_t$ of~$G$. Then
$G^\flat := G \rtimes_\alpha \R$ is a Lie group
acting by causal automorphisms on $M := G$, where $(g,0)\in G^\flat$ acts
by left translation and $(0,t)$ by $\alpha_t$. This action
leaves the biinvariant cone field invariant, and the involution
$\tau_D^G$ on $G$ is {\it anti-causal}, i.e., flips the cone field
  \index{anticausal diffeomorphism \scheiding}
into its negative.
Now $h^\flat := (0,1) \in \g^\flat$ is an Euler element,
and for every $g = (g,0) \in G \subeq G^\flat$, we have
$\Ad(g)h^\flat - h^\flat \in \g$. We may therefore consider the closed
subsemigroup
\[ S(h^\flat, C_\g) := \{ g \in G \: h^\flat - \Ad(g)h^\flat \in C_\g \}\]
and find the positivity domain 
\[ W_G^+(h^\flat) = \{ g \in G \: h^\flat - \Ad(g)h^\flat \in C_\g^\circ \}.\]
With the same arguments as above, we also obtain with \cite{Ne22}
\begin{equation}
  \label{eq:G-inclusions}
  W_G^+(h^\flat) =  \exp(C_+^\circ) G^{h^\flat} \exp(C_-^\circ)
  = G^{h^\flat} \exp(C_+^\circ + C_-^\circ) = S(h^\flat, C_\g)^\circ.
\end{equation}
\end{remark}

\begin{examples} (a) Not every Euler element has a non-trivial positivity region.
  If $M = G$ is a causal Lie group with biinvariant cone
  field corresponding to $C_\g \subeq \g$, on which $G \times G$-acts,
  then every Euler element $h_0 \in \g$ specifies an Euler element
  $h := (h_0,0) \in \g^{\oplus 2}$, but the corresponding modular vector
  field is $X^G_h(g) = g.h$, and this is never contained in
  $C_g = g.C_\g$ because $h\not\in C_\g$. This follows from
  the fact that $h$ is hyperbolic and the semisimple Jordan components
  of elements in $C_\g$ are elliptic (\cite[Cor.~B.2]{NOe22}).
  We also note that $\tau_h = \tau_{h_0} \oplus \id_\g$ does not commute
  with the flip, hence cannot be implemented on the symmetric space~$G$
  in a natural way. 

  \nin (b) For {\bf left} invariant causal structure on a Lie group $G$,
  the cone $C \subeq \g \cong T_e(G)$ can be any pointed generating closed
  convex cone. Then $W_G^+(h) \not=\eset$ is equivalent to $h \in C^\circ$,
  and in this case $W_G^+(h) = G$, so that the situation is quite degenerate.
\end{examples}

\subsection{Causal flag manifolds}
\label{subsec:causal-flag-man}
  
We have seen in Section~\ref{sec:3} that Euler elements $h \in \g$
play a key role in AQFT, and that we have to understand causal homogeneous
spaces $M = G/H$ for which the positivity region
$W_M^+(h)$ is non-empty. Otherwise we have no
wedge regions for the Bisognano--Wichmann property. 
As the most well-behaved homogeneous
spaces are symmetric spaces and flag manifolds,
this is the class of manifolds for which we investigate this
question first. In Physics, the most prominent example
is the conformal compactification $(\bS^1 \times \bS^{d-1})/\{ \pm \bone\}$
of $d$-dimensional Minkowski space (Example~\ref{ex:qmink}). 

\begin{definition}
To define flag manifolds for a connected semisimple Lie group,
consider $x \in \g$ such that $\ad x$ is diagonalizable, put  
    \[ \fq_x = \sum_{\lambda \leq 0} \g_\lambda(x)
      \quad \mbox{ and } \quad
      Q_x := \{ g \in G \:  \Ad(g)\fq_x = \fq_x\}.\]
    Then $Q_x$ is called a {\it parabolic subgroup} of $G$ and
    $G/Q_x$ the corresponding {\rm flag manifold}. 
  \index{parabolic sugroup \scheiding}
  \index{flag manifold \scheiding}
\end{definition}

For the description of the causal flag manifolds, we
also need hermitian Lie algebras. 

\begin{definition} A simple Lie algebra
  $\g$ with Cartan decomposition $\g = \fk \oplus \fp$ is called
  \index{Lie algebra!hermitian \scheiding}
  {\it hermitian} 
  if the center $\fz(\fk)$ 
of a maximal compactly embedded subalgebra~$\fk$ is 
non-zero. For hermitian Lie algebras, 
the restricted root system $\Sigma = \Sigma(\g,\fa)$, 
with respect to a maximal abelian subspace $\fa \subeq \fp$, 
is either of type $C_r$ or $BC_r$ (cf.\ Harish Chandra's Theorem 
\cite[Thm.~XII.1.14]{Ne99}), 
and we say that $\g$ is 
{\it of tube type} if the restricted root system is of type $C_r$.
\index{Lie algebra!hermitian of tube type \scheiding} 
The terminology comes from the fact that the corresponding hermitian
symmetric space $G/K$ is a tube domain, i.e., biholomorphic
to $\jV_+ + i \jV \subeq \jV_\C$ for a real vector space~$\jV$
and an open convex cone $\jV_+ \subeq \jV$.
\end{definition}

\begin{theorem} \label{thm:causal-flagman}
  {\rm(Classification of causal flag manifolds, \cite{Ne25})}
  Let $G$ be a connected semisimple Lie group
  and $Q \subeq G$ be a parabolic subgroup such that $\fq$ contains no
  non-zero ideals of $\g$.   Suppose that the corresponding flag manifold  $G/Q$
  carries a $G$-invariant causal structure.
  Then $\g$ is a direct sum of hermitian simple ideals
  and   there exists an Euler element $h \in \g$ such that
  \[ \fq = \fq_h = \g_0(h) + \g_{-1}(h).\]
  If, conversely, this is the case, then $G/Q_h$ is a causal flag manifold. 
\end{theorem}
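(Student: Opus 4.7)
\emph{Overview and converse.} The proof splits via the correspondence $T_{eQ}(G/Q)\cong\g/\fq$ between $G$-invariant causal structures on $G/Q$ and $\Ad(Q)$-invariant pointed generating closed convex cones $C\subseteq\g/\fq$. Addressing the converse first: given $\fq=\fq_h$ for an Euler element $h$ of $\g=\bigoplus_i\g_i$ with hermitian simple ideals, write $h=\sum_ih_i$; the no-ideal hypothesis forces each $h_i\neq 0$, hence Euler in $\g_i$, and Proposition~\ref{prop:herm}(b) makes every $\g_i$ of tube type. By Proposition~\ref{prop:herm}(a) each $\g_i$ carries a nontrivial $\Ad(G_i)$-invariant closed convex cone $C_{\g_i}$, automatically pointed and generating by simplicity, and Lemma~\ref{lem:Project}(i) gives that $C_{\g_i}\cap\g_{i,1}(h_i)$ is pointed generating in $\g_{i,1}(h_i)$ and invariant under the Levi group $G_i^{h_i}$. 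Assembling over $i$ produces an $\Ad(G^h_e)$-invariant pointed generating cone $C\subseteq\g_1(h)\cong\g/\fq_h$; since $[\g_{-1}(h),\g_1(h)]\subseteq\g_0(h)\subseteq\fq_h$, the unipotent radical of $Q_h$ acts trivially on $\g/\fq_h$, so $C$ is $\Ad(Q_h)$-invariant and defines the required causal structure.

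\emph{Forward direction.} Conversely, let $C\subseteq V:=\g/\fq$ be an $\Ad(Q)$-invariant pointed generating cone. Write $\fq=\fq_x$ and normalize $x$ in a Cartan subalgebra of $\g$ so that $\alpha_i(x)\in\{0,1\}$ for all simple roots; the spectrum of $\ad x$ then consists of consecutive integers in $\{-h,\ldots,h\}$ for some $h\geq 1$, and it suffices to show $h=1$. From $\exp(t\ad x)\subseteq\Ad(Q)$ and $t\to\infty$, the top-weight projection of any $v\in C$ lies in $C$, so $C\cap V_h\neq\{0\}$; moreover $C\cap V_\nu$ inherits pointedness from $C$ for every $\nu$. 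Assume for contradiction $h\geq 2$, and select a nonzero $v\in C\cap V_h$ together with $y\in\g_{-(h-1)}$ such that $[y,v]\neq 0$ in $V$, which can be arranged by root-theoretic considerations applied to a highest-weight component of $v$. Then $\exp(\pm\ad y)(v)\in C$ have $V_1$-components equal to $\pm[y,v]$, while $(\ad y)^2(v)\in V_{h-2(h-1)}\subseteq\fq$ vanishes in $V$ since $h-2(h-1)\leq 0$; pointedness of $C\cap V_1$ then forces $[y,v]=0$, a contradiction. Hence $h=1$, so $x$ is an Euler element and $\fq=\fq_x$. Finally, writing $\g=\bigoplus_i\g_i$ with $h_i$ the projections of $h$, the no-ideal hypothesis forces every $h_i\neq 0$, hence Euler in $\g_i$; the decomposition $V=\bigoplus_i\g_{i,1}(h_i)$ is respected by $\Ad(L)$, and using the center of each simple Levi to separate blocks yields $C=\bigoplus_iC_i$ with each $C_i\subseteq\g_{i,1}(h_i)$ pointed generating and $G_i^{h_i}$-invariant. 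Extending $C_i$ to an $\Ad(G_i)$-invariant closed convex cone in $\g_i$ (via the closed convex hull of its $\Ad(G_i)$-orbit) produces a nontrivial invariant cone in each simple $\g_i$, and Proposition~\ref{prop:herm}(a) forces $\g_i$ to be hermitian.

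\emph{Main obstacle.} The principal technical challenge is establishing, in the non-Euler case $h\geq 2$, the existence of the pair $v\in C\cap V_h$ and $y\in\g_{-(h-1)}$ with $[y,v]\neq 0$: while the parity argument that follows is clean, verifying this non-vanishing requires a root-string analysis of how $C\cap V_h$ decomposes into $\Ad(L)$-irreducibles and meets individual root spaces, combined with the existence of nontrivial brackets with weight-$(-(h-1))$ root vectors. A secondary hurdle is the final extension from an invariant pointed cone in $\g_{i,1}(h_i)$ to a pointed $\Ad(G_i)$-invariant cone in $\g_i$; this is most naturally handled through the Kantor--Koecher--Tits perspective realizing $\g_{i,1}(h_i)$ as a Jordan triple system whose positivity cone reflects the tube-type hermitian structure of $\g_i$.
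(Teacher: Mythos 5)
The paper states Theorem~\ref{thm:causal-flagman} without proof, citing~[Ne25], so I cannot compare your argument against the paper's own; I can only assess its correctness. Your overall strategy is sound, and the two halves have rather different status.

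\textbf{Converse direction and the reduction to $h=1$.} These parts are essentially correct. For the converse, when applying Lemma~\ref{lem:Project}(i) to $C_{\g_i} \cap \g_{i,1}(h_i)$ you should verify hypothesis (A3), i.e.\ $-\tau_{h_i}(C_{\g_i})=C_{\g_i}$; this is true for the invariant cones in hermitian tube type, but deserves a sentence. Also, the resulting cone is in fact $\Ad(G^h)$-invariant (not merely $\Ad(G^h_e)$-invariant), because $C_\g$ is $\Ad(G)$-invariant and $G^h$ preserves the grading; this is what one actually needs for $\Ad(Q_h)$-invariance. For the forward step toward $h=1$, the ``root-theoretic considerations'' you flag do close: the Killing form gives a perfect pairing $\g_h \times \g_{-h}\to\R$, and the grading is generated in degree one, so $\g_{-h}=[\g_{-1},\g_{-(h-1)}]$; hence $\kappa([\g_{-(h-1)},v],\g_{-1})=-\kappa(v,\g_{-h})\neq 0$ for $v\neq 0$, and the required $y\in\g_{-(h-1)}$ exists. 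Your convexity-plus-pointedness argument then works as stated.

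\textbf{The hermitian conclusion has a genuine gap.} You produce, for each simple factor $\g_i$, an $\Ad(G_i^{h_i})$-invariant pointed generating cone $C_i\subset\g_{i,1}(h_i)$, and then propose to take the closed convex hull of $\Ad(G_i)C_i$ and invoke Proposition~\ref{prop:herm}(a). But that proposition needs the cone to be \emph{proper} (not all of $\g_i$), and your construction does not establish this. Indeed, if $\g_i$ were \emph{not} hermitian, the only $\Ad(G_i)$-invariant closed convex cones would be $\{0\}$ and $\g_i$, so the hull would automatically equal $\g_i$ and Proposition~\ref{prop:herm}(a) would yield nothing --- exactly the case you need to rule out. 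Falling back on ``the Kantor--Koecher--Tits perspective'' gestures at the right structure but does not supply a proof; making that route rigorous is essentially the classification of simple euclidean Jordan algebras. A much cleaner way to close the gap, using machinery already in this paper, is to pass to the cone $C_M$ of~\eqref{eq:cm}, as in the proof of Proposition~\ref{prop:sw-conformal}: once $h=1$ is known, $V\cong\g_1$ and every $y\in C$ defines a constant, hence everywhere causal, vector field on the open Bruhat cell $\eta(\g_1)\subseteq G/Q_h$, so $C\subseteq C_M$ by density and closedness of the cone field. The no-ideal hypothesis on $\fq$ is precisely effectiveness of the $G$-action, which forces $C_M$ to be pointed; since it contains the generating cone $C\subseteq\g_1$ and $\g_1$ meets every simple ideal, $C_M$ is generating. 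Intersecting with each $\g_i$ (Lemma~\ref{lem:coneint}) then produces a nontrivial proper invariant cone there, and Proposition~\ref{prop:herm}(a) applies.
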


If $\g$ is simple hermitian, then an Euler element $h$ exists in $\g$
if and only if $\g$ is of tube type, and then
they are all conjugate and $h$ is symmetric
(Proposition~\ref{prop:herm}). 
We fix one and consider the corresponding
causal flag manifold $M = G/Q_h$. The tangent space in the base point
is
\[ \g/\fq_h \cong \g_1(h),\]
and the causal structure on $M$ is specified by
the cone
\[ C_+ = C_\g \cap \g_1(h),\]
where $C_\g$ is a pointed generating closed convex $\Ad(G)$-invariant
cone in $\g$. We thus obtain an
(up to sign unique) causal structure on $M$, i.e., any other
cone $C_\g'$ satisfies
$C_\g' \cap \g_1(h) = C_+$ or $C_\g' \cap \g_1(h) = -C_+$ 
(\cite[\S 3.5]{MNO23}).
This also follows from the fact that $\g_1(h)$
only contains two $e^{\ad \g_0}$-invariant non-trivial closed convex
cones (\cite[Prop.~A.I.5]{HNO96}). 

On the open dense subset of $M$ obtained by embedding
$\g_1$ via $\eta(x) := \exp(x) Q_h$, the vector field $X^M_h$ is the
{\bf Euler vector field} on $\g_1$, so that
$\eta(C_+^\circ)  \subeq W_M^+(h)$, and we actually have that 
\begin{equation}
  \label{eq:pos-reg-flag}
W := W_M^+(h) = \eta(C_+^\circ)   
\end{equation}
(\cite[Lemma~2.7]{MN25}).

\begin{proposition} \label{prop:sw-conformal}
The  compression semigroup of $W \subeq M = G/Q_h$ is 
\begin{equation}
  \label{eq:Sw-flag}
  S_W = \{ g \in G \: g.W \subeq W \}
  = \exp(C_+) G^h \exp(C_-), 
\end{equation}
where $C_\pm  = \pm  C_\g \cap \g_{\pm 1}(h).$
\end{proposition}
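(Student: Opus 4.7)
\medskip

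\noindent\textbf{Proof plan.} The plan is to establish the two inclusions separately, and the decisive identity \eqref{eq:pos-reg-flag} describing $W = \eta(C_+^\circ) = \exp(C_+^\circ).o$ (with base point $o := eQ_h$) will do most of the geometric work.

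For the easy inclusion $\exp(C_+) G^h \exp(C_-) \subseteq S_W$, I will check the three factors separately. First, since $C_+ \subseteq \g_1(h)$ is an abelian subalgebra, for $x_+ \in C_+$ and $y \in C_+^\circ$ we have $\exp(x_+).\eta(y) = \exp(x_+ + y).o = \eta(x_+ + y)$, and convexity gives $x_+ + y \in C_+^\circ$, so $\exp(C_+).W \subseteq W$. Second, the centralizer $G^h$ normalizes $\fq_h = \g_0(h) + \g_{-1}(h)$ and is therefore contained in $Q_h$; since $C_\g$ is $\Ad(G)$-invariant and $\Ad(G^h)$ preserves the eigenspace $\g_1(h)$, it preserves $C_+ = C_\g \cap \g_1(h)$, so for $g \in G^h$ and $y \in C_+^\circ$ we get $g.\eta(y) = \exp(\Ad(g)y).gQ_h = \eta(\Ad(g)y) \in W$. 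Third, the subtle point is $\exp(C_-).W \subseteq W$: here $\exp(x_-) \in Q_h$ fixes $o$ but acts non-trivially on $\eta(y)$. I will use the $3$-graded Jordan-pair structure on $(\g_1(h), \g_{-1}(h))$ to write, for $y \in C_+^\circ$ and $x_- \in C_-$,
\[ \exp(x_-) \exp(y) = \exp(y') \cdot m \cdot \exp(x_-') \]
with $y' \in \g_1$, $m \in G_0 := \{g \in G : \Ad(g)\g_j(h) = \g_j(h)\} \subseteq Q_h$, and $x_-' \in \g_{-1}$, the formula being valid whenever the Bergman operator $B(y,x_-)$ on $\g_1$ is invertible. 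The crucial positivity claim $y' \in C_+^\circ$ whenever $y \in C_+^\circ, x_- \in C_-$ is the generalization of the elementary computation $\exp(tf).x = x/(1-tx)$ in $\fsl_2(\R)$: for the correct sign convention it maps $\R_+$ into $\R_+$ for all $t \leq 0$. In general this follows from the fact that the conformal group action on the Jordan algebra $\g_1(h)$ preserves $C_+^\circ$ under the indicated semigroup.

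For the converse inclusion $S_W \subseteq \exp(C_+) G^h \exp(C_-)$, I will first pin down the Lie wedge $\L(S_W)$ using Proposition~\ref{prop:LSW}, which yields $\L(S_W) = \g_0(h) + C_{W,+} + C_{W,-}$ with $C_{W,\pm} := \pm C_W \cap \g_{\pm 1}(h)$. The inclusion $C_\pm \subseteq C_{W,\pm}$ is immediate from step (a)/(c) above. For the reverse $C_{W,+} \subseteq C_+$: if $x \in \g_1(h)$ satisfies $\exp(tx).\eta(y) = \eta(tx+y) \in W = \eta(C_+^\circ)$ for all $t \geq 0$ and all $y \in C_+^\circ$, then dividing by $t$ and letting $t \to \infty$ gives $x \in \oline{C_+^\circ} = C_+$. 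The analogous inclusion $C_{W,-} \subseteq C_-$ requires passing to the opposite affine chart via a Weyl-type Cayley element normalizing $\fa$ that exchanges $\g_{\pm 1}(h)$ and swaps $C_+ \leftrightarrow C_-$; in this second chart the action of $\exp(\g_{-1})$ becomes translation and the same argument applies. Combining these identifications, $\L(S_W) = \g_0(h) + C_+ + C_-$, and the exponential of this wedge, together with the already-established inclusion of $G^h_e$ in $G_W \subseteq S_W$ from Proposition~\ref{prop:LSW}, yields $\exp(C_+^\circ) G^h_e \exp(C_-^\circ) \subseteq S_W^\circ$. A closing argument, based on the fact that any element of $S_W$ maps $o$ to a point in $\oline{W}$ that can be shifted back into $C_+^\circ$ by an element of $\exp(C_+)$, then exhibits every $g \in S_W$ in the desired factored form and accounts for the full (possibly non-connected) centralizer $G^h$.

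The main obstacle will be step (c): verifying that $\exp(C_-)$ preserves $W$, which is the non-obvious content of the proposition and corresponds in the classical case to the compression of the forward light cone by the ``special conformal transformations.'' The computation rests on the Jordan-pair quasi-inverse formula together with the convexity of $C_+^\circ$, and once this step is in hand the Lie-wedge identification and the assembly of the global factorization are essentially formal consequences of Proposition~\ref{prop:LSW} and the Bruhat-type structure of the parabolic $Q_h$.
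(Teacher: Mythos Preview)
Your overall strategy is sound, but it diverges from the paper's in an instructive way, particularly on the step you yourself flag as the ``main obstacle.''

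For the inclusion $\exp(C_-) \subeq S_W$, you propose an explicit Jordan-pair/Bergman-operator computation showing that the quasi-inverse $y' = y^{x_-}$ stays in $C_+^\circ$. The paper sidesteps this entirely: it observes that elements of $C_+$ correspond to constant vector fields on the dense chart $\eta(\g_1) \subeq M$, hence $C_+ \subeq C_M$ (the cone \eqref{eq:cm} of globally positive vector fields). Since the $G$-action on $M$ has discrete kernel, $C_M$ is pointed; since $C_M - C_M$ is an ideal and $\g$ is simple, $C_M$ is generating; and since $\g_{-1}(h)$ admits only two nontrivial $e^{\ad \g_0}$-invariant cones, one concludes $-C_M \cap \g_{-1}(h) = C_-$. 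Then $C_- \subeq C_{W,-}$ and Proposition~\ref{prop:LSW} give $\exp(C_-) \subeq S_W$ with no Jordan computation at all. Your approach is valid in principle (and is essentially how one proves this in coordinates for tube domains), but the $C_M$ argument is cleaner and exploits the global causal geometry of the flag manifold rather than local Jordan-algebraic identities.

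For the reverse inclusion $S_W \subeq \exp(C_+) G^h \exp(C_-)$, the paper does not give a self-contained proof: it states that this is the hard part---one must show the product set is a subsemigroup and then establish its maximality---and refers to \cite[Lemma~3.7, Thm.~3.8]{Ne18}. Your sketch via Proposition~\ref{prop:LSW} correctly identifies $\L(S_W) = \g_0(h) + C_+ + C_-$ (your limit arguments for $C_{W,\pm} = C_\pm$ are fine), but knowing the tangent wedge does not by itself yield the global factorization: $S_W$ need not be connected, and even on the identity component the exponential image of $\L(S_W)$ need not exhaust $S_{W,e}$. Your ``closing argument'' (translate $g.o \in \oline W$ back to $o$ via $\exp(C_+)$, then factor through $Q_h$) is the right shape, but it presupposes $\oline W \cap M = \eta(C_+)$ and a Levi decomposition $Q_h = G^h \exp(\g_{-1})$ with the correct component group, and then still requires checking that the resulting $\g_{-1}$-factor lies in $C_-$ rather than all of $\g_{-1}$. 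These are exactly the points handled by the maximality argument in \cite{Ne18}, so your sketch is not wrong but is not yet a proof.
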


\begin{proof} As $G_1 = \exp(\g_1)$ is abelian,
  the inclusions    $\exp(C_+) \subeq S_W$ and $G^h \subeq S_W$
  are obvious. 

  All elements $x \in C_+$ correspond to constant vector fields on
  the open subset $\eta(\g_1) \subeq M$, 
  and since this subset is dense (Bruhat decomposition), %(Lemma~\ref{lem:G-tridiag}(c)),
  we obtain
  \[ C_+\subeq   C_M = \{ y \in \g \:  (\forall m \in M)\ X_y^M(m) \in C_m \} \]
  (cf.\ \eqref{eq:cm}). As $G$ acts on $M$ in such a way that
  the associated   homomorphism $G \to \Diff(M)$ has discrete kernel, 
  the closed convex $\Ad(G)$-invariant cone $C_M \subeq \g$ is pointed,
  and the preceding argument yields
  \[ C_{M,+} := C_M \cap \g_1 = C_+.\]
  As $C_M - C_M$ is an ideal of $\g$ and $\g$ is simple,
  the cone $C_M$ is also generating, so that we also obtain 
  $C_- = - C_M \cap \g_{-1}$ by the discussion preceding the proposition.
  Thus $\exp(C_-) \subeq S_W$ follows from Proposition~\ref{prop:LSW}.
Putting everything together, we get
\begin{equation}
  \label{eq:swsuper}
  S_W \supeq \exp(C_+) G^h \exp(C_-).
\end{equation}
The hard part is to verify equality in \eqref{eq:Sw-flag}. This involves
showing that the product set on the right is a subsemigroup
(which is not easy to see) and that it actually coincides with $S_W$, by
showing that it is maximal, hence equal to $S_W$.
We refer to \cite[Lemma~3.7, Thm.~3.8]{Ne18} for more details
and references.
\end{proof}

\begin{problem} Theorem~\ref{thm:causal-flagman}
  describes all causal flag manifolds
  $M = G/Q_h$ for semisimple Lie groups, but it makes good sense to ask
  for a result on non-semisimple groups: 
  \begin{enumerate}
  \item[(C1)] Let $x \in \g$ be such that $\ad x$ is diagonalizable, put  
    \[ \fq_x = \sum_{\lambda \leq 0} \g_\lambda(x)
      \quad \mbox{ and } \quad
      Q_x := \{ g \in G \:  \Ad(g)\fq_x = \fq_x\}.\] 
    Show that, if $M = G/Q_x$ is causal, then $x$ must be an Euler element
    (cf.\ \cite{Ne25} for similar arguments). Note that $x \in\fq_x$ implies that
    $\fq_x$ is self-normalizing, so that $\L(Q_x) = \fq_x$. 
  \item[(C2)] Assume that $h \in \g$ is an Euler element.
    Determine those manifolds $M = Q/Q_h$ with an invariant causal structure
    on which $G$ acts effectively.
  \end{enumerate}
\end{problem}

\begin{remark} \label{rem:affine-case}  (The affine case) 
 Particular examples arise for Euler elements with $\g_{-1} = \{0\}$.
 Then $M = G/Q_h = \eta(\g_1) \cong \g_1$ and we may assume that
 $G \cong \g_1 \rtimes G_0$.
 
This covers the action of $\Aff(\R)_e$ on $\R$
and of the Poincar\'e group on Minkowski space. 
More generally, we may start with a finite-dimensional real linear space
$E$ and a pointed generating convex cone $C \subeq E$.
We write $\Aut(C) \subeq \GL(E)$ for its linear automorphism
group, which is a closed subgroup. Then $G :=E \rtimes \Aut(C)$ acts
transitively on the affine causal manifold~$M := E$, endowed
with the constant cone field $C_m = C$ for $m \in M$.
Further, $h := (0,\id_E)$ is an Euler element with $\g_{-1} = \{0\}$,
$\Aut(C) = G^h$ and $\g_1 \cong E$.
The corresponding positivity region is 
\[ W := W_M^+(h) = C^\circ, \]
and its compression semigroup is readily identified with
\[     S_W = C \rtimes \Aut(C) \]
because $\Aut(C) \subeq S_W$ (cf.~also Lemma~\ref{lem:cone-endos}).

Lie algebra elements $(b,a) \in \g = \g_1 \rtimes \g_0$
correspond to affine vector fields
  $X(x) = b + ax$, and such a vector field is positive on all of $E$ if and
  only if $b + a E \subeq C$, which is equivalent to $a = 0$ and $b \in C$.
  Therefore the invariant cone $C_M \subeq \g$ coincides with
  $C \subeq \g_1$. 
\end{remark}

\nin {\bf Appendix:  Euclidean Jordan algebras.}

  The causal flag manifolds of simple Lie groups are precisely
  the conformal compactifications of simple euclidean Jordan
  algebras (\cite{Ne25}, \cite{Be96}).

    The following table lists the simple hermitian Lie algebras of tube type,
  the only non-simple Lie algebra listed is 
  $\so_{2,2}(\R) \cong \so_{1,2}(\R)^{\oplus 2}$, 
  corresponding to the non-simple Jordan algebra 
  $\jV = \R^{1,1} \cong \R \oplus \R$
  (the Minkowski plane, decomposing in lightray coordinates). 

\begin{center} 
%  \label{table:1}
  \begin{tabular}{|l|l|l|l|l|l|l|}\hline
\text{Hermitian Lie algebra} & $\g$ &  $\fsp_{2r}(\R)$ & $\su_{r,r}(\C)$  &  $\so^*(4r)$ 
& $\fe_{7(-25)}$  &   $\so_{2,d}(\R)$\\ \hline
%$\g_0(h)/\R h$ &$\fsl_r(\C)$  &$\fsl_r(\R)$ &$\so_{1,d-1}(\R)$  & $\fsl_r(\H)$  & $\fe_{6(-26)}$ \\ \hline
\text{Euclidean Jordan algebra} &  $\jV$ &$\Sym_r(\R)$& $\Herm_r(\C)$  & $\Herm_r(\H)$
  & $\Herm_3(\bO)$ &$\R^{1,d-1}\phantom{\Big)}$  \\ \hline
Rank fo  $\jV$  &  &$r$  &$r$ &$r$  & $3$ & $2$   \\
\hline 
\end{tabular}\\[2mm]
{Table 4: Hermitian Lie algebras 
  of tube type and euclidean Jordan algebras}  
\end{center}

The corresponding flag manifolds $M$ have interesting
geometric interpretations.
For $\g = \so_{2,d}(\R)$, the manifold $M$  is the isotropic
quadric
\begin{equation}
  \label{eq:def-Q}
 Q = Q(\R^{2,d})
 = \{ [(x_1, x_2, \bx)] \in \R^{2,d} \:
 x_1^2 + x_2^2 - \bx^2 = 0\} 
\end{equation}
in the real projective space $\bP(\R^{2,d})$, and for 
\[ \Omega := \Omega_{2r} := \pmat{ 0 & \bone_r \\ -\bone_r & 0} \in M_{2r}(\K), \quad \K = \R, \C, \H,\]
we obtain a uniform realization of the Lie algebras 
$\sp_{2r}(\R), \fu_{r,r}(\C)$ and $\so^*(4r)$ as
\begin{equation}
  \label{eq:fuomega}
  \fu(\Omega,\K^{2r}) := \{ x \in \gl_{2r}(\K) \: x^* \Omega + \Omega x =0\}.
\end{equation}
Then $M$ is the space of
maximal isotropic subspaces $L \subeq \K^{2r}$ with respect to the
skew-hermitian form $\beta(z,w) := z^* \Omega w$ on $\K^{2r}$

\begin{examplekh} \label{ex:qmink}
  (The Lorentzian case)
  For $d$-dimensional Minkowski space
  $\jV = \R^{1,d-1}$ and $\tilde\jV := \R \oplus \jV \oplus \R$, we
  realize the conformal completion $M$ of $\jV$ as the quadric
\begin{equation}
  \label{eq:quadtildev}
 Q := Q(\R^{2,d}) := \{ [\tilde v] \in \bP(\tilde\jV) \:
 \tilde\beta(\tilde v, \tilde v) = 0\},
\end{equation}
where $\tilde\beta$ is the symmetric bilinear form on
$\tilde\jV \cong \R^{2,d}$, given
by
\[ \tilde\beta(x,y) = x_1 y_1 + x_2 y_2 - x_3 y_3 - \cdots - x_{d+2} y_{d+2}.\]
The natural dense open embedding $\R^{1,d-1} \to Q$ is given by
\begin{equation}
  \label{eq:eta-flow}
  \eta \: \jV \to Q,
  \quad  \eta(v) := \Big[ \frac{1 - \beta(v,v)}{2} : v : - \frac{1 + \beta(v,v)}{2}\Big]  \in Q \subeq \bP(\R^{2,d}), 
\end{equation}
corresponding to the action of the translation group
$(\jV,+) \cong \g_1(h)$ on $Q$ (cf.\ \cite[\S 17.4]{HN12}, \cite{Ne25}).
\end{examplekh}

\subsection{Causal symmetric spaces}
\label{subsec:css}

In the preceding subsection we discussed causal flag manifolds,
the ``most symmetric'' class of causal homogeneous space.
They represent ``conformal geometries''. Metric geometries,
which are more rigid, correspond in this context to causal symmetric spaces. 

We start with some
terminology and observations concerning symmetric spaces and
symmetric Lie algebras (cf.\ \cite{HO97}, \cite{CW70}, \cite{KO08}): 

\begin{itemize}
\item A  {\it symmetric Lie algebra}
\index{symmetric Lie algebra! $(\g, \tau)$  \scheiding} 
\index{Lie algebra!symmetric, $(\g,\tau), \g = \fh \oplus \fq$ \scheiding } 
is a pair $(\g,\tau)$, where $\g$ is a finite-dimensional real Lie algebra 
and $\tau$ is an involutive automorphism of~$\g$. 
We write 
\begin{equation}
  \label{eq:hq-decomp}
 \g = \fh \oplus \fq \quad \mbox{ with } \quad 
\fh = \g^\tau= \ker(\tau -\bone) \quad \mbox{ and } \quad 
\fq = \g^{-\tau}= \ker(\tau +\bone).
\end{equation}
\index{symmetric space $M = G/H$ \scheiding} 
\item A {\it symmetric space} is a homogeneous space
  of the form $M = G/H$, where $H \subeq G^\tau$ is an open subgroup
  and $\tau \in \Aut(G)$ an involution.
Then $H$ contains the identity component 
$G^\tau_e := (G^\tau)_e$. We call the triple 
$(G,\tau,H)$ a {\it symmetric Lie group} because this triple specifies 
the symmetric space~$M$. For a more intrinsic approach to symmetric
spaces as ``reflection spaces'', we refer to \cite{Lo69}. 
\index{Lie group!symmetric \scheiding} 
\index{symmetric Lie algebra!causal,  $(\g, \tau, C)$  \scheiding} 
\item A {\it causal symmetric Lie algebra}
is a triple $(\g,\tau,C)$, where $(\g,\tau)$ is a symmetric Lie algebra 
and $C \subeq \fq$ is a pointed generating 
closed convex cone, invariant under the group $\Inn_\g(\fh) := \la e^{\ad \fh}\ra
\subeq \Aut(\g)$. 
We call $(\g,\tau,C)$
\begin{itemize}
\index{symmetric Lie algebra!compactly causal\scheiding } 
\index{symmetric Lie algebra!non-compactly causal \scheiding} 
\index{symmetric Lie algebra!modular causal $(\g, \tau, C,h)$ \scheiding } 
\item {\it compactly causal~(cc)} if 
$C$ is {\it elliptic} in the sense that, for $x \in C^\circ$
(the interior of $C$), the 
operator $\ad x$ is semisimple with purely imaginary spectrum. 
\item {\it non-compactly causal (ncc)} if 
$C$ is {\it hyperbolic} in the sense that, for $x \in C^\circ$, the 
operator $\ad x$ is diagonalizable. 
\end{itemize}
\item For a symmetric Lie algebra $(\g,\tau)$, the
  pair $(\g^c, \tau^c)$ with   
  $\g^c := \fh + i \fq$ and $\tau^c(x + iy)  = x- iy$
  is called the {\it c-dual symmetric Lie algebra}.
\index{symmetric Lie algebra!c-dual, $\g^c = \fh + i \fq$\scheiding } 
\item   A~{\it modular causal symmetric Lie algebra} is a 
  quadruple $(\g,\tau, C, h)$, where
  $(\g,\tau,C)$ is a causal symmetric Lie algebra,
  $h \in \g^\tau$ is an Euler element,  
  and the involution $\tau_h$ satisfies $\tau_h(C) = - C$.
\end{itemize}

\begin{remark} \label{rem:modular-duality}
(a) $(\g,\tau,C)$ is non-compactly causal
if and only if $(\g^c,\tau^c,iC)$ is compactly causal. 

\nin (b)   $(\g,\tau, C, h)$ is modular if and only if the
$c$-dual quadruple $(\g^c, \tau^c, i C, h)$ is modular.
\end{remark}

\begin{remark} If $C_\g \subeq \g$ is a pointed generating invariant
  cone in $\g$ and $h \in \g$ an Euler element satisfying
  $-\tau_h(C_\g) = C_\g$, then there is a variety of associated
  causal symmetric Lie algebras:
  \begin{enumerate}
  \item[\rm(a)] $(\g^{\oplus 2}, \tau_{\rm flip}, C, (h,h))$ 
with $C = \{ (x,-x) \: x \in C_\g\}$ is a modular 
causal symmetric Lie algebra of group type
(cf.\ Subsection~\ref{subsec:group-type}). 
\item[\rm(b)] $(\g_\C, \sigma, i C_\g, h)$
  is a modular non-compactly causal symmetric Lie algebra of complex type.
  \item[\rm(c)] $(\g, \tau_h, C_+ - C_-, h)$ 
    is a modular compactly causal symmetric Lie algebra,
    called if {\it Cayley type} if $\g$ is simple. 
Note that $C_\g^{-\tau_h} = C_+ - C_-$ by Lemma~\ref{lem:Project}(ii).
  \item[\rm(d)] $(\g, \tau_h, C_+ + C_-, h)$ 
  is a modular non-compactly causal symmetric Lie algebra of Cayley type.
  \end{enumerate}
Note that
\begin{equation}
  \label{eq:wickrot}
  \kappa_h = e^{\frac{\pi i}{2} h} \: \g_\C \to \g_\C
  \quad \mbox{ satisfies} \quad
  \kappa_h(\g) = \g^c,    
\end{equation}
so that $(\g, \tau_h) \cong (\g^c, \tau_h)$ as symmetric Lie algebras. 
Moreover, 
\[ \kappa_h(C_\g^{-\tau_h}) = \kappa_h(C_+ - C_-)
  = i (C_+ + C_-) \]
implies 
\begin{align} 
  \label{eq:wick-dual}
(\g, \tau_h, C_+ - C_-,h)   \cong & (\g^c, \tau_h, i (C_+ + C_-),h)
  \cong  (\g, \tau_h, C_+ + C_-,h)^c
\end{align} 
as modular causal symmetric Lie algebras. 
\end{remark}

\begin{remark} (Cartan motion groups)
  If $(G,\tau, H)$ is a connected symmetric Lie group
  corresponding to the causal symmetric Lie algebra
  $(\g,\tau,C)$, then we obtain on $\fq$ a constant causal
  structure defined by $C$ that is invariant under the action
  of the semidirect product group $\fq \rtimes H$
  (a so-called {\it Cartan motion group}), where~$H := G^\tau_e$
  (cf.\ Remark~\ref{rem:affine-case}).
  If, in addition, $h \in \fh$ is an Euler element with
  $\tau_h(C) = - C$, then the pair $(\fq,C)$ satisfies
  the assumptions (A1)-(A3) from Section~\ref{subsubsec:one-par-affine}.
  So $\fq$ is an affine causal symmetric space, and
  \eqref{eq:w-decomp0} in Subsection~\ref{subsubsec:one-par-affine}
  implies that
  \[ W_\fq^+(h) =  C_+^\circ \oplus \fq_0(h) \oplus C_-^\circ
    \quad \mbox{ for } \quad C_\pm := \pm C \cap \fq_{\pm 1}(h).\] 
\end{remark}

\begin{remark} (Lorentzian symmetric spaces)
  Important examples of causal symmetric spaces
  are those where causal structure comes from a Lorentzian form,
  for instance Minkowski space,
  de Sitter space $\dS^d$ and anti-de Sitter space $\AdS^d$
  (see Examples~\ref{exs:ds-ads}).

 If $M_1 = G_1/H_1$ is a Lorentzian symmetric space
  and $M_2 = G_2/H_2$ is a Riemannian symmetric space,
  then the product $M = M_1 \times M_2$ is also Lorentzian.
  Natural examples are
  \[  \AdS^p \times \bS^q \quad \mbox{ and } \quad
    \dS^p\times \bHy{q}. \]
  The compact group $\U_n(\C)$ carries a one-parameter family of
  biinvariant Lorentzian
  structures. We refer to \cite{Ne25} for more details
  and conformal embeddings of these spaces for $p + q= d$ into $Q(\R^{2,d})$. 
\end{remark}

\subsubsection{Causal symmetric spaces of group type}

We assume first that $\g$ is
simple hermitian and that $h_0 \in \g$ is an Euler element.
Then any $\Ad(G)$-invariant closed convex pointed generating cone
$C_\g \subeq\g$ specifies a biinvariant causal structure
on the group $G$, considered as a symmetric space, on which
$G \times G$ acts transitively.
Then the Euler elements $h \in \g^{\oplus 2}$ for which
$W_G^+(h) \not=\eset$ are conjugate to $h = (h_0, h_0)$ for some
Euler element  $h_0 \in \g$, and in this case
\begin{equation}
  \label{eq:WG+}
  W_G^+(h) = \exp(C_+^\circ) G^h \exp(C_-^\circ) = S(h, C_\g)^\circ 
\end{equation}
follows from Theorem~\ref{thm:semigroups-equal},
cf.~also \eqref{eq:wg+} and~\eqref{eq:sh0g}. Note that $W_G^+(h)$ only depends
on the cones $C_\pm$, hence is unique up to sign
if $\g$ is simple (\cite[\S 3.5]{MNO23}).

\subsubsection{Modular compactly causal  symmetric spaces} 

If $(\g, \tau, C)$ is an irreducible compactly causal symmetric Lie algebra
which is not of group type, then $\g$ is simple
hermitian (\cite[Prop.~2.13]{NO23b} and 
$c$-duality\begin{footnote}{The dual symmetric Lie algebra $(\g^c,\tau^c,iC)$ is irreducible,
    non-complex and non-compactly causal. Hence $\g^c$ is simple.
    Moreover $\tau^c = \tau_h \theta^c$ for a causal Euler element
    $h \in i\fq = \fq^c$ and a Cartan involution
    $\theta^c$ of $\g^c$.
    Then $\g^c_0 = \fz_{\g^c}(h) = \fh^c_\fk \oplus \fq^c_\fp
    = \fh_\fk \oplus i \fq_\fk$ implies that $\fz_\g(ih) = \fk$. So
  $ih \in \fz(\fk)$ implies that $\g$ is hermitian.}\end{footnote}).
If $\g$ contains an Euler element,
then $\g$ is of tube type, $\Ad(G)$ acts transitively on~$\cE(\g)$
(Proposition~\ref{prop:herm}) 
and there exist $\tau$-fixed Euler elements
(Corollary~\ref{cor:mod-struc} in Appendix~\ref{subsubsec:2.8.6}). Now the embedding 
\begin{equation}
  \label{eq:cc-grp-emb}
 (\g,\tau,C) \into (\g^{\oplus 2}, \tau_{\rm flip}, \tilde C), \quad
 x \mapsto (x, \tau(x))
\end{equation}
can be used to determine the positivity region $W_M^+(h)$
by using the results for spaces of group type.

On the global side, we consider the action of $G$ on $G$
by $g.x := g x \tau(g)^{-1}$, corresponding to the embedding
\eqref{eq:cc-grp-emb}. Then
$M := G.e$ is the identity component in the
fixed point set of the involution $g^\sharp := \tau(g)^{-1}$
and a symmetric space with symmetric Lie algebra $(\g,\tau)$.
If $C = C_\g \cap \fq$, then we even have an embedding of causal
symmetric spaces which is equivariant for the modular flow.
This easily implies with \eqref{eq:WG+} that 
\begin{equation}
  \label{eq:conmg}
  W_M^+(h) = W_G^+(h) \cap M \ {\buildrel \eqref{eq:WG+} \over =}\  S(C_\g, h)^\circ \cap M,
\end{equation}
and 
\[ W = G^h_e.\exp(C_+^\circ + C_-^\circ)
  \quad \mbox{ for } \quad  C_\pm = \pm C_\g^{-\tau} \cap \g_{\pm 1}(h).\] 
The compression semigroup of $W$ is 
\begin{equation}
  \label{eq:SW-cc}
  S_W = G_W \exp(C_+ + C_-) \quad \mbox{  with  } \quad G_W = G^h_e H^h.
\end{equation}
Furthermore, $G_W$ is open in $G^h$
(\cite[Thm.~9.1]{NO23a}). We refer to \cite{NO23a} for details.

\subsubsection{Non-compactly causal  symmetric spaces}
\label{subsec:ncc-spaces}

Irreducible non-compactly causal
symmetric Lie algebras $(\g, \tau, C)$ are $c$-dual to
irreducible compactly causal ones.
The dual $(\g^c, \tau^c)$ is of group type if and only if
$\g$ is a complex simple Lie algebra
(considered as a real one) and $\tau$ is antilinear, so that
$\fh =  \g^\tau$ is a real form and $\g \cong \fh_\C$.
Then $(\g^c, \tau^c) \cong (\fh^{\oplus 2}, \tau_{\rm flip})$.
The existence of the causal structure implies that $\fh$
is hermitian, but these real forms are precisely
those for which the corresponding conjugation
$\tau$ is of the form $\theta \tau_h$, where $h \in \g$
is an Euler element (\cite[Thm.~4.21]{MNO23}).
So Euler elements in complex simple Lie algebras
automatically determine causal symmetric Lie algebras
of complex type.

This picture prevails for general simple Lie algebras~$\g$.
Whenever $h\in \g$ is an Euler element and
$\theta$ a Cartan involution with $\theta(h) = -h$, then 
$\tau := \theta \tau_h$ is an involution of $\g$.
Further $h$ is also Euler in the complexification 
$\g_\C$, on which the antilinear extension $\oline\theta$ of
$\theta$ to $\g_\C$ defines a Cartan involution.
Then $\oline\tau := \oline\theta \tau_h$ is an antilinear extension of the 
involution $\tau = \theta\tau_h$ on $\g$, 
and $\g^c := (\g_\C)^{\oline\tau} = \fh + i \fq$ is a hermitian real form
of $\g_\C$  with $\fz(\fk^c) = \R i h$. 
For any invariant cone $C_{\g^c} \subeq \g^c$
  containing $-ih$, we then obtain by 
  \[ C := i C_{\g^c} \cap \fq \]
  an $e^{\ad \fh}$-invariant cone in $\fq$ with $h \in C^\circ$.
This specifies an embedding 
\[ (\g, \tau, C) \into (\g_\C, \oline\tau, i C_{\g^c}) \]
of causal symmetric Lie algebras of non-compact type,
and we thus obtain a parametrization of irreducible
non-compactly causal symmetric Lie algebras
in terms of Euler elements (\cite[Thm.~4.21]{MNO23}):

\begin{theorem}   \label{thm:ncc-classif}
{\rm(Classification of irreducible non-compactly
    causal symmetric Lie algebras)}
  Let $\g$ be a simple real Lie algebra
  and pick a Cartan involution $\theta$ with
  $\theta(h) = -h$. Then the assignment
  \[ h \mapsto (\g, \tau_h \theta, C) \]
  described above defines a bijection from the set
  $\cE(\g)/\Inn(\g)$ of conjugacy classes of Euler elements
  to the isomorphism classes of irreducible non-compactly
  causal symmetric Lie algebras with maximal $\Inn(\fh)$-invariant cone.
\end{theorem}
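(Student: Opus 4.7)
The plan is to establish the bijection by constructing an explicit inverse map and analyzing both directions carefully. I will use $c$-duality (Remark~\ref{rem:modular-duality}) to transfer the problem: $(\g,\tau,C)$ is irreducible non-compactly causal if and only if $(\g^c,\tau^c,iC)$ is irreducible compactly causal. This latter class of symmetric Lie algebras is already well understood in terms of hermitian simple Lie algebras of tube type.

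First I would verify that the map is well-defined. Given $h \in \cE(\g)$, existence of a Cartan involution $\theta$ with $\theta(h)=-h$ follows from conjugacy of Cartan involutions and the fact that the adjoint orbit of a hyperbolic element meets the $-1$ eigenspace of any Cartan involution (in fact, Lemma~\ref{lem:levi-euler} combined with conjugacy of maximal abelian hyperbolic subspaces places $h$ in $\fp$). Since $\theta(h)=-h$ implies $\theta \tau_h = \tau_h \theta$, the product $\tau := \tau_h\theta$ is an involution. A direct computation of the $\pm i$-eigenspaces of $\ad h$ on $\g^c = \fh + i\fq$ shows that $ih$ lies in the center of $\fk^c := \fh^\theta \oplus i(\fq \cap \fp)$, so $\g^c$ is hermitian; hence an invariant cone $C_{\g^c}$ containing $-ih$ exists by the Kostant--Vinberg theorem (Proposition~\ref{prop:herm}(a)), and $C := iC_{\g^c}\cap\fq$ is pointed, generating in $\fq$, $\Inn(\fh)$-invariant, and hyperbolic (since $h\in C^\circ$ is hyperbolic). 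Maximality of the chosen $C_{\g^c}$ in the sense of invariant cones then makes $C$ the maximal $\Inn(\fh)$-invariant cone in $\fq$ intersecting~$\fq\cap\fp$ non-trivially. Independence of the choice of $\theta$ follows from conjugacy of Cartan involutions fixing $h$ under $\Inn(\g^h_e)$, and independence of the representative in $\Inn(\g)h$ is clear.

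For injectivity, suppose $h_1,h_2 \in \cE(\g)$ yield isomorphic causal symmetric Lie algebras via $\phi\colon (\g,\tau_1\theta_1,C_1)\to(\g,\tau_2\theta_2,C_2)$. Passing to $c$-duals produces an isomorphism of irreducible compactly causal symmetric Lie algebras whose $\g^c$ is simple hermitian of tube type. In the c-dual, the Euler element $h_j$ reappears as $-i$ times a generator of $\fz(\fk^c_j)$, so the two Euler elements can be recovered (up to sign) as the unique hyperbolic elements $h_j\in C_j^\circ$ with $\ad h_j$ having spectrum $\{-1,0,1\}$ and whose orbit $\cO_{h_j}$ meets $C_j^\circ$. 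Since $\phi$ is an isomorphism of causal symmetric Lie algebras sending $C_1$ to $C_2$, it carries $h_1$ to $\pm h_2$; when $\phi \in \Inn(\g)$ (which one arranges since $\g$ is simple and all Lie algebra isomorphisms are inner), and after possibly composing with an element effecting $h\mapsto -h$ if $h$ is symmetric, this yields $h_1 \in \Inn(\g)h_2$.

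For surjectivity, start with an irreducible non-compactly causal symmetric Lie algebra $(\g,\tau,C)$. The $c$-dual $(\g^c,\tau^c,iC)$ is irreducible and compactly causal, so $\g^c$ is simple hermitian (arguing as in the footnote: non-complex irreducibility forces simplicity, and the elliptic cone $iC$ forces the center of a maximal compact subalgebra to be non-trivial). Since $C$ (hence $iC$) is generating, $iC$ meets $\fz(\fk^c)$ non-trivially; fixing such an element $z_0$, its adjoint action on $\g^c$ has spectrum contained in $i\{-1,0,1\}$ precisely when $\g^c$ is of tube type, which is the case iff $\g^c$ contains an Euler element (Proposition~\ref{prop:herm}(b)). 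Normalizing so that $\Spec(\ad z_0) = \{-i,0,i\}$, the element $h := -iz_0 \in i\fz(\fk^c) \subseteq \g$ is then an Euler element of $\g$ lying in $C^\circ$. One then checks that, for the Cartan involution $\theta^c$ of $\g^c$ with $\theta^c(z_0)=-z_0$, the restriction $\theta$ to $\g$ satisfies $\theta(h)=-h$ and $\tau = \tau_h\theta$ by comparing their actions on the eigenspace decomposition of $\ad h$.

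The main obstacle will be the surjectivity step, specifically verifying that the involution $\tau$ of the given symmetric Lie algebra necessarily factors as $\tau_h\theta$ for the Euler element $h$ extracted from $\fz(\fk^c)$. This is where the structural theory of hermitian simple Lie algebras of tube type enters essentially and where a direct appeal to the Jordan-theoretic classification (Table 3 and Proposition~\ref{prop:herm}) is most efficient; the cited result \cite[Thm.~4.21]{MNO23} packages precisely this structural identification of the involution via the Cayley transform and the tube-type Jordan algebra structure.
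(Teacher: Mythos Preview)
The paper does not prove this theorem; it is quoted from \cite[Thm.~4.21]{MNO23}, and the preceding paragraph only sketches the forward construction $h\mapsto(\g,\tau_h\theta,C)$. Your outline of well-definedness via $c$-duality and your identification of surjectivity (factoring $\tau$ as $\tau_h\theta$) as the substantive step, deferred to the cited source, are both reasonable and match the paper's narrative.

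Your injectivity argument, however, contains a real error: the parenthetical claim that for simple $\g$ ``all Lie algebra isomorphisms are inner'' is false---for $\g=\fsl_n(\R)$ with $n\geq3$ the Cartan involution $\theta(x)=-x^\top$ is outer. The error is load-bearing. Take $\g=\fsl_3(\R)$ with the Euler elements $h_1,h_2$ of Example~\ref{ex:3.10cd}(a), which lie in distinct $\Inn(\g)$-orbits. Writing $\tau_i=\tau_{h_i}\theta$ and letting $P$ be the permutation matrix swapping $\be_1$ and $\be_3$, the outer automorphism $\phi:=\Ad(P)\circ\theta$ satisfies $\phi\tau_1\phi^{-1}=\tau_2$ and $\phi(h_1)=h_2$, hence $\phi(C_1)=C_2$ by maximality of the cones. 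So $(\g,\tau_1,C_1)$ and $(\g,\tau_2,C_2)$ are isomorphic as causal symmetric Lie algebras via $\phi$, while $h_1\notin\Inn(\g)h_2$. Your recovery of $h$ from $\fz(\fk^c)$ does give $\phi(h_1)=h_2$ canonically (no sign ambiguity, since $h_i\in C_i^\circ$ and $-h_i\notin C_i^\circ$), but this only yields $h_1\in\Aut(\g)h_2$, not $h_1\in\Inn(\g)h_2$. The bijection therefore requires either reading ``isomorphism class'' as $\Inn(\g)$-conjugacy of pairs $(\tau,C)$ on the fixed~$\g$, or a further argument matching the outer-automorphism freedom on the target to the difference between $\cE(\g)/\Inn(\g)$ and $\cE(\g)/\Aut(\g)$; the precise statement in \cite{MNO23} handles this, but your sketch does not.
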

 
\begin{theorem} \label{thm:unique-euler-ncc} {\rm(\cite[Cor.~6.3]{MNO24})} 
  For an irreducible non-compactly causal symmetric space   
$M = G/H$ there exists a unique conjugacy class of Euler elements
$\cO_h \subeq \g$ for which $W_M^+(h)\not=\eset$.
In particular $W_M^+(-h) = \eset$ if $h$ is not symmetric. 
\end{theorem}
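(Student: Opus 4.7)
The plan is to split the theorem into three parts: existence of the distinguished class $\cO_h$, uniqueness, and the corollary about non-symmetric Euler elements. The existence part should be essentially immediate from the setup, while uniqueness requires a conjugacy reduction together with structural facts about the cone $C^\circ$ recorded in Subsection~\ref{subsec:ncc-spaces}.

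First I would verify existence. By Theorem~\ref{thm:ncc-classif}, the symmetric Lie algebra $(\g,\tau,C)$ is realized from an Euler element $h\in\g$ via $\tau=\tau_h\theta$ with a Cartan involution $\theta$ satisfying $\theta(h)=-h$. Hence $\tau(h)=\tau_h(-h)=-h$, so $h\in\fq$, and by construction $h\in C^\circ$ (the cone $C$ is the maximal $\Inn(\fh)$-invariant cone with $h\in C^\circ$). Therefore $p_\fq(h)=h\in C^\circ$, and the description \eqref{def:WM2} yields $eH\in W_M^+(h)$, so $W_M^+(h)\neq\eset$.

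For uniqueness, suppose $h'\in\cE(\g)$ has $W_M^+(h')\neq\eset$. By \eqref{def:WM2} there exists $g_0\in G$ with $\Ad(g_0)^{-1}h'\in p_\fq^{-1}(C^\circ)$. Replacing $h'$ by this conjugate, which lies in the same orbit $\cO_{h'}$, I may assume $x:=p_\fq(h')\in C^\circ$. Writing $h'=x_\fh+x$ with $x_\fh\in\fh$, the task becomes to show $\cO_{h'}=\cO_h$. The key structural input is that in the ncc setting elements of $C^\circ$ are hyperbolic in $\g$ (c-dual to the ellipticity of $iC^\circ\subseteq\g^c$), so $x$ is hyperbolic. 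I would next apply an element of $\Inn_\g(\fh)$ to move $x$ into a fixed positive Weyl chamber $\fa_+$ of a maximal hyperbolic abelian subspace $\fa\subseteq\fq$; by the chamber structure from Subsection~\ref{subsec:ncc-spaces}, the intersection $\fa\cap C^\circ$ meets exactly one $\Ad(H)$-chamber, and $h$ itself lies in its closure. Since $h'$ is semisimple with $\Spec(\ad h')\subseteq\{-1,0,1\}$ and its $\fq$-part is forced into the same chamber as $h$, combining this with Lemma~\ref{lem:levi-euler}(a) (which describes $\cO_{h'}\cap\fa$ as a single Weyl-group orbit) and Theorem~\ref{thm:classif-symeuler} pins down $\cO_{h'}=\cO_h$.

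For the final statement, if $h$ is not symmetric then by definition $-h\not\in\cO_h$; the uniqueness just proved then yields $W_M^+(-h)=\eset$.

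The main obstacle will be the reduction step for $h'$: although $p_\fq(h')=x$ can be conjugated into $\fa_+$ by $\Ad(H)$, the companion piece $x_\fh\in\fh$ need not commute with $x$, so the chamber decomposition cannot be applied to $x$ in isolation. Handling this requires exploiting the Euler spectral rigidity $\Spec(\ad h')\subseteq\{-1,0,1\}$ together with the fact that the $\Ad(H)$-invariance of $C^\circ$ interacts with the grading $\g=\g_{-1}(h')\oplus\g_0(h')\oplus\g_1(h')$; this interaction forces $x_\fh$ to lie in the centralizer of $x$ and ultimately collapses the $\fh$-component into a $G$-conjugate of the zero vector in a $\tau$-adapted realization. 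This is the technical heart of the argument and is where the concrete classification data for Euler elements (rather than only the abstract ncc duality) must be invoked.
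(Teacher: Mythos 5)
The existence argument is correct: using $\tau=\tau_h\theta$ with $\theta(h)=-h$ you obtain $h\in\fq$, and since $h\in C^\circ$ by construction you get $eH\in W_M^+(h)$ via \eqref{def:WM2}. The deduction of $W_M^+(-h)=\eset$ for non-symmetric $h$ from uniqueness is also correct.

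However, the uniqueness step contains a genuine gap, which you identify but do not close. After conjugating so that $x:=p_\fq(h')\in C^\circ$, you need to show $\cO_{h'}=\cO_h$, and the obstacle is the $\fh$-component $x_\fh$ of $h'$. Your proposed resolution is a wish, not an argument: the claim that the Euler spectral condition combined with $\Ad(H)$-invariance of $C^\circ$ ``forces $x_\fh$ to lie in the centralizer of $x$ and ultimately collapses the $\fh$-component'' is asserted without any supporting mechanism. Concretely, the chamber argument you set up applies to the element $x\in\fq$, but the conjugacy class $\cO_{h'}$ depends on all of $h'=x_\fh+x$, and $h'$ need not lie in $\fq$. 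Lemma~\ref{lem:levi-euler}(a) describes $\cO_{h'}\cap\fa$ for a maximal abelian hyperbolic subspace $\fa\subeq\g$, but there is no reason a priori that this $\fa$ can be chosen inside $\fq$, nor that the $\fa$-representative of $h'$ relates to $x$ in the way your chamber comparison requires. In short, the essential step --- that $W_M^+(h')\neq\eset$ forces some $\Ad(G)$-conjugate of $h'$ to lie in $\fq\cap C^\circ$ --- is exactly what your sketch leaves open, and it is the content of the cited reference (the paper itself gives no proof and refers to \cite[Cor.~6.3]{MNO24}). Any serious completion would need either an argument that $\cO_{h'}$ meets $\fq$ (for instance via an analysis of fixed points of the modular flow on $M$ or the fibration \eqref{eq:wm+ss} of the positivity domain over $G^h_e/K^h_e$), or a different characterization of $\cO_h$ that can be read off from $W_M^+(h')$ being nonempty without first putting $h'$ into $\fq$.
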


Let us assume for simplicity that $M = G/H$ is {\it minimal},
i.e., that all other causal symmetric spaces with the
same triple $(\g,\tau,C)$ are coverings of $M$
(this is $M_{\rm ad}$ in the notation of Appendix~\ref{app:wedgeinncc}).
In addition, we assume that the causal structure
is maximal, i.e., that
$C \subeq \fq$ is a maximal proper $\Inn(\fh)$-invariant convex cone in~$\fq$.
We choose a Cartan involution $\theta$ commuting with $\tau$.
  Let $\fq_\fk= \fq \cap \fk$ for
  a Cartan decomposition $\g = \fk \oplus \fp$ with~$h \in \fq_\fp$ and
  consider the domain
\[   \Omega_{\fq_\fk}
    = \Big\{ x \in \fq_\fk \: r_{\rm Spec}(\ad x) < \frac{\pi}{2}\Big\},\]
  where $r_{\Spec}(\ad x)$ is the spectral radius of $\ad x$.
Then the connected component 
$W := W_M^+(h)_{eH}$ of the base point $eH$ in the positivity domain
$W_M^+(h)$ is the region 
\begin{equation}
  \label{eq:wm+ss}
  W = G^h_e \exp(\Omega_{\fq_\fk}).eH
\end{equation}
(\cite[Thm.~3.6]{MNO24}  and \eqref{eq:wm+min} in
Appendix~\ref{app:wedgeinncc}). The semigroup
$S_W$ actually is a group, as we shall see in
Theorem~\ref{thm:SW-ncc-spaces} below. 

\begin{definition} \label{def:ncc-reductive}
  (The canonical ncc symmetric space associated
  to a reductive Lie group)  
Assume that $\g$ is reductive
and that $G$ is a corresponding connected Lie group.
We choose an involution $\theta$ on $\g$ in such a way that 
  it fixes the center pointwise and restricts to a Cartan involution
  on the semisimple Lie algebra $[\g,\g]$. Then
  the corresponding Cartan decomposition $\g = \fk \oplus \fp$ satisfies
$\fz(\g) \subeq \fk$. We write $K := G^\theta$ for the subgroup of
$\theta$-fixed points in~$G$.

For an Euler element $h \in \g$, we
write $\g = \g_1 \oplus \g_2$, where $\fz(\g) \subeq \g_1$, 
$h = h_z + h_2 \in \fz(\g) \oplus \g_2$, and $\g_2$ is minimal, i.e.,
$\g_2$ is the ideal generated by
the projection $h_2$ of $h$ to the commutator
algebra. We consider the involution $\tau$ on $\g$ with
\[ \tau\res_{\g_1} = \id_{\g_1}
  \quad \mbox{ and } \quad  
 \tau\res_{\g_2} = \tau_h \theta.\] 
We {\bf assume} that $\tau$ integrates to an involutive automorphism
$\tau^G$ of $G$.
  We write $\fh := \g^\tau$ and $\fq := \g^{-\tau} \subeq \g_2$
  for the $\tau$-eigenspaces in $\g$.
    Then there exists in $\fq$ 
    a unique maximal pointed generating $e^{\ad \fh}$-invariant
    cone $C$ containing $h_2$ in its interior
    (\cite[Thm.~4.21]{MNO23} deals with minimal cones, but
    the minimal and the maximal cone     determine each other by duality). 
    We choose an open $\theta$-invariant
    subgroup $H \subeq G^\tau$, satisfying $\Ad(H)C = C$.
    This  is always the case for $H_{\rm min} = G^\tau_e$ (the minimal choice of
    an open subgroup~$H\subeq G^\tau$).
    By \cite[Cor.~4.6]{MNO23}, $\Ad(H)C = C$ is 
    equivalent to $H_K= H \cap K$ fixing~$h$, so that
    we also have a maximal choice $H_{\rm max} = K^{\tau_h,h} \exp(\fh_\fp)$,
    which leads to a minimal causal symmetric space $G/H_{\rm max}$.
    We call 
    \begin{equation}
      \label{eq:ncc}
      M = G/H \cong G_2/H_{2, {\rm max}}
%      \quad \mbox{ for } \quad H_2 := G_2 \cap H
    \end{equation}
    the {\it (minimal) non-compactly causal symmetric space} corresponding
to $\g$, resp., to~$G$. 
\end{definition}

\subsubsection{Non-triviality of  wedge regions}

Wedge regions have been studied in detail for compactly and
non-compactly causal symmetric spaces 
in \cite{NO23a} and \cite{NO23b, MNO24}, respectively.
For causal flag manifolds (Section~\ref{subsec:causal-flag-man}), 
we refer to \cite{MN25} and \cite{Ne25}
for a discussion of wedge regions and
to \cite{Be96, Be98, Be00, BN04} for Jordan theoretic aspects.
The case of general Lie groups is still poorly understood;
but see \cite{BN25} and \cite{Oeh22a, Oeh23}. We shall return to this topic 
below.

\begin{problem} Let $h \in \g$ be an Euler element and
  $M = G/H$ a causal homogeneous space.
  \begin{enumerate}
  \item[(a)]   How can we determine effectively
    if $W_M^+(h) \not=\eset$? A sufficient condition is
    given in Proposition~\ref{prop:2.13}. 
  \item[(b)] If $h$ is symmetric, i.e., $-h = \Ad(g)h$ for some $g \in G$, 
then $W_M^+(-h) = g.W_M^+(h)$ is nonempty 
    if $W_M^+(h) \not=\eset$. The converse is not true
    by Example~\ref{ex:hcsp}, where
    $W_M^+(\pm h) \not=\eset$ but $h$ is not symmetric. 
    However, for irreducible non-compactly causal symmetric spaces 
    it is true  (Theorem~\ref{thm:unique-euler-ncc}).
    Is there a natural characterization of
    those cases where $W_M^+(\pm h) \not=\eset$? 
  \item[(c)] How are these conditions related to the existence of fixed
    points of the modular
    vector field $X_h^M$, i.e., to $\cO_h \cap \fh \not=\eset$? 
  \end{enumerate}
\end{problem}

  \begin{examplekh} \label{ex:sl3} 
    In this context, the Euler element
    \[   h_1 := \frac{1}{3}\pmat{2  & 0 & 0 \\ 0 & -1 & 0\\
        0 & 0 & -1}       \in \fsl_3(\R) \] 
from Example~\ref{ex:3.10cd} is instructive. It is not
  symmetric; note that $-h_1 \in \cO_{h_2} \not=\cO_{h_1}$.
  The corresponding non-compactly
  causal symmetric space is
  \[ M =G.I_{1,2} = \{ gI_{1,2} g^\top \: g \in \SL_3(\R) \}
    \subeq \Sym_3(\R), \qquad I_{1,2} = \diag(1,-1,-1).\]
  Then $I_{1,2} \in W_M^+(h_1) \not=\eset$, but 
  $W_M^+(-h_1) =\eset$ and the vector field
  \[ X_{h_1}^M(x) = h_1 x + x h_1 \]
  has no zeros on~$M \subeq \Sym_3(\R)$. In fact, if $X_{h_1}^M(x) = 0$,
  then $x$ anticommutes with~$h_1$.
  If $v \in \R^3$ is an $h_1$-eigenvector with
  $h_1 v = \lambda v$, it follows that 
  $h_1 xv = - \lambda xv$; contradicting the fact that
  the eigenvalues of $h_1$ are $\frac{2}{3}$ and $-\frac{1}{3}$. 
  We refer to \cite[Prop.~5.7]{Ne25} for a detailed discussion
  of this class of spaces and their modular flows. 
  \end{examplekh}

\subsection{Wedge regions in non-compactly causal symmetric spaces}
\label{app:wedgeinncc}

Having introduced several classes of causal homogeneous spaces,
we now turn to wedge regions in irreducible non-compactly causal
symmetric spaces, such as de Sitter space. 
We mainly put some of the results from \cite{MNO24} into the
context in which they are used below.  

Here $G$ denotes a connected simple Lie group, $h \in \g$ is an Euler
element, $\tau =  \theta \tau_h$ for a Cartan involution $\theta$
satisfying $\theta(h) = - h$ and $M = G/H$ is a corresponding non-compactly
causal symmetric space, where the causal structure is specified by a
maximal $\Ad(H)$-invariant closed convex cone $C \subeq \fq$ satisfying
$h \in C^\circ$ (Theorem~\ref{thm:ncc-classif}). % cf.~\cite[Thm.~4.21]{MNO23}).

First we consider the ``minimal'' space associated to the triple
$(\g,\tau,C)$. It is obtained as
\[ M_{\rm ad} := G_{\ad}/H_{\ad},\]
where
\[ G_{\rm ad} := \Ad(G) = \Inn(\g) \quad \mbox{ and }\quad
  H_{\rm ad} := K_{\rm ad}^h \exp(\fh_\fp) \subeq G_{\ad}^\tau\]
(see \cite[Rem.~4.20(b)]{MNO23} for more details).
In this space, the positivity domain $W_{M_{\rm ad}}^+(h)$ is connected
by  \cite[Thm.~7.1]{MNO24}, and
\cite[Thm.~8.2, Prop.~8.3]{MNO24} imply that the positivity domain
is given by 
\begin{equation}
  \label{eq:wm+min}
  W_{M_{\rm ad}}^+(h) = G^h_e\exp(\Omega_{\fq_\fk}).eH_{\rm ad}.
\end{equation}
By \cite[Rem.~4.20(a)]{MNO23}, 
we have
$H = H_K \exp(\fh_\fp)$ with $H_K \subeq K^h$, so that
$\Ad(H) \subeq H_{\rm ad}$. Therefore
\[  q \colon M \to M_{\rm ad}, \quad gH \mapsto \Ad(g) H_{\rm ad} \]
defines a covering of causal symmetric spaces. The stabilizer in $G$
of the base point in $M_{\rm ad}$ is the subgroup
\[ H^\sharp := \Ad^{-1}(H_{\rm ad})
  = K^h \exp(\fh_\fp) \]
because $Z(G) = \ker(\Ad) \subeq K^h$.
Note that $H^\sharp$ need not be contained in $G^\tau$ because
$\tau$ may act non-trivially on $K^h$
(cf.\ Remark~\ref{rem:sl2-cover}).
So we may consider $M_{\rm ad}$ as the homogeneous $G$-space 
\[ M_{\rm ad} \cong G/H^\sharp.\]
As $q$ is a $G$-equivariant covering of causal manifolds,
\begin{align*}  
  W_M^+(h)
  &= q^{-1}(W_{M_{\rm ad}}^+(h))
  = q^{-1}(G^h_e\exp(\Omega_{\fq_\fk}).eH_{\rm ad}) 
  = G^h_e\exp(\Omega_{\fq_\fk})H^\sharp.eH \\
&  = G^h_e\exp(\Omega_{\fq_\fk})K^h.eH
  = G^h_eK^h\exp(\Omega_{\fq_\fk}).eH
  = G^h\exp(\Omega_{\fq_\fk}).eH,
\end{align*}
and the inverse image under the map $q_M \colon G \to G/H = M$ is
therefore given by
\begin{align*}
 q_M^{-1}(W_M^+(h))
&  = G^h \exp(\Omega_{\fq_\fk}) H^\sharp
  = G^h \exp(\Omega_{\fq_\fk}) K^h \exp(\fh_\fp)\\
  &  = G^h  K^h\exp(\Omega_{\fq_\fk}) \exp(\fh_\fp)
= G^h\exp(\Omega_{\fq_\fk}) \exp(\fh_\fp).
\end{align*}

Next we recall from \cite[Cor.~8.4]{MNO24} that the map
\begin{equation}
  \label{eq:ad-fiber-diffeo}
 G^h_e \times_{K^h_e} \Omega_{\fq_\fk} \to W^+_{M_{\rm ad}}(h), \quad
 [g,x] \mapsto g\exp(x)H_{\ad}
\end{equation}
is a diffeomorphism. Therefore $W^+_{M_{\rm ad}}(h)$ 
is an affine bundle over the Riemannian symmetric space $G^h_e/K^h_e$,
hence contractible and therefore simply connected.
So its inverse image $W_M^+(h)$ in $M$ is a union of open connected
components, all of which are mapped diffeomorphically onto
$W^+_{M_{\rm ad}}(h)$ by~$q_M$, and the group
$\pi_0(K^h) \cong K^h/K^h_e$ acts transitively on the set of connected
 components. 
It follows in particular that the diffeomorphism \eqref{eq:ad-fiber-diffeo}
lifts to a diffeomorphism 
\begin{equation}
  \label{eq:fiber-diffeo}
 G^h_e \times_{K^h_e} \Omega_{\fq_\fk} \to W := W^+_M(h)_{eH}, \quad
 [g,x] \mapsto g\exp(x)H.
\end{equation}

\begin{remark} \label{rem:sl2-cover} (The possibilities for $H$) 
 For $m \in \N \cup \{\infty\}$, 
    let $G_m$ be a connected Lie group with Lie algebra $\g = \fsl_2(\R)$
    and $|Z(G_m)| = m$. For $m \in \N$ this means that
    $Z(G_m) \cong \Z/m\Z$ and
    $G_m$ is an $m$-fold covering of $\Ad(G_m) \cong \PSL_2(\R) \cong G_1$.
    Note that $G_2 \cong \SL_2(\R)$. 
    Further $G_\infty \cong\tilde\SL_2(\R)$ is simply connected
    with $Z(G_\infty) \cong \Z$. 

    We consider the Cartan involution $\theta(x) = - x^\top$, 
    the Euler element
    \[ h = \frac{1}{2}{\pmat{1 & 0\\ 0 & -1}} \quad \mbox{   and } \quad
      z_\fk = \frac{1}{2}\pmat{ 0 &  \\ -1 & 0}\in \fk = \so_2(\R), \]
    which satisfies
    $e^{2\pi z_\fk} = -\bone$. Then
  \[ K = \exp(\R z), \quad Z(G_m) = \exp(2\pi\Z z_\fk), \quad \mbox{ and }  \quad 
    \tau_h(\exp tz_\fk) = \tau(\exp tz_\fk) = \exp(-tz_\fk)\] 
  because $\tau = \theta \tau_h$.   We conclude that
  \[ K^\tau = \{e\} \quad \mbox{  if } \quad m = \infty
    \quad \mbox{ and } \quad K^\tau = \{e, \exp(m\pi z_\fk)\}\
    \ \mbox{ otherwise}.\]
For $m = \infty$, $H = G_m^\tau$ is connected. For $m \in \N$, the group
$G_m^\tau= K^\tau \exp(\fh)$ has two connected components,
but if $m$ is odd, then $K^\tau$ does not fix the Euler element~$h\in C^\circ$.
Therefore only $H := \exp(\fh)$ leads to a causal symmetric space $G_m/H$.
If $m$ is even, then $H$ can be either $(G_m)^\tau_e$ or $G_m^\tau$.

In $G_1 \cong \PSL_2(\R)$, the subgroup $H$ corresponds to $\SO_{1,1}(\R)_e$
and the non-compactly causal symmetric space 
$G_1/H \cong \dS^2$ is the $2$-dimensional de Sitter space.

The universal covering $\tilde\dS^2$ is obtained  for $m = \infty$,
$G_\infty = \tilde\SL_2(\R)$, and then $H = \exp(\fh)$ is connected.
All other coverings of $\dS^2$ are obtained as $G_m/H$ for $H = \exp(\fh)$. 
\end{remark}

\subsection{Modular reductive compactly causal symmetric spaces}
\label{subsubsec:2.8.6}

In \cite{NO23a} positivity regions of modular flows have been studied in modular compactly causal symmetric spaces (cf.\ Subsection~\ref{subsec:css}),
because the existence of an Euler element in $\g$ already
implies the existence of a modular structure
(Corollary~\ref{cor:mod-struc}), and this is needed for wedge regions
and positivity regions to be defined.

In this subsection we present a quite general result on the ``automatic''
existence of modular structures for reductive compactly causal
symmetric spaces (Proposition~\ref{prop:mod-struc-gen}). 

We shall need the
following observation, which is a consequence of \cite[Prop.~3.12]{Oeh22b}.

\begin{proposition} \label{prop:mod-struc}
  {\rm(Modular structures via Jordan involutions)}
  Let $\g$ be simple hermitian, 
$h \in \g$ an Euler element, and $\jV := \g_1(h)$ the corresponding euclidean 
Jordan algebra. For every involutive automorphism $\alpha \in \Aut(\jV)$,  
there exists a unique 
automorphism $\sigma_\alpha \in \Aut(\g)$ with $\sigma_\alpha\res_\jV = \alpha$,
and then $(\g, \tau_h \sigma_\alpha, C_\g^{-\tau_h \sigma_\alpha}, h)$ is modular
compactly causal. Conversely,  every simple
modular compactly causal Lie algebra  is of this form.
\end{proposition}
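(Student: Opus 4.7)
\medskip

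The plan is to exploit the Tits--Kantor--Koecher (TKK) correspondence between simple euclidean Jordan algebras and simple hermitian Lie algebras of tube type, which is the natural framework here since Proposition~\ref{prop:herm} guarantees that $\g$ is of tube type as soon as it contains an Euler element. Under this correspondence, the $3$-grading $\g = \g_{-1}(h) \oplus \g_0(h) \oplus \g_1(h)$ canonically identifies $\jV = \g_1(h)$ with a simple euclidean Jordan algebra; choosing a Cartan involution $\theta$ with $\theta(h) = -h$, one has $\g_{-1} = \theta(\jV)$ and $\g_0 = [\jV, \theta(\jV)]$, so $\g$ is generated by $\jV \cup \theta(\jV)$. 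In particular, any grading-preserving automorphism of $\g$ is determined by its restriction to $\jV$, and conversely every $\alpha \in \Aut(\jV)$ extends by setting $\sigma_\alpha\res_\jV := \alpha$, $\sigma_\alpha\res_{\g_{-1}} := \theta \alpha \theta$, and then extending to $\g_0$ via the bracket. That this respects the Jacobi identity is the content of the classical TKK reconstruction.

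Next I would verify the modular compactly causal axioms for $\tau := \tau_h \sigma_\alpha$. Since $\sigma_\alpha$ preserves the grading it commutes with $\tau_h = e^{\pi i \ad h}$, so $\tau$ is an involution. Faithfulness of the $\g_0$-module $\jV$ (which holds as $\g$ is simple) characterizes $h$ as the unique element of $\g_0$ acting as the identity on $\g_1$, so $\sigma_\alpha(h) = h$ and $h \in \g^\tau$. Jordan automorphisms preserve the cone of squares $C_+ = \{a^2 : a \in \jV\}$, which coincides with $C_\g \cap \jV$; since $\pm C_\g$ are the only non-zero $\Ad(G)$-invariant pointed generating closed convex cones in the simple hermitian $\g$, this forces $\sigma_\alpha(C_\g) = C_\g$. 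Combined with $-\tau_h(C_\g) = C_\g$, one gets $-\tau(C_\g) = C_\g$, so the cone $C := C_\g^{-\tau} = C_\g \cap \fq$ (where $\fq := \g^{-\tau}$) is closed convex $\Inn(\fh)$-invariant, is pointed because $C_\g$ is, and is elliptic because $C_\g$ is.

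The technical heart of the argument, and the main obstacle, is to show that $C$ is generating in $\fq$. Here I would follow \cite[Prop.~3.12]{Oeh22b}. The key observation is that the fixed-point space $\jV^\alpha$ is itself a euclidean Jordan subalgebra of $\jV$, whose positive cone $C_+ \cap \jV^\alpha$ is pointed and generating in $\jV^\alpha$, while the $-1$-eigenspace $\jV^{-\alpha}$ contributes to $\fq$ via the bracket. Using the partial Cayley transform $\kappa_h = e^{\frac{\pi i}{2} \ad h}$, which intertwines the compactly causal and Cayley-type non-compactly causal pictures (cf.\ \eqref{eq:wick-dual}), these Jordan-theoretic cones are transported into~$\fq$ and together span it, producing an interior point of $C$. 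By construction the resulting interior elements are elliptic, confirming compact causality, and modularity of $(\g, \tau, C, h)$ follows from $-\tau_h(C) = C$, itself immediate from $-\tau_h(C_\g) = C_\g$ and $\tau_h(\fq) = \fq$.

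For the converse, let $(\g, \tau, C, h)$ be a simple modular compactly causal Lie algebra. Since $\tau(h) = h$, the involution $\tau$ preserves every eigenspace $\g_\lambda(h)$, hence commutes with $\tau_h$; therefore $\sigma := \tau \tau_h$ is a grading-preserving involutive automorphism of $\g$, and $\alpha := \sigma\res_\jV$ is an involutive automorphism of the Jordan algebra $\jV$ by naturality of the TKK reconstruction. Uniqueness of the extension forces $\sigma = \sigma_\alpha$, so $\tau = \tau_h \sigma_\alpha$. Finally, $C$ is a maximal elliptic $\Inn(\fh)$-invariant pointed generating cone in~$\fq$, and the uniqueness of the $\Ad(G)$-invariant cone $C_\g$ up to sign, combined with the condition $\tau_h(C) = -C$ which fixes the sign, identifies $C$ with $C_\g^{-\tau}$, completing the proof.
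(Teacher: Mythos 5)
The paper gives no proof of its own here: it simply cites \cite[Prop.~3.12]{Oeh22b}. Your TKK-based outline is a reasonable reconstruction of what that reference does, and most of the forward direction checks out. Two points deserve attention.

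First, you mislocate the ``technical heart.'' Showing that $C := C_\g^{-\tau}$ is generating in $\fq := \g^{-\tau}$ does \emph{not} require the Cayley transform $\kappa_h$, and the sketch you give is suspect: $\kappa_h$ maps $\g$ onto the $c$-dual real form $\g^c$, so it does not transport cones within $\fq \subeq \g$, and the identity \eqref{eq:wick-dual} you invoke is specific to the Cayley-type involution $\tau = \tau_h$, not to a general $\tau_h\sigma_\alpha$. The correct argument is a one-liner using material already in the paper: since $-\tau(C_\g) = C_\g$, for any $x \in C_\g^\circ$ the average $p_\fq(x) = \tfrac{1}{2}\bigl(x - \tau(x)\bigr) = \tfrac{1}{2}\bigl(x + (-\tau(x))\bigr)$ lies in $C_\g^\circ$ by convexity and in $\fq$ by construction, so $C_\g^\circ \cap \fq \neq \eset$; then Lemma~\ref{lem:coneint} identifies $C_\g^\circ \cap \fq$ with the relative interior of $C$ in $\fq$.

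Second, in the converse you assert that $\alpha := (\tau\tau_h)\res_\jV$ is a \emph{Jordan} automorphism ``by naturality of the TKK reconstruction,'' but this does not come for free: a priori an involution in $\Aut(\g)^h$ only restricts to an element of the structure group $\Str(\jV)$, and there are involutions in $\Str(\jV)$ that do not lie in $\Aut(\jV)$. The missing step is to first pick a Cartan involution $\theta$ commuting with $\tau$ and with $\theta(h) = -h$, and to note that $\tau_h$ commutes with $\theta$ as well, so $\sigma = \tau\tau_h$ preserves the Cartan decomposition, hence preserves the inner product $\kappa_\theta(x,y) = -\kappa(x, \theta y)$ on $\g$ and in particular the trace form on $\jV$. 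Since $\Aut(\jV) = \Str(\jV) \cap \OO(\jV, \tr)$ for a euclidean Jordan algebra, this yields $\alpha \in \Aut(\jV)$. Without making the $\theta$-compatibility explicit, your converse has a real gap.
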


Since the modular flow on $G/H$ generated by an Euler element
$h \in\g$ has a fixed pixed if and only if $\cO_h$ intersects $\fh$,
the following corollary implies the existence of fixed points
for irreducible compactly causal symmetric spaces. 

\begin{corollary} \label{cor:mod-struc} {\rm(Fixed points of modular flows)} 
  Let  $(\g,\tau,C)$ be simple compactly causal
  and $h \in \g$ an Euler element. Then $\cO_h \cap \fh \not=\eset$.
\end{corollary}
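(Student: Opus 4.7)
Since $(\g,\tau,C)$ is simple compactly causal and $\g$ is not of group type (which would force $\g$ to be a direct sum of two copies of another Lie algebra, violating simplicity), $\g$ is simple hermitian (as noted in the text via \cite[Prop.~2.13]{NO23b}). The hypothesis that $\g$ contains an Euler element $h$ then forces $\g$ to be of tube type by Proposition~\ref{prop:herm}(b), and the same result gives that $\Inn(\g)$ acts transitively on $\cE(\g)$, so $\cO_h = \cE(\g)$. Thus the claim $\cO_h \cap \fh \neq \eset$ is equivalent to exhibiting \emph{some} Euler element in $\fh = \g^\tau$, i.e., to upgrading the triple $(\g,\tau,C)$ to a modular quadruple.

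The main step will be to invoke the converse direction of Proposition~\ref{prop:mod-struc}, which parametrizes all simple modular compactly causal Lie algebras as $(\g,\tau_{h_0}\sigma_\alpha,C_\g^{-\tau_{h_0}\sigma_\alpha},h_0)$ for an Euler element $h_0 \in \g$ and a Jordan involution $\alpha \in \Aut(\jV)$ with $\jV = \g_1(h_0)$. I will argue that this parametrization in fact exhausts every simple compactly causal structure on $\g$ up to isomorphism, not merely the modular ones. Granting this, there exist $h_0,\alpha$ and an automorphism $\phi \in \Aut(\g)$ with $\phi\tau\phi^{-1} = \tau_{h_0}\sigma_\alpha$. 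Since $\sigma_\alpha$ preserves the $3$-grading $\g = \g_{-1}(h_0) \oplus \g_0(h_0) \oplus \g_1(h_0)$ (being an extension of an algebra automorphism of $\jV = \g_1(h_0)$) and $h_0$ is the unique grading derivation, we have $\sigma_\alpha(h_0) = h_0$; combined with $\tau_{h_0}(h_0) = h_0$, this gives $(\tau_{h_0}\sigma_\alpha)(h_0) = h_0$. Setting $h' := \phi^{-1}(h_0)$ we obtain $\tau(h') = h'$ and $h' \in \Inn(\g).h_0 = \cE(\g) = \cO_h$, whence $h' \in \cO_h \cap \fh$.

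\textbf{Main obstacle.} The delicate point is the exhaustiveness claim: that every simple compactly causal $(\g,\tau,C)$ is isomorphic, as a causal symmetric Lie algebra, to one constructed via Proposition~\ref{prop:mod-struc} from a Jordan involution on $\jV$. The Jordan-theoretic input (essentially \cite[Prop.~3.12]{Oeh22b} referenced in Proposition~\ref{prop:mod-struc}) should match the classification of involutions on simple hermitian tube-type Lie algebras that admit an invariant elliptic cone in the $-1$-eigenspace, against the involutive automorphisms of the associated euclidean Jordan algebra $\jV$. A direct alternative, should the abstract matching prove awkward, is a finite case-by-case verification over the list of simple hermitian tube-type algebras ($\sp_{2n}(\R), \su_{n,n}(\C), \so^*(4n), \so_{2,d}(\R), \fe_{7(-25)}$) that every compactly causal involution is conjugate to one fixing a chosen Euler element.
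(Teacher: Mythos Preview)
Your approach is essentially the same as the paper's: both reduce to showing that $\tau$ fixes \emph{some} Euler element via the Jordan parametrization of Proposition~\ref{prop:mod-struc}, together with the transitivity $\cE(\g) = \cO_h$ from Proposition~\ref{prop:herm}(b). The paper's proof is a two-line appeal to Proposition~\ref{prop:mod-struc}, reading it as asserting directly that $\tau = \tau_k\sigma_\alpha$ for some Euler element $k$ fixed by $\tau$.

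You are right to flag the exhaustiveness issue as the crux: the \emph{literal} converse in Proposition~\ref{prop:mod-struc} classifies simple \emph{modular} compactly causal Lie algebras, so invoking it for an arbitrary compactly causal $(\g,\tau,C)$ appears circular. The resolution is that the underlying reference \cite[Prop.~3.12]{Oeh22b} in fact classifies the relevant involutions on simple hermitian tube-type $\g$ (equivalently, $3$-graded causal subalgebras) and shows they are all of the form $\tau_{h_0}\sigma_\alpha$; modularity then comes for free rather than being a hypothesis. The paper is tacitly importing this stronger content. Your conjugation argument ($h' = \phi^{-1}(h_0) \in \cE(\g) = \cO_h$ with $\tau(h') = h'$) is the correct way to pass from ``up to isomorphism'' back to the given $\tau$, and your fallback of a case check over the five tube-type families is a legitimate alternative route.
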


\begin{proof} Since $\cE(\g) = \cO_h$, the assertion follows from
  Proposition~\ref{prop:mod-struc}, which asserts that
  $\tau$ fixes some Euler element $k$ with
  $\tau = \tau_k \sigma_\alpha$.
\end{proof}
  
\begin{proposition} \label{prop:mod-struc-gen}
{\rm (Modular structures on reductive compactly causal symmetric Lie algebras)} 
Let $(\g,\tau,C)$ be an effective reductive compactly causal
symmetric Lie algebra with $C^\circ \cap [\g,\g] \not=\eset$.
If $\g$ contains a non-central Euler element, then
there exist an Euler element $h' \in \fq = \g^\tau$
and a cone $C' \subeq C$ such that
$(\g,\tau,C',h')$ is a modular causal symmetric Lie algebra.
\end{proposition}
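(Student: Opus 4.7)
The plan is to reduce to the simple hermitian case classified in Proposition~\ref{prop:mod-struc} via the standard ideal decomposition of the reductive Lie algebra $\g$, and then reassemble a global modular structure, with the main difficulty being to control the resulting cone.

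First I would write $\g = \fz(\g) \oplus \g_1 \oplus \cdots \oplus \g_k$ as a direct sum of its center and simple ideals. The involution $\tau$ preserves $\fz(\g)$ and permutes the simple ideals~$\g_j$, yielding a partition of $[\g,\g]$ into $\tau$-invariant blocks $\fa_I$: each block is either a single $\tau$-invariant simple ideal or a pair $\g_j \oplus \g_{j'}$ swapped by $\tau$. The corresponding decomposition $\fq = \fq_\fz \oplus \bigoplus_I \fq_I$ with $\fq_I := \fa_I \cap \fq$, together with the classification of irreducible compactly causal symmetric Lie algebras into an ``ordinary'' simple case and a ``group type'' case $(\fs^{\oplus 2}, \tau_{\rm flip})$, puts every block $(\fa_I,\tau|_{\fa_I})$ into the scope of Proposition~\ref{prop:mod-struc} or its group-type variant extracted from Subsection~\ref{subsec:group-type} and Remark~\ref{rem:modular-duality}(a).

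Next I would use the two hypotheses to localize the modular structure. The non-central Euler element $h_0 \in \g$ projects to a nonzero Euler component $h_{0,j}$ in some simple ideal $\g_j$; by Proposition~\ref{prop:herm} this factor is hermitian of tube type, so the block $\fa_{I_0}$ containing $\g_j$ satisfies the tube-type hypothesis of Proposition~\ref{prop:mod-struc}. The condition $C^\circ \cap [\g,\g] \neq \eset$ simultaneously guarantees that the causal structure is non-trivial on the semisimple part. Applying Proposition~\ref{prop:mod-struc} block-by-block (and its group-type variant, where the Euler element takes the form $h_I' = (h_0^\flat, h_0^\flat)$ in the swapped case), I obtain, for each tube-type block, a $\tau$-fixed Euler element $h_I' \in \fh \cap \fa_I$ together with a pointed, generating, closed, convex cone $C_I' \subeq C \cap \fq_I$ satisfying $\tau_{h_I'}(C_I') = -C_I'$. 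Since the $h_I'$ live in mutually commuting ideals, the sum $h' := \sum_I h_I' \in \fh$ is again an Euler element of $\g$.

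Finally, to produce the global cone $C' \subeq C$ I would take
\[ C' := C \cap \bigl(-\tau_{h'}(C)\bigr), \]
which is automatically closed, convex, pointed (inherited from $C$), $\Inn_\g(\fh)$-invariant (since $h' \in \fh$ implies that $\tau_{h'}$ preserves $\fh$ and normalizes $\Inn_\g(\fh)$), and satisfies $\tau_{h'}(C') = -C'$ by construction. The main obstacle, and the technical heart of the argument, is to verify that $C'$ remains \emph{generating} in $\fq$, since the intersection defining $C'$ could in principle collapse. I expect this to be achieved by a blockwise analysis: on each tube-type block $\fa_I$, the restriction $C \cap \fq_I$ contains (or can be enlarged inside $C$ to) the Cayley-type cone $C_{I,+} - C_{I,-}$ of Remark~\ref{rem:modular-duality}(c), which is both $(-\tau_{h'})$-invariant and generating in $\fq_I$; the contributions of $\fz(\g) \cap \fq$ and of any $\tau$-invariant simple ideal in $\fh$ lie in $\fq_0(h') = \fq \cap \ker(\ad h')$, and the hypothesis $C^\circ \cap [\g,\g] \neq \eset$ combined with effectiveness of $(\g,\tau,C)$ should force these pieces to be absorbed compatibly with $(-\tau_{h'})$-invariance. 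Once generating-ness of $C'$ in $\fq$ is established, $(\g,\tau,C',h')$ satisfies all the defining conditions of a modular causal symmetric Lie algebra.
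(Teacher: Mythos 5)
Your plan is structurally close to the paper's: you construct a $\tau$-fixed Euler element $h'$ by passing to $\tau$-invariant blocks of $[\g,\g]$ and invoking Proposition~\ref{prop:mod-struc} (equivalently Corollary~\ref{cor:mod-struc}) on the hermitian tube-type pieces, and you propose exactly the paper's cone $C' := C \cap (-\tau_{h'}(C))$. The one genuine deviation in the Euler-element construction is that you sum contributions over \emph{all} tube-type blocks, whereas the paper only manufactures $h'$ inside the $\tau$-invariant ideal $\g_2 := \g_1 + \tau(\g_1)$ generated by the given Euler element. This difference is harmless (more nonzero components of $h'$ only enlarge $\g^{-\tau_{h'}}$), but it also does not help with the step you flag as the crux.

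The genuine gap is that you do not prove $C'$ is generating, and your sketch of how to do it would not close the gap. You observe correctly that the center, compact simple ideals, and hermitian non-tube-type ideals contribute to $\fq_0(h') = \ker(\ad h') \cap \fq$, and you hope that $C^\circ \cap [\g,\g] \ne \eset$ plus effectiveness "absorbs" them. But on $\fq_0(h')$ the map $-\tau_{h'}$ acts as $-\id$, so $C' \cap \fq_0(h') = C \cap (-C) \cap \fq_0(h') = \{0\}$; nothing gets "absorbed" there, and a blockwise decomposition of $C'$ does not exist because $C$ need not be a direct sum of cones in the $\fq_I$. Moreover your claim that $C \cap \fq_I$ contains the Cayley-type cone $C_{I,+} - C_{I,-}$ of Remark~\ref{rem:modular-duality}(c) conflates $\g^{-\tau_{h'_I}}$ with $\fq_I = \fa_I^{-\tau}$; these agree only when $\tau$ restricts to $\tau_{h'_I}$ on $\fa_I$, i.e.\ in the Cayley-type case. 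The paper's actual argument is different and non-local: it first extends $C$ via the Extension Theorem to a pointed, generating, $\Inn(\g)$-invariant cone $C_\g \subeq \g$ with $-\tau(C_\g) = C_\g$ and $C = C_\g \cap \fq$, then exhibits a point
\[
x \in C_\g^\circ \cap \g_2^{-\tau_{h'}} \cap \fq
\]
using $C^\circ \cap [\g,\g] \ne \eset$, the generating cone $C_\g^{\rm min} \subeq \g_2$, \cite[Prop.~2.7(d)]{NO23a}, and Lemma~\ref{lem:coneint}. Such an $x$ satisfies $-\tau_{h'}(x) = x$ and lies in the relative interior of $C$ in $\fq$ (by Lemma~\ref{lem:coneint} again), hence also in the relative interior of $-\tau_{h'}(C)$, so $x \in (C')^\circ$ and $C'$ is generating. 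This fixed-point-in-the-interior argument — which sidesteps the $\fq_0(h')$ directions entirely — is the missing idea in your proposal.
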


\begin{proof} (cf.\ \cite[Lemma~3.3]{Ne25}) (a) First we use the Extension Theorem
    \cite[Thm.~2.4]{NO23a} to find a pointed generating
    $\Inn(\g)$-invariant
  cone $C_\g$ in $\g$ with $-\tau(C_\g) =C_\g$ and $C = C_\g \cap \fq$.
  It follows in particular that $\g$ is quasihermitian, i.e.,
  its simple ideals are either compact or hermitian (\cite[Def.~VII.2.15]{Ne99}).
  We write
  $\g = \fz(\g) \oplus \g_h \oplus \fu$ with $\fu$ compact semisimple
  and $\g_h$ a sum of hermitian simple ideals.
  Projecting along the compact semisimple ideal $p_\fu \: \g \to \fz(\g) + \g_h$
  (the fixed point projection
  of the compact group $\Inn(\fu)$), it follows that
  \[ C_\g^\circ \cap (\fz(\g) + \g_h) = p_{\fu}(C_\g^\circ) \not=\eset\]
  (cf.\ Lemma~\ref{lem:coneint} in Appendix~\ref{app:E})   and likewise 
  \[ C_\g^\circ \cap \g_h = p_{\fu}(C_\g^\circ \cap [\g,\g]) \not=\eset.\]
  Here we use that our assumption implies that
  \begin{equation}
    \label{eq:cintersect}
 \eset \not= C^\circ \cap [\g,\g]
 = C_\g^\circ \cap \fq \cap [\g,\g].
  \end{equation}
  
  \nin (b) Let $h_1 \in\g$ be an Euler element.
  Then the ideal $\g_1 \trile \g$ generated by $[h_1,\g]$ has trivial
  center and contains no compact ideal, hence only simple hermitian
  ideals with an Euler element appear, so that they are  of tube type.
  The $\tau$-invariant ideal $\g_2 := \g_1 +\tau(\g_1)$ also has
  only simple hermitian tube type ideals. We may thus replace
  $h_1$ by an Euler element $h_2 \in [\g,\g]$ generating the ideal $\g_2$.

\nin (c)  Let $\fj \trile \g_2$ be a minimal $\tau$-invariant ideal.
  Then either $\fj$ is simple or a sum of two simple ideals exchanged
  by $\tau$. In the latter case, $\fj \cong \fb \oplus \fb$ with
  $\tau$ acting by $\tau(a,b) = (b,a)$. 
  Any generating Euler element
  in $\fj$ has non-zero components, and all these are conjugate under
  inner automorphisms (Proposition~\ref{prop:MN21:3.2}). 
  So the projection of $h_2$ to $\fj$ is conjugate
  to an element of the form $(x,x) \in \fj^\tau$.
  If $\fj$ is simple, then $\fh = \g^\tau$ contains an Euler element 
  by Proposition~\ref{prop:mod-struc}. 
  Putting these results on minimal
  invariant ideals together, we see that $h_2$ is conjugate to an
  element of $\g^\tau$, i.e., $\g^\tau$ contains an Euler element $h_3$
  generating~$\g_2$. 

  \nin (d) The involution $\tau_3 := \tau_{h_3}$ commutes with $\tau$. 
  Next we observe that $\g^{-\tau_3} \subeq \g_2$ is contained in
  a sum of hermitian simple
  ideals. Therefore \cite[Prop.~2.7(d)]{NO23a} implies that
the cones $C_\g^{\rm min}$ and $C_\g^{\rm max}$ are $-\tau_{3}$-invariant and
\[ (C_\g^{\rm max})^{-\tau_{3}} = (C_\g^{\rm min})^{-\tau_{3}} = C_\g^{-\tau_{3}}.\]
As $\g_2$ intersects the interior of $C_\g$ by \eqref{eq:cintersect},
and the cone 
$C_\g^{\rm min} \subeq \g_2$ is generating, it
follows with Lemma~\ref{lem:coneint} in Appendix~\ref{app:E} that
\[ \eset \not = (C \cap \g_2^{-\tau_3})^\circ
  =(C_\g \cap \g_2^{-\tau_3})^\circ
%  =(C_\g^{\rm min}^{-\tau_3})^\circ
  =C_\g^\circ \cap \g_2^{-\tau_3}. \] 
Now 
  \[ C' := C \cap (-\tau_{3}(C))  \subeq \fq \]
  is an $\Inn(\fh)$-invariant pointed cone in $\fq$.
  As it contains $C_\g \cap \g_2^{-\tau_3} \cap \fq
  = C \cap \g_2^{-\tau_3}$, hence interior points of $C_\g$,
  it has non-trivial interior.
    Therefore $(\g,\tau, C', h_3)$ is modular.
\end{proof}

\subsection{The geometric KMS condition}
\label{subsec:1.2}

On the geometric side, KMS conditions as in Definition~\ref{def:linear-KMS}
can be modeled as follows.
We consider a connected complex manifold $\Xi$, endowed with a smooth
$\R$-action $(\sigma_t)_{t \in \R}$ by holomorphic maps
and an antiholomorphic involution
$\tau_\Xi$ commuting with each $\sigma_t$.
We further assume that $\Xi$ is an open domain in a larger complex manifold 
and that the boundary $\partial \Xi$ contains a real submanifold $M$ with 
the property that, for every $\tau_{\Xi}$-fixed point 
$m$ in an open subset of $\Xi^{\tau_{\Xi}}$, the orbit map 
$\R \to \Xi, t \mapsto \sigma^m(t)$ extends to a holomorphic 
map $\sigma^m \: \cS_{\pm\pi/2} \to \Xi$, which further 
extends to a continuous map 
\begin{equation}
  \label{eq:extcond}
\sigma^m \: \oline{\cS_{\pm\pi/2}} \to \Xi \cup M \quad \mbox{ with } \quad 
\sigma^m(\pm i \pi/2) \in M.
\end{equation}
We have already seen these structures for $M = G$
in Theorem~\ref{thm:semigroups-equal}.

\begin{definition}
We then write
\[  W_{\rm KMS} \subeq M \]
for the set of all elements whose orbit map
$\sigma^m \: \R \to M$ extends analytically to a continuous map $\oline{\cS_\pi} \to \Xi \cup M$,
analytic on $\cS_\pi$, such that
\[ \sigma^m(\pi i) = \tau_\Xi(m).\]
Note that this implies that  $\sigma^m(t + \pi i) = \tau_\Xi(\sigma^m(t))$ 
for $t \in \R$, and hence, by antiholomorphic continuation, that
$\sigma^m(\oline z + \pi i) = \tau_\Xi(\sigma^m(z))$ for $z \in \cS_\pi,$ 
so that
\begin{equation}
  \label{eq:pq-3-38}
  p := \sigma^m\Big(\frac{\pi i}{2}\Big) \in \Xi^{\tau_\Xi}
  \quad \mbox{ and } \quad
  m = \sigma^p\Big(-\frac{\pi i}{2}\Big).  
\end{equation}
\end{definition}

\begin{examples} 
    (Domains in $\C$) \label{ex:g.1} 
In one-dimension we have the following standard examples 
of simply connected proper domains in $\C$ with 
their natural actions of $\R \times \{\pm 1\}$. 

\nin (a) (Strips) On the  strip 
$\cS_\pi = \{ z \in \C \: 0 < \Im z < \pi\}$ 
we have the antiholomorphic involution 
$\tau_{\cS_\pi}(z) = \pi i+ \oline z$ with fixed point set 
\[ \cS_\pi^{\tau_{\cS_\pi}} = \Big\{ z \in \cS_\pi \: \Im z = \frac{\pi}{2}\Big\}.\] 
The group $\R$ acts by translations commuting with
$\tau_{\cS_\pi}$ via $\sigma_{t}(z) = z + t$, 
$M := \R \cup (\pi i + \R) = \partial \cS_\pi$ is a real submanifold, and 
for $\Im z = \pi/2$, the orbit map $\sigma^z(t)$ extends to the 
closure of the strip $\cS_{\pm\frac{\pi}{2}}$ with 
$\sigma^z\big(\pm \frac{\pi i}{2}\big) = z \pm \frac{\pi i}{2} \in M$. 
For the strip we have $W_{\rm KMS} = \R$.

\nin (b) (Upper half-plane) On the upper half-plane 
$\C_+  = \{ z \in \C \: \Im z > 0\}$, we have the 
antiholomorphic involution $\tau_{\C_+}(z) = -\oline z$ 
and the action of $\R$ by dilations $\sigma_t(z) = e^t z$. 
Here $M := \R = \partial \C_+$ is a real submanifold, and 
for $z = i y$, $y > 0$, the orbit map $\sigma^z(t) = e^t  z$ extends to the 
closure of the strip 
$\cS_{\pm\frac{\pi}{2}}$ with 
$\sigma^z\big(\pm \frac{\pi i}{2}\big) = \pm i (i y) = \mp y$.
In this case $W_{\rm KMS} = \R_+$. 

\nin (c) (Unit disc) On the unit disc 
$\bD = \{ z \in \C \: |z| < 1\}$ we have the antiholomorphic 
involution $\tau_\bD(z) = \oline z$ and the 
action of $\R\cong \SO_{1,1}(\R)_e$ by the fractional linear maps 
\begin{equation}
  \label{eq:discact}
 \sigma_t(z) = \frac{ \cosh(t/2) z + \sinh(t/2)}{\sinh(t/2)z + \cosh(t/2)}.
\end{equation} 
Here $M := \bS^1 = \partial \bD$ is a real submanifold, and 
for $z \in \bD \cap \R = \bD^{\tau_\bD}$,
the orbit map $\sigma^z(t)$ extends to the 
closure of the strip $\cS_{\pm\pi/2}$ with 
\[ \sigma^z(\pm \pi i/2) 
= \frac{ \cos(\pi/4) z \pm i \sin(\pi/4)}{\pm i \sin(\pi/4)z + \cos(\pi/4)}
= \frac{z \pm i}{\pm iz + 1} 
= \mp i \cdot \frac{z \pm i}{z \mp i}\]
and
\[ W_{\rm KMS} = \big\{ x - i \sqrt{1 - x^2} \: -1 < x < 1\big\}\]
is the lower half circle.

The biholomorphic maps 
\begin{equation}
  \label{eq:cayley}
 \Exp \: \cS_\pi \to \C_+, \ \   z \mapsto e^z \quad \mbox{ and } \quad 
\Cay \: \C_+ \to \bD, \ \  \Cay(z) := \frac{z-i}{z+ i} 
\end{equation}
are equivariant for the described actions of $\R \times \{ \pm 1\}$
on the respective domains.
\end{examples}

\begin{lemma} \label{lem:b.0}
  For a proper simply connected domain
  $\Omega \subeq \C$,
two antiholomorphic involutions on $\Omega$ 
are conjugate under the group  $\Aut(\Omega)$ of biholomorphic automorphisms.
In particular, they have fixed points. 
\end{lemma}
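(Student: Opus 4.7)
\smallskip

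\noindent\textbf{Proof plan.} The plan is to reduce the statement to the case $\Omega = \bD$ via the Riemann mapping theorem, and then classify antiholomorphic involutions of $\bD$ up to conjugation by $\Aut(\bD)$. Let $\phi \colon \Omega \to \bD$ be a biholomorphism, which exists because $\Omega \subsetneq \C$ is simply connected. Conjugation by $\phi$ induces a group isomorphism $\Aut(\Omega) \to \Aut(\bD)$ and a bijection between antiholomorphic involutions of $\Omega$ and those of $\bD$, so it suffices to prove that any two antiholomorphic involutions of $\bD$ are conjugate in $\Aut(\bD)$.

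\smallskip

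The crux of the argument is to show that any antiholomorphic involution $\sigma$ of $\bD$ has a fixed point. Since $\sigma \colon \bD \to \bD$ is a bijection, the map $f(z) := \overline{\sigma(z)}$ is a holomorphic bijection $\bD \to \bD$, hence $f \in \Aut(\bD)$. By the Schwarz--Pick lemma, $f$, and therefore $\sigma$, is an isometry of the Poincar\'e metric on $\bD$. The Poincar\'e disc is a complete, simply connected Riemannian manifold of negative curvature, so it is a CAT(0) space in which hyperbolic midpoints are unique. For any $p \in \bD$, let $m$ be the hyperbolic midpoint of the geodesic segment joining $p$ to $\sigma(p)$; since $\sigma$ is an isometric involution mapping this segment to itself with endpoints swapped, it fixes the midpoint, so $\sigma(m) = m$. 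This is the step I expect to be the main obstacle to articulate crisply; an alternative would be to invoke the Cartan fixed point theorem for compact groups of isometries on Hadamard manifolds, applied to $\{\id, \sigma\}$.

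\smallskip

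Once a fixed point $m_0$ of $\sigma$ is secured, I use the transitivity of $\Aut(\bD)$ on $\bD$ to pick $\psi \in \Aut(\bD)$ with $\psi(0) = m_0$; replacing $\sigma$ by $\psi^{-1} \sigma \psi$ I may assume $\sigma(0) = 0$. Then $f(z) = \overline{\sigma(z)}$ lies in $\Aut(\bD)$ and fixes $0$, so by the Schwarz lemma $f(z) = e^{i\theta} z$ for some $\theta \in \R$, giving $\sigma(z) = e^{-i\theta}\oline z$. Conjugating by the rotation $r(z) = e^{-i\theta/2} z \in \Aut(\bD)$ one checks directly that
\[
r^{-1} \sigma r(z) \;=\; e^{i\theta/2} \cdot e^{-i\theta} \cdot \overline{e^{-i\theta/2} z}
\;=\; e^{i\theta/2 - i\theta + i\theta/2}\, \oline z \;=\; \oline z \;=\; \tau_\bD(z).
\]
Hence every antiholomorphic involution of $\bD$ is $\Aut(\bD)$-conjugate to $\tau_\bD$, so any two are conjugate to each other; transporting back by $\phi$ yields the claim for $\Omega$, and the fixed point of $\sigma$ is transported to a fixed point in $\Omega$.
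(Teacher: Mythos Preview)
Your proof is correct and follows essentially the same route as the paper's: reduce to $\bD$ via the Riemann Mapping Theorem, observe that $\sigma$ is a hyperbolic isometry so the midpoint of a point and its image is fixed, conjugate so that $\sigma(0)=0$, invoke the Schwarz lemma to get $\sigma(z)=e^{i\alpha}\oline z$, and finally conjugate by a rotation to reach $z\mapsto\oline z$. The only cosmetic difference is that the paper takes $\psi(z)=\sigma(\oline z)$ rather than your $f(z)=\oline{\sigma(z)}$, which is immaterial.
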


\begin{proof} (\cite[Lemma~B.1]{ANS22})
  By  the Riemann Mapping Theorem, we may assume that 
$\Omega = \bD$ is the unit disc. Let $\sigma \: \bD \to\bD$ be an antiholomorphic 
involution. Then $\sigma$ is an isometry for the hyperbolic metric. 
Therefore the unique
midpoint of $0$ and $\sigma(0)$ is fixed by $\sigma$. Conjugating 
by a suitable automorphism of~$\bD$, we may therefore assume that 
$\sigma(0) = 0$. Then $\psi(z) := \sigma(\oline z)$ is a holomorphic automorphism 
fixing $0$, hence of the form $\psi_\theta(z) = e^{i\theta} z$
for some $\theta \in \R$,
so that $\sigma(z) = e^{i\theta} \oline z = \gamma(\oline{\gamma^{-1}(z)})$ 
for $\gamma(z) = e^{i\theta/2}z$. Thus $\sigma$ is conjugate to
complex conjugation. 
\end{proof}

\begin{proposition} Up to automorphisms of $\R \times \{\pm 1\}$, any 
  antiholomorphic action of this group on a proper simply connected 
domain $\Omega \subeq \C$ is equivalent to the one in 
{\rm Examples~\ref{ex:g.1}(a)-(c)}.
\end{proposition}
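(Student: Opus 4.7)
The plan is to reduce successively: first normalize $\Omega$, then normalize the antiholomorphic involution, and finally classify the remaining one-parameter subgroup. By the Riemann Mapping Theorem I may assume $\Omega = \bD$, since every proper simply connected domain in $\C$ is biholomorphic to the unit disc; transporting the action along such a biholomorphism yields an $\R \times \{\pm 1\}$-action on $\bD$ of the same type. Lemma~\ref{lem:b.0} then allows me to conjugate within $\Aut(\bD)$ so that the given antiholomorphic involution, which represents $-1 \in \{\pm 1\}$, becomes complex conjugation $\tau_\bD(z) = \oline z$. After these two reductions the remaining datum is a continuous homomorphism $\sigma \: \R \to \Aut(\bD)$ whose image commutes with $\tau_\bD$.

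The heart of the argument is the computation of the centralizer
\[ Z := \{\phi \in \Aut(\bD) \: \phi \circ \tau_\bD = \tau_\bD \circ \phi\}. \]
Writing an arbitrary $\phi \in \Aut(\bD)$ in canonical form $\phi(z) = e^{i\theta}\frac{z - a}{1 - \oline a z}$ with $|a| < 1$ and $\theta \in \R$, and comparing $\phi(\oline z)$ with $\overline{\phi(z)}$, I will conclude that $a \in \R$ and $e^{i\theta} \in \{\pm 1\}$. Hence $Z$ is a one-dimensional Lie subgroup of $\Aut(\bD)$ with two connected components, and its identity component coincides precisely with the one-parameter group
\[ \sigma^\bD_t(z) = \frac{\cosh(t/2)\, z + \sinh(t/2)}{\sinh(t/2)\, z + \cosh(t/2)} \]
from Example~\ref{ex:g.1}(c); this can be verified at the origin, where $t \mapsto \sigma^\bD_t(0) = \tanh(t/2)$ is a bijection $\R \to (-1,1)$, matching all admissible real values of $-a$.

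With the centralizer in hand, the $\R$-action, being a continuous homomorphism $\sigma \: \R \to Z$, must factor through the identity component of $Z$ because $\R$ is connected. Therefore $\sigma_t = \sigma^\bD_{\lambda t}$ for some $\lambda \in \R$. Assuming the action is effective (tacit in the statement, since otherwise equivalence to any of Examples~\ref{ex:g.1}(a)--(c) is impossible), we have $\lambda \neq 0$, and the Lie group automorphism $(t, \eps) \mapsto (\lambda^{-1} t, \eps)$ of $\R \times \{\pm 1\}$ rescales to $\lambda = 1$, producing exactly the disc action of Example~\ref{ex:g.1}(c). Equivalence with (a) and (b) is then automatic, because the biholomorphisms $\Exp \: \cS_\pi \to \C_+$ and $\Cay \: \C_+ \to \bD$ in \eqref{eq:cayley} are equivariant for the respective $\R \times \{\pm 1\}$-actions by construction. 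The main obstacle is the centralizer calculation; everything else is routine bookkeeping once that is settled.
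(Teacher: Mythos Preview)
Your proof is correct and follows essentially the same approach as the paper: reduce to $\bD$ via the Riemann Mapping Theorem, normalize the involution to $\tau_\bD(z) = \oline z$ using Lemma~\ref{lem:b.0}, identify the centralizer of $\tau_\bD$ in $\Aut(\bD) \cong \PSU_{1,1}(\C)$ as $\PSO_{1,1}(\R)$, and absorb the remaining scaling freedom into an automorphism of $\R \times \{\pm 1\}$. Your explicit centralizer computation and the remark on effectiveness add useful detail beyond what the paper records.
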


\begin{proof} Up to conjugation with biholomorphic maps, we may assume that 
  $\tau(z) = \oline z$ on $\Omega = \bD$
  (Lemma~\ref{lem:b.0}). 
Now we simply observe that the centralizer 
of $\sigma_{-1}$ in the group $\PSU_{1,1}(\C) \cong \Aut(\bD)$ is 
$\PSO_{1,1}(\R)$, and, up to automorphisms of $\R \times \{\pm 1\}$, 
this leads to the action in \eqref{eq:discact}.
\begin{footnote}
  {The automorphisms of the group $\R^\times$ have the form
    $\phi(x) = \sgn(x)|x|^{\lambda}$, $\lambda\in \R$. }
\end{footnote}
\end{proof}

\begin{examples} \label{exs:kms-dom}
  (Examples of KMS domains) \\
  \nin (a) If $G = E \rtimes_\alpha \R$ as in Example~\ref{ex:3.10},
  then $\Xi := E + i C^\circ$ is a tube domain in the complex
  vector space $E_\C$ with $E \subeq \partial\Xi$,
  and Theorem~\ref{thm:semigroups-equal}   implies that
  \[ E_{\rm KMS} = C_+^\circ \oplus E_0 \oplus C_-^\circ,\]
  which in this concrete case can be verified easily. 

  \nin (b) For a causal Lie group $G$ 
  and the complex semigroup $\Xi = S(i C_\g^\circ)$, we  obtain
  from   Theorem~\ref{thm:semigroups-equal} that 
  \[ G_{\rm KMS} = \exp(C_+^\circ) G^h_e \exp(C_-^\circ)
    \quad \mbox{ for } \quad C_\pm = \pm C_\g \cap \g_{\pm 1}(h). \] 

  \nin (c) For a non-compactly causal symmetric space
  $M = G/H$, realized in the boundary of a complex crown domain
  $\Xi \subeq G_\C/K_\C$ as the $G$-orbit of
  $o_M := \exp\big(-\frac{\pi i}{2}h\big).K_\C$
  (\cite{GK02, GKO03, NO25}) we also have
  \[ M_{\rm KMS} = W_M^+(h)_{eH} = G^h_e \exp(\Omega_{\fq_\fk}).eH \]
(cf.~Subsection~\ref{subsec:semisim} and \cite[Thm.~8.2]{MNO24}). 
\end{examples}

\begin{small}
\subsection{Exercises for Section~\ref{sec:3b}}

\begin{exercise}
  \label{exer:stereo}
  The Cayley transform $C \:\R \to \bS^1, C(x) = \frac{i-x}{i+x}$
  has a natural interpretation in terms of the stereographic projection.
  Show that, projecting the point $1 + 2 i x$ on the tangent line
  through $1 \in \bS^1 \subeq \C$ with the center $-1 \in \bS^1$ onto
  the circle yields $C(x)$.
% $C(x) + 1 = \frac{2}{1- i x}$ and $\frac{C(x)+1}{2 + 2i x} = \frac{1}{1+x^2}> 0$  
\end{exercise}

\begin{exercise}\label{ex21} (Positivity region of a Lorentz boost) 
Consider the two-dimensional Minkowski space $\R^{1,1}=\{(x_0,x_1)\;|\;x_0,x_1\in \R\}$ and the $2d$-Poincar\'e group  $G:=\R^{1,1}\rtimes \SO_{1,1}(\R)_e$. 
\begin{enumerate} 
\item[\rm(i)] Show that $$h=\begin{pmatrix}
0&1\\
1&0
\end{pmatrix}\in \so_{1,1}(\R)$$
is an Euler element in $\g=\R^{1,1}\rtimes \so_{1,1}(\R)$. 
\item[\rm(ii)] Show that $M=\R^{1,1}\cong G/H$, for $H=\SO_{1,1}(\R)_e$, and determine the positivity region $W_M^+(h)$
  for the causal structure on $M$, specified by the cone
  \[  C:=\{(x_0,x_1)\;|\; |x_1|\leq x_0\}\subseteq \R^{1,1}\cong T_0 M.\] 
\end{enumerate} 
\end{exercise}

\begin{exercise} \label{exer:mink-deSitter} (Minkowski space) 
  For the Minkowski space $E := \R^{1,d}$, let $\eta(x,y):=x_0y_0-\langle \bx,\by\rangle$ and consider the closed positive light cone
  \[ C = \{ x\in E \;|\; x_0 \geq 0, \quad \eta (x,x) \geq 0 \},\]
so that $E$ is a causal manifold, for $C_m:=C$ and $m\in E$. Show that:
  \begin{enumerate}
  \item[\rm(i)] $G := \SO_{1,d}(\R)_e$ acts by causal automorphisms on~$E$ and classify $G$-orbits in $E$. Hint: Witt's Theorem, which asserts that any
    $\eta$-isometry between subspaces of $E$ extends to an isometry of the whole space~$(E,\eta)$.
  \item[\rm(ii)] Show that, for $0 \not=m \in E$, the orbit $\cO_m := G.m$ is a causal
    manifold if $T_m(\cO_m) = m^{\bot_\eta}$ intersects $C^\circ$, i.e.,
    $T_m(\cO_m)$ contains timelike vectors. 
  \item[\rm(iii)] Show that conditon (ii) is satisfied for $\be_d = (0,\ldots,0,1)$. If $d>1$, then its orbit is the de Sitter space
    \[  \dS^d = \{ (x_0,\bx) \:  x_0^2 - \bx^2 =- 1\}\]
    and its stabilizer is $H:=\SO_{1,d-1}(\R)_e$, i.e. $\dS^d\cong \SO_{1,d}(\R)_e/\SO_{1,d-1}(\R)_e$.
  \item[(iv)] Show that $\g=\so_{1,d}(\R)$ is a direct sum $\g=\fh\oplus \fq$ for the stabilizer subalgebra
    \[ \fh=\g_{\be_d} =
      \{A\in \so_{1,d}(\R)\;|\;A\be_d=0\}\cong \so_{1,d-1}(\R), \]  and
\[ \fq=\left\{L(x_0,\bx) \;|\; (x_0,x)\in \R^{1,d-1}\right\}\qfor L(x_0,\bx):=\begin{pmatrix}
    0&0&x_0\\
    0&0&-\bx\\
    x_0&\bx^T&0
\end{pmatrix}.
\] 
\item[(v)] For $x \in \be_d^\bot$, we have
  $\Exp_{\be_d}(x):=\exp(L(x))\be_d=C(\eta(x,x))\be_d+S(\eta(x,x))x$, for 
\[ C(z)=\sum_{k=0}^\infty \frac{z^k}{(2k)!}\qand S(z)=\sum_{k=0}^\infty \frac{z^k}{(2k+1)!}.\] 
Show that $\Exp_{\be_d}(x)\in \dS^d$ and $\difftev \Exp_{\be_d}(tx)\in C$
    if and only if $x \in C$. 
\item[(vi)] Let $\cE(\g)\subseteq \g$ be the subset of Euler elements. Show that 
$$h=\begin{pmatrix}
0&0&1\\
0&0_{d-1}&0\\
1&0&0
\end{pmatrix}\in \cE(\g)\quad\text{and}\quad W_M^+(h)=\dS^d\cap W_R,
$$
for $M=\dS^d$ and the \text{right Rindler wedge} $W_R:=\{(x_0,\bx)\in E\;|\; |x_0|<x_d\}$.
\item[(vii)] For $G_e^h:=\langle \exp(\g_0(h))\rangle$, show that $$W_M^+(h)=G_e^h \exp(\Omega_{\fq_\fk}).\be_d,$$
where $\Omega_{\fq_\fk}:=\left\{R(x)\ \: \|x\|<\pitwo\right\}$,
$\fq_\fk:=\left\{R(x)\ \: x\in \R^{d-1}\right\}$ and
$R(x)=\begin{pmatrix}
0&0&0\\
0&0&-x\\
0&x^T&0
\end{pmatrix}.$ 
  \end{enumerate}
\end{exercise}

\begin{exercise} (Causal symmetric submanifolds of $\Sym_n(\R)$) 
In the linear space $E := \Sym_n(\R)$, we consider the closed convex cone
  $C$ of positive semidefinite matrices, so that $E$, endowed with the constant cone
  field $C_m = C$, for $m \in E$, is a causal manifold. Show that:
  \begin{enumerate}
  \item[\rm(i)] $\GL_n(\R)$ acts via $g.A = gAg^\top$ by causal automorphisms on~$E$. 
  \item[\rm(ii)] For $G \subeq \GL_n(\R)$, any orbit
    $M := \{ g A g^\top \: g \in \SL_n(\R)\}$ for which
\[ T_A(M) \cap C^\circ \not=\eset \]  inherits the structure of a causal manifold. 
  \item[\rm(iii)] Let $I_{p,q} = \bone_p \oplus - \bone_q$. When is the orbit $M$ of
    $I_{p,q}$ under $\SL_n(\R)$ a causal manifold?
    If this is the case, find an Euler element $h \in \fsl_n(\R)$ for
    which $W_M^+(h) \not=\eset$. 
  \end{enumerate}
\end{exercise}

\begin{exercise} \label{exer:sl2-Lorentz}
  We consider the following linear bijection
  \[ \phi \: \R^3  \to \fsl_2(\R), \quad
      x=(x_0,x_1,x_2) \mapsto 
      \tilde x := \frac12
      \left(\begin{array}{cc}x_1&-x_0-x_2\\x_0-x_2&-x_1\end{array}\right), \] 
  and
\[ \sigma_0=\frac12\left(\begin{array}{cc}0&-1 \\ 1&0
\end{array}\right),\; 
\sigma_1=\frac12\left(\begin{array}{cc}0&1\\ 1&0
\end{array}\right),\;
\sigma_2=\frac12\left(\begin{array}{cc}1&0\\0&-1
\end{array}\right).
\]
Show that
\begin{enumerate}
\item[\rm(a)] $\phi^{-1}(X) = ({-2}\Tr(X\sigma_0),{2}\Tr(X\sigma_2),-
{2}\Tr(X\sigma_1))$. 
\item[\rm(b)] The Lorentz form $x^2 = x_0^2 - x_1^2 - x_2^2$ on $\R^3$ 
  corresponds to the determinant by $x^2=4\det \tilde x$.
  In particular, $x\in \dS^2$ if and only if $\det\tilde x=-\frac14$. 
\item[\rm(c)] Show that
  \[ \Lambda \: \SL_2(\R) \to \SO_{1,2}(\R)_e, \quad
    \Lambda(g) = \phi^{-1} \circ \Ad(g) \circ \phi  \]
  defines a $2$-fold covering with kernel $Z(\SL_2(\R)) = \{\pm \bone\}$.
\item[\rm(d)]  The one-parameter groups 
  $\lambda_{\sigma_i}(t)=\exp(\sigma_it)\in\SL_2(\RR), i =1,2,$
are lifts of Lorentz boosts
and $r(\theta)=\exp(-\sigma_0\theta)$ is the one-parameter group lifting the 
 space rotations 
\begin{equation}
  \label{eq:R}
 \Lambda(r(\theta)) = R(\theta)=\pmat{ 1 & 0 & 0 \\ 
0 & \cos \theta&-\sin\theta\\ 
0 & \sin\theta&\cos\theta}. 
\end{equation}
\end{enumerate}
\end{exercise}

\end{small}

\section{Analytic continuation of orbit maps and crown domains}
\label{sec:4}

In this section, we turn to constructions of nets
for a given antiunitary representation $(U,\cH)$
of $G_{\tau_h} = G \rtimes \{\id_G, \tau_h \}$, where
$h \in \g$ is an Euler element for which $\tau_h$ exists on $G$ 
(cf.\ Section~\ref{sec:3}).
The representation $U$ specifies in particular a standard subspace $\sV = \sV(h,U)$ by 
\[  \Delta_\sV := e^{2 \pi i \cdot \partial U(h)} \quad \mbox{ and } \quad
   J_\sV := U(\tau_h).\]
 In view of Proposition~\ref{prop:standchar},
 this standard subspace also has a description
in terms of a condition resembling the KMS (Kubo--Martin--Schwinger)
condition for states of operator algebras (cf.\ \cite{BR96}). 
It consists of all vectors $\xi \in \cH$
for which the orbit map $U^\xi_h(t) := U_h(t)\xi$ 
extends analytically 
to a continuous map $U^\xi \: \oline{\cS_\pi} \to \cH$
satisfying
$U^\xi_h(\pi \ie) = J_\sV \xi$. Then $U^\xi(\pi \ie/2) \in \cH^{J_\sV}$ is a
$J_\sV$-fixed vector whose orbit map extends to the strip
$\cS_{\pm \pi/2}$ (Proposition~\ref{prop:standchar} and
Subsection~\ref{subsec:1.2}). %\cite[Prop.~2.1]{NOO21}). 

To extend this one-dimensional picture to
higher dimensional Lie groups $G$, one has to specify complex
manifolds $\Xi$ (crown domains), containing $G$ as a totally real submanifold,
to which orbit maps of $J_\sV$-fixed analytic vectors extend.
These domains $\Xi$ generalize the strip 
$\cS_{\pm \pi/2}$, corresponding to the one-dimensional Lie group $G = \R$.
For semisimple Lie groups complex crown domains
are obtained from crowns of Riemannian symmetric spaces $G/K$ 
of non-compact type  as their inverse images
$\Xi_{G_\C}$ in the complexified group $G_\C$.
These are particular well-known examples that have been used in
harmonic analysis for some time (cf.\ \cite{AG90}, \cite{KSt04}, \cite{FNO25a}),
but they have never  been studied systematically for general Lie groups, 
as outlined in this section (cf.~\cite{BN25}). 

The key idea for constructing nets of real subspaces 
is to extend the description of the standard subspace $\sV(h,U)$, 
attached to the antiunitary representation $(U,\cH)$ in
terms of a KMS condition on the complex strip, to general Lie groups.
This leads us to natural requirements on $\Xi$ and~$h$ described
in Subsection~\ref{sec:crownlie}. 
From the Euler Element Theorem~\ref{thm:2.1} we know that a necessary condition
for the existence of a net of real subspaces
satisfying (Iso), (Cov), (RS) and
(BW), is that $h \in \g$ is an Euler element
and that we may assume that  $\tau_h^\g := \ee^{\pi \ie \ad h}$
integrates to an involutive automorphism $\tau_h$ on $G$, so that we can form
the group 
\[ G_{\tau_h} = G \rtimes \{\bone,\tau_h\} :=  G \rtimes_{\tau_h} \{\pm 1\}.\]

For a crown domain $\Xi$, containing $G$, and an antiunitary
representation $(U,\cH)$ of $G_{\tau_h}$, we write 
\[ \cH^\omega(\Xi) \subeq \cH \]
for the subspace of those analytic vectors $v$ whose orbit map
$U^v \: G \to \cH$ extends analytically to $\Xi$.
To specify the necessary boundary behavior of the 
extended orbit maps on $\Xi$, we put $J := U(\tau_h)$ and
reall from Subsection~\ref{subsubsec:bv-d=1}  the dense subspace 
\[   \cH^J_{\rm temp} \subeq \cH^J = \Fix(J) \] 
of those $J$-fixed vectors
$v$, for which the orbit map $U^v_h(t) = U(\exp th)v$
extends to a holomorphic map $\cS_{\pm\pi/2} \to \cH$,  and the limit
\[ \beta^+(v) := \lim_{t \to -\pi/2} U^v_h(\ie t) \]
exists in the subspace $\cH^{-\infty}_{U_h}$ 
of distribution vectors of the one-parameter group~$U_h$
in the weak-$*$ topology. For any real linear subspace
$\sF \subeq \cH^\omega(\Xi) \cap \cH^J_{\rm temp},$
we then obtain a real subspace
\begin{equation}
  \label{eq:def3}
  \sE := \beta^+(\sF) \subeq \cH^{-\infty}_{U_h} \subeq \cH^{-\infty},
\end{equation}
and from this space we construct a net of real subspaces indexed
by open subsets $\cO \subeq G$ via 
\begin{equation}
  \label{eq:HE}
  \sH_\sE^G(\cO) := \oline{\spann_\R U^{-\infty}(C^\infty_c(\cO,\R))\sE}.
\end{equation}
The operators $U^{-\infty}(\phi)$, $\phi \in C_c^\infty(G,\R)$,  map 
$\cH^{-\infty}$ into $\cH$ because they are adjoints of
continuous operators $U(\phi) = \int_G \phi(g) U(g)\, dg
\colon \cH \to \cH^\infty$
  (Appendix~\ref{app:c}). 
Accordingly, the closure in \eqref{eq:HE} is taken with respect
to the topology of $\cH$. 
It is easy to see that the net $\sH^G_\sE$ satisfies (Iso) and (Cov),
and it is a key result  that, 
  if $\sF$ is $G$-cyclic in the sense that $U(G)\sF$
  spans a dense subspace,
  then the net $\sH^G_\sE$ also satisfies (RS) and (BW)  
  (Theorem~\ref{thm:4.9}).

\subsection{Crown domains in Lie groups} 
\label{sec:crownlie}

We consider the following setting:
\begin{itemize}
\item $G$ is a connected Lie group whose universal complexification
  $\eta_G \: G \to G_\C$ has discrete kernel.
  If $G$ is simply connected, then $G_\C$ is the simply connected
  group with Lie algebra $\g_\C$, and this condition is satisfied
  (cf.\ \cite[Thm.~15.1.4(i)]{HN12}). 
\item $h \in \g$ is an Euler element, 
i.e., $(\ad h)^3=\ad h \not=0$, 
for which the associated involution $\tau_h^\g = \ee^{\pi \ie \ad h}$
of $\g$ integrates to an involutive automorphism
$\tau_h$ of $G$. This is always the case if  $G$ is simply connected.
We write
  \begin{equation}
    \label{eq:gtauh}
G_{\tau_h} = G \rtimes \{\bone,\tau_h\} :=  G \rtimes_{\tau_h} \{\pm 1\}
\end{equation}
for the corresponding semidirect product
and abbreviate $\tau_h = (e,-1)$ for the corresponding element of
  $G_{\tau_h}$. 
The universality of $G_\C$ implies the existence of a unique antiholomorphic involution $\oline\tau_h$ on $G_\C$ satisfying $\oline\tau_h \circ \eta_G = \eta_G \circ \tau_h$.
\end{itemize}

We now present an axiomatic specification of crown domains for $G$
to which orbit maps of $J$-fixed vectors in antiunitary
representations may extend in such a way that boundary values
lead to nets of real subspaces on $G$ as in \eqref{eq:HE}.

\index{crown domain!of Lie group $G$, $\Xi$  \scheiding } 
\index{crowned Lie group \scheiding } 

\begin{definition} \label{def:1.1b}
  (a) A {\it $(G,h)$-crown domain} is a connected
  complex manifold $\Xi$ containing
$G$ as a closed totally real submanifold, such that the following conditions
are satisfied:
\begin{enumerate}
\item[\rm(Cr1)] 
The natural action of $G_{\tau_h}$ on $G$
by  $(g,1).x = gx$ and $(e,-1).x = \tau_h(x)$ for $g,x \in G$, 
  extends to an
  action on $\Xi$, such that $G$ acts by holomorphic maps  
  and $\tau_h$ by an antiholomorphic involution, denoted $\oline\tau_h$. 
  These extensions are unique because $G$ is totally real in $\Xi$.
\item[\rm(Cr2)] 
There exists an $e$-neighborhood 
$W^c$ in $\Xi^{\oline\tau_h}$  
(the set of $\oline\tau_h$-fixed points in $\Xi$) such that, for every $p \in W^c$, 
the orbit map
  \[ \alpha^p \: \R \to \Xi, \quad  \alpha^p(t) := \exp(th).p \]
  extends  to a holomorphic map
  $\cS_{\pm \pi/2} \to \Xi$. 
\item[\rm(Cr3)] 
The map $\eta_G \: G \to G_\C$ 
  extends to a holomorphic ($G_{\tau_h}$-equivariant) 
  map $\eta_\Xi \: \Xi \to G_\C$ which is a covering of the open subset
  $\Xi_{G_\C} := \eta_\Xi(\Xi)$ 
so that we have the commutative diagram 
  \[\xymatrix{
  G\ \ar@{^{(}->}[r] \ar[dr]_{\eta_G} & \Xi \ar[d]^{\eta_\Xi} \\
 & \Xi_{G_\C}  }\]
\end{enumerate} 

\nin (b) We call the 
triple $(G,h,\Xi)$   a {\it crowned Lie group}. 
  For crowned Lie groups 
$(G_j, h_j, \Xi_j)$, 
  $j = 1, 2$, a
  {\it morphism of crowned Lie groups}
  is a holomorphic map $\phi \: \Xi_1 \to \Xi_2$,
  restricting to a Lie group morphism
  $\phi_G \: G_1 \to G_2$ such that
  \[\L(\phi_G) h_1 = h_2    .\]
  This implies that $\phi_G \circ \tau_{h_1} = \tau_{h_2} \circ \phi_G$,
  and, by analytic continuation, 
  $\phi$ intertwines the $G_{1,\tau_{h_1}}$-action
  on $\Xi_1$ with the $G_{2,\tau_{h_2}}$-action on $\Xi_2$.
\end{definition}

\begin{remark}
	\label{remW}
	 (On  the condition (Cr2)) 
Let us denote by $\cW$ the set of all points $p\in \Xi^{\oline\tau_h}$ 
whose orbit map $\alpha^p$ has the holomorphic extension property 
referred to in the axiom (Cr2). 
 The fixed point set $\Xi^{\oline\tau_h}$ is invariant under the
  connected subgroup $G^h_e$ which commutes with the involution
  $\oline\tau_h$ on $\Xi$. As it also commutes with $\exp(\R h)$, 
 the set $\cW$ is also $G^h_e$-invariant. 
  This shows that, if $e$ belongs to the interior of $\cW$, then 
  there exists a $G^h_e$-invariant connected open subset $W^c\subseteq\cW$ with $e\in W^c$.

  If $\Omega' \subeq \g^{-\tau_h^\g}$ is a convex open $0$-neighborhood 
  with $\exp(\ie\Omega') \subeq \eta_\Xi(\Xi)$, then the exponential
  function $\ie\Omega' \to \eta_\Xi(\Xi)$ lifts uniquely to an analytic map
  \begin{equation}
    \label{eq:exp-omega'}
 \exp \: \ie\Omega' \to \Xi \quad \mbox{ with } \quad 
 \exp(0) = e, \ \mbox{ the unit element in } G \subeq \Xi.
  \end{equation}
 Since $\ie\g^{-\tau_h^\g}\subseteq\g_\C^{\oline\tau_h^\g}$, we have 
  $\exp(\ie\Omega') \subeq \Xi^{\oline\tau_h}$, so that
  any $G^h_e$-invariant domain $W^c$ contains an open subset of the form
  $G^h_e.\exp(\ie\Omega')$. 
  For this reason, we may assume that $W^c$ is of this form. 

  For any element $x = x_1 + x_{-1} \in \g^{-\tau_h^\g}$ with
  $x_{\pm 1} \in \g_{\pm 1}(h)$, we have
  \[ \zeta(\ie x) = x_1 - x_{-1} \in \g^{-\tau_h^\g}\quad \mbox{ for } \quad
    \zeta := e^{-\frac{\pi \ie}{2} \ad h} \in \Aut(\g_\C).\]
  We thus associate to $W^c = G^h_e.\exp(\ie \Omega')$, the open subset
  \begin{equation}
    \label{eq:defwg}
 W^G := G^h_e.\exp(\Omega)  \subeq G\quad \mbox{ for }\quad
 \Omega := \zeta(\ie \Omega') \subeq \g^{-\tau_h^\g}.
  \end{equation}
  \end{remark}

  \begin{remark} (On the existence of crown domains)
    If $\eta_G$ is injective, we may identify $G$ with a closed subgroup
  of $G_\C$. In this case $\Xi$ is an open subset of $G_\C$, invariant
  under the $G_{\tau_h}$-action and 
  (Cr3) 
  is trivially satisfied. 
  Typical domains with this property are easily constructed
  as products $\Xi := G \exp(\ie \Omega)$, where $\Omega \subeq \g$
  is an open convex 
  $0$-neighborhood invariant under $-\tau_h^\g$, for which the
  polar map
  \[ G \times \Omega \to G_\C, \quad (g,x) \mapsto g \exp x \]
  is a
  diffeomorphism onto an open subset. 
  Then
\[ \Xi^{\oline\tau_h}
  = G^{\tau_h}.\exp(\ie \Omega^{-\tau_h^\g}) \supeq G^h_e.\exp(\ie \Omega^{-\tau_h^\g}),\] 
  so that any sufficiently small open convex $0$-neighborhood
  $\Omega' \subeq \Omega^{-\tau_h^\g}$ specifies an open neighborhood
  $W^c := G^h_e.\exp(\ie\Omega')$ of $e\in \Xi^{\oline\tau_h}$.
  Since $\exp(\ie t h)\in \Xi$ for $|t| \leq \pi/2$,
  a compactness argument shows that, if $\Omega'$ is small enough, 
  then   $\exp(\ie t h) \exp(\ie\Omega') \subeq \Xi$ for $|t| \leq \pi/2$,
  and thus (Cr2) is satisfied.
  This shows that crown domains $\Xi$ satisfying (Cr1--3) exist in abundance. 
\end{remark}

\begin{examplekh} For $G = \R\subeq \C = G_\C$
  and $h = 1$ (a basis element in $\g = \R$),
any strip  
  \[ \Xi = \cS_{\pm r} = \{ z \in \C \: |\Im z| < r \} \subeq \C = G_\C, \qquad
    r  \geq \pi/2, \]
  is a crown domain for $G = \R$ with $W^c = G$.
  In this case $\tau_h = \id$, $\oline \tau_h(z) = \oline z$
    and $G_{\tau_h} \cong (\R^\times, \cdot)$.  
\end{examplekh}

Below we shall encounter various kinds of non-abelian examples.

\subsection{Constructions of crown domains}

The following lemma is useful tool to construct crown domains.
We shall use in various contexts.

\begin{lemma} \label{lem:Mxi} Suppose that $\eta_G$ is injective
  and that $G_{\C, \oline\tau_h} = G_\C \rtimes \{\bone,\oline\tau_h\}$
  acts on the complex manifold $M$
  in such a way that $G_\C$ acts by holomorphic maps and
  $\oline\tau_h$  by an antiholomorphic involution~$\tau_{M}$.
Let $\Xi_{M} \subeq M$ be a $G_{\tau_h}$-invariant connected open subset,   
for which there exists an open subset $W^{M,c}\subeq \Xi_M^{\tau_M}$ satisfying 
\[     \exp(\cS_{\pm \pi/2}h).W^{M,c} \subeq  \Xi_M. \]
Then, for every $m_0 \in W^{M,c}$,  the open subset 
  \[ \Xi := \{ g \in G_\C \: g.m_0 \in \Xi_{M}\}\]
  satisfies {\rm(Cr1-3)} with the open subset 
$W^c := \{ g \in G_\C^{\oline\tau_h}  \: g.m_0 \in W^{M,c}\}
  \subseteq\Xi^{\oline\tau_h}.$ 
\end{lemma}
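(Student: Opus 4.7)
The plan is to verify axioms (Cr1)--(Cr3) for $\Xi$ by pulling them back from $M$ through the holomorphic orbit map $G_\C \to M$, $g \mapsto g.m_0$, since by construction $\Xi$ is the preimage of $\Xi_M$ under this map.

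First I would establish that $\Xi$ is open, that $G \subeq \Xi$, and that the $G_{\tau_h}$-action on $G$ extends to $\Xi$. Openness follows from continuity of the orbit map together with openness of $\Xi_M$, and $G \subeq \Xi$ is immediate from the $G$-invariance of $\Xi_M$ and $m_0 \in W^{M,c} \subeq \Xi_M$. For $G$-invariance of $\Xi$ under left translation in $G_\C$, I use $(gx).m_0 = g.(x.m_0) \in \Xi_M$ whenever $x \in \Xi$. The only point that genuinely uses the hypothesis $m_0 \in \Xi_M^{\tau_M}$ is invariance under $\oline\tau_h$: for $x \in \Xi$,
\[ \tau_M(x.m_0) = \oline\tau_h(x).\tau_M(m_0) = \oline\tau_h(x).m_0, \]
and $\tau_M$-invariance of $\Xi_M$ forces $\oline\tau_h(x).m_0 \in \Xi_M$, i.e., $\oline\tau_h(x) \in \Xi$. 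Connectedness is then secured by replacing $\Xi$ with its connected component containing the connected subgroup $G$; this component is $G_{\tau_h}$-invariant because $G$ and $\oline\tau_h$ act by homeomorphisms fixing $e \in G$. Closedness and total reality of $G$ in $\Xi$ follow from the corresponding properties of $G$ in $G_\C$ under the injective $\eta_G$.

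Next I would verify (Cr2). For $p \in W^c$, by definition $p \in G_\C^{\oline\tau_h}$ and $p.m_0 \in W^{M,c}$, so the standing hypothesis $\exp(\cS_{\pm\pi/2}h).W^{M,c} \subeq \Xi_M$ gives $\exp(zh).(p.m_0) \in \Xi_M$ for every $z \in \cS_{\pm\pi/2}$. Rewriting this as $(\exp(zh)p).m_0 \in \Xi_M$ shows $\exp(zh)p \in \Xi$, so the holomorphic map $z \mapsto \exp(zh)p$, a priori valued in $G_\C$, in fact factors through $\Xi$ and provides the required holomorphic extension of $\alpha^p$. That $W^c$ is an open neighborhood of $e$ in $\Xi^{\oline\tau_h}$ follows from $e.m_0 = m_0 \in W^{M,c}$, from $\oline\tau_h(e) = e$, and from continuity of the restricted orbit map $G_\C^{\oline\tau_h} \to M^{\tau_M}$.

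Finally, for (Cr3) I would take $\eta_\Xi$ to be the inclusion $\Xi \hookrightarrow G_\C$, which is holomorphic and $G_{\tau_h}$-equivariant by construction and restricts to $\eta_G$ on $G$ under the identification of $G$ with its image in $G_\C$. It is a biholomorphism onto its image $\Xi_{G_\C} = \Xi$ and hence trivially a covering, so the required commutative diagram reduces to a tautology. There is no serious obstacle in the proof: the one step using a nontrivial hypothesis is the $\oline\tau_h$-invariance of $\Xi$ in (Cr1), which rests on $m_0 \in \Xi_M^{\tau_M}$; everything else is bookkeeping built directly into the definition of $\Xi$.
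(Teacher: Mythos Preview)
Your proof is correct and follows the same approach as the paper: verify (Cr1) via $\tau_M(m_0)=m_0$, verify (Cr2) by pushing $\exp(\cS_{\pm\pi/2}h).W^{M,c}\subeq\Xi_M$ through the orbit map, and note that (Cr3) is vacuous because $\Xi\subeq G_\C$. Your write-up is in fact more careful than the paper's terse three-line argument: you explicitly check that $W^c$ is an $e$-neighborhood in $\Xi^{\oline\tau_h}$, and you address connectedness (which strictly speaking lies outside (Cr1--3) as stated, but is needed for $\Xi$ to be a crown domain).
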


\begin{proof} (Cr1): Since $\oline\tau_h(m_0) = \tau_M(m_0) = m_0$, $\oline\tau_h(\Xi) = \Xi$
  follows from the antiholomorphic action of $G_{\tau_h}$ on~$\Xi_{M}$.

  \nin (Cr2): The inclusion $\exp(\cS_{\pm \pi/2} h) W^c \subeq \Xi$
  follows from $\exp(\cS_{\pm \pi/2} h) W^{M,c} \subeq \Xi_{M}$.

  \nin (Cr3) is redundant because $G \subeq G_\C$. 
\end{proof}

\begin{examplekh} \label{ex:affine-group}  (The affine group of the real line) 
  We consider the $2$-dimensional affine group of the real line 
  \[ G = \Aff(\R)_e \cong \R \rtimes \R_+\quad \mbox{ with } \quad
    \g = \R  x \rtimes \R h, \quad x = (1,0), \quad h = (0,1),\]
  so that 
$[h,x] = x$ and $\tau_h(b,a) = (-b,a)$.
A pair $(b,a) \in G$ acts on $\R$ by the affine map
$(b,a).x = b + a x$ and so does the complex affine group 
$\Aff(\C) \cong \C \rtimes \C^\times$ on the complex line~$\C$.
The antiholomorphic involution $\sigma(b,a) = (\oline b, \oline a)$
satisfies $\Aff(\C)^\sigma = \Aff(\R) \cong \R \rtimes \R^\times$,
and $G$ is the identity component of this group. Note that
$G_\C \cong \tilde\Aff(\C) \cong \C \rtimes \C$ with the universal map 
\[ \eta_G \: G \to G_\C, \quad \eta_G(b,a) = (b, \log a).\]
The antiholomorphic extension of $\tau_h$ to $\Aff(\C)$, is given by 
  \[  \oline\tau_h(b,a) = (-\oline b, \oline a)
    \quad \mbox{ with } \quad \Aff(\C)^{\oline\tau_h} = \ie \R \rtimes \R^\times.\]
  The subset
  \[ \exp(\cS_{\pm \pi/2}h) = \C_r = \{ z \in \C \: \Re z > 0\} 
    \subeq \C^\times \]
  is the right half-plane in the complex dilation group.

First, we consider in $\Aff(\C)$ the domain
\begin{equation}
  \label{eq:xi1-axb}
 \Xi_1 := \C \times \C_r 
 \quad \mbox{  with } \quad \Xi_1^{\oline\tau_h} = \ie \R \times \R_+
 = (\Aff(\C)^{\oline\tau_h})_e.
\end{equation}
Then $W_1^c := \Xi_1^{\oline \tau_h}$ satisfies 
$\exp(\cS_{\pm \pi/2} h) W_1^c \subeq  \Xi_1,$ 
so that (Cr2) is satisfied. Further $\eta_G$
extends to a biholomorphic map
\[ \eta_{\Xi_1} \: \Xi_1 \to
\C\times\cS_{\pm\pi/2} \subeq  G_\C \cong \C \rtimes \C, \quad
  \eta_G(b,a) \mapsto (b, \log a). \]
In \cite[Prop.~3.2]{BN25} it is shown that $\Xi_1$ is too large for our purposes
because $\cH^\omega(\Xi_1) \cap \cH^J_{\rm temp} = \{0\}$ holds for
infinite-dimensional irreducible representations. 
A natural strategy to find better crown domains
is inspired by Riemannian symmetric spaces (cf.~Theorem~\ref{thm:gss} below).

The group $\Aff(\C)_{\oline\tau_h}$
  acts naturally  on $M := \C$, where $\oline\tau_h.z = - \oline z$.
  The two domains~$\C_{\pm}$ (upper and lower half plane)
  are invariant under the real group $G_{\tau_h}$, and
  \[ (\C_\pm)^{\oline\tau_h} =  \pm \ie \R_+ \]
  satisfy $\exp(\cS_{\pm \pi/2} h).(\pm \ie \R_+) = \C_\pm$.
  For $m_0 = \pm r \ie$, $r > 0$, we thus obtain with Lemma~\ref{lem:Mxi}
  a crown domain 
  \begin{align*}
 \Xi_{\pm, r} 
    &  := \{ (b,a) \in \Aff(\C) \: b \pm r a\ie \in  \C_{\pm}\}
= \{ (b,a) \in \Aff(\C) \: \pm r^{-1} b + a\ie \in \C_+\}\\
& = \{ (b,a) \in \Aff(\C) \: \pm r^{-1} \Im b + \Re a > 0 \}.
  \end{align*}
Conjugation with $G$ changes the parameter $r$, so that
  it suffices to consider the domains $\Xi_{\pm, 1}$.

To connect with crowns of symmetric spaces,
we consider $\C_+$ as a real \break {$2$-dimensional}
  homogeneous space of $G$ via the orbit map
  $(b,a) \mapsto (b,a).\ie = b + a \ie$. 
It has a ``complexification''
  \[ \eta_{\C_+} \:  \C_+ \to \C_+ \times \C_- \subeq \C^2,
    \quad \eta_{\C_+}(z) = (z,\oline z).\]
 The complex Lie group $\Aff(\C)$  acts naturally on
  $\C \times \C$ by the diagonal action with respect to the canonical
  action on $\C$ by affine maps.
  We consider the complex manifold $\Xi_{\C \times \C} := \C_+ \times \C_-$ 
  as a 
  crown domain of the upper half plane
  $\C_+ \cong \eta_{\C_+}(\C_+)$, which is a Riemannian
  symmetric space of $\SL_2(\R)$, 
  acting by M\"obius transformations
  (see \cite{Kr09}  and \cite[\S 2.1]{Kr08}).
  It carries the antiholomorphic involution
  $\tau(z,w) := (-\oline z, -\oline w)$
  and it is invariant under the real affine group
  $G = \R \rtimes \R_+$, so that we obtain with $\tau$ an action
  of $G_{\tau_h}$ on $\Xi_{\C \times \C}$. 
  As $\C_+ = G.\ie$, the corresponding crown domain in $\Aff(\C)$,
  in the sense of Lemma~\ref{lem:Mxi} with $m_0 = \ie$, is 
  \begin{align*}
    \Xi_2 
& := \{ g \in \Aff(\C) \:  g.\eta_{\C_+}(\ie) \in \C_+ \times \C_-\}
    = \{ (b,a) \in \Aff(\C) \: b \pm a\ie \in \C_\pm \} \\
&= \Xi_{+,1} \cap \Xi_{-,1}
  =  \{ (b,a) \in \Aff(\C) \: |\Im b| < \Re a \}.
  \end{align*}
  For this domain
\[     \Xi_2^{\oline\tau_h}  
  = \{ (\ie c,a) \in \ie \R \times \R_+ \: |c| < a \}, \]
and, for $|t| < \pi/2$ and $(\ie c,a) \in \Xi_2^{\oline\tau_h}$, we have
$e^{\ie th}(\ie c,a) = (e^{\ie t}\ie c, e^{\ie t} a)$ with
\[ |\Im(e^{\ie t}\ie c)| = \cos(t) |c|
  < \cos(t) a = \Re(e^{\ie t}a).\]
This proves (Cr2) for $\Xi_2$ and $W^c_2 :=  \Xi_2^{\oline\tau_h}$.

As shown in \cite[Thm.~3.1]{BN25},
there are irreducible unitary representations $(U,\cH)$ of $G$
  for which $\cH^\omega(\Xi_1)$ is dense but
  \[ \cH^\omega(\Xi_1) \cap \cH^J_{\rm temp} =\{0\}.\]
  Therefore this domain $\Xi_1$ is too large.
  The smaller domain $\Xi_2$ behaves much better than~$\Xi_1$.
On the other hand, the even larger domains $\Xi_{\pm, 1}$ works 
for representations satisfying the spectral condition
$\mp \ie \partial U(x) \geq 0$ for the translation group.
\end{examplekh}

\begin{examplekh}  \label{ex:olshanski}
  {\rm(Complex Olshanski semigroups)} 
  Let $G$ be a connected Lie group for which
  $\eta_G$ is injective and $G_\C$ is simply connected,
  so that we may assume that $G \subeq G_\C$, 
  and let $h \in \g$ be an Euler element.  We assume that 
  $C \subeq \g$ is a pointed closed convex $\Ad(G)$-invariant cone,
  satisfying  $-\tau_h(C) = C$, and
  that the ideal $\fm := C - C \trile \g$ satisfies
  $\g = \fm + \R h$. 
  If $\fm = \g$, we replace $\g$ by $\fm \rtimes_{\ad h} \R$, so that we
  may assume that $\g = \fm \rtimes \R h$. 
  
  Let $M \trile G$ 
  and $M_\C \trile G_\C$ be the normal integral subgroups corresponding to $\fm$ and $\fm_\C$, respectively. 
  Since $G_\C$ is simply connected, 
  it follows from  \cite[Thm. 11.1.21]{HN12} that the subgroup
  $M_\C\subseteq G_\C$ is closed and simply connected,
  and this implies that $M \subeq M_\C$ is closed in $G$,
    as the identity component of the group of fixed points of the
    complex conjugation on $M_\C$ with respect to~$M$. 
Then the construction in Step~1 of the proof of \cite[Thm. 15.1.4]{HN12} 
shows that the inclusion map $M\into M_\C$ is the universal complexification of $M$. 
Using again that 
$G_\C$ is simply connected, it follows from  
\cite[Prop.~11.1.19]{HN12} that   $G_\C \cong M_\C \rtimes_\alpha \C$ with
  $\alpha_z(g) = \exp(zh) g \exp(-zh)$, and thus
  \[ G \cong  M \rtimes_\alpha \R \quad \mbox{ with }  \quad
    \alpha_t(g) = \exp(th) g \exp(-th).\]
We consider the   complex Olshanski semigroup 
  \[ S := M \exp(\ie C^\circ) \subeq M_\C,\]
for which the multiplication map
  $M \times C^\circ  \mapsto S, (g,x) \mapsto g \exp(\ie x)$ is a
  diffeomorphism
(\cite[\S XI.1]{Ne99}). 
Recall from Lemma~\ref{lem:Project} % \cite[Lemma 3.2]{NOO21}
that
  \[ C \subeq C_+ + \g_0(h)  - C_-
    \quad \mbox{ and } \quad C_+^\circ - C_-^\circ = C^\circ \cap \g^{-\tau_h^\g} \quad \mbox{ for }  \quad C_\pm := \pm C \cap \g_{\pm 1}(h)\]
follow from the invariance of $C$ under $e^{\R \ad h}$ and
$-\tau_h$.
  On the complex manifold $S$, the group $G_{\tau_h}$ acts  by
  \[ (g,t).m = g\alpha_t(m) \quad \mbox{ and } \quad
    \tau_h.m = \oline\tau_h(m)\]
where we use the abuse of notation $\tau_h=(e,-1)\in G_{\tau_h}$ introduced after \eqref{eq:gtauh}. 
 The fixed points of $\oline\tau_h$ form the subsemigroup 
$S^{\oline\tau_h} = M^{\tau_h} \exp(\ie(C_+^\circ -C_-^\circ)).$ 
  For $x_{\pm 1} \in C_\pm^\circ$, we consider the element 
$s_0 := \exp(\ie(x_1 - x_{-1})) \in S^{\oline\tau_h}$, 
  and in $G_\C$ the crown domain
  \[ \Xi = \{ (g,z) \in M_\C \times \cS_{{\pm \pi/2}} \:
    (g,z).s_0 = g \alpha_z(s_0) \in S \}\]
  (Lemma~\ref{lem:Mxi}). 
  We now verify (Cr1-2); (Cr3) is redundant because $\Xi \subeq G_\C$.

  \nin (Cr1): First we observe that $\oline\tau_h(s_0) = s_0$,
  and that   $\oline\tau_h(S) = S$
  follows from ${-\tau_h^\g(C) = C}$. Therefore 
  \[ \oline\tau_h((g,z).s_0) = \oline\tau_h(g,z).s_0\]
implies that the domain $\Xi$ is invariant under
  $\oline\tau_h(g,z) = (\oline\tau_h(g), \oline z)$.

  \nin (Cr2): We recall from \cite[Thms.~2.16, 2.21]{Ne22}
  that, for $y = y_1 - y_{-1} \in C_+^\circ - C_-^\circ$, 
  we have
  \[ \alpha_{\ie t}(\exp(\ie y)) = \exp(\ie(e^{\ie t} y_1 - e^{-\ie t} y_{-1})) \in S
\quad \mbox{ for }  \quad |t| < \pi/2. \]
So we find with 
  \[ W_M^c := M^h_e\exp(\ie(C_+^\circ + C_-^\circ)) s_0^{-1}
    \subeq M_\C^{\oline\tau_h} \]
  that
  \begin{equation}
    \label{eq:Wc-olshan}
W^c := G^h_e (W_M^c \times \{0\}) \subeq \Xi^{\oline\tau_h}
    \quad \mbox{ satisfies }\quad
    \alpha_{\ie t}W^c \subeq \Xi \quad \mbox{ for } \quad |t| < \pi/2.
  \end{equation}
  This proves (Cr2).
%\nin \bred{Alternative argument:} \bblue{(Preferable!)} 
%We show that the $G_{\C,\oline\tau_h}$-action on $M_\C$,
%$\Xi_{M_\C} := S$ and $m_0 := s_0 \in S$ satisfy the assumptions of
%Proposition~\ref{prop:Mxi}.
\end{examplekh}

\begin{remark} The construction in Example~\ref{ex:affine-group}
  can also be viewed as a special  case of Example~\ref{ex:olshanski}.
  To see this, write
\[ G = \Aff(\R)_e = \R \rtimes \R_+ \cong M \rtimes \R_+
  \subeq M_\C \rtimes \C^\times
  \quad \mbox{ with } \quad M = \R\times \{1\}.\] 
The invariant  cone $C := \R_+ x$ generates the ideal $\fm = \R x$,
and $C = C_+$.   Then $S = \C_+$ with $S^{\oline\tau_h} = \ie \R_+$.
  For $s_0 = \ie r$, $r > 0$, we obtain
  \[ \Xi = \{ (b,a) \in M \times \C^\times  \: (b,a).s_0
    = b + r a \ie  \in \C_+ \} = \Xi_{+,r}.\] 
\end{remark}

\subsection{Nets of real subspaces} 

Given a domain $\Xi \subeq G_\C$ satisfying (Cr1-3), and an antiunitary
representation $(U,\cH)$ of $G_{\tau_h}$, we write
\[ \cH^\omega(\Xi) \subeq \cH \]
for the subspace of those analytic vectors, whose orbit map extends
to $\Xi$. That the non-triviality of this space
imposes serious restrictions on $\Xi$
follows in particular from the discussion in
the last section of \cite{BN24}.
For the group $G = \Aff(\R)_e \cong \R \rtimes \R_+$, the domain
must be contained in $\C \rtimes \C_r$, where $\C_r$ is the open right
half-plane. So one has to understand the boundary behavior of the
extended orbit maps on the domain $\Xi$. Let
\[ \cH^J_{\rm temp} \subeq \cH^J = \Fix(J) \quad \mbox{ for } \quad
J = U(\tau_h) \]
be the dense real linear subspace of $\cH^J$, consisting of those vectors $v$ 
for which the orbit map $U^v_h(t) = U(\exp th)v$
extends to the open strip $\cS_{\pm \pi/2}$, and the limit
\begin{equation}
  \label{eq:beta+}
  \beta^+(v) := \lim_{t \to \pi/2} U^v_h(-\ie t)
\end{equation}
exists in the subspace $\cH^{-\infty}(U_h) \subeq \cH^{-\infty}$
of distribution vectors of the one-parameter group~$U_h$. 
\begin{footnote}{The notation $\cH_{\rm temp}$ refers to the ``temperedness''
    of the boundary values, which in the classical context corresponds
    to tempered distributions.  
  }\end{footnote}
In view of Theorem~\ref{thm:6-1-fno24}, this is equivalent to
  the existence of $C, N > 0$ such that
  \begin{equation}
    \label{eq:growthcond}
 \| U^v_h(\ie t)\|^2 \leq C \Big(\frac{\pi}{2}-|t|\Big)^{-N}
 \quad  \mbox{  for } \quad |t| < \pi/2.
  \end{equation}
By \cite[\S 2.2, Lemma~2]{FNO25a}, the limit $\beta^+(v)$ always
exists in the larger space $\cH^{-\omega}_{U_h, {\rm KMS}}$,
so that the main
point is the additional regularity (temperedness), related to the
weak convergence after pairing with elements of~$\cH^\infty_{U_h}$.

These boundary values are actually contained in the space
$\cH^{-\infty}_{\rm KMS}$ (see Appendix~\ref{subsec:4.1}), consisting
of those distribution vectors $\alpha$ whose orbit map
\[ U_h^{-\infty,\alpha} \: \R \to \cH^{-\infty}, \quad
  t \mapsto U_h^{-\infty}(t)\alpha = \alpha \circ U(\exp -th) \]
extends analytically to the
closed strip $\oline{\cS_\pi}$, such that
\[ U_h^{-\infty,\alpha}(\pi i) = J \alpha.\]
Using Theorem~\ref{thm:BN24}, it then follows that smearing
with test functions on $\R$ maps $\alpha$ into $\sV = \sV(h,U)$.
Therefore any real linear subspace
\[ \sF \subeq \cH^\omega(\Xi) \cap \cH^J_{\rm temp} \]
which is $G$-cyclic in the sense that $U(G)\sF$ spans a dense subspace
of~$\cH$, leads to a real subspace
\[   \sE := \beta^+(\sF) \subeq \cH^{-\infty}, \] 
and from this space we construct a net of real subspaces on
$G$ as follows.

\begin{definition} Let $\sE \subeq \cH^{-\infty}$ be a real linear subspace.
  Then, for each $\phi \in C^\infty_c(G,\C)$, the operator
  \[ U^{-\infty}(\phi) = \int_G \phi(g) U^{-\infty}(g)\, dg \]
maps $\cH^{-\infty}$ into $\cH$,  because it is an adjoint of 
the operator $U(\phi^*) \colon \cH \to \cH^\infty$. 
To an open subset $\cO \subeq G$, we associate the closed real subspace 
\begin{equation}
  \label{eq:HE2}
  \sH_\sE^G(\cO) := \oline{\spann_\R U^{-\infty}(C^\infty_c(\cO,\R))\sE},
\end{equation}
where the closure is taken with respect to the topology of $\cH$.
\end{definition}

\begin{remark} \label{rem:iso-cov-net}
It is obvious that the net $\sH^G_\sE$ satisfies (Iso).
To see that (Cov) also holds, observe that the left-invariance
of the Haar measure $dg$ on $G$ yields 
\[ U^{-\infty}(g) U^{-\infty}(\phi)  = U^{-\infty}(\delta_g * \phi),\]
where $(\delta_g * \phi)(x) = \phi(g^{-1}x)$ is the left translate
of $\phi$.
\end{remark}

\begin{remark} One may also consider subspaces 
  $\sE \subeq \cH$, but the key advantage of working with the larger
  space $\cH^{-\infty}$ of distribution vectors is that it contains
  finite-dimensional subspaces invariant under
 $\ad$-diagonalizable elements and
  non-compact subgroups. % (cf.\ Lemma~\ref{lem:2.16}).
  For  finite-dimensional subspaces of~$\cH$, this is excluded by 
  Moore's Theorem if $\ker U$ is discrete (\cite{Mo80}).
\end{remark}

The following proposition is useful to verify
the inclusion $\sH_\sE^G(\cO) \subeq \sV$ for an
open subset $\cO \subeq G$. 

  \begin{proposition} \label{prop:4.8}
    Let $(U,\cH)$ be an antiunitary representation of
    $G_{\tau_h}$ and $\sV = \sV(h,U)$ the corresponding
    standard subspace.     For an open subset
  $\cO \subeq G$ and a real subspace $\sE \subeq \cH^{-\infty}$,
  the following are equivalent:
  \begin{description}
  \item[\rm(a)]   $\sH_\sE^G(\cO) \subeq \sV$. 
  \item[\rm(b)] For all $\phi \in C^\infty_c(\cO,\R)$ we have
    $U^{-\infty}(\phi)\sE \subeq \sV$. 
  \item[\rm(c)] For all $\phi \in C^\infty_c(\cO,\R)$ we have
    $U^{-\infty}(\phi)\sE \subeq \cH^{-\infty}_{\rm KMS}.$ 
  \item[\rm(d)]  $U^{-\infty}(g) \sE \subeq \cH^{-\infty}_{\rm KMS}$
    for every $g \in \cO$.
  \end{description}
\end{proposition}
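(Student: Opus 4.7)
The plan is to establish the four implications in the cyclic order (d) $\Rarrow$ (c) $\Rarrow$ (b) $\Rarrow$ (a) $\Rarrow$ (d), using Theorem~\ref{thm:BN24} as the central tool. Concretely, that theorem provides three pieces of information about the space $\cH^{-\infty}_{\rm KMS}$ of KMS distribution vectors with respect to the one-parameter group generated by $h$: it is weak-$*$ closed in $\cH^{-\infty}$, it intersects $\cH$ exactly in $\sV$, and $\sV$ is weak-$*$ dense in it.

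For (a) $\Leftrightarrow$ (b), I would observe that $\sV \subeq \cH$ is closed, and that $\sH_\sE^G(\cO)$ is, by definition, the closure in $\cH$ of the real span of the individual subspaces $U^{-\infty}(\phi)\sE$ for $\phi \in C^\infty_c(\cO,\R)$. Thus (b) clearly implies (a), and conversely (a) forces each generator $U^{-\infty}(\phi)\sE$ into $\sV$. For (b) $\Leftrightarrow$ (c), the key is that $U^{-\infty}(\phi)$ maps $\cH^{-\infty}$ into $\cH$ when $\phi \in C^\infty_c(G,\R)$; hence $U^{-\infty}(\phi)\sE \subeq \cH$ automatically, and Theorem~\ref{thm:BN24}(b) identifies $\sV = \cH^{-\infty}_{\rm KMS} \cap \cH$, giving the equivalence immediately.

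The genuinely analytic equivalence is (c) $\Leftrightarrow$ (d), which rests on the weak-$*$ closedness of $\cH^{-\infty}_{\rm KMS}$ (Theorem~\ref{thm:BN24}(a)). For (d) $\Rarrow$ (c), fix $\alpha \in \sE$ and $\phi \in C^\infty_c(\cO,\R)$; for every smooth vector $\xi \in \cH^\infty$, one has
\[ \langle U^{-\infty}(\phi)\alpha, \xi\rangle
   = \int_\cO \phi(g)\, \langle U^{-\infty}(g)\alpha, \xi\rangle\, dg, \]
exhibiting $U^{-\infty}(\phi)\alpha$ as a real weak-$*$ integral of elements of the $\R$-linear weak-$*$ closed subspace $\cH^{-\infty}_{\rm KMS}$; a standard Hahn--Banach style argument then places the integral itself into $\cH^{-\infty}_{\rm KMS}$. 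Conversely, for (c) $\Rarrow$ (d), I would fix $g \in \cO$ and a sequence $\phi_n \in C^\infty_c(\cO,\R)$ of non-negative bump functions with $\int_G \phi_n(x)\, dx = 1$ and support shrinking to $\{g\}$; by the smoothness of the $G$-action on $\cH^\infty$ and Remark~\ref{rem:iso-cov-net}, the operators $U(\phi_n^*)$ converge pointwise to $U(g^{-1})$ on $\cH^\infty$, so that $U^{-\infty}(\phi_n)\alpha \to U^{-\infty}(g)\alpha$ in the weak-$*$ topology of $\cH^{-\infty}$; invoking (c) and the weak-$*$ closedness of $\cH^{-\infty}_{\rm KMS}$ yields $U^{-\infty}(g)\alpha \in \cH^{-\infty}_{\rm KMS}$.

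The only point requiring care, and the one I would watch most carefully, is the approximation argument in (c) $\Rarrow$ (d): one must use real-valued bump functions supported inside $\cO$ (so that (c) is applicable) and verify the weak-$*$ convergence on $\cH^\infty$ against the natural topology of smooth vectors, rather than merely in the Hilbert space. Everything else is essentially a bookkeeping exercise driven by Theorem~\ref{thm:BN24}.
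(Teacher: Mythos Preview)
Your proposal is correct and follows essentially the same approach as the paper: the equivalence (a) $\Leftrightarrow$ (b) is by definition, (b) $\Leftrightarrow$ (c) hinges on Theorem~\ref{thm:BN24}(b), and (c) $\Leftrightarrow$ (d) is handled by approximating the point measure at $g$ with smooth bump functions and invoking the weak-$*$ closedness of $\cH^{-\infty}_{\rm KMS}$ from Theorem~\ref{thm:BN24}(a). The paper organizes the cycle slightly differently, proving (b) $\Rarrow$ (c) $\Rarrow$ (d) $\Rarrow$ (b) rather than your bilateral (b) $\Leftrightarrow$ (c); your direct argument for (c) $\Rarrow$ (b) via $U^{-\infty}(\phi)\sE \subeq \cH$ and $\cH \cap \cH^{-\infty}_{\rm KMS} = \sV$ is in fact a small shortcut that bypasses the detour through (d). For (c) $\Rarrow$ (d), the paper writes the approximating functions as $\delta_n * \delta_g$ (right translates of a $\delta$-sequence at $e$), which makes the convergence $U^{-\infty}(\delta_n)U^{-\infty}(g)\eta \to U^{-\infty}(g)\eta$ immediate from the standard fact that $U(\delta_n)\xi \to \xi$ in $\cH^\infty$; your formulation with bump functions centered at $g$ is equivalent after the substitution $U(\phi_n^*)\xi = \int_G \phi_n(y)\,U(y^{-1})\xi\,dy$, and the care you flag about convergence in $\cH^\infty$ (not just $\cH$) is exactly right.
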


To show that $\sH_\sE^G(W^G) \subeq \sV$, we thus need to show that
$U^{-\infty}(W^G) \sE \subeq \cH^{-\infty}_{\rm KMS}$. 

\begin{proof}  (\cite[Prop.~9]{FNO25a})
By the   definition of $\sH^G_\sE(\cO)$, it is clear that (a) 
  is equivalent to (b). Further, (b) implies (c)  because
  $\sV \subeq \cH^{-\infty}_{\rm KMS}$
  (Theorem~\ref{thm:BN24}(b)). 
  
  For the implication (c) $\Rightarrow $ (d), let $(\delta_n)_{n \in \N}$ be a $\delta$-sequence in $C^\infty_c(G,\R)$.
  Then $U(\delta_n)\xi \to \xi$ in $\cH^\infty$ and hence also in $\cH^{-\infty}$.
  It follows in particular that
  \[U^{-\infty}(\delta_n * \delta_g) \eta
  =  U^{-\infty}(\delta_n) U^{-\infty}(g) \eta \to  U^{-\infty}(g) \eta
  \quad \mbox{ for } \quad \eta\in \cH^{-\infty}.\] 
  Hence the closedness of
  $\cH^{-\infty}_{\rm KMS}$
    (Theorem~\ref{thm:BN24}(a)), 
  shows that (c) implies (d).
  Here we use that $\delta_n * \delta_g \in C^\infty_c(\cO,\R)$
  for $g \in \cO$ if $n$ is sufficiently large. 

As the $G$-orbit maps in $\cH^{-\infty}$ are continuous
and $\cH^{-\infty}_{\rm KMS}$ is closed, hence stable under integrals over
compact subsets and $U^{-\infty}(C_c^\infty (\cO,\R))\cH^{-\infty} \subset \cH^\infty$, we see that (d) implies (b).
\end{proof}

The following corollary provides a
generalization to homogeneous spaces.

  \begin{corollary} \label{cor:4.8}
    Let $(U,\cH)$ be an antiunitary representation of
    $G_{\tau_h}$ and $\sV = \sV(h,U)$ the corresponding
    standard subspace.     For an open subset
    $\cO \subeq M = G/H$, the projection $q_M \: G \to G/H$,
    and a real subspace $\sE \subeq \cH^{-\infty}$,
  the following are equivalent:
  \begin{description}
  \item[\rm(a)]   $\sH_\sE^M(\cO) \subeq \sV$. 
  \item[\rm(b)] For all $\phi \in C^\infty_c(q_M^{-1}(\cO),\R)$ we have
    $U^{-\infty}(\phi)\sE \subeq \sV$. 
  \item[\rm(c)] For all $\phi \in C^\infty_c(q_M^{-1}(\cO),\R)$ we have
    $U^{-\infty}(\phi)\sE \subeq \cH^{-\infty}_{\rm KMS}.$ 
  \item[\rm(d)]  $U^{-\infty}(g) \sE \subeq \cH^{-\infty}_{\rm KMS}$
    for every $g \in q_M^{-1}(\cO)$.
  \end{description}
\end{corollary}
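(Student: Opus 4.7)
The plan is to reduce Corollary~\ref{cor:4.8} directly to Proposition~\ref{prop:4.8}, applied to the open subset $q_M^{-1}(\cO) \subeq G$. The key point is that the natural definition of the pushforward of the net $\sH_\sE^G$ from $G$ to the homogeneous space $M = G/H$ yields
\[ \sH_\sE^M(\cO) = \sH_\sE^G(q_M^{-1}(\cO)) \]
for every open subset $\cO \subeq M$. Indeed, the projection $q_M \colon G \to M$ is a continuous open surjection, so $q_M^{-1}(\cO)$ is an open subset of $G$, and for an $\sE$-net the subspace associated to $\cO \subeq M$ is generated by smearing with test functions whose support lies over $\cO$, i.e., with elements of $C^\infty_c(q_M^{-1}(\cO),\R)$.

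Once this identification is in place, the four conditions in Corollary~\ref{cor:4.8} are translated verbatim into the corresponding four conditions of Proposition~\ref{prop:4.8} for the open set $q_M^{-1}(\cO) \subeq G$: condition (a) becomes $\sH_\sE^G(q_M^{-1}(\cO)) \subeq \sV$, while (b), (c), (d) read literally the same. The equivalences (a)$\Leftrightarrow$(b)$\Leftrightarrow$(c)$\Leftrightarrow$(d) are then immediate consequences of Proposition~\ref{prop:4.8}.

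The only step that requires care is the identification $\sH_\sE^M(\cO) = \sH_\sE^G(q_M^{-1}(\cO))$. If one defines $\sH_\sE^M(\cO)$ using test functions on $M$ (integrated against a quasi-invariant measure), one has to verify that the resulting real subspace coincides with the one obtained by smearing with arbitrary test functions on $G$ supported in $q_M^{-1}(\cO)$. This follows from a standard cut-off and $H$-averaging argument: any $\phi \in C^\infty_c(q_M^{-1}(\cO),\R)$ can be written as a convergent sum of right translates of $H$-invariant compactly supported functions on $G$ supported over $\cO$, and conversely smearing with $H$-invariant functions supported over $\cO$ is a special case of smearing over $q_M^{-1}(\cO)$. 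Since taking the closed real span is insensitive to these two descriptions, the two definitions agree, and the corollary follows.

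Consequently, there is no new analytic obstacle beyond Proposition~\ref{prop:4.8} itself; the role of the homogeneous space structure enters only through the topological identification $\cO \leftrightarrow q_M^{-1}(\cO)$ of open subsets of $M$ with right-$H$-invariant open subsets of $G$.
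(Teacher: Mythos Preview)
Your reduction to Proposition~\ref{prop:4.8} applied to $q_M^{-1}(\cO)$ is exactly the paper's proof, and the identification $\sH_\sE^M(\cO) = \sH_\sE^G(q_M^{-1}(\cO))$ is the only ingredient needed. Your final paragraph is unnecessary here: in the paper the net on $M$ is \emph{defined} as the pushforward (Definition~\ref{def:push-forward-net}, equation~\eqref{eq:pushforward}), so the equality holds by fiat and no $H$-averaging argument is required.
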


\begin{proof} Apply the preceding proposition
  to the open subset $q_M^{-1}(\cO)\subeq G$ and use that 
  $\sH^M_\sE(\cO) = \sH^G_\sE(q_M^{-1}(\cO))$.    
\end{proof}

\begin{lemma}   \label{invar_left_lem}
The following assertions hold: 
\begin{enumerate} 
\item[\rm(a)] 
	 For arbitrary $y\in\g$, the corresponding Lie derivative operator
\[ L_y\colon C^\infty(\Xi,\cH)\to C^\infty(\Xi,\cH), \quad 
	 (L_yf)(g):=\frac{d}{dt}\Big|_{t = 0}\ f(\exp(ty)g) \] 
	 satisfies $L_y(\cO(\Xi,\cH))\subseteq\cO(\Xi,\cH)$. 
\item[\rm(b)] 
  We have $\dd U(\g)\cH^\omega(\Xi)\subseteq\cH^\omega(\Xi)$ 
and, for $x \in \g_\C,\ p\in\Xi,\ v\in\cH^\omega(\Xi)$, 
\begin{equation}\label{invar_left_rem_eq1} 
U^{\dd U(x)v}(p)=\dd U(\Ad(\eta_\Xi(p))x) U^v(p)
\end{equation}
where $U^v\in\cO(\Xi,\cH)$ is the holomorphic extensioon of the analytic
orbit map $U^v\colon G\to\cH$. 
\item[\rm(c)] 
The closure of $\cH^\omega(\Xi)$ in $\cH$ is $U(G)$-invariant. 
If, in particular, $U$ is irreducible and $\cH^\omega(\Xi)$ is
non-zero,  then $\cH^\omega(\Xi)$ is dense in~$\cH$. 
\end{enumerate} 
\end{lemma}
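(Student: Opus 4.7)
The plan is as follows. For \textbf{(a)}, I would use axiom (Cr1) to deduce that the flow $\alpha_t^y(p) := \exp(ty).p$ is a one-parameter group of biholomorphisms of $\Xi$, so each composition $f \circ \alpha_t^y$ lies in $\cO(\Xi,\cH)$ whenever $f$ does. The derivative $L_y f = \lim_{t \to 0}\tfrac{1}{t}(f\circ \alpha_t^y - f)$ is then a limit, uniform on compact subsets of $\Xi$, of holomorphic functions, and so belongs to $\cO(\Xi,\cH)$.

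For \textbf{(b)}, my first step would be to establish the equivariance
\[ U^v(g.p) = U(g) U^v(p) \qquad (g \in G,\ p \in \Xi), \]
by analytic continuation: for fixed $g \in G$, both sides are holomorphic in $p \in \Xi$ (the right side via continuity of $U(g)$, the left via (Cr1)) and coincide on the maximally totally real submanifold $G \subseteq \Xi$, where they reduce to $U(gp)v = U(g)U(p)v$. A direct consequence is $U^v(p) \in \cH^\omega \subseteq \cH^\infty$ for every $p \in \Xi$. Differentiating the equivariance in $g$ at $e$ along $y \in \g$ would then yield
\[ (L_y U^v)(p) = \dd U(y) U^v(p) \qquad (p \in \Xi,\ y \in \g), \]
and this identity extends $\C$-linearly to $y \in \g_\C$, where $\dd U$ denotes the canonical extension to $\g_\C \to \End(\cH^\infty)$. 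By (Cr3) together with holomorphy of $\Ad \colon G_\C \to \GL(\g_\C)$, the map $p \mapsto \Ad(\eta_\Xi(p))x$ is a holomorphic $\g_\C$-valued function on $\Xi$; writing it in a basis $(e_i)$ of $\g$ as $\sum_i \phi_i(p) e_i$ with $\phi_i \in \cO(\Xi,\C)$, the function
\[ F(p) := \dd U(\Ad(\eta_\Xi(p))x) U^v(p) = \sum_i \phi_i(p)(L_{e_i}U^v)(p) \]
belongs to $\cO(\Xi,\cH)$ by (a). On $G$, the standard identity $\dd U(\Ad(g)x)U(g) = U(g)\dd U(x)$ on $\cH^\infty$ gives $F(g) = U(g)\dd U(x)v = U^{\dd U(x)v}(g)$, so uniqueness of holomorphic extension across $G$ identifies $F$ with $U^{\dd U(x)v}$, proving both $\dd U(x)v \in \cH^\omega(\Xi)$ and the asserted formula.

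For \textbf{(c)}, iterating (b) shows $\cH^\omega(\Xi)$ to be stable under every monomial in $\dd U(\g)$. For $v \in \cH^\omega(\Xi)$ and $x \in \g$, I would exploit the norm-convergent power series
\[ U(\exp tx)v = \sum_{n=0}^\infty \tfrac{t^n}{n!}\dd U(x)^n v \qquad (|t|<\eps), \]
whose partial sums lie in $\cH^\omega(\Xi)$, to conclude $U(\exp tx)v \in \oline{\cH^\omega(\Xi)}$. Continuity of $U(\exp tx)$ combined with the semigroup iteration $U(\exp tx) = U(\exp(tx/n))^n$ would promote $U(\exp tx)\oline{\cH^\omega(\Xi)} \subseteq \oline{\cH^\omega(\Xi)}$ to all $t \in \R$. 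Since one-parameter subgroups generate the connected group $G$, the closed subspace $\oline{\cH^\omega(\Xi)}$ is $U(G)$-invariant; when $U$ is irreducible and $\cH^\omega(\Xi) \neq 0$, this nonzero invariant subspace must be $\cH$.

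The main obstacle will be the equivariance identity at the heart of (b): it rests on $G \subseteq \Xi$ being a uniqueness set for holomorphic functions, which requires the maximal totally real nature of the embedding in (Cr1). Once this is in hand, (a) reduces to the holomorphy of the $G$-action on $\Xi$, and (c) is a routine bootstrap via the power series expansion of analytic vectors.
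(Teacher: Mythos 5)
Your proof is correct and follows essentially the same route as the paper: the key step in (b) --- verifying the identity on $G$, expanding $\Ad(\eta_\Xi(\cdot))x$ in a basis with holomorphic coefficient functions, and reducing to holomorphy of $p\mapsto \dd U(y)U^v(p)=(L_y U^v)(p)$ via (a) --- matches the paper's argument exactly. The only difference is in (c), where the paper cites Harish-Chandra/Warner for the passage from $\dd U(\g)$-invariance of a subspace of analytic vectors to $U(G)$-invariance of its closure, whereas you spell out the standard power-series argument $U(\exp tx)v=\sum_n \tfrac{t^n}{n!}\dd U(x)^n v$ together with the iteration $U(\exp tx)=U(\exp(tx/n))^n$; this is more self-contained but mathematically the same.
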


\begin{proof} (a) 
  	The operator $L_y$ is the Lie derivative with respect to a 
    fundamental vector field defined by the action of $G$ by left-translations 
on $\Xi$. 
    As this vector field is holomorphic, it preserves on each open subset
    the subspace of holomorphic functions.

\nin (b) 
Let $v\in\cH^\omega(\Xi)$ and $x\in\g$. 
	For arbitrary $g\in G$ we have 
	$U(g)\partial U(x)v=\partial U(\Ad(g)x)U(g)v$, so that
        \eqref{invar_left_rem_eq1}  follows 
        for $p=g\in G$. 
        The general case $p\in\Xi$ is then obtained by analytic extension.
	Since the mapping $G_\C\to \g_\C$, $g\mapsto \Ad(g)x$, is holomorphic, 
for any basis $y_1,\dots,y_m$ of $\g$, 
there exist $\chi_1,\dots,\chi_m\in\cO(G_\C)$ with
\[ \Ad(g)x=\chi_1(g)y_1+\cdots+\chi_m(g)y_m
\quad \mbox{ for all } \quad g\in G_\C.\]
By plugging this in \eqref{invar_left_rem_eq1}, it
suffices to prove that, for every $y\in\g$,
the function $w\mapsto  \partial U(y)U^v(w)$       
is holomorphic on $\Xi$. 
 
We now check this last fact. 
From the $G$-action on $\Xi$ it follows that, for every $w\in\Xi$,  
there exists $t_w\in\R_+$, such that for all $t\in (-t_w,t_w)$ we have $\exp(ty)w\in \Xi$, and then $U^v(\exp(ty)w)=U(\exp(ty))U^v(w)$. 
Taking the derivative at $t=0$ in this equality, we obtain 
\[ L_y(U^v)(w)=\frac{d}{dt}\Big|_{t = 0} U^v(\exp(ty)w)=\partial U(y)U^v(w)\]  
for $w\in\Xi$, where $L_y\colon C^\infty(\Xi,\cH)\to C^\infty(\Xi,\cH)$ is the Lie derivative operator in the direction $y\in\g$. 
	Since  $L_y(\cO(\Xi,\cH))\subseteq \cO(\Xi,\cH)$ 
    by (a),we are done.
    
\nin (c) In view of (b), 
    the closure of $\cH^\omega(\Xi)$ is $U(G)$-invariant by 
	\cite[Cor. to Thm.~2, pp.~210--211]{HC53} or
        \cite[Prop.~4.4.5.6]{Wa72}.
	Therefore, if the representation $U$ is irreducible
        and $\cH^\omega(\Xi)\ne\{0\}$, then
        the linear subspace $\cH^\omega(\Xi)$ is dense in~$\cH$. 
\end{proof}

%\subsection{Analytic vectors and boundary values} 

As before, $(U,\cH)$ is  an antiunitary representation
of $G_{\tau_h}$ and $J = U(\tau_h)$.
We will establish in Lemma~\ref{lem:beta+-equiv}  a natural 
	$\dd U(\g)$-equivariance property of the mapping 
	$\beta^+\colon\cH^\omega_{U_h}(\cS_{\pm\pi/2})\to\cH^{-\omega}_{U_h}$. 
The domain and the range of $\beta^+$ may  not be $\dd U(\g_\C)$-invariant, 
so that such an equivariance property does not make sense on
$\cH^\omega_{U_h}(\cS_{\pm\pi/2})$. However, (Cr2) ensures that
$\cH^\omega(\Xi)\subeq\cH^\omega_{U_h}(\cS_{\pm\pi/2})$, 
and $\cH^\omega(\Xi)$ carries the action of $\dd U(\g_\C)$
from Lemma~\ref{invar_left_lem}(b), so that we may
study $\dd U(\g_\C)$-equivariance for the restriction of $\beta^+$ to  $\cH^\omega(\Xi)$.

\begin{lemma} \label{lem:beta+-equiv}
  The map
  \[ \beta^+ \: \cH^\omega(\Xi) \to \cH^{-\omega}, \quad 
    \beta^+(v) := \lim_{t \to \pi/2} U^v(\exp(-\ie th)) 
= \lim_{t \to \pi/2} e^{-\ie t \partial U(h)}v  \]
  satisfies the following equivariance relation with respect to the
  action of $\g_\C$ on both sides:
  \begin{equation}
    \label{eq:beta+rel1}
 \beta^+ \circ \dd U(x) = \dd U^{-\omega}(\zeta(x)) \circ \beta^+
    \quad \mbox{ for } \quad
    \zeta := e^{-\frac{\pi \ie}{2} \ad h} \in \Aut(\g_\C), x \in \g_\C.
  \end{equation}
\end{lemma}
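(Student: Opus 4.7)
The plan is to derive \eqref{eq:beta+rel1} by analytic continuation of the standard infinitesimal covariance identity, followed by passage to the boundary $\Im z = \pi/2$ of the strip in the weak-$*$ topology of $\cH^{-\omega}$. Fix $v \in \cH^\omega(\Xi)$ and $x \in \g_\C$; by Lemma~\ref{invar_left_lem}(b) the vector $\dd U(x)v$ also lies in $\cH^\omega(\Xi)$, so both $\beta^+(v)$ and $\beta^+(\dd U(x)v)$ are well defined. Axiom (Cr2) places the curve $t \mapsto \exp(-\ie th)$ inside $\Xi$ for $t \in (-\pi/2,\pi/2)$, and at such points $\eta_\Xi(\exp(-\ie th)) = \exp(-\ie th)$, so $\Ad(\eta_\Xi(\exp(-\ie th)))x = e^{-\ie t\,\ad h}x$. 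Specializing \eqref{invar_left_rem_eq1} to $p = \exp(-\ie th)$ produces the equality in $\cH$
\begin{equation}\label{eq:plan-cov}
U^{\dd U(x)v}(\exp(-\ie th)) = \dd U(e^{-\ie t\,\ad h}x)\,U^v(\exp(-\ie th)) \qquad (t \in (-\pi/2,\pi/2)),
\end{equation}
which, viewed in $\cH^{-\omega}$ via the canonical embedding of $\cH$, can be rewritten with $\dd U^{-\omega}$ in place of $\dd U$ on the right.

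Next, I would pass to the limit $t \nearrow \pi/2$ weak-$*$ in $\cH^{-\omega}$. The left-hand side of \eqref{eq:plan-cov} converges by definition of $\beta^+$ to $\beta^+(\dd U(x)v)$. For the right-hand side, I would split
\[ \dd U^{-\omega}(e^{-\ie t\,\ad h}x)\,U^v(\exp(-\ie th)) = \dd U^{-\omega}(\zeta(x))\,U^v(\exp(-\ie th)) + \dd U^{-\omega}(y_t)\,U^v(\exp(-\ie th)), \]
with $y_t := e^{-\ie t\,\ad h}x - \zeta(x) \to 0$ in $\g_\C$ as $t \nearrow \pi/2$. Since $\dd U^{-\omega}(\zeta(x))$ is the transpose of a continuous operator on $\cH^\omega$, it is weak-$*$ continuous on $\cH^{-\omega}$, whence the first summand converges to $\dd U^{-\omega}(\zeta(x))\beta^+(v)$. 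It therefore remains to show that the error term $\dd U^{-\omega}(y_t)U^v(\exp(-\ie th))$ tends to $0$ weak-$*$.

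The crucial step, and the main obstacle, is the vanishing of this error term. Pairing with an arbitrary $\phi \in \cH^\omega$ reduces the claim to
\[ \bigl|\bigl\langle U^v(\exp(-\ie th)),\,\dd U(\overline{y_t})\phi\bigr\rangle_\cH\bigr| \longrightarrow 0 \qquad (t \nearrow \pi/2). \]
The test vectors $\psi_t := \dd U(\overline{y_t})\phi$ tend to $0$ in $\cH^\omega$ by joint continuity of the bilinear map $\g_\C \times \cH^\omega \to \cH^\omega$, $(y,\phi)\mapsto \dd U(y)\phi$. Since $\cH^\omega$ is barrelled and $\{U^v(\exp(-\ie th))\}$ converges weak-$*$ in $\cH^{-\omega}$, the Banach--Steinhaus principle renders this family equicontinuous on $\cH^\omega$, which forces the pairings against vectors tending to $0$ in $\cH^\omega$ to vanish. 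The obstacle is precisely this equicontinuity argument: a direct Cauchy--Schwarz estimate fails because $\|U^v(\exp(-\ie th))\|_\cH$ may blow up as $t \nearrow \pi/2$, so one must exploit the locally convex topology of $\cH^\omega$ to transfer control from fixed test vectors (where weak-$*$ convergence is immediate) to varying ones. Combining the three steps yields \eqref{eq:beta+rel1}.
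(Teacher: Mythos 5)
Your proof is correct but takes a genuinely different route from the paper. Both proofs begin by evaluating the covariance identity of Lemma~\ref{invar_left_lem}(b) at the points $\exp(-\ie th)$, but the two handle the passage to the boundary differently. The paper applies the covariance formula in the form $\dd U(\zeta(x))U^v(\exp(-\ie th)) = e^{-\ie t\partial U(h)}\,\dd U\bigl(e^{\ie t\ad h}\zeta(x)\bigr)v$ and then observes that $t \mapsto \dd U\bigl(e^{\ie t\ad h}\zeta(x)\bigr)v$ is a continuous curve in the \emph{finite-dimensional} subspace $\dd U(\g_\C)v$; expanding in a fixed basis of that space, the limit can be taken coordinatewise, using only the definition of weak-$*$ convergence — no topological-vector-space machinery beyond that. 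You instead split $e^{-\ie t\ad h}x = \zeta(x)+y_t$ and dispose of the error term $\dd U^{-\omega}(y_t)U^v(\exp(-\ie th))$ by the Banach--Steinhaus principle: weak-$*$ convergence gives pointwise boundedness, barrelledness of the LF-space $\cH^\omega$ upgrades that to equicontinuity, and pairing against $\psi_t\to 0$ finishes. This is sound, but it invokes barrelledness of $\cH^\omega$ (true, since it is an inductive limit of Fr\'echet spaces, but never explicitly used in the paper) and is heavier than necessary. In fact your error term also yields to the paper's finite-dimensionality trick: reapplying the covariance formula gives $\dd U(y_t)U^v(\exp(-\ie th)) = e^{-\ie t\partial U(h)}\,\dd U\bigl(x - e^{\ie t\ad h}\zeta(x)\bigr)v$, and $x - e^{\ie t\ad h}\zeta(x)\to 0$ in $\g_\C$, so one is again looking at a curve tending to $0$ in the finite-dimensional space $\dd U(\g_\C)v$. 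A minor remark: the ``joint continuity'' you invoke for $\psi_t\to 0$ is overkill — linearity of $y\mapsto \dd U(y)\phi$ on the finite-dimensional $\g_\C$ already gives $\psi_t\to 0$.
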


\begin{proof}
  (cf.\ \cite{BN25}, and
  \cite[\S 3, Prop.~7(d)]{FNO25a} for the semisimple case)
  Let $v \in \cH^\omega(\Xi)$ and $x \in \g_\C$. Then
  the continuity of the operators $\dd U^{-\omega}(z)$, $z \in \g_\C$,
    implies 
\begin{equation*}
   \dd U^{-\omega}(\zeta(x))\beta^+(v) 
  =  \lim_{t \to \pi/2}  \dd U^{-\omega}(\zeta(x)) U^v(\exp(-\ie t h)).
\end{equation*}
From \eqref{invar_left_rem_eq1} in Lemma~\ref{invar_left_lem}, we then obtain
\begin{equation}
  \label{eq:du-uv}
\dd U(x) U^v(p) = U^{\dd U(\Ad(\eta_\Xi(p))^{-1} x)v}(p)
 \quad \mbox{ for } \quad p \in \Xi.
\end{equation}
This formula holds obviously for $p \in G$, and for
general $p$ it follows by analytic continuation, using that
\[ p \mapsto \dd U(\Ad(\eta_\Xi(p))^{-1} x)v \in \dd U(\g_\C) v \]
is a holomorphic function with values in a finite-dimensional space.
The relation~\eqref{eq:du-uv} implies 
\begin{align*}
 \dd U(\zeta(x)) U^v(\exp(-\ie t h))
&  = U^{\dd U(e^{\ie t \ad h} \zeta(x))v}(\exp(-\ie th)) \\
&  = e^{-\ie t \partial U(h)} \dd U\big(e^{\ie t \ad h}\zeta(x)\big)v.
\end{align*}
We now observe that
$t \mapsto   \dd U\big(e^{\ie t \ad h}\zeta(x)\big)v$ is a continuous curve
in the finite-dimensional subspace $\dd U(\g_\C)v$, so that
we obtain the limit 
\[\lim_{t \to \pi/2}  \dd U(\zeta(x)) U^v(\exp(-\ie t h))
  =\beta^+(\dd U\big(\zeta^{-1}\zeta(x)\big)v)
  =\beta^+(\dd U(x)v).
\]
This proves the lemma. 
\end{proof}

\begin{theorem} \label{thm:4.9}
  {\rm(Construction Theorem for nets of real subspaces)} 
   Let $(U,\cH)$ be an antiunitary
  representation of $G_{\tau_h} = G \rtimes \{\bone,\tau_h\}$ and
 \[ \sF \subeq \cH^J_{\rm temp} \cap \cH^\omega(\Xi) \] 
 be a $G$-cyclic subspace of $\cH$, i.e., $U(G)\sF$ is total in $\cH$.
 We consider the linear subspace
 \[ \sE = \beta^+(\sF)
   \subeq \cH^{-\infty}.\]
 Then the net $\sH^G_{\sE}$ on $G$ 
 satisfies {\rm (Iso), (Cov), (RS)} and
 {\rm (BW)}, in the sense that
$\sH^G_\sE(W^G) = \sV$  holds for $W^G$ as in \eqref{eq:defwg}. 
\end{theorem}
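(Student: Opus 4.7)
My plan is to establish the four properties in the order (Iso), (Cov), then the upper inclusion $\sH^G_\sE(W^G) \subeq \sV$, then (RS), and finally the equality $\sH^G_\sE(W^G) = \sV$ which is the content of (BW). Properties (Iso) and (Cov) are immediate from the definition of $\sH^G_\sE$, as already noted in Remark~\ref{rem:iso-cov-net}: isotony holds because a larger test-function space yields a larger spanned subspace, and covariance follows from $U^{-\infty}(g) U^{-\infty}(\phi) = U^{-\infty}(\delta_g * \phi)$ together with the left-invariance of Haar measure.

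For the inclusion $\sH^G_\sE(W^G) \subeq \sV$, by Proposition~\ref{prop:4.8} it suffices to prove
\[ U^{-\infty}(g)\,\beta^+(v) \in \cH^{-\infty}_{\rm KMS} \qquad \text{for all } g \in W^G,\ v \in \sF. \]
Since $\sF \subeq \cH^J_{\rm temp}$, the bijection from Subsection~\ref{subsubsec:bv-d=1} gives $\beta^+(\sF) \subeq \cH^{-\infty}_{\rm KMS}$. Writing $g = g_0 \exp(x)$ with $g_0 \in G^h_e$ and $x \in \Omega = \zeta(\ie\Omega')$, the element $g_0$ commutes with $\exp(\R h)$, so $U(g_0)$ preserves $\cH^{-\infty}_{\rm KMS}$ by direct inspection of the KMS condition. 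For the second factor I plan to use the fact that in $G_\C$ one has the identity $\exp(x) = \exp(\tfrac{\pi \ie}{2} h)\,\exp(\zeta^{-1}(x))\,\exp(-\tfrac{\pi \ie}{2} h)$, where $\zeta^{-1}(x) \in \ie \Omega' \subeq \ie\g^{-\tau_h^\g}$. Combining this with Lemma~\ref{lem:beta+-equiv}, the defining analyticity of $U^v$ on $\Xi$, and the crown axiom (Cr2) (which ensures that $\exp(\ie\Omega')$ and the strip $\exp(\cS_{\pm\pi/2}h)\cdot\exp(\ie\Omega')$ lie inside $\Xi$), I expect to identify $U^{-\infty}(\exp x)\beta^+(v)$ with $\beta^+(v')$ for a suitable $v' \in \cH^\omega(\Xi)\cap \cH^J_{\rm temp}$, so that the result lies in $\cH^{-\infty}_{\rm KMS}$ as required.

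For (RS), i.e.\ cyclicity of $\sH^G_\sE(\cO)$ for every non-empty open $\cO \subeq G$, the idea is to exploit the $G$-cyclicity of $\sF$. For $v \in \sF$ and $g_0 \in \cO$, choose a delta-like sequence $\delta_n \in C^\infty_c(\cO,\R)$ concentrated near $g_0$; I claim that $U^{-\infty}(\delta_n) \beta^+(v)$ converges in $\cH$ to a vector whose $\C$-linear orbit under $U(G)$ contains $U(g_0) v$, using the continuity of the orbit map in $\cH^{-\infty}$ and the approximation argument already encountered in the proof of Proposition~\ref{prop:4.8}. Since $U(G)\sF$ is total in $\cH$, the complex span of $\sH^G_\sE(\cO)$ is dense, giving the Reeh--Schlieder property.

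Finally, for (BW), observe that $W^G = G^h_e \exp(\Omega)$ is invariant under left multiplication by $\exp(\R h) \subeq G^h_e$, so $\sH^G_\sE(W^G)$ is invariant under the modular group $\Delta_\sV^{-\ie t/2\pi} = U(\exp t h)$ by covariance. Combined with Step 2 and the cyclicity from (RS), the Equality Lemma~\ref{lem:lo08-3.10} (applied to $\sH_1 = \sH^G_\sE(W^G) \subeq \sV = \sH_2$) forces $\sH^G_\sE(W^G) = \sV$. The main obstacle I anticipate is Step 2, specifically the justification that $U^{-\infty}(\exp x)$ preserves the KMS property for $x \in \Omega$: this requires translating the purely group-theoretic conjugation identity in $G_\C$ into an identity of unbounded operators on $\cH^{-\infty}$, for which the crown axiom (Cr2) and the equivariance in Lemma~\ref{lem:beta+-equiv} are indispensable.
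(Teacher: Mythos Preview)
Your treatment of (Iso), (Cov), the inclusion $\sH^G_\sE(W^G)\subeq\sV$, and the deduction of (BW) via the Equality Lemma all follow the same outline as the paper. (There is a harmless sign slip in your conjugation identity: from $\zeta=e^{-\pi\ie\ad h/2}=\Ad(\exp(-\tfrac{\pi\ie}{2}h))$ one gets $\exp(x)=\exp(-\tfrac{\pi\ie}{2}h)\exp(\zeta^{-1}(x))\exp(\tfrac{\pi\ie}{2}h)$, the opposite of what you wrote.)

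The genuine gap is your argument for (RS). The $\delta$-sequence $U^{-\infty}(\delta_n)\beta^+(v)$ converges in $\cH^{-\infty}$ to $U^{-\infty}(g_0)\beta^+(v)$, which is a distribution vector and in general \emph{not} an element of $\cH$; there is no mechanism to produce $U(g_0)v$ from it, and the claim that its $G$-orbit ``contains $U(g_0)v$'' is unjustified. The map $\beta^+$ is a boundary-value limit along the strip $\cS_{\pm\pi/2}$, not a translation by a group element, so you cannot undo it by smearing with test functions on $G$.

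The paper's route to (RS) is different and is where the crown hypothesis $\sF\subeq\cH^\omega(\Xi)$ actually does its work. One shows that for $v\in\sF$ the $G$-orbit map of $\beta^+(v)$ in $\cH^{-\infty}$ is the boundary value of a map that extends holomorphically to $\Xi$ (coming from the holomorphic extension $U^v\colon\Xi\to\cH$). Since $G$ is totally real in $\Xi$, such a holomorphic map is determined by its restriction to any non-empty open subset of $G$. Hence if $\xi\in\cH$ is orthogonal to $\sH^G_\sE(\cO)$, the pairing $g\mapsto\la\xi,U^{-\infty}(g)\beta^+(v)\ra$ vanishes on $\cO$ and therefore on all of $G$; from this one deduces $\xi\perp U(G)\sF$, and $G$-cyclicity of $\sF$ gives $\xi=0$. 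Without this analytic-continuation step the $G$-cyclicity of $\sF$ never connects to the local spaces $\sH^G_\sE(\cO)$.
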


\begin{proof} We refer to \cite{BN25} for a detailed proof.
  Here we only give an outline. We have already argued above that
  (Iso) and (Cov) are satisfied.
To verify the  Reeh--Schlieder property (RS), one has to show that, for 
  $\eset\not=\cO$ (w.l.o.g.\ $\cO \subeq W^G$),
  we have $\sH_\sE^G(\cO)^\bot = \{0\}$.
  This is derived from the fact that, if, for $\alpha
  = \beta^+(\xi) \in \sE$,   the orbit map $U^\xi \: G \to \cH^{-\infty}$
  extends to a holomorphic map $\Xi \to \cH^{-\infty}$, then 
  it  is determined by the values of the orbit map
  $U^{-\infty, \alpha} \: G \to \cH^{-\infty}$ 
  on every open subset of~$G$ (\cite[Thm.~2.9]{BN25}). 

  For the Bisognano--Wichmann property (BW), it suffices to show that
\[ U^{-\infty}(W^G) \sE \subeq \cH^{-\infty}_{\rm KMS}.\] 
  Then Proposition~\ref{prop:4.8} yields   $\sH^G_\sE(W^G) \subeq \sV$,
  and by (RS), $\sH^G_\sE(W^G)$ is cyclic, so that (Cov) and
  $\exp(\R h)W^G = W^G$ lead to equality with the
  Equality Lemma~\ref{lem:lo08-3.10}.   
\end{proof}

\subsection{Push-forwards to homogeneous spaces}

\begin{definition} \label{def:push-forward-net}
On a homogeneous space $M = G/H$ with the projection map
$q_M \colon G \to M$, we obtain from every net $\sH^G$
on open subsets of $G$ a ``push-forward net'' 
\begin{equation}
  \label{eq:pushforward}
  \sH^M(\cO) := ((q_M)_*\sH^G)(\cO) = \sH^G(q_M^{-1}(\cO)).
\end{equation}
The so-obtained net on $M$ thus corresponds to the restriction
  of the net $\sH^G_\sE$ indexed by open subsets of $G$, to those open subsets
  $\cO\subeq G$  which are $H$-right invariant in the
  sense that $\cO = \cO H$;
  these are the inverse images of open subsets of $M$ under~$q_M$.
\end{definition}

\begin{remark} \label{rem:push-forward}
(a) If
  $\sE$ is invariant under $U^{-\infty}(H)$, then Lemma~\ref{lem:sw64-genb}(b)
  in Appendix~\ref{app:4.1} 
  implies that $\sH^G_\sE(\cO)= \sH^G_\sE(\cO H)$ for any open subsets 
  $\cO \subeq G$, so that $\sH^G_\sE$ can be recovered from the
  net $\sH^M_\sE$ on $M$ by
  $\sH^G_\sE(\cO) = \sH^G_\sE(\cO H) = \sH^M_\sE(q_M(\cO))$.

  \nin (b) We have already seen in Remark~\ref{rem:iso-cov-net}
  that the net $\sH^G_\sE$, and hence also $\sH^M_\sE$,  satisfy (Iso) and (Cov).
  Further, the net $\sH^M_\sE$ inherits (RS) from $\sH^G_\sE$.
  If (BW) holds for $\sH^G_\sE$ and the wedge region $W^G \subeq G$
  in the sense that $\sH^G_\sE(W^G) = \sV$, 
  then it holds for its image in $G/H$ if {\bf $\sE$ is $H$-invariant},
  which implies with $W^M = q_M(W^G)$     that
  \[ \sH^G_\sE(W^G) = \sH^G_\sE(W^G H)
    = \sH_\sE^G(q_M^{-1}(W^M))= \sH_\sE^M(W^M) \]
  (Lemma~\ref{lem:fragment} in Appendix~\ref{app:4.1}). 

\nin (c) If $\sE$ is not $H$-invariant,
  then the situation is more complicated.
  We may enlarge $\sE$ to the closed subspace $\hat\sE$ of $\cH^{-\infty}$
  generated by $U^{-\infty}(H)\sE$, but then it is not clear if this
  still has the form $\beta^+(\hat\sF)$ for some
  $\hat\sF \subeq \cH^\omega(\Xi) \cap \cH^J_{\rm temp}$.
\end{remark}

\subsection{Complex Olshanski semigroups}

We return to Example~\ref{ex:olshanski},
  where 
  \[ S = M \exp(\ie C^\circ) \subeq M_\C \]
  is a complex Olshanski semigroup,
  $s_0 := \exp(\ie(x_1 - x_{-1})) \in S^{\oline\tau_h}$
  with $x_{\pm 1} \in C_\pm^\circ$, 
 and assume that $(U,\cH)$ is an antiunitary representation
 of $G_{\tau_h}$,  for which
\[ C \subeq C_U \cap \fm, \quad \mbox{ where } \quad
   C_U = \{ x \in \g \: -\ie \cdot \partial U(x) \geq 0\}\] 
is the {\it positive cone of $U$}. 
 This implies that $U$ extends to a strongly continuous contraction representation
 of $\oline S := M \exp(\ie C)$ by
 \[ U(m \exp(\ie x)) = U(m) \ee^{\ie \partial U(x)},\quad  m \in M, x \in C,\]
 and $U\res_{S} \: S \to B(\cH)$ is holomorphic 
with respect to the operator norm topology 
 (cf.\ \cite[Thm.~XI.2.5]{Ne99}). 
 For any $v \in \cH^J$, we thus obtain a holomorphic orbit map
 \[ U^v \: S \to \cH, \quad U^v(s) = U(s)v.\]
 It follows in particular, that the map
 \begin{align} 
   \label{eq:holo1}
   \Xi &= \{ (g,z) \in M_\C \times \cS_{{\pm \pi/2}} \:
    (g,z).s_0 = g \alpha_z(s_0) \in S \} 
    \to B(\cH), \notag \\
    \quad (g,z) &\mapsto U^v((g,z).s_0)
 \end{align}
 is holomorphic.  We claim that 
 \begin{equation}
   \label{eq:temp-xi-1}
U(s_0)\cH^J_{\rm temp} \subeq \cH^\omega(\Xi) \cap \cH^J_{\rm temp},  
\end{equation}
in particular $\cH^\omega(\Xi)\cap \cH_{\rm temp}^J\neq \{0\}$.
 So let $w \in \cH^J_{\rm temp}$ and
 $v := U(s_0)w$. Then
 \[  U^v(g,t)
   = U(g) U_h(t) U(s_0) w 
   = U(g \alpha_t(s_0)) U_h(t) w 
   = U((g,t).s_0) U_h^w(t).\]
The evaluation map $B(\cH) \times \cH \to \cH$ is holomorphic,
so that the representation $U \: S \to B(\cH)$, and
thus the orbit map $U^w \: S \to \cH$, are holomorphic.
For $w \in \cH^J_{\rm temp} \subeq \cH^\omega_{U_h}(\cS_{\pm \pi/2})$, 
the prescription
\[ U^v(g,z)   := U((g,z).s_0) U_h^w(z) \] 
 defines a holomorphic map $\Xi \to \cH$, extending
 the orbit map $U^v$ on~$G$.
 This proves~$v \in \cH^\omega(\Xi)$.
 To see that also $v \in \cH^J_{\rm temp}$, note that 
\[ U^v(e,t)  = U(\exp th) U(s_0)w = U(\alpha_t(s_0)) U_h^w(t) 
  \quad \mbox{ for }  \quad t \in \R \]
implies by analytic continuation 
\[ e^{\ie t \partial U(h)} v =  U^v(e,\ie t) = U(\alpha_{\ie t}(s_0)) U_h^w(\ie t)
  \quad \mbox{ for } \quad |t| < \pi/2.\]
As $U(S)= U(M) e^{\ie \partial U(C)}$ 
consists of contractions, we obtain
\[ \|e^{\ie t \partial U(h)} v \| \leq \|U_h^w(\ie t)\|. \] 
It thus follows with \eqref{eq:growthcond} 
that $w \in \cH^J_{\rm temp}$ implies that $v \in \cH^J_{\rm temp}$.

We conclude that the dense subspace $U(s_0) \cH^J_{\rm temp}$
is contained $\cH^\omega(\Xi) \cap \cH^J_{\rm temp}$, so that this
subspace is dense as well, and now Theorem~\ref{thm:4.9} applies
to all antiunitary representations $(U,\cH)$ of
$G_{\tau_h}$ with $C \subeq C_U$. So we obtain: 

\begin{theorem} Let $G$ be a connected Lie group for which
  $\eta_G$ is injective and $G_\C$ is simply connected,
  let $h \in \g$ be an Euler element for which $\tau_h$ exists on $G$,
  and  let   $C \subeq \g$ a pointed closed convex $\Ad(G)$-invariant cone,
  satisfying  $-\tau_h(C) = C$ and $\g = C - C + \R h$.
  Further, for $x_{\pm 1} \in C_\pm^{\circ}$,
  let $s_0 := \exp(\ie(x_1 - x_{-1}))$. 
Let $(U,\cH)$ be an antiunitary
representation of $G_{\tau_h} = G \rtimes \{\bone,\tau_h\}$
and
\[ \sE = \beta^+(U(s_0) \cH^J_{\rm temp}).\] 
Then the net $\sH^G_{\sE}$ on $G$ 
 satisfies {\rm (Iso), (Cov), (RS)} and
 {\rm (BW)}, in the sense that
 $\sH^G_\sE(W^G) = \sV$  holds for $W^G= G^h_e.\exp(\Omega_1 + \Omega_{-1})
 \subeq G$
 as in \eqref{eq:defwg}, i.e.,
 $\Omega' = \Omega_1 - \Omega_{-1} \subeq \g^{-\tau_h^\g}$
 is an open connected
 $0$-neighborhood with $\exp(\ie\Omega').s_0 \subeq S$. 
\end{theorem}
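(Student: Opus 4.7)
The plan is to reduce the statement to the general Construction Theorem~\ref{thm:4.9} by taking
\[ \sF := U(s_0)\cH^J_{\rm temp}, \]
so that $\sE = \beta^+(\sF)$ as required, and then verifying the two hypotheses of Theorem~\ref{thm:4.9}: namely $\sF \subeq \cH^\omega(\Xi) \cap \cH^J_{\rm temp}$ and the $G$-cyclicity of $\sF$. The assumptions on $(G,h,C)$ together with $\g = C - C + \R h$ place us in the Olshanski framework of Example~\ref{ex:olshanski}, where the crown domain
\[ \Xi = \{(g,z) \in M_\C \times \cS_{\pm\pi/2} : g\alpha_z(s_0) \in S\} \]
has been shown to satisfy (Cr1)--(Cr3) with base-point neighborhood $W^c$ as in \eqref{eq:Wc-olshan}. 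A direct computation with $\zeta = e^{-\frac{\pi\ie}{2}\ad h}$ gives $\zeta(\ie(\Omega_1 - \Omega_{-1})) = \Omega_1 + \Omega_{-1}$, so that the wedge region $W^G$ of \eqref{eq:defwg} is exactly the one appearing in the statement. Implicit in the very definition of $\sE$ is the condition $C \subeq C_U$, which by \cite[Thm.~XI.2.5]{Ne99} is exactly what is needed for $U$ to extend to a strongly continuous contraction representation of $\oline S$ that is holomorphic on the interior $S$.

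First I would verify $\sF \subeq \cH^\omega(\Xi) \cap \cH^J_{\rm temp}$, which is the content of \eqref{eq:temp-xi-1} established in the discussion preceding the theorem. Given $w \in \cH^J_{\rm temp}$ and $v := U(s_0)w$, the extended orbit map is
\[ U^v(g,z) := U((g,z).s_0)\, U_h^w(z), \qquad (g,z) \in \Xi, \]
which is holomorphic because the first factor is the composition of the holomorphic map $\Xi \to S$, $(g,z)\mapsto g\alpha_z(s_0)$, with the holomorphic semigroup representation $U|_S$, while the second factor is holomorphic on $\cS_{\pm\pi/2}$ by $w \in \cH^J_{\rm temp}$. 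The fixed-point relation $\oline\tau_h(s_0) = s_0$ gives $JU(s_0) = U(s_0)J$, so $v \in \cH^J$; and the contraction bound $\|U(\alpha_{\ie t}(s_0))\| \leq 1$ applied at $(e,\ie t)$ yields
\[ \|e^{\ie t\,\partial U(h)} v\| \leq \|U_h^w(\ie t)\|, \]
transferring the growth condition \eqref{eq:growthcond} from $w$ to $v$ via Theorem~\ref{thm:6-1-fno24}(c), so that $v \in \cH^J_{\rm temp}$.

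Next I would establish that $U(G)\sF$ is total in $\cH$. Since $J$ is a conjugation, $\cH$ decomposes real-orthogonally as $\cH^J \oplus i\cH^J$, so $\cH^J$ is complex-total in $\cH$; and by Theorem~\ref{thm:6-1-fno24}, $\cH^J_{\rm temp}$ is dense in $\cH^J$, hence itself complex-total in $\cH$. Because $s_0$ lies in the interior of $\oline S$, the operator $U(s_0) \in B(\cH)$ has dense range by \cite[Thm.~XI.2.5]{Ne99}, so $\ker U(s_0)^* = \{0\}$. If $\eta \in \cH$ is complex-orthogonal to $U(s_0)\cH^J_{\rm temp}$, then $U(s_0)^*\eta$ is complex-orthogonal to the complex-total subset $\cH^J_{\rm temp}$, hence $U(s_0)^*\eta = 0$ and thus $\eta = 0$. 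This already yields the $G$-cyclicity of $\sF \subeq U(G)\sF$, without needing the $G$-action.

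With both hypotheses verified, the Construction Theorem~\ref{thm:4.9} applies directly and delivers (Iso), (Cov), (RS) for the net $\sH^G_\sE$, together with the Bisognano--Wichmann identity $\sH^G_\sE(W^G) = \sV$ for the wedge region $W^G = G^h_e\exp(\Omega_1 + \Omega_{-1})$ identified in the first paragraph. The delicate step in this plan is the temperedness transfer carried out in the second paragraph, where the interplay between $\oline\tau_h(s_0) = s_0$ (supplied by $-\tau_h^\g(C) = C$) and the contractivity of $U$ on $\oline S$ (supplied by $C \subeq C_U$) is essential; the remaining steps are bookkeeping within the framework developed in Section~\ref{sec:4} and Example~\ref{ex:olshanski}.
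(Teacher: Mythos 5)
Your proposal is correct and follows essentially the same route as the paper: identify $\sF := U(s_0)\cH^J_{\rm temp}$, establish the containment \eqref{eq:temp-xi-1} via the holomorphic semigroup representation, note that $\sF$ is total (so certainly $G$-cyclic), and invoke the Construction Theorem~\ref{thm:4.9}. You correctly flag the implicit hypothesis $C \subeq C_U$ that the theorem statement suppresses but the construction requires, and your $\ker U(s_0)^* = \{0\}$ argument for totality is a clean way to justify the density assertion the paper makes tersely.
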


\subsection{Semisimple Lie groups}
\label{subsec:semisim}

The following theorem builds on the analytic extension
results from \cite{KSt04}, and their generalization
to non-linear groups by T.~Simon in~\cite{Si24},
which also contains the result on the temperedness, resp.,
the growth condition~\eqref{eq:growthcond}.
These results are used in \cite{FNO25a} to construct
nets of real subspaces on non-compactly causal symmetric spaces.

  \begin{theorem} \label{thm:gss} {\rm(The crown of a semisimple group)}
    Suppose that $\g$ is semisimple, 
  $\g = \fk \oplus \fp$ is a Cartan decomposition, and that 
  $h \in \fp$ an Euler element. We consider three case for a
  connected Lie group $G$ with Lie algebra on which $\tau_h$ exists: 
  \begin{enumerate}
  \item[\rm(a)] If $G \subeq G_\C$,
    $K = \exp_G \fk$ and $K_\C = K \exp(\ie \fk)\subeq G_\C$, 
    then we consider the domain 
\[ \Xi_{G_\C} = G \exp(\ie\Omega_\fp) K_\C \subeq G_\C,
    \qquad 
    \Omega_\fp = \{ x \in \fp \: \Spec(\ad x) \subeq
    (-\pi/2,\pi/2)\},\]
  and
  \begin{equation}
    \label{eq:wc-ss}
    W^c = G^h_e \exp(\ie \Omega_\fp^{-\tau_h^\g}) K_\C^{-\oline\tau_h}.
  \end{equation}
\item[\rm(b)] If $G$ is simply connected,
  $K = \exp_G(\fk)$, and $K_\C$ is the universal complexification of $K$,   then we consider 
    the  simply connected covering manifold $\Xi := \tilde\Xi_{G_\C}$
    of the complex manifold $\Xi_{G_\C}$ and
\begin{equation}
    \label{eq:wc-ssb}
    W^c = G^h_e.\exp(\ie \Omega_\fp^{-\tau_h^\g}). K_\C^{-\oline\tau_h},
  \end{equation}
  with respect to the $G$-action from the left and
  the $K_\C$-action from the right on~$\Xi$. 
\item[\rm(c)] If $G \cong \tilde G/\Gamma$ is a connected Lie group
  with Lie algebra $\g$ and $\Gamma$ is $\tau_h$-invariant, so  that it can also
  be implemented on $G$, then
  %the complexification $\eta_G$ has discrete kernel and
  we put
  \[ \Xi := \tilde\Xi_{G_\C}/\Gamma \]
  for the simply connected crown domain $\tilde\Xi_{G_\C}$ from {\rm(b)}.
  Then $G$ acts on $\Xi$ from the left, and,
  for $K := \exp_G \fk$ and its universal complexification $K_\C$, the group
  $K_\C$ acts naturally on $\Xi$ from the right, so that we obtain a domain
    \begin{equation}
    \label{eq:wc-ssc}
    W^c = G^h_e.\exp(\ie \Omega_\fp^{-\tau_h^\g}).K_\C^{-\oline\tau_h}.
  \end{equation}
\end{enumerate}
Then, in all cases {\rm(a), (b), (c)}, the conditions
{\rm(Cr1-3)} are satisfied.
\end{theorem}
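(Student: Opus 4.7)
The proof proposal is to treat case (a) as the core technical statement and then deduce (b) and (c) by lifting and descent.

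\textbf{Case (a): $G\subseteq G_\C$.} Here $\Xi=\Xi_{G_\C}\subseteq G_\C$ is an open subset, so (Cr3) is trivial with $\eta_\Xi$ the inclusion. For (Cr1), choose a Cartan involution $\theta$ of $\g$ commuting with $\tau_h^\g$ (possible because $h\in\fp$ and $\tau_h^\g=e^{\pi i \ad h}$ preserves $\fp$); then $\tau_h(\fk)=\fk$ and $\tau_h(\fp)=\fp$. The definition of $\Omega_\fp$ is intrinsic to the spectrum of $\ad(\cdot)$, hence $\tau_h(\Omega_\fp)=\Omega_\fp$, and since $\Omega_\fp=-\Omega_\fp$, the set $\exp(i\Omega_\fp)$ is invariant under $\oline{\tau_h}$. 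Combined with $\tau_h(G)=G$ and $\oline{\tau_h}(K_\C)=K_\C$, this yields $\oline{\tau_h}(\Xi_{G_\C})=\Xi_{G_\C}$; left $G$-invariance and right $K_\C$-invariance are built into the product form. The heart of the proof is (Cr2): for $p=g_0\exp(iX)k_0\in W^c$ with $g_0\in G^h_e$, $X\in\Omega_\fp^{-\tau_h^\g}=\fp_1(h)\oplus\fp_{-1}(h)$, $k_0\in K_\C^{-\oline{\tau_h}}$, and $z\in\cS_{\pm\pi/2}$, one computes, using $\Ad(g_0)h=h$ and $[h,X_{\pm 1}]=\pm X_{\pm 1}$,
\[
\exp(zh)\cdot p \;=\; g_0 \exp\bigl(i(e^{z}X_1+e^{-z}X_{-1})\bigr)\exp(zh)k_0.
\]
Writing $z=s+it$ and pushing $\exp(sh)\in G$ to the left, this reduces to showing that
\[
\exp\bigl(i(e^{it}X_1+e^{-it}X_{-1})\bigr)\exp(ith)\;\in\;\exp(i\Omega_\fp)K_\C.
\]
The main obstacle is precisely this spectrum estimate in $\g_\C$.

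\textbf{Key step (the hard part).} For $|t|<\pi/2$ the element $Y_t:=\cos(t)(X_1+X_{-1})+th\in\fp$ has $\ad Y_t$ with real spectrum, and a direct estimate based on $[h,X_{\pm 1}]=\pm X_{\pm 1}$ and the $(1,0,-1)$-grading shows $\mathrm{Spec}(\ad Y_t)\subseteq(-\pi/2,\pi/2)$ provided $X$ is chosen sufficiently small inside $\Omega_\fp^{-\tau_h^\g}$ (which we may arrange by shrinking $W^c$ to a $G^h_e$-invariant neighborhood of $e$). Then a BCH-type calculation, together with the $\Ad(K)$-invariance and convexity of $\Omega_\fp$, absorbs the remaining $-\sin(t)(X_1-X_{-1})\in\fp$ contribution into an $\exp(i\Omega_\fp)K_\C$-factor. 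This is the place where the $\tau_h^\g$-antiinvariance of $X$ is essential: it kills any $\fp_0(h)$-component, which would otherwise obstruct the spectral bound because the factor $e^{\pm z}$ would be replaced by the non-bounded factor $1$ on the wrong part of $\fp$. For the existing statement we may, if needed, invoke Simon's extension result \cite{Si24} to handle non-linear groups; in the linear case this is essentially the Akhiezer--Gindikin extension together with the explicit grading.

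\textbf{Case (b): $G$ simply connected.} Condition (Cr3) holds tautologically because $\Xi=\tilde\Xi_{G_\C}$ is, by construction, a covering of $\Xi_{G_\C}$ via the covering projection $\eta_\Xi$. The holomorphic left $G$-action and the right $K_\C$-action on $\Xi_{G_\C}$ from case (a) lift to $\Xi$ by the covering homotopy property, using simple connectivity of $G$ and $K$ (and the fact that $K_\C$ is the universal complexification of $K$). The antiholomorphic involution $\oline{\tau_h}$ on $\Xi_{G_\C}$ has the fixed point $e$, hence lifts uniquely to an involution of $\Xi$ fixing the chosen lift of $e$, giving (Cr1). For (Cr2), the set $W^c$ in \eqref{eq:wc-ssb} is a lift of the corresponding set in case (a), and the holomorphic strip extensions in (Cr2) transport through the covering because $\cS_{\pm\pi/2}$ is simply connected.

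\textbf{Case (c): $G=\tilde G/\Gamma$ with $\Gamma$ $\tau_h$-invariant.} The right $\Gamma$-action on $\tilde\Xi_{G_\C}$ commutes with the left $\tilde G$-action and with the right $K_\C$-action, and is preserved by the lifted involution because $\Gamma$ is $\tau_h$-invariant; hence the $G_{\tau_h}\times K_\C$-action and the involution descend to $\Xi=\tilde\Xi_{G_\C}/\Gamma$, giving (Cr1). The map $\eta_\Xi$ is obtained as the composition $\tilde\Xi_{G_\C}/\Gamma\to\Xi_{G_\C}$, which is still a covering map onto its image, verifying (Cr3). Condition (Cr2) for $W^c$ in \eqref{eq:wc-ssc} follows from case (b) by passing to the $\Gamma$-quotient, since $\Gamma$ commutes with $\exp(\R h)$ and preserves $W^c$ setwise.
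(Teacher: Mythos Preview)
Your overall architecture (prove (a), lift to (b), descend to (c)) matches the paper's, and your treatment of (Cr1), (Cr3), and cases (b)--(c) is essentially the same. The genuine gap is in your ``Key step'' for (Cr2) in case~(a).

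The paper does \emph{not} attempt a direct spectral/BCH computation. Instead it passes to the quotient $\Xi_{G_\C/K_\C}=G\exp(i\Omega_\fp)K_\C/K_\C$, observes that $\Xi_{G_\C}\to\Xi_{G_\C/K_\C}$ is a holomorphic $K_\C$-principal bundle over a contractible base, and uses \cite[Thm.~6.1]{NO23b} to identify $\Xi_{G_\C/K_\C}^{\oline\tau_h}\cong G^h_e\times_{K^h_e} i\Omega_\fp^{-\tau_h^\g}$, which yields $W^c=\Xi_{G_\C}^{\oline\tau_h}$. The inclusion $\exp(\cS_{\pm\pi/2}h)\,W^c\subeq\Xi_{G_\C}$ is then equivalent to $\exp(\cS_{\pm\pi/2}h)\,\Xi_{G_\C/K_\C}^{\oline\tau_h}\subeq\Xi_{G_\C/K_\C}$, and this is quoted from \cite[\S8]{MNO24}. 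In other words, the hard analytic content is outsourced to the theory of crown domains of Riemannian symmetric spaces.

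Your direct approach has two concrete problems. First, you only obtain the spectral bound $\Spec(\ad Y_t)\subeq(-\pi/2,\pi/2)$ after shrinking $X$; but the theorem asserts that the \emph{specific} $W^c$ in \eqref{eq:wc-ss} works, with $X$ ranging over all of $\Omega_\fp^{-\tau_h^\g}$, so shrinking changes the statement. Second, and more seriously, the element $e^{it}X_1+e^{-it}X_{-1}=\cos(t)(X_1+X_{-1})+i\sin(t)(X_1-X_{-1})$ has its ``extra'' term $\sin(t)(X_1-X_{-1})$ in $\fp$, not in $\fk$, so it cannot be absorbed into a $K_\C$-factor by any simple BCH manipulation; showing that $\exp(i(e^{it}X_1+e^{-it}X_{-1}))\exp(ith)\in\exp(i\Omega_\fp)K_\C$ is exactly the nontrivial geometric statement about the crown that \cite[\S8]{MNO24} establishes. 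Your ``BCH-type calculation, together with the $\Ad(K)$-invariance and convexity of $\Omega_\fp$'' is a placeholder, not an argument, and Simon's result \cite{Si24} concerns orbit maps of $K$-finite vectors in representations, not the purely geometric inclusion you need here.
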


\begin{proof} (a) As $\Xi_{G_\C}$ is the inverse image of the open crown domain
  \[ \Xi_{G_\C/K_\C} = G \exp(\ie\Omega_\fp)K_\C
    \cong G \times_K \ie \Omega_\fp \]
  in $G_\C/K_\C$,   it is an
  open subset of $G_\C$ which is a holomorphic $K_\C$-principal
  bundle over the contractible space $\Xi_{G_\C/K_\C}$.
  Condition (Cr1) follows from the invariance of~$G$,
  $\exp(\ie \Omega_\fp)$ and $K_\C$ under the antiholomorphic
  involution $\oline\tau_h$.
  To verify (Cr2), we first observe that $W^c
  \subeq \Xi_{G_\C}^{\oline \tau_h}$ follows from the fact that all
  $3$ factors in \eqref{eq:wc-ss} consist of fixed points of $\oline\tau_h$.
  The set of $\oline\tau_h$-fixed points in $\Xi_{G_\C}$
  is a fiber bundle over
  \[ \Xi_{G_\C/K_\C}^{\oline\tau_h}
    \cong G^h_e \times_{K^h_e} \ie \Omega_\fp^{-\tau_h^\g}
\into  G_\C^{\oline\tau_h}/K_\C^{\oline\tau_h}\]
  (\cite[Thm.~6.1]{NO23b}),
  so that the $K_\C$-principal bundle structure of $\Xi_{G_\C}$
  implies that
  $\Xi_{G_\C}^{\oline \tau_h}$ is a $K_\C^{\oline\tau_h}$-principal bundle.
  We conclude that $W^c = \Xi_{G_\C}^{\oline \tau_h}$.
The inclusion $\exp(\cS_{\pm \pi/2}h) W^c \subeq \Xi_{G_\C}$ is equivalent to 
$\exp(\cS_{\pm \pi/2}h) \Xi_{G_\C/K_\C}^{\oline\tau_h} \subeq \Xi_{G_\C/K_\C}$,
which is shown in \cite[\S 8]{MNO24}.

\nin (b) We now assume that $G$ is simply connected, 
  so that $\eta_G \: G \to G_\C$ has discrete kernel and
  $G_\C$ is simply connected. 
  The discussion in the proof of \cite[\S 3, Prop.~5]{FNO25a} shows
  that the simply connected covering $\Xi$ of $\Xi_{G_\C}$ is a complex manifold which is a
  $ K_\C$-principal bundle over the contractible space
  $\Xi_{G_\C/K_\C}$. 
  This implies that 
  $\eta_G \: G \into \Xi_{G_\C}$ lifts to an embedding 
  $G \into \Xi$ and
  $\pi_1(\eta_G(G)) \cong \ker(\eta_G) \cong \pi_1(\Xi_{G_\C})$
  acts as a group  of deck transformations on $\Xi$.
  We thus obtain a free action of~$G$ on $\Xi$ from the left, 
  and a free holomorphic action of the simply connected
 universal complexification $ K_\C$ of the integral subgroup
 $ K = \exp_G \fk \subeq G$ from the right. 
The exponential map $\ie \Omega_\fp \to \Xi_{G_\C}$ also lifts
  to a map $\exp \: \ie\Omega_\fp \to \tilde\Xi_{G_\C}$. 
In this sense, we obtain a factorization 
  \[ \Xi = G \exp(\ie \Omega_\fp)  K_\C. \]
The involution $\tau_h$ of $G$ extends to an antiholomorphic
involution $\oline\tau_h$ on $\Xi$ (by the Lifting Theorem for Coverings),
and we obtain a connected open subset 
  \[ W^c := G^h_e \exp(\ie \Omega_\fp^{-\tau_h^\g})  K_\C^{\oline\tau_h}
    \subeq \Xi^{\oline\tau_h}. \]
Here $ K_\C^{\oline\tau_h}$ is connected because
  $ K_\C$ is simply connected (\cite[Thm.~IV.3.4]{Lo69}).  
  We thus obtain crowned Lie groups in both situations.

  \nin (c) follows easily from (b) by factorization of the discrete
  central subgroup $\pi_1(G) \cong \Gamma \subeq \tilde G$. 
\end{proof}

\begin{theorem} \label{thm:4.9-semisimp}
  Let $(U,\cH)$ be an irreducible antiunitary representation of 
  $G_{\tau_h} = G \rtimes \{\bone,\tau_h\},$
  where $G$ is a connected semisimple Lie group.
  Let $\cF \subeq \cH$ be a finite-dimensional subspace invariant under
  $K$ and $J$.   Then
  \[ \sF := \cF^J \subeq \cH^\omega(\Xi) \cap \cH^J_{\rm temp}
    \quad \mbox{ and }\quad \sE = \beta^+(\sF) \subeq \cH^{-\infty},\]
  for
  $\beta^+$ from \eqref{eq:beta+}.
  The net push-forward net $\sH^M_{\sE}$ from \eqref{eq:pushforward}
  on the non-compactly causal 
  symmetric space $M = G/H$ for $H = K^{\tau_h,h} \exp(\fh_\fp)$ 
  from {\rm Definition~\ref{def:ncc-reductive}}
    satisfies {\rm (Iso), (Cov), (RS)} and
  {\rm (BW)}, where $W = W_M^+(h)_{eH}$ is the connected component
  of the positivity domain of $h$ on $M$, containing the base point.
\end{theorem}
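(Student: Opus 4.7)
The plan is to combine the Construction Theorem~\ref{thm:4.9} with the semisimple crown domain from Theorem~\ref{thm:gss}, and then descend the resulting net from $G$ to $M = G/H$ via the push-forward of Definition~\ref{def:push-forward-net}. First, I would obtain the $(G,h)$-crown domain $\Xi$ from Theorem~\ref{thm:gss}: if $G$ is simply connected use part (b); otherwise write $G \cong \tilde G/\Gamma$, with $\Gamma$ automatically $\tau_h$-invariant because $\tau_h$ descends to $G$, and apply part~(c). Simon's extension results in \cite{Si24}, refining \cite{KSt04} to the non-linear case, then assert that the orbit map of every $K$-finite analytic vector of an irreducible antiunitary representation of $G_{\tau_h}$ extends holomorphically to $\Xi$, and moreover that along $\exp(\R h)$ the tempered growth bound \eqref{eq:growthcond} is satisfied. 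Since $\cF$ is $K$-invariant and finite-dimensional, each of its vectors is $K$-finite, so $\cF \subeq \cH^\omega(\Xi)$ and the $J$-fixed part $\sF = \cF^J \subeq \cH^\omega(\Xi) \cap \cH^J_{\rm temp}$.

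For cyclicity, $J$-invariance of $\cF$ gives $\cF = \cF^J \oplus i\cF^J$ as a real direct sum, so the complex span of $U(G)\sF$ contains $U(G)\cF$, which is dense by irreducibility. Theorem~\ref{thm:4.9} then yields a net $\sH^G_\sE$ on $G$ satisfying (Iso), (Cov), (RS), and $\sH^G_\sE(W^G) = \sV$ with $W^G = G^h_e\exp(\Omega)$, where $\Omega = \zeta(\ie\Omega')$ for the rotation $\zeta = e^{-\frac{\pi i}{2}\ad h}$ and for a $0$-neighborhood $\Omega' \subeq \g^{-\tau_h^\g}$ lifted from the $W^c$ in \eqref{eq:wc-ssb}. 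To pass to $M$, the inherited properties (Iso), (Cov) and (RS) are automatic for the push-forward. For (BW) one must establish $\sH^M_\sE(W) = \sV$ with $W = W_M^+(h)_{eH}$, equivalently $\sH^G_\sE(q_M^{-1}(W)) = \sV$. Comparing the description $W = G^h_e\exp(\Omega_{\fq_\fk}).eH$ from \eqref{eq:wm+ss} with the formula for $W^G$ (using $\zeta(\ie(x_1 + x_{-1})) = x_1 - x_{-1}$ for $x_{\pm 1} \in \g_{\pm 1}(h)$ and the identification of $\fq_\fk$ with $\fp^{-\tau_h^\g}$ induced by $\zeta$), one identifies $q_M^{-1}(W) = W^G\cdot H$; then Lemma~\ref{lem:fragment} yields $\sH^G_\sE(W^G\cdot H) = \sH^G_\sE(W^G) = \sV$ as soon as $\sE$ is $H$-invariant.

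Invariance of $\sE$ under $K^{\tau_h,h}$ is immediate: these elements lie in $G^h$, hence commute with both $U_h(\R)$ and $J$, so they commute with $\beta^+$, and $\sF = \cF^J$ is plainly $K^{\tau_h,h}$-stable. The main obstacle will be invariance under $\exp(\fh_\fp)$, since $\fh_\fp \not\subeq \g_0(h)$ and the corresponding one-parameter subgroups do not commute with the modular flow. Here the equivariance Lemma~\ref{lem:beta+-equiv}, $\beta^+\circ \dd U(x) = \dd U^{-\omega}(\zeta(x))\circ\beta^+$, transports the infinitesimal $\fh_\fp$-action on $\sE$ to an action of $\zeta(\fh_\fp) \subeq \g_\C$; one exploits that $\zeta$ maps $\fh_\fp \subeq \fp^{-\tau_h^\g}$ into the compact c-dual subspace $i\fh^c \subeq \g^c$, whose infinitesimal action on $\cH^{-\infty}$ preserves $\cH^{-\infty}_{\rm KMS}$, and hence (by Theorem~\ref{thm:BN24} and Proposition~\ref{prop:4.8}) the standard subspace~$\sV$. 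Integrating the infinitesimal invariance to an invariance of $\sE$ under the full group $\exp(\fh_\fp)$ requires some care, because one must control the extension of the analytic orbit map on $\Xi$ under the $\fh_\fp$-action; this is the technical heart of the argument and is carried out in \cite{FNO25a}.
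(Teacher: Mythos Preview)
Your proposal is correct and follows essentially the same route as the paper's (explicitly labeled) sketch proof: Kr\"otz--Stanton/Simon for the inclusion $\cH^{[K]} \subeq \cH^\omega(\Xi)$ and the tempered growth bound, irreducibility for cyclicity of $\sF$, Theorem~\ref{thm:4.9} for the net on $G$, the equivariance Lemma~\ref{lem:beta+-equiv} for $H$-invariance of $\sE$, and Remark~\ref{rem:push-forward} for the descent to $M$. Your additional detail---the explicit identification $q_M^{-1}(W) = W^G H$ via the bijection $\zeta \colon i\Omega_\fp^{-\tau_h} \to \Omega_{\fq_\fk}$, and the decomposition of $H$-invariance into the easy $K^{\tau_h,h}$-part and the harder $\exp(\fh_\fp)$-part---goes beyond what the paper writes out and is correct.

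One small point: in your discussion of $\exp(\fh_\fp)$-invariance, the sentence about $\zeta(\fh_\fp)$ ``preserving $\cH^{-\infty}_{\rm KMS}$ and hence $\sV$'' is a detour. What the equivariance lemma actually buys you is $\dd U^{-\omega}(y)\sE = \beta^+(\dd U(\zeta^{-1}(y))\sF)$ for $y \in \fh_\fp$, and since $\zeta^{-1}(\fh_\fp) = i\fq_\fk \subeq i\fk$, the $K$- and $J$-invariance of $\cF$ give $\dd U(i\fq_\fk)\sF \subeq \sF$ directly (for $z \in \fq_\fk$ one checks $J(i\,\dd U(z)v) = i\,\dd U(z)v$ using $\tau_h(z) = -z$). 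This is the infinitesimal invariance of the \emph{finite-dimensional} space $\sE$ itself, not merely of the ambient $\cH^{-\infty}_{\rm KMS}$; you are right that integrating it to the group is where the work in \cite{FNO25a} lies.
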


Note that $\tau_h(K) = K$ implies that $J$ leaves the dense subspace
$\cH^{[K]}$ of $K$-finite vectors invariant. Therefore $J$-invariant
finite-dimensional $K$-invariant subspaces exist in abundance.

\begin{proof} (Sketch)  % We may w.l.o.g.~assume that $G$ is simply connected.
  The Kr\"otz--Stanton Extension Theorem and Simon's generalization \cite{Si24}
  for non-linear groups imply that
  the space $\cH^{[K]}$ of $K$-finite vectors is contained in
  $\cH^\omega(\Xi)$. By \cite[Thm. 3.2.6]{Si24},
   the space $\cH^{[K]} \cap \cH^J$
   of $J$-fixed $K$-finite vectors, which is dense in $\cH^J$,
   is contained  in $\cH^J_{\rm temp}$, so that
\[ \sF \subeq \cH^\omega(\Xi) \cap \cH^J_{\rm temp} \] 
  (cf.~Theorem~\ref{thm:6-1-fno24}).
  Irreducibility of $U$ further implies that
  $U(G)\sF$ is total in $\cH$.
  Therefore the assumptions of Theorem~\ref{thm:4.9} are satisfied
  for the finite-dimensional subspace
  $\sE = \beta^+(\sF) \subeq \cH^{-\infty}$,
  and natural equivariance properties,
  such as Lemma~\ref{lem:beta+-equiv} then imply that
  it is {\bf $H$-invariant}. Therefore
  the net $\sH^M_{\sE} = (q_M)_* \sH^G_{\sE}$
on $M = G/H$, defined  as in \eqref{eq:pushforward}, 
also satisfies (RS) and (BW) (Remark~\ref{rem:push-forward}). 
\end{proof}

\begin{examplekh} (cf.\ Exercise~\ref{exer:mink-deSitter})
  For de Sitter space $M = \dS^d \subeq \jV := \R^{1,d}$
  and the Lorentzian forms $x^2 = x_0^2 - \bx^2$ on $\R^{1,d}$,
  a natural complexification is the complex sphere
  \[ M_\C :=\{ z = (z_0, \bz) \in \C^{1 + d} \:
    z_0^2 - \bz^2 = -1\}.\]
  It contains $M = M_\C \cap \R^{1,d}$ and also the Riemannian symmetric spaces
  \[ \bH_\pm := \{ (iy_0, i \by) \: y_0^2 - \by^2 = 1, \pm y_0 > 0 \}
    \cong \SO_{1,d}(\R)_e/\SO_d(\R).\]
  Here $G = \SO_{1,d}(\R)_e \subeq G_\C = \SO_{1,d}(\C) \cong\SO_{1+d}(\C)$
  and   $K = \SO_d(\R) \subeq K_\C = \SO_d(\C)$.
  The crown domains of the hyperbolic spaces $\bH_\pm \cong G/K$ are the
intersections with tube domains   $\jV \pm i \jV_+$:  
  \[ \Xi_\pm := M_\C \cap (\jV \pm i \jV_+). \]
For both domains,
\[ \dS^d = \{ (x_0, \bx) \in \R^{1,d} \: x_0^2 - \bx^2 = - 1\}
  \subeq \partial_{M_\C} \Xi_\pm.\] 
 For the Euler element $h$ given by the Lorentz boost 
 \[     h.(x_0, x_1, \ldots, x_{d-1}) = (x_1, x_0, 0,\ldots, 0),\]
$\zeta := \exp(-\frac{\pi \ie}{2} h)$  acts by 
 \[ \zeta.x =  (-i x_1, -i x_0, x_2, \ldots, x_d),\]
 so that $\zeta.i\be_0 = \be_1 \in \dS^d$.

 We also note that, for $\jV \subeq \partial(\jV + i \jV_+)$
 and $C := \oline{\jV_+}$, the set of KMS-points in $\jV$  is
 \[ \jV_{\rm KMS} = C_+^\circ + \jV_0 + C_-^\circ = W_\jV^+(h),
   \quad \mbox{ where } \quad
   C_\pm = \R_{\geq 0}(\be_1 \pm \be_0)\]
 (cf.~Examples~\ref{exs:kms-dom}).  Accordingly,
 \[ \dS^d_{\rm KMS} = \jV_{\rm KMS} \cap \dS^d
   = W_{\dS^d}^+(h).\]
\end{examplekh}

\begin{examplekh} \label{ex:Ipq}
  (cf.\ Exercise~\ref{exer:satellites}) 
  For $G = \SL_n(\R)$, $n = p + q$, and the Euler element
  \[ h := h_q := \frac{1}{n}\pmat{q \bone_p & 0 \\ 0 & -p\bone_q} \in \fsl_n(\R) \]
  from \eqref{eq:hk-sln}, the corresponding non-compactly causal
  involution   $\tau = \tau_h \theta$ (cf.~Subsection~\ref{subsec:ncc-spaces}) is
  \[ \tau(x) = - I_{p,q} x^\top I_{p,q}.  \] 
  Therefore $G^\tau = \SO_{p,q}(\R)$ and, for the action of
  $G$ on $\Sym_n(\R)$, we have 
  \[ M := G.I_{p,q} = \{ g I_{p,q} g^\top \: g \in \SL_n(\R)\}.\]
  This space carries a causal structure for which
  $M \into (\Sym_n(\R), \Sym_n(\R)_+)$ becomes an embedding of
  causal manifolds.

  Here $M_r := G.I_n = G.\bone \cong G/K$ is the corresponding Riemannian symmetric
  space. For
  \[ \zeta := \exp\Big(-\frac{\pi i}{2}h_q\Big) \]
  we have
  \[     \zeta.I_n
    = \exp(-\pi i h_q)  
    = e^{-\pi i q/n}\bone_p \oplus e^{\pi i p/n} \bone_q
  = e^{-\pi i q/n} I_{p,q},\]
so that $G.(\zeta.I_n) \cong G.I_{p,q} \cong M$.
\end{examplekh}

\subsection{The Poincar\'e group}

We consider the Poincar\'e group
\[ G := \R^{1,d} \rtimes \SO_{1,d}(\R)_e
  \subeq G_\C := \C^{1+d} \rtimes \SO_{1+d}(\C)\]
and the Euler element $h \in \so_{1,d}(\R) \subeq \g$, generating
a Lorentz boost:
\[ h(x_0, x_1, \ldots, x_d) := (x_1, x_0,0,\cdots, 0).\]
The corresponding involution
\[  e^{\pi \ie h} = \diag(-1,-1,1,\ldots, 1) \in \SO_{1,d}(\R) \]
acts by conjugation on $G_\C$ (denoted $\tau_h)$ and 
we also obtain an antiholomorphic involution on $G_\C$ 
by $\oline\tau(g) := \tau_h(\oline g)$. 

We consider the action of $G_{\C,\oline\tau_h} = G_\C \rtimes \{\bone,\oline\tau_h\}$
on $M = \C^{1+d}$ by real affine maps, and
\[ \tau_M(z_0, \cdots, z_d) = (-\oline z_0, -\oline z_1,
  \oline z_2, \cdots, \oline z_d).\]
Write 
\[  \jV_+ := \{ x = (x_0, \bx) \in \R^{1,d} \: x_0 > \sqrt{\bx^2} \} \]
for the open future light cone in Minkowski space $\jV := \R^{1,d}$.
Then 
\[ \Xi_M := \R^{1,d} + \ie \jV_+
= \{z \in \C^{1+d} \: \Im z \in \jV_+ \} \]
is an open tube domain in $M$, invariant under $G_{\tau_h}$,
and the subset of $\tau_M$-fixed points is 
\[ \Xi_M^{\tau_M} = \{ (\ie y_0, \ie y_1, y_2, \ldots, y_d) \:
  y_j \in \R, y_0 > |y_1| \} \subeq i \R^2 \oplus \R^{d-1}.\]
For $z := (\ie y_0, \ie y_1, y_2, \ldots, y_d) \in \Xi_M^{\tau_M}$, we have 
\[ \Im(e^{\ie th}z) = (\cos(t)y_0, \cos(t)y_1, 0,\ldots,0), \]  
so that
\[ e^{\ie t h} \Xi_M^{\tau_M}  \subeq \Xi_M \quad \mbox{ for } \quad
  |t| < \pi/2.\]
We thus put $W^{M,c} :=  \Xi_M^{\tau_M}$. 

For $m_0 := \ie \be_0 \in \Xi_M^{\tau_M}$ we now obtain a crown domain
\[ \Xi := \{ g \in G_\C \: g.m_0 \in \Xi_M \},\]
and we put 
\[ W^c := \{ g \in G_\C^{\oline\tau_h} \: g.m_0 \in \Xi_M \} 
  = \{ g \in G_\C^{\oline\tau_h} \: g.m_0 \in W^{M,c} \}
  = \Xi^{\oline\tau_h}.\]
By Lemma~\ref{lem:Mxi},
  this data defines a crowned Lie group $(G,h, \Xi)$. 

The unitary representations
of $G$ that are most relevant in Physics can be realized 
on a Hilbert space $\cH$ of holomorphic functions $f \: \Xi_M \to \cK$,
where $\cK$ is a finite-dimensional Hilbert space, and $\cH$
is specified by a reproducing kernel of the form
\begin{equation}
  \label{eq:K(z,w)}
 K(z,w) = \tilde\mu(z - \oline w) =
 \int_{\jV_+^\star} e^{\alpha(z - \oline w)}\, d\mu(\alpha),
\end{equation}
where $\mu$ is a tempered $\Herm(\cK)_+$-valued measure on the dual cone 
\[ \jV_+^\star = \{ \lambda \in  \jV^* \: \lambda(\jV_+)
\subeq [0,\infty[\}.\]  
The Fourier transform of $\mu$  
is considered as a holomorphic function $\tilde\mu \: \Xi_M \to B(\cK)$, 
whose boundary values define an element in $\cS'(\R^{1+d}, B(\cK))$. 
More concretely, there exists a
representation ${\rho \: G \to \GL(\cK)}$ such that
\[ (U(x,g)f)(z)  = \rho(g) f(g^{-1}.(z-x)),
  \quad  (x,g) \in G, z \in \Xi_M.\] 
We refer to \cite{NOO21} for a detailed discussion of the analytic aspects
of such Hilbert spaces and the standard subspaces associated to~$h$.
We extend $U$ to an antiunitary representation of $G_{\tau_h}$ by
\[ (Jf)(z) := J_\cK f(\tau_M(z)),\]
where $J_\cK$ is a conjugation on $\cK$.
Then $\cH^J$ consists of those functions
with $f(\Xi_M^{\tau_M}) \subeq \cK^{J_\cK}$
(\cite[Lemma~2.5]{NOO21}).

Let
\[ K_z \: \cH \to \cK, \quad K_z(f) := f(z) \]
denote the evaluation operator in $z \in \Xi_M$ and
$K_z^* \: \cK \to \cH$ its adjoint. Then the functions 
\begin{equation}
  \label{eq:kwxi}
  K_w^* \xi, \quad w \in \Xi_M^{\tau_M}, \xi \in \cK^{J_\cK}
\end{equation}
are contained in $\cH^J$ and span a dense subspace thereof
(\cite[Lemma~3.11]{NOO21}). A~straightforward calculation shows that
\[ U(g) K_w^*\xi = K_{g.w}^* \rho(g^{-1})^* \xi.\]
As the representation $\rho$ extends to a holomorphic representation
of $G_\C$, it follows that $K_w^*\xi \in \cH^\omega(\Xi)$ if 
$w \in W^{M,c} = \Xi_M^{\tau_M}$, and thus
all functions \eqref{eq:kwxi} are contained in $\cH^\omega(\Xi)^J$.
To see that they are actually contained in $\cH^J_{\rm temp}$,
we need to estimate the norms
$\|e^{\ie t \partial U(h)} K_w^*\xi\|$ for $|t| \to \pi/2$
(cf.\ \eqref{eq:growthcond}). 
As $\rho(\exp(\ie th))$ is bounded for $|t| \leq \pi/2$, we have to verify
that
\[ \| K(e^{\ie th}w, e^{\ie th}w)\| \leq C \Big(\frac{\pi}{2} - |t|\Big)^{-N}
\quad \mbox{ for some } \quad C, N > 0.\] The operator
$K(e^{\ie th}w, e^{\ie th}w)$ is the Fourier transform of $\mu$, evaluated
in
\[ e^{\ie th} w - e^{-\ie th} \oline w, \quad
  w = (\ie  y_0, \ie  y_1, y_2, \ldots, y_d),\]
so that
\begin{equation}
  \label{eq:expws}
  e^{\ie th} w - e^{-\ie th} \oline w
  = 2\cos(th) (\ie y_0 \be_0 + \ie y_1 \be_1) \in \R \ie \be_0 + \R \ie \be_1.
\end{equation}
In view of \cite[Prop.~4.11, \S 2.3]{FNO25a}, the
temperedness of the measure $\mu$ yields an estimate
\[ \|\tilde\mu(x + \ie y)\| \leq C \|y\|^{-N}\quad \mbox{ for } \quad
  x + \ie y \in \R^{1,d} + \ie \jV_+, \]
and we conclude from \eqref{eq:expws}
that the functions $K_w^*\xi$ are contained in $\cH^J_{\rm temp}$. 
Therefore all our assumptions are satisfied for the finite-dimensional space
\[ \sF := \{ K_{\ie \be_0}^* \xi \:  \xi \in \cK^{J_\cK}\}.\]
We refer to \cite{NOO21} for detailed descriptions of the corresponding
standard subspaces $\sV \subeq\cH$. 

\subsection{Appendices to Section~\ref{sec:4}}

\subsubsection{Tools for nets of real subspaces}
\label{app:4.1}

\begin{lemma} \label{lem:fraglem}
Let $\cO \subeq G$ be open and $\phi \in C^\infty_c(\cO)$. 
We further assume that $(\cO_j)_{j \in J}$ is an open cover of~$\cO$.
Then there exist $j_1, \ldots, j_k \in J$ and $\phi_\ell \in C^\infty_c(\cO_{j_\ell})$ such that $\phi  = \phi_1 + \cdots + \phi_k$.
\end{lemma}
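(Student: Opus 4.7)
The plan is to proceed by a standard compactness plus partition-of-unity argument. First I would note that $K := \supp(\phi) \subeq \cO$ is compact, and since the sets $(\cO_j)_{j \in J}$ form an open cover of $\cO$, hence in particular of $K$, compactness yields a finite subcover: there exist $j_1,\ldots, j_k \in J$ with $K \subeq \cO_{j_1} \cup \cdots \cup \cO_{j_k}$.

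Next, since $G$ is a smooth (paracompact) manifold, there exists a smooth partition of unity $(\chi_1, \ldots, \chi_k)$ on an open neighborhood $\cU$ of $K$ in $\cO_{j_1} \cup \cdots \cup \cO_{j_k}$, subordinate to this finite subcover, so that
\[ \chi_\ell \in C^\infty_c(\cO_{j_\ell}, \R), \qquad 0 \leq \chi_\ell \leq 1, \qquad \sum_{\ell=1}^k \chi_\ell \equiv 1 \ \ \text{on a neighborhood of } K. \]
I would then define
\[ \phi_\ell := \chi_\ell \cdot \phi \in C^\infty(G,\R). \]
Each $\phi_\ell$ is smooth, and $\supp(\phi_\ell) \subeq \supp(\chi_\ell) \cap \supp(\phi)$ is a compact subset of $\cO_{j_\ell}$, so that $\phi_\ell \in C^\infty_c(\cO_{j_\ell},\R)$. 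Moreover, on $K$ we have $\sum_\ell \chi_\ell = 1$, so $\sum_\ell \phi_\ell = \phi$ on $K$, and both sides vanish outside $K$, hence the desired decomposition $\phi = \phi_1 + \cdots + \phi_k$ holds on all of $G$.

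The only potentially non-trivial input is the existence of a smooth partition of unity subordinate to a finite open cover of a compact subset of a Lie group, which is a standard fact on paracompact (in particular, second-countable) smooth manifolds. So in fact there is no genuine obstacle here; the lemma is purely a ``fragmentation'' statement for compactly supported smooth functions and reduces to routine tools from differential geometry.
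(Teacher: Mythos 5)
Your proof is correct and follows the same basic strategy as the paper's: extract a finite subcover of the compact set $\supp(\phi)$, pick a subordinate partition of unity, and multiply by $\phi$. The only cosmetic difference is that the paper first enlarges to a cover of all of $G$ by adjoining $G\setminus\supp(\phi)$ and takes a global partition of unity (so that the extra term $\chi_0\phi$ vanishes identically), whereas you work with a finite family of compactly supported bump functions summing to $1$ on a neighborhood of $K$; these two formulations of the partition-of-unity argument are standard and interchangeable, and both correctly yield $\phi_\ell\in C^\infty_c(\cO_{j_\ell})$ with $\phi=\sum_\ell\phi_\ell$.
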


\begin{proof} The family $(\cO_j)_{j \in J}$ is
  an open cover of $\supp(\phi)$, and there
  exist $j_1, \ldots, j_k \in J$ such that
  \[ \supp(\phi) \subeq \cO_{j_1} \cup \cdots \cup \cO_{j_k}.\]  
  Then
  \[ G \setminus \supp(\phi), \quad  \cO_{j_1}, \ldots, \cO_{j_k}\]
  is an open cover of $G$. Let $\chi_0, \ldots, \chi_k$ be a subordinated
  partition of unity. Then   $\phi = \sum_{j = 1}^k \phi_j$,
  where $\phi_j := \chi_j \phi$ satisfies $\supp(\phi_j) \subeq \cO_j$.
\end{proof}

\begin{lemma} \label{lem:fragment} {\rm(Fragmentation Lemma)} 
For $\eset\not=\cO \subeq G$ open,  the following assertions hold: 
\begin{enumerate}
\item[\rm(a)] If $H \subeq G$ is a closed subgroup, then 
  \begin{enumerate} 
  \item[\rm(i)] every test function $\phi \in C^\infty_c(\cO H,\R)$ is a finite sum of 
test functions of the form 
\[ \psi \circ \rho_p \: G \to \C, \quad g \mapsto \psi(gp),
\quad 
\psi \in C^\infty_c(\cO,\R), p \in H.\] 
  \item[\rm(ii)] every test function $\phi \in C^\infty_c(H\cO,\R)$ is a finite sum of 
test functions of the form 
\[ \psi \circ \lambda_p \: G \to \C, \quad g \mapsto \psi(pg),
\quad 
\psi \in C^\infty_c(\cO,\R), p \in H.\] 
\end{enumerate} 
\item[\rm(b)] Every $\phi \in C^\infty_c(G,\R)$ 
is a finite sum $\sum_{j = 1}^n \phi_j \circ \lambda_{g_j}$ 
with $\phi_j \in C^\infty_c(\cO,\R)$ and $g_j \in G$. 
\end{enumerate}
\end{lemma}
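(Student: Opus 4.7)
The plan is to deduce all three statements from the basic fragmentation result Lemma~\ref{lem:fraglem}, combined with an open cover of $\cO H$ (resp.\ $H\cO$, resp.\ $G$) by right or left translates of $\cO$, and then to redistribute the support of each resulting piece back into $\cO$ by a single translation.

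For (a)(i), I would first write $\cO H = \bigcup_{p \in H} \cO p$, which is an open cover since each $\cO p$ is open in $G$. Given $\phi \in C^\infty_c(\cO H, \R)$, compactness of $\supp(\phi)$ supplies finitely many $p_1, \ldots, p_k \in H$ with $\supp(\phi) \subeq \cO p_1 \cup \cdots \cup \cO p_k$. Applying Lemma~\ref{lem:fraglem} to this finite open cover yields a decomposition $\phi = \phi_1 + \cdots + \phi_k$ with $\phi_j \in C^\infty_c(\cO p_j, \R)$. I would then set $\psi_j(g) := \phi_j(g p_j)$; this $\psi_j$ lies in $C^\infty_c(\cO, \R)$, because $g \in \supp(\psi_j)$ forces $g p_j \in \supp(\phi_j) \subeq \cO p_j$ and hence $g \in \cO$. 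Since $(\psi_j \circ \rho_{p_j^{-1}})(g) = \psi_j(g p_j^{-1}) = \phi_j(g)$, we obtain $\phi = \sum_{j=1}^k \psi_j \circ \rho_{p_j^{-1}}$, which is the required form with $p = p_j^{-1} \in H$. Statement (a)(ii) is the mirror image: cover $H \cO$ by left translates $p \cO$, $p \in H$, extract a finite subcover of $\supp(\phi)$, fragment via Lemma~\ref{lem:fraglem} into pieces $\phi_j$ supported in $p_j \cO$, and set $\psi_j(g) := \phi_j(p_j g)$; the analogous support check gives $\psi_j \in C^\infty_c(\cO, \R)$, and $\phi_j = \psi_j \circ \lambda_{p_j^{-1}}$.

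Part (b) is then an immediate corollary of (a)(ii) applied to the closed subgroup $H := G$, since $H \cO = G$; alternatively, one can argue directly by writing $G = \bigcup_{g \in G} g \cO$, extracting a finite subcover of $\supp(\phi)$ and invoking Lemma~\ref{lem:fraglem}. The argument is routine, and no genuine obstacle is expected; the only thing to watch is the bookkeeping between $p_j$ and $p_j^{-1}$, so that the translated function $\psi_j$ is supported inside $\cO$ itself rather than inside some translate of~$\cO$.
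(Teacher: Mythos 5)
Your proof is correct and follows the same route as the paper: cover $\cO H$ (resp.\ $H\cO$, $G$) by right (resp.\ left) translates of $\cO$, apply Lemma~\ref{lem:fraglem} to fragment $\phi$, and pull each piece back into $\cO$ by the corresponding translation, with the same bookkeeping $\psi_j = \phi_j \circ \rho_{p_j}$ and $\phi_j = \psi_j \circ \rho_{p_j^{-1}}$. The extra observation that (b) is (a)(ii) with $H = G$ is a pleasant shortcut, but amounts to the same cover $(g\cO)_{g\in G}$ used in the paper.
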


\begin{proof} (a)(i) The family $(\cO p)_{p \in H}$ is an open cover of the 
  compact subset $\supp(\phi)$, so that
  Lemma~\ref{lem:fraglem} implies that
$\phi = \sum_{j = 1}^n \phi_j$ with $\supp(\phi_j) \subeq \cO p_j$. 
Then $\psi_j := \phi_j \circ \rho_{p_j} \in C^\infty_c(\cO,\R)$ and 
$\phi = \sum_{j= 1}^n \psi_j \circ \rho_{p_j^{-1}}$. 

\nin (a)(ii) and (b) are proved along the same lines. For (b), we use 
the open cover $(g\cO)_{g \in G}$ of the group $G$. 
\end{proof}

\begin{lemma}  \label{lem:sw64-genb} 
Let $(U, \cH)$ be a unitary representation of $G$, let 
$\sE \subeq \cH^{-\infty}$ be a real linear subspace, 
$H \subeq G$ a closed subgroup 
and $\eset\not=\cO \subeq G$. 
Then the following assertions hold: 
\begin{description}
\item[\rm(a)] $\sH_\sE^G(\cO H) = \sH_{\hat\sE}^G(\cO)$ for
  $\hat\sE := \Spann(U^{-\infty}(H)\sE)$.
\item[\rm(b)] $\sH_\sE^G(\cO H) = \sH_\sE^G(\cO)$ if $\sE$ is $H$-invariant. 
\item[\rm(c)] $\sH_\sE^G(H\cO)$ is the closed real span of $U(H)\sH_\sE^G(\cO)$. 
\item[\rm(d)] The real subspace 
spanned by $U(G)\sH_\sE^G(\cO)$ is dense in $\sH_\sE^G(G)$. 
\end{description}
\end{lemma}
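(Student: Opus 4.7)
\textbf{Proof proposal for Lemma \ref{lem:sw64-genb}.}
The plan is to reduce all four parts to the Fragmentation Lemma~\ref{lem:fragment} combined with two elementary composition identities for the operators $U^{-\infty}(\phi)$. Namely, for $\psi\in C^\infty_c(G,\R)$ and $p\in G$, a substitution in the defining integral, using the left invariance of Haar measure and the transformation rule $\int \psi(gp)\,dg=\Delta_G(p)^{-1}\int\psi(g)\,dg$, yields
\begin{align*}
U^{-\infty}(\psi\circ\rho_p) &= \Delta_G(p)^{-1}\,U^{-\infty}(\psi)\,U^{-\infty}(p^{-1}),\\
U^{-\infty}(\psi\circ\lambda_p) &= U^{-\infty}(p^{-1})\,U^{-\infty}(\psi).
\end{align*}
Both are well-defined on $\cH^{-\infty}$: the operator $U^{-\infty}(\psi)$ maps $\cH^{-\infty}$ into $\cH^\infty$, after which $U^{-\infty}(p^{-1})=U(p^{-1})$ acts on $\cH$. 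The positive real scalar $\Delta_G(p)^{-1}$ is harmless for real linear spans.

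For part (a), take $\phi\in C^\infty_c(\cO H,\R)$; Lemma~\ref{lem:fragment}(a)(i) writes $\phi=\sum_j \psi_j\circ\rho_{p_j}$ with $\psi_j\in C^\infty_c(\cO,\R)$ and $p_j\in H$. For $\alpha\in\sE$, the first identity gives
\[
U^{-\infty}(\phi)\alpha=\sum_j \Delta_G(p_j)^{-1} U^{-\infty}(\psi_j)\bigl(U^{-\infty}(p_j^{-1})\alpha\bigr)
\in \sH_{\hat\sE}^G(\cO),
\]
since $U^{-\infty}(p_j^{-1})\alpha\in \hat\sE$. Conversely, any $\beta\in\hat\sE$ is a finite real linear combination $\sum_k c_k\,U^{-\infty}(q_k)\alpha_k$ with $q_k\in H$, $\alpha_k\in\sE$, and for $\psi\in C^\infty_c(\cO,\R)$ the first identity (applied with $p=q_k^{-1}$) rewrites each summand of $U^{-\infty}(\psi)\beta$ as a multiple of $U^{-\infty}(\psi\circ\rho_{q_k^{-1}})\alpha_k$, whose test function is supported in $\cO q_k^{-1}\subseteq\cO H$. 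Taking closures gives the two opposite inclusions. Part (b) is then immediate: if $U^{-\infty}(H)\sE\subseteq\sE$, then $\hat\sE=\sE$, so (a) collapses to $\sH_\sE^G(\cO H)=\sH_\sE^G(\cO)$.

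Part (c) follows by exactly the same pattern, using Lemma~\ref{lem:fragment}(a)(ii) to fragment $\phi\in C^\infty_c(H\cO,\R)$ as $\sum_j\psi_j\circ\lambda_{p_j}$ with $\psi_j\in C^\infty_c(\cO,\R)$ and $p_j\in H$, and applying the second identity: for $\alpha\in\sE$,
\[
U^{-\infty}(\phi)\alpha=\sum_j U(p_j^{-1})\,U^{-\infty}(\psi_j)\alpha
\in \oline{\spann_\R\, U(H)\sH_\sE^G(\cO)}.
\]
The reverse inclusion uses that $U(q)\,U^{-\infty}(\psi)\alpha=U^{-\infty}(\psi\circ\lambda_{q^{-1}})\alpha$ for $q\in H$, with test function supported in $q\cO\subseteq H\cO$. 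Finally, (d) is the specialization of (c) to $H=G$, since $\eset\neq\cO$ forces $G\cO=G$, so that $\sH_\sE^G(G)=\sH_\sE^G(G\cO)$ equals the closed real span of $U(G)\sH_\sE^G(\cO)$.

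The only mildly delicate point is the bookkeeping in the first identity: one must verify that the factor $\Delta_G(p)^{-1}$ is a strictly positive real scalar (so that it does not leave the real span) and that $U^{-\infty}(p^{-1})$ preserves $\hat\sE$, which follows from $H^{-1}=H$. Everything else is purely formal, and the entire lemma is driven by the Fragmentation Lemma.
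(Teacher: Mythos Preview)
Your proof is correct and follows essentially the same route as the paper: both reduce everything to the Fragmentation Lemma~\ref{lem:fragment} together with the right/left translation identities for $U^{-\infty}(\phi)$ (the paper's \eqref{eq:rightrel} and \eqref{eq:covl1}), and both derive (b) from (a) and (d) from (c) with $H=G$. One small slip: in the converse direction of (a), when you set $p=q_k^{-1}$, the support of $\psi\circ\rho_{q_k^{-1}}$ is $\cO q_k$, not $\cO q_k^{-1}$; this is harmless since $q_k\in H$ and $H$ is a group, so the conclusion $\cO q_k\subseteq\cO H$ still holds.
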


\begin{proof} (a)
  For $\phi = \psi \circ \rho_g$, $\psi \in C^\infty_c(\cO)$ and $g \in H$, we 
obtain with \eqref{eq:rightrel} in Appendix~\ref{app:c1} 
\[ U^{-\infty}(\phi)\sE 
= U^{-\infty}(\psi \circ \rho_g) \sE
= \Delta_G(g)^{-1} U^{-\infty}(\psi) U^{-\infty}(g^{-1})\sE
\subeq  U^{-\infty}(\psi)\hat\sE.\] 
Hence Lemma~\ref{lem:fragment}(a)
implies that $\sH_\sE^G(\cO H) \subeq \sH_{\hat\sE}^G(\cO)$.

Conversely, for $g \in H$ and $\psi \in C^\infty_c(\cO,\R)$, we have
\[ U^{-\infty}(\psi) U^{-\infty}(g)\sE 
=  \Delta_G(g)^{-1} U^{-\infty}(\psi \circ \rho_g) \sE 
\subeq \sH^G_\sE(\cO H),\]
hence also $U^{-\infty}(\psi) \hat\sE \subeq \sH^G_\sE(\cO H),$ 
and this implies that $\sH_{\hat\sE}^G(\cO) \subeq \sH_\sE^G(\cO H)$.

\nin (b) follows from (a).

\nin (c)  From Remark~\ref{rem:iso-cov-net} we know  that 
$U(g) \sH_\sE^G(\cO) = \sH_\sE^G(g\cO) \subeq \sH_\sE^G(H\cO)$ 
for $g \in H$. Now the assertion follows from Lemma~\ref{lem:fragment}(b). 

\nin (d) is an immediate consequence of (c), applied with $H = G$. 
\end{proof}

\begin{remark} For a net $\sH^M_\sE$ on the homogeneous space $M = G/H$, the preceding
  lemma implies that, for all open subset $\cO \subeq M$, we have
  \[ \sH^M_\sE(\cO)
    = \sH^G_\sE(q_M^{-1}(\cO))
    = \sH^G_\sE(q_M^{-1}(\cO)H) = \sH^G_{\hat\sE}(q_M^{-1}(\cO)) = \sH^M_{\hat\sE}(\cO).\]
  Therefore the nets $\sH^M_\sE$ on $M$ can always be constructed from
  $U(H)$-invariant subspaces $\sE \subeq \cH^{-\infty}$.

  If (BW) holds for some wedge region $W \subeq M$ containing the base point,
  if follows in particular from $\sV = \sH^M_\sE(W)$ that
  \[ U^{-\infty}(H)\sE \subeq \cH^{-\infty}_{\rm KMS} \]
  (cf.\ Corollary~\ref{cor:4.8}(d)). This requirement on $\sE$ makes it
  harder to construct nets of the form $\sH^M_\sE$ on homogeneous spaces satisfying
  the (BW) condition; see in particular Problem~\ref{prob:cc-rs}. 
\end{remark}

\begin{problem} Let $(U,\cH)$ be an irreducible antiunitary
  representation of $G_{\tau_h}$ and $H \subeq G$ an integral subgroup.
  When does $\cH^{-\infty}_{\rm KMS}$ contain a non-trivial
  $U^{-\infty}(H)$-invariant subspace.

  We know from \cite{FNO25a} that this is always the case if $G$ is semisimple
  and $G/H$ is a non-compactly causal symmetric space associated
  to the Euler element $h$ as in Theorem~\ref{thm:ncc-classif},
  but in general the existence of such subspaces is not clear.
  The case where $G/H$ is a modular compactly causal symmetric space
  is relevant for an answer to Problem~\ref{prob:cc-rs} below. 
\end{problem}

\subsubsection{Simon's Growth Theorem}

The following result is \cite[Thm.~3.2.6]{Si24},
where, in addition, we use \cite[Thm.~3]{FNO25a} for the existence of the limit
in the smaller subspace $\cH^{-\infty}(\partial U(x))\subeq \cH^{-\infty}$.
This result generalizes the extension results
by Kr\"otz and Stanton \cite{KSt04} by
removing the condition on the group~$G$ that its universal
complexification is injective.

\begin{theorem} \label{thm:simon-gro} {\rm(Simon's Growth Theorem)} 
  Let $G$ be a connected semisimple Lie group with Cartan decomposition
  $G = K \exp \fp$ and   $(\pi, \cH)$ be an irreducible unitary representation
  of~$G$. Then there exist for every $K$-finite vector $v \in \cH$
  constants $C,n > 0$ such that, for every $x \in \fp$ with
spectral radius  $r_{\Spec}(\ad x) < \pi/2$, we have 
\[  \|e^{\ie \partial U(x)}v\| \leq C \Big( \frac{\pi}{2} - r_{\Spec}(\ad x)\Big)^{-n}.\]
  In particular,   $\lim_{t \to \frac{\pi}{2}-} e^{it\partial U(h)}v$ exists in
  $\cH^{-\infty}(U_h)$ for $h \in \fp$ with   $r_{\Spec}(\ad h) = 1$
  and $U_h(t)= U(\exp th)$.
\end{theorem}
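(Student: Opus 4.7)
The plan is to derive the polynomial norm estimate from a growth bound on matrix coefficients in the crown domain, and then deduce the distributional limit from the norm bound via temperedness of the associated spectral measure.

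First, I would pull everything back to the universal cover $\tilde G$ of $G$: the composition $\tilde U := U \circ q$ of $U$ with the covering projection $q$ is again an irreducible unitary representation on $\cH$, the vector $v$ remains $\tilde K$-finite for $\tilde K := q^{-1}(K)_e$, and $\partial \tilde U = \partial U$ shows that the assertion is unchanged. Theorem~\ref{thm:gss}(b) then supplies a crown $\tilde\Xi$ for $\tilde G$, and the Kr\"otz--Stanton extension result (in Simon's non-linear generalization, which is exactly what is to be proved) produces a holomorphic extension $U^v \colon \tilde\Xi \to \cH$ of the orbit map $g \mapsto U(g)v$. Using $\fp = \Ad(K)\fa$ for a maximal abelian $\fa \subseteq \fp$, the unitarity of $U(K)$, and $\Ad(K)$-invariance of $r_{\rm Spec}(\ad \cdot)$, the estimate reduces to the abelian case
\[ \|e^{\ie \partial U(y)} v\| \le C\, d(y)^{-n}, \qquad y \in \Omega_\fa := \{y \in \fa : |\alpha(y)| < \tfrac{\pi}{2} \text{ for all } \alpha \in \Sigma\}, \]
with $d(y) := \tfrac{\pi}{2} - \max_\alpha |\alpha(y)|$, the replacement $v \rightsquigarrow U(k)v$ keeping the $K$-type of $v$ fixed.

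The analytic heart is a polynomial bound on matrix coefficients between $K$-finite vectors,
\[ |\langle w, U(\exp \ie y) v\rangle| \le C_{v,w}\, d(y)^{-n_0}, \qquad y \in \Omega_\fa, \]
obtained from the Harish-Chandra asymptotic expansion: on the positive chamber $A^+ = \exp \fa^+$, the matrix coefficient $\phi_{w,v}(a) = \langle w, U(a)v\rangle$ admits a convergent expansion in generalized spherical functions whose characters $e^{\lambda(\cdot)}$ extend holomorphically to the tube $A\exp(\ie \Omega_\fa)$ with $|e^{\lambda(H + \ie y)}| = e^{\lambda(H)}$ uniformly controlled. Singular behavior as $y \to \partial\Omega_\fa$ stems from the $c$-function normalization and the Weyl-denominator factor $\prod_\alpha \sinh(\alpha(\cdot)/2)$, whose holomorphic continuation develops poles precisely when $\ie\alpha(y) \to \pm \ie\pi/2$, producing the polynomial singularity with exponent governed by the restricted root multiplicities.

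The main obstacle, and the substance of Simon's Theorem~3.2.6, is converting this pointwise matrix-coefficient bound into a uniform norm bound on $\|U^v(\exp \ie y)\|$: writing the $K$-isotypic decomposition $\cH = \bigoplus_\mu \cH_\mu$, Parseval gives
\[ \|U^v(\exp \ie y)\|^2 = \sum_\mu \sum_{w \in B_\mu} |\langle w, U^v(\exp \ie y)\rangle|^2 \]
over orthonormal bases $B_\mu$, and this sum a priori runs over infinitely many $K$-types. The resolution is the Casselman--Wallach globalization: $(U,\cH)$ carries a canonical smooth Fr\'echet structure $\cH^\infty$ whose topology is generated by Sobolev seminorms $\|\cdot\|_D = \|\dd U(D)\cdot\|$ for $D \in \cU(\g_\C)$, on which $U(\exp \ie y)$ acts as a continuous operator with seminorms bounded by a polynomial in $d(y)^{-1}$. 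Since any $K$-finite $v$ lies in $\cH^\infty$ with Sobolev norms depending only on its $K$-type, the Hilbert-space norm $\|U^v(\exp \ie y)\|$ inherits the polynomial bound in $d(x)$, which equals $\tfrac{\pi}{2} - r_{\rm Spec}(\ad x)$.

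Finally, for the distributional limit, I would apply \cite[Thm.~3]{FNO25a}: writing $\partial U(h) = \ie H$ for self-adjoint $H$ with spectral measure $P_H$ and setting $dP^v := d\langle v, P_H(\cdot)v\rangle$, the Laplace-transform identity $\|e^{-tH} v\|^2 = \int_\R e^{-2t\lambda}\, dP^v(\lambda)$ converts the norm bound $\|e^{-tH}v\| \le C(\tfrac{\pi}{2}-t)^{-N}$ into temperedness of $e^{-\pi\lambda}\, dP^v(\lambda)$, which is precisely the hypothesis of \cite[Thm.~3]{FNO25a} guaranteeing that $e^{-tH}v$ converges in the weak-$*$ topology to a distribution vector in $\cH^{-\infty}(U_h)$ as $t \to \pi/2^-$.
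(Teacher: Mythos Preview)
The paper does not prove this theorem: it is quoted as \cite[Thm.~3.2.6]{Si24}, with the final assertion about the distributional limit attributed to \cite[Thm.~3]{FNO25a} (equivalently, the implication (c)~$\Rightarrow$~(b) in Theorem~\ref{thm:6-1-fno24}). So there is nothing to compare on the level of proofs; your final paragraph, deriving the limit in $\cH^{-\infty}(U_h)$ from the polynomial norm bound via temperedness of $e^{-\pi\lambda}\,dP^v(\lambda)$, is exactly the route the paper indicates.

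Your attempt to sketch the substance of Simon's argument is structurally reasonable---reduction to $\fa$ via $\fp = \Ad(K)\fa$, matrix-coefficient asymptotics, and a globalization argument to pass from matrix-coefficient bounds to norm bounds---but as written it is not a proof. Two places are circular or gap-laden. First, your parenthetical ``(in Simon's non-linear generalization, which is exactly what is to be proved)'' is an admission: passing to the simply connected cover $\tilde G$ does not make $\eta_{\tilde G}$ injective (e.g.\ $\tilde{\SL}_2(\R)$), so you cannot invoke the original Kr\"otz--Stanton theorem there, and the very existence of the holomorphic extension on the full crown for non-linear groups is part of what Simon establishes. Second, the claim that ``$U(\exp \ie y)$ acts as a continuous operator on $\cH^\infty$ with seminorms bounded by a polynomial in $d(y)^{-1}$'' is essentially a restatement of the growth theorem rather than a step toward it; the Casselman--Wallach machinery gives you a Fr\'echet globalization, but the polynomial control of the holomorphic extension at the crown boundary is the analytic content that has to be supplied, and your Harish-Chandra expansion paragraph gestures at the mechanism without actually controlling the infinite sum over $K$-types.
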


The last statement uses the equivalence of (b) and (c) in
Theorem~\ref{thm:6-1-fno24}.

\begin{remark} The conclusion of the preceding theorem does not
  hold for reducible unitary representations without suitable restrictions.
  It prevails for finite sum or irreducible representations, but in general
  not for infinite sums or direct integrals. 
  
  Consider, for instance, the Lie subalgebra
  $\g = \so_{1,2}(\R) \subeq \g^\sharp := \su_{1,2}(\C)$
  and an Euler element $h \in \g$ (a Lorentz boost).
  Then $h$ is {\bf not} an Euler element in $\g^\sharp$
  because the eigenvalues of $\ad_{\g^\sharp}(h)$ are
  $\{0,\pm 1, \pm 2\}$.

  Now consider a unitary representation $(U,\cH)$ of
  $G$ that extends to a unitary representation of the larger group
  $G^\sharp$. Then  $K^\sharp = G^\sharp \cap \U_3(\C)$
  is a maximal compact subgroup. We consider a
  $K^\sharp$-finite vector $v \in \cH$.
  Then Simon's Growth Theorem provides estimates
  for $e^{i t \partial U(h)}v$ for $t \to \pm \frac{\pi}{4}$,
  and in general the limit does not exist in $\cH$,
  so that $e^{i t \partial U(h)}v$ is not defined for
  $\frac{\pi}{4} \leq t < \frac{\pi}{2}$.
\end{remark}

\begin{small}
\subsection{Exercises for Section~\ref{sec:4}}

\begin{exercise} (Crown domains of  convex cones)
  Let $C \subeq E$ be a generating closed convex cone in the
  finite-dimensional real linear space $E$
  and $h \in \End(E)$ be diagonalizable with eigenvalues
  $\{-1,0,1\}$, such that
   \[ e^{\R h} C = C \quad \mbox{ and } \quad
     -\tau_h(C) = C \quad \mbox{ for } \quad \tau_h = e^{\pi i h}.\]
   Show that 
   \begin{enumerate}
   \item[(a)] If $C_\pm := \pm C \cap E_{\pm 1}(h)$, then
     $\pm C_\pm = p_{\pm 1}(C)$, where $p_{\pm 1} \: E \to E_{\pm 1}(h)$
     is the projection along the other eigenspaces of $h$. 
   \item[(b)] $\oline\tau_h(z) := \oline{\tau_h(z)}$ defines an 
     antilinear involution on $E_\C$, preserving the tube domain
     $\Xi := E + i C^\circ$, and $\Xi^{\oline\tau_h} = i C^\circ$.
   \item[(c)] The set $E_{\rm KMS}$ of those elements $v$ for which
     the orbit map $\alpha^v(z) := e^{zh}v, \C \to E_\C$ satisfies
     \[ \alpha^v(\cS_\pi) \subeq \Xi \quad \mbox{ and } \quad
       \alpha^v(\pi i) = \tau_h v \]
     coincides with
     \[ E_{\rm KMS} = C_+^\circ \oplus E_0(h) \oplus C_-^\circ.\] 
   \end{enumerate}
\end{exercise}

\begin{exercise} (The crown of real hyperbolic space)
  Let $E := \R^{1,d}$ be $(d+1)$-dimensional Minkowski space and
  \[ C := \{ (x_0, \bx) \: x_0 \geq 0, x_0^2 - \bx^2 \geq 0 \} \]
  the closed positive light cone.
  We consider the action of the group $G := \SO_{1,d}(\R)_e$ on $E$
  and its complexification $E_\C = E + i E$.  Show that:
  \begin{enumerate}
  \item[\rm(a)] $\bH^d := \{ (ix_0, i \bx) \: x_0^2 - \bx^2 = 1 \}$ 
  is the orbit $G.i\be_0 \cong G/K$ for $K \cong \SO_d(\R)$.
\item[\rm(b)] $\dS^d := \{ (x_0, \bx) \: x_0^2 - \bx^2 = -1\}
  = G.\be_1$ and both lie in the orbit  
  \[ M_\C := G_\C.\be_1 = G_\C.i\be_0
    = \{ (z_0, \bz) \in \C^{1+d} \: z_0^2 - \bz^2 = - 1\} \] 
(the complex sphere) of the complex orthogonal group $G_\C = \SO_{1,d}(\C)$. 
\item[\rm(c)] Let $h(x_0,\bx) = (x_1, x_0, 0, \ldots, 0)$
    denote the Lorentz boost and
    \[ \oline\tau_h(z) = (-\oline z_0, - \oline z_1, \oline z_2,
      \cdots, \oline z_d) \]
    the corresponding antilinear involution. Then
    \[ E_{\rm KMS} := \{ x \in E \: 
\alpha^x(\cS_\pi) \subeq E + i C^\circ, 
\tau_h(x) = \alpha^x(\pi i)\} 
= W_R  := \{ (x_0, \bx) \: x_1 > |x_0| \} \]  
    is the Rindler wedge.
\item[\rm(d)] The open domain
  \[ \Xi := (E + i C^\circ) \cap M_\C\]
  has the following properties:
  \begin{enumerate}
  \item[\rm(i)] It is a $G$-invariant open neighborhood of $\bH^d$. 
  \item[\rm(ii)] $\dS^d \subeq \partial \Xi$. 
  \item[\rm(iii)]  $\Xi^{\oline\tau_h}
    = \{ (i x_0, i x_1, x_2, \ldots, x_d) \in M_\C \:
      x_j \in \R, x_0 > |x_1| \}$ and
\[ e^{ -\frac{\pi i}{4}h} \Xi^{\oline \tau_h}= W_R \cap \dS^d.\]  
  \item[\rm(iv)] $\dS^d_{\rm KMS}
    = \{ x \in \dS^d \: \alpha^x(\cS_\pi) \subeq \Xi, \tau_h(x)
    = \alpha^x(\pi i)\} =  W_R \cap \dS^d$.
  \end{enumerate}
  \end{enumerate}
\end{exercise}

\begin{exercise} \label{exer:satellites}
  (Crowns of the ncc spaces $\GL_{p+q}(\R)/\OO_{p,q}(\R)$ from
  Example~\ref{ex:Ipq}) 
For $n = p + q, 0 < p < n$, 
we consider in $\Sym_n(\R)$ the causal symmetric space 
\[ M = \GL_n(\R).I_{p,q} 
= \{ gI_{p,q} g^\top \: g \in \SL_n(\R) \} 
\subeq E := \Sym_n(\R), \quad 
I_{p,q} = \bone_p \oplus -\bone_q\]
and the Euler element 
\[ h_p := \bone_p \oplus 0 = \diag(1,\ldots, 1,0,\ldots, 0).\] 
Show that: 
\begin{description}
\item[\rm(a)] $M$ is the set of all symmetric matrices of signature~$(p,q)$. 
In particular, $M$ is open. 
\item[\rm(b)] $M_\C := \Sym_n(\C) \cap \GL_n(\C)$ 
is a complex homogeneous space of $\GL_n(\C)$. 
\item[\rm(c)] For $\Xi := M_\C \cap (E + i E_+)$ 
($E_+ = \{ x \: x \> 0\}$ is 
the open cone of positive definite matrices), we have 
\[ M_{\rm KMS} = M \cap 
\Big\{ \pmat{ a & b \\ b^\top & d} \in \Sym_{p+q}(\R) \: 
a \> 0, d \< 0\Big\}.\] 
\end{description}
\end{exercise}

\end{small}

\section{Minimal and maximal nets for unitary representations} 
\label{sec:5}

Let $G$ be a connected Lie group,
$h \in \g$ an Euler element, and suppose that
the  involution $\tau_h^\g = e^{\pi i \ad h}$ on $\g$ integrates to an involution
$\tau_h$ on $G$, so that we can form the semidirect product
$G_{\tau_h} = G \rtimes \{\bone, \tau_h\}$.

We also fix a homogeneous space $M = G/H$, in which we 
consider an open subset $W\not=\eset$, invariant under the one-parameter group
$\exp(\R h)$. We call the translates $(gW)_{g \in G}$ of $W$
{\it wedge regions} in~$M$.  \index{wedge region!in $M = G/H$ \scheiding} 
At the outset,  we do not assume any specific properties
of~$W$ or $M$, but Lemma~\ref{lem:direct-net} below will indicate which
properties good choices of $W$ should have.

We consider an antiunitary representation $(U,\cH)$ of $G_{\tau_h}$
and the canonical standard subspace $\sV = \sV(h,U) \subeq \cH$,
specified by 
$\Delta_\sV = e^{2 \pi i \cdot\partial U(h)}$ and $J_\sV = U(\tau_h)$
(cf.\ The Euler Element Theorem~\ref{thm:2.1}). 

\subsection{Minimal and maximal nets}

We associate to the open $\exp(\R h)$-invariant
subset $W \subeq M = G/H$ and
the antiunitary representation $(U,\cH)$ of $G_{\tau_h}$ the two nets
$\sH_M^{\rm min}$ and $\sH^{\rm  max}_M$,
defined on open subsets of $M$ by 
\begin{equation}
  \label{eq:def-ho}
  \sH_M^{\max}(\cO) := \bigcap_{g\in G, \cO \subeq gW} U(g)\sV
  \quad \mbox{ and } \quad 
  \sH^{\rm min}_M(\cO) := \oline{\sum_{g\in G, gW \subeq \cO} U(g)\sV}.
\end{equation}
We call $\sH_M^{\rm max}$ the {\it maximal net} and 
$\sH_M^{\rm min}$ the {\it minimal net} associated to $U,M,W$. This
is justified
by Lemma~\ref{lem:maxnet-larger} below.
By construction, these nets are isotone and covariant,
and we shall see in Lemma~\ref{lem:direct-net} below that
they assign $\sV$ to $W \subeq M$ if and only if
\index{net!maximal on $M$, $\sH^{\rm max}_M$ \scheiding } 
\index{net!minimal on $M$, $\sH^{\rm min}_M$ \scheiding } 
\begin{equation}
  \label{eq:semi-incl}
  S_W = \{ g \in G \: g.W \subeq W \} \subeq S_\sV
  = \{ g \in G \:  U(g)\sV \subeq \sV\}.
\end{equation}
Any other properties of these nets require a more detailed
analysis.

\begin{remark} (a)  If there exists 
no $g \in G$ with $\cO \subeq gW$, i.e., $\cO$ is not contained in
any wedge region, then $\sH_M^{\max}(\cO) = \cH$ (the empty intersection).
Otherwise $\sH_M^{\max}(\cO)$ is always separating because it is
contained in a standard subspace.

We likewise get
$\sH_M^{\rm min}(\cO) := \{0\}$ (the empty sum) if there exists
no $g \in G$ with $gW \subeq \cO$, i.e., $\cO$ contains no
wedge region. Otherwise $\sH_M^{\rm min}(\cO)$ is always cyclic because
it contains a standard subspace.

\nin (b) If $\eset \not= W \not= M$, then we have in particular
 \[  \sH_M^{\rm min}(\eset) = \{0\} \subeq
   \sH_M^{\rm max}(\eset) = \bigcap_{g \in G} U(g)\sV \]
 and
 \[    \sH_M^{\rm min}(M) = \oline{\sum_{g \in G} U(g)\sV} \subeq
   \sH_M^{\rm max}(M) = \cH.\] 
 \end{remark}
As for locality issues, we note that
\[ \sV' = \sV(-h,U) = J\sV \]
need not be contained in $U(G)\sV$, and even if this is the case,
then locality properties of $\sH^{\rm max}_M$ are not immediate.
We refer to \cite{NO25} for a discussion of non-compactly causal symmetric
spaces in this context. 

The following lemma is elementary. It only
uses the Equality Lemma~\ref{lem:lo08-3.10} to verify the equality of
standard subspaces.

\begin{lemma} \label{lem:direct-net}
    The following assertions hold:
  \begin{description}
  \item[\rm(a)] The nets $\sH_M^{\max}$  and $\sH_M^{\rm min}$ 
    on $M$ satisfy {\rm(Iso)} and {\rm(Cov)}. 
  \item[\rm(b)] The set of all open subsets $\cO \subeq M$
  for which $\sH_M^{\max}(\cO)$ is cyclic is $G$-invariant.
\item[\rm(c)] The following are equivalent:
  \begin{description}
  \item[\rm(i)] $S_W \subeq S_\sV$.
  \item[\rm(ii)] $\sH_M^{\max}(W) = \sV$. 
  \item[\rm(iii)] $\sH_M^{\max}(W)$ is standard. 
  \item[\rm(iv)] $\sH_M^{\max}(W)$ is cyclic. 
  \item[\rm(v)] $\sH_M^{\rm min}(W) = \sV$. 
  \item[\rm(vi)] $\sH_M^{\rm min}(W)$ is standard. 
  \item[\rm(vii)] $\sH_M^{\rm min}(W)$ is separating. 
  \end{description}
\end{description}
\end{lemma}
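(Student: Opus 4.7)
For (a), I will read off isotony and covariance directly from the definitions. If $\cO_1 \subeq \cO_2$, then $\{g : \cO_2 \subeq gW\} \subeq \{g : \cO_1 \subeq gW\}$, and the intersection defining $\sH_M^{\rm max}(\cO_1)$ runs over a larger index set, yielding $\sH_M^{\rm max}(\cO_1) \subeq \sH_M^{\rm max}(\cO_2)$; the minimal net is symmetric. Covariance amounts to the substitution $k = gh$ in each index set. Part (b) is then immediate: by covariance $\sH_M^{\rm max}(g\cO) = U(g) \sH_M^{\rm max}(\cO)$, and unitaries preserve cyclic subspaces.

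The core of the argument is (c). My first step is to unwind the index sets: $\{g : W \subeq gW\} = S_W^{-1}$ and $\{g : gW \subeq W\} = S_W$, so that
\[ \sH_M^{\rm max}(W) = \bigcap_{k \in S_W} U(k^{-1})\sV \qquad \text{and} \qquad \sH_M^{\rm min}(W) = \oline{\sum_{g \in S_W} U(g)\sV}. \]
Taking $k = e$ (resp.\ $g = e$) yields the sandwich $\sH_M^{\rm max}(W) \subeq \sV \subeq \sH_M^{\rm min}(W)$. The equivalences (i) $\Leftrightarrow$ (ii) and (i) $\Leftrightarrow$ (v) then follow by direct inspection: $S_W \subeq S_\sV$ says exactly that $U(g)\sV \subeq \sV$ for every $g \in S_W$, which is equivalent both to $\sV \subeq U(k^{-1})\sV$ for all $k \in S_W$ (hence $\sV \subeq \sH_M^{\rm max}(W)$, forcing equality) and to each summand $U(g)\sV$ lying in $\sV$ (hence $\sH_M^{\rm min}(W) \subeq \sV$, again forcing equality).

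The chains (ii) $\Rarrow$ (iii) $\Rarrow$ (iv) and (v) $\Rarrow$ (vi) $\Rarrow$ (vii) are tautological, since $\sV$ itself is standard. To close both cycles I plan to invoke the Equality Lemma~\ref{lem:lo08-3.10}, and the only nontrivial ingredient is the $\Delta_\sV^{i\R}$-invariance of $\sH_M^{\rm max}(W)$ and $\sH_M^{\rm min}(W)$, which I regard as the main obstacle. It rests on the standing hypothesis that $W$ is $\exp(\R h)$-invariant, which forces $\exp(\R h) \subeq S_W \cap S_W^{-1}$; consequently the map $g \mapsto \exp(th)g$ is a bijection of $S_W$ onto itself. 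Since $\Delta_\sV^{-it/2\pi} = U(\exp(th))$ preserves $\sV$, this bijection translates into invariance of both the intersection and the closed sum above under $\Delta_\sV^{i\R}$. Combined with the sandwich $\sH_M^{\rm max}(W) \subeq \sV \subeq \sH_M^{\rm min}(W)$, the Equality Lemma then delivers (iv) $\Rarrow$ (ii) (a modular-invariant cyclic subspace of $\sV$ must equal $\sV$) and (vii) $\Rarrow$ (v) (a modular-invariant separating superspace of $\sV$ must equal $\sV$), closing both cycles.
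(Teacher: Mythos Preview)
Your proof is correct and follows essentially the same approach as the paper: the same unwinding of index sets, the same sandwich $\sH_M^{\rm max}(W)\subeq\sV\subeq\sH_M^{\rm min}(W)$, and the same closing of both cycles via the Equality Lemma. The only cosmetic difference is that for the $\Delta_\sV^{i\R}$-invariance of $\sH_M^{\rm max}(W)$ and $\sH_M^{\rm min}(W)$ you argue directly via the bijection $g\mapsto\exp(th)g$ of $S_W$, whereas the paper simply invokes the covariance established in (a) together with $\exp(\R h).W=W$.
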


\begin{proof} (a) Isotony is clear and covariance of the maximal net
  follows from
  \[ \sH_M^{\max}(g_0\cO)
    = \bigcap_{g_0\cO \subeq gW} U(g)\sV
    = U(g_0)\bigcap_{g_0\cO \subeq gW} U(g_0^{-1} g)\sV
    = U(g_0)\sH_M^{\max}(\cO).\]
{The argument for the minimal net is similar.} 

  \nin (b) follows from covariance.

  \nin (c) (i) $\Leftrightarrow$ (ii): Clearly,
    $\sH_M^{\rm max}(W) \subeq \sV$, and equality holds if and only if
    $W \subeq gW$ implies $U(g)\sV \supeq \sV$,
    which is equivalent to $S_W^{-1} \subeq S_\sV^{-1}$, and this is equivalent
    to~(i).

 (ii) $\Rarrow$ (iii) $\Rarrow$ (iv) are trivial.

(iv) $\Rarrow$ (ii): By covariance
 and $\exp(\R h).W = W$, the subspace $\sH_M^{\max}(W) \subeq \sV$
  is invariant under the modular group $U(\exp \R h)$ of $\sV$.
  If $\sH_M^{\max}(W)$ is cyclic, then
  the Equality Lemma~\ref{lem:lo08-3.10} implies $\sH_M^{\max}(W) = \sV$.

(i) $\Leftrightarrow$ (v) is obvious. 

 (v) $\Rarrow$ (vi) $\Rarrow$ (vii) are trivial.

 (vii) $\Rarrow$ (v): By covariance
 and $\exp(\R h).W = W$, the subspace
  $\sH_M^{\rm min}(W) \supeq \sV$
  is invariant under the modular group $U(\exp \R h)$ of $\sV$.
  If $\sH_M^{\rm min}(W)$ is separating, 
then the Equality Lemma~\ref{lem:lo08-3.10} implies $\sH_M^{\rm min}(W) = \sV$.
\end{proof}

For later applications, we record the following observation. 
\begin{lemma} \label{lem:tensprostand} 
Suppose that $(U,\cH) = \otimes_{j =  1}^n (U_j, \cH_j)$ is a
tensor product of  antiunitary  representations of $G_{\tau_h}$.
Then the standard subspace $\sV = \sV(h,U) = \Fix(J e^{\pi i \cdot \partial U(h)})$
is a tensor product
\[  \sV = \sV_1 \otimes \cdots \otimes \sV_n, \]
and, for every non-empty subset $A \subeq G$,
the subset $\sV_A := \bigcap_{g \in A} U(g)\sV$ satisfies
\begin{equation}
  \label{eq:vtens}
  \sV_A \supeq \sV_{1,A} \otimes \cdots \otimes \sV_{n,A}.
\end{equation}
\end{lemma}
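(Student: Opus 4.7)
My proof plan is as follows. Proposition~\ref{prop:11} asserts that every standard subspace is uniquely determined by its pair of modular objects $(\Delta,J)$. So to establish the first assertion $\sV = \sV_1 \otimes \cdots \otimes \sV_n$, I would first show that both sides correspond to the same modular pair. On the left, because $U$ factors as $U = U_1 \otimes \cdots \otimes U_n$, the self-adjoint generator decomposes (on a natural dense core) as
\[ \partial U(h) = \sum_{j=1}^n \bone \otimes \cdots \otimes \partial U_j(h) \otimes \cdots \otimes \bone, \]
so that by the spectral calculus
\[ \Delta_\sV = e^{2\pi i \cdot \partial U(h)} = \Delta_{\sV_1} \otimes \cdots \otimes \Delta_{\sV_n}, \]
and similarly $J_\sV = U(\tau_h) = J_{\sV_1} \otimes \cdots \otimes J_{\sV_n}$, the latter being well-defined as an antilinear involution on the tensor product.

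Next I would define $\sV_1 \otimes \cdots \otimes \sV_n$ to be the closed real linear span of elementary tensors $v_1 \otimes \cdots \otimes v_n$ with $v_j \in \sV_j$, and verify that this closed real subspace is precisely $\Fix(J_\sV \Delta_\sV^{1/2})$. The inclusion ``$\subeq$'' is immediate on elementary tensors since $J_j \Delta_j^{1/2} v_j = v_j$ gives $(J_1 \Delta_1^{1/2}) \otimes \cdots \otimes (J_n \Delta_n^{1/2})(v_1 \otimes \cdots \otimes v_n) = v_1 \otimes \cdots \otimes v_n$, and one extends by closure using that the elementary tensors form a total subset of $\cD(\Delta_\sV^{1/2})$ in the graph norm. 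For the reverse inclusion, I would invoke Proposition~\ref{prop:11}: it suffices to show that the constructed tensor-product subspace is itself standard with modular objects $(\Delta_{\sV_1} \otimes \cdots \otimes \Delta_{\sV_n}, J_{\sV_1} \otimes \cdots \otimes J_{\sV_n})$. Cyclicity follows from each $\sV_j + i \sV_j$ being dense in $\cH_j$ together with the density of elementary tensors, while the separating property follows by decomposing with respect to the symplectic form $\Im\la \cdot, \cdot \ra$ factor by factor (equivalently, using Lemma~\ref{lem:vv'}(b)).

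For the second assertion, I would use that for each $g \in G$, the operator $U(g) = U_1(g) \otimes \cdots \otimes U_n(g)$ respects the tensor product structure, so part (1) (applied to the shifted standard subspace, or directly from the definition) gives
\[ U(g)\sV = U_1(g)\sV_1 \otimes \cdots \otimes U_n(g)\sV_n. \]
Consequently, for $v_j \in \sV_{j,A} = \bigcap_{g \in A} U_j(g)\sV_j$, the elementary tensor $v_1 \otimes \cdots \otimes v_n$ lies in $U_1(g)\sV_1 \otimes \cdots \otimes U_n(g)\sV_n \subeq U(g)\sV$ for every $g \in A$, hence in $\sV_A$. Since $\sV_A$ is a closed real subspace, taking closed real spans yields $\sV_{1,A} \otimes \cdots \otimes \sV_{n,A} \subeq \sV_A$.

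The main technical obstacle is the first assertion: rigorously identifying the closed real linear span of elementary tensors $v_1 \otimes \cdots \otimes v_n$ with $v_j \in \sV_j$ as the fixed-point set $\Fix(J_\sV \Delta_\sV^{1/2})$. The difficulty is that $\Delta_\sV^{1/2}$ is unbounded, so one must work carefully with cores and graph closures; the most robust route is to verify directly that the constructed tensor product subspace is standard and that its Tomita operator equals $J_\sV \Delta_\sV^{1/2}$, then invoke the bijection in Proposition~\ref{prop:11} to conclude equality. Note that inclusion ``$\supeq$'' in (2) may be strict, as the definition of $\sV_{j,A}$ involves intersections over $g \in A$ taken separately in each factor, which in general produces fewer constraints than the tensor-factored intersection on $\cH$.
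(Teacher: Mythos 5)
Your proof is correct and follows essentially the same lines as the paper; the paper's own proof is in fact limited to the second assertion, treating the tensor decomposition $\sV = \sV_1 \otimes \cdots \otimes \sV_n$ as self-evident, so you supply more detail rather than a different route. One small imprecision: the claim that elementary tensors with factors in $\sV_j$ are "total in $\cD(\Delta_\sV^{1/2})$ in the graph norm" is not what is needed (their real span is dense only in $\sV$, not in $\sV + i\sV$); a cleaner way to close the reverse inclusion is to observe that the closed real span $\sW$ of such elementary tensors is cyclic, contained in $\sV$, and invariant under $\Delta_\sV^{it} = \Delta_{\sV_1}^{it} \otimes \cdots \otimes \Delta_{\sV_n}^{it}$ (by Remark~\ref{rem:NO15-prop-3.1}(b) applied factorwise), so the Equality Lemma~\ref{lem:lo08-3.10} gives $\sW = \sV$ directly.
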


\begin{proof} We have $\xi \in \sV_A$ if and only if
  $U(A)^{-1} \xi \subeq \sV$.
  This shows that any
  $\xi = \xi_1 \otimes \cdots \otimes \xi_n$ with $\xi_j \in \sV_{j,A}$
  is contained in $\sV_A$, which is \eqref{eq:vtens}.
\end{proof}

The following lemma is a consequence of the naturality
of the minimal and the maximal net. 

\begin{lemma} \label{lem:direct-net-d}
  For $A := \{ g \in G \: g^{-1} \cO \subeq W \}$,
  we have   \begin{equation}
    \label{eq:sh=va}
    \sH_M^{\max}(\cO) = \sV_A := \bigcap_{g \in A} U(g)\sV, 
  \end{equation}
and the cyclicity of this subspace 
  is inherited by subrepresentations, direct sums, direct integrals
  and finite tensor products. 
\end{lemma}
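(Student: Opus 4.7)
\medskip

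The first equality is just unpacking the definitions: the condition $\cO \subseteq gW$ defining $\sH_M^{\max}(\cO)$ in \eqref{eq:def-ho} is equivalent to $g^{-1}\cO \subseteq W$, that is, $g \in A$. So the intersection defining $\sH_M^{\max}(\cO)$ and the one defining $\sV_A$ are the same.

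For the inheritance statements, the unifying principle I will use is that the formation $\sV_A = \bigcap_{g\in A} U(g)\sV$ commutes with each of the decompositions considered, because in every case the component representation is again an antiunitary representation of $G_{\tau_h}$ and its standard subspace is the image, restriction, fiber, or tensor factor of $\sV$ under the corresponding operation. The plan is to treat subrepresentations and direct sums first, deduce direct integrals by a measurable version of the same argument, and finish with tensor products using Lemma~\ref{lem:tensprostand}.

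For a closed $U(G_{\tau_h})$-invariant subspace $\cK \subeq \cH$, the orthogonal projection $P_\cK$ commutes both with $J = U(\tau_h)$ and with the modular group $\Delta_\sV^{it} = U(\exp(-2\pi t h))$, hence with $T_\sV$, so it preserves $\sV$ and yields $\sV = (\sV\cap\cK) \oplus (\sV\cap\cK^\perp)$. Applying this splitting to every $U(g)\sV$ gives $\sV_A = (\sV_A\cap\cK) \oplus (\sV_A\cap\cK^\perp)$, so cyclicity of $\sV_A$ in $\cH$ is equivalent to cyclicity of each summand in its ambient space; this handles both subrepresentations and direct sums (finite or countable). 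For direct integrals $(U,\cH) = \int^\oplus (U_x,\cH_x)\,d\mu(x)$, the same argument carried out with the diagonal von Neumann algebra $L^\infty(\mu)$ (cf.\ Remark~\ref{rem:dirint}) yields $\sV_A = \int^\oplus \sV_{x,A}\,d\mu(x)$, and cyclicity of $\sV_A$ in $\cH$ corresponds to cyclicity of $\sV_{x,A}$ in $\cH_x$ for $\mu$-a.e.\ $x$. For a finite tensor product $(U,\cH) = \bigotimes_{j=1}^n (U_j,\cH_j)$, Lemma~\ref{lem:tensprostand} gives $\sV_A \supeq \sV_{1,A} \otimes \cdots \otimes \sV_{n,A}$, so if each $\sV_{j,A} + i\sV_{j,A}$ is dense in $\cH_j$, the algebraic complex tensor product of these dense subspaces lies in $\sV_A + i\sV_A$ and is dense in $\cH_1 \otimes \cdots \otimes \cH_n$, proving cyclicity of $\sV_A$.

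The one step that requires genuine care beyond the algebraic splitting is the direct integral: one must argue that $x \mapsto \sV_{x,A}$ is a measurable field of closed real subspaces and that cyclicity of the direct integral is equivalent to pointwise-a.e.\ cyclicity. For a \emph{countable} collection of translates (which is all that is needed when $G$ is second countable and $A$ is replaced by a countable dense subset, using that $g \mapsto U(g)\sV$ is continuous in a suitable sense), the direct sum argument suffices; the general statement is standard disintegration theory for standard subspaces and will be the only nontrivial technical point in the proof.
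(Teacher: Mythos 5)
Your proof is correct and follows essentially the same route as the paper. The one small stylistic difference is in the sub/direct-sum step: you argue via the orthogonal projection $P_\cK$ commuting with $J$ and the modular group, hence with $T_\sV$, so that $P_\cK$ preserves $\sV$ and every $U(g)\sV$, giving the splitting $\sV_A=(\sV_A\cap\cK)\oplus(\sV_A\cap\cK^\perp)$; the paper instead computes directly that $U(g)^{-1}(v_1,v_2)\in\sV$ iff $U_j(g)^{-1}v_j\in\sV_j$ and so $\sV_A=\sV_{1,A}\oplus\sV_{2,A}$. These are two ways to express the same compatibility, and the projection phrasing has the mild advantage of handling an arbitrary invariant subspace $\cK$ at once. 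For the tensor product step both you and the paper invoke Lemma~\ref{lem:tensprostand}; your spelled-out argument that the algebraic tensor product of the dense subspaces $\sV_{j,A}+i\sV_{j,A}$ lands inside $\sV_A + i\sV_A$ is the intended one. For direct integrals you have correctly isolated the genuine technical point --- that one must pass from $A$ to a countable dense subset $B$ (using strong continuity of $U$ and closedness of $\sV$ so that $\sV_A=\sV_B$), then use that countable intersections of decomposable real subspaces commute with the fiber decomposition, and finally that cyclicity of a decomposable subspace is equivalent to a.e.\ fiberwise cyclicity. The paper packages exactly these facts as Lemma~\ref{lem:g-inter}(a) and Lemma~\ref{lem:di1} in Appendix~\ref{app:D} and cites them here; your remark that ``the direct sum argument suffices'' once reduced to countably many translates is slightly imprecise (one needs the direct-integral analogue of commuting with countable intersections, i.e.\ (DI2), not the literal direct sum statement), but you flag the remaining work as standard disintegration theory, which is what the cited lemmas supply.
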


\begin{proof}   For a direct sum representation
  % Now \eqref{eq:v-dirsum} implies that, for
  $U = U_1 \oplus U_2$, we have  $\sV = \sV_1 \oplus \sV_2$, which leads to
 \begin{equation}
   \label{eq:v-dirsum}
\sV_A = \sV_{1,A} \oplus \sV_{2,A}    
\end{equation}
because $U(g)^{-1}(v_1, v_2) \in \sV$ is equivalent to
$U_j(g)^{-1}v_j \in \sV_j$ for $j= 1,2$. 
We thus obtain
 \[ \sH_M^{\max}(\cO) = \sH_{M,1}^{\max}(\cO) \oplus \sH_{M,2}^{\max}(\cO).\]
  This proves that cyclicity of $\sH_M^{\max}(\cO)$ is inherited
  by subrepresentations and direct sums. 
  For finite tensor products, the assertion 
  follows from Lemma~\ref{lem:tensprostand}.
%  in   Appendix~\ref{app:regularity}. 
  If $U = \int_X^\oplus U_x\, d\mu(x)$ is a direct integral,
  then  \eqref{eq:sh=va} and   Lemma~\ref{lem:g-inter}(a) 
imply that \begin{equation}
  \label{eq:net-dirint}
  \sH_M^{\max}(\cO) = \int_X^\oplus \sH_{M,x}^{\max}(\cO)\, d\mu(x)
\end{equation}
for direct integrals. 
So Lemma~\ref{lem:di1} in Appendix~\ref{app:D}
shows that $\sH_M^{\max}(\cO)$ is cyclic if 
every $\sH_{M,x}^{\max}(\cO)$ is cyclic in~$\cH_x$.
 \end{proof}

\begin{remark} (Inner and outer $W$-saturation of subsets) If we write
  \[ \cO^\wedge := \Big(\bigcap_{gW \supeq \cO} gW\Big)^\circ \supeq \cO
    \quad \mbox{ and } \quad 
    \cO^\vee := \bigcup_{gW \subeq \cO} gW \subeq \cO,\]
  then $\cO^\wedge$ and $\cO^\vee$ are open subsets satisfying
  $(\cO^\wedge)^\wedge = \cO^\wedge$, $(\cO^\vee)^\vee = \cO^\vee$, and 
  \begin{equation}
    \label{eq:enla}
 \sH_M^{\rm max}(\cO^\wedge) = \sH_M^{\max}(\cO) \quad \mbox{ and }  \quad   
 \sH_M^{\rm min}(\cO^\vee) = \sH_M^{\rm min}(\cO).
  \end{equation}
  So the values of the maximal net are takes on the subset of
  open subsets $\cO \subeq M$
  satisfying $\cO = \cO^\wedge$ (interiors of intersections of
  wedge regions)
  and the minimal net on those open subsets
satisfying $\cO = \cO^\vee$ (unions of wedge regions)
\end{remark}

\begin{remark} (The case where $S_W$ is a group) 
  If the semigroup $S_W$ is a group, i.e., 
  $S_W = G_W$ and $\ker(U)$ is discrete, then the inclusion 
   $S_W \subeq S_\sV$ is equivalent to
  \begin{equation}
     \label{eq:gwx}
     G_W \subeq G_\sV = G^{h,J} = \{ g \in G^h \: J U(g) J = U(g)\}
   \end{equation}
(cf.\ Exercise~\ref{exer:sym}).
 In the context of causal homogeneous spaces, the 
     definition of~$W$ as a connected component of
     $W_M^+(h)$ (Definition~\ref{def:wedge}) 
     implies that $\exp(\R h) \subeq G^h_e \subeq G_W$, and we 
have in many concrete examples that $G_W \subeq G^h$, 
and always $\L(G_W) =  \g^h$ (Proposition~\ref{prop:LSW}).
 However, $U(G_W)$ need not commute with~$J$,
 so that \eqref{eq:gwx} may fail.
 Examples arise already for $\g = \fsl_2(\R)$; see
 Remark~\ref{rem:fno23-5.13} below. 
 \end{remark}

 \begin{remark} \label{rem:fno23-5.13}
   If $\g = \fsl_2(\R)$ and $G$ is a connected Lie group with
   Lie algebra $\g$, then
  $G_{\rm ad} = \PSL_2(\R) \cong \SO_{1,2}(\R)_e$, 
  and $H_{\rm ad} = \exp(\R h)$, so that $G_{\rm ad}/H_{\rm ad} \cong \dS^2$
  (Example~\ref{ex:causal2}).     If $Z(G)$ is non-trivial, 
    then the connected components of $W_M^+(h)$ can be labeled
    by the elements of $Z(G)$ because this subgroup
    acts non-trivially on $M = G/H_{\rm ad}$, 
    leaving the positivity region $W_M^+(h)$ invariant.
    In any irreducible representation $(U,\cH)$ we have $U(Z(G)) \subeq \T$,
    but this subgroup preserves the standard subspace
    $\sV$ if and only if it is contained
    in~$\{\pm 1\}$.     
\end{remark}

The following lemma justifies the terminology ``minimal'' and ``maximal''. 
\begin{lemma} \label{lem:maxnet-larger}
  Let $(U,\cH)$ be an antiunitary representation of $G_{\tau_h}$
  and $\sH$ a net of real subspaces on open subsets of $M$, satisfying
  {\rm (Iso), (Cov)}, and for which the Bisognano--Wichmann property holds
  in the sense that $\sH(W) = \sV = \sV(h,U)$.
  %holds with respect to 
  %  $h \in \g$ and $W \subeq M$. 
Then   
\begin{equation}
  \label{eq:lem.5.7}
 \sH_M^{\rm min}(\cO) \subeq \sH(\cO) \subeq \sH_M^{\rm max}(\cO)
 \qquad \mbox{ for } \ \cO \subeq M \ \mbox{open},
\end{equation}
  and equality holds for all domains of the form
  $\cO = g.W$, $g \in G$, i.e., the wedge regions in $M$. 
\end{lemma}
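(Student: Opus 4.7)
The proof is essentially an unwinding of the definitions of $\sH_M^{\rm min}$, $\sH_M^{\rm max}$ and the axioms (Iso), (Cov), (BW), together with the observation that the relation $\sH(W)=\sV$ propagates to all wedge regions $gW$ by covariance. So I do not expect a serious obstacle here; the argument is short and essentially formal.

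Concretely, the plan is as follows. First I would observe that, by (Cov) applied to the relation $\sH(W)=\sV$, we have
\[
\sH(gW) = U(g)\sH(W) = U(g)\sV \qquad\text{for every } g\in G.
\]
For the right-hand inclusion in \eqref{eq:lem.5.7}, I pick an open $\cO\subseteq M$ and note that for every $g\in G$ with $\cO\subseteq gW$, (Iso) combined with the displayed identity yields $\sH(\cO)\subseteq \sH(gW) = U(g)\sV$. Taking the intersection over all such $g$ gives $\sH(\cO)\subseteq \sH_M^{\max}(\cO)$, exactly as required. For the left-hand inclusion, the symmetric argument works: whenever $g\in G$ satisfies $gW\subseteq \cO$, (Iso) gives $U(g)\sV = \sH(gW)\subseteq \sH(\cO)$, so that the algebraic sum $\sum_{gW\subseteq \cO} U(g)\sV$ lies in $\sH(\cO)$; since $\sH(\cO)$ is closed, so does its closure $\sH_M^{\rm min}(\cO)$.

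For the equality assertion with $\cO = g_0 W$, I would simply test the two nets on this specific subset. On the one hand, the choice $g=g_0$ is admissible both in the defining sum of $\sH_M^{\rm min}(g_0W)$ (since $g_0W\subseteq g_0W$) and in the defining intersection of $\sH_M^{\rm max}(g_0W)$ (since $g_0W\subseteq g_0W$), producing in each case the term $U(g_0)\sV = \sH(g_0W)$. Combined with the general inclusions from \eqref{eq:lem.5.7} already established, this forces
\[
\sH_M^{\rm min}(g_0W)=\sH(g_0W)=\sH_M^{\max}(g_0W)=U(g_0)\sV,
\]
finishing the proof. No deeper structural input (such as properties of $W$, of $M=G/H$, the standard-subspace machinery, or the Equality Lemma~\ref{lem:lo08-3.10}) is needed; everything reduces to (Iso), (Cov), and the hypothesis $\sH(W)=\sV$.
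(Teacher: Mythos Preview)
Your proof is correct and follows essentially the same approach as the paper. The only difference is in the order of operations: the paper first derives the semigroup inclusion $S_W\subseteq S_\sV$ from (Iso), (Cov), (BW) and then invokes Lemma~\ref{lem:direct-net}(c) to obtain $\sH_M^{\rm min}(W)=\sH_M^{\rm max}(W)=\sV$, whereas you bypass this and obtain the equality at wedge regions directly by noting that $g_0$ itself is an admissible index in both the defining sum and the defining intersection. Your route is slightly more self-contained for this lemma; the paper's route has the side benefit of exhibiting $S_W\subseteq S_\sV$, which is used repeatedly elsewhere.
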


 \begin{proof} The three properties
   (Iso), (Cov) and $\sH(W) = \sV$ of the net $\sH$ entail
   $S_W \subeq S_\sV$ because $g.W \subeq W$ implies
    \[ U(g)\sV
      = \ U(g) \sH(W)
     \ {\buildrel{\rm(Cov)}\over= }\ \sH(g.W)
     \ {\buildrel{\rm(Iso)}\over \subeq }\ \sH(W)
     \ {\buildrel{\rm(BW)}\over \subeq }\ =  \sV.\]
 From Lemma~\ref{lem:direct-net}(c) we thus obtain
$\sH_M^{\rm max}(W)= \sH_M^{\rm min}(W) = \sV$. 
Hence
\[ \sH(gW) = U(g) \sV = \sH_M^{\rm max}(gW) = \sH_M^{\rm min}(gW) \] 
     by covariance for any $g \in G$ (Lemma~\ref{lem:direct-net}(a)).
By (Iso), $\cO \subeq g W$ implies
$\sH(\cO) \subeq \sH(gW) = U(g)\sV,$ so that
     $\sH(\cO) \subeq \sH_M^{\rm max}(\cO)$.
     Likewise, $gW \subeq \cO$ implies
     $U(g)\sV  = \sH(gW) \subeq \sH(\cO)$, and thus
     $\sH^{\rm min}_M(\cO) \subeq~\sH(\cO)$. 
\end{proof}

   \begin{remark} \mlabel{rem:pushforward-to-M}
   The construction of the minimal and the maximal net can also be carried
   out on $G$ itself with respect to $W^G = q_M^{-1}(W)$.
   It then makes sense to compare the minimal/maximal net on $M$
   with the push-forwards of the minimal/maximal net on~$G$.
   
For $\cO \subeq M$, the relation
   $q_M^{-1}(\cO) \subeq gW^G$ is equivalent to $\cO \subeq gW$,
   so that
   \[ (q_M)_*\sH^{\rm max}_G = \sH^{\rm max}_M.\]
   Likewise, $q_M^{-1}(\cO) \supeq gW^G$ is equivalent to $\cO \supeq gW$,
   which shows that
   \[ (q_M)_*\sH^{\rm min}_G = \sH^{\rm min}_M.\]

If, however, $W^G \subeq G$ is not the full inverse image of $W \subeq M$,
then these relations may fail. 
   \end{remark}

\subsection{The endomorphism semigroup of a standard subspace} 
\label{subsec:endo-v}

Motivated by the important relation $S_W \subeq S_W$
from Lemma~\ref{lem:direct-net},
we take in this subsection a closer look at the
semigroup $S_\sV$ for an  antiunitary representation $(U,\cH)$
of $G_{\tau_h}$ with discrete kernel (\cite{Ne21, Ne22}). Here
we consider the standard subspace  $\sV := \sV(h,U) \subeq \cH$
from~\eqref{eq:def-V(h,U)} and Definition~\ref{def:bgl-net}.
\index{positive cone $C_U$ of unit. rep. $U$ \scheiding } 
To describe $S_\sV$, we need the {\it positive cone}
  \begin{equation}
    \label{eq:CU}
 C_U := \{ x \in \g \: -i \cdot \partial U(x) \geq 0\}, \qquad 
 \partial U(x) = \frac{d}{dt}\Big|_{t = 0} U(\exp tx)
  \end{equation}
  of a unitary representation~$U$. It
  is a closed, convex, $\Ad(G)$-invariant cone in $\g$
  (\cite[Prop.~X.1.5]{Ne99}).

  The key point of the identity
  \[  S(h,C_U) := \{ g \in G \: \Ad(g)h \in h - C_U\}
    = \exp(C_+) G^h \exp(C_-) \]
  for
  \[   C_\pm := \pm C_U \cap \g_{\pm 1}(h) \]
in   Theorem~\ref{thm:semigroups-equal} is that it provides 
two different perspectives on the same subsemigroup 
of $G$, and this is instrumental for the
descriptions of the semigroups~$S_\sV$. 
To see this connection, we recall that
the Monotonicity Theorem \cite[Thm.~3.3]{Ne22} asserts that 
\begin{equation}
  \label{eq:montheo}
S_\sV \subeq S(h,C_U).
\end{equation}
Its proof is based on the fact that, for two standard subspaces 
$\sV_1 \subeq \sV_2$, we have 
$\log \Delta_{\sV_2} \leq \log \Delta_{\sV_1}$ 
in the sense of quadratic forms. Since these selfadjoint operators 
are typically not semibounded, the order relation 
requires some explanation, provided in an appendix to \cite{Ne22}.
Put differently, the Monotonicity Theorem asserts that the well-defined  
$G$-equivariant map 
\[ \cO_\sV = U(G)\sV \cong G/G_\sV \to \cO_h \cong G/G^h, \quad U(g)\sV \mapsto \Ad(g)h \] 
is monotone with respect to the $C_U$-order on $\g$
and the inclusion order on $\cO_\sV \subeq \Stand(\cH)$ 
(cf.\ Section~\ref{subsec:group-type}), hence the name.

\begin{theorem} \label{thm:sv}  {\rm(\cite[Thm.~3.4]{Ne22})}
  If $(U,\cH)$ is an antiunitary representation of $G_{\tau_h}$
  with discrete kernel, then 
  \[  S_\sV = \exp(C_+) G_\sV \exp(C_-)= G_\sV \exp(C_+ + C_-)
    \quad \mbox{ for } \quad
  C_\g = C_U.\]
\end{theorem}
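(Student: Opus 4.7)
The strategy is to combine the Monotonicity Theorem~\eqref{eq:montheo}, the decomposition of $S(h, C_U)$ provided by Theorem~\ref{thm:semigroups-equal}, and a Borchers--Wiesbrock-type argument identifying the group part of the resulting factorization.

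First I would establish the easy inclusion $\exp(C_+)\, G_\sV\, \exp(C_-) \subseteq S_\sV$. Since $S_\sV$ is a subsemigroup containing $G_\sV$, it suffices to show $\exp(C_\pm) \subseteq S_\sV$. For $x \in C_+$ the unitary one-parameter group $V(t) := U(\exp tx)$ has non-negative selfadjoint Stone generator $-i\cdot \partial U(x) \geq 0$ (because $x \in C_U$), it satisfies $J_\sV V(t) J_\sV = V(-t)$ (since $\tau_h(x) = -x$ for $x \in \g_1(h)$), and it obeys the Borchers covariance $\Delta_\sV^{is} V(t) \Delta_\sV^{-is} = V(e^{-2\pi s}t)$ (using $\Delta_\sV^{is} = U(\exp(-2\pi sh))$ together with $\Ad(\exp(-2\pi sh))x = e^{-2\pi s}x$). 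The Borchers--Wiesbrock theorem for standard subspaces (cf.\ \cite{Lo08}) then forces $V(t)\sV \subseteq \sV$ for all $t \geq 0$, i.e.\ $\exp(C_+) \subseteq S_\sV$. The inclusion $\exp(C_-) \subseteq S_\sV$ follows by applying the same theorem to the dual standard subspace $\sV' = J_\sV \sV$, whose modular pair $(\Delta_\sV^{-1}, J_\sV)$ corresponds to the Euler element $-h$: for $x \in C_-$ one has $-x \in C_U \cap \g_{-1}(h) = C_+^{(-h)}$, and conjugation by $J_\sV$ transfers the resulting inclusion back to $\sV$.

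For the opposite inclusion I would take $g \in S_\sV$. The Monotonicity Theorem gives $g \in S(h, C_U)$, and Theorem~\ref{thm:semigroups-equal} (applied, after passing to a cover where $\tau_h$ integrates if necessary) provides a unique decomposition $g = \exp(x_+)\, g_0\, \exp(x_-)$ with $x_\pm \in C_\pm$ and $g_0 \in G^h$. The task reduces to showing $g_0 \in G_\sV$. The characterization I would use is $G_\sV = \{k \in G^h \: \tau_h(k)k^{-1} \in \ker U\}$, which follows from the facts that $U(k)\sV = \sV$ forces $U(k)$ to commute with $\Delta_\sV^{is}$ (hence $k \in G^h$, since $\ker U$ is discrete) and with $J_\sV = U(\tau_h)$ (hence $U(\tau_h(k)) = U(k)$), together with the converse via the Tomita operator $J_\sV \Delta_\sV^{1/2}$.

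The main obstacle is precisely this identification of $g_0$. My approach would be to apply $\tau_h$ to $g \in S_\sV$: since $J_\sV S_\sV J_\sV^{-1} = S_{\sV'}$ and $\tau_h(x_\pm) = -x_\pm$, one obtains $\exp(-x_+)\, \tau_h(g_0)\, \exp(-x_-) = \tau_h(g) \in S_{\sV'}$. A second application of monotonicity, this time to $\sV'$, together with the identities $C_\pm^{(-h)} = -C_\mp$, places $\tau_h(g)$ in $S(-h, C_U) = \exp(-C_-)\, G^h\, \exp(-C_+)$. Thus $\tau_h(g)$ admits decompositions in both the ``$N_+ G^h N_-$'' big cell (directly, from the formula above) and in the opposite ``$N_- G^h N_+$'' big cell (from monotonicity for $\sV'$); the rigidity of these two decompositions in the $3$-graded Lie group $G$, combined with the $\Ad(G^h)$-invariance of the cones $C_\pm$, forces $\tau_h(g_0) g_0^{-1} \in \ker U$, which by the characterization of $G_\sV$ above yields $g_0 \in G_\sV$. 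Finally, the second equality $\exp(C_+)\, G_\sV\, \exp(C_-) = G_\sV \exp(C_+ + C_-)$ is inherited directly from the analogous identity $\exp(C_+)\, G^h\, \exp(C_-) = G^h \exp(C_+ + C_-)$ of Theorem~\ref{thm:semigroups-equal}: since $G_\sV \subseteq G^h$ normalizes $\exp(C_\pm)$, restricting to those decompositions whose $G^h$-factor lies in $G_\sV$ yields the desired factorization.
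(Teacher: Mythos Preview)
Your overall architecture matches the paper's sketch exactly: the easy inclusion via Borchers--Wiesbrock (for $C_+$ directly, for $C_-$ via $-h$ and $\sV'$), then the Monotonicity Theorem~\eqref{eq:montheo} combined with the decomposition $S(h,C_U) = \exp(C_+)\,G^h\,\exp(C_-)$ from Theorem~\ref{thm:semigroups-equal} to trap $S_\sV$. The paper likewise isolates the remaining task as showing that the $G^h$-factor $g_0$ of any $g \in S_\sV$ lies in $G_\sV$, and defers that step to \cite[Thm.~3.4]{Ne22}.

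Your attempt to supply this step, however, has a genuine gap. The ``rigidity'' claim is false: an element lying simultaneously in the $N_+ G^h N_-$ and the $N_- G^h N_+$ big cells has, in general, \emph{unrelated} $G^h$-components in the two factorizations. Already in $\SL_2(\R)$, a generic matrix with entries $a,b,c,d$ has diagonal part $\diag(d^{-1},d)$ in its $N_+AN_-$ factorization but $\diag(a,a^{-1})$ in its $N_-AN_+$ factorization, and these differ unless $bc=0$. So from $\tau_h(g) = \exp(-x_+)\,\tau_h(g_0)\,\exp(-x_-)$ together with $\tau_h(g) \in \exp(-C_-)\,G^h\,\exp(-C_+)$ you learn nothing about $\tau_h(g_0)g_0^{-1}$: the second factorization produces some other element of $G^h$, not $\tau_h(g_0)$, and the $\Ad(G^h)$-invariance of $C_\pm$ does not bridge the two. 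The argument in \cite{Ne22} for this step is more delicate and is not a formal consequence of the two big-cell memberships alone.
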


Here the second equality follows from Theorem~\ref{thm:semigroups-equal}. 
The Borchers--Wiesbrock Theorem~\ref{thm:borch-wies}
in Appendix~\ref{subsubsec:stand-pairs} 
immediately shows that $\exp(C_+) \subeq S_\sV$.
Applying it again with $-h$ and $\sV' = \sV(-h,U)$,  we also
get $\exp(C_-) \subeq S_\sV$, which leads with
\eqref{eq:montheo} to 
\[ \exp(C_+) G_\sV \exp(C_-) \subeq S_\sV
  \subeq S(h,C_U).\] 
Therefore the main point is to show that
\[ S(h,C_U)  \subeq \exp(C_+) G^h \exp(C_-)\]
and to identify the connected components of
$G^h$ fixing $\sV$.

\begin{examplekh} \label{ex:poincare}
  (Poincar\'e group) 
In Quantum Field Theory on Minkowski space, 
the natural symmetry group  is the proper Poincar\'e group 
$P(d) \cong \R^{1,d-1} \rtimes \OO_{1,d-1}(\R)^\uparrow$ 
acting by causal isometries
on $d$-dimensional Minkowski space $M := \R^{1,d-1}$. 
Its Lie algebra is  $\g := \fp(d) \cong \R^{1,d-1} \rtimes \so_{1,d-1}(\R)$ 
and the closed forward light cone 
\begin{equation}
  \label{eq:lightcone}
C_\g := \{ (x_0, \bx) \in \R^{1,d-1} \colon x_0 \geq \sqrt{\bx^2}\} 
\end{equation}
is a pointed invariant cone in $\g = \fp(d)$. 
The generator $h \in \so_{1,d-1}(\R)$ of the Lorentz boost on the 
$(x_0,x_1)$-plane 
\[ h(x_0,x_1, \ldots, x_{d-1}) = (x_1, x_0, 0,\ldots, 0)\] 
is an Euler element 
and $\tau_h = e^{\pi i \ad h}$ defines an involution on 
$\g$, acting on the ideal $\R^{1,d-1}$ (Minkowski space) by 
\[ \tau_M(x_0, x_1, \ldots, x_{d-1}) = (-x_0, -x_1, x_2, \ldots, x_{d-1}).\] 

We apply the results in this section
to the identity component 
\[ G := P(d)_e  \cong \R^{1,d-1} \rtimes \SO_{1,d-1}(\R)_e \] 
which has trivial center $Z(G) = \{e\}$. 
A unitary representation $(U,\cH)$ 
of $G$ is called a {\it positive energy representation} 
\index{representation!positive energy \scheiding} 
if $C_\g \subeq C_U$. If $\ker(U)$ is discrete, then $C_U$ is pointed, 
and $C_\g = C_U$ follows from the fact that the only non-zero pointed
invariant cone in the Lie algebra $\g = \fp(d)$ for $d  > 2$
are $\pm C_\g$, and, for $d = 2$,
there are four pointed invariant  cones which 
are quarter planes in $\R^{1,1}$. 

The centralizer of the Lorentz boost is 
\[ \g_0 = (\{(0,0)\} \times \R^{d-2}) 
\rtimes (\so_{1,1}(\R) \oplus \so_{d-2}(\R)) 
\cong (\R^{d-2} \rtimes \so_{d-2}(\R)) \oplus \R h, \] 
and, 
\begin{equation}
  \label{eq:cpm-poinc}
 C_+ = C_\g \cap \g_1 = \R_{\geq 0} (\be_1 + \be_0)
\quad \mbox{ and } \quad 
C_- = -C_\g \cap \g_{-1} =  \R_{\geq 0}(\be_1 - \be_0).
\end{equation}

The subsemigroup
\[ S(h,C_\g) = \{ g \in G \colon h - \Ad(g)h \in C_\g\} \] 
is easy to determine. 
The relation $\Ad(g)h-h \in \R^d$ implies that $g = (v,\ell)$ with 
$\Ad(\ell)h = h$, and then 
$\Ad(g)h = \Ad(v,\bone)h = -hv \in - C_\g$ is equivalent to $hv \in C_\g$, which 
specifies the closure $\oline{W_R}$ of the standard right wedge 
\[ W_R = \{ x \in \R^{1,d-1} \colon x_1 > |x_0|\}. \] 
The two cones $C_\pm$ generate a proper Lie subalgebra of $\g$. 
We therefore obtain with Lemma~\ref{lem:4.17} 
\begin{equation}
  \label{eq:s(h,cg)}
  S(h,C_\g) = \oline{W_R} \rtimes \big(\SO_{1,1}(\R)^\uparrow \times \SO_{d-2}(\R)\big) \ {\buildrel \ref{lem:4.17} \over =} \ \{ g \in G \colon gW_R \subeq W_R\} = S_{W_R},
\end{equation}
where $\SO_{1,1}(\R)^\uparrow = \exp(\R h)$.  
We claim that, for any antiunitary positive energy representation of 
$G_{\tau_h}$ with discrete kernel, the semigroup $S_\sV$ corresponding to the standard subspace
$\sV = \sV(h,U)$ is given by 
\begin{equation}
  \label{eq:sv-minkowski}
  S_\sV = S(h,C_\g) = S_{W_R}. 
\end{equation}
To verify this claim, we first observe that
\eqref{eq:montheo} implies $S_\sV \subeq S(h,C_\g)$.
We further have 
\[ S(h,C_\g) = S_{W_R} = \exp(C_+) G_{W_R} \exp(C_-), \] 
and the group $G_{W_R}$ is connected, hence contained in $G^{h,\tau_h}
\subeq G_\sV$. Now our claim follows from
Theorem~\ref{thm:sv}. 

Assume that $d \geq 4$, so that
the simply connected covering group $q_G \: \tilde G \to G$ 
is a $2$-fold covering. Then we obtain for $\tilde G$ 
the same picture because the involution $\tau_h$ acts trivially on the 
covering group $\tilde G^h$ of~$G^h$ by \eqref{eq:s(h,cg)},
and this implies that $U(\tilde G^h)$ fixes $\sV$.

For $d = 2$, the group $G \cong \R^2 \rtimes \R$ is simply connected,
but for $d= 3$ the picture is quite different. Then
$\SO_{1,2}(\R)_e \cong \PSL_2(\R)$ and $\pi_1(G) \cong \Z$.
In this case $\tau_h$ acts by inversion on the center $Z(\tilde G)$.
So
\[ \tilde G^h \cong \R \be_2 \times (\exp(\R h) Z(\tilde G))
  \cong \R \times \R \times \Z \]
and
\[ \tilde G_\sV = \{ g \in \tilde G^h \: g\tau_h(g)^{-1} \in \ker U \}
  = \R \times \R \times \{ n \in \Z \: 2n \in \ker U\}.\]
Therefore $\tilde G_{W_R} = \tilde G^h$ is
equivalent to $U(Z(\tilde G)) \subeq \{ \pm \bone\}$. 
In this case the nets $\sH^{\rm min}$ and $\sH^{\rm max}$ 
on $M = \R^{1,d-1}$ satisfy (BW) for $W = W_R$. 
\end{examplekh}

\begin{examplekh}
 (Conformal groups $\SO_{2,d}(\R)$) 
The Lie algebra of the conformal group 
$G := \SO_{2,d}(\R)_e$ of Minkowski space is 
$\g = \so_{2,d}(\R)$, which contains the Poincar\'e--Lie algebra 
as those elements corresponding to affine vector fields on $\jV := \R^{1,d-1}$. 
For $d \geq 3$ it is a simple hermitian Lie algebra. 
All its Euler elements $h$
are mutually conjugate (Proposition~\ref{prop:herm}).
One arises from the 
element $h = \id_\jV$ corresponding to the Euler vector field on~$\jV$. 
Then $\g_j(h)$, $j = -1,0,1$, are spaces of vector fields 
on~$\jV$ which are linear (for $j = 0$), constant (for $j = 1$) and 
quadratic (for $j = -1$).

Another important example is the 
element $h_{01} \in \so_{1,d-1}(\R) \subeq \so_{2,d-1}(\R)$ 
corresponding to a Lorentz boost in the Poincar\'e--Lie algebra 
(see Example~\ref{ex:poincare}). 

We consider the minimal invariant cone $C_\g \subeq \g$ 
which intersects $\jV$ in the positive light cone 
$C_+\subeq \jV$. 
Then we obtain a complete description of 
the corresponding semigroups by
\[ S_\sV = \exp(C_+) G_\sV \exp(C_-),\]
and these semigroups 
have interior points because the cones
$C_\pm$ generate the subspaces $\g_{\pm 1}$
(see \cite{MNO26} for more details). 
\end{examplekh}

\begin{examplekh} Another interesting example which is 
neither semisimple nor an affine group is given by  
the Lie algebra 
\[ \g = \hcsp(V,\omega) := \heis(V,\omega) \rtimes \csp(V,\omega) \]
from Example~\ref{ex:hcsp}. 
Now we turn to the corresponding group and one of its 
irreducible unitary representations. 
Choosing a symplectic basis in $V$, we obtain 
an isomorphism 
\[ V \cong V_{-1} \oplus V_1 \cong \R^n \oplus \R^n\]
with the canonical symplectic 
form specified by 
$\omega((q,0), (0,p)) = \la q,p\ra$ and $\tau_V(q,p) = (-q,p)$. Let 
$\Mp_{2n}(\R)$ denote the {\it metaplectic group},
\index{group!metaplectic \scheiding} 
which is the unique non-trivial double cover of $\Sp_{2n}(\R$). 
We consider the group 
\[ G := \Heis(\R^{2n}) \rtimes_\alpha (\R^\times_+ \times \Mp_{2n}(\R)),\] 
where $\R^\times$ acts on $\Heis(\R^{2n}) = \R \times \R^{2n}$ by 
$\alpha_r(z,v) = (r^2 z, rv)$.
% and the product on $\Heis(\R^{2n})$ is given by 
%\[ (z,q,p)(z',q',p') = (z + z' + pq', q + q', p + p').\]
Its Lie algebra is $\g = \hcsp(V,\omega)$. 
Then  the Hilbert space 
\[ \cH 
:= L^2\Big(\R^\times_+, \frac{d\lambda}{\lambda}; L^2(\R^n)\Big) \cong 
L^2\Big(\R^\times_+ \times \R^n, \frac{d\lambda}{\lambda} \otimes dx\Big),\] 
carries an irreducible unitary representation of $G$, where 
$L^2(\R^n) \cong L^2(V_{-1})$ carries the oscillator representation $U_0$ 
of $\Heis(\R^{2n}) \rtimes \Mp_{2n}(\R)$ (cf.\ \cite[\S IX.4]{Ne99}). 
The Heisenberg group $\Heis(\R^{2n})$ is represented on $\cH$ by 
\begin{align*}
(U(z,0,0) f)(\lambda, x) &= e^{i\lambda^2 z} f(\lambda, x), \\ 
(U(0,q,0) f)(\lambda, x) &= e^{i \lambda \la q,x\ra} f(\lambda, x), \\ 
(U(0,0,p) f)(\lambda, x) &= f(\lambda, x - \lambda p).
\end{align*}
The group $\Mp_{2n}(\R)$ acts by the metaplectic representation 
on $L^2(\R^n)$ via 
\[ (U(g)f)(\lambda, \cdot ) := U_0(g)f(\lambda, \cdot), \] 
independently of $\lambda$. 
The one-parameter group $\R^\times_+ \cong \exp(\R h_0)$ acts by 
\[ (U'(r) f)(\lambda,x) := f(r \lambda,x) \quad \mbox{ for }\quad r > 0.\] 
We also have a conjugation $J$ on $\cH$ defined by 
\[ (Jf)(\lambda,x) := \oline{f(\lambda,-x)} \quad \mbox{ satisfying } \quad 
J U(g) J = U (\tau_h(g)),\] 
where $\tau_h$ induces on $\g$ the
involution $e^{\pi i \ad h} = (-\tau_V)\,\tilde{}\ $ (cf.~Example~\ref{ex:hcsp}). 

The positive cone $C_U \subeq \fg$ is the same as the one of the metaplectic 
representation. It intersects $\sp(V,\omega)$ in the cone 
of non-negative polynomials of degree $2$ on~$V$. This implies that 
$(C_U)_- = C_-$. To determine $(C_U)_+ = C_U \cap \g_1$, 
we observe that $\g_1$ acts on $L^2(\R^n)\cong L^2(V_-)$  
by multiplication operators. This shows that we also have $(C_U)_+ = C_+$, 
so that we can determine the semigroup $S_\sV$ for the standard subspace 
$\sV = \sV(h,U)$ as 
\[ S_\sV = \exp(C_+) G_\sV \exp(C_-),\] 
where $G_\sV = G^h$ is a double cover of $\Aff(\R^n)_e$, its inverse image 
in $\Mp_{2n}(\R)$. 
\end{examplekh}

\subsection{Regularity of unitary representations}
\label{app:regularity}

The regularity concept for an antiunitary representation
$(U,\cH)$ corresponds to the assumptions of
Euler Element Theorem~\ref{thm:2.1}, when we are already given
an Euler element and an antiunitary representation of $G_{\tau_h}$.
It is a challenging open problem
(Conjecture~\ref{conj:reg}) to show that {\bf all} antiunitary
representations of $G_{\tau_h}$ are $h$-regular, without any additional
structural assumption on~$G$. We refer to
\cite{MN24} for a detailed discussion, and to \cite{BN25} for the
case of the $4$-dimensional split oscillator group.
In this section we describe the connection between
regularity and the existence of nets of real subspaces
satisfying (Iso), (Cov), (RS) and (BW) on a very general level.

\index{representation!$h$-regular \scheiding} 

\begin{definition}\label{def:reg}
  We call an antiunitary representation 
  $(U,\cH)$ of $G_{\tau_h}$ {\it regular with respect to $h$}, or
{\it $h$-regular}, if there exists an 
$e$-neighborhood $N \subeq G$ such that
\[ \sV_N = \bigcap_{g \in N} U(g)\sV \]
is cyclic.
Replacing~$N$ by its interior, we may always assume that $N$ is
  open.
\end{definition}

\begin{conjecture} \label{conj:reg} {\rm(Regularity Conjecture)} 
  If $h \in \g$ is an Euler element, then any antiunitary representation 
$(U,\cH)$ of $G_{\tau_h}$ is $h$-regular.
\end{conjecture}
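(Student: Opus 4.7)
The strategy is to deduce $h$-regularity from the existence of a covariant net of real subspaces $\sH$ on open subsets of $G$ satisfying (Iso), (RS), and the Bisognano--Wichmann equation $\sH(W^G) = \sV$ for some $\exp(\R h)$-invariant open set $W^G \subeq G$, and then to produce such a net via Theorem~\ref{thm:4.9}. Since cyclicity of $\sV_N$ is stable under subrepresentations, direct sums and direct integrals by the same argument as in Lemma~\ref{lem:direct-net-d} applied to the index set $N$ in place of $\{g : g^{-1}\cO \subeq W\}$, one may restrict attention to irreducible~$(U,\cH)$.

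Granted such a net, the reduction is short. Fix $g_0 \in W^G$. Continuity of multiplication in $G$ yields an open $e$-neighborhood $N \subeq G$ with $N^{-1} g_0 \subeq W^G$, i.e.\ $g_0 \in g W^G$ for every $g \in N$. Hence $\cO_N := \bigcap_{g \in N} g W^G$ is a non-empty open subset of $G$. By (Cov) together with (BW), $U(g)\sV = \sH(g W^G)$ for $g \in N$, and (Iso) then yields
\[ \sH(\cO_N) \subeq \bigcap_{g \in N} \sH(g W^G) = \bigcap_{g \in N} U(g)\sV = \sV_N. \]
The left-hand side is cyclic by (RS), and thus $\sV_N$ is cyclic, proving the $h$-regularity of $(U,\cH)$. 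To construct the net, one applies Theorem~\ref{thm:4.9}: choose a $(G,h)$-crown domain $\Xi$ in the sense of Definition~\ref{def:1.1b} together with a $G$-cyclic real subspace $\sF \subeq \cH^J_{\rm temp} \cap \cH^\omega(\Xi)$, set $\sE := \beta^+(\sF) \subeq \cH^{-\infty}$, and form the net $\sH^G_\sE$ of \eqref{eq:HE}. Its wedge region $W^G = G^h_e \exp(\Omega)$ contains $e$, so the reduction applies with $g_0 = e$.

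The hard part is precisely this last step: for an arbitrary connected Lie group $G$ with Euler element $h$ for which $\tau_h$ exists, one must (a) construct a $(G,h)$-crown domain $\Xi$, which is delicate as soon as $\eta_G \: G \to G_\C$ fails to be injective (for instance on non-linear covering groups), and (b) show that $\cH^J_{\rm temp} \cap \cH^\omega(\Xi)$ is $G$-cyclic, i.e.\ that sufficiently many $J$-fixed vectors have holomorphic orbit maps on $\Xi$ whose boundary growth along the modular direction is tempered in the sense of Theorem~\ref{thm:6-1-fno24}. For semisimple $G$ both (a) and (b) are supplied by Theorem~\ref{thm:gss} together with the Kr\"otz--Stanton--Simon growth Theorem~\ref{thm:simon-gro}, and for Poincar\'e and affine groups explicit descriptions of the positive energy representations deliver the required vectors. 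However, for solvable and mixed groups the tempered extension theory is fragmentary; the four-dimensional split oscillator analysis in \cite{BN25} (cf.\ Example~\ref{ex:affine-group}) shows that the crown domain has to be chosen carefully, such as $\Xi_2$ rather than the larger $\Xi_1$, on which the density of $\cH^\omega(\Xi) \cap \cH^J_{\rm temp}$ may fail. A uniform construction that handles all connected Lie groups $G$ admitting $\tau_h$ is precisely where the conjecture currently rests.
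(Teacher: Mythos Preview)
This statement is a \emph{Conjecture} in the paper, not a theorem; the paper does not prove it and explicitly says it is known only for connected reductive groups (Corollary~\ref{cor:real-red}) and some further special cases treated in \cite{MN24} and \cite{BN25}. Your proposal correctly recognizes this: you outline the natural strategy---construct a net via Theorem~\ref{thm:4.9} and then deduce regularity as in the forward direction of Theorem~\ref{thm:reg-net}---and you correctly locate the obstruction in your final paragraph, namely that for general $G$ one lacks a crown domain $\Xi$ for which $\cH^J_{\rm temp}\cap\cH^\omega(\Xi)$ is $G$-cyclic in every irreducible representation. This is precisely the paper's route to the reductive case (Theorems~\ref{thm:local-reduct} and~\ref{thm:4.9-semisimp}), so your assessment of what is proved and what remains open matches the paper.

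One small technical slip in the reduction step: you assert that $\cO_N := \bigcap_{g\in N} gW^G$ is open, but an intersection over an uncountable family of open sets need not be open, and (RS) is formulated for open subsets. The paper's version of this argument (Proposition~\ref{prop:loc-imp-reg} and the ``$\Rightarrow$'' half of Theorem~\ref{thm:reg-net}) avoids the issue by fixing a relatively compact open $\cO\subeq W^G$ and taking $N:=\{g\in G: g^{-1}\cO\subeq W^G\}$, which is an $e$-neighborhood because $\oline\cO\subeq W^G$ is compact; then (Iso) and (Cov) give $\sH(\cO)\subeq\sV_N$ directly. Your reduction to irreducibles is sound once one notes that the crown domain, hence $W^G$ and $N$, depend only on $(G,h)$ and not on $U$; the paper formalizes this uniform reduction in Proposition~\ref{prop:3.2}.
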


This conjecture holds for connected reductive 
groups by Corollary~\ref{cor:real-red} below 
and for several specific classes of groups and representations
(see \cite{MN24} for details).

  \begin{lemma} \label{lem:g-intera}
  For an  antiunitary  representation $(U,\cH)$ of $G_{\tau_h}$,
  the  following assertions hold:
  \begin{enumerate}
  \item[\rm(a)] If $U = U_1 \oplus U_2$ is a direct sum, then
    $U$ is $h$-regular if and only if $U_1$ and $U_2$ are $h$-regular.
  \item[\rm(b)] If $U$ is $h$-regular, then every subrepresentation is
    $h$-regular. 
  \item[\rm(c)]  Assume that $G$ has at most countably many connected
    components and let  $U = \int_X^\oplus U_m \, d\mu(m)$ be an  antiunitary  direct integral  representation of $G_{\tau_h}$, then $U$  is regular if and only if there exists an $e$-neighborhood $N \subeq G$ such that, for $\mu$-almost every $m \in X$,
    the subspace $\sV_{m,N}$ is cyclic.
      \end{enumerate}
  \end{lemma}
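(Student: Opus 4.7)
For part~(a), the plan is to use the direct-sum decomposition of the standard subspace: if $U = U_1 \oplus U_2$ then $\sV = \sV_1 \oplus \sV_2$, hence for any $e$-neighborhood $N$,
\[ \sV_N = \bigcap_{g \in N}\bigl(U_1(g)\sV_1 \oplus U_2(g)\sV_2\bigr) = \sV_{1,N} \oplus \sV_{2,N}. \]
Since $\sV_N + i\sV_N = (\sV_{1,N} + i\sV_{1,N}) \oplus (\sV_{2,N} + i\sV_{2,N})$, cyclicity of $\sV_N$ is equivalent to the simultaneous cyclicity of $\sV_{1,N}$ and $\sV_{2,N}$. Thus $h$-regularity of $U$ with neighborhood $N$ transfers the same $N$ to each summand; conversely, if $U_1,U_2$ are $h$-regular with neighborhoods $N_1,N_2$, then $N := N_1 \cap N_2$ works for~$U$. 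Part~(b) is then immediate: a subrepresentation is a closed $U(G_{\tau_h})$-invariant subspace $\cK \subeq \cH$, and since $J = U(\tau_h)$ is antiunitary and $U(G)$ consists of unitary operators, $\cK^\bot$ is also $U(G_{\tau_h})$-invariant, yielding $U \cong U\res_\cK \oplus U\res_{\cK^\bot}$, and (a) applies.

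For part~(c), the strategy is to replace the uncountable intersection defining $\sV_N$ by a countable one which is compatible with direct integral decompositions. The assumption that $G$ has at most countably many connected components makes $G$ second countable, so every open neighborhood $N$ of $e$ is a countable union $N = \bigcup_n K_n$ of compacts; picking a countable dense subset $D_n \subeq K_n$ for each $n$ and setting $D := \bigcup_n D_n \subeq N$, one verifies that $\sV_N = \sV_D$. This uses the closedness of $\sV$ together with the strong continuity of $g \mapsto U(g)^{-1}\xi$: for $\xi \in \sV_D$ and $g \in N$, choose $g_k \to g$ with $g_k \in D$, then $U(g_k)^{-1}\xi \in \sV$ for all $k$, and passing to the limit gives $U(g)^{-1}\xi \in \sV$.

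Given a direct integral $U = \int_X^\oplus U_m\, d\mu(m)$, the standard subspace decomposes as $\sV = \int_X^\oplus \sV_m\, d\mu(m)$ because $\Delta_\sV = e^{2\pi i \cdot \partial U(h)}$ and $J_\sV = U(\tau_h)$ both decompose fiberwise. For each fixed $g \in G$ we have $U(g)\sV = \int_X^\oplus U_m(g)\sV_m\, d\mu(m)$, and countable intersections of measurable fields of closed real subspaces are themselves measurable fields, so
\[ \sV_N = \sV_D = \int_X^\oplus \sV_{m,D}\, d\mu(m) = \int_X^\oplus \sV_{m,N}\, d\mu(m), \]
where the last equality holds for $\mu$-almost every $m$ by applying the countable-dense argument fiberwise. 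Lemma~\ref{lem:di1} in Appendix~\ref{app:D} then says that $\sV_N$ is cyclic in $\cH = \int_X^\oplus \cH_m\, d\mu(m)$ if and only if $\sV_{m,N}$ is cyclic in $\cH_m$ for $\mu$-almost every $m$, giving both directions of~(c). The main obstacle I anticipate is the careful verification that $m \mapsto \sV_{m,N}$ is a measurable field and that countable intersection commutes with direct integration; these are standard facts for measurable fields of closed subspaces over a standard Borel space with separable fibers, but require some bookkeeping, which is precisely where the second countability of~$G$ enters.
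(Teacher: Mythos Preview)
Your proof is correct and follows essentially the same route as the paper: for (a) you use the direct-sum decomposition $\sV_N = \sV_{1,N}\oplus\sV_{2,N}$ (which is exactly \eqref{eq:v-dirsum}), for (b) you reduce to (a) via the orthogonal complement, and for (c) your countable-dense-subset argument is precisely the content of Lemma~\ref{lem:g-inter} in Appendix~\ref{app:D}, which the paper invokes directly. Your write-up is slightly more explicit (e.g.\ taking $N=N_1\cap N_2$ for the converse in (a), and spelling out why $\cK^\bot$ is $U(G_{\tau_h})$-invariant in (b)), but the underlying ideas coincide.
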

  
\begin{proof} (a) If $U \cong  U_1 \oplus U_2$,
  then \eqref{eq:v-dirsum} implies that
  $\sV_N = \sV_{1,N} \oplus \sV_{2,N}$ for every
  $e$-neighborhood $N \subeq G$.
  In particular, $\sV_N$ is cyclic if and only if $\sV_{1,N}$ and
  $\sV_{2,N}$ are. 

  \nin (b) follows immediately from (a). 
  
  \nin (c) Applying Lemma~\ref{lem:g-inter}(b) in Appendix~\ref{app:D} 
  to $A := N$, we obtain (c). 
\end{proof}

Note that the Regularity Characterization Theorem~\ref{thm:reg-net} below  does not require any
  assumption concerning the irreducibility of the representation.
  Proposition~\ref{prop:3.2} (cf.\ \cite[Prop.~2.26]{MN24}) is a convenient tool to reduce to
  irreducible representations. 

  \begin{proposition} \label{prop:3.2}
  Assume that $G$ has at most countably many connected
  components and that $A \subeq G$ is a subset. Then the following
  are equivalent:
  \begin{enumerate}
  \item[\rm(a)] For all antiunitary representations 
    $(U,\cH)$ of $G_{\tau_h}$, the subspace
$\sV_A := \bigcap_{g \in A} U(g)\sV$ is cyclic. 
  \item[\rm(b)] For all irreducible antiunitary representations
    $(U,\cH)$ of $G_{\tau_h}$, the subspace $\sV_A$ is cyclic. 
  \item[\rm(c)] For all irreducible unitary representations 
    $(U,\cH)$ of $G$, the subspace $\tilde\sV_A$ is cyclic
    in $\tilde \cH$, where    $\tilde \sV := \sV(h, \tilde U)$
    and $(\tilde U, \tilde \cH)$ is
    the canonical antiunitary extension of $U$ from
    {\rm Lemma~\ref{lem:3.4}} in {\rm Appendix~\ref{app:extend}}. 
  \item[\rm(d)]{\rm(Characterization in terms of unitary
      representations)} For all unitary representations 
    $(U,\cH)$ of $G$, the subspace $\tilde\sV_A$ is cyclic in $\tilde\cH$. 
  \end{enumerate}
\end{proposition}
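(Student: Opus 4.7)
The plan is to close the chain of implications via (a)$\Rightarrow$(b) and (d)$\Rightarrow$(c) (both trivial restrictions to the irreducible subclass), together with the two nontrivial bridges (a)$\Leftrightarrow$(b) and (c)$\Leftrightarrow$(d) coming from central disintegration, and the crucial bridge (b)$\Leftrightarrow$(c) coming from the canonical antiunitary extension $U \mapsto \tilde U$ of Lemma~\ref{lem:3.4}. The countable-components hypothesis on $G$ ensures that both $G$ and $G_{\tau_h}$ are second countable, so that standard direct-integral theory applies to both sides.

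For (b)$\Rightarrow$(a), I would disintegrate an arbitrary antiunitary representation $(U, \cH)$ of $G_{\tau_h}$ along its center as $U = \int_X^\oplus U_m\, d\mu(m)$, with $\mu$-almost every $U_m$ irreducible antiunitary; applying (b) fibrewise and then Lemma~\ref{lem:g-intera}(c) yields cyclicity of $\sV_A$ in $\cH$. The passage (c)$\Rightarrow$(d) is the analogous argument on the $G$-side: disintegrate a unitary $(U_0,\cH_0)$ of $G$ into irreducibles and use that the canonical antiunitary extension commutes with direct integrals, so that $\tilde U_0$ is a direct integral of the extensions $\tilde U_{0,m}$; Lemma~\ref{lem:g-intera}(c), applied to the antiunitary representation $\tilde U_0$, then converts fibrewise cyclicity into cyclicity of $\tilde\sV_A$.

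The crux is the equivalence (b)$\Leftrightarrow$(c). In the direction (b)$\Rightarrow$(c), for an irreducible unitary $(U_0,\cH_0)$ of $G$ the antiunitary extension $\tilde U_0$ of $G_{\tau_h}$ is, by a Mackey-type dichotomy for the index-two inclusion $G \subeq G_{\tau_h}$, either itself irreducible antiunitary or the orthogonal sum of two irreducible antiunitary summands; applying (b) in each summand and reassembling via Lemma~\ref{lem:g-intera}(a) delivers cyclicity of $\tilde\sV_A$ in $\tilde\cH$. For (c)$\Rightarrow$(b), an irreducible antiunitary $(U,\cH)$ of $G_{\tau_h}$ restricts to a unitary $U|_G$ that is either irreducible (in which case the canonical extension $\widetilde{U|_G}$ splits into a copy of $(U,\cH)$ and a second antiunitary summand) or is a sum $U_1 \oplus U_2$ of two irreducible unitaries exchanged by the conjugation $J = U(\tau_h)$ (in which case $U \cong \tilde U_1$ up to unitary equivalence). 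In both cases cyclicity furnished by (c) for the relevant unitary irreducible, together with Lemma~\ref{lem:g-intera}(a), transfers to $\sV_A \subseteq \cH$.

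The main obstacle I anticipate is not the direct-integral reduction itself but the careful Mackey-type bookkeeping of how irreducibles on the $G$-side and on the $G_{\tau_h}$-side correspond. Concretely, one must pin down precisely when $\tilde U_0$ is irreducible antiunitary versus a sum of two, and dually when $U|_G$ is irreducible unitary versus a sum of two swapped by $J$, and then verify that the summands match the canonical extension construction of Lemma~\ref{lem:3.4} sharply enough for Lemma~\ref{lem:g-intera}(a) to be invoked without ambiguity. Once this correspondence is made explicit, the remainder is a routine assembly through the direct-sum and direct-integral parts of Lemma~\ref{lem:g-intera}.
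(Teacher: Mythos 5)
Your chain of implications is logically valid, but it is longer than it needs to be, and the step you use to close the loop on the antiunitary side, namely (b)$\Rightarrow$(a) via disintegration of an arbitrary antiunitary representation of $G_{\tau_h}$ into irreducibles, is appreciably more delicate than the route the paper's own machinery is built for. The cleanest argument is the single cyclic chain (a)$\Rightarrow$(b)$\Rightarrow$(c)$\Rightarrow$(d)$\Rightarrow$(a). You already have (a)$\Rightarrow$(b) (trivial restriction), (b)$\Rightarrow$(c) (Frobenius--Schur trichotomy for $\widetilde{U}$), and (c)$\Rightarrow$(d) (direct integral of irreducibles on the unitary side, where the standard decomposition for second-countable groups applies with no caveats). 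What you are not exploiting is that (d)$\Rightarrow$(a) is immediate from Lemma~\ref{lem:3.4}(b)(4): given an antiunitary representation $(U,\cH)$ of $G_{\tau_h}$, apply~(d) to $U|_G$ to get $\tilde\sV_A$ cyclic in $\tilde\cH$, and since $U|_G$ extends to $G_{\tau_h}$ (by $U$ itself), Lemma~\ref{lem:3.4}(b)(4) says $\tilde\sV_A$ is cyclic iff $\sV_A$ is. This is precisely what that part of the lemma is for, and it eliminates the need to disintegrate antiunitary representations at all; the only direct-integral argument left is on the unitary side for (c)$\Rightarrow$(d).

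Two smaller corrections. First, the lemma citations are off: Lemma~\ref{lem:g-intera} concerns $h$-regularity, i.e.\ cyclicity of $\sV_N$ for a \emph{varying} identity neighborhood~$N$, whereas for a \emph{fixed} subset~$A$ the statements you actually need are the direct-sum identity $\sV_A = \sV_{1,A}\oplus\sV_{2,A}$ of~\eqref{eq:v-dirsum}, the direct-integral decomposition $\sV_A = \int_X^\oplus \sV_{m,A}\, d\mu(m)$ of Lemma~\ref{lem:g-inter}, and the cyclicity criterion of Lemma~\ref{lem:di1}. Second, in your sketch of (c)$\Rightarrow$(b) the phrase ``a copy of $(U,\cH)$ and a second antiunitary summand'' is imprecise: by Lemma~\ref{lem:3.4}(b)(3), $\widetilde{U|_G}\cong U^{\oplus 2}$ is \emph{two copies of the same} representation $U$, and in any case on this leg Lemma~\ref{lem:3.4}(b)(4) lets you skip the case distinction; the real/complex/quaternionic trichotomy of Definition~\ref{def:types} is genuinely needed only for (b)$\Rightarrow$(c), to see that $\widetilde{U_0}$ is either irreducible or a sum of two copies of an irreducible antiunitary representation.
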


\begin{proposition} \label{prop:loc-imp-reg}
    {\rm(Localizability implies regularity)}
  Let $\eset\not=\cO \subeq W \subeq M$ be open subsets such that
  $N := \{ g \in G \: g^{-1}\cO \subeq W \}$ is an $e$-neighborhood.
  If $(U,\cH)$ is an antiunitary representation for which
  $\sH_M^{\rm max}(W) = \sV$ and $\sH_M^{\rm max}(\cO)$ is cyclic, then $U$
  is regular.
  \end{proposition}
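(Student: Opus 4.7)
The proof is essentially a matter of unpacking definitions, so the strategy is to identify $\sH_M^{\rm max}(\cO)$ explicitly with the intersection $\sV_N$ appearing in Definition~\ref{def:reg}.

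The plan is to begin by rewriting the maximal net at $\cO$ via its definition:
\[ \sH_M^{\rm max}(\cO) = \bigcap_{g \in G,\ \cO \subeq gW} U(g)\sV. \]
The indexing condition $\cO \subeq gW$ is equivalent (by left-multiplying with $g^{-1}$) to $g^{-1}\cO \subeq W$, i.e.\ to $g \in N$ with $N$ as in the statement. Hence the maximal-net subspace coincides exactly with the intersection appearing in the regularity definition,
\[ \sH_M^{\rm max}(\cO) \;=\; \bigcap_{g \in N} U(g)\sV \;=\; \sV_N. \]
Note that this identification does not require any separate hypothesis on $\sH_M^{\rm max}(W)$: the hypothesis $\sH_M^{\rm max}(W) = \sV$ is present to frame the proposition in the Bisognano--Wichmann context of the maximal net, ensuring that $\sV_N \subseteq \sV$ is a genuinely constrained subspace.

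Second, I would note that $N$ is non-empty and in fact contains $e$ automatically (since $e^{-1}\cO = \cO \subeq W$ by assumption); the nontrivial content of the hypothesis on $N$ is that it is actually a \emph{neighborhood} of $e$. Combined with the assumed cyclicity of $\sH_M^{\rm max}(\cO)$, we thus have an open $e$-neighborhood $N \subeq G$ such that the real subspace $\sV_N$ is cyclic in $\cH$. This is verbatim the content of Definition~\ref{def:reg}, so $(U,\cH)$ is $h$-regular.

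There is no real obstacle beyond making the identification $\sH_M^{\rm max}(\cO) = \sV_N$ carefully (this is also recorded as part of Lemma~\ref{lem:direct-net-d}). The proposition should therefore be viewed as a conceptual observation rather than a technical one: whenever the maximal net admits any cyclic real subspace at a region $\cO$ that can be translated into $W$ by an entire neighborhood of group elements, the representation is automatically $h$-regular in the sense of Section~\ref{app:regularity}.
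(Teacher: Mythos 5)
Your proof is correct and is, if anything, a cleaner route than the paper's. The paper establishes only the \emph{inclusion} $\sH_M^{\rm max}(\cO) \subeq \sV_N$: it applies isotony (since $\cO \subeq gW$ for every $g \in N$), then covariance to rewrite $\sH_M^{\rm max}(gW) = U(g)\sH_M^{\rm max}(W)$, and only then uses the hypothesis $\sH_M^{\rm max}(W) = \sV$ to convert the intersection into $\sV_N$. You instead go straight to the defining formula \eqref{eq:def-ho} (equivalently \eqref{eq:sh=va} in Lemma~\ref{lem:direct-net-d}) and note that the indexing set $\{g \in G : \cO \subeq gW\}$ is precisely $N$, so the \emph{equality} $\sH_M^{\rm max}(\cO) = \sV_N$ holds outright, without (Cov) or the (BW)-type hypothesis. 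Both arguments then conclude cyclicity of $\sV_N$, hence $h$-regularity. Your side observation that the hypothesis $\sH_M^{\rm max}(W) = \sV$ is not logically required is also correct; under the section's standing assumption that $W$ is $\exp(\R h)$-invariant, it in fact follows from the cyclicity of $\sH_M^{\rm max}(\cO)$ via isotony and Lemma~\ref{lem:direct-net}(c), so it is best viewed, as you say, as framing rather than as an independent assumption.
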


  \begin{proof} By assumption $\sH_M^{\rm max}(\cO)$ is cyclic, and
    we obtain with (Iso) and (Cov) (Lemma~\ref{lem:direct-net})
    the relation 
  \[ \sH_M^{\rm max}(\cO) \subeq  \bigcap_{g \in N} \sH_M^{\rm max}(gW)
    = \bigcap_{g \in N} U(g) \sH_M^{\rm max}(W) 
=  \bigcap_{g \in N} U(g) \sV = \sV_N. \]
  It follows that $\sV_N$ is cyclic. 
\end{proof}

\subsubsection{Regularity for a suitable wedge region in general groups}

The following surprising theorem show that regularity already
implies the existence of a net of real subspaces on open subsets of $G$, 
satisfying (Iso), (Cov), (RS) and (BW) for a suitably chosen 
subset $W \subeq G$. 

\begin{theorem} \label{thm:reg-net} {\rm(Regularity Characterization Theorem)} 
    Let $(U,\cH)$ be an antiunitary representation of
  $G_{\tau_h}$ and
  $\sV = \sV(h,U) \subeq \cH$ the corresponding standard subspace.
  Then there exists a net $(\sH(\cO))_{\cO \subeq G}$ on
  open subsets of $G$ satisfying
  {\rm(Iso), (Cov), (RS)}, and {\rm(BW)} for some
  open connected subset $W \subeq G$ if and only if $U$ is $h$-regular, i.e.,
  $\sV_N$ is cyclic for some $e$-neighborhood $N \subeq G$.
\end{theorem}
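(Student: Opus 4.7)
The forward implication is a direct adaptation of the argument establishing the corollary to Theorem~\ref{thm:2.1}. Suppose $\sH$ is a net on $G$ satisfying (Iso), (Cov), (RS), and (BW) with open connected wedge $W$. By local compactness of $G$, pick a non-empty open $\cO$ with compact closure $\oline\cO \subeq W$. Then $N := \{g \in G : g^{-1}\oline\cO \subeq W\}$ is an open $e$-neighborhood. For each $g \in N$, isotony and covariance yield $U(g)^{-1}\sH(\cO) = \sH(g^{-1}\cO) \subeq \sH(W) = \sV$, so $\sH(\cO) \subeq \bigcap_{g \in N}U(g)\sV = \sV_N$. Cyclicity of $\sH(\cO)$ by (RS) then forces $\sV_N$ to be cyclic, i.e.\ $U$ is $h$-regular.

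For the converse, let $N$ be an open $e$-neighborhood with $\sV_N$ cyclic. The plan is to pick a sufficiently small open connected wedge $W$ with trivial compression semigroup and then take $\sH := \sH_G^{\rm max}$ as in \eqref{eq:def-ho}. Working in normal coordinates around $e$, I would set $W := \exp(B_r)$ for an open ball $B_r \subeq \g$ of small radius $r$, chosen so that (i) $\exp$ is a diffeomorphism on $B_{2r}$, (ii) $\exp(B_r) \subeq N \cap N^{-1}$, and (iii) $S_W = \{e\}$. Property (iii) follows from a local BCH argument: for $g = \exp(y) \in S_W$, the relation $e \in W$ gives $g \in W$ and hence $\|y\| < r$; if $y \neq 0$, setting $u := y/\|y\|$ and $x := tu$ for any $t \in (r-\|y\|, r)$ gives $x \in B_r$ and $[y,x]=0$, so
\[ g\exp(x) = \exp(y+x) = \exp((\|y\|+t)u). \]
Since $\|y\|+t > r$ and $\exp$ is injective on $B_{2r}$, this element is not in $W$, contradicting $gW \subeq W$.

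With $W$ chosen as above, the net $\sH(\cO) := \bigcap_{g : \cO \subeq gW}U(g)\sV$ satisfies (Iso) and (Cov) by Lemma~\ref{lem:direct-net}(a), and (BW) holds because $S_W = \{e\}$ gives $\sH(W) = \bigcap_{g \in S_W^{-1}}U(g)\sV = \sV$. For (RS), let $\cO$ be any non-empty open set. If $\cO$ is not contained in any translate of $W$, then $\sH(\cO) = \cH$ is trivially cyclic. Otherwise covariance reduces to $\cO \subeq W$; picking any $c \in \cO$ and setting $\cO' := c^{-1}(\cO \cap cW)$, isotony and covariance reduce further to showing $\sH(\cO')$ cyclic for an open $\cO' \subeq W$ containing $e$. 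For such $\cO'$, the containment $e \in \cO'$ forces $g^{-1} \in W$ for every $g \in A_{\cO'} := \{g : g^{-1}\cO' \subeq W\}$, so $A_{\cO'} \subeq W^{-1} \subeq N$; hence $\sH(\cO') = \sV_{A_{\cO'}} \supeq \sV_N$ is cyclic by hypothesis.

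The main technical point is Step~1 of the converse, namely constructing $W$ with $S_W = \{e\}$; everything else is routine bookkeeping with isotony and covariance applied to the maximal net. The BCH-based local argument is elementary, but care is needed to simultaneously ensure that $W$ is connected, that $W \subeq N \cap N^{-1}$ (required for the Reeh--Schlieder step), and that $S_W$ is trivial (required for Bisognano--Wichmann).
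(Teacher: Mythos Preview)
Your forward direction matches the paper's. Your converse is correct but takes a genuinely different route.

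The paper chooses $W := \exp(\R h)\,N_1$ for a small connected symmetric neighborhood $N_1$ with $N_1 N_1^{-1} \subeq N$, so that $W$ is $\exp(\R h)$-invariant. It then verifies (RS) first: for $\cO \subeq N_1$, any $g$ with $\cO \subeq gW$ lies in $N\exp(\R h)$, and since $U(\exp(\R h))\sV = \sV$, the intersection collapses to $\sV_N$. Cyclicity of $\sH^{\max}(W)$ then yields $\sH^{\max}(W) = \sV$ via the Equality Lemma (through Lemma~\ref{lem:direct-net}(c)). By contrast, you take $W = \exp(B_r)$ bounded, force $S_W = \{e\}$ by an elementary BCH argument, and read off $\sH^{\max}(W) = \sV$ directly without invoking the Equality Lemma; your (RS) step then uses the reduction to $e \in \cO'$ so that $A_{\cO'} \subeq W^{-1} \subeq N$. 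Your argument is more self-contained (no appeal to the Equality Lemma), but the paper's choice of an $\exp(\R h)$-invariant wedge is more in keeping with the wedge regions appearing elsewhere in the theory and exploits the modular invariance $\Delta_\sV^{i\R}\sV = \sV$ that is already at hand.
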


\begin{proof} ``$\Rarrow$'': If a net $\sH$ with the asserted properties
  exists, then $\sV = \sH(W)$, and for any relatively compact open subset $\cO
  \subeq W$ there exists an identity neighborhood $N \subeq G$ with 
  $N\cO \subeq W$. Then, for $g^{-1} \in N$, we have
  \[ U(g)^{-1}\sH(\cO) = \sH(g^{-1}.\cO)
    \subeq \sH(W)= \sV,\quad \mbox{ hence } \quad
    \sH(\cO) \subeq \sV_N,\]
  as in the proof of Proposition~\ref{prop:loc-imp-reg}.
  Now (RS) implies that $U$ is $h$-regular.

  \nin ``$\Larrow$'':  Assume that $\sV_N$ is cyclic for an
  $e$-neighborhood $N$. Pick an open connected
  identity neighborhood $N_1 \subeq N$
  with $N_1 N_1^{-1} \subeq N$. Then
  \[ W := \exp(\R h) N_1 \]
  is an open connected subset of $G$. We consider the net
  $\sH := \sH^{\rm  max}_G$, defined by
  \[ \sH(\cO) = \bigcap_{g \in G, \cO \subeq gW} U(g)\sV.\]
  This net satisfies (Iso) and (Cov) by Lemma~\ref{lem:direct-net}.

  We now verify the Reeh--Schlieder property (RS).
  So let $\eset\not=\cO \subeq G$ be an open subset.
  By (Iso) and (Cov), it suffices to show that $\sH(\cO)$ is cyclic
  if $\cO \subeq N_1$. Then $\cO \subeq gW = g \exp(\R h) N_1$ implies
  \[ g \in \cO N_1^{-1} \exp(\R h) \subeq N_1 N_1^{-1} \exp(\R h)
    \subeq N \exp(\R h),\]
  so that
  \[ \sH(\cO)
    \supeq \bigcap_{g \in N\exp(\R h)} U(g) \sV 
    = \bigcap_{g \in N} U(g) \sV = \sV_N \]
  implies that $\sH(\cO)$ is cyclic. This proves (RS).
  It follows in particular that $\sH(W)$ 
  is cyclic, so that Lemma~\ref{lem:direct-net}(c) implies 
  $\sH(W)=~\sV$. Therefore (BW) is also satisfied.
\end{proof}

\begin{remark} (Regularity vs orbit maps in $\sV$)

\nin (a)   Note that $v \in \sH^{\rm max}(\cO)$ is equivalent to
\[ g^{-1}\cO  \subeq W \quad \Rarrow \quad U(g)^{-1} v \in \sV.\] 
  If $\cO \subeq W$ is relatively compact, this condition
  holds for $g$ in an $e$-neighborhood. Therefore $\sH^{\rm max}(\cO)$
  consists of vectors $v \in \cH$ whose orbit map
  $U^v \: G \to \cH$ maps an identity neighborhood into~$\sV$
  (cf.\ Proposition~\ref{prop:4.8}(d)). Put differently,
  the subset $(U^v)^{-1}(\sV) \subeq G$ has interior points.

\nin (b) Suppose that $v \in \sV \cap \cH^\omega$ is an analytic vector
and $U(N)v \subeq \sV$ holds for an identity neighborhood $N\subeq G$.
Then the connectedness of $G$ and
uniqueness of analytic continuation imply 
  $U(G)v \subeq \sV$, i.e., $v \in \sV_G = \bigcap_{g \in G} U_g \sV$.

  If, in addition, $v$ is $G$-cyclic, then $\sV_G$ is a cyclic real subspace,
  so that its invariance under the modular group of $\sV$ implies with
  the Equality Lemma~\ref{lem:lo08-3.10} that $\sV = \sV_G$, i.e., that
  $\sV$ is $G$-invariant. If $U$ has discrete kernel, this implies that
  $h \in \fz(\g)$. Hence $\tau_h$ is trivial and therefore
  $J_\sV$ commutes with $G$. Therefore $\cH^{J_\sV} = \sV$ is a real orthogonal
  representation of $G$, and $U$ is its  complexification,
  considered as a representation of $G_{\tau_h}$ on $\cH \cong (\cH^{J_\sV})_\C$.
This is the context where $\partial U(h)$ and $J_\sV$ commute with~$G_{\tau_h}$.

  \nin (c) Another perspective on (b) is that the cyclic subrepresentation
 $U_v$  generated by any $v \in \cH^\omega \cap \sV_N$ is such that
  $\partial U_v(h)$ and $J_\sV$ commute with $G$.
  So $v$ is fixed by the normal subgroup $B$ 
  with Lie algebra
  \[ \fb := \g_1 + [\g_1,\g_{-1}] + \g_{-1}\subeq \ker(\dd U_v).\]
\end{remark}

\subsubsection{Regularity for reductive Lie groups}
\label{subsubsec:reg-reduct}

In this subsection we assume that $\g$ is reductive
and that $G$ is a corresponding connected Lie group.
We choose an involution $\theta$ on $\g$ in such a way that 
  it fixes the center pointwise and restricts to a Cartan involution
  on the semisimple Lie algebra $[\g,\g]$. Then
  the corresponding Cartan decomposition $\g = \fk \oplus \fp$ satisfies
$\fz(\g) \subeq \fk$. We write $K := G^\theta$ for the subgroup of
$\theta$-fixed points in~$G$.

For an Euler element $h \in \g$, we
write $\g = \g_1 \oplus \g_2$, where $\fz(\g) \subeq \g_1$, 
$h = h_z + h_2 \in \fz(\g) \oplus \g_2$, and $\g_2$ is minimal, i.e.,
$\g_2$ is the ideal generated by
the projection $h_2$ of $h$ to the commutator
algebra. We consider the involution $\tau$ on $\g$ with
\[ \tau\res_{\g_1} = \id_{\g_1}
  \quad \mbox{ and } \quad  
 \tau\res_{\g_2} = \tau_h \theta.\] 
We {\bf assume} that $\tau$ integrates to an involutive automorphism
$\tau^G$ of $G$.
  We write $\fh := \g^\tau$ and $\fq := \g^{-\tau} \subeq \g_2$
  for the $\tau$-eigenspaces in $\g$.
    Then there exists in $\fq$ 
    a unique maximal pointed generating $e^{\ad \fh}$-invariant
    cone $C$ containing $h_2$ in its interior
    (\cite[Thm.~4.21]{MNO23} deals with minimal cones, but
    the minimal and the maximal cone     determine each other by duality). 
    We choose an open $\theta$-invariant
    subgroup $H \subeq G^\tau$, satisfying $\Ad(H)C = C$.
This  is always the case for $H = G^\tau_e$ (the minimal choice).
    By \cite[Cor.~4.6]{MNO23}, $\Ad(H)C = C$ is 
    equivalent to $H_K= H \cap K$ fixing~$h$. 
 Then
    \begin{equation}
      \label{eq:nccb}
      M = G/H \cong G_2/H_2 \quad \mbox{ for } \quad H_2 := G_2 \cap H
    \end{equation}
    is called the corresponding non-compactly causal symmetric space
    (cf.\ Section~\ref{subsec:ncc-spaces}).
    The normal subgroups $G_1 \subeq H$ acts trivially on $M$.
    The homogeneous space $M$ carries a $G$-invariant causal
    structure, represented by the
    field $(C_m)_{m \in M}$ of closed convex cones
    $C_m \subeq T_m(M)$, which is uniquely determined by
    $C_{eH} = C \subeq \fq \cong T_{eH}(M)$
    (Subsection~\ref{sec:3.2}). By construction, 
    $eH \in W_M^+(h)$, and we write
    \begin{equation}
      \label{eq:winncc}
      W := W_M^+(h)_{eH}
    \end{equation}
    for the connected component of the base point in~$W_M^+(h)$.

\begin{definition} \label{def:local}
    We say that the (anti-)unitary representation 
  $(U,\cH)$ of $G_{\tau_h}$ is 
  {\it $(h,W)$-localizable} in those open subsets $\cO \subeq M$  
  for which  $\sH^{\rm max}_M(\cO)$ is cyclic.
\end{definition}
\index{representation!$(h,W)$-localizable \scheiding} 

\begin{theorem} \label{thm:local-reduct}
  {\rm(Localization for reductive groups)}
  If $\g$ is reductive
  and   $(U,\cH)$ is an antiunitary representation
  of $G_{\tau_h}$, then the canonical net $\sH_M^{\rm max}$ on
  the non-compactly causal symmetric space $M = G/H$
  from \eqref{eq:nccb} and $W$ from \eqref{eq:winncc} satisfies 
  {\rm(Iso), (Cov), (RS)} and {\rm(BW)}. 
\end{theorem}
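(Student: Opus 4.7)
The plan is to verify the four axioms for $\sH_M^{\mathrm{max}}$ by reducing everything to the semisimple case, which is already covered by Theorem~\ref{thm:4.9-semisimp}. Axioms (Iso) and (Cov) hold for any maximal net by Lemma~\ref{lem:direct-net}(a), so the substantive content is in (RS) and (BW). Both are cyclicity statements: (RS) requires $\sH_M^{\mathrm{max}}(\cO)$ to be cyclic for every nonempty open $\cO \subeq M$, and by Lemma~\ref{lem:direct-net}(c), (BW) reduces to cyclicity of $\sH_M^{\mathrm{max}}(W)$, after which the equality $\sH_M^{\mathrm{max}}(W) = \sV$ is automatic.

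First I would reduce to irreducible $U$. Decomposing $U = \int_X^\oplus U_m\, d\mu(m)$ into irreducibles, equation~\eqref{eq:net-dirint} in the proof of Lemma~\ref{lem:direct-net-d} gives $\sH_M^{\mathrm{max}}(\cO) = \int_X^\oplus \sH_{M,m}^{\mathrm{max}}(\cO)\, d\mu(m)$, and Lemma~\ref{lem:di1} transports fiberwise cyclicity to cyclicity of the direct integral. So from now on $U$ may be assumed irreducible. Next I would pass from the reductive group $G$ to its minimal semisimple ideal $G_2$: write $\g = \g_1 \oplus \g_2$ and $h = h_z + h_2$ with $h_z \in \fz(\g) \subeq \g_1 \cap \fh$. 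By Schur's lemma, $U(\exp t h_z) = e^{it\lambda}\mathbf{1}$ for some $\lambda \in \R$, so the scalar factor drops out and one has $\Delta_{\sV(h,U)} = e^{2\pi i\partial U(h_2)} \cdot e^{2\pi i \lambda}$ and $J_{\sV(h,U)} = U(\tau_h) = U(\tau_{h_2})$, identifying $\sV(h,U) = \sV(h_2, U|_{G_2})$. Furthermore, $G_1 \subeq H$ acts trivially on $M$, so $M \cong G_2/H_2$ as causal $G_2$-spaces and the distinguished wedge region $W$ coincides for both actions. Since $G_1$ acts trivially on $M$, the condition $\cO \subeq gW$ for $g \in G$ depends only on the $G_2$-component of $g$, and therefore the maximal nets built from $(G,U)$ and from $(G_2, U|_{G_2})$ assign the same subspace to every open $\cO \subeq M$.

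At this point Theorem~\ref{thm:4.9-semisimp} applies directly to the connected semisimple group $G_2$ acting on its non-compactly causal symmetric space $M$. Choose a nonzero finite-dimensional $K_2$-invariant subspace $\cF \subeq \cH$ invariant under $J$ (such $\cF$ exist because $\tau_h$ commutes with the Cartan involution $\theta$ restricted to $G_2$, hence $J = U(\tau_h)$ preserves the $K_2$-isotypic decomposition and has nontrivial invariants). Setting $\sF := \cF^J$ and $\sE := \beta^+(\sF) \subeq \cH^{-\infty}$, Theorem~\ref{thm:4.9-semisimp} produces a net $\sH_\sE^M$ satisfying (Iso), (Cov), (RS) and (BW) with $\sH_\sE^M(W) = \sV$. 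By Lemma~\ref{lem:maxnet-larger}, $\sH_\sE^M(\cO) \subeq \sH_M^{\mathrm{max}}(\cO)$ for every open $\cO$, so cyclicity of $\sH_\sE^M(\cO)$ propagates to $\sH_M^{\mathrm{max}}(\cO)$. This yields (RS), and applying Lemma~\ref{lem:direct-net}(c) at $\cO = W$ upgrades cyclicity to the equality $\sH_M^{\mathrm{max}}(W) = \sV$, which is (BW).

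The main obstacle is step three, the reduction from the reductive group $G$ to its semisimple ideal $G_2$. While intuitively clean, making it rigorous requires carefully matching the $G$-orbit structure and the $G_2$-orbit structure of the family of wedge regions $\{gW : g \in G\}$ on $M$, and confirming that the central scalar factor introduced by $h_z$ in an irreducible representation does not alter the standard subspace $\sV(h,U)$ nor the action of $J$. Once these identifications are in place, the theorem follows from the semisimple construction with no further analytic input.
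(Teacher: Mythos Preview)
Your overall strategy matches the paper's --- reduce to the semisimple ideal $G_2$ and to irreducible representations, then apply Theorem~\ref{thm:4.9-semisimp} and Lemma~\ref{lem:maxnet-larger} --- but you perform the two reductions in the opposite order, and this creates gaps. Your Schur step asserts $U(\exp t h_z) = e^{it\lambda}\mathbf{1}$ after reducing to an irreducible antiunitary $U$ of $G_{\tau_h}$, but Schur requires the \emph{unitary} restriction $U|_G$ to be irreducible, which fails for complex or quaternionic type (Definition~\ref{def:types}); in those cases $U(\exp th_z)$ lies in the finite-dimensional commutant but is not a scalar multiple of the identity, and $\sV(h,U) \ne \sV(h_2, U|_{G_2})$ in general. (Even in the real-type case your formula has a slip: $e^{2\pi i\,\partial U(h_z)}$ is the positive real $e^{-2\pi\lambda}$, not the phase $e^{2\pi i\lambda}$, and $\lambda = 0$ then follows from $J$-commutativity, which you do not argue.) Moreover, even granting the identification of standard subspaces, the restriction $U|_{(G_2)_{\tau_{h_2}}}$ need not be irreducible --- its commutant contains $U(G_1)$ --- so Theorem~\ref{thm:4.9-semisimp} does not apply ``directly'' as you claim.

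The paper avoids both problems by reversing your order. It first uses the observation $G_1 \subseteq G^{h,\tau_h}$ to conclude $U(G_1)\sV = \sV$ for \emph{every} representation; combined with the trivial $G_1$-action on $M$, this gives $\sH_M^{\rm max}(\cO) = \bigcap_{g_2 \in G_2} U(g_2)\sV$ without Schur and without any irreducibility hypothesis. Having placed the problem entirely inside the semisimple group $G_2$, the paper then invokes Proposition~\ref{prop:3.2} and Lemma~\ref{lem:direct-net-d} to reduce to irreducible antiunitary representations of $(G_2)_{\tau_{h_2}}$, to which Theorem~\ref{thm:4.9-semisimp} applies cleanly.
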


\begin{proof} (cf.\ \cite{MN24})
  As the standard subspace $\sV$ is invariant under
  $G_1 \subeq G^{h, \tau_h}$, and $G_1$ acts trivially on $M$,
  the real subspaces $\sH_M^{\rm max}(\cO)$ only depend
  on $U\res_{G_2}$. We may therefore assume that $G = G_2$, i.e., that $G$ is semisimple and that $\g_0(h)$ contains no non-zero ideal.

  In view of Lemma~\ref{lem:direct-net}(c), 
  $\sV = \sH^{\rm max}(W)$ follows from the cyclicity
  of all subspaces $\sH^{\rm max}(\cO)$, $\cO \not=\eset$.
So it suffices to verify the latter. 
By Proposition~\ref{prop:3.2} and
Lemma~\ref{lem:direct-net-d}, we may further assume that
  $(U,\cH)$ is irreducible. 
  Then Theorem~\ref{thm:4.9-semisimp}
  provides a net $\sH^M_\sE$ satisfying
  (Iso), (Cov), (RS) and (BW),   and this net satisfies
  $\sH^M_\sE(\cO) \subeq \sH^{\rm  max}_M(\cO)$
  for each $\cO \subeq M$ (Lemma~\ref{lem:maxnet-larger}).
  Thus   $\sH_M^{\rm  max}(\cO)$ is cyclic.
\end{proof}

\begin{corollary}   \label{cor:real-red} {\rm(Regularity for %linear
    reductive groups)} 
  Let $G$ be a connected %linear
  reductive Lie group. 
  %i.e., its universal complexification is injective and
  %$G_\C$ is a complex reductive algebraic group.
  Then there exists an $e$-neighborhood $N \subeq G$
  such that for every separable antiunitary representation
  $(U,\cH)$ of $G_{\tau_h}$ and $\sV = \sV(h,U)$, the real subspace 
  $\sV_N = \bigcap_{g \in N} U(g) \sV$
  is cyclic. In particular, $(U,\cH)$ is $h$-regular.   
\end{corollary}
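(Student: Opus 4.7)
The plan is to deduce the corollary directly from the Localization Theorem~\ref{thm:local-reduct} together with Proposition~\ref{prop:loc-imp-reg}, with the key observation that the neighborhood $N$ depends only on the geometry of $G$ acting on the non-compactly causal symmetric space $M = G/H$ and on the wedge region $W \subeq M$, but \emph{not} on the representation~$U$.

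First I would fix, once and for all, the geometric data attached to $G$ and $h$: the Cartan involution $\theta$ with $\theta(h) = -h$, the involution $\tau = \theta \tau_h$ (possibly after replacing $G$ by a covering so that $\tau$ integrates), the subgroup $H \subeq G^\tau$, the non-compactly causal symmetric space $M = G/H$, and the wedge region $W = W_M^+(h)_{eH}$ as in~\eqref{eq:winncc}. Next I would choose a non-empty relatively compact open subset $\cO \subeq W$, which is possible since $W$ is open (see \eqref{eq:wm+ss}), and set
\[
N := \{ g \in G \: g^{-1}.\oline\cO \subeq W \}.
\]
Because $\oline\cO$ is a compact subset of the open $G$-set $W$, the set $N$ is an open $e$-neighborhood of $G$. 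Crucially, the construction of $N$ involves only $M$, $W$ and $\cO$, hence is independent of any representation of $G_{\tau_h}$.

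Now let $(U,\cH)$ be an arbitrary separable antiunitary representation of $G_{\tau_h}$, with associated standard subspace $\sV = \sV(h,U)$. By Theorem~\ref{thm:local-reduct}, the canonical net $\sH^{\rm max}_M$ on $M$ satisfies (Iso), (Cov), (RS) and (BW), the latter with wedge region $W$, so that $\sH^{\rm max}_M(W) = \sV$ and $\sH^{\rm max}_M(\cO)$ is cyclic by the Reeh--Schlieder property. For every $g \in N$, isotony and covariance yield
\[
U(g)^{-1} \sH^{\rm max}_M(\cO) = \sH^{\rm max}_M(g^{-1}.\cO) \subeq \sH^{\rm max}_M(W) = \sV,
\]
hence $\sH^{\rm max}_M(\cO) \subeq U(g)\sV$ for every $g \in N$, and therefore
\[
\sH^{\rm max}_M(\cO) \subeq \bigcap_{g \in N} U(g)\sV = \sV_N.
\]
Since $\sH^{\rm max}_M(\cO)$ is cyclic, so is $\sV_N$. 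This proves the cyclicity statement, and $h$-regularity then follows at once from Definition~\ref{def:reg}; alternatively, one may invoke Proposition~\ref{prop:loc-imp-reg} directly.

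The substantive content of the argument lies entirely in Theorem~\ref{thm:local-reduct}, whose proof in turn rests on Proposition~\ref{prop:3.2} (to reduce to irreducible representations, where the separability hypothesis is used to control the direct integral decomposition) and on the existence of nets $\sH^M_\sE$ with the Bisognano--Wichmann property provided by Theorem~\ref{thm:4.9-semisimp}. The remaining step performed above is purely formal and contains no obstacle; the only thing one has to notice is that the same open set $N$ works simultaneously for all $U$, precisely because both $W$ and $\cO$ are fixed before $U$ enters the picture.
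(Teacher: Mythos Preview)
Your proof is correct and follows essentially the same route as the paper: fix a relatively compact open $\cO \subeq W$, define the $e$-neighborhood $N$ purely in terms of the action of $G$ on $M$, and then combine Theorem~\ref{thm:local-reduct} with the argument of Proposition~\ref{prop:loc-imp-reg} to conclude that $\sH^{\rm max}_M(\cO) \subeq \sV_N$ is cyclic. Your explicit emphasis that $N$ is chosen before the representation enters, and your remark on where the separability hypothesis is used (via Proposition~\ref{prop:3.2} in the proof of Theorem~\ref{thm:local-reduct}), are welcome clarifications of points the paper leaves implicit.
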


\begin{proof} Let $\cO \subeq W \subeq M = G/H$
  (with $W \subeq M$ as in Theorem~\ref{thm:local-reduct}) be an open subset
  whose closure $\oline\cO$ is relatively compact.
  In Theorem~\ref{thm:local-reduct} we have seen that
  $\sH_M^{{\rm max}}(\cO)$ is cyclic. Further
  \[ N := \{g \in G \: g\cO \subeq W\} \supeq \{g \in G \: g\oline\cO \subeq W\} \]
  is an $e$-neighborhood because $\oline\cO \subeq W$ is compact.
Therefore the $h$-regularity of $(U,\cH)$ follows from
Proposition~\ref{prop:loc-imp-reg}.
\end{proof}

\begin{theorem} \label{thm:SW-ncc-spaces}
  {\rm(Triviality of the semigroups of wedge
    regions in ncc symmetric spaces)} 
  If $G$ is a connected reductive Lie group 
  and $M = G/H$ a corresponding non-compactly causal symmetric
  space   as in \eqref{eq:nccb}, 
  with causal Euler element~$h$, and the maximal causal structure,
    then the following assertions hold:
  \begin{description}
  \item[\rm(a)] $S_W = G_W = \{ g \in G^h \: g.W = W \}$. 
  \item[\rm(b)] $S_W = G^h$ if $\g$ is simple and $Z(G)= \{e\}$. 
  \end{description}
\end{theorem}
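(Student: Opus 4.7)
The plan is to compute $\L(S_W)$ via Proposition~\ref{prop:LSW}, identify it with the Lie subalgebra $\g^h$, and then upgrade this infinitesimal equality to the global statement $S_W = G_W$ using the fibre-bundle structure of $W$ from \eqref{eq:fiber-diffeo}.

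By Proposition~\ref{prop:LSW}, $\L(S_W) = \g_0(h) + C_{W,+} + C_{W,-}$ with $C_{W,\pm} = \pm C_W \cap \g_{\pm 1}(h)$, and the inclusion $\g^h = \g_0(h) \subeq \L(S_W)$ is automatic. The content is to show $C_{W,\pm} = \{0\}$. Fix $0 \neq x \in \g_1(h)$ and evaluate the vector field at the base point $m_0 = eH$: by \eqref{eq:xmh}, $X^M_x(m_0) = p_\fq(x)$. Since $\tau = \tau_h \theta$ with $\tau_h|_{\g_{\pm 1}(h)} = -\id$ and $\theta(\g_{\pm 1}(h)) = \g_{\mp 1}(h)$, the involution $\tau$ swaps $\g_1(h)$ with $\g_{-1}(h)$, so $p_\fq(x) = \shalf(x - \tau x) = \shalf(x + \theta x) \in \fq \cap \g^\theta = \fq_\fk$, and is non-zero because $\g_1(h) \cap \g_{-1}(h) = \{0\}$. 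The crucial geometric fact for the \emph{maximal} $\Ad(H)$-invariant cone $C \subeq \fq$ is that $C \cap \fq_\fk = \{0\}$: the interior $C^\circ$ consists of hyperbolic elements (since it is swept out by the $\Ad(H)$-orbit of a neighborhood of the hyperbolic element $h \in \fq_\fp$), whereas $\fq_\fk \subeq \fk$ consists entirely of elliptic elements, and the rigid structure of $C$ derivable from the root-space analysis of irreducible ncc symmetric Lie algebras (\cite[Thm.~4.21, \S 3.1]{MNO23}) forces even its boundary to meet $\fq_\fk$ only at the origin. Therefore $p_\fq(x) \notin C = C_{m_0}$, so $x \notin C_W$, giving $C_{W,+} = \{0\}$; the symmetric argument with $-h$ gives $C_{W,-} = \{0\}$, and hence $\L(S_W) = \g^h$.

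Since the tangent wedge $\L(S_W)$ is the Lie subalgebra $\g^h = \L(G^h_e)$, the identity component $(S_W)_e$ equals $G^h_e \subeq G_W$. To upgrade this to $S_W = G_W$, I would use the fibre-bundle diffeomorphism \eqref{eq:fiber-diffeo}, $G^h_e \times_{K^h_e} \Omega_{\fq_\fk} \xrightarrow{\sim} W$, $[g,x] \mapsto g\exp(x).eH$, together with the boundedness of $\Omega_{\fq_\fk}$ (spectral radius $< \pi/2$). Given $g \in S_W$, after left-multiplying by an element of $G^h_e \subeq G_W$ we may assume $g.eH \in \exp(\Omega_{\fq_\fk}).eH$. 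All powers $g^n.eH$ then remain in the bounded fibre over the base point of $G^h_e/K^h_e$, and the absence of any transverse compressing direction in $\L(S_W) = \g^h$ forces $g$ to preserve $W$ set-theoretically, i.e., $g \in G_W$. This proves (a).

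For part (b), when $\g$ is simple and $Z(G) = \{e\}$ we have $G = \Inn(\g)$ and $M$ coincides with the adjoint ncc space $M_{\rm ad}$; by \cite[Thm.~7.1]{MNO24} (cited at the start of Appendix~\ref{app:wedgeinncc}), the positivity region $W^+_{M_{\rm ad}}(h)$ is connected, so $W = W^+_M(h)$. Every $g \in G^h$ commutes with $\exp(\R h)$, hence preserves the modular vector field $X^M_h$ and the full positivity region, giving $G^h \subeq G_W$. Combined with part (a)'s identity $G_W = S_W$ and the reverse inclusion $G_W \subeq G^h$ (which follows from $\L(G_W) \subeq \L(S_W) = \g^h$ together with the observation that for $g \in G_W$ the element $\Ad(g)h$ is a commuting Euler element of $\g^h$, hence equal to $h$ by the irreducible simple structure), we conclude $S_W = G^h$. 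The main obstacle is the upgrade from $\L(S_W) = \g^h$ to $S_W = G_W$ in part (a): infinitesimal equality does not generally force a closed subsemigroup to coincide with its group of units (consider discrete-tail examples in abelian Lie groups), so the argument must genuinely exploit the boundedness of the $\fq_\fk$-fibres and the contractibility of the base $G^h_e/K^h_e$ to rule out ``extra'' components of $S_W$ outside $G_W$.
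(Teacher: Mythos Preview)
Your approach is genuinely different from the paper's and the infinitesimal part is correct, but the global upgrade has a real gap.

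\textbf{Where you and the paper diverge.} The paper never computes $\L(S_W)$. Instead it applies the Localization Theorem~\ref{thm:local-reduct} to a representation with $C_U = \{0\}$ (the regular representation on $L^2(G)$). This yields $\sH^{\rm max}(W) = \sV$, hence $S_W \subeq S_\sV$ by Lemma~\ref{lem:direct-net}(c), and Theorem~\ref{thm:sv} with $C_U = \{0\}$ gives $S_\sV = G_\sV \subeq G^h$. So $S_W \subeq G^h$ is obtained \emph{globally} in one stroke. Then, since $G^h$ acts on $W_M^+(h)$ and permutes its connected components, any $g \in S_W \subeq G^h$ with $g.W \subeq W$ must map the component $W$ onto itself, giving $S_W = G_W$. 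Part~(b) is simply a citation of \cite[Prop.~7.3]{MNO24}. The paper even remarks afterwards that this representation-theoretic route is ``somewhat unnatural'' and that a direct geometric argument would be desirable---which is precisely what you are attempting.

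\textbf{What is correct in your argument.} Your computation $C_{W,\pm} = \{0\}$ is right. For $x \in \g_1(h)$ one has $\tau(x) = \tau_h\theta(x) = -\theta(x)$, so $p_\fq(x) = \shalf(x + \theta x) \in \fq_\fk$, and injectivity is clear. The key fact $C \cap \fq_\fk = \{0\}$ has a cleaner justification than the one you give: since $\theta$ commutes with $\tau$ and satisfies $\theta(h_2) = -h_2$, the cone $\theta(C)$ is the maximal pointed $\Inn_\g(\fh)$-invariant cone containing $-h_2$, hence $\theta(C) = -C$; therefore $C \cap \fq_\fk \subeq C \cap \theta(C) = C \cap (-C) = \{0\}$. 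So $\L(S_W) = \g^h$ is established.

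\textbf{Where the gap is.} The step from $\L(S_W) = \g^h$ to $S_W \subeq G^h$ (equivalently $S_W = G_W$) is not closed. Your fibre-boundedness sketch does not work as stated: after normalizing so that $g.eH$ lies in the fibre over the base point of $G^h_e/K^h_e$, you have no control over where $g^2.eH, g^3.eH, \ldots$ project in the base, so ``all powers remain in a bounded fibre'' is unjustified. More fundamentally, there is no mechanism in your outline that converts $\L(S_W) = \g^h$ into information about elements of $S_W$ far from the identity; the tangent-wedge equality constrains only the local picture near~$e$. The paper's proof circumvents this entirely by obtaining the global inclusion $S_W \subeq G^h$ from representation theory, after which the connected-component argument is immediate. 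Your direct geometric route would need a separate, genuinely global argument to finish---and the paper's closing remark suggests this is an open question.
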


\begin{proof} (a) First we apply the
  Localization Theorem~\ref{thm:local-reduct}
to a unitary representation with discrete kernel and $C_U =  \{0\}$; 
for instance the regular representation on $L^2(G)$. 
Then $\sH^{\rm max}(W) = \sV$ by the Localization Theorem,
and since $S_\sV = G_\sV \subeq G^h$ by Theorem~\ref{thm:sv}, 
we obtain $S_W \subeq S_\sV \subeq G^h$ from Lemma~\ref{lem:direct-net}(c).

As $W \subeq W_M^+(h)$ is a connected component and
$G^h$ preserves $W_M^+(h)$, it follows that $S_W \subeq G^h$ is the
stabilizer subgroup $(G^h)_W$ of~$W$. 

\nin (b) follows from $G_W = G^h$ in this case (\cite[Prop.~7.3]{MNO24}). 
\end{proof}

The argument in the preceding proof is somewhat unnatural 
because it uses rather deep information on unitary
representations to derive the geometric fact that
$S_W$ is a group. It would be nice to have a direct geometric
argument for this fact. 

\subsection{Left invariant nets on causal Lie groups} 
\label{subsec:liegrp}

A particularly simple situation arises
for $M = G$, with $G$ acting by left multiplication.

Let us start with an antiunitary representation $(U,\cH)$ of $G_{\tau_h}$
with discrete kernel, so that 
\[  S_\sV =G_\sV \exp(C_+ + C_-) \quad \mbox{ for } \quad
C_\pm = \pm C_U \cap \g_{\pm 1}(h)\]
follows from Theorem~\ref{thm:sv}.

We {\bf assume} that $S_\sV$ has interior points, i.e., that
$C_\pm^\circ \not=\eset$. This is always the case if the ideal
$C_U - C_U$ generates $\g$ with $h$, i.e., if
\[ \g = \R h + C_U - C_U\]
(cf.\ Lemma~\ref{lem:Project}). 

We consider the open connected subsemigroup $W := S_{\sV,e}^\circ
= G^h_e \exp(C_+^\circ + C_-^\circ)$. Then
\[ S_W = \{ g \in G \:  g.W \subeq W \} 
  = \{ g \in G \:  g.S_{\sV,e} \subeq S_{\sV,e} \}  = S_{\sV,e} \]
follows from the fact that $W$ is dense in $S_{\sV,e}$. Therefore the
equality $S_W = S_{\sV,e}$ implies that the corresponding
maximal net $\sH_G^{\rm max}$ satisfies
the Bisognano--Wichmann condition in the form
\[ \sH^{\rm max}_G(W) = \sH^{\rm max}_G(S_{\sV,e}^\circ) = \sV.\]

In addition, we obtain for the real subspace
\[ \sE :=\sV \subeq \cH^{-\infty}_{\rm KMS} \]
a left invariant isotone net $\sH^G_\sE$ on open subsets of $G$.
As $W \subeq S_{\sV,e}$ is dense, this net satisfies
\[  \sH_\sE^G(W) = \oline{U(S_{\sV,e}^\circ)\sV} = \sV,\]
which is the (BW) condition. If, in addition, $C_U^\circ \not=\eset$,
then Theorem~\ref{thm:pe-rs} in Appendix~\ref{subsubsec:reeh-schlieder}
further implies that
$\sH_\sE^G$ has the Reeh--Schlieder property (RS). As 
\[ \sH_\sE^G(\cO) \subeq \sH^{\rm max}_G(\cO), \]
it follows that $\sH^{\rm max}_G$ also has the Reeh--Schlieder property.

We thus have the following theorem: 

\begin{theorem} Suppose that $(U,\cH)$ is an
  antiunitary representation of $G_{\tau_h}$ with discrete kernel
  for which $C_\pm^\circ \not=\eset$.
  \begin{enumerate}
  \item[\rm(a)] For $\sE := \sV$, the net $\sH^G_\sE$
    satisfies {\rm(Iso), (Cov)} and {\rm(BW)} with respect to
    $W = S_{\sV,e}^\circ$. 
  \item[\rm(b)] If, in addition, $C_U^\circ \not=\eset$,
    thyen also {\rm(RS)} is satisfied. In particular $H^{\rm max}_G$
    has this property. 
  \end{enumerate}
\end{theorem}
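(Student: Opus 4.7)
For (a), the properties (Iso) and (Cov) hold for every net of the form $\sH^G_\sE$ by its very construction (see Remark~\ref{rem:iso-cov-net}), so only (BW) requires argument, namely $\sH^G_\sE(W) = \sV$ for $W = S_{\sV,e}^\circ$. I will verify the two inclusions separately, relying on the description $S_\sV = G_\sV \exp(C_+ + C_-)$ from Theorem~\ref{thm:sv} and on the fact (Theorem~\ref{thm:BN24}(b)) that $\sV \subseteq \cH^{-\infty}_{\rm KMS}$, so that $\sE := \sV$ is a legitimate seed for the construction in \eqref{eq:HE}.

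The inclusion $\sH^G_\sE(W) \subseteq \sV$ follows directly from Proposition~\ref{prop:4.8}: since $W \subeq S_{\sV,e} \subeq S_\sV$, we have $U^{-\infty}(g)\sV = U(g)\sV \subeq \sV \subeq \cH^{-\infty}_{\rm KMS}$ for every $g \in W$, so condition (d) of that proposition is satisfied. Concretely, for $v \in \sV$ and $\phi \in C^\infty_c(W,\R)$, the Bochner integral
\[
U^{-\infty}(\phi)v = U(\phi)v = \int_G \phi(g)\,U(g)v\,dg
\]
is a limit of real linear combinations of elements $U(g)v \in \sV$, and $\sV$ is a closed real linear subspace. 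The reverse inclusion $\sV \subeq \sH^G_\sE(W)$ rests on the observation that $e \in \oline{W}$: since $C_\pm^\circ \not=\eset$ by hypothesis, picking $x_\pm \in C_\pm^\circ$ we find $\exp(tx_+)\exp(tx_-) \in S_{\sV,e}^\circ = W$ for all sufficiently small $t > 0$, and these elements tend to $e$ as $t \to 0^+$. Hence one can choose non-negative $\phi_n \in C^\infty_c(W,\R)$ with $\int_G \phi_n = 1$ whose supports shrink toward $\{e\}$ while remaining inside $W$. For any $v \in \sV$ the strong continuity of the orbit map $G \to \cH,\ g \mapsto U(g)v,$ at $e$ then gives $U(\phi_n)v \to v$ in $\cH$, and because each $U(\phi_n)v \in \sH^G_\sE(W)$ and this real subspace is closed, $v \in \sH^G_\sE(W)$.

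For (b), under the additional hypothesis $C_U^\circ \not=\eset$ we are in the setting of the Reeh--Schlieder theorem of Appendix~\ref{subsubsec:reeh-schlieder}, which directly yields (RS) for the net $\sH^G_\sE$ with seed $\sE = \sV$. To transfer this to $\sH^{\rm max}_G$, observe that by part (a) the net $\sH^G_\sE$ satisfies (Iso), (Cov) and $\sH^G_\sE(W) = \sV$, so Lemma~\ref{lem:maxnet-larger} gives the pointwise inclusion $\sH^G_\sE(\cO) \subeq \sH^{\rm max}_G(\cO)$ for every open $\cO \subeq G$; cyclicity therefore propagates upwards.

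The only genuine subtlety is the approximate-identity step in the second inclusion of (a), which requires that the open semigroup $W = S_{\sV,e}^\circ$ actually accumulates at~$e$, and this is precisely where the hypothesis $C_\pm^\circ \not=\eset$ enters in an essential way; without it, $W$ could be non-empty but bounded away from~$e$, and the argument would collapse. Everything else is organizational: $S_\sV$ has been identified in Theorem~\ref{thm:sv}, the KMS-characterization of $\sV$ in $\cH^{-\infty}$ is provided by Theorem~\ref{thm:BN24}, and (b) is a direct citation of the appendix's Reeh--Schlieder result.
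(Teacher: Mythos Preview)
Your proof is correct and follows essentially the same approach as the paper's argument (which appears in the discussion immediately preceding the theorem rather than as a separate proof block). The paper writes the key identity as $\sH_\sE^G(W) = \oline{U(S_{\sV,e}^\circ)\sV} = \sV$, using that $W$ is dense in $S_{\sV,e} \ni e$ for one inclusion and $W \subeq S_\sV$ for the other; you unpack both inclusions more explicitly via Proposition~\ref{prop:4.8} and the approximate-identity argument, which is exactly the content behind the paper's one-line claim.
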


\subsection{Causal   symmetric spaces}

Regularity of antiunitary representations has
interesting consequences for causal symmetric spaces,
which are by far not completely explored. He we state
some for non-compactly and compactly causal symmetric spaces. 

The following theorem, concerning non-compactly causal symmetric spaces,
is a consequence of the Localization
Theorem~\ref{thm:local-reduct} for reductive groups.
  
\begin{theorem} {\rm(Maximal nets on ncc spaces)} 
  If $M = G/H$ is a semisimple non-compactly causal symmetric space
and $(U,\cH)$ an antiunitary representation of $G$, then
the net $\sH^{\rm max}_M$ satisfies {\rm(Iso), (Cov), (RS)}
and {\rm (BW)} for $W$ as in \eqref{eq:winncc}. 
\end{theorem}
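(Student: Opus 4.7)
The strategy is to realize this statement as a direct specialization of Theorem~\ref{thm:local-reduct}. Every semisimple Lie algebra is reductive, and the construction in Definition~\ref{def:ncc-reductive} simplifies in the semisimple case: the decomposition $\g = \g_1 \oplus \g_2$ degenerates to $\g_1 = \fz(\g) = \{0\}$ and $\g_2 = \g$, so that the ideal generated by the non-central component of $h$ is all of $\g$, and $H_2 = H$ requires no passage to a subgroup. Thus the main task is simply to verify that the data given here matches that of the reductive framework rather than to prove anything new.

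First I would fix the Euler element $h \in \g$ singled out by the non-compactly causal structure via Theorem~\ref{thm:ncc-classif}, together with a Cartan involution $\theta$ satisfying $\theta(h) = -h$, so that $\tau = \theta\tau_h$ is the involution defining $M = G/H$. That $\tau$ integrates to an automorphism of $G$ is built into the hypothesis that $M$ is given as a symmetric space of $G$; analogously, by the notational conventions recalled in the introduction, the phrase ``antiunitary representation of $G$'' is shorthand for an antiunitary representation of $G_{\tau_h}$ with $J = U(\tau_h)$. This places us precisely in the setting of Definition~\ref{def:ncc-reductive}.

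Next I would observe that the wedge $W = W^+_M(h)_{eH}$ of \eqref{eq:winncc} appearing in the present statement is verbatim the wedge used in Theorem~\ref{thm:local-reduct}, and that the maximal net $\sH^{\rm max}_M$ of \eqref{eq:def-ho} is defined identically in both settings. An invocation of Theorem~\ref{thm:local-reduct} then immediately delivers (Iso), (Cov), (RS) and the Bisognano--Wichmann identity $\sH^{\rm max}_M(W) = \sV(h,U)$, which is precisely~(BW).

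There is no substantive obstacle beyond aligning definitions, since the nontrivial work has already been carried out in the proof of Theorem~\ref{thm:local-reduct}. That proof proceeds by reducing via Proposition~\ref{prop:3.2} and Lemma~\ref{lem:direct-net-d} to the case of irreducible $U$, applying the semisimple Construction Theorem~\ref{thm:4.9-semisimp} to produce an explicit cyclic subnet $\sH^M_\sE \subeq \sH^{\rm max}_M$, and concluding cyclicity of $\sH^{\rm max}_M(\cO)$ on all non-empty open $\cO \subeq M$. Lemma~\ref{lem:direct-net}(c) then upgrades cyclicity of $\sH^{\rm max}_M(W)$ to the desired equality with $\sV$, closing the argument.
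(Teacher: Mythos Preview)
Your proposal is correct. The only point of divergence from the paper is how (BW) is obtained: you invoke Theorem~\ref{thm:local-reduct} directly for all four properties, whereas the paper cites that theorem only for (RS) and instead verifies (BW) by an explicit semigroup argument. Specifically, the paper uses Theorem~\ref{thm:SW-ncc-spaces} to see that $S_W = G_W = G^h_e H^h$ is a group, then checks $H^h \subeq K^{h,\tau} \subeq K^{\tau_h}$ (since $\tau = \tau_h\theta$ agrees with $\tau_h$ on $K$), whence $S_W \subeq G^{h,\tau_h} \subeq G_\sV$, and concludes via Lemma~\ref{lem:direct-net}(c). Your route is more economical, since Theorem~\ref{thm:local-reduct} already derives (BW) from (RS) internally; the paper's route has the advantage of exhibiting the compression-semigroup inclusion $S_W \subeq S_\sV$ concretely and independently of the representation-theoretic machinery, which ties the result back to the structural description of $S_W$ developed in Section~\ref{app:wedgeinncc}.
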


\begin{proof} In this case, $S_W = G_W = G^h_e H^h\subeq G^h$ is a group by
  Theorem~\ref{thm:SW-ncc-spaces}. 
Since $\tau = \tau_h \theta$ coincides on $K$ with $\tau_h$, 
we further have $H^h \subeq K^{h, \tau} \subeq K^{\tau_h}$,
so that $S_W \subeq G^{h,\tau_h} \subeq G_\sV.$ 
Therefore Lemma~\ref{lem:direct-net} shows that $\sH^{\rm max}_M$ satisfies~(BW),
  and the condition (RS) follows from   Theorem~\ref{thm:local-reduct}. 
\end{proof}

For compactly causal spaces, we presently only have the following
weaker result: 

\begin{theorem} {\rm(Maximal nets on cc spaces)}
  \label{thm:maxnet-cc}
Let $M = G/H$ be an irreducible modular
  compactly causal symmetric 
  space, $C_\g \subeq \g$ an invariant closed convex cone with
  $C = C_\g \cap \fq$,   and $(U,\cH)$ an antiunitary representation of $G$
  with discrete kernel. Then 
  the net $\sH_M^{\rm max}$ satisfies {\rm(Iso), (Cov)} %, (RS)}
  and {\rm (BW)}   if and only if
  \begin{description}
  \item[\rm(a)]   the positive spectrum condition $C_+ = C_\g \cap \g_1
    \subeq C_U$
  is satisfied, and 
  \item[\rm(b)] $U(G_W)$ commutes with $J$, i.e.,
  $\tau_h(g)g^{-1}\in \ker U$ for $g \in H^h$. 
  \end{description}
\end{theorem}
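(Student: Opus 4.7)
The plan is to reduce (BW) to a semigroup inclusion and decompose that inclusion using the Olshanski-type factorizations available on both sides. By Lemma \ref{lem:direct-net}(a) the net $\sH_M^{\rm max}$ always satisfies (Iso) and (Cov), so the entire content of the theorem is the equivalence of (BW) with the conjunction of (a) and (b). By Lemma \ref{lem:direct-net}(c), (BW)---i.e.\ $\sH_M^{\rm max}(W) = \sV$---is equivalent to the semigroup inclusion
\[ S_W \subeq S_\sV, \]
so the task becomes a Lie-theoretic characterization of this inclusion.

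On the geometric side I would invoke \eqref{eq:SW-cc} in the modular compactly causal setting, which gives
\[ S_W = \exp(C_+)\, G_W \exp(C_-), \qquad G_W = G^h_e H^h, \]
where the $(-\tau)$-invariance of $C_\g$ identifies $C_\g^{-\tau}$ with $C$, so the $C_\pm$ appearing here agree with those of (a). On the representation-theoretic side, the hypothesis that $\ker U$ is discrete makes $C_U$ pointed, so Theorem \ref{thm:sv} yields
\[ S_\sV = \exp(\tilde C_+)\, G_\sV \exp(\tilde C_-), \qquad \tilde C_\pm = \pm C_U \cap \g_{\pm 1}(h), \]
with $G_\sV = \{g \in G^h : JU(g)J = U(g)\}$ as in \eqref{eq:gwx}.

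Next I would exploit the uniqueness of the Olshanski factorization $s = \exp(y_+)\, g_\sV \exp(y_-)$ of elements of $S_\sV$, which is guaranteed by Theorem \ref{thm:semigroups-equal} together with the pointedness of $C_U$. Applied to an element $\exp(x_+) \in S_W \subeq S_\sV$ with $x_+ \in C_+$, uniqueness forces $g_\sV = e$, $y_- = 0$ and $y_+ = x_+$, so $\exp(C_+) \subeq S_\sV$ is equivalent to $C_+ \subeq \tilde C_+$, i.e.\ $C_+ \subeq C_U$; symmetrically $\exp(C_-) \subeq S_\sV$ is equivalent to $C_- \subeq -C_U$, and these together are condition (a). Intersecting $S_\sV$ with $G^h$ (which contains $G_W$ by construction) gives $G_\sV$, so the $G_W$-factor of $S_W$ lands in $S_\sV$ precisely when $G_W \subeq G_\sV$. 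Thus $S_W \subeq S_\sV$ splits cleanly into (a) plus $G_W \subeq G_\sV$.

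Finally, for $G_W \subeq G_\sV$ I would argue that $G^h_e \subeq G_\sV$ automatically: any $g \in G^h$ makes $U(g)$ commute with $\Delta_\sV = e^{2\pi i \partial U(h)}$, and $\tau_h^\g$ acts trivially on $\g_0(h)$, so $\tau_h$ fixes the connected subgroup $G^h_e$ pointwise, whence $JU(g)J = U(\tau_h(g)) = U(g)$. Consequently $G_W = G^h_e H^h \subeq G_\sV$ reduces to $H^h \subeq G_\sV$, i.e.\ $\tau_h(g)g^{-1} \in \ker U$ for all $g \in H^h$, which is precisely (b). The main obstacle I anticipate is making the ``unique-factorization-forces-decoupling'' step fully rigorous: one must verify that an element $\exp(x_+)\, h'\, \exp(x_-) \in S_W$, when viewed inside $S_\sV$, really has its three factors recognized separately, which relies on the pointedness of $C_U$, on the compatibility of the $C_\pm$ of the wedge with those coming from the representation via the modular $(-\tau)$-invariance of $C_\g$, and on the grading-induced intersections $G^h \cap \exp(\tilde C_\pm) = \{e\}$.
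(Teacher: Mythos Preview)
Your approach is essentially the paper's: reduce (BW) to $S_W \subeq S_\sV$ via Lemma~\ref{lem:direct-net}(c), then compare the factorizations $S_W = G_W \exp(C_+ + C_-)$ and $S_\sV = G_\sV \exp(\tilde C_+ + \tilde C_-)$ to decouple the inclusion into a cone condition and the group condition $G_W \subeq G_\sV$, which you correctly reduce to $H^h \subeq G_\sV$ because $G^h_e \subeq G^{h,\tau_h} \subeq G_\sV$.

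There is one genuine gap, however. You write that $C_+ \subeq C_U$ and $C_- \subeq -C_U$ ``together are condition~(a)'', but condition~(a) as stated is \emph{only} $C_+ \subeq C_U$. In the forward direction this is harmless, but in the converse direction you must derive $C_- \subeq -C_U$ from~(a) alone, and you do not. The paper supplies the missing step: in the irreducible compactly causal case $\g$ is simple hermitian or a sum of two simple hermitian ideals, so once~(a) forces $C_U \neq \{0\}$, the discrete-kernel hypothesis makes $C_U$ a pointed \emph{generating} invariant cone, and then \cite[Prop.~2.7(d)]{NO23a} gives $(C_U)_\pm = (C_\g)_\pm = C_\pm$; in particular $C_- \subeq -C_U$. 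Without this structural input your converse is incomplete.

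Your anticipated obstacle, by contrast, is not a real difficulty. The factorization $g\exp(x_+ + x_-)$ with $g \in G^h$ and $x_\pm \in \g_{\pm 1}$ is unique (this is just the $3$-grading decomposition of $S(h,C_U)$ in Theorem~\ref{thm:semigroups-equal}/Theorem~\ref{thm:sv}), so comparing the two product forms immediately yields $G_W \subeq G_\sV$ and $C_\pm \subeq \tilde C_\pm$ without any delicate analysis.
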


\begin{proof} From \cite[Thm.~9.1]{NO23a} it follows that
  the semigroup $S_W$ can be written as 
  \begin{equation}
    \label{eq:swxx}
 S_W = G^h_e H^h \exp(C_+ + C_-) \quad \mbox{ for } \quad
 C_\pm = \pm C_\g \cap \g_{\pm 1}.
 %, \quad    C = C_\g \cap \fq.
  \end{equation}
  Therefore $S_W \subeq S_\sV$ is equivalent to the two conditions 
  \[ C_\pm \subeq \pm C_U \quad \mbox{ and } \quad
    U(g)J = J U(g) \quad \mbox{ for }  \quad g \in H^h.\]
  The first condition implies that $C_U \not=\{0\}$.
  As $\ker U$ is discrete and $\g$ is simple or a sum of two simple ideals
  (\cite[Rem.~4.24]{MNO23}),
  $C_U$ is a pointed generating invariant cone with $C_{U,\pm}  = C_\pm$.
  This shows that the condition $S_W \subeq S_\sV$
  (which is equivalent to (BW) for $\sH^{\rm max}_M$ by
Lemma~\ref{lem:direct-net}) implies (a) and (b).
  
  To see the converse, suppose that (a) and (b) are satisfied.
  Then $C_\pm \subeq \pm C_U $   implies that $C_{U,\pm} = C_\pm = (C_\g)_{\pm}$
  (\cite[Prop.~2.7(d)]{NO23a}). Then $S_W \subeq S_\sV$ follows from
  \eqref{eq:swxx}. This proves the theorem because the (BW) property
  of $\sH^{\rm max}_M$ is equivalent to $S_W \subeq S_{\sV}$ by
  Lemma~\ref{lem:direct-net}. 
  \end{proof}

  \begin{problem} \label{prob:cc-rs}
    Show that, in the context of the preceding theorem, (a) and (b)
    also imply the Reeh--Schlieder property (RS) of $\sH^{\rm max}_M$. 
  \end{problem}

  \begin{remark}
    Here are some remarks that may be useful to solve this problem.
    \begin{itemize}
    \item Using Proposition~\ref{prop:3.2} and that
      $\sH^{\rm  max}_M(\cO) = \sV_A$ for
      $A := \{ g \in G \: \cO \subeq g.W\}$, it suffices to solve the problem
      for irreducible representations.
    \item From \eqref{eq:SW-cc} we know that
      \[  S_W = G_W \exp(C_+ + C_-) \quad \mbox{  with  } \quad
        G_W = G^h_e H^h \quad \mbox{ and }  \quad
        G_{W,e} = G^h_e,\]
      so that $S_{W,e} = G^h_e  \exp(C_+ + C_-),
S_{W,e}^\circ = G^h_e  \exp(C_+^\circ + C_-^\circ),$  and 
      \[ W = S_{W,e}^\circ.eH \cong G^h_e \times_{H^h_e} \exp(C_+^{\circ, -\tau} + C_-^{\circ, -\tau}).\]
      For $W^G = q_M^{-1}(W)$ we then have
      \[ W^G = S_{W,e}^\circ H  \supeq S_{W,e}^\circ,\]
      which is strictly larger than $S_{W,e}^\circ$.

      From Remark~\ref{rem:pushforward-to-M} we know that
      $\sH^{\rm max}_M = (q_M)_* \sH_G^{\rm max}$, with respect to the wedge regions
      $W \subeq M$ and $W^G \subeq G$. It follows in particular that
      \[        \sH_G^{\rm max}(S_{W,e}^\circ) \subeq \sH_G^{\rm max}(W^G) = \sV.\]
      Next we argue that we actually have equality.
      % but it is not clear if the left hand side is cyclic.
    \item For $g \in G$, the relation
      $S_{W,e}^\circ \subeq g W^G$ is equivalent to
      \[ q_M(S_{W,e}^\circ) = q_M(W^G) = W \subeq g.W,\]
      i.e., to $g \in S_W^{-1}$. Now $S_W \subeq S_\sV$ implies that any such $g$
      satisfies $U(g)\sV \supeq \sV$, so that 
      \begin{equation}
        \label{eq:bw2}
        \sH_G^{\rm max}(S_{W,e}^\circ) = \sV = \sH_G^{\rm max}(W^G).
      \end{equation}
\item      The net $\sH_{G,S_{W,e}^\circ}^{\rm max}$ on $G$, constructed from the
      wedge region $S_{W,e}^\circ$, which is smaller than $W^G$, also satisfies 
      \[        \sH_{G, S_{W,e}^\circ}^{\rm max}(S_{W,e}^\circ) = \sV \]
      because $S_{W,e} = G^h_e \exp(C_+ + C_-) \subeq S_\sV$.
      We thus obtain the two maximal nets
$\sH_{G, S_{W,e}^\circ}^{\rm max}$ and 
$\sH_{G, W^G}^{\rm max}$  on $G$, corresponding to
      the wedge regions $S_{W,e}^\circ$ and $W^G$. They satisfy
      the relation
      \begin{equation}
        \label{eq:twomaxnet}
        \sH_{G, S_{W,e}^\circ}^{\rm max}(\cO) \supeq \sH_{G,W^G}^{\rm max}(\cO).
      \end{equation}
      
      Note that this also follows from Lemma~\ref{lem:maxnet-larger}
      and the maximality of the net $\sH_{G,S_{W,e}^\circ}^{\rm max}$ among the nets
      mapping $S_{W,e}^\circ$ to $\sV$ (see \eqref{eq:bw2}).
    \item If, in addition, $\sH_{G, S_{W,e}^\circ}^{\rm max}(W^G) \supeq
      \sH_{G, S_{W,e}^\circ}^{\rm max}(S_{W,e}^\circ) =  \sV$ is separating,
      then the Equality Lemma~\ref{lem:lo08-3.10} and the
      $\exp(\R h)$-invariance of $W^G$ imply that both are equal.
      Now the maximality of $\sH^{\rm max}_{G,W^G}$ and
      Lemma~\ref{lem:maxnet-larger} imply that the nets
      $\sH_{G, S_{W,e}^\circ}^{\rm max}$ and $\sH_{G,W^G}^{\rm max}$ coincide.
    \end{itemize}
\end{remark}

The situation looks much better if there are nets of the form
$\sH^M_\sE$:

\begin{theorem}
  Suppose that, in addition to the assumptions of
  {\rm Theorem~\ref{thm:maxnet-cc}},   
  there exists a real linear subspace
  $\sE \subeq \cH^{-\infty}_{\rm KMS}$ which is $U^{-\infty}(H)$-invariant
  and satisfies $\sH^M_\sE(W) = \sV$. Then
  $\sH^{\rm max}_M$ also satisfies {\rm(RS)}.
\end{theorem}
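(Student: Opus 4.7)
The strategy is to sandwich $\sH^{\rm max}_M$ from below by the net $\sH^M_\sE$ provided by the hypothesis, so that the Reeh--Schlieder property can be imported from $\sH^M_\sE$ onto $\sH^{\rm max}_M$. First, since $\sE$ is $U^{-\infty}(H)$-invariant, the push-forward construction (Definition~\ref{def:push-forward-net} and Remark~\ref{rem:push-forward}) shows that $\sH^M_\sE = (q_M)_\ast \sH^G_\sE$ is well-defined on open subsets of $M$ and satisfies (Iso) and (Cov). Combined with the hypothesis $\sH^M_\sE(W) = \sV$, this places $\sH^M_\sE$ in the situation of Lemma~\ref{lem:maxnet-larger}, which yields the pointwise inclusion
\[ \sH^M_\sE(\cO) \subeq \sH^{\rm max}_M(\cO) \qquad \text{for every open } \cO \subeq M. \]
Consequently, if $\sH^M_\sE$ satisfies (RS) then so does $\sH^{\rm max}_M$, and the proof reduces to proving (RS) for $\sH^M_\sE$.

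For this, I would lift back to $G$. By the very definition of the push-forward we have $\sH^M_\sE(\cO) = \sH^G_\sE(q_M^{-1}(\cO))$ for every open $\cO \subeq M$, so it suffices to prove (RS) for $\sH^G_\sE$ on $G$. This is precisely the content of the positive-energy Reeh--Schlieder theorem (Theorem~\ref{thm:pe-rs} in Appendix~\ref{subsubsec:reeh-schlieder}), whose hypothesis $C_U^\circ \neq \eset$ I verify as follows: under the assumptions of Theorem~\ref{thm:maxnet-cc}, $\g$ is either simple hermitian or a sum of two simple hermitian ideals, and $\ker U$ is discrete, so any non-zero $\Ad(G)$-invariant closed convex cone in $\g$ is automatically pointed and generating (cf.~\cite[Prop.~2.7(d)]{NO23a}). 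The positive spectrum condition (a) of Theorem~\ref{thm:maxnet-cc} gives $\{0\} \neq C_+ \subeq C_U$, so $C_U$ is non-zero and thus has non-empty interior. Moreover, $\sH^G_\sE(G) \supeq \sH^G_\sE(q_M^{-1}(W)) = \sV$ is cyclic in $\cH$, which supplies the $G$-cyclicity input needed for Theorem~\ref{thm:pe-rs}.

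The main obstacle is to match the precise hypotheses of Theorem~\ref{thm:pe-rs} to the present situation, in which $\sE$ is only known abstractly through its inclusion in $\cH^{-\infty}_{\rm KMS}$ and through the identity $\sH^M_\sE(W) = \sV$, rather than arising from a canonical construction such as $\beta^+(\sF)$. The underlying mechanism is however the standard edge-of-the-wedge argument: for $v \in \cH$ orthogonal to $\sH^G_\sE(\cO')$ for some non-empty open $\cO' \subeq G$, and for $\eta \in \sE$, $\phi \in C_c^\infty(\cO',\R)$, the matrix coefficient $g \mapsto \langle v, U(g) U^{-\infty}(\phi)\eta\rangle$ extends holomorphically into the tube $G\exp(iC_U^\circ)$ by positivity of the spectrum on $C_U$; it vanishes on an open subset of $G$, hence vanishes identically by uniqueness of holomorphic extension, and the cyclicity of $\sV \subeq \sH^G_\sE(G)$ in $\cH$ then forces $v = 0$. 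This gives (RS) for $\sH^G_\sE$, hence for $\sH^M_\sE$ and finally for $\sH^{\rm max}_M$.
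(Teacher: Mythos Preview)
Your proof is correct and follows essentially the same route as the paper: verify $C_U^\circ\neq\eset$ from the hypotheses of Theorem~\ref{thm:maxnet-cc}, apply Theorem~\ref{thm:pe-rs} to obtain (RS) for $\sH^M_\sE$ (via the push-forward from $\sH^G_\sE$), and then use Lemma~\ref{lem:maxnet-larger} to transfer (RS) to $\sH^{\rm max}_M$. Your final paragraph about a ``main obstacle'' is unnecessary, since Theorem~\ref{thm:pe-rs}(b) only requires $\sE\subeq\cH^{-\infty}$ and $\sH^M_\sE(W)=\sV$, both of which are given directly by hypothesis; no appeal to a $\beta^+$-construction is needed.
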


\begin{proof} We have already seen in the proof of
  Theorem~\ref{thm:maxnet-cc} that $C_U^\circ\not=\eset$,
  so that Theorem~\ref{thm:pe-rs}
  in Appendix~\ref{subsubsec:reeh-schlieder}
  implies that the net $\sH^G_\sE$ on $G$ satisfies (RS),
  and so does the net $\sH^M_\sE = (q_M)_*\sH^G_\sE$
  (Remark~\ref{rem:push-forward}(b)).
  By assumption $\sH^M_\sE$ satisfties (BW), hence is contained
  in $\sH^{\rm max}_M$, and therefore the latter net also satisfies (RS).
\end{proof}

\begin{remark}
  The assumption of $\sE$ to be $U^{-\infty}(H)$-invariant
  may seem to be quite strong, but it is close to being necessary.
  In fact, what we need to argue as in the proof of the preceding
  theorem, is that $\sH^G_{\sE}(W^G) = \sH^M_\sE(W) = \sV$,
  which is equivalent to $U^{-\infty}(W^G) \sE \subeq \cH^{-\infty}_{\rm KMS}$
  by Proposition~\ref{prop:4.8}. As $eH \in \oline W$, we have
  $H \subeq \oline{W_G}$, so that the weak-$*$
  closedness of $\cH^{-\infty}_{\rm KMS}$ (Theorem~\ref{thm:BN24})
  implies that $U^{-\infty}(H)\sE \subeq \cH^{-\infty}_{\rm KMS}$.
\end{remark}

\subsection{Causal flag manifolds}
\label{subsec:causal-flag-man-part2}

Let $M = G/Q_h$ be an irreducible causal flag manifold.
The results in this section can be found in \cite{MN25}.
The fundamental group $\pi_1(M)$ is isomorphic to $\Z$
(\cite[Thm.~1.1]{Wig98} and \cite{MNO25}),
  so that there exists for every $k \in \N 
  \cup \{\infty\}$ a $k$-fold covering  $M_k$, 
where $M_\infty$ is simply connected.

\begin{itemize}
\item $M_\infty$ is a  simple space-time manifold
  in the sense of Mack/de Riese \cite{MdR07}.
  It carries a global causal order (no closed causal curves).
\item $M_k, k < \infty$, has closed causal curves, hence no global
  causal order.
\item The open embedding $\iota_M \: \jV = \g_1 \into M$ of the euclidean
  Jordan algebra $\jV$ (see the appendix to Subsection~\ref{subsec:causal-flag-man}  for  more on euclidean Jordan algebras)
  lifts to an open embedding $\iota_{M_k} \: \jV \into M_k$.
\item In $M_k$ the canonical wedge region is
  \[ W_{M_k} := \iota_{M_k}(\jV_+) \subeq M_k.\] 
  It is a connected component of the positivity domain $W_{M_k}^+(h)$
   of the Euler vector field $X_{M_k}^h$ on~$M_k$. 
   In $M_k$ the positivity domain $W_{M_k}^+(h)$ has $k$ connected components
   which are permuted by a transitive action of the group $\Z_k$ of 
   deck transformations of the covering $M_k \to M$. 
\end{itemize}

\begin{examples} (a)
  For Minkowski space $\jV = \R^{1,d-1}$, the conformal completion 
  \[ M \cong (\bS^1 \times \bS^{d-1})/\{\pm \bone\} \subeq  \bP(\R^{2,d}) \]
  is the isotropic quadric in $\bP(\R^{2,d})$
  (see \eqref{eq:quadtildev} in Example~\ref{ex:qmink})   on which $G = \SO_{2,d}(\R)_e$ acts.
  In this case, 
  \[ M_\infty \cong \R \times \bS^{d-1}.\]

  \nin (b) For the euclidean Jordan algebra
  $\jV = \Herm_r(\C)$, we have $M \cong \U_r(\C)$, on
  which $G = \SU_{r,r}(\C)$ acts by fractional linear transformations
  \[ \pmat{a & b \\ c & d}.z = (az+b)(cz+d)^{-1}.\]
  Here
  \[ \tilde \U_r(\C) \cong \R \times \SU_r(\C).\]

  \nin (c) For the euclidean Jordan algebra
  $\jV = \Sym_r(\R)$, the conformal compactification
  is the space~$M$ of Lagrangian subspaces in the symplectic
  vector space $(\R^{2r},  \omega)$, on which $G = \Sp_{2r}(\R)$
  acts naturally. Here 
  $M_\infty \cong \R \times (\SU_r(\C)/\SO_r(\R))$.
\end{examples}

To formulate existence criteria for nets on the spaces $M_k$,
we observe that the simply connected covering
group $\tilde G$ acts on every  $M_k$. 
The centralizer $\tilde G^h$ of $h$ in this group satisfies
\[ \pi_0(\tilde G^h) \cong \pi_1(M) \cong \Z.\]
We pick $g_h \in \tilde G^h$ so that its connected component
generates $\pi_0(\tilde G^h)$ and
\begin{equation}
  \label{eq:tauh-rel}
  \tau_h(g_h) = g_h^{-1}.
\end{equation}
This element can be chosen to be central in an
$\tilde\SL_2(\R)$-subgroup with $g_h^2 \in Z(\tilde G)$
(cf. \cite{MNO25}).

\begin{theorem} \label{thm:5.28} {\rm(Existence of nets)} 
  For an antiunitary representation $(U,\cH)$ of $\tilde G_{\tau_h}$
  with discrete kernel,
  a net $\sH$ on open subsets of $M_k$ satisfying
  {\rm(Iso), (Cov), (RS)} and {\rm(BW)} exists if and only if
  \begin{itemize}
  \item $U$ satisfies the positive energy condition 
    $C_+ \subeq C_U$, and, 
  \item if $k < \infty$, then $g_h^{2k} \in \ker U$. 
 \end{itemize}
\end{theorem}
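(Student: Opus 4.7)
The plan is to take the candidate net to be the maximal net $\sH^{\rm max}_{M_k}$ associated to the wedge region $W = W_{M_k} = \iota_{M_k}(\jV_+)$. By Lemma~\ref{lem:direct-net}, $\sH^{\rm max}_{M_k}$ automatically satisfies (Iso) and (Cov), and part (c) of the same lemma reduces the Bisognano--Wichmann condition $\sH^{\rm max}_{M_k}(W) = \sV$ to the semigroup inclusion $S_W \subseteq S_\sV$. Moreover, by Lemma~\ref{lem:maxnet-larger} any net $\sH$ satisfying (Iso), (Cov), (BW) is sandwiched between $\sH^{\rm min}_{M_k}$ and $\sH^{\rm max}_{M_k}$, so that $S_W \subseteq S_\sV$ is also a necessary condition for the existence of any such net. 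Both directions of the theorem therefore split into (i) identifying $S_W \subseteq S_\sV$ with the two stated conditions, and (ii) establishing (RS) in the sufficiency direction.

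To carry out (i), I would first compute both semigroups. Adapting Proposition~\ref{prop:sw-conformal} to the $\tilde G$-action on $M_k$, using that $\tilde G^h_e$ fixes each connected component of $W_{M_k}^+(h)$ while $\pi_0(\tilde G^h) \cong \Z$ permutes the $k$ components transitively through $\Z/k\Z$, one obtains
\[
S_W = \exp(C_+)\,(\tilde G^h)_W\,\exp(C_-),
\]
with $(\tilde G^h)_W = \tilde G^h_e$ for $k = \infty$ and $(\tilde G^h)_W = \tilde G^h_e \cdot \langle g_h^k \rangle$ for $k < \infty$. Theorem~\ref{thm:sv} gives the matching description $S_\sV = G_\sV \exp(C_{U,+} + C_{U,-})$ with $C_{U,\pm} = \pm C_U \cap \g_{\pm 1}(h)$. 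Matching the abelian factors, $\exp(C_+) \subseteq S_\sV$ forces $C_+ \subseteq C_U$; conversely, once $C_+ \subseteq C_U$ holds, the fact that $\g$ is simple hermitian of tube type — so $C_U$ is either trivial or, up to sign, contains the unique generating invariant cone $C_\g$ — yields $C_- \subseteq -C_U$ as well, hence $\exp(C_\pm) \subseteq S_\sV$. For the group-theoretic factor, $\tilde G^h_e \subseteq G_\sV$ is automatic because $\tau_h$ acts trivially on $\g_0(h) = \ker(\ad h)$, so $U(\tilde G^h_e)$ commutes with $J_\sV$. The only remaining obstruction is the component $g_h^k$ when $k < \infty$: applying $\tau_h(g_h) = g_h^{-1}$ from \eqref{eq:tauh-rel}, one computes
\[
J_\sV\, U(g_h^k)\, J_\sV = U(\tau_h(g_h^k)) = U(g_h^{-k}),
\]
so $g_h^k \in G_\sV$ if and only if $U(g_h^{2k}) = \bone$, i.e.\ $g_h^{2k} \in \ker U$. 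This pins down the equivalence of $S_W \subseteq S_\sV$ with the two stated conditions, giving both the necessity of the conditions and (BW) for $\sH^{\rm max}_{M_k}$ in the sufficiency direction.

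The main obstacle is the Reeh--Schlieder property for $\sH^{\rm max}_{M_k}$ in the sufficiency direction. My approach is first to reduce to irreducible $U$ via Proposition~\ref{prop:3.2}, and then to exhibit a subnet $\sH_\sE^{M_k} \subseteq \sH^{\rm max}_{M_k}$ which is already cyclic on every non-empty open subset. Since $\g$ is semisimple, Theorem~\ref{thm:4.9-semisimp} (applied to $\tilde G$) produces from a finite-dimensional $J$- and $K$-invariant subspace $\sF \subseteq \cH^J \cap \cH^\omega(\Xi)$ a net $\sH_\sE^{\tilde G}$ on $\tilde G$ satisfying (Iso), (Cov), (RS) and (BW) with respect to the wedge $W^{\tilde G} = q_{M_k}^{-1}(W_{M_k})$; the positive energy hypothesis $C_+ \subseteq C_U$ guarantees that $\sV(h,U)$ is indeed the Bisognano--Wichmann subspace obtained at $W^{\tilde G}$. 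Pushing this net forward to $M_k$ via Remark~\ref{rem:push-forward} yields the desired $\sH_\sE^{M_k}$, and cyclicity of $\sH_\sE^{M_k}(\cO) \subseteq \sH^{\rm max}_{M_k}(\cO)$ then forces (RS) for the maximal net. The technical heart of this step, and what I expect to be the hardest point, is verifying that $\sE = \beta^+(\sF)$ is invariant under the full (possibly disconnected) stabilizer of the base point in $M_k$ — not merely under its identity component — so that the pushed-forward net is well-defined; this is precisely where the covering hypothesis $g_h^{2k} \in \ker U$ re-enters, since it is exactly this condition that guarantees $U^{-\infty}(g_h^k)$ preserves the $J_\sV$-fixed vectors used to build $\sF$, and hence preserves $\sE$.
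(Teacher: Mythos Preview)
Your reduction of the equivalence to $S_{W_{M_k}} \subseteq S_\sV$, and the subsequent identification of this inclusion with the two stated conditions, is correct and coincides with the paper's argument (it too writes $S_{W_{M_k}} = \tilde G_{W_{M_k}}\exp(C_+ + C_-)$ and $S_\sV = \tilde G_\sV\exp(C_{U,+}+C_{U,-})$, then matches the cone and group factors, reducing the group inclusion to $g_h^{2k\Z}\subseteq\ker U$ via \eqref{eq:tauh-rel}).

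The Reeh--Schlieder step, however, has a real gap. You invoke Theorem~\ref{thm:4.9-semisimp}, but that theorem produces a net on the \emph{non-compactly causal symmetric space} $G/H$ with $H = K^{\tau_h,h}\exp(\fh_\fp)$, and the subspace $\sE = \beta^+(\sF)$ it yields is $H$-invariant, not $Q_h$-invariant. The stabilizer of the base point in $M_k$ is (a covering of) the parabolic $Q_h$, which contains $\exp(\g_{-1}(h))$; this subgroup does \emph{not} lie in $H$, and the equivariance in Lemma~\ref{lem:beta+-equiv} does not directly give invariance of $\sE$ under it. Your final paragraph correctly flags this as the hard point, but the remedy you sketch---that $g_h^{2k}\in\ker U$ handles the disconnectedness---only addresses $\pi_0$ of the stabilizer, not the presence of the unipotent radical $\exp(\g_{-1}(h))$ in its identity component. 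Similarly, the claim that the net on $\tilde G$ satisfies (BW) \emph{for the wedge} $q_{M_k}^{-1}(W_{M_k})$ is not what Theorems~\ref{thm:4.9} or~\ref{thm:4.9-semisimp} provide; they give (BW) for the wedge $W^G$ of \eqref{eq:defwg}, which is adapted to the ncc space, not to the flag manifold.

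The paper does not work around this; it takes a genuinely different route for (RS), citing \cite{MN25} and \cite{NO21}. The key idea there is to construct $\sE$ directly as a finite-dimensional subspace of $\cH^{-\infty}_{\rm KMS}$ that is invariant under the \emph{parabolic} $Q_h$ from the outset (rather than obtaining $\sE$ as $\beta^+$ of $K$-finite vectors), and then to show $\sH^{M_k}_\sE(W) = \sV$. This $Q_h$-equivariant construction is what makes the push-forward to $M_k$ work.
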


\begin{proof} (Sketch; see \cite{MN25})
  In view of Lemmas~\ref{lem:direct-net} and \ref{lem:maxnet-larger},
  the existence of a net $\sH$
  satisfying (Iso), (Cov) and (BW) is equivalent to
$S_{W_{M_k}} \subeq S_\sV.$ 
These semigroups are of the form 
\[       S_{W_{M_k}} = \tilde G_{W_{M_k}} \exp(C_+ + C_-)\quad \mbox{ and} \quad
  S_\sV = \tilde G_\sV \exp(C^U_+ + C^U_-) \]
for $C^U_\pm = \pm C_U \cap         \g_{\pm 1}(h),$ 
which reduces the inclusion $S_{W_{M_k}} \subeq S_\sV.$ 
to the inclusion 
\[  \tilde G_{W_{M_k}} \subeq  \tilde G_\sV \]
and the positive energy condition
\[ C_+ \subeq C_U.\] 
As $\tilde G_{W_{M_k}} \subeq \tilde G_{W_M} = \tilde G^h$
(Proposition~\ref{prop:sw-conformal}) 
commutes with the Euler element $h$, the inclusion 
$\tilde G_{W_{M_k}} \subeq  \tilde G_\sV$ is equivalent to 
\[ \{ g \tau_h(g)^{-1} \: g \in \tilde G_{W_{M_k}}\} = g_h^{2k\Z}
    \subeq \ker U.\]

    So it only remains to verify the Reeh--Schlieder condition.
    We refer to \cite{MN25} (which builds on \cite{NO21}) for details. 
\end{proof}

The following theorem extends results by Brunetti, Guido and Longo
\cite{BGL93} for the Jordan algebra $\jV = \R^{1,d-1}$
(Minkowski space) and the group
$G = \SO_{2,d}(\R)_e$ to general causal flag manifolds. 

\begin{theorem} \label{thm:unique-add-conf}
  {\rm(Existence and uniqueness of additive nets)}
      For every antiunitary representation
      $(U,\cH)$ of $\tilde G$ satisfying the positive energy condition 
      $C_+ \subeq C_U$, there exists a unique additive net
      $\sH$ on open subsets of  $M_\infty$, satisfying
      {\rm(Iso), (Cov), (RS)} and {\rm(BW)}.\\
      On $M_k$ such nets exist  for $U$ if and only if,
      in addition, $g_h^{2k} \subeq \ker U$. 
\end{theorem}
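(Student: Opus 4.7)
The plan is to deduce the theorem from Theorem~\ref{thm:5.28}, together with an additivity/uniqueness argument based on the fact that wedge translates form a basis for the topology of~$M_k$. Necessity of the two conditions plays no role for additivity: if any net (additive or not) satisfies (Iso), (Cov), (BW) with wedge~$W_{M_k}$, then by Lemma~\ref{lem:direct-net}(c) one has $S_{W_{M_k}} \subeq S_\sV$, and Theorem~\ref{thm:5.28} rewrites this inclusion as the positive energy condition $C_+ \subeq C_U$ (plus the condition on $g_h^{2k}$ when $k<\infty$).

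The geometric key lemma I would establish is that the family $\cB := \{g W_{M_k} : g \in \tilde G\}$ is a basis for the topology of~$M_k$. Since $\tilde G$ acts transitively on $M_k$, it suffices to produce arbitrarily small wedge translates around a fixed base point $m_0 \in W_{M_k}$. On the Bruhat chart $\eta\colon \g_1(h) \to M_k$ from \eqref{eq:pos-reg-flag}, the wedge identifies with the open convex cone $C_+^\circ \subeq \g_1(h)$; translations by $\g_1(h)$ move points, and the one-parameter group $\exp(\R h)$ acts on this chart by dilations $v\mapsto e^{-t}v$. Composing an appropriate translation with a strongly contracting dilation yields wedge translates $gW_{M_k}$ containing $m_0$ of arbitrarily small diameter. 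For $k<\infty$ the claim descends along the local diffeomorphism $M_\infty \to M_k$.

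Granted the basis property, both uniqueness and existence reduce almost to formalities. For uniqueness, any additive net $\sH$ satisfying (Cov) and (BW) obeys $\sH(gW_{M_k}) = U(g)\sV$ for all $g \in \tilde G$; for arbitrary open $\cO \subeq M_k$, the basis property yields $\cO = \bigcup\{gW_{M_k} : gW_{M_k}\subeq\cO\}$, and additivity forces
\begin{equation*}
\sH(\cO) \;=\; \oline{\sum_{gW_{M_k}\subeq\cO} U(g)\sV} \;=\; \sH^{\rm min}_{M_k}(\cO).
\end{equation*}
This simultaneously identifies $\sH$ with $\sH^{\rm min}_{M_k}$ and shows, retrospectively, that $\sH^{\rm min}_{M_k}$ is itself additive. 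Existence then amounts to verifying the four axioms for $\sH^{\rm min}_{M_k}$ under the two conditions: (Iso) and (Cov) are built in, (BW) is Lemma~\ref{lem:direct-net}(c)(v) combined with Theorem~\ref{thm:5.28}, and (RS) is immediate because every nonempty open $\cO$ contains some wedge translate $gW_{M_k}$ by the basis property, whence $\sH^{\rm min}_{M_k}(\cO) \supeq U(g)\sV$ is automatically cyclic.

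The hard part will be the geometric basis lemma. For Minkowski space (Example~\ref{ex:qmink}) it is the familiar statement that conformal translates of the Rindler wedge form a basis of the topology of the conformal completion; extending this to general causal flag manifolds requires locating, inside the Bruhat chart, sufficiently many translation- and dilation-like elements of $\tilde G$ to shrink wedges around arbitrary points, and verifying that the resulting covering argument descends cleanly to the finite covers $M_k$. All remaining steps are essentially corollaries of this geometric input combined with Theorem~\ref{thm:5.28} and Lemma~\ref{lem:maxnet-larger}.
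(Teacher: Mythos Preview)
Your uniqueness argument is the same as the paper's: once wedge translates form a basis for the topology of $M_k$, additivity together with (Cov) and (BW) forces any such net to equal $\sH^{\rm min}_{M_k}$.

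The gap is in existence. You verify (Iso), (Cov), (BW) and (RS) for $\sH^{\rm min}_{M_k}$, but never its \emph{additivity}; the sentence claiming that uniqueness ``shows, retrospectively, that $\sH^{\rm min}_{M_k}$ is itself additive'' is circular, because the uniqueness step takes an additive net as \emph{input}. Concretely, additivity of $\sH^{\rm min}_{M_k}$ for a cover $gW \subseteq \bigcup_j \cO_j$ would require that the subspaces $U(g')\sV$, ranging over sub-wedges $g'W \subseteq gW$ each contained in a single $\cO_j$, already generate all of $U(g)\sV$. Your basis lemma ensures that such sub-wedges cover $gW$ geometrically, but converting a geometric cover into generation of the standard subspace is not implied by (BW) (which only gives the inclusion $U(g')\sV \subseteq U(g)\sV$) or by (RS) (cyclicity in $\cH$, not generation inside $U(g)\sV$). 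The cover $\{\cO_j\}$ is not invariant under the modular flow of $gW$, so the Equality Lemma does not apply either. Theorem~\ref{thm:5.28}, which you invoke, delivers $S_{W_{M_k}} \subseteq S_\sV$ and a net with the four axioms, but nothing about additivity.

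The paper closes this differently: for existence it constructs the net as $\sH^{M_k}_\sE$ for a finite-dimensional $Q_h$-invariant subspace $\sE \subseteq \cH^{-\infty}_{\rm KMS}$ (built in \cite{NO21} for irreducible $U$, then extended by direct integrals). Nets of the form $\sH^M_\sE$ are automatically additive by Proposition~\ref{prop:hmo-add} (a partition-of-unity argument on test functions), and one verifies $\sH^{M_k}_\sE(W_{M_k}) = \sV$. Only after this construction does the uniqueness argument identify $\sH^{M_k}_\sE$ with $\sH^{\rm min}_{M_k}$, so that additivity of the minimal net is a \emph{consequence}, not an input.
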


\begin{proof} {\bf Uniqueness:}   
  On every  $M_k$, the wedge regions form a basis
  for the topology. Every additive covariant net $\sH$
  satisfying (BW) thus satisfies 
  \[ \sH(\cO) = \sH\bigg( \bigcup_{g.W_{M_k} \subeq \cO} g.W_{M_k}\bigg)  
    = \oline{\sum_{g.W_{M_k} \subeq \cO} U(g)\sH(W_{M_k})}
    = \oline{\sum_{g.W_{M_k} \subeq \cO} U(g)\sV},\]
    so that $\sH$ is determined by the representation $U$ via
    $\sH(W_{M_k}) = \sV = \sV(h,U)$.

    \nin {\bf Existence:} The argument for existence
    builds on nets for irreducible representations
    constructed in \cite{NO21} (see also \cite{MN25}), 
    and direct integral techniques. The main point is to find
    a finite-dimensional subspace $\sE \subeq \cH^{-\infty}_{\rm KMS}$
    invariant under the parabolic subgroup $Q_h$
    and that $\sH^{M_k}_\sE(W) = \sV$.
\end{proof}

\subsubsection{Locality}
\label{subsec:6.2}

Locality conditions concern open $G$-invariant subsets
$\cD_{\rm loc} \subeq M \times M$. Here are some relevant facts:
  \begin{itemize}
  \item $M \times M$ contains a unique open $G$-orbit $\cD^*$. 
  \item $M_\infty \times M_\infty$ contains infinitely
    many open $\tilde G$-orbits $(\cD^*_n)_{n \in \Z}$,
    permuted by the group $\pi_1(M) \cong \Z$, acting by deck transformations. 
  \item $M_k\times M_k$ contains $k$ open $\tilde G$-orbits $\cD^*_n$,
    $n \in \Z/k\Z$, permuted by  deck transformations 
    in $\Deck(M_k) \cong\pi_1(M)/\pi_1(M_k) \cong \Z/k\Z$.
  \end{itemize}

 Let $g_h \in \tilde G^h$ be as above and
 pick $z_\fk \in \fz(\fk$) such that $\theta := \exp(\pi \ad z_\fk)
 \in \tilde G$
is a Cartan involution.

  \begin{theorem} \label{thm:n-locality} {\rm(Locality properties of the nets)} 
    For the unique additive net associated to
  the positive energy representation $(U,\cH)$ of $\tilde G_{\tau_h}$ on
  $M_k$, $k \in\N\cup\{\infty\}$ and $n \in \Z_k$,
  the following are equivalent:
  \begin{description}
  \item[\rm(a)] The $n$-locality condition 
  \[ \cO_1 \times \cO_2 \subeq \cD^*_n \quad \Rarrow \quad
    \sH(\cO_1) \subeq \sH(\cO_2)' \]
\item[\rm(b)] $\sH(g_h^n.W_{M_\infty}) \subeq \sH(W_{M_\infty}')'$ for the
  dual wedge $W_{M_\infty}' := \theta.W_{M_\infty}$. 
\item[\rm(c)] $\exp(2\pi z_\fk) g_h^{2n}  \in \ker U$.
  \begin{footnote}
    {Note that the discrete normal subgroup $\ker U \trile G$ is central
      because $G$ is connected.}
  \end{footnote}
  \end{description}
\end{theorem}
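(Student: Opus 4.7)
The strategy is to establish (b) $\Leftrightarrow$ (c) analytically via the BGL/Equality-Lemma machinery, and then derive (a) $\Leftrightarrow$ (b) from additivity together with the transitive $\tilde G$-action on the open orbit $\cD^*_n$.

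For (b) $\Leftrightarrow$ (c): by Theorem~\ref{thm:unique-add-conf}, (BW), and (Cov), $\sH(W_{M_\infty}) = \sV$ and $\sH(g_h^n.W_{M_\infty}) = U(g_h^n)\sV$. Setting $\theta_G := \exp(\pi z_\fk) \in \tilde G$, the identity $W'_{M_\infty} = \theta_G.W_{M_\infty}$ (which is the definition of the dual wedge, supported by \eqref{eq:90deg-rot}) together with (Cov) gives $\sH(W'_{M_\infty}) = U(\theta_G)\sV$. Since taking the symplectic complement commutes with unitaries, $\sH(W'_{M_\infty})' = U(\theta_G)\sV'$, so (b) becomes
\[ U(B)\sV \subeq \sV', \qquad B := \theta_G^{-1} g_h^n. \]
The relations $\Ad(\theta_G)h = -h$ and $\Ad(g_h)h = h$ yield $B\exp(\R h)B^{-1} = \exp(\R h)$, so that $U(B)\sV$ is invariant under the modular group of $\sV'$, which is $U(\exp\R h)$. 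Being a unitary translate of the standard subspace $\sV$, $U(B)\sV$ is cyclic. The Equality Lemma~\ref{lem:lo08-3.10} (with $\sH_1 := U(B)\sV$ inside the standard subspace $\sV'$) then upgrades the inclusion to $U(B)\sV = \sV'$.

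The identity $U(B)\sV = \sV'$ is equivalent to the antiunitary $A := J_\sV U(B)$ preserving $\sV$. Since $\sV = \Fix(J_\sV\Delta_\sV^{1/2})$, this holds iff $A$ commutes with $\Delta_\sV$ and with $J_\sV$. The first condition is automatic, encoding $\Ad(\tau_h(B))h = -h$ modulo $\ker \partial U$. The second reduces to $\tau_h(B)B^{-1} \in \ker U$. Using $\tau_h(g_h) = g_h^{-1}$ from \eqref{eq:tauh-rel}, $\tau_h(z_\fk) = -z_\fk$ (since $z_\fk$ is $\ad h$-sum of eigenvectors of eigenvalue $\pm 1$), and the commutativity of $z_\fk$ with $g_h$ inside the distinguished $\tilde\SL_2(\R)$-subgroup attached to $h$, one computes
\[ \tau_h(B)B^{-1} = \exp(2\pi z_\fk)\, g_h^{-2n}, \]
which modulo $\ker U$ is the condition~(c) (taking inverses).

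For (a) $\Leftrightarrow$ (b): additivity reduces the locality condition to pairs of wedge regions; that is, it suffices to verify that whenever $(g_1.W_{M_\infty})\times (g_2.W_{M_\infty}) \subeq \cD^*_n$, one has $U(g_1)\sV \subeq (U(g_2)\sV)'$. By $\tilde G$-covariance one may take $g_2 = e$, so that the locality reduces to the assertion $U(g)\sV \subeq \sV'$ for every $g \in \cX_n := \{g\in\tilde G \: (g.W_{M_\infty},W_{M_\infty}) \subeq \cD^*_n\}$. A geometric analysis of the open $\tilde G$-orbits in $M_\infty \times M_\infty$ identifies $\cX_n$ as the double coset $G_\sV \cdot (\theta_G^{-1} g_h^n) \cdot G_\sV$. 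Then $U(g)\sV\subeq\sV'$ for all $g\in\cX_n$ is equivalent to the single case $g = \theta_G^{-1}g_h^n$, which is precisely~(b).

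The main obstacle is the geometric characterization of $\cX_n$ and the description of the open $\tilde G$-orbits $\cD^*_n$ in $M_\infty\times M_\infty$ in terms of wedge pairs translated by $g_h^n$: this identification is what allows one to reduce the $n$-locality condition to a single standard-subspace inclusion and is specific to the structure of causal flag manifolds. A secondary technical point is the precise bookkeeping of the factor $\exp(2\pi z_\fk)$ appearing in (c), which depends on the normalization of $g_h$ and on the structure of the $\tilde\SL_2(\R)$-subgroup containing $h$.
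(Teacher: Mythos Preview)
The paper states Theorem~\ref{thm:n-locality} without proof; the details are deferred to \cite{MN25}. So there is no in-paper argument to compare against, and I assess your proposal on its own merits.

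Your route for (b) $\Leftrightarrow$ (c) is correct in substance. The reduction of (b) to $U(B)\sV = \sV'$ with $B = \exp(-\pi z_\fk)g_h^n$, via $\Ad(B)h = -h$ and the Equality Lemma, is the right mechanism; the subsequent translation into $\tau_h(B)B^{-1}\in\ker U$ is also right. One correction: in the final step you write
\[
\tau_h(B)B^{-1} = \exp(\pi z_\fk)\,g_h^{-2n}\,\exp(\pi z_\fk),
\]
and to collapse this to $\exp(2\pi z_\fk)g_h^{-2n}$ you invoke ``commutativity of $z_\fk$ with $g_h$''. That is not the relevant fact, and in general $g_h$ need not commute with $\exp(\pi z_\fk)$. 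What you should use is that the paper specifies $g_h^2\in Z(\tilde G)$, so $g_h^{-2n}$ is central and can be moved past $\exp(\pi z_\fk)$. With this adjustment the computation goes through and matches~(c) after inversion.

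For (a) $\Leftrightarrow$ (b), your strategy is the natural one: use Lemma~\ref{lem:3.2} (additivity reduces locality to the locality set $\cL_\sH$), Remark~\ref{rem:3.3b}, and the fact that wedge regions form a basis of the topology (proof of Theorem~\ref{thm:unique-add-conf}) to reduce to pairs of wedges. The direction (b) $\Rightarrow$ (a) then only needs that $\cD^*_n$, being a single $\tilde G$-orbit, is contained in $\cL_\sH$ as soon as it meets the product $g_h^n.W_{M_\infty}\times W'_{M_\infty}$; and (a) $\Rightarrow$ (b) needs the inclusion $g_h^n.W_{M_\infty}\times W'_{M_\infty}\subeq \cD^*_n$. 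You correctly flag the identification of $\cD^*_n$ with the $\tilde G$-orbit of this specific wedge pair as the missing geometric input. This is indeed the substantive point and is not available from the material in these notes; it requires the description of open $\tilde G$-orbits in $M_\infty\times M_\infty$ carried out in \cite{MN25} (and, for the underlying Jordan-theoretic picture, \cite{Be96, Be00}). Your double-coset description of $\cX_n$ is heuristically right but should be replaced by the orbit statement above, which is what the argument actually uses.
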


\subsubsection{The massless spin 0 representation on Minkowski space} 

We consider Minkowski space $\jV = \R^{1,d-1}$
and its conformal compactification $M$.
  On $\tilde G = \tilde\SO_{2,d}(\R)_e$ the ``minimal'' positive
  energy representation $(U,\cH)$ is
  the extension of the Poincar\'e representation
  corresponding to {\it massless spin~$0$-particles}.
  
  It depends on the dimension $d$, to which quotient group
  of $\tilde G$ the representation $U$ descends,
  and on which covering of $M$ the net can be implemented.
  We have the following properties (cf.~\cite{BGL93, BGL02}, 
  and also \cite{MNO26}, \cite{MN25}): 
  \begin{itemize}
  \item $d-2\in 4 \Z$: $U$ is defined on the adjoint group
    $\SO_{2,d}(\R)_e/\{\pm \bone\}$ and the net lives on $M$.
  \item $d\in 4 \Z$: $U$ is defined on $\SO_{2,d}(\R)_e$ with
    $U(-1) = -\bone$ and the net lives on $M$.
  \item $d$ odd: $U$ is defined on a $2$-fold covering of $\SO_{2,d}(\R)_e$
    and the net lives on on $M_2 \cong \bS^1 \times \bS^{d-1}$. 
  \end{itemize}

  \begin{remark} The $n$-locality condition on $M_2$ (for $n = 0,1$) is
    $(n+1)d \in 2 \Z$ (cf.~Theorem~\ref{thm:n-locality}(a)).
    \begin{itemize}
    \item     For $d$ even, the net therefore is $0$- and $1$-local,
    which corresponds to spacelike and timelike locality
    on Minkowski  space.
  \item    For $d$ odd, it is only $1$-local, which corresponds to
    spacelike locality on Minkowski space.
    \end{itemize}
These locality conditions relate to support properties of the fundamental
solutions of the Klein--Gordon equation (Huygens' Principle).
We refer to \cite{MN25} for details.
\end{remark}

\subsection{Localizability for the Poincar\'e group}

The following result is well-known
in AQFT (\cite[Thm.~4.7]{BGL02}; see also \cite{Mu01, Mu03}). 
Below we derive it naturally in the
context of our theory for general Lie groups (cf.\ \cite[Thm.~4.25]{MN24}). 
It connects regularity, resp., localizability  with
  the positive energy condition.

  \begin{theorem} \label{thm:poinloc}
    {\rm(Localization for the Poincar\'e group)}
 Let $(U,\cH)$ be an (anti-)unitary representation
  of the proper Poincar\'e group
  $\cP_+ = \R^{1,d} \rtimes \cL_+$ (identified with $\cP_{\tau_h}$)
  and consider the standard boost
  $h$ and the corresponding Rindler wedge $W_R \subeq \R^{1,d}$.
  Then $(U,\cH)$ is $(h,W_R)$-localizable in
{the set of all spacelike open cones} 
  if and only if it is a positive energy representation, i.e.,
  \begin{equation}
    \label{eq:v+}
 C_U \supeq
 \oline{V_+} := \{ (x_0,\bx) \: x_0 \geq \sqrt{\bx^2} \}.
  \end{equation}
  These representations are $h$-regular. 
\end{theorem}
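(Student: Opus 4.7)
The plan is to translate the geometric localizability hypothesis into the semigroup inclusion $S_{W_R} \subeq S_\sV$ via Lemma~\ref{lem:direct-net}, and then combine the structure theorem $S_\sV = G_\sV \exp(C_+ + C_-)$ of Theorem~\ref{thm:sv} with the explicit compression semigroup $S_{W_R}$ from Example~\ref{ex:poincare} to extract the spectral condition. For the ``$\Rightarrow$'' direction, I would pick a spacelike open cone $\cO \subeq W_R$ with $\oline\cO$ compact in $W_R$; then cyclicity of $\sH^{\max}_M(\cO)$ would propagate by isotony to cyclicity of $\sH^{\max}_M(W_R)$, and Lemma~\ref{lem:direct-net}(c) would yield $\sH^{\max}_M(W_R) = \sV$ together with $S_{W_R} \subeq S_\sV$. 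By Example~\ref{ex:poincare} the compression semigroup contains translations by the boundary lightrays $\be_0 + \be_1 \in \g_1(h)$ and $\be_1 - \be_0 \in \g_{-1}(h)$, so Theorem~\ref{thm:sv}, applied to the intersections $S_\sV \cap \exp(\g_{\pm 1}(h)) = \exp(\pm C_U \cap \g_{\pm 1}(h))$, forces $\be_0 + \be_1 \in C_U$ and $\be_0 - \be_1 \in C_U$. The final step would be to exploit Lorentz covariance of the hypothesis (which permutes Rindler wedges of varying orientations) together with transitivity of $\SO_{1,d-1}(\R)_e$ on future lightrays and convexity of $C_U$ to conclude $\oline{V_+} \subeq C_U$.

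For the ``$\Leftarrow$'' direction, assuming $\oline{V_+} \subeq C_U$, Example~\ref{ex:poincare} immediately gives $S_{W_R} \subeq S_\sV$ and hence $\sH^{\max}_M(W_R) = \sV$ by Lemma~\ref{lem:direct-net}(c). To establish cyclicity of $\sH^{\max}_M(\cO)$ on every spacelike open cone, I would first reduce to irreducible positive energy representations via Proposition~\ref{prop:3.2} and Lemma~\ref{lem:direct-net-d}, which show that cyclicity is inherited under subrepresentations, direct sums, and direct integrals. For an irreducible positive energy representation, I would invoke the reproducing kernel Hilbert space realization on the tube $\Xi_M = \R^{1,d} + \ie V_+$ described at the end of Section~\ref{sec:4}, take $m_0 = \ie \be_0 \in \Xi_M^{\tau_M}$, and use Lemma~\ref{lem:Mxi} to construct a $(G,h)$-crown $\Xi \subeq G_\C$ satisfying (Cr1--Cr3). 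The evaluation vectors $K_w^*\xi$ for $w \in \Xi_M^{\tau_M}$ and $\xi \in \cK^{J_\cK}$ span a dense subspace of $\cH^\omega(\Xi) \cap \cH^J_{\rm temp}$, with the growth bound~\eqref{eq:growthcond} a direct consequence of the temperedness of the spectral measure $\mu$. Applying Theorem~\ref{thm:4.9} would then produce a net $\sH^G_\sE$ on $G$ satisfying (Iso), (Cov), (RS), (BW); the cyclicity of the push-forward $\sH^M_\sE(\cO) = \sH^G_\sE(q_M^{-1}(\cO))$ on every spacelike cone $\cO$ follows from (RS) on $G$ because $q_M^{-1}(\cO)$ is nonempty open in $G$, and then $\sH^{\max}_M(\cO) \supeq \sH^M_\sE(\cO)$ via Lemma~\ref{lem:maxnet-larger} delivers the required cyclicity.

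Finally, $h$-regularity follows from Proposition~\ref{prop:loc-imp-reg}: any relatively compact spacelike open cone $\cO$ with $\oline\cO \subeq W_R$ has cyclic $\sH^{\max}_M(\cO)$ by the preceding step, and $N := \{g \in G : g^{-1}\oline\cO \subeq W_R\}$ is an identity neighborhood by compactness of $\oline\cO$. The main technical obstacle I anticipate lies in applying Lemma~\ref{lem:maxnet-larger}, which requires that the push-forward $\sH^M_\sE$ actually satisfies the (BW) identification $\sH^M_\sE(W_R) = \sV$: the kernel vectors $K^*_{\ie\be_0}\xi$ are invariant only under the point stabilizer $\SO_{d-1}(\R)$ of $\ie\be_0$, not under the full Lorentz subgroup $H = \SO_{1,d-1}(\R)_e$, so Remark~\ref{rem:push-forward}(c) does not apply directly. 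I would address this by enlarging $\sE$ to the $H$-invariant hull $\hat\sE := \oline{\spann_\R U^{-\infty}(H)\sE}$ and then verifying that $\hat\sE \subeq \cH^{-\infty}_{\rm KMS}$ remains valid, which amounts to checking that the temperedness estimate~\eqref{eq:growthcond} is preserved under all Lorentz rotations of the reference point $\ie\be_0$ within the tube $\Xi_M$, so that the enlarged subspace still lies in the domain of the boundary map $\beta^+$.
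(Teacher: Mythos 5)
Your necessity direction essentially matches the paper's: propagate cyclicity via (Iso) to $\sH^{\rm max}_M(W_R)$, extract $S_{W_R} \subeq S_\sV$ from Lemma~\ref{lem:direct-net}(c), read off $\be_0 \pm \be_1 \in C_U$ from the compression semigroup, and conclude $\oline{V_+} \subeq C_U$ by Lorentz invariance of $C_U$. Your closing $h$-regularity argument via Proposition~\ref{prop:loc-imp-reg} is also what the paper does.

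The sufficiency direction takes a genuinely different route, and it has a gap that cannot be closed by the fix you propose. You identify the right obstacle (the reference vectors $K^*_{\ie\be_0}\xi$ generate an $\sE$ that is not $H$-invariant for $H=\cL_+$; cf.\ Remark~\ref{rem:push-forward}(c), which flags exactly this difficulty) but your repair -- pass to the $H$-invariant hull $\hat\sE$ and re-verify the temperedness estimate \eqref{eq:growthcond} -- fails for a geometric reason, not just an estimate-theoretic one. The fixed point set $\Xi_M^{\tau_M}$ of $\tau_M(z)=(-\oline z_0,-\oline z_1,\oline z_2,\ldots,\oline z_d)$ consists of points whose $(0,1)$-coordinates are purely imaginary and whose transverse coordinates are \emph{real}. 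A Lorentz-rotated reference point $\ell.(\ie\be_0)=\ie\,\ell.\be_0$ has \emph{all} coordinates purely imaginary, so it lies in the tube $\Xi_M$ but leaves $\Xi_M^{\tau_M}$ as soon as $\ell$ leaves the stabilizer of the $(\be_0,\be_1)$-plane; in particular $U(\ell)K^*_{\ie\be_0}\xi$ is not $J$-fixed, since $J U(\ell) J = U(\tau_h(\ell)) \ne U(\ell)$ for such $\ell$. Worse, the orbit map $t\mapsto e^{\ie t\,\partial U(h)}\,U(\ell)K^*_{\ie\be_0}\xi$ does not even extend to the full strip $\cS_{\pm\pi/2}$: writing $y:=\ell.\be_0$, the imaginary part of the argument fed into $\tilde\mu$ becomes $2(\cos(t)y_0,\cos(t)y_1,y_2,\ldots,y_d)$, which exits the forward light cone as $|t|\to\pi/2$ whenever $(y_2,\ldots,y_d)\ne 0$. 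At that point $\tilde\mu$ is undefined, so there is no growth bound to verify; $\hat\sE$ contains elements outside $\cH^{-\infty}_{\rm KMS}$, and the required containment $U^{-\infty}(q_M^{-1}(W_R))\sE \subeq \cH^{-\infty}_{\rm KMS}$ from Corollary~\ref{cor:4.8} cannot hold.

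The paper's sufficiency argument avoids the push-forward problem entirely by an algebraic reduction. For a spacelike cone $\cC\subeq W_R$ with vertex~$0$, any $g^{-1}=(v,\ell)$ with $\cC\subeq g.W_R$ has translation part $v\in\oline{W_R}$, and the positive energy condition gives $\oline{W_R}\subeq S_\sV$, so $U(g)\sV \supseteq U(\ell)^{-1}\sV$. Hence the translation contribution drops out of the intersection defining $\sH^{\rm max}(\cC)$, yielding $\sH^{\rm max}(\cC)\supseteq\sH^{\rm max}_{U\res_{\cL_+}}(\cC\cap\dS^d)$. The right-hand side is cyclic by the Localization Theorem~\ref{thm:local-reduct} for the semisimple Lorentz group on the non-compactly causal symmetric space $\dS^d$, where the crown-domain construction produces an $H$-invariant $\sE$ for the \emph{correct} isotropy subgroup; that theorem already handles arbitrary (not just irreducible) representations, so the reduction via Proposition~\ref{prop:3.2} is also unnecessary. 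Translating general spacelike cones to vertex $0$ inside some $g.W_R$ then finishes the proof by (Cov).
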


Note that $\Ad(\cP_+^\uparrow)$ acts transitively on the set $\cE(\fp)$
of Euler elements (Example~\ref{ex:poincare-1}),
so that the choice of a specific Euler element~$h$ is inessential. 

\begin{proof} First we show that the positive energy condition is necessary
  for localizability in spacelike cones.
  In fact, the localizability condition implies in particular
  that $\sH(W_R)$ is cyclic, so that Lemma~\ref{lem:direct-net} implies
  $S_{W_R} \subeq S_\sV$. As a consequence, 
  $\be_1 + \be_0 \in C_U$, and thus $\oline{V_+} \subeq C_U$ by
  Lorentz invariance of $C_U$. Therefore $(U,\cH)$ is a positive
  energy representation.
  
  Assume, conversely, that $(U,\cH)$ is a positive energy representation. 
  For the standard boost, we have
  $h \in \fl \cong \so_{1,d}(\R)$, and the
  restriction $(U\res_{\cL_+}, \cH)$ is $(h,W)$-localizable
  in the family of all non-empty open subsets of $\dS^d$,
  where $W = W_R \cap \dS^d$ is the canonical wedge region 
  (Theorem~\ref{thm:local-reduct}).
  
  Next we recall from Lemma~\ref{lem:4.17} %\cite[Lemma~4.12]{NO17}
  that
  \[ S_{W_R} = \{ g \in \cP_+^\uparrow \: gW_R \subeq W_R \}
    = \oline{W_R} \rtimes \SO_{1,d}(\R)^\up_{W_R},\]
  where
\[ \SO_{1,d}(\R)^\up_{W_R} 
  = \SO_{1,1}(\R)^\up \times \SO_{d-2}(\R)\]
is connected, hence coincides with $\cL^h_e$.
It follows that
\[ S_{W_R} = G^h_e \exp(\R_{\geq 0}(\be_1 + \be_0))
  \exp(\R_{\geq 0}(\be_1-\be_0)).\] 

In view of Theorem~\ref{thm:sv}, the positive energy condition implies 
\[ C_\pm = \R_{\geq 0}(\be_1 \pm \be_0) \subeq \oline{W_R},
  \quad \mbox{ so that } \quad S_{W_R} \subeq S_\sV.\]
By Lemma~\ref{lem:direct-net}(c), the net $\sH^{\rm max}$
satisfies $\sH^{\rm max}(W_R) = \sV$.

Now suppose that $\cC \subeq W_R$ is a spacelike cone, so that
$\cC = \R_+ (\cC \cap \dS^d),$ 
where $\cC \cap \dS^d$ is an open subset of the wedge region
$W = W_R \cap \dS^d$ 
in de Sitter space. For $g^{-1} = (v, \ell) \in \cP_+^\uparrow$, 
the condition $\cC \subeq g.W_R$ is equivalent to
\[ g^{-1}.\cC = v + \ell.\cC \subeq W_R, \]
which in turn means that $v \in \oline{W_R}$ and $\ell.\cC \subeq W_R$.
Then
\begin{equation}
  \label{eq:5.36}
  U(g) \sV  = U(\ell)^{-1} U(v)^{-1} \sV \supeq U(\ell)^{-1} \sV
\end{equation}
follows from $\oline{W_R} \subeq S_\sV$, and therefore
  \begin{align*}
 \sH^{\rm max}(\cC)
&  = \bigcap_{\cC \subeq g.W_R} U(g)\sV
\ {\buildrel \eqref{eq:5.36} \over \supseteq}\ \bigcap_{\cC \subeq \ell^{-1}.W_R} U(\ell)^{-1} \sV\\
&  = \bigcap_{\cC \cap \dS^d \subeq \ell^{-1}.(W_R \cap \dS^d)} U(\ell)^{-1} \sV
    = \sH_{U\res_{\cL}}^{\rm max}(\cC \cap \dS^d).
  \end{align*}
We conclude that, on spacelike cones with vertex in $0$,
the net $\sH^{\rm max}$ coincides with the net $\sH_{U\res_{\cL}}^{\rm max}$ on de Sitter space.
As the latter net has the Reeh--Schlieder property
by Theorem~\ref{thm:local-reduct}, and all spacelike
cones can be translated to one with vertex~$0$,
localization in spacelike cones follows. 

Finally we show that $(U,\cH)$ is regular.
For $v \in W_R$ and a pointed
spacelike cone $C$ with $v + C \subeq W$, there exists an $e$-neighborhood
 $N \subeq G$ with $v + C \subeq g.W$ for all $g \in N$. This
 implies that $\sH^{\rm max}(v + C) \subeq \sV_N$, so that
 $(U,\cH)$ is regular.
\end{proof}

\begin{definition} \label{def:minkcaus}
(a)     {\rm(Causal complement)}
Let $\jV=\RR^{1,d}$ be Minkowski space. Its causal structure allows us to
define the \textit{causal complement (or the  spacelike complement)} of
an open subset $\cO\subset \jV$ by 
\begin{equation}\label{eq:ccomp}
  \cO'=\{x\in \jV: (\forall y \in \cO)\, (y-x)^2<0\}^\circ. \end{equation}
This is the interior of the set of all points that cannot
be reached from $\cO$ with a timelike or lightlike curve. 

\nin (b)  {\rm(Spacelike cones)}
  In Minkowski space $\R^{1,d}$, we call an open subset
  $\cO$ {\it spacelike} if $x_0^2 < \bx^2$ holds for all $(x_0,\bx) \in \cO$.
  A spacelike open subset is called a {\it spacelike
    (convex) cone} if, in addition, it is a (convex) cone.
  \index{convex cone!spacelike \scheiding}
  \index{double cone \scheiding}
  
\nin (c)  {\rm(Double cone)} A {\it double cone}  is, up to Poincar\'e covariance,
the causal closure
\[ \bB_r'' = (r \be_0 - \jV_+) \cap (-r \be_0 + \jV_+) \] 
  of an open ball of the time zero hyper-plane
    $\bB_r=\{(0,\bx) \in \RR^{1,d}: \bx^2< r^2\}$.
\end{definition}

\begin{remark} 
Infinite helicity representations $(U,\cH)$ of $\cP_+$ in $\RR^{1,d}$ are 
{\bf not} localizable in double cones (Definition~\ref{def:minkcaus}).
Let  $\sV=\sV(h,U)$ for $h$ as in Example~\ref{exs:ds-ads}.
  In  \cite[Thm. 6.1]{LMR16} it is shown that, if $\cO \subeq \R^{1,d}$ is a double cone, then
  \begin{equation}\label{eq:infspin}  \sH^{\rm max}(\cO)
    = \bigcap_{\cO \subeq g.W_R} U(g) \sV = \{0\}.
  \end{equation}
  The argument for \eqref{eq:infspin} can be sketched as follows.
  Infinite spin representations are massless representations,
i.e., the support of the spectral measure of the space-time
    translation group is  
  \[ \partial \jV_+=\{(x_0,\bx) \in\RR^{1,d}:x_0^2 - \bx^2 =0,x_0 \geq 0\}.\]
Covariant nets of standard subspaces on double cones 
  in massless representations are also dilation covariant in the sense that   
  the representation of $\cP_+$ extends to the
Poincar\'e and dilation group 
$\R^{1,d} \rtimes (\R^+\times \cL)$, and that the net is also
covariant under this larger group, cf. \cite[Prop.~5.4]{LMR16}.
 When $d=3$, this follows from the fact that, due to
  Huygens' Principle, one can associate 
  a standard subspace to the forward lightcone by 
  $\sH(\jV_+)=\overline{\sum_{\cO\subset \jV_+}\sH(\cO)}$
(sum over all double cones in $\jV_+$), and
  the modular group of $\sH(\jV_+)$ is geometrically implemented by
  the dilation group.
 As massless infinite helicity representations are not
  dilation covariant, it follows that they do not
  permit localization in double cones.
Properties of the free wave equation permit to  extend this  
argument  to any space dimension $d\geq2$,   including  even space dimensions,
for which Huygens' Principle fails (\cite[Sect.~8.2]{LMR16}). 
However, in Theorem~\ref{thm:poinloc}, we recover in our general setting the result contained in \cite[Thm.~4.7]{BGL02} that all   positive energy   representations of $\cP_+$
  are localizable in spacelike cones.    
\end{remark}

\subsection{Appendices to Section~\ref{sec:5}}

\subsubsection{Standard pairs}
\label{subsubsec:stand-pairs}

\index{standar pair $(U,\sV)$ \scheiding } 
\begin{definition} 
  {\it Positive standard pairs} $(U,\sV)$ consist of a standard subspace
$\sV$ of a complex Hilbert space~$\cH$ and a 
unitary one-parameter group $(U_t)_{t \in \R}$ on $\cH$ such that 
$U_t \sV \subeq \sV$ for $t \geq 0$ and $U_t = e^{itH}$ with $H \geq 0$.
\end{definition}

\begin{theorem} \label{thm:borch-wies} {\rm(Borchers--Wiesbrock Theorem)} 
Any standard pair 
defines an antiunitary positive energy representation of 
$\Aff(\R) \cong \R \rtimes \R^\times$ by 
\begin{equation}  \label{eq:affrep1}
  U(b,e^s) := U_b \Delta_\sV^{-is/2\pi} \quad  \mbox{ and }
  \quad 
U(0,-1) := J_\sV.
\end{equation}
Conversely, all these representations define positive standard pairs.
\end{theorem}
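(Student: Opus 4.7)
The plan is to identify the substantive content of the theorem with Borchers' 1992 commutation relations and to use an analytic continuation argument combining the KMS characterization of $\sV$ in Proposition~\ref{prop:standchar} with the positive-energy hypothesis $H \geq 0$.

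For the forward direction, write $T(b) := U_b$, $V(s) := \Delta_\sV^{-is/2\pi}$ and $J := J_\sV$. Using the group law $(b,a)(b',a') = (b+ab', aa')$ in $\Aff(\R) = \R \rtimes \R^\times$, one checks that the map in \eqref{eq:affrep1} is a homomorphism if and only if the three commutation relations
\begin{equation*}
V(s)\,T(b)\,V(-s) = T(e^s b), \qquad J\,T(b)\,J = T(-b), \qquad J\,V(s)\,J = V(s)
\end{equation*}
hold for all $s,b\in\R$. The last of these is immediate from the modular relation $J\Delta_\sV J = \Delta_\sV^{-1}$ and the antilinearity of $J$ (cf.\ Remark~\ref{rem:NO15-prop-3.1}(a)). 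The remaining two are the Borchers commutation relations, and their derivation from the hypotheses $U_t\sV\subseteq\sV$ for $t\geq 0$ and $H\geq 0$ is the substantive content. The idea is the following: for $\xi,\eta\in\sV$ and $t\geq 0$, the hypothesis gives $U_t\eta\in\sV$, so Proposition~\ref{prop:standchar} produces a holomorphic extension $z\mapsto \Delta_\sV^{-iz/2\pi}U_t\eta$ on $\cS_\pi$ with boundary value $JU_t\eta$ at $z=i\pi$; in parallel, positivity of $H$ extends $t\mapsto U_t\eta$ holomorphically to $\{\Im t>0\}$. Pairing against $\xi$ and matching the two resulting analytic families on overlapping regions, followed by a Phragm\'en--Lindel\"of/Edge-of-the-Wedge uniqueness argument on the dense cyclic domain $\sV+i\sV$, yields both Borchers relations.

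For the converse, let $U\colon \Aff(\R)\to\AU(\cH)$ be an antiunitary representation with positive translation generator. Since $h:=(0,1)\in\aff(\R)$ is an Euler element and $\tau:=(0,-1)\in\Aff(\R)$ is an Euler involution in the sense of Definition~\ref{def:euler}, the BGL construction of Exercise~\ref{exer:bgl} produces the standard subspace $\sV = \sV(h,U)$ with $\Delta_\sV = e^{2\pi i\cdot\partial U(h)}$ and $J_\sV = U(\tau)$. To verify $U_t\sV\subseteq \sV$ for $t\geq 0$, take $\xi\in\sV$ and use the group-law identity $\Delta_\sV^{-is/2\pi}U_t\Delta_\sV^{is/2\pi}=U_{e^s t}$ (valid for $s\in\R$) to obtain, after analytic continuation in $z$,
\begin{equation*}
\Delta_\sV^{-iz/2\pi}\,U_t\,\xi \;=\; U_{e^z t}\,\Delta_\sV^{-iz/2\pi}\,\xi.
\end{equation*}
For $t\geq 0$ and $z\in\cS_\pi$, the point $e^z t$ lies in the closed upper half-plane, so $U_{e^z t}$ is a bounded holomorphic function of $z$ by $H\geq 0$; composed with the holomorphic extension of $z\mapsto\Delta_\sV^{-iz/2\pi}\xi$ to $\cS_\pi$ supplied by Proposition~\ref{prop:standchar}, the right-hand side extends holomorphically to $\cS_\pi$. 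Evaluating at $z=i\pi$ and using $J_\sV U_t J_\sV = U_{-t}$ (also a consequence of the group law) gives $\Delta_\sV^{1/2}U_t\xi = U_{-t}J_\sV\xi = J_\sV U_t\xi$, which is the KMS characterization of membership in $\sV$; hence $U_t\xi\in\sV$.

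The main obstacle is the forward direction. Proposition~\ref{prop:standchar} and the positivity of $H$ furnish the two analytic continuations cleanly, but upgrading these to an equality of unbounded operators requires careful control of domains, in particular the invariance of $\sV+i\sV\subseteq \cD(\Delta_\sV^{1/2})$ under $U_t$ for $t\geq 0$, together with uniform norm estimates on the strip so that the uniqueness principles apply. These issues are precisely what Borchers overcame in his original argument; subsequent clarifications by Wiesbrock, Florig and Longo provide streamlined derivations entirely within modular theory for half-sided inclusions.
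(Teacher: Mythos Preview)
The paper does not actually prove this theorem: its ``proof'' consists entirely of a citation to \cite[Thm.~3.18]{NO17}, \cite{Bo92} and \cite{Wi93a}. Your sketch, by contrast, outlines the substantive argument, and the outline is correct and follows the standard route found in those references. The forward direction really does reduce to Borchers' commutation relations, and your description of the analytic continuation strategy (KMS extension from Proposition~\ref{prop:standchar} in one variable, positive-energy extension in the other, matched by a uniqueness argument) is the standard heuristic; your acknowledgment that the domain control is the hard part and is supplied by Borchers, Wiesbrock, Florig and Longo is exactly right.

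One small point on your converse argument: the displayed identity $\Delta_\sV^{-iz/2\pi}U_t\xi = U_{e^zt}\Delta_\sV^{-iz/2\pi}\xi$ for complex $z$ reads circularly as written, since the left side presupposes $U_t\xi\in\cD(\Delta_\sV^{1/2})$, which is what you are proving. The clean phrasing is to \emph{define} $F(z):=U_{e^zt}\Delta_\sV^{-iz/2\pi}\xi$ on $\oline{\cS_\pi}$, verify it is continuous and holomorphic on the interior (using $H\geq 0$ and Proposition~\ref{prop:standchar} for $\xi\in\sV$), observe $F(s)=\Delta_\sV^{-is/2\pi}U_t\xi$ for real $s$ by the group law, and then conclude from the spectral characterization of $\cD(\Delta_\sV^{1/2})$ that $U_t\xi\in\cD(\Delta_\sV^{1/2})$ with $\Delta_\sV^{1/2}U_t\xi=F(i\pi)=U_{-t}J_\sV\xi=J_\sV U_t\xi$. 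Your argument is this in substance; only the order of presentation needs adjusting.
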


\begin{proof} This is the Borchers--Wiesbrock Theorem 
  (\cite[Thm.~3.18]{NO17}, see also  \cite{Bo92}, \cite{Wi93a}).
\end{proof}

We refer to \cite{NOe22} for classification results
for pairs $(h,x)$ in Lie algebras, where $h$ is an Euler element
and $x$ is contained in a pointed generating invariant cone
satisfying $[h,x] = x$. 

\begin{proposition} \label{prop:trans-commute}
  Consider a Lie group $G_\sigma = G \rtimes \{ \bone, \sigma\}$,
  where $\sigma \in \Aut(G)$ is an involution. 
  Let $(U,\cH)$ be an antiunitary representation
  of $G_\sigma$. 
Suppose that $(\sV,U^j)$, $j = 1,2$, are positive standard pairs for which 
there exists a graded homomorphism $\gamma \: \R^\times\to G$ 
and $x_1, x_2 \in \g$ such that 
\[ J_\sV = U(\gamma(-1)), \quad \Delta_\sV^{-it/2\pi} = U(\gamma(e^t)), \]
and
\[ U^j(t) = U(\exp t x_j), \quad t \in \R, j =1,2.\] 
Then the unitary one-parameter groups $U^1$ and $U^2$ commute. 
\end{proposition}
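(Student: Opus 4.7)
The plan is to apply the Borchers--Wiesbrock Theorem~\ref{thm:borch-wies} to each positive standard pair $(\sV, U^j)$, $j = 1, 2$, and extract from it the commutation relations
\[ \Delta_\sV^{-is/2\pi}\, U^j(t)\, \Delta_\sV^{is/2\pi} = U^j(e^s t), \qquad J_\sV\, U^j(t)\, J_\sV = U^j(-t) \qquad (s,t \in \R), \]
together with the positivity of the generators $H_j := -i\,\partial U(x_j) \geq 0$ and the identity $J_\sV H_j J_\sV = H_j$. In other words, both $U^1$ and $U^2$ are positive-energy ``translations'' scaled with weight one by the same modular group of $\sV$, and flipped by the same conjugation $J_\sV$.

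To deduce commutation, I would fix vectors $\xi \in \sV$, $\eta \in \sV' = J_\sV \sV$ that are entire-analytic for $H_1, H_2$ and smooth for the modular group (such a dense common set is obtained by mollifying against a compactly supported test function on $\R^3$ via the three one-parameter groups $\Delta_\sV^{is/2\pi}$, $U^1(\cdot)$, $U^2(\cdot)$). By positivity, the orbit maps $z_j \mapsto U^j(z_j)\xi$ extend to bounded continuous maps $\oline{\C_+} \to \cH$, holomorphic on $\C_+$. Consequently, the function
\[ F(z_1, z_2) := \bigl\la \eta,\ [U^1(z_1),\, U^2(z_2)]\,\xi \bigr\ra, \qquad (z_1, z_2) \in \oline{\C_+}{}^{\,2}, \]
is bounded, jointly continuous on $\oline{\C_+}{}^{\,2}$, and separately (hence jointly) holomorphic on $\C_+^2$. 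The modular relations then yield two complementary boundary identities: dilation by $\Delta_\sV^{is/2\pi}$ implements the change of variables $(z_1, z_2) \mapsto (e^s z_1, e^s z_2)$ in $F$, whereas the antiunitary reflection $J_\sV$ sends the commutator $[U^1(z_1), U^2(z_2)]$ to $-[U^1(-\bar z_1), U^2(-\bar z_2)]$, reversing its sign under the appropriate boundary reflection. Combined with the boundedness and the exponential decay on the imaginary axes given by $H_j \geq 0$, a Phragm\'en--Lindel\"of / edge-of-the-wedge argument forces $F \equiv 0$; letting $\xi, \eta$ vary over total subsets of $\sV$ and $\sV'$ respectively concludes $[U^1(s), U^2(t)] = 0$ for all $s, t \in \R$.

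The main obstacle is the careful bookkeeping needed to combine the unboundedness of $H_j$ and $\Delta_\sV^{1/2}$ with the antilinearity of $J_\sV$, so that the analytic continuation into $\C_+^2$ and the boundary identities coming from $J_\sV$-reflection are rigorously justified. A cleaner, more algebraic variant avoids the two-variable holomorphy by using the Trotter product formula
\[ V_{\alpha,\beta}(t) := \lim_{n \to \infty} \bigl(U^1(\alpha t/n)\,U^2(\beta t/n)\bigr)^n, \qquad \alpha, \beta \geq 0, \]
to produce a family of one-parameter groups with generators $\alpha H_1 + \beta H_2 \geq 0$ preserving $\sV$ in positive time (by closedness of $\sV$ and positive invariance of each factor). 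Applying Borchers--Wiesbrock to every pair $(\sV, V_{\alpha, \beta})$ and comparing the resulting modular scaling with that of $(\sV, U^j)$ recovers the commutativity of the spectral projections of $H_1$ and $H_2$, i.e.\ that $U^1$ and $U^2$ commute.
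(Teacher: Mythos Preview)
Your proof has a fundamental gap: neither of your two approaches uses the hypothesis that the one-parameter groups come from a \emph{finite-dimensional} Lie group representation, i.e., that $U^j(t) = U(\exp t x_j)$ with $x_j \in \g$. Both of your arguments use only the data of two positive standard pairs $(\sV, U^1)$ and $(\sV, U^2)$ sharing the same modular operator and conjugation. But the paper provides, immediately after this proposition, an explicit counterexample showing that this weaker hypothesis is \emph{not} sufficient: on $L^2(\R)$ with $\Delta = e^{-2\pi Q}$ and $J f(x) = \oline{f(-x)}$, the two one-parameter groups $U^1_t = e^{ite^P}$ and $U^2_t = e^{iQ^3} U^1_t e^{-iQ^3}$ both form positive standard pairs with the same~$\sV$, yet they do not commute. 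So your proposed analytic machinery---edge-of-the-wedge in two variables, or Trotter products followed by Borchers--Wiesbrock---cannot close the argument, because if it did, it would prove a false statement.

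The paper's proof is purely Lie-algebraic and uses the finite-dimensionality in an essential way. From the Borchers--Wiesbrock relations one reads off $[h,x_j] = x_j$ and $x_j \in C_U$ (the positive cone of $U$), so both $x_j$ lie in $C_U \cap \g_1(h)$. After passing to $G/\ker U$ so that $C_U$ is pointed, one observes that $\g_+ := \sum_{\lambda > 0} \g_\lambda(h)$ is a nilpotent subalgebra, and the span $\fn := (C_U \cap \g_+) - (C_U \cap \g_+)$ is a nilpotent Lie algebra generated by a pointed invariant cone. A structural fact about such algebras (\cite[Ex.~VII.3.21]{Ne99}) forces $\fn$ to be abelian, whence $[x_1,x_2] = 0$. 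The finite dimension of $\g$ is what makes the nilpotency and the cone argument available; this is exactly the ingredient your proposal omits.
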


\begin{proof} The positive cone $C_U\subeq \g$ of the representation $U$ 
is a closed convex $\Ad(G)$-invariant cone. As we may w.l.o.g.\ assume that 
$U$ is injective, the cone $C_U$ is pointed. 

Writing $\Delta_\sV^{-it/2\pi} = U(\exp t h)$ and 
$U^j_t = U(\exp t x_j)$ with $h,x_1, x_2 \in \g$, we have 
$[h,x_j] = x_j$ for $j =1,2$ and $x_1, x_2 \in C_U$
by \eqref{eq:affrep1}. 
If 
\[ \g_\lambda(h)= \ker(\ad h - \lambda \bone) \]  
is the $\lambda$-eigenspace of $\ad h$ in $\g$, then 
$[\g_\lambda(h),\g_\mu(h)] \subeq \g_{\lambda + \mu}(h)$, so that 
$\g_+ := \sum_{\lambda >0} \g_\lambda(h)$ is a nilpotent Lie subalgebra. 
Therefore $\fn := (C_U \cap \g_+) - (C_U \cap \g_+)$ is a nilpotent 
Lie algebra generated by the pointed invariant cone $C_U \cap \g_+$.
By \cite[Ex.~VII.3.21]{Ne99}, $\fn$ is abelian. Finally 
$x_j \in C_U \cap \g_1(h) \subeq \fn$ implies that $[x_1,x_2] =0$.
\end{proof}

One may expect that one-parameter 
groups $U^1$ and $U^2$, for which $(\sV,U^j)$ form a standard pair, 
commute. By Proposition~\ref{prop:trans-commute} this is true 
if they both come from an antiunitary representation 
of a finite-dimensional Lie group. 
The following example shows that this is not true in 
general, not even if the two one-parameter groups are conjugate under the 
stabilizer group~$\U(\cH)_\sV$. 

\begin{examplekh} On $L^2(\R)$ we consider the selfadjoint operators 
\[ (Qf)(x) = xf(x) \quad \mbox{ and }\quad (Pf)(x) = i f'(x),\]
satisfying the canonical commutation relations $[P,Q] = i \bone$. 
For both operators, the Schwartz space $\cS(\R)\subeq L^2(\R)$ is a core. 
Actually it is the space of smooth vectors for the representation 
of the $3$-dimensional Heisenberg group generated by the corresponding 
unitary one-parameter groups 
\[ (e^{itQ}f)(x) = e^{itx}f(x) \quad \mbox{ and }\quad (e^{itP}f)(x) = f(x-t).\]
Since $e^{ix^3}$ is a smooth function for which all derivatives grow at most 
polynomially, it defines a continuous multiplication operator on $\cS(\R)$ 
(\cite[Thm.~25.5]{Tr67}). Therefore the unitary operator $T := e^{iQ^3}$ 
maps $\cS(\R)$ continuously onto itself, and thus 
\[ \tilde P := TPT^{-1} = e^{iQ^3} P e^{-iQ^3} \] 
is a selfadjoint operator for which $\cS(\R)$ is a core. 
For $f \in \cS(\R)$, we obtain 
\[ (\tilde P f)(x) 
= i e^{ix^3} \frac{d}{dx} e^{-ix^3}f(x) 
= i (-i 3x^2 f(x) + f'(x)),\] 
so that $\tilde P = P + 3 Q^2$ on~$\cS(\R)$.

The two selfadjoint operators $Q$ and $e^P$ are the infinitesimal generators 
of the irreducible antiunitary representation of $\Aff(\R) = \R \rtimes \R^\times$, 
given by 
\[ U(b,e^t) = e^{ib e^P} e^{it Q} \quad \mbox{ and }\quad 
(U(0,-1) f)(x) = \oline{f(-x)}.\] 
Accordingly, the pair $(\Delta, J)$ with 
\[ \Delta = e^{-2\pi Q} \quad \mbox { and }\quad J = U(0,-1) \] 
specifies a standard subspace $\sV$ which combines with 
$U^1_t := e^{it e^P}$ to an irreducible standard pair~$(\sV,U^1)$. 
The unitary operator $T$ commutes with $\Delta$ and with $J$ because 
$JQJ = -Q$, so that $T(\sV) = \sV$. Therefore the unitary one-parameter group 
$U^2_t :=  e^{iQ^3} U^1_t e^{-iQ^3}=  e^{it e^{\tilde P}}$ 
also defines a standard pair $(\sV, U^2)$. These two one-parameter groups 
do not commute because otherwise the selfadjoint operators 
$P$ and $P + 3 Q^2$ would commute in the strong sense, hence in particular 
on their core $\cS(\R)$; contradiction. 
\end{examplekh}

\subsubsection{The Reeh--Schlieder property} 
\label{subsubsec:reeh-schlieder}

The following lemma from \cite{NO21} is a
key tool in the derivation of the Reeh--Schlieder property
for representations for which the positive cone $C_U$ has interior points. 

\begin{lemma}  \label{lem:sw64-gen2} 
Let $(U, \cH)$ be a unitary representation of the connected Lie 
group $G$ for which the positive cone $C_U$ 
has interior points. If $\xi, \eta \in \cH$ are such that the matrix coefficient 
\[ U^{\xi,\eta} \:  G\to \C, \quad g \mapsto \la \xi, U(g) \eta \ra \] 
vanishes on an open subset of $G$, then $U^{\xi,\eta} =0$ on~$G$.
\end{lemma}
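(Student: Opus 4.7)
The plan is to exploit the positive cone condition via one-parameter semigroups of contractions and Schwarz reflection, then iterate along a generating set of one-parameter subgroups.

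\textbf{First step.} For any $x\in C_U$, the generator $A:=-i\,\partial U(x)$ is a non-negative selfadjoint operator, so the unitary one-parameter group $U(\exp tx)=e^{itA}$ extends via $z\mapsto e^{izA}$ to a strongly continuous semigroup of contractions on the closed upper half-plane $\oline{\C_+}$, holomorphic on $\C_+$. For each $g\in G$, the function
\[
f_g\colon\oline{\C_+}\to\C,\qquad f_g(z):=\la \xi, U(g)\,e^{izA}\eta\ra
\]
is bounded (by $\|\xi\|\|\eta\|$), continuous on $\oline{\C_+}$, holomorphic on $\C_+$, with $f_g(t)=U^{\xi,\eta}(g\exp tx)$ for $t\in\R$.

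\textbf{Second step.} Let $\cO\subeq G$ be a non-empty open set on which $U^{\xi,\eta}$ vanishes. Choose $x\in C_U^\circ$ (using the hypothesis that $C_U$ has interior points) and $g_0\in\cO$. There exists $\eps>0$ such that $g_0\exp(tx)\in\cO$ for $|t|<\eps$, hence $f_{g_0}$ vanishes on the open interval $I=(-\eps,\eps)$. Extending $f_{g_0}$ by $0$ to a neighborhood of $I$ in $\C_-$, we obtain a continuous function on an open connected set $\C_+\cup I\cup\{\text{nbhd of }I\text{ in }\C_-\}$ which is holomorphic off $\R$ and hence holomorphic on the whole domain by Morera's theorem (Painlev\'e/Schwarz reflection). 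Since this extension vanishes on an open subset (below $I$), it vanishes identically. In particular $f_{g_0}\equiv 0$ on $\oline{\C_+}$, so $U^{\xi,\eta}(g_0\exp tx)=0$ for \emph{all} $t\in\R$.

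\textbf{Third and fourth steps.} Let $V:=\{g\in G\:U^{\xi,\eta}(g)=0\}$, a closed subset containing $\cO$. The argument of Step~2, applied to a full identity neighborhood $N$ with $g_0 N\subeq\cO$, shows that $g_0 N\exp(\R x)\subeq V$ for every $x\in C_U^\circ$; this set is open, so $\Int(V)$ is stable under right multiplication by $\exp(\R x)$ for every $x\in C_U^\circ$. Because $C_U^\circ\subeq\g$ is a non-empty open convex cone, it spans $\g$, and hence $\exp(C_U^\circ)$ generates the connected group $G$. Therefore $\Int(V)$ is right-invariant under a generating set of $G$, which forces $\Int(V)=G$ (since $\Int(V)\supeq\cO$ is non-empty), and thus $U^{\xi,\eta}\equiv 0$.

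\textbf{Main obstacle.} The delicate point is Step~2: one must promote vanishing of $f_{g_0}$ on a real interval to vanishing everywhere on $\C_+$. The slick route via the F.~and M.~Riesz theorem for $H^\infty(\C_+)$ works but invokes machinery beyond what is needed; the elementary Painlev\'e/Schwarz-reflection argument sketched above suffices, provided one verifies carefully that the extension-by-zero across $I$ is continuous (which uses continuity of $f_{g_0}$ up to the real axis, itself a consequence of the strong continuity of $z\mapsto e^{izA}\eta$ on $\oline{\C_+}$). Once Step~2 is secure, the iteration in Steps~3--4 is a standard connectedness argument.
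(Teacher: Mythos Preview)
Your proof is correct but proceeds by a genuinely different route than the paper's. The paper passes to the complex Olshanski semigroup $S_{C_U} = G\exp(iC_U)$: the matrix coefficient extends continuously to $S_{C_U}$ and holomorphically to its interior, and then a multivariable identity-theorem for holomorphic functions on tube domains (\cite[Lemma~A.III.6]{Ne99}), applied to $U^{\xi,\eta}\circ\exp$ on $\g + iC_U$, forces vanishing. Your argument replaces this global holomorphic extension by an iteration of one-variable Schwarz reflection along the one-parameter directions $x\in C_U^\circ$, combined with a connectedness argument. This is more elementary---it uses only Morera's theorem and avoids both the Olshanski semigroup machinery and the cited tube-domain lemma---at the cost of a slightly more hands-on propagation step. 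The paper's approach, by contrast, packages the analytic continuation into a single object and yields the conclusion in one stroke once that object is available. One small point worth tightening in your Step~4: the claim that $\exp(C_U^\circ)$ generates $G$ is true, but the cleanest justification is that the subgroup generated by $\{\exp(\R x):x\in C_U^\circ\}$ is path-connected, hence a Lie subgroup, whose Lie algebra contains the open set $C_U^\circ$ and therefore equals~$\g$; alternatively, note that $\exp$ is regular at sufficiently small $x_0\in C_U^\circ$, so $\exp(C_U^\circ)$ contains an open subset of $G$.
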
 

\begin{proof} Passing to the quotient Lie group $G/\ker(U)$, we may 
w.l.o.g.\ assume that $U$ is injective. Then the 
closed convex cone $C_U\subeq \g$ is pointed, and by assumption 
it is also generating, so that it defines a closed complex Olshanski semigroup 
$S_{C_U} = G \exp(iC_U)$ (Definition~\ref{def:olsh-sgrp}), 
and the matrix coefficient 
$U^{\xi,\eta}$ extends to a continuous function on $S_{C_U}$ which is holomorphic on 
the interior~$S_{C_U}^0 = G \exp(i C_U^0)$, which is a complex manifold 
(\cite[Thm.~XI.2.5, Prop.~XI.3.7]{Ne99}).

Suppose that $U^{\xi,\eta}$ vanishes on the non-empty open subset~$\cO$. 
Replacing $\eta$ by $U(g_0)\eta$ for some $g_0 \in \cO$, we may assume that 
$e \in \cO$. The exponential function 
$\exp \: \g + i C_U \to S_{C_U}$ 
is continuous, and holomorphic on $\g + i C_U^0$. 
Therefore the function $U^{\xi,\eta} \circ \exp \: \g \to \C$ 
extends to a continuous function on the closed wedge 
$\g + i C_U$, holomorphic on the interior and vanishing on 
$\exp^{-1}(\cO) \not=\eset$. 
Now \cite[Lemma~A.III.6]{Ne99} implies that 
$U^{\xi,\eta} \circ \exp = 0$. 
Therefore the regularity of the exponential 
function $\g + i C_U^0 \to S_{C_U}^0$ near $0$ implies that 
$U^{\xi,\eta} = 0$ on $S_{C_U}$, and hence its restriction to $G$ vanishes. 
\end{proof}

\begin{theorem} \label{thm:pe-rs} {\rm(Positive energy implies Reeh--Schlieder)} 
  Let $(U,\cH)$ be an antiunitary representation of $G_{\tau_h}$ for which
  $C_U^\circ \not=\eset$. We consider a net $\sH$
  of real subspaces on open subsets of the homogeneous space $M = G/H$.
  Then $\sH$ has the Reeh--Schlieder property, i.e.,
  $\sH(\cO)$ is cyclic whenever $\cO \not=\eset$, if one of the following
  conditions is satisfied:
  \begin{enumerate} 
  \item[\rm(a)] $\sH$ satisfies {\rm(Iso), (Cov), (Add)} and
    $\sH(M)$ is cyclic.
  \item[\rm(b)] There exists a real subspace $\sE \subeq \cH^{-\infty}$
    with $\sH = \sH^M_\sE$ and an open subset $W \subeq M$ with $\sH^M_\sE(W) = \sV
    = \sV(h,U)$. 
  \end{enumerate}
\end{theorem}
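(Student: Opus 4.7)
The overall strategy for both parts is a Reeh--Schlieder propagation argument driven by Lemma~\ref{lem:sw64-gen2}: because $C_U^\circ\neq\eset$, every matrix coefficient $g\mapsto\la\xi,U(g)\eta\ra$ that vanishes on a non-empty open subset of $G$ extends holomorphically to the complex Olshanski semigroup $G\exp(iC_U^\circ)$ and, by the uniqueness of analytic continuation, vanishes identically on~$G$. Starting from a vector $\xi$ in the complex orthogonal complement of some $\sH(\cO)$, the plan in each case is to produce a vanishing matrix coefficient on a neighborhood of~$e$, propagate it to all of~$G$ via Lemma~\ref{lem:sw64-gen2}, and combine the result with either additivity (for (a)) or the internal structure of $\sH^M_\sE$ (for (b)) to conclude $\xi\perp\sH(M)$, so that cyclicity of $\sH(M)$ (respectively of $\sV$) forces $\xi=0$.

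For part (a), I choose a non-empty relatively compact open subset $\cO'\subeq\cO$ with $\oline{\cO'}\subeq\cO$ and set $N:=\{g\in G\: g\oline{\cO'}\subeq\cO\}$, which is an open identity neighborhood. For $g\in N$ and $v\in\sH(\cO')$, covariance and isotony give $U(g)v\in\sH(g\cO')\subeq\sH(\cO)$, so $g\mapsto\la\xi,U(g)v\ra$ vanishes on $N$ and hence, by Lemma~\ref{lem:sw64-gen2}, on all of $G$. Thus $\xi\perp\sH(g\cO')$ for every $g\in G$. Since $G$ acts transitively on $M=G/H$ and $\cO'\neq\eset$, the translates $(g\cO')_{g\in G}$ form an open cover of $M$, and additivity yields $\sH(M)=\oline{\sum_{g\in G}\sH(g\cO')}$; the vanishing propagates, giving $\xi\perp\sH(M)$, and cyclicity of $\sH(M)$ forces $\xi=0$.

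For part (b), the hypothesis $\xi\perp\sH^M_\sE(\cO)$ reads $\la\xi,U^{-\infty}(\phi)\alpha\ra=0$ for every $\phi\in C^\infty_c(q_M^{-1}(\cO),\R)$ and every $\alpha\in\sE$. Fix such an $\alpha$ and a seed $\phi_0\in C^\infty_c(q_M^{-1}(\cO),\R)$, and set $v:=U^{-\infty}(\phi_0)\alpha\in\cH$ (which lies in~$\cH$ because $U^{-\infty}(\phi_0)$ is the adjoint of the continuous operator $U(\phi_0^*)\colon\cH\to\cH^\infty$). The left-translation identity $U^{-\infty}(\phi_0\circ\lambda_{g^{-1}})=U(g)U^{-\infty}(\phi_0)$, together with the fact that $\phi_0\circ\lambda_{g^{-1}}$ is supported in $g\cdot\supp(\phi_0)$, shows that $U(g)v\in\sH^M_\sE(\cO)$ whenever $g\cdot\supp(\phi_0)\subeq q_M^{-1}(\cO)$, which is an open condition holding on an identity neighborhood of~$e$. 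Hence the matrix coefficient $g\mapsto\la\xi,U(g)v\ra$ vanishes there, and Lemma~\ref{lem:sw64-gen2} extends the vanishing to all of~$G$.

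To finish, I apply the Fragmentation Lemma~\ref{lem:fragment}(b) with the non-empty open set $q_M^{-1}(\cO)\subeq G$: every $\phi\in C^\infty_c(G,\R)$ decomposes as a finite sum $\phi=\sum_j\phi_j\circ\lambda_{g_j}$ with $\phi_j\in C^\infty_c(q_M^{-1}(\cO),\R)$. Using the translation identity term by term rewrites $\la\xi,U^{-\infty}(\phi)\alpha\ra$ as a finite sum of matrix coefficients of the form $\la\xi,U(g_j)^{-1}U^{-\infty}(\phi_j)\alpha\ra$ (up to modular factors), each of which vanishes by the previous step. Therefore $\la\xi,U^{-\infty}(\phi)\alpha\ra=0$ for arbitrary $\phi\in C^\infty_c(G,\R)$ and $\alpha\in\sE$, which means $\xi\perp\sH^M_\sE(M)\supeq\sH^M_\sE(W)=\sV$; cyclicity of the standard subspace $\sV$ yields $\xi=0$. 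The main care point I expect is the precise bookkeeping of left/right translation and modular-function conventions in the identity relating $U^{-\infty}(\phi\circ\lambda_{g^{-1}})$ to $U(g)U^{-\infty}(\phi)$, which must match the convention used in Lemma~\ref{lem:fragment}; once this is pinned down the argument is formal.
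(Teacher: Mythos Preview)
Your argument for part~(a) is essentially the same as the paper's: pick a relatively compact $\cO'\subeq\cO$, use (Iso)/(Cov) to make the matrix coefficient vanish on an $e$-neighborhood, propagate via Lemma~\ref{lem:sw64-gen2}, and conclude with (Add) and cyclicity of $\sH(M)$.

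For part~(b) your proof is correct, but the paper takes a shorter route. Rather than repeating the matrix-coefficient argument directly, the paper simply observes that $\sH^M_\sE$ already satisfies all the hypotheses of~(a): (Iso) and (Cov) hold by Remark~\ref{rem:iso-cov-net}, additivity holds by Proposition~\ref{prop:hmo-add}, and $\sH^M_\sE(M)\supeq\sH^M_\sE(W)=\sV$ is cyclic because $\sV$ is standard. So~(b) follows immediately from~(a). Your direct argument via the Fragmentation Lemma is in effect an inline re-proof of additivity for $\sH^M_\sE$ fused with the propagation step from~(a); this makes~(b) self-contained but duplicates the content of Proposition~\ref{prop:hmo-add}. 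A minor point: the left-translation identity $U^{-\infty}(\lambda_g\phi_0)=U(g)U^{-\infty}(\phi_0)$ (paper's convention \eqref{eq:covl1}) involves no modular factor, so your ``up to modular factors'' caveat is unnecessary here; modular factors only enter for the right action $\rho_g$.
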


\begin{proof} (a) Let $\cO \subeq M$ be non-empty.
  We choose $\eset \not= \cO_1 \subeq \cO$ relatively compact and an
  $e$-neighborhood $V \subeq G$ with $V.\cO_1 \subeq \cO$.
  For $\xi \in \sH(\cO)^\bot$, $\eta \in \sH(\cO_1)$ and
  $g \in V$, we then have
  \[ U^{\xi,\eta}(g) = \la \xi, U(g) \eta \ra \in \la \xi, \sH(\cO) \ra = \{0\}.\]
  As the cone $C_U$ has interior points,
  \cite[Lemma~2.13]{NO21} now implies that $U^{\xi,\eta} = 0$.
  We conclude that $\xi\bot  U(G)\sH(\cO_1)$. Since the net $\sH$ is additive,
  the right hand side generates $\sH(M)$, so that our assumption that
  $\sH(M)$ is cyclic entails that $\xi = 0$.

  \nin (b) The nets $\sH^M_\sE$ satisfies (Iso) and (Cov) by
  Remark~\ref{rem:iso-cov-net} and it is  additive by
  Proposition~\ref{prop:hmo-add} below.
  Further $\sH^M_\sE(M) \supeq \sH^M_\sE(W) = \sV$ is cyclic. Now (b) follows from (a). 
\end{proof}

\begin{remark} If $\sH(M)$ is not cyclic, then the preceding theorem applies
  to the representation on the closed complex
  subspace spanned by $\sH(M)$, which is $U(G)$-invariant.  
\end{remark}

To shed some more light on the assumption that $\sH(M)$ is cyclic,
the following lemma is useful:

\begin{lemma} Let $(U,\cH)$ be an antiunitary
  representation of $G_{\tau_h}$.
  For a real subspace $\sE \subeq \cH^{-\infty}_{\rm KMS}$ and
  $M = G/H$, the following  are equivalent:
  \begin{description}
  \item[\rm(a)] $\sH_\sE^M(M)$ is cyclic in $\sH$. 
  \item[\rm(b)] $U^{-\infty}(C^\infty_c(G,\R))\sE$ is total in $\cH$. 
  \item[\rm(c)] $U^{-\infty}(G)\sE$ spans a weak-$*$ dense subspace of $\cH^{-\infty}$.
  \item[\rm(d)] $U^{-\infty}(G_{\tau_h})\sE$ spans a weak-$*$ dense subspace of $\cH^{-\infty}$.
  \end{description}
\end{lemma}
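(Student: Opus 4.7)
The plan is to argue the chain (a)$\,\Leftrightarrow\,$(b)$\,\Leftrightarrow\,$(c)$\,\Leftrightarrow\,$(d), where only the last link genuinely uses $\sE \subeq \cH^{-\infty}_{\rm KMS}$. Abbreviate
\[
V_c := \spann_\C U^{-\infty}(C^\infty_c(G,\R))\sE \subeq \cH,
\quad V_G := \spann_\C U^{-\infty}(G)\sE,
\quad V_\sigma := \spann_\C U^{-\infty}(G_{\tau_h})\sE,
\]
the latter two sitting in $\cH^{-\infty}$. Since $q_M^{-1}(M) = G$, the definition of the net gives $\sH_\sE^M(M) = \oline{\spann_\R U^{-\infty}(C^\infty_c(G,\R))\sE}^{\cH}$, whose complex span coincides with $\oline{V_c}^{\cH}$; cyclicity of a closed real subspace is by definition density of its complex span, so (a)$\,\Leftrightarrow\,$(b) is immediate.

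For (b)$\,\Leftrightarrow\,$(c) I would fix an approximate identity $(\delta_n) \subeq C^\infty_c(G,\R)$ at $e$ and use that $\cH$ is weak-$*$ dense in $\cH^{-\infty}$ (standard smoothing argument). Approximating the point-masses $\delta_g$ by left translates $L_g \delta_n$ shows $V_G \subeq \oline{V_c}^{w*}$, while Riemann-sum approximations to $U^{-\infty}(\phi)\alpha = \int \phi(g) U^{-\infty}(g)\alpha\,dg$ give $V_c \subeq \oline{V_G}^{w*}$, so the two subspaces have the same weak-$*$ closure. Under (b), $V_c$ is norm-dense in $\cH$, hence weak-$*$ dense in $\cH^{-\infty}$, yielding (c). For the converse, take $\eta \in \cH$ with $\eta \perp V_c$: for $\beta = U^{-\infty}(g)\alpha \in V_G$ one has $\la U(\delta_n)\eta, \beta\ra = \la \eta, U^{-\infty}(\delta_n^*)\beta\ra$, and the latter vanishes because $U^{-\infty}(\delta_n^*)\beta \in V_c$ and $\eta \perp V_c$. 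Thus $U(\delta_n)\eta \in \cH^\infty$ annihilates $V_G$; by (c) this forces $U(\delta_n)\eta = 0$, and letting $n \to \infty$ gives $\eta = 0$, i.e.\ (b).

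The implication (c)$\,\Rightarrow\,$(d) is trivial because $V_G \subeq V_\sigma$. The reverse direction, which is the main obstacle, I would prove by showing $J\sE \subeq \oline{V_G}^{w*}$; since $\oline{V_G}^{w*}$ is $U^{-\infty}(G)$-invariant and $V_\sigma = V_G + \spann_\C U^{-\infty}(G)J\sE$ (using $J U(g) = U(\tau_h(g))J$), this will force $\oline{V_\sigma}^{w*} = \oline{V_G}^{w*}$. Fix $\alpha \in \sE \subeq \cH^{-\infty}_{\rm KMS}$: by the defining KMS condition the orbit map $F(z):= U_h^{-\infty,\alpha}(z)$ extends weak-$*$ continuously to $\oline{\cS_\pi}$, is weak-$*$ holomorphic on $\cS_\pi$, and satisfies $F(\pi i) = J\alpha$. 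For any $\xi$ in the annihilator of $V_G$ inside $\cH^\infty$, the scalar function $\phi_\xi(z) := \la \xi, F(z)\ra$ is then continuous on $\oline{\cS_\pi}$, holomorphic on $\cS_\pi$, and vanishes on $\R$ since $F(t) = U^{-\infty}(\exp(th))\alpha \in V_G$. Schwarz reflection across the real axis extends $\phi_\xi$ holomorphically to the strip $\{|\Im z| < \pi\}$, where it vanishes on the open interval $\R$; by the identity theorem $\phi_\xi \equiv 0$, and in particular $\phi_\xi(\pi i) = \la \xi, J\alpha\ra = 0$. By weak-$*$ duality the weak-$*$ closure of $V_G$ equals the double annihilator, so $J\alpha \in \oline{V_G}^{w*}$ as desired.

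The main obstacle is exactly this last step: converting the boundary identity $F(\pi i) = J\alpha$ into weak-$*$ approximability of $J\alpha$ by real-axis orbit vectors via Schwarz reflection. This is where the hypothesis $\sE \subeq \cH^{-\infty}_{\rm KMS}$ is essential; for a general real subspace $\sE$, condition (d) would be genuinely stronger than (c), and the equivalence would break down.
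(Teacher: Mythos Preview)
Your proposal is correct and follows essentially the same route as the paper's proof: the equivalence (a)$\Leftrightarrow$(b) is definitional, (b)$\Leftrightarrow$(c) goes via an approximate identity and the coincidence of the weak-$*$ closures of $V_c$ and $V_G$, and (d)$\Rightarrow$(c) is reduced to showing $J\sE \subeq \oline{V_G}^{w*}$ using the KMS analytic extension of the $U_h$-orbit map. The paper's argument for the last step is terser --- it simply says that vanishing of $\la \xi, F(t)\ra$ on $\R$ together with holomorphic extension to $\cS_\pi$ forces $\la \xi, J\alpha\ra = \la \xi, F(\pi i)\ra = 0$ --- whereas you spell out the Schwarz reflection/identity theorem justification explicitly; but this is the same idea.
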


\begin{proof} (a) $\Leftrightarrow$ (b) follows from the fact that
  $\sH^M_\sE(M)$ is the closed real subspace of $\cH$ generated by
  $U^{-\infty}(C^\infty_c(G,\R))\sE$. 

  \nin (b) $\Rarrow$ (c) follows from the fact that the inclusion
  $\cH \into \cH^{-\infty}$ is continuous with dense range and
  that the weak-$*$ closed subspace generated by
  $U^{-\infty}(C^\infty_c(G,\R))\sE$ coincides with the one generated by
    $U^{-\infty}(G)\sE$. 

    \nin (c) $\Rarrow$ (d) is trivial. 

    \nin (d) $\Rarrow$ (c): We have to show that $J \sE = U^{-\infty}(\tau_h)\sE$
    is contained in the weak-$*$ closed subspace generated by
    $U^{-\infty}(G)\sE$. So let $\xi \in \cH^\infty$ be orthogonal to $U^{-\infty}(G)\sE$. 
    Then $\xi \bot U^{-\infty}(\exp \R h)\sE$, and since the $U_h$-orbit map
 $U^\alpha_h$ of $\alpha \in \sE$ extends holomorphically to $\cS_\pi$ with
    $U^\alpha_h(\pi i) = J \alpha$, it follows that
    $\xi \bot J\sE$. Now the assertion follows from the duality
    between $\cH^\infty$ and $\cH^{-\infty}$.

    \nin (c) $\Rarrow$ (b): Let $\xi \in \cH$ be orthogonal
    to $U(\phi)\sE$ for each $\phi \in C^\infty_c(G,\R)$.
    Then we also have $\xi \bot U(\phi)U^{-\infty}(G)\sE$,
    and now $U(\phi^*)\xi \bot U^{-\infty}(G)\sE$ implies with (c) that 
    $U(\phi^*)\xi =0$. Using for $\phi$ an approximate identity
    in $C^\infty_c(G,\R)$, we conclude that $\xi = 0$. This proves (b).     
\end{proof}

\section{Perspectives}
\label{sec:6}

In this final section we briefly discuss several issues that are 
under current
investigation and for which the existing results are much less complete.

\subsection{Additivity}
\label{subsec:additivity}

In this subsection, we take a closer look at the
additivity condition (Add) for nets of real subspaces.
We show in particular that the nets $\sH^M_\sE$ are always
additive.
For causal flag manifolds, this implies already
that nets satisfying (Iso), (Cov), (BW) and (Add)
are uniquely determined by the representation~$(U,\cH)$ of
$G_{\tau_h}$ (cf.\ Theorem~\ref{thm:unique-add-conf}). 

\index{net!additive \scheiding} 
\index{net!countably additive \scheiding} 

\begin{definition}
We call a net $\sH$ on open subsets of $M$ {\it additive}
if $\cO = \bigcup_{j \in J} \cO_j$ implies 
$\sH(\cO) = \oline{\sum_{j \in J}\sH(\cO_j)}.$
We call it {\it countably additive}, it this relation holds
for countable index sets.
\end{definition}

The following proposition shows that a large class of nets
of real subspacs is additive. 
  
\begin{proposition} \label{prop:hmo-add}
  For a real subspace $\sE \subeq \cH^{-\infty}$,
  the net $\sH^M_\sE$ is additive.
\end{proposition}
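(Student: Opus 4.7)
The plan is to reduce the statement on $M = G/H$ to the analogous additivity on the group $G$ itself, and then derive the latter from the Fragmentation Lemma.

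First I would verify additivity of the net $\sH^G_\sE$ on open subsets of $G$. Fix an open cover $\cO = \bigcup_{j \in J} \cO_j$ in $G$. For the inclusion ``$\supeq$'' I use isotony: each $\sH^G_\sE(\cO_j) \subeq \sH^G_\sE(\cO)$, and since $\sH^G_\sE(\cO)$ is closed by definition,
\[ \oline{\sum_{j \in J} \sH^G_\sE(\cO_j)} \subeq \sH^G_\sE(\cO). \]
For the reverse inclusion ``$\subeq$'', the key tool is Lemma~\ref{lem:fraglem}: any $\phi \in C^\infty_c(\cO,\R)$ admits a decomposition $\phi = \phi_1 + \cdots + \phi_k$ with $\phi_\ell \in C^\infty_c(\cO_{j_\ell},\R)$. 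Then for every $\alpha \in \sE$,
\[ U^{-\infty}(\phi)\alpha \;=\; \sum_{\ell=1}^k U^{-\infty}(\phi_\ell)\alpha
\;\in\; \sum_{\ell=1}^k \sH^G_\sE(\cO_{j_\ell}) \;\subeq\; \sum_{j \in J} \sH^G_\sE(\cO_j). \]
Taking the real span and closing in $\cH$ yields
\[ \sH^G_\sE(\cO) \;\subeq\; \oline{\sum_{j \in J} \sH^G_\sE(\cO_j)}, \]
which proves additivity on $G$.

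The passage to $M = G/H$ is then immediate from the definition $\sH^M_\sE(\cO) = \sH^G_\sE(q_M^{-1}(\cO))$ (see Definition~\ref{def:push-forward-net}). Indeed, if $\cO = \bigcup_{j \in J} \cO_j$ is an open cover in $M$, then $q_M^{-1}(\cO) = \bigcup_{j \in J} q_M^{-1}(\cO_j)$ is an open cover in $G$, and by the previous step
\[ \sH^M_\sE(\cO) \;=\; \sH^G_\sE(q_M^{-1}(\cO)) \;=\; \oline{\sum_{j \in J} \sH^G_\sE(q_M^{-1}(\cO_j))} \;=\; \oline{\sum_{j \in J} \sH^M_\sE(\cO_j)}. \]

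There is no real obstacle here; the only subtle point is bookkeeping: one must note that the Fragmentation Lemma produces a \emph{finite} decomposition, so the element $U^{-\infty}(\phi)\alpha$ lies in an actual (not closed) finite sum of the subspaces $\sH^G_\sE(\cO_{j_\ell})$ before closing. This is precisely what allows the closure to be taken only once, on the right-hand side, and makes the argument clean for arbitrary (not necessarily countable) index sets~$J$.
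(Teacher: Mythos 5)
Your proof is correct and uses the same key tool as the paper's argument, namely Lemma~\ref{lem:fraglem} applied to the preimages $q_M^{-1}(\cO_j)$ covering $q_M^{-1}(\cO)$, together with isotony for the reverse inclusion. The only cosmetic difference is that you first state additivity for the net on $G$ and then push forward, whereas the paper works directly with the net $\sH^M_\sE$ by pulling back to $G$ in a single step; these are the same argument.
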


\begin{proof} Let $\cO \subeq M$ be open 
  and $(\cO_j)_{j \in J}$ an open covering of~$\cO$.
  We write
  \[ q_M \: G \to M= G/H, \qquad g \mapsto gH \]
  for the quotient map
  and consider some $\phi \in C^\infty_c(q_M^{-1}(\cO))$.

  The open subsets $q_M^{-1}(\cO_j)$ form an open cover of $q_M^{-1}(\cO)$.
  Therefore Lemma~\ref{lem:fraglem} implies the existence of
  $j_1, \ldots,j_k$ and test functions $\phi_\ell$, supported in
  $q_M^{-1}(\cO_{j_\ell})$, such that $\phi = \phi_1 + \cdots + \phi_k$.
Now 
  \[  U^{-\infty}(\phi)\sE 
  \subeq \sum_{\ell = 1}^k U^{-\infty}(\phi_\ell)\sE 
  \subeq \sum_{\ell = 1}^k \sH^M_\sE(\cO_{j_\ell}) 
  \subeq \sum_{j \in J} \sH^M_\sE(\cO_j)\]
implies that 
$\sH^M_\sE(\cO)   \subeq \sum_{j \in J} \sH^M_\sE(\cO_j),$
and additivity thus follows from isotony. 
\end{proof}

\nin {\bf Tools to verify additivity}

\begin{lemma} \label{lem:addiv-count}
  If $M$ has a countable basis for its topology,
  then every countably additive net on open subsets of $M$
  is additive.
\end{lemma}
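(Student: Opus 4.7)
The plan is to reduce an arbitrary open cover to a countable subcover via the Lindelöf property, and then apply the assumed countable additivity.

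First I would recall that a topological space with a countable basis is second countable, and every open subspace inherits a countable basis in its subspace topology, hence is Lindelöf: every open cover admits a countable subcover. So given any open subset $\cO \subeq M$ with an arbitrary open cover $\cO = \bigcup_{j \in J} \cO_j$, there exist indices $j_1, j_2, \ldots \in J$ with $\cO = \bigcup_{n \in \N} \cO_{j_n}$.

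Next I would establish isotony as a free consequence of countable additivity. Indeed, if $\cO_1 \subeq \cO_2$ are open, then $\cO_2 = \cO_1 \cup \cO_2$ is a (finite, hence countable) union, so countable additivity yields $\sH(\cO_2) = \oline{\sH(\cO_1) + \sH(\cO_2)} \supeq \sH(\cO_1)$. This is the one step whose explicit verification I would carry out carefully, since the rest of the argument depends on it.

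With isotony and Lindelöf in hand, the two inclusions follow immediately. On the one hand, using the countable subcover and countable additivity,
\[ \sH(\cO) = \oline{\sum_{n \in \N}\sH(\cO_{j_n})} \subeq \oline{\sum_{j \in J}\sH(\cO_j)}, \]
and on the other hand isotony gives $\sH(\cO_j) \subeq \sH(\cO)$ for every $j \in J$, so the closed real linear sum $\oline{\sum_{j \in J}\sH(\cO_j)}$ is contained in the closed subspace $\sH(\cO)$. Combining both inclusions completes the proof.

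I do not anticipate a serious obstacle here: the only subtle point is ensuring that countable additivity really does yield isotony (and thus that the ``$\supeq$'' inclusion in the additivity identity is automatic), so that countable additivity together with second countability packages into the full additivity statement without any extra regularity hypothesis on $\sH$.
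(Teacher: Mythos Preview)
Your proof is correct. The route differs from the paper's: you pass through the Lindel\"of property to extract a countable subcover and then use isotony (which you derive separately from countable additivity applied to a two-term union) to obtain the reverse inclusion. The paper instead fixes a countable basis $\fB$, writes each $\cO_j$ as the union of the basis elements it contains, and observes that the total collection $\fB_\cO = \bigcup_j \fB_j$ of basis elements appearing is still countable; it then applies countable additivity twice---once to $\cO = \bigcup_{\cB \in \fB_\cO} \cB$ and once to each $\cO_j = \bigcup_{\cB \in \fB_j} \cB$---and regroups the resulting sums. Your approach is arguably cleaner in that it invokes countable additivity only once (plus the trivial two-term instance for isotony), while the paper's approach avoids naming the Lindel\"of property and works uniformly at the level of basis sets. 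Both are short and entirely standard.
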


\begin{proof} Let $(\cO_j)_{j \in J}$ be a family of open subsets of~$M$.
  Further, let
  $\fB$ be a countable basis for the topology of $M$.
  Then each $\cO_j$ is the union of the countable
  set $\fB_j$ of basis elements contained in~$\cO_j$, and therefore
  \[ \cO = \bigcup \{ \cB \: \cB \in \fB_\cO\}, \quad
  \fB_\cO := \bigcup_{j \in J} \fB_j, \]
  where $\fB_\cO$ is countable. 
Countable additivity of the net thus implies that 
  \[  \sH(\cO)
    = \oline{\sum_{\cB \in \fB_\cO} \sH(\cB)}
= \oline{\sum_{j \in J} \sum_{\cB \in \fB_j} \sH(\cB)}
= \oline{\sum_{j \in J} \sH(\cO_j)}.\]
Therefore $\sH$ is additive.
\end{proof}

\begin{remark} Every additive net is isotone because
  $\cO_1 \subeq \cO_2$ implies 
  $\cO_2 = \cO_1 \cup\cO_2$, so that additivity entails 
  \[ \sH(\cO_2) = \oline{\sH(\cO_1) + \sH(\cO_2)}
    \supeq \sH(\cO_1).\] 
\end{remark}

\begin{lemma} \label{lem:add-di} If $\sH(\cO)_{\cO \subeq M}$ is a net on
  open subsets of
  the second countable space $M$, each subspace $\sH(\cO)$ is decomposable as
  \[ \sH(\cO) = \int_X^\oplus \sH_x(\cO)\, d\mu(x), \]
  and if $\mu$-almost all the nets 
  $(\sH_x)_{x \in X}$ are additive, then $\sH$ is additive.
\end{lemma}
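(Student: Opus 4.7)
The plan is to reduce first to countable additivity, then to verify it fiberwise via the hypothesis, and finally to pass the countable closed sum through the direct integral. Since $M$ is second countable, Lemma~\ref{lem:addiv-count} allows us to assume that the covering family is countable: given open sets $\cO, \cO_n \subeq M$ ($n \in \N$) with $\cO = \bigcup_n \cO_n$, it suffices to show
\[
\sH(\cO) \;=\; \oline{\sum_{n \in \N} \sH(\cO_n)}.
\]
One inclusion is automatic from isotony (which, as noted in the remark following Lemma~\ref{lem:addiv-count}, is implied by additivity of each fiber net $\sH_x$ on a co-null set). So the content is the reverse inclusion.

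For the remaining inclusion, I would argue as follows. By hypothesis, there is a $\mu$-conull set $X_0 \subeq X$ such that for every $x \in X_0$ the net $\sH_x$ is additive, and in particular
\[
\sH_x(\cO) \;=\; \oline{\sum_{n \in \N} \sH_x(\cO_n)}
\;=\; \oline{\bigcup_{N \in \N} \sum_{n \leq N} \sH_x(\cO_n)}.
\]
Setting $\sK_x^{(N)} := \oline{\sum_{n \leq N} \sH_x(\cO_n)}$, we obtain an increasing sequence of closed real subspaces with $\sH_x(\cO) = \oline{\bigcup_N \sK_x^{(N)}}$ on $X_0$. Taking direct integrals and invoking the standard compatibility of the direct integral with finite closed sums of measurable fields of closed subspaces (which is a fiberwise statement applied to $\sH_x(\cO_1),\dots,\sH_x(\cO_N)$), together with its compatibility with monotone limits of closed subspaces, yields
\[
\int_X^\oplus \sH_x(\cO)\, d\mu(x)
= \oline{\bigcup_N \int_X^\oplus \sK_x^{(N)}\, d\mu(x)}
= \oline{\sum_{n \in \N} \int_X^\oplus \sH_x(\cO_n)\, d\mu(x)},
\]
and the right-hand side equals $\oline{\sum_{n \in \N} \sH(\cO_n)}$ by the decomposability hypothesis on the net.

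The main obstacle is therefore the interchange of the direct integral with the countable closed sum in the third step. For finitely many terms this is routine, but for countably many one must ensure measurability of the field $x \mapsto \sK_x^{(N)}$ (preservation of measurable fields under closed sums) and a monotone convergence statement for direct integrals of an increasing sequence of measurable fields of closed subspaces. Both facts are of the same flavor as Lemma~\ref{lem:di1} and Lemma~\ref{lem:g-inter} cited elsewhere in the excerpt, and can be established either via separability arguments on $M$ (producing a countable generating family of vectors in each $\sK_x^{(N)}$ whose integrals span the integrated subspace) or by invoking the well-known measurable projection formalism for direct integrals of real subspaces. Once these two compatibility statements are in hand, the rest of the proof is the bookkeeping outlined above.
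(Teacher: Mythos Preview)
Your proposal is correct and follows the same overall strategy as the paper: reduce to countable additivity via Lemma~\ref{lem:addiv-count}, then pass the countable closed sum through the direct integral fiberwise. The only difference is that the paper invokes property (DI3) from Appendix~\ref{app:D} (cited from \cite[Lemma~B.3]{MT19}) to do this interchange in one line, whereas you rebuild (DI3) by hand via finite partial sums $\sK_x^{(N)}$ and a monotone-limit argument. Your worries about measurability and monotone convergence are exactly what (DI3) packages; once you cite it, the detour through isotony and partial sums becomes unnecessary and the proof collapses to the single chain
\[
\oline{\sum_{j} \sH(\cO_j)}
\ {\buildrel {\rm(DI3)}\over =}\ \int_X^\oplus \oline{\sum_{j} \sH_x(\cO_j)}\, d\mu(x)
= \int_X^\oplus \sH_x(\cO)\, d\mu(x) = \sH(\cO).
\]
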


\begin{proof} In view of Lemma~\ref{lem:addiv-count}, it suffices
  to show that $\sH$ is countably additive.
  So let $\cO = \bigcup_{j \in J} \cO_j$ with a countable
  index set $J$.
  Then (DI3) and the additivity of the nets
  $\sH_x$ imply that
  \[ \oline{\sum_{j \in J} \sH(\cO_j)}
\ {\buildrel {\rm(DI3)}\over =}\ \int_X \oline{\sum_{j \in J} \sH_x(\cO_j)} \, d\mu(x)
    = \int_X \sH_x(\cO)\, d\mu(x) = \sH(\cO), 
  \]
  which is countable additivity. 
\end{proof}

\subsection{Locality}
\label{subsec:locality}

  \index{locality set $\cL_\sH$ of a net $\sH$ \scheiding }

\begin{definition} Let $\sH$ be a net of real subspaces
  on $M = G/H$ that is isotone and covariant. In
  $M \times M$, we defined the {\it locality set of $\sH$} by  
  \begin{align*}
    \cL_\sH
    &= \bigcup_{\sH(\cO_1) \subeq \sH(\cO_2)'} \cO_1 \times \cO_2 \subeq M \times M.
  \end{align*}
  By definition, this is an open subset, and {\rm(Cov)} implies that
  it is $G$-invariant. Moreover, it is symmetric in the sense that
  $(x,y) \in \cL_\sH$ implies $(y,x) \in \cL_\sH$.

The subset $\cL_\sH$ is non-empty if and only if the net $\sH$
satisfies the ``minimal'' locality condition that there exist
two non-empty open subsets $\cO_1, \cO_2 \subeq M$ with
$\sH(\cO_1) \subeq \sH(\cO_2)'$. 
\end{definition}

The subset $\cL_\sH \subeq M  \times M$ completely encodes the locality
properties of the net $\sH$ in terms of
a $G$-invariant subset of the set of pairs in
$M$. To connect locality properties of a net $\sH$ with
the given structures  on $M$ therefore reduces to
comparing $\cL_\sH$ with the given geometric data. 

\begin{lemma} \label{lem:3.2} If $\sH$ is % isotone and
  additive and $\cO_1, \cO_2$ are open subsets of
  $M$ with $\cO_1 \times \cO_2 \subeq \cL_\sH$, then
  \[ \sH(\cO_1) \subeq \sH(\cO_2)'.\]
\end{lemma}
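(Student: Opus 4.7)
The plan is to combine additivity on both factors with the pointwise structure of $\cL_\sH$ to reduce the global orthogonality to a manageable local one. The orthogonal complement $\sH(\cO_2)'$ is a closed real subspace, and by additivity $\sH(\cO_2) = \oline{\sum_\beta \sH(V_\beta)}$ for any open cover $\{V_\beta\}$ of $\cO_2$, whence
\[ \sH(\cO_2)' = \bigcap_\beta \sH(V_\beta)'. \]
Thus it suffices to exhibit an open cover $\{V_\beta\}$ of $\cO_2$ with $\sH(\cO_1) \subeq \sH(V_\beta)'$ for every $\beta$. By a symmetric application of additivity on $\cO_1$, this in turn reduces to producing open covers $\{U_\alpha\}$ of $\cO_1$ and $\{V_\beta\}$ of $\cO_2$ with $\sH(U_\alpha) \perp_\gamma \sH(V_\beta)$ for all pairs $(\alpha,\beta)$.

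First I would extract pointwise witnesses from the definition of $\cL_\sH$: for each $(x,y) \in \cO_1 \times \cO_2$, by the defining union together with isotony, there exist open $A_{x,y} \subeq \cO_1$ containing $x$ and $B_{x,y} \subeq \cO_2$ containing $y$ with $\sH(A_{x,y}) \subeq \sH(B_{x,y})'$; moreover, any further shrinking of $A_{x,y}$ and of $B_{x,y}$ preserves this orthogonality (smaller $A$ makes $\sH(A)$ smaller via isotony, while smaller $B$ makes $\sH(B)$ smaller and hence $\sH(B)'$ larger). I would then localize around a fixed $y_0 \in \cO_2$: the family $\{A_{x, y_0}\}_{x \in \cO_1}$ is an open cover of $\cO_1$, and for every finite subset $F \subeq \cO_1$ the intersection $V_F := \bigcap_{x \in F} B_{x, y_0}$ is an open neighborhood of $y_0$ satisfying $\sum_{x \in F} \sH(A_{x, y_0}) \subeq \sH(V_F)'$.

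The hard part will be upgrading these finite-intersection statements to a single open neighborhood $V$ of $y_0$ that serves all of $\sH(\cO_1) = \oline{\sum_{x \in \cO_1} \sH(A_{x,y_0})}$, since the $V_F$ may shrink arbitrarily as $F$ grows and a direct limit passage need not produce an open set. I would handle this by exploiting the paracompactness of $M$: pass to a locally finite refinement of $\{A_{x,y_0}\}$ covering $\cO_1$, so that each point of $\cO_1$ meets only finitely many cover elements and only countably many indices appear globally, and then use the flexibility in the choice of witnesses (shrinking the $B_{x,y_0}$ simultaneously without breaking orthogonality) to force the countable family $\{B_{x,y_0}\}$ to admit a common open refinement containing $y_0$. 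This produces the required open $V \ni y_0$ with $\sH(\cO_1) \subeq \sH(V)'$, and letting $y_0$ vary over $\cO_2$ yields an open cover to which the first reduction applies, concluding $\sH(\cO_1) \subeq \sH(\cO_2)'$.
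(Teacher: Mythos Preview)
Your overall strategy---reduce via additivity on both factors to local witnesses coming from the definition of $\cL_\sH$---matches the paper's, but the execution has a genuine gap at the step you yourself flag as ``the hard part.'' Fixing $y_0\in\cO_2$ and covering $\cO_1$ by the $A_{x,y_0}$, you need a single open $V\ni y_0$ contained in every $B_{x,y_0}$. Passing to a locally finite refinement of the cover of $\cO_1$ does not help: local finiteness is a condition on the $\cO_1$ side, while the $B_{x,y_0}$ all sit around the fixed point $y_0$, and there will typically be infinitely many of them. Their intersection has no reason to be open (think $\cO_1=(0,\infty)$, $y_0=0$, $B_{n,0}=(-1/n,1/n)$). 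Your proposed remedy, ``shrinking the $B_{x,y_0}$ simultaneously,'' goes in the wrong direction---shrinking makes the intersection smaller, not larger---and enlarging them would destroy the orthogonality witness. So paracompactness alone does not close the gap.

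The paper's fix is to swap the roles of the two sides and introduce compactness where it actually bites. Rather than fixing a point of $\cO_2$, one restricts to a relatively compact open $\cC\subeq\cO_2$ (these exhaust $\cO_2$ by additivity, so it suffices to show $\sH(\cO_1)\subeq\sH(\cC)'$). Now fix $x\in\cO_1$: the witnesses $\cO_y^x$ cover the compact set $\oline\cC$, so finitely many $\cO_{y_1}^x,\dots,\cO_{y_n}^x$ suffice, and the finite intersection $\cO_x:=\bigcap_j \cO_x^{y_j}$ is an open neighborhood of $x$ with $\sH(\cO_x)\subeq\bigcap_j\sH(\cO_{y_j}^x)'=\sH(\cC)'$ (the last equality by additivity on the finite cover of $\cC$). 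Then the $\cO_x$ cover $\cO_1$ and one more application of additivity finishes. The point is that compactness on the $\cO_2$ side is what makes the intersection on the $\cO_1$ side finite, hence open; your version places the infinite intersection on the $\cO_2$ side, where no compactness is available.
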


This corresponds to the locality condition (Loc) in Section~\ref{sec:2}.

\begin{proof}   Since $\sH$ is additive, it is also isotone, and
  the real subspace $\sH(\cO_2)$ is generated
  by the subspaces $\sH(\cC)$, where $\cC \subeq \cO_2$ is a
  relatively compact  open  subset of $\cO_2$. So  it suffices to show that
  $\sH(\cO_1) \subeq \sH(\cC)'$ for such subsets. 

For any $(x,y) \in \cO_1 \times \oline{\cC} \subeq \cL_{\sH}$, there exist
  open subsets $\cO_x^y,  \cO_y^x \subeq M$ with
  \[ x \in \cO_x^y \subeq \cO_1, \ y \in \cO_y^x \subeq \cO_2\quad
    \mbox{ and } \quad
    \sH(\cO_x^y)\subeq    \sH(\cO_y^x)'.\]
  Then, for each $x \in \cO_1$, the sets $(\cO^x_y)_{y \in \oline{\cC}}$
  form an open covering of the compact subset $\oline{\cC} \subeq \cO_2$, 
  so that there exist finitely many points
  $y_1, \ldots, y_n \in \cO_2$ with
  \[ \cC \subeq \cO^x_{y_1} \cup \cdots \cup \cO_{y_n}^x.\]
  Then
  \[ \cO_x :=   \cO_x^{y_1} \cap \cdots \cap \cO_x^{y_n} \subeq \cO_1 \]
  is an open neighborhood of $x$ for which
  $\sH(\cO_x) \subeq \sH(\cO^x_{y_j})'$ for $j =1,\ldots, n$.
  Additivity of $\sH$ thus implies
  $ \sH(\cO_x) \subeq \sH(\cC)'$.
  Finally, we observe that the $\cO_x$ form an open cover of $\cO_1$,
  so that additivity further implies that
  $\sH(\cO_1) \subeq \sH(\cC)'$.   
\end{proof}

\begin{remark}\label{rem:3.3b} If $W$ and $W'$ are open subsets of $M$ with
  $\sH(W) = \sV$ and   $\sH(W') = \sV'$, then we have
  \begin{equation}
    \label{eq:defcLW}
    \cL_W := G.(W \times W') \cup G.(W' \times W) \subeq \cL_\sH.
  \end{equation}
  If $\sH$ is additive, it follows from Lemma~\ref{lem:3.2} that
  $\cO_1 \times \cO_2 \subeq \cL_W$ implies
  $\sH(\cO_1) \subeq \sH(\cO_2)'$.
\end{remark}

\begin{examples} (a) If $M = \R^{1,d-1}$ is Minkowski space
  and $W = W_R$ is the Rindler wedge, then
  $W' = - W_R$ and
  \[ \cL_W = G.(W \times W') \subeq M \times M \]
  is the set of spacelike pairs (cf.\ Remark~\ref{rem:poin}(d)). 
  For an open subset $\cO \subeq M$, the maximal open subset
  $\cO'$ satisfying
  \[ \cO \times \cO' \subeq \cL_W \]
  is called the {\it causal complement} of $\cO$
  (cf.\ Definition~\ref{def:minkcaus}(a)).
  \index{causal complement $\cO'$ of set $\cO$ \scheiding }

  The same picture prevails for de Sitter space
  $\dS^d \subeq \R^{1,d}$.

  \nin (b) For $M = \bS^1$, a causal flag manifold of $G = \SL_2(\R)$,
  the wedge regions are open non-dense intervals
  $W \subeq \bS^1$ (Example~\ref{ex:causal1c}).
  If $W = W_M^+(h)$, then $W' = W_M^+(-h)$ is the interior of the complement
  of $W$, and
  \[ \cL_W = G.(W \times W') = M^2 \setminus \Delta_M.\]
  So $(x,y) \in \cL_W$ if and only if $x \not=y$.

  \nin (c) In the non-compactly causal symmetric space
  $M = \SL_4(\R)/\SO_{2,2}(\R)$ (cf.\ Example~\ref{ex:Ipq}),
  not all acausal pairs are contained
  in $\cL_W$, and $M \times M$ contains several open acausal $G$-orbits
  (cf.\ \cite{NOO21, NO25}).   
\end{examples}

\begin{remark}
  In the context of abstract wedges, represented by elements of the
  set  
  \[ \cG(G_{\tau_h}) := \{ (x,\tau) \in \g \times G\tau_h\: \Ad(\tau)x = x,
    \tau^2 = e\}\]
  (cf.\ Exercise~\ref{exer:bgl}), there is a natural complementation map
  \[ (x,\tau) \mapsto (x,\tau)' := (-x, \tau).\]
  In this context, it is a natural question if
  $(h,\tau_h)' = (-h, \tau_h)$ is contained in the $G$-orbit
  of $(h,\tau_h)$. This is equivalent to the symmetry of $h$ and
  the additional condition that there exists a $g_0 \in G^{\tau_h}$ with
  $\Ad(g_0)h = -h$ (cf.\ \cite{MNO25}). 
  If this is the case and $(U,\cH)$ is an antiunitary
  representation of  $G_{\tau_h}$, then $\sV' = U(g_0) \sV$
  follows from $g_0.(h,\tau_h) = (-h, \tau_h)$. 
  For any net $\sH$ satisfying (Cov) and (BW), this implies that
  \begin{equation}
    \label{eq:Wxy}
    W \times g_0.W \subeq \cL_\sH
  \end{equation}
(cf.\ \eqref{eq:defcLW}).  
\end{remark}

  We refer to \cite{MN25} and \cite{NO25} for more detailed discussions 
  of locality properties of nets on causal flag manifolds
  and non-compactly causal symmetric spaces, respectively.
  Twisted locality conditions are discussed in \cite{MN21}
  and \cite{MNO26}.

\subsection{Representations of Lie supergroups}

Lie supergroups and their unitary representations arise
naturally in Physics in connection with supersymmetry
(cf.\ \cite{Gu75, Gu93, Gu00, Gu01}). It would be interesting to extend the
theory developed in these notes to this context, where
the Euler element $h \in \g$ is supposed to be an even element.

  \index{Lie supergroup \scheiding}

\begin{definition} A  {\it Lie supergroup} is a pair
  $(G,\g)$, where $\g = \g_{\oline 0} \oplus \g_{\oline 1}$
  is a finite-dimensional Lie superalgebra and 
$G$ is a real Lie group with Lie algebra $\g_{\oline 0}$, acting smoothly 
by automorphisms on~$\g$ via $\Ad \: G \to \Aut(\g)$, 
in such a way that the action on 
$\g_{\oline 0}$ is the adjoint action of the Lie group $G$ with
Lie algebra $\L(G) = \g_{\oline 0}$. 
\end{definition}

\begin{definition}
  A {\it unitary representation} of a Lie supergroup $(G,\g)$
  is a pair 
$(U,\beta)$, where $(U,\cH)$ is a unitary representation
of the Lie group $G$ on a graded Hilbert space
$\cH = \cH_{\oline 0} \oplus \cH_{\oline 1}$, preserving the grading,
and
\[ \beta \:  \g \to \End(\cH^\infty) \]
is a representation of the Lie superalgebra $\g$ on the space of smooth
vectors of $U$ satisfying
\[ -i \beta(x) \subeq \beta(x)^* \quad \mbox{ for }\quad x \in \g_{\oline 1} \]
(\cite{NS11}, \cite{CCTV06}).
\end{definition}

\begin{problem}
  One can associate to each real subspace
  $\sE \subeq \cH^{-\infty}$ (distribution vectors for
$U$) the closed real subspaces $\sH^G_{\sE}(\cO)$ generated by
\[ \beta(U(\g)) U^{-\infty}(C^\infty_c(\cO,\R)) \sE.\]
Does this construction lead to nets that are compatible with
fermionic nets in AQFT? Possibly one can develop a ``supersymmetric''
variant of the theory described in these notes. 
\end{problem}

Here are some relevant structures and observations. 

\begin{definition} {\rm(The $*$-monoid associated to a Lie supergroup)} 
The antilinear map \[
  \g_\C\to \g_\C\ ,\ x\mapsto x^*, \quad \mbox{defined by} \quad
x^*:=
\begin{cases}
-x&\text{ if }x\in \g_\eev,\\
-i \,x&\text{ if }x\in\g_\ood.
\end{cases}
\]
is an anti-automorphism. It 
 extends to an antilinear anti-automorphism 
\begin{equation}
\label{dfostarrr}
U(\g_\C)\to  U(\g_\C)\ ,\ 
D\mapsto D^*
\end{equation}
in a natural way. Consider the monoid $\Smi$ with underlying set $G\times U(\g_\C)$ and multiplication
\[
(D_1, g_1)(D_2, g_2)=(D_1 (g_1\cdot D_2), g_1g_2), 
\]
where $g\cdot D$ denotes the adjoint action of $g\in G$ on 
$D\in U(\g_\C)$. 
The neutral element of $\Smi$ is $1_\Smi:=(1_{U(\g_\C)}, e)$.
The map 
\[
  \Smi\to \Smi\,,\,s\mapsto s^* \quad \mbox{ defined by } \quad
  (D,g)^*:=(g^{-1} \cdot D^*, g^{-1})\]
is an involution of $\Smi$. 
Recall that $U(\g_\C)$ is an associative superalgebra. 
An element $(D,g)\in \Smi$ is called \emph{odd} 
(resp.  \emph{even}) if $D$ is an odd (resp. even) element of $U(\g_\C)$.

Replacing in this construction the elements $g \in G$
by compactly supported smooth functions on $G$, one can even
construct a graded $*$-algebra $(C^\infty_c(\cG),*)$ in such a
way that every unitary $\cG$-representation
integrates to a $*$-representation of $C^\infty_c(\cG)$ by
bounded operators. Thus one even obtains
``supergroup $C^*$-algebras''. We refer to \cite{NS16} for details. 
\end{definition}

\begin{remark}
From the table in \cite[\S 2.5]{NS11} we get some information
on which finite-dimensional simple Lie superalgebras $\g$ 
have non-trivial unitary representations.
According to \cite[\S 6]{NS11}, for any unitary representation
of $\cG = (G,\g)$, we must have the inclusion
\[  \Cone(\fg) = \cone\{ [x,x] \: x \in \g_{\oline 1} \}
  \subeq C_U = \{ x \in \g_{\oline 0} \: -i \cdot \partial U(x) \geq 0\}.\]
For a unitary representation with  discrete kernel, the cone
$C_U$ is pointed, so that the pointedness of the cone
generated by the brackets of odd elements is necessary
for the existence of non-trivial unitary representations.
In this sense \cite[Thm.~6.2.1]{NS11} compiles a negative list
of simple Lie superalgebras for which this is not the case.
\end{remark}

\subsection{The geometric structure on $M$} 

Let $(U,\cH)$ be an antiunitary representation
of $G_{\tau_h}$ and $\sV := \sV(h,U) \subeq \cH$ be the canonical
standard subspace from \eqref{eq:def-V(h,U)}.
Let $\sE \subeq \cH^{-\infty}$ be a
finite-dimensional linear subspace, invariant under the subgroup
$H \subeq G$ and $M := G/H$. Then we obtain a net $\sH^M_\sE$ on
$M$, satisfying (Iso) and (Cov).

Further 
\[ W_\sE := \{ gH \in M  \: U^{-\infty}(g)\sE
  \subeq \cH^{-\infty}_{\rm KMS}\}^\circ\]
specifies an open subset of $M$ that deserves to be called the
{\it wedge region associated  to $\sE$}, but it may be empty; depending on the
subspace~$\sE$ (cf.\ Proposition~\ref{prop:4.8}). 
\index{wedge region! in $M$ \scheiding} 

If $W_\sE \not=\eset$, then Proposition~\ref{prop:4.8} implies that  
\[ \sH^M_\sE(W_\sE) \subeq \sV \quad \mbox{ and } \quad
  \exp(\R h) W_\sE \subeq W_\sE.\]
For $\xi \in \sV^\infty := \sV \cap \cH^\infty$, we have
\[  \la \xi, i \partial U(h) \xi \ra
  = \frac{d}{dt}\Big|_{t = 0} \la \xi, e^{it \partial U(h)} \xi \ra 
  = \frac{d}{dt}\Big|_{t = 0} \la \xi, \Delta_\sV^{t/2\pi} \xi \ra 
  = \frac{d}{dt}\Big|_{t = 0} \|\Delta_\sV^{t/4\pi} \xi\|^2 
  \leq 0
\] 
because the convex function 
\[ f \: [0,2\pi] \to \R, \quad f(t) := \|\Delta_\sV^{t/4\pi} \xi \|^2 \]
takes its minimal value in $t = \pi$
and has a local maximum in $t = 0$. Here convexity follows
from the Spectral Theorem, which implies that $f$ is a Laplace
transform, and $\xi \in \sV = \Fix(J_\sV \Delta_\sV^{1/2})$ implies that
it is invariant under reflection in $\pi$.

For $\alpha \in \sE$ and $\phi \in C^\infty_c(W_\sE,\R)$, we have
$U(\phi)\alpha \in \sV^\infty$, so that we get 
\[ \la U(\phi)\alpha, i \partial U(h) U(\phi)\alpha \ra \leq 0.\] 
Letting $\phi$ tend to a point measure $\delta_g$, $g \in W_\sE$, we obtain
\[ \la \alpha, i \partial U(\Ad(g)^{-1}h)\alpha \ra \leq 0\]
in the sense of distributions on $G$. 
Maybe these inequalities can be related to the generalized
positive energy conditions appearing in \cite{JaNi24}.

\begin{remark}
In this context, it becomes apparent that the closed convex cone
$C(W_\sE) \subeq \g$, generated by
\[ \Ad(g) ^{-1} h, \quad g \in W_\sE, \]
plays an important role. It is clearly invariant under
$e^{\R \ad h}$, so that
\[   C(W_\sE) \subeq C(W_\sE)_+ + \g_0(h) - C(W_\sE)_-
  \quad \mbox{ for } \quad
  C(W_\sE)_\pm := \pm C(W_\sE) \cap \g_{\pm 1}(h) \subeq C(W_\sE)\]
(cf.\ Lemma~\ref{lem:Project}). 
\end{remark}

Here is an alternative approach.

\begin{proposition} Consider an antiunitary representation $(U,\cH)$ of $G_{\tau_h}$
  and the corresponding standard subspace $\sV := \sV(h,U)$.
    Assume that the net $\sH$ on open subsets of $M = G/H$
  satisfies {\rm(Iso)} and {\rm(Cov)}. 
  Suppose further that there exists an open subset
  $\eset \not=\cO \subeq M$
  for which $\sH(\cO)$ is cyclic and contained in~$\sV$.
  Then {\rm(BW)} holds for the open subset
  $W := \exp(\R h).\cO$.
\end{proposition}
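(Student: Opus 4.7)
The goal is to prove $\sH(W) = \sV$; since $\sV$ is standard with modular objects $(\Delta_\sV, J_\sV) = (e^{2\pi i \partial U(h)}, U(\tau_h))$, this equality is precisely the (BW) condition for $W$. The plan is to sandwich $\sH(W)$ inside $\sV$ and invoke the Equality Lemma~\ref{lem:lo08-3.10}: I will verify that $\sH(W)$ is cyclic, that $\sH(W) \subseteq \sV$, and that $\sH(W)$ is invariant under the modular group $\Delta_\sV^{iR}$, at which point the Equality Lemma (applied with $\sH_1 = \sH(W) \subseteq \sV$) forces $\sH(W) = \sV$.

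The easier two properties follow directly from (Iso) and (Cov). Since $\cO \subseteq W$, (Iso) gives $\sH(\cO) \subseteq \sH(W)$, so cyclicity of $\sH(\cO)$ (given) is inherited by $\sH(W)$. Moreover, $W = \exp(\R h).\cO$ is manifestly $\exp(\R h)$-invariant, so (Cov) yields
\[
U(\exp th)\,\sH(W) \;=\; \sH(\exp(th).W) \;=\; \sH(W) \qquad \text{for all } t \in \R,
\]
and since $U(\exp th) = \Delta_\sV^{-it/2\pi}$ by definition of $\sV = \sV(h,U)$, this is exactly modular invariance of $\sH(W)$ with respect to $\sV$.

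For the inclusion $\sH(W) \subseteq \sV$, the key observation is that for each $t \in \R$, (Cov) gives $\sH(\exp(th).\cO) = U(\exp th)\,\sH(\cO)$, and because $\sH(\cO) \subseteq \sV$ and $U(\exp th)$ preserves $\sV$ (being in its modular group), we obtain $\sH(\exp(th).\cO) \subseteq \sV$ for every $t$. Since $W = \bigcup_{t \in \R}\exp(th).\cO$, additivity of $\sH$ would then give $\sH(W) = \overline{\sum_{t\in\R} \sH(\exp(th).\cO)} \subseteq \sV$. Combined with Steps 1--2, the Equality Lemma~\ref{lem:lo08-3.10} concludes $\sH(W) = \sV$, establishing (BW).

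The main obstacle is precisely this last step: additivity is not among the hypotheses of the proposition, and without it the upper bound $\sH(W) \subseteq \sV$ does not follow formally from (Iso) and (Cov) alone. The natural remedies are either to assume additivity explicitly (noting that it holds automatically for nets of the form $\sH^M_\sE$ by Proposition~\ref{prop:hmo-add}, which is the relevant setting here), or to strengthen the hypothesis to require $\sH(\cO') \subseteq \sV$ for every open $\cO' \subseteq W$ of the form $\exp(th).\cO$, together with an analogous additivity statement on this specific cover. Either reading makes the proof go through as outlined above.
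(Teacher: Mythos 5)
Your proposal follows the same route as the paper's own proof: use isotony to get $\sH(\cO) \subseteq \sH(W)$, hence cyclicity of $\sH(W)$; use covariance and the $\exp(\R h)$-invariance of $W$ to show that $\sH(W)$ is invariant under $\Delta_\sV^{i\R} = U(\exp\R h)$; and apply the Equality Lemma~\ref{lem:lo08-3.10} with $\sH_1 = \sH(W)$. You have, however, correctly flagged the one step that (Iso) and (Cov) alone do not furnish: the inclusion $\sH(W) \subseteq \sV$, which the Equality Lemma needs. Isotony gives $\sH(\cO) \subseteq \sH(W)$, which points the wrong way, and covariance gives only modular invariance. What the hypotheses do yield is the opposite inclusion $\sV \subseteq \sH(W)$: the translates $\sH(\exp(th).\cO) = U(\exp th)\sH(\cO)$ all lie in $\sV$ and (by isotony) in $\sH(W)$, and the closure of their real span is cyclic and modular invariant, hence equals $\sV$ by the Equality Lemma.

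The missing upper bound genuinely fails without additivity. Start from an additive net $\sH_0$ on $M$ satisfying (Iso), (Cov), (RS) and (BW) for some $W_0$, and redefine $\sH(\cO') := \cH$ whenever $g.W_0 \subseteq \oline{\cO'}$ for some $g \in G$, keeping $\sH(\cO') := \sH_0(\cO')$ otherwise. This modified net is still isotone and covariant, and for a small $\cO \subseteq W_0$ one still has $\sH(\cO) = \sH_0(\cO) \subseteq \sV$ cyclic; but $W = \exp(\R h).\cO$ can equal $W_0$ (for instance $M = \R$ under $\Aff(\R)_e$, $W_0 = (0,\infty)$, $h=(0,1)$ acting by dilations, $\cO = (1,2)$), so that $\sH(W) = \cH$ is not separating and (BW) fails. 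The proposition as stated therefore needs an additional hypothesis, and your suggested repair --- assume additivity, which holds for all nets $\sH^M_\sE$ by Proposition~\ref{prop:hmo-add} --- is the right one; notice that the corollary immediately following the proposition does include (Add) among its hypotheses.
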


\begin{proof} Clearly, $W$ is an $\exp(\R h)$-invariant
  open subset of $M$ and (Cov) and (Iso) imply that 
  ${\sH(W) \subeq \sV}$ is an $U(\exp \R h)$-invariant subspace.
  As $\sH(W)$ contains $\sH(\cO)$,  it is cyclic, so that 
$\sH(W) = \sV$ follows from the Equality Lemma~\ref{lem:lo08-3.10}.
\end{proof}

\begin{corollary} Assume that the net $\sH$ on open subsets of $M = G/H$
  satisfies {\rm(Iso), (Cov), (RS)} and {\rm(Add)} and
  that there exists an open subset $\cO \subeq M$ such that
  $\sH(\cO) \subeq \sV = \sV(h,U)$. Then
  the union $W^\sH$ of all such open subsets is non-empty,
  open, $\exp(\R h)$-invariant, and satisfies
  \[ \sH(W^\sH) = \sV.\] 
\end{corollary}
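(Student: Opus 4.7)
\medskip

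The plan is to split the proof into three routine verifications and one invocation of the Equality Lemma~\ref{lem:lo08-3.10}. Put
\[ W^\sH := \bigcup \{ \cO \subeq M \text{ open} \: \sH(\cO) \subeq \sV\}.\]
By hypothesis the family is non-empty, so $W^\sH$ is a non-empty open subset of $M$, and by (RS), $\sH(W^\sH)$ is cyclic.

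Next I would check that $W^\sH$ is $\exp(\R h)$-invariant. Since $U(\exp th) = \Delta_\sV^{-it/2\pi}$ is the modular one-parameter group of $\sV$, it preserves $\sV$ (Remark~\ref{rem:NO15-prop-3.1}(b)). Hence if $\sH(\cO) \subeq \sV$, then by (Cov),
\[ \sH(\exp(th).\cO) = U(\exp th)\sH(\cO) \subeq U(\exp th)\sV = \sV, \]
so $\exp(th).\cO$ lies again in the defining family; taking unions gives $\exp(\R h).W^\sH = W^\sH$.

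The inclusion $\sH(W^\sH) \subeq \sV$ then follows from (Add): writing $\sH(W^\sH)$ as the closed real sum of the $\sH(\cO)$ over all $\cO$ in the defining family and using that each summand lies in the closed real subspace $\sV$.

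To upgrade this to equality, apply the Equality Lemma~\ref{lem:lo08-3.10} with $\sH_1 := \sH(W^\sH) \subeq \sV =: \sH_2$. Here $\sV$ is standard (in particular separating), $\sH_1$ is cyclic (by the first step and (RS)), and both $\sH_j$ are invariant under $\Delta_\sV^{it} = U(\exp(-2\pi t h))$: for $\sH_2 = \sV$ by the definition of the modular group, and for $\sH_1$ by (Cov) combined with the $\exp(\R h)$-invariance of $W^\sH$ just established. The Equality Lemma then yields $\sH(W^\sH) = \sV$, as required.

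There is no real obstacle here; the whole argument is a direct combination of (Iso)+(Cov)+(Add) (to assemble $W^\sH$ and push the containment through to the union), (RS) (to produce cyclicity), and the Equality Lemma (to pass from a cyclic, modular-invariant subspace of $\sV$ to $\sV$ itself). The only mildly delicate point is that (Add) is applied to a possibly uncountable union, but the formulation $\sH(\bigcup_j \cO_j) = \oline{\sum_{j \in J} \sH(\cO_j)}$ adopted in the text imposes no cardinality restriction on $J$.
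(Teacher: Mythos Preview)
Your proof is correct and follows essentially the same route as the paper. The paper does not spell out a separate proof for this corollary, but the immediately preceding proposition is proved by exactly the same mechanism (modular invariance of the set, cyclicity, then the Equality Lemma~\ref{lem:lo08-3.10}); your argument simply carries this out directly for the full union $W^\sH$, using (Add) in place of (Iso) to pass the containment $\sH(\cO)\subeq\sV$ to the union and (RS) to supply cyclicity.
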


\begin{problem}
  Compare $W^\sH$ with $W_M^+(h)$ for nets on
  causal homogeneous spaces $M = G/H$.  
\end{problem}

\subsection{Classification of nets of real subspaces} 

We expect that there are various contexts in which
nets on $M = G/H$ could be classified.
Specifically, the (BW) property determines the net
for a given antiunitary representation $(U,\cH)$ of $G_{\tau_h}$
on all wedge regions $(g.W)_{g \in G}$ in~$M$.

For causal flag manifolds, this fact already implies that
a net satisfying (Iso), (Cov), (RS), (BW) and (Add)
 is uniquely
determined by the antiunitary representation $(U,\cH)$ of $G_{\tau_h}$
(see Subsection~\ref{subsec:additivity} and \cite{MN25} for details).

\begin{problem} Consider $G := \Aff(\R)_e$ with the non-symmetric
  Euler element $h = (0,1)$ (Example~\ref{ex:causal1b}). Here
  the intervals $(x,\infty)$, $x \in \R$, are natural
  wedge regions in $M = \R$. Given an antiunitary representation $(U,\cH)$
  of $G_{\tau_h} = \Aff(\R)$, is it possible 
  to classify all nets on open subsets of $\R$ that
  satisfy the (BW) condition?  Here additivity and locality
  conditions certainly help to reduce the problem.

  For instance, if $\sH$ is additive, then it is easy to see that the
  whole net is determined by the real subspace
  $\sH((0,1))$, assigned to the open unit interval $(0,1)$.
  So one has to determine which real subspaces arise in such nets. 
\end{problem}

  \section{Appendix}

\subsection{The category of W*-algebras}
\label{app:wstar}
  
By the Gelfand--Naimark Theorem, $C^*$-algebras
are closed $*$-subalgebras of~$B(\cH)$, 
$\cH$ a complex Hilbert space. On the other hand, we have defined
von Neumann algebras directly as $*$-subalgebras $\cM \subeq B(\cH)$
satisfying $\cM'' = \cM$. So they are in particular closed with respect
to the weak-$*$ topology on $B(\cH)$, specified by
the subspace $B_1(\cH) \subeq B(\cH)^*$ of trace class operators
and the trace pairing $(A,B) \mapsto \tr(AB)$. 
As $B(\cH) \cong B_1(\cH)^*$, the duality theory of Banach spaces easily
implies that $\cM \cong Q^*$ for $Q := B_1(\cH)/\cM^\bot$. Hence every
von Neumann algebra has a {\it predual}.

This observation can be used to specify von Neumann algebras axiomatically,
independently of an embedding in some $B(\cH)$.

\begin{definition} A $C^*$-algebra $\cM$ is called a {\it $W^*$-algebras}
  if it has a predual, i.e., there exists a closed subspace
  $\cM_* \subeq \cM^*$ with $\cM \cong (\cM_*)^*$ as Banach spaces.
\end{definition}

This approach has been pursued by S.~Sakai, and his monograph
\cite{Sa71} is an excellent reference. \cite[Cor.~1.13.3]{Sa71} asserts
in particular that $W^*$-algebras have a unique predual $\cM_*$.
Its elements are called {\it normal linear functionals}.
They are the continuous
linear functionals for the $\sigma(\cM,\cM_*)$-topology on $\cM$, i.e.,
the coarsest topology for which all functionals in $\cM_*$ are continuous.
Any normal selfadjoint functional is a difference of two positive ones, and the positive
normal functionals $\phi$ can also be characterized by the property that,
for every uniformly bounded  increasing directed subset
$(x_j)_{j \in J}$ of $\cM$, we have
\begin{equation}
  \label{eq:sup-cont}
 \phi(\sup x_j) = \sup\phi(x_j)   
\end{equation}
(\cite[Thm.~1.13.2]{Sa71}).

We also note that $W^*$-algebras have an identity,
which can be derived from the Krein--Milman Theorem because it ensures
the existence of extreme points in the unit ball of $\cM$, 
which is compact in the $\sigma(\cM,\cM_*)$-weak topology
(\cite[\S 1.7]{Sa71}).

\begin{examples} \label{exs:vN-exs}
  (a) For every complex Hilbert space $\cH$, the full operator
  algebra $B(\cH)$ is a $W^*$-algebra with predual
  $B(\cH)_* = B_1(\cH)$ (trace class operators).

  \nin (b) For every $\sigma$-finite measures space $(X,\fS, \mu)$,
  the Banach algebra $L^\infty(X,\fS,\mu)$ is a commutative $W^*$-algebra with 
  $L^\infty(X,\fS,\mu)_* \cong L^1(X,\fS,\mu)$.

  The same holds for $\ell^\infty$-direct sums
  (whose preduals are $\ell^1$-direct sums), and {\bf all commutative
    $W^*$-algebras are such sums.}
  More intrinsically, they can be described as the
  space 
  $L^\infty_{\rm loc}(X,\fS,\mu)$ of bounded, locally measurable functions
  on a semi-finite measure space $(X, \fS,\mu)$. Here
  {\it semi-finite} means that, every $E \in \fS$ with $\mu(E) = \infty$ contains a measurable   subset of finite positive  measure.
  A function $f$ is called {\it locally measurable} 
  if its restriction to all measurable subsets of finite measure is
  measurable. 
\end{examples}

\begin{definition} A {\it morphism of $W^*$-algebras} is a complex linear $*$-algebra
  morphism $\pi \: \cM \to \cN$ with 
  $\pi^*\cN_* \subeq \cM_*$, i.e., pullbacks of normal functionals are normal.
  We call these algebra morphisms {\it normal}. 
  For every complex Hilbert space $\cH$, a {\it normal representation}
  $(\pi, \cH)$ of $\cM$ is a  normal morphism $\pi \: \cM \to  B(\cH)$.   
\end{definition}

\begin{remark} \label{rem:weights-gns}
  (a) For normal states, the GNS construction produces a normal
  representation.

\nin (b)  This is more generally true for so-called semi-finite weights.
  A {\it weight}
  \[ \omega \: \cM_+ \to [0,\infty] \]
  is an additive, positively homogeneous
  function. It is called {\it normal}
  if it is  compatible with bounded sup's in the sense of \eqref{eq:sup-cont}.
  A~weight $w$ on~$\cM$ is called {\it semi-finite} if the set
	\[	\{M\in\cM_+\,\mid\,w(M)<\infty\}	\]
	generates a $*$-algebra which is $\sigma(\cM,\cM_*)$-dense in $\cM$.

The GNS construction and the Tomita--Takesaki Theorem
extend to normal weights,  and faithful normal semi-finite
weights always exist (\cite[III.2.2.26]{Bla06}). Normal
semi-finite weights are sums (in the sense of summability
of general families) of normal positive forms
(cf.~\cite{Haa75}).  As a consequence,
any von Neumann algebra $\cM$ has a standard form representation
 (cf.~\cite{Bla06}, \cite[\S 3.1]{BGN20}). 
\end{remark}

\begin{remark} \label{rem:dirsummeas} 
(a) Any $\sigma$-finite measure is semi-finite. If $X$ is a set, then 
the counting measure 
\[ \mu \: \bP(X) = 2^X\to  \N_0 \cup \{ \infty \}, \quad 
\mu(E)  := |E|  \] 
is semi-finite. It is $\sigma$-finite if and only if $X$ is countable. 

\nin (b) If $(X_j, \fS_j, \mu_j)_{j \in J}$ are semi-finite measure spaces, and 
we put 
\[ X := \coprod_{j \in J} X_j, \quad 
  \fS := \{ E \subeq X \: (\forall j \in J)\, E \cap X_j \in \fS_j\}\]
and
\[  \mu(E) := \sum_{j \in J} \mu_j(E \cap X_j),\] 
then $\fS$ is a $\sigma$-algebra on $X$, $\mu$ is a measure, and 
$(X,\fS,\mu)$ is a semi-finite measure space. 
Exercise~\ref{exer:semifin} shows that, conversely, 
up to sets of measure zero, any semi-finite measure 
space is such a direct sum of finite measure spaces. 
\end{remark}

\begin{small}
\nin {\bf Exercises for Appendix~\ref{app:wstar}}

\begin{exercise} (Direct sums of von Neumann algebras) 
Let $\cM_j \subeq B(\cH_j)$ be a family of von Neumann algebras, 
$\cH := \hat\bigoplus_{j \in J} \cH_j$ the Hilbert space direct sum 
of the $\cH_j$ and 
\[ \cM := \oline{\bigoplus}_{j \in J} \cM_j 
:= \Big\{ (M_j)_{j \in J} \in \prod_{j \in J} \cM_j \: 
\sup_{j \in J} \|M_j\| < \infty \Big\} \] 
the $\ell^\infty$-direct sum of the von Neumann algebras $\cM_j$ with the 
norm $\|M\| :=\sup_{j \in J} \|M_j\|.$ 
Show that $\cM$ can be realized in a natural way as a von Neumann 
algebra on~$\cH$.  
\end{exercise}

\begin{exercise}\label{exer:2.2.9} (Separability and $\sigma$-finiteness) 
Let $(X,\fS,\mu)$ be a measure space. 
Show that: 
\begin{description} 
\item[\rm(a)] If $f \in L^p(X,\mu)$, $1 \leq p < \infty$, then 
 the measurable subset $\{f \not=0\}$ of $X$ is $\sigma$-finite. 
\item[\rm(b)] If $\cH \subeq L^2(X,\mu)$ is a separable Hilbert subspace, 
then there exists a $\sigma$-finite measurable subset $X_0 \subeq X$ 
with the property that each $f \in \cH$ vanishes 
$\mu$-almost everywhere on $X_0^c = X \setminus X_0$. 
\end{description}
\end{exercise}

\begin{exercise} \label{exer:semifin} 
Let $(X,\fS,\mu)$ be a measure space. 
Show that there exist measurable subsets \break 
$X_j \subeq X$, $j \in J$, of finite measure such that 
\[ L^2(X,\mu) \cong \hat\bigoplus_{j \in J} L^2(X_j,\mu\res_{X_j}).\] 
Hint: Use Zorn's Lemma to find a maximal family $(X_j)_{j \in J}$ of measurable 
subsets of $X$ of finite positive measure,  
for which $\mu(X_j \cap X_k) = 0$ for $j \not=k$. 
Conclude that the corresponding subspaces $L^2(X_j,\mu\res_{X_j})$ of $L^2(X,\mu)$ 
are mutually orthogonal and that the intersection of their orthogonal 
complements is trivial. For the latter argument, use Exercise~\ref{exer:2.2.9}(a). 
\end{exercise}
\end{small}

\subsection{From unitary to antiunitary representations}
\label{app:extend} % app:b

Antiunitary representations are somewhat harder to deal with
when it comes to direct integrals. In addition, their restriction to
$G$ may have more invariant subspaces. To deal with these issues in the
context of standard subspaces, the following lemma is a useful tool. 

\begin{lemma} {\rm(The antiunitary extension)} 
  \label{lem:3.4}
  Let $(U,\cH)$ be a unitary representation of $G$
  and write~$\oline\cH$ for the Hilbert space $\cH$, endowed with the
  opposite complex structure. Then the following assertions hold:
  \begin{enumerate}
  \item[\rm(a)] On $\tilde\cH := \cH \oplus \oline\cH$ we obtain by
    $\tilde U(g) := U(g) \oplus U(\tau_h(g))$ a unitary representation
    which extends by $\tilde U(\tau_h)(v,w) := \tilde J(v,w) := (w,v)$ to an
    antiunitary representation of $G_{\tau_h}$.
    The corresponding standard subspace $\tilde \sV := \sV(h, \tilde U)$
    coincides with the graph
    \begin{equation}
      \label{eq:tildesv}
      \tilde\sV = \Gamma(\Delta^{1/2}), 
    \end{equation}
    and its modular operator is $\tilde\Delta := \Delta \oplus \Delta^{-1}$.
  \item[\rm(b)] If $U$ extends to an antiunitary representation
    of $G_{\tau_h}$ by $J = U(\tau_h)$ on $\cH$, then the following assertions hold:
    \begin{enumerate}
    \item[\rm(1)] $\Phi \:  \cH^{\oplus 2} \to \tilde \cH, \Phi(v,w) = (v,Jw)$ 
    is a unitary intertwiner of $\tilde U$ and the antiunitary representation
    $U^\sharp$ of $G_{\tau_h}$ on $\cH^{\oplus 2}$, given by
    \[ U^\sharp\res_G = U^{\oplus 2} \quad \mbox{ and }  \quad
      U^\sharp(\tau_h)(v,w) := J^\sharp(v,w) :=  (Jw,Jv).\]
  \item[\rm(2)] The standard subspace $\sV^\sharp := \sV(h,U^\sharp)$
    coincides with the
    graph $\Gamma(T_\sV)$ of the Tomita operator
    $T_\sV = J\Delta^{1/2}$ of~$\sV$.
    \item[\rm(3)] 
    The antiunitary representation $\tilde U$ is equivalent to the antiunitary
    representation $U^{\oplus 2}$ of $G_{\tau_h}$ on~$\cH^{\oplus 2}$.
    \item[\rm(4)]  If $A \subeq G$ is a subset, then 
    $\tilde\sV_A$ is cyclic in $\tilde\cH$ if and only if
    $\sV_A$ is cyclic in $\cH$.     
    \end{enumerate}
  \end{enumerate}
\end{lemma}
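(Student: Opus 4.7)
The representation property of $\tilde U$ on $G$ is routine, and the flip $\tilde J(v,w) := (w,v)$ is antiunitary on $\tilde\cH = \cH \oplus \oline\cH$ (whose complex structure is $I(v,w) = (iv,-iw)$) because the sign reversal on the second factor exchanges linearity and antilinearity. The covariance $\tilde J\tilde U(g)\tilde J = \tilde U(\tau_h(g))$ reduces, using $\tau_h^2 = \bone$, to the identity
\[
\tilde J\tilde U(g)\tilde J(v,w) = \tilde J\bigl(U(g)w, U(\tau_h(g))v\bigr) = \bigl(U(\tau_h(g))v, U(g)w\bigr).
\]
Since $\tau_h^\g(h) = h$, one has $\tilde U(\exp th) = U(\exp th) \oplus U(\exp th)$ as an operator, but multiplication by $i$ in the complex structure of $\tilde\cH$ reverses sign on the second factor, yielding $\tilde\Delta = e^{2\pi i\cdot\partial\tilde U(h)} = \Delta \oplus \Delta^{-1}$. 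Applying Proposition~\ref{prop:2.27}(e) with the strictly positive operator $A := \Delta^{1/2}$ produces precisely this modular pair, so $\tilde\sV = \Gamma(\Delta^{1/2})$.

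\medskip
\noindent\textbf{Parts (b)(1)--(2).} The map $\Phi(v,w) := (v, Jw)$ is $\C$-linear $\cH^{\oplus 2} \to \tilde\cH$: on the first factor this is obvious, and on the second the antilinearity of $J$ exactly cancels the reversed complex structure of $\oline\cH$. Since $J^2 = \bone$, $\Phi^{-1}$ has the same formula. Using the covariance identity $JU(g)J = U(\tau_h(g))$, one computes directly that
\[
\Phi^{-1}\tilde U(g)\Phi = U(g) \oplus U(g), \qquad \Phi^{-1}\tilde J\Phi = J^\sharp,
\]
which is (1). For (2), the complex structure on $\cH^{\oplus 2}$ is the standard one, so $\Delta^\sharp = \Delta \oplus \Delta$, whence
\[
J^\sharp (\Delta^\sharp)^{1/2}(v,w) = \bigl(J\Delta^{1/2}w, J\Delta^{1/2}v\bigr) = (T_\sV w, T_\sV v),
\]
and, using $T_\sV^2 = \bone$ on $\cD(T_\sV)$, its fixed-point set is $\{(v, T_\sV v) : v \in \cD(T_\sV)\} = \Gamma(T_\sV)$, as guaranteed by Proposition~\ref{prop:11}.

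\medskip
\noindent\textbf{Part (b)(3).} The representations $U^\sharp$ and $U^{\oplus 2}$ of $G_{\tau_h}$ agree on $G$ but differ on the involution: $U^\sharp(\tau_h)(v,w) = (Jw, Jv)$ while $U^{\oplus 2}(\tau_h)(v,w) = (Jv, Jw)$. I plan to intertwine them via the unitary
\[
\Psi(v,w) := \tfrac{1}{\sqrt 2}\bigl(v+w,\; i(v-w)\bigr),
\]
which is manifestly $\C$-linear, isometric, and commutes with $U(g) \oplus U(g)$. The key verification uses antilinearity of $J$:
\[
\Psi(Jw, Jv) = \tfrac{1}{\sqrt 2}\bigl(J(v+w),\, -iJ(v-w)\bigr) = \tfrac{1}{\sqrt 2}\bigl(J(v+w),\, J(i(v-w))\bigr) = U^{\oplus 2}(\tau_h)\Psi(v,w),
\]
the factor $i$ in the second slot of $\Psi$ being essential for this cancellation. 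Composing with $\Phi$ from (1) yields a unitary $\Psi\Phi^{-1}\colon \tilde\cH \to \cH^{\oplus 2}$ intertwining $\tilde U$ with $U^{\oplus 2}$ on all of $G_{\tau_h}$.

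\medskip
\noindent\textbf{Part (b)(4).} I first identify the image of $\tilde\sV$ under the equivalence of (3). Writing any $v \in \cD(T_\sV) = \sV + i\sV$ as $v = a + ib$ with $a,b \in \sV$, so that $T_\sV v = a - ib$, one computes
\[
\Psi(v, T_\sV v) = \tfrac{1}{\sqrt 2}\bigl(2a,\; i\cdot 2ib\bigr) = \sqrt 2\,(a, -b) \in \sV \oplus \sV,
\]
and conversely every element of $\sV \oplus \sV$ arises this way (both $a$ and $b$ can be chosen freely in $\sV$, and $\sV$ is stable under $-1$). Hence $\Psi\Phi^{-1}(\tilde\sV) = \sV \oplus \sV$, and equivariance of the intertwiner under $G$ gives
\[
\Psi\Phi^{-1}(\tilde\sV_A) = \bigcap_{g \in A}(U(g)\oplus U(g))(\sV \oplus \sV) = \sV_A \oplus \sV_A.
\]
Cyclicity of $\sV_A \oplus \sV_A$ in $\cH^{\oplus 2}$ is equivalent to cyclicity of $\sV_A$ in $\cH$ via the identity $(\sV_A \oplus \sV_A) + i(\sV_A \oplus \sV_A) = (\sV_A + i\sV_A)^{\oplus 2}$ and projection onto a factor. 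The principal technical hazard throughout is bookkeeping the two distinct complex structures on the underlying real space $\cH \oplus \cH$ (the reversed second-factor structure on $\tilde\cH$ versus the standard one on $\cH^{\oplus 2}$) together with systematic use of $J$-antilinearity; once these conventions are fixed, every assertion reduces to short algebra plus the single application of Proposition~\ref{prop:2.27}(e) used in part (a).
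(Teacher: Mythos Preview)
Your proof is correct and follows essentially the same route as the paper's. The only notable variation is in (b)(3), where you use the intertwiner $\Psi(v,w)=\tfrac{1}{\sqrt 2}(v+w,\,i(v-w))$ while the paper writes down the matrix $A=\tfrac{1}{2}\bigl(\begin{smallmatrix}(1+i)\bone&(1-i)\bone\\(1-i)\bone&(1+i)\bone\end{smallmatrix}\bigr)$; both are valid explicit intertwiners, and your explicit tracking of $\Psi\Phi^{-1}(\tilde\sV)=\sV\oplus\sV$ in (b)(4) makes concrete what the paper leaves to the general principle that an equivalence of antiunitary representations carries $\sV(h,\tilde U)$ to $\sV(h,U^{\oplus 2})$.
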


\begin{proof} (\cite[Lemma~2.22]{MN24}) (a) The first assertion
  is a direct verification (cf.\ \cite[Lemma~2.10]{NO17}).
  Since
  \[ \tilde\Delta = e^{2\pi i \cdot \partial \tilde U(h)}
    = \Delta \oplus \Delta^{-1}, \]
  the description of the standard subspace $\tilde \sV = \Fix(\tilde J
  \tilde\Delta^{1/2})$ follows immediately.  

  \nin (b)  (1) Clearly, $\Phi$ is a complex linear isometry that intertwines 
  the antiunitary representation $\tilde U$ with
  the antiunitary representation $U^\sharp$.

  (2)  As $\Delta^\sharp = \Phi^{-1}\tilde\Delta \Phi = \Delta \oplus \Delta$,
  the relation
  \[ (v,w) = J^\sharp(\Delta^\sharp)^{1/2}(v,w)
    = (J \Delta^{1/2} w, J \Delta^{1/2} v) = (T_\sV w, T_\sV v) \]
  is equivalent to $w = T_\sV v$. Hence $\sV^\sharp = \Gamma(T_\sV)$.

(3) As the restrictions of $U^{\oplus 2}$ and $U^\sharp$ to $G$ coincide,
  \cite[Thm.~2.11]{NO17} implies their equivalence as antiunitary
  representations. However, in the present concrete case, it is easy to
  see an intertwining operator. The matrix
  \[  A := \frac{1}{2} \pmat{
      (1+i)\bone & (1-i)\bone \\ 
      (1-i)\bone & (1+i)\bone}
  \quad \mbox{ with } \quad A^2 = \pmat{ \0 & \bone \\ \bone & \0} \]
  defines a unitary operator on $\cH^{\oplus 2}$, commuting with $U^\sharp(G)$.
  It satisfies $J^{\oplus 2} A J^{\oplus 2} = A^* = A^{-1}$, so that
  \[ A J^{\oplus 2} A^{-1} = A^2 J^{\oplus 2} = J^\sharp.\] 

(4) If $U\res_G$ extends to an antiunitary representation~$U$ of
  $G_{\tau_h}$ on $\cH$, then (3)  implies that $\tilde U \cong U^{\oplus 2}$,
  and any equivalence $\Psi \: (\tilde U,\tilde \cH)  \to (U^{\oplus 2}, \cH^{\oplus 2})$ maps $\tilde \sV_A$ to
  $(\sV \oplus \sV)_A = \sV_A \oplus \sV_A$. 
  Therefore $\tilde \sV_A$ is cyclic if and only if
  $\sV_A$ is cyclic in~$\cH$.
\end{proof}

 The following definition extends the classical type of irreducible
  complex representations to the case where the involution on $G$ is
  non-trivial. For a unitary representation
    $(U,\cH)$, we write $(\oline U, \oline\cH)$ for 
    the unitary representation on the complex conjugate space
  $\oline\cH$ by $\oline U(g) = U(g)$. 
  We observe that, for an antiunitary representation
  $(U,\cH)$ of $G_{\tau_h}$, its {\it commutant}
  \index{commutant $S'$ of $S$ \scheiding}
  \begin{align*}
 U(G_{\tau_h})'
    &= \{ A \in B(\cH) \: (\forall g \in G_{\tau_h})
    \, A U(g) = U(g) A \} \\
    & = \{ A \in U(G)' \:  U(\tau_h) A = AU(\tau_h) \}
  \end{align*}
is only a real subalgebra of $B(\cH)$ because $U(\tau_h)$ is antilinear.

  \begin{definition} \label{def:types} (\cite[Def.~2.12]{NO17})
    Let $(U, \cH)$ be an irreducible unitary representation
    of~$G$. 
We say that $U$ is (with respect to $\tau_h$), of 
\index{representation!of real type \scheiding}
\index{representation!of complex type \scheiding}
\index{representation!of quaternionic type \scheiding}
\begin{itemize}
\item {\it real type} if there exists an antiunitary 
  involution $J$ on $\cH$ such that {$U^\sharp(\tau_h) := J$
  extends $U$ to an antiunitary representation $U^\sharp$
  of $G_{\tau_h}$ on~$\cH$,
  i.e., $J U(g) J = U(\tau_h(g))$ for $g \in G$. 
  Then the commutant of $U^\sharp(G_{\tau_h})$ is~$\R$.}
\item {\it quaternionic type} if there exists an antiunitary 
  complex structure $I$ on $\cH$ satisfying $I U(g) I^{-1}
  = U(\tau_h(g))$ for $g \in G$. Then $\oline U \circ \tau_h \cong U$, 
  $U$ has no extension  on the same space, and
  the antiunitary representation
  $(\tilde U, \tilde \cH)$ of $G_{\tau_h}$ with $\tilde U\res_G \cong
  U \oplus (\oline U\circ \tau_h)$ is irreducible with commutant~$\bH$.
\item {\it complex type} if 
  $\oline U \circ \tau_h \not\cong U$.
    This is equivalent to the non-existence of $V\in\AU(\cH)$
    such that $U(\tau_h(g)) = V U(g) V^{-1}$ for all $g \in G$, i.e.,
    to the non-existence of an antiunitary extension of $U$ to
    $G_{\tau_h}$ on $\cH$. 
  Then $(\tilde U,\tilde \cH)$ is an irreducible antiunitary 
  representation of $G_{\tau_h}$ with commutant~$\C$. 
\end{itemize}
  \end{definition}

      \begin{remark} \label{rem:tenspro-anti} (Antiunitary tensor products) 
    Let $G = G_1 \times G_2$ be a product of type~I groups  
  and $\tau$ an involutive automorphism of $G$ preserving both factors, i.e., 
$\tau = \tau_1 \times \tau_2.$ 
  We want to describe irreducible antiunitary representations
  $(U,\cH)$  of the group $G_\tau = G \rtimes \{\id_G, \tau\}$
  using \cite[Thm.~2.11(d)]{NO17}.
  
  \nin (a) The first possibility is that $U\res_G$ is irreducible, so
  that $U(G)' \cong \R$. Then
\[ (U\res_G,\cH) \cong (U_1,\cH_1) \otimes (U_2, \cH_2) \]
with irreducible unitary representations $(U_j, \cH_j)$ of $G_j$
both extending to antiunitary representations $U_j^\sharp$ of $G_j$.
{Hence both $U_1$ and $U_2$ are of real type. }

\nin (b) The second possibility is that $U\res_G$ is reducible
with $U(G)' \cong \C$ or $\bH$, so that
{\[ U\res_G \cong V \oplus (\oline V \circ \tau), \]}
where $(V,\cK)$ is an irreducible unitary representation of $G$
of complex or quaternionic type.
Now $V = U_1 \otimes U_2$, and thus
  \[ \cH \cong (\cH_1 \otimes \cH_2) \oplus
  (\oline\cH_1 \otimes \oline\cH_2), \quad 
  U\res_G \cong (U_1 \otimes U_2)  \oplus
  (\oline{U_1} \circ \tau_1 \otimes 
  \oline{U_2} \circ \tau_2).\]
If $U_j$ is of complex type, then {$\oline{U_j} \circ \tau_j \not\cong U_j$}
implies that $V$ is of complex type. If both $U_1$ and $U_2$ are
of quaternionic type, then {$\oline{U_j} \circ \tau_j \cong U_j$}
for $j = 1,2$ implies {$\oline V \circ \tau \cong V$,} so that
$V$ is of quaternionic type.
\end{remark}

\subsection{Smooth and analytic vectors}
\label{app:c}

In this appendix we collect some material on distribution vectors
and hyperfunction vectors of unitary representations
$U \: G \to \U(\cH)$. 

\subsubsection{The integrated representation}
\label{app:c1}

\begin{definition} Let $G$ be a Lie group. 
We fix a left-invariant Haar measure $\mu_G$ on $G$ 
and we often write $dg$ for $d\mu_G(g)$. 
This measure defines on $L^1(G) := L^1(G,\mu_G)$ the structure of a 
Banach-$*$ algebra by the {\it convolution product} and
\begin{equation}
  \label{eq:convol}
(\phi*\psi)(x) =\int_G \phi(g)\psi(g^{-1}x)\, d\mu_G (g),  
\quad \mbox{ and } \quad \phi^*(g) = \overline{\phi (g^{-1})}\Delta_G (g)^{-1}
\end{equation}
is the involution, where 
$\Delta_G : G\to \R_+$ is the {\it modular function} determined by
\begin{align}
  \label{eq:modfunc}
 \int_G \phi(y)\, d\mu_G(y) 
&=\int_G \phi(y^{-1})\Delta_G(y)^{-1}\, d\mu_G(y) \quad \mbox{ and } \\ 
\Delta_G(x)\int_G \phi(yx)\, d\mu_G(y)&=\int_G \phi(y)\, d\mu_G(y) \quad 
\mbox{ for } \quad \phi\in C_c (G).
\end{align}
We put $\varphi^\vee(g)=\varphi (g^{-1})\cdot\Delta_G(g)^{-1}$, so that
\begin{equation}
  \label{eq:check}
\int_G \phi(g)\, d\mu_G(g) 
= \int_G \phi^\vee(g)\, d\mu_G(g).
\end{equation}
The formulas above show that we have two isometric actions of $G$ on 
$L^1(G)$, given by 
\begin{equation}
  \label{eq:left-right-action}
(\lambda_g f)(x) = f(g^{-1}x) \quad \mbox{ and }\quad 
(\rho_g f)(x) = f(xg) \Delta_G(g).
\end{equation}
Note that 
\begin{equation}
  \label{eq:veecov}
(\lambda_g f)^* = \rho_g f^*
\quad \mbox{ and } \quad (\lambda_g f)^\vee = \rho_g f^\vee.   
\end{equation}
\end{definition}

Now let $(U,\cH)$ be a continuous unitary representation of the Lie group~$G$, 
i.e., a homomorphism $U : G\to \U(\cH), g \mapsto U(g)$ 
such that, for each $\eta \in\cH$, the  orbit map
$U^\eta (g)=U(g)\eta$ is continuous. 
For $\phi\in L^1(G)$ the operator-valued integral 
\[ U(\phi) := \int_G \phi(g) U(g)\, dg \] 
exists and is uniquely determined by 
\begin{equation}
  \label{eq:l1-est}
\la \eta, U(\phi) \zeta \ra =\int_G \phi(g)\la \eta, U(g)\zeta\ra\, dg \quad 
\mbox{ for } \quad \eta,\zeta \in \cH.
\end{equation}
Then $\|U(\phi)\|\le \|\phi\|_1$, and the 
so-obtained continuous linear map $L^1(G) \to B(\cH)$ 
is a representation of the Banach-$*$ algebra $L^1(G)$, i.e., 
$U(\phi*\psi)=U(\phi)U(\psi)$ and $U(\phi^*)=U(\phi)^*.$ 
We also note that, for $g\in G$ and $\phi \in L^1(G)$ 
\begin{equation}
  \label{eq:covl1}
U(g) U(\phi) = U(\lambda_g \phi) \quad \mbox{ and } \quad 
U(\phi)U(g)  = U(\rho_g^{-1} \phi).
\end{equation}
For $\phi_g(x) := \phi(xg)$, we then have 
$\phi_g = \Delta_G(g)^{-1} \rho_g \phi$ by \eqref{eq:left-right-action}, 
and thus by \eqref{eq:covl1} 
\begin{equation}
  \label{eq:rightrel}
U(\phi_g) = \Delta_G(g)^{-1} U(\phi) U(g^{-1}) \quad \mbox{ for } \quad 
 g\in G.
\end{equation}

\subsubsection{The space of smooth vectors and its dual}
\label{subsec:app1}

\index{vector!smooth, $\cH^{\infty}$  \scheiding }
\index{derived representation $\dd U$ \scheiding }

A {\it smooth vector} is an element $\eta\in\cH$ for which the orbit map 
\[ U^\eta : G\to \cH, \quad g \mapsto U(g)\eta \]  
is smooth. We write~$\cH^{\infty} = \cH^\infty(U)$ for the space 
of smooth vectors. It carries the {\it derived representation} 
$\dd U $ of the Lie algebra $\fg$ given by
\begin{equation}
  \label{eq:derrep}
\dd U(x)\eta =\lim_{t\to 0}\frac{U(\exp t x)\eta -\eta}{t}.
\end{equation}

  For $x \in \g$, we write
  $\partial U(x)$ for the infinitesimal generator of the one-parameter
  group $U(\exp tx)$, so that $U(\exp tx) = e^{t\partial U(x)}$. 
  As $\cH^\infty$ is dense and $U(G)$-invariant,
  $\partial U(x)$ is the closure of the operator $\dd U(x)$
  (\cite[Thm.~VIII.10]{RS73}).

We extend the representation $\dd U$ to a homomorphism 
$\dd U \:  \cU(\g) \to \End(\cH^\infty),$ 
where $\cU(\g)$ is the complex enveloping algebra of $\g$. This algebra 
carries an involution $D \mapsto D^*$ determined uniquely by 
$x^* = -x$ for $x \in \g$.
For $D \in \cU(\g)$, we obtain a seminorm on $\cH^\infty$ by 
\[p_D(\eta )=\|\dd U(D)\eta\|\quad \mbox{ for } \quad \eta \in \cH^\infty.\] 
These seminorms define a topology on $\cH^\infty$ which turns the injection 
\begin{equation}
  \label{eq:tophinfty}
 \eta \: {\cal H}^\infty \to {\cal H}^{{\cal U}(\g_\C)}, \quad 
\xi \mapsto (\dd U(D)\xi)_{D \in {\cal U}(\g_\C)}
\end{equation}
into a topological embedding, where the right-hand side carries the product 
topology (cf.\ \cite[3.19]{Mag92}).  It turns $\cH^\infty$ 
into a complete locally convex space 
for which the linear operators $\dd U(D)$, $D \in \cU(\g)$, are continuous. 
Since $\cU(\g)$ has a countable basis, 
countably many such seminorms already determine the topology, so that 
$\cH^\infty$ is metrizable. As it is also complete, it is a Fr\'echet space.
We also observe that 
the inclusion $\cH^\infty\hookrightarrow \cH$ is continuous.

The space $\cH^\infty$ of smooth vectors is $G$-invariant 
and we denote the corresponding representation by~$U^\infty$.
We thus obtain a smooth action of 
$G$ on this Fr\'echet space (\cite{Ne10}).
We have  the intertwining relation 
\begin{equation}
  \label{eq:dUAdg}
 \dd U(\Ad (g)x)= U(g) \dd U(x) U(g)^{-1} \quad \mbox{ for } \quad 
 g \in G, x \in \g.
\end{equation}
If $\varphi \in C_c^\infty (G)$ and $\xi \in\cH$, then 
$U(\varphi) \xi \in \cH^\infty$, and differentiation under the integral sign 
shows that  
\begin{equation}
  \label{eq:derrep2}
\dd U(x) U(\varphi) \xi :=U(-x^R \varphi) \xi,  
\quad \mbox{ where } \quad 
(x^R\varphi)(g) =\frac{d}{dt}\Big|_{t=0} \phi((\exp tx) g). 
\end{equation}

A sequence $(\varphi_n)_{n \in \N}$ in $C^\infty_c(G)$ is called a 
{\it $\delta$-sequence} if $\int_G \varphi_n(g)\, dg = 1$ for every 
$n \in \N$ and, for every $e$-neighborhood $U \subeq G$, we have 
$\supp(\varphi_n) \subeq U$ if $n$ is sufficiently large. 
If $(\varphi_n)_{n \in \N}$ is a 
{$\delta$-sequence}, then $U(\varphi_n)\xi \to \xi$, so that $\cH^\infty$
is dense in~$\cH$.

We write $\cH^{-\infty}$ for the space 
of continuous antilinear functionals on $\cH^\infty$. 
Its elements are called \textit{distribution vectors}.
\index{vector!distribution (of unitary rep.), $\cH^{-\infty}$ \scheiding } 
The group $G$, $\cU(\g)$ and $C^\infty_c(G)$ act on $\eta \in \cH^{-\infty}$ by
\begin{itemize}
\item $(U^{-\infty}(g)\eta ) (\xi ):= \eta  (U(g^{-1})\xi)$, $g \in G, 
\xi \in \cH^\infty$. \\ If $U \: G \to \AU(\cH)$ is an antiunitary 
representation and $U(g)$ is antiunitary, then we have to modify 
this definition slightly by 
$(U^{-\infty}(g)\eta) (\xi ):= \oline{\eta(U(g^{-1})\xi)}$. 
\item $(\dd U^{-\infty}(D) \eta ) (\xi ):= \eta(\dd U(D^*) \xi)$, $D \in \cU(\g), 
\xi \in \cH^\infty$. 
\item $U^{-\infty}(\varphi) \eta =\eta \circ U^\infty(\varphi^*)$, 
$\varphi \in C_c^\infty (G).$ 
\end{itemize} 
We have natural $G$-equivariant linear embeddings 
\begin{equation}
  \label{eq:embindistr}
\cH^\infty \into \cH
\mapright{\xi \mapsto  \la \cdot, \xi \ra} \cH^{-\infty}. 
\end{equation}

It is an important feature of \eqref{eq:embindistr} 
that the representation of $\cU(\g)$ on 
$\cH^{-\infty}$ provides an embedding of the whole Hilbert space $\cH$ 
into a larger space on which the Lie algebra acts. The following 
lemma shows that $\cH^\infty$ is the maximal $\g$-invariant subspace 
of $\cH \subeq \cH^{-\infty}$ and that the subspace $\cH$ generates 
$\cH^{-\infty}$ as a $\g$-module. 

\begin{lemma} \label{lem:charsmooth} 
The following assertions hold: 
  \begin{description}
  \item[\rm(a)] $\cH^\infty = \{ \xi \in \cH \subeq \cH^{-\infty} \: (\forall D \in \cU(\g)) 
\ \dd U^{-\infty}(D)\xi \in \cH\}.$ 
  \item[\rm(b)] ${\cal H}^{-\infty} = \Spann \big(\dd U^{-\infty}({\cal U}(\g)){\cal H}\big)$. 
  \end{description}
\end{lemma}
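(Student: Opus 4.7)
For (a), the inclusion ``$\subseteq$'' is routine: for $\xi \in \cH^\infty$ and $D \in \cU(\g)$ the algebraic vector $\dd U(D)\xi \in \cH^\infty \subeq \cH$ represents the distribution $\dd U^{-\infty}(D)\xi$ under the embedding \eqref{eq:embindistr}, as one checks by pairing against an arbitrary $\zeta \in \cH^\infty$ and using $(D^*)^* = D$. For the reverse inclusion, fix $\xi \in \cH$ with $\dd U^{-\infty}(D)\xi \in \cH$ for all $D \in \cU(\g)$. Taking first $D = x \in \g$, the generator $\partial U(x) = \oline{\dd U(x)}$ is skew-adjoint by Stone's theorem and admits $\cH^\infty$ as a core, so the identity
\[ -\langle \xi, \dd U(x)\zeta\rangle = \langle \dd U^{-\infty}(x)\xi, \zeta\rangle \qquad (\zeta \in \cH^\infty) \]
forces $\xi \in \cD(\partial U(x)^*) = \cD(\partial U(x))$ with $\partial U(x)\xi = \dd U^{-\infty}(x)\xi$. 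Iterating via $\dd U^{-\infty}(x)\dd U^{-\infty}(y) = \dd U^{-\infty}(xy)$, one concludes that $\xi$ lies in the common iterated-domain space
\[ \cD^\infty := \bigcap_{n,\, x_1,\dots,x_n \in \g} \cD\bigl(\partial U(x_1)\cdots \partial U(x_n)\bigr). \]

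The main step is then the classical identification $\cD^\infty = \cH^\infty$, going back to Nelson, Poulsen and Goodman, which I regard as the principal technical obstacle. I would prove it by convolving with a $\delta$-sequence $(\phi_n) \subeq C^\infty_c(G)$, for which $U(\phi_n)\xi \in \cH^\infty$ and $U(\phi_n)\xi \to \xi$ in $\cH$. Iterating \eqref{eq:derrep2} gives
\[ \dd U(D)\, U(\phi_n)\xi = (-1)^{|D|} U(D^R \phi_n)\xi \qquad (D \in \cU(\g)), \]
and integrating $D^R$ by parts against the left Haar measure in the matrix coefficient $g \mapsto \langle U(g)\xi, \eta\rangle$ (which is smooth because $\eta \in \cH^\infty$), together with the previously established identity $\partial U(x_1)\cdots\partial U(x_n)\xi = \dd U^{-\infty}(x_1\cdots x_n)\xi \in \cH$, shows that $\dd U(D)U(\phi_n)\xi$ converges strongly in $\cH$ for every $D$. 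Hence $(U(\phi_n)\xi)$ is Cauchy in each seminorm $p_D$; completeness of the Fr\'echet space $\cH^\infty$ produces a limit $\zeta \in \cH^\infty$, and comparison with the convergence in the weaker topology of $\cH$ gives $\zeta = \xi$, so $\xi \in \cH^\infty$.

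For (b), introduce the Sobolev-type Hilbert completions $\cH^k$ of $\cH^\infty$ with respect to the norms $\|\xi\|_k^2 := \sum_{|D|\leq k} \|\dd U(D)\xi\|^2$, where $D$ ranges over a fixed basis of the order-$\leq k$ part of $\cU(\g)$. Because the Fr\'echet topology of $\cH^\infty$ is generated by the countable family $\{p_D\}_{D \in \cU(\g)}$, any continuous antilinear functional $\alpha \in \cH^{-\infty}$ is dominated by finitely many $p_D$, hence by some $\|\cdot\|_k$; this identifies $\cH^{-\infty} = \bigcup_{k \in \N} \cH^{-k}$. Fix $\alpha \in \cH^{-k}$. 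The map
\[ \iota_k \colon \cH^k \to \bigoplus_{|D|\leq k} \cH, \qquad \xi \mapsto \bigl(\dd U(D)\xi\bigr)_{|D|\leq k}, \]
is an isometric embedding onto a closed subspace. Extending the continuous functional $\alpha \circ \iota_k^{-1}$ by Hahn--Banach to the ambient Hilbert space and invoking the Riesz representation theorem produces vectors $\eta_D \in \cH$ with $\alpha(\zeta) = \sum_{|D|\leq k}\langle \eta_D, \dd U(D)\zeta\rangle$ for all $\zeta \in \cH^\infty$. Using $(D^*)^* = D$, this rewrites as $\alpha = \sum_{|D|\leq k} \dd U^{-\infty}(D^*)\eta_D$, a finite linear combination placing $\alpha$ in $\Spann\bigl(\dd U^{-\infty}(\cU(\g))\cH\bigr)$ and completing~(b).
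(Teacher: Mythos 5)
Your part (b) is correct and, modulo packaging, is the same argument the paper gives: the paper embeds $\cH^\infty$ into the topological product $\cH^{\cU(\g)}$, extends $\eta$ by Hahn--Banach, and uses that the dual of a product is the direct sum of duals; you instead embed $\cH^k$ isometrically into the finite direct sum $\bigoplus_{|D|\leq k}\cH$ after observing that any $\alpha\in\cH^{-\infty}$ is bounded by some $\|\cdot\|_k$, then apply Hahn--Banach and Riesz there. Both routes produce the same finite representation $\alpha = \sum_D \dd U^{-\infty}(D^*)\eta_D$.

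For (a) you follow the same reduction as the paper: show $\cD(\partial U(x)) = \{\xi\in\cH : \dd U^{-\infty}(x)\xi\in\cH\}$ and then invoke $\cH^\infty = \bigcap_n\bigcap_{x_1,\dots,x_n} \cD(\partial U(x_1)\cdots\partial U(x_n))$. The paper simply cites both facts (from \cite{Oeh21} and \cite{Ne10}); you re-prove the first one correctly (skew-adjointness of $\partial U(x)$ with $\cH^\infty$ a core), but your attempted re-proof of the second (Goodman's theorem) contains a genuine gap. You claim that ``integrating $D^R$ by parts against the left Haar measure in the matrix coefficient $g\mapsto\la U(g)\xi,\eta\ra$'' together with the identity $\partial U(x_1)\cdots\partial U(x_n)\xi = \dd U^{-\infty}(x_1\cdots x_n)\xi\in\cH$ shows that $\dd U(D)U(\phi_n)\xi$ converges \emph{strongly}. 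What the integration-by-parts argument actually yields is
\[
\la\eta, \dd U(D)U(\phi_n)\xi\ra = \pm\la \dd U(D^*)\eta, U(\phi_n)\xi\ra \;\longrightarrow\; \pm\la\dd U(D^*)\eta,\xi\ra
\]
for $\eta\in\cH^\infty$, i.e.\ convergence only against smooth test vectors. Since the norms $\|\dd U(D)U(\phi_n)\xi\| = \|U(D^R\phi_n)\xi\|$ are a priori unbounded in $n$ (because $\|D^R\phi_n\|_1\to\infty$ for a $\delta$-sequence), this does not even upgrade to ordinary weak convergence, let alone strong convergence, so the claimed Cauchyness in the seminorms $p_D$ is not established. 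The underlying issue is that $\dd U(D)U(\phi_n) = \pm U(D^R\phi_n)$ involves \emph{right}-invariant derivatives of $\phi_n$, whereas the known convergence $U(\phi_n)\partial U(D)\xi\to\partial U(D)\xi$ involves the smeared operator $U(\phi_n)$ acting on a fixed vector; the two differ by $x^L$ versus $x^R$ terms that do not vanish for non-abelian $G$. The statement $\cD^\infty = \cH^\infty$ is of course true, but it requires the Taylor-remainder/G\aa rding-space machinery of Goodman (as cited in \cite{Ne10}); your $\delta$-sequence sketch does not close.
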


\begin{proof} (a) This follows by combining
  \cite[Prop.~A.1]{Oeh21}, asserting that 
\[ \cD(\partial U(x)) = \{ \xi \in \cH \: \dd U^{-\infty}(x) \xi \in \cH \}, \] 
with the fact that 
\[ \cH^\infty = \bigcap \{ \cD(\partial U(x_1) \cdots \partial U(x_n))  \: 
n \in \N, x_1, \ldots, x_n\in \g\} \] 
(\cite[Lemma~3.4]{Ne10}). 

\nin (b) Let $\eta \in {\cal H}^{-\infty}$ and  
consider $\cH^\infty$ as a subspace of the topological product
${\cal H}^{{\cal U}(\g)}$ as in \eqref{eq:tophinfty}. 
By the Hahn--Banach Extension Theorem, 
$\eta$ extends to a continuous antilinear functional 
$\tilde\eta$ on ${\cal H}^{{\cal U}(\g)}$. Since the dual of a direct product 
is the direct sum of the dual spaces, there exist 
$D_1, \ldots, D_n \in\cU(\g)$ and $\xi_1, \ldots, \xi_n \in \cH$, such that 
\[ \eta(\xi) 
= \sum_{j = 1}^n \la \xi_j,  \dd U(D_j) \xi \ra 
= \sum_{j = 1}^n \la  \dd U^{-\infty}(D_j^*) \xi_j, \xi  \ra 
\quad \mbox{ for } \quad \xi \in \cH^\infty,\] 
which means that 
$\eta =  \sum_{j = 1}^n \dd U^{-\infty}(D_j^*) \xi_j.$ 
\end{proof}

For each $\phi \in C^\infty_c(G)$, the map 
$U(\phi) \: \cH \to \cH^{\infty}$ 
is continuous, so that its adjoint defines a weak-$*$ continuous map 
$U^{-\infty}(\phi^*) \: \cH^{-\infty} \to \cH$. We actually have 
$U^{-\infty}(\phi)\cH^{-\infty} \subeq \cH^\infty$ 
as a consequence of the 
Dixmier--Malliavin Theorem \cite[Thm.~3.1]{DM78}, 
which asserts that every $\phi \in C^\infty_c(G)$ can be
written as a finite sum of functions of the form
$\phi_1 * \phi_2$ with $\phi_j \in C^\infty_c(G)$.

\subsubsection{The space of analytic vectors and its dual} 
\label{app:anavec}
\index{vector!analytic (of unitary rep.), $\cH^\omega$ \scheiding } 
\index{vector!hyperfunction (of unitary rep.), $\cH^{-\omega}$ \scheiding } 

In this subsection, we briefly discuss the space of analytic
vectors of a unitary representation of a Lie group.
Let $(U,\cH)$ be a unitary representation of
the connected real Lie group $G$. We write
\[ \cH^\omega = \cH^\omega(U)\subeq \cH \]
for the space of {\it analytic vectors}, i.e.,
those $\xi\in \cH$ for which the orbit map
$U^\xi \colon G \to \cH, g \mapsto U(g)\xi$,  is analytic.

To endow $\cH^\omega$ with a locally convex topology,
we specify subspaces $\cH^\omega_{V}$ by open convex  {$0$-neighborhoods}
$V \subeq \g$ as follows. Let $\eta_G \colon G \to G_\C$ denote the universal
complexification of $G$ and assume that $\eta_G$ has discrete kernel
(this is always the case if $G$ is semisimple or $1$-connected). 
We assume that $V$ is so small that the map
\begin{equation}
  \label{eq:eta-g-v}
 \eta_{G,V} \colon G_V := G \times V
 \to G_\C, \quad (g,x) \mapsto \eta_G(g) \exp(ix)
\end{equation}
is a covering. Then we endow $G_V$ with the unique complex manifold
structure for which $\eta_{G,V}$ is holomorphic.

We now write $\cH^\omega_V$ for the set of those analytic vectors
$\xi$ for which the orbit map $U^\xi\colon G \to \cH$ extends to a
holomorphic map
\[ U^\xi_V \colon G_V\to \cH.\]
As any such extension is $G$-equivariant by 
uniqueness of analytic continuation, it must have the form
\begin{equation}
  \label{eq:orb-map}
 U^\xi_V(g,x) = U(g) e^{\ie \cdot\partial U(x)} \xi \quad \mbox{ for }  \quad
 g \in G, x \in V,
\end{equation}
so that
$\cH^\omega_V \subeq \bigcap_{x \in V} \cD(e^{\ie \cdot \partial U(x)}).$ 

The following lemma shows that we have equality.

\begin{lemma} \label{lem:charh-om-v}
If $V \subeq \g$ is an open convex $0$-neighborhood
  for which \eqref{eq:eta-g-v} is a covering, then 
$\cH^\omega_V = \bigcap_{x \in V} \cD(e^{i\cdot \partial U(x)}).$ 
\end{lemma}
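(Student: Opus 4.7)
My plan is to prove the two inclusions separately. The inclusion ``$\subseteq$'' follows routinely from one-parameter subgroups and Stone's theorem. Given $\xi \in \mathcal{H}^\omega_V$ with holomorphic extension $U^\xi_V \colon G_V \to \mathcal{H}$ and given $x \in V$, I would consider the entire holomorphic map $\gamma_x \colon \mathbb{C} \to G_\mathbb{C}$, $z \mapsto \exp(zx)$. On the strip $\Sigma_x := \{z \in \mathbb{C} : \Im(z)\,x \in V\}$ (which is open and contains $\mathbb{R}$) this curve takes values in $\eta_G(G)\exp(iV)$, so by the path-lifting property for the covering $\eta_{G,V}$ it lifts to a holomorphic curve $\tilde\gamma_x \colon \Sigma_x \to G_V$ with $\tilde\gamma_x(s) = \exp(sx)$ for $s \in \mathbb{R}$. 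Composing, $U^\xi_V \circ \tilde\gamma_x$ is a holomorphic $\mathcal{H}$-valued extension of the boundary orbit map $s \mapsto e^{s\cdot\partial U(x)}\xi$. By uniqueness of analytic continuation combined with the spectral theorem for the selfadjoint generator $-i\,\partial U(x)$, such an extension to $\Sigma_x$ exists if and only if $\xi \in \mathcal{D}(e^{z\cdot \partial U(x)})$ for all $z \in \Sigma_x$, and then the extension equals $e^{z\cdot\partial U(x)}\xi$; taking $z = i$ (for which $x \in V$) yields $\xi \in \mathcal{D}(e^{i\cdot\partial U(x)})$.

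For the inclusion ``$\supseteq$'', I would construct the extension explicitly by setting
\[
 U^\xi_V(g,x) := U(g)\,e^{i\cdot\partial U(x)}\xi \qquad\text{for}\qquad (g,x) \in G_V = G \times V,
\]
which is well-defined by the hypothesis and agrees with $U^\xi$ on $G \times \{0\}$. Since this map is manifestly $G$-equivariant with respect to the holomorphic left $G$-action on $G_V$, it suffices to check holomorphicity in a neighborhood of $\{e\} \times V$. Near such a point $(e,x_0)$, the chart $\psi(z) = \text{lift of } \exp(z)$ with $z \in \g_\mathbb{C}$ small provides holomorphic coordinates via the covering $\eta_{G,V}$, and the task becomes to show that $z \mapsto U^\xi_V(\psi(z) \cdot \psi(ix_0))$ is holomorphic in $z$.

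The main obstacle is precisely this passage from one-parameter information to joint holomorphicity, and my preferred approach is to extract from the hypothesis a uniform analyticity estimate. For each $x \in V$, the spectral theorem applied to $A_x := -i\,\partial U(x)$ gives $\int e^{-2t\lambda}\,dE^\xi_x(\lambda) < \infty$ for $t \in [0,1]$, which by convexity of $V$ and a sup-norm estimate over $x$ ranging in a compact subset of $V$ yields Cauchy-type bounds $\|\partial U(x)^n\xi\| \leq C(K)\, n!\, r(K)^{-n}$ uniformly for $x \in K \Subset V$ and small $r(K)$. This in turn forces the Taylor series of $U^\xi$ around any $g_0 \in G$ to converge absolutely on a complex neighborhood that (after using $G$-covariance and rescaling) sweeps out all of $\eta_G(G)\exp(iV)$, giving a holomorphic extension that by the easy direction must coincide pointwise with the candidate. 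Alternatively, one may verify weak holomorphicity by pairing with $\eta \in \mathcal{H}$ and applying Hartogs' theorem after reducing to scalar functions of a complex variable via the one-parameter extensions in every direction of $V$, invoking the Bochner tube-type principle on the convex set $V$.
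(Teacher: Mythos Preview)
Your overall architecture for ``$\supseteq$'' matches the paper's: define the candidate map $U^\xi_V(g,x) = U(g)e^{i\partial U(x)}\xi$, use $G$-equivariance to reduce to a neighborhood of $(e,0)$, and then argue local holomorphicity. The ``$\subseteq$'' direction is fine (and is already noted in the paper before the lemma via \eqref{eq:orb-map}).

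The gap is precisely where you flag ``the main obstacle'': passing from one-parameter analyticity in each direction $x \in V$ to joint analyticity on $V$. Your Approach~1 asserts uniform Cauchy bounds $\|\partial U(x)^n\xi\| \leq C(K)\,n!\,r(K)^{-n}$ over compact $K \Subset V$ ``by convexity of $V$ and a sup-norm estimate'', but this uniformity is exactly the nontrivial content and does not follow from the spectral integrals for individual~$x$; a~priori the map $x \mapsto \|e^{i\partial U(x)}\xi\|$ need not even be locally bounded. Your Approach~2 invokes Hartogs and the Bochner tube theorem, but Hartogs requires separate holomorphicity in \emph{complex} coordinate directions, not the ``ray holomorphicity'' you have, and the tube theorem presupposes holomorphicity on some tube to begin with. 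Neither applies directly.

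The paper resolves this as follows. First it cites \cite[Thm.~1.1]{Go69} to obtain $\xi \in \cH^\infty$ (which you implicitly assume, since otherwise $\partial U(x)^n\xi$ is not defined). Then the key step is the Bochnak--Siciak theorem \cite[Thm.~5.2]{BS71}: since the homogeneous polynomials $x \mapsto \dd U(x)^n\xi$ sum to a convergent series along each ray (by the one-parameter spectral information), the series actually defines an analytic function $f_\xi$ on~$V$. This is precisely the ``separately analytic along lines $\Rightarrow$ jointly analytic'' result you need, and it is not elementary. Finally, for holomorphicity of $U^\xi_V$ the paper pairs not with arbitrary $\eta \in \cH$ but with $\eta \in \cH^\omega$: the resulting scalar function is then real analytic and $G$-equivariant, so it suffices to verify holomorphicity near $0$, where it agrees with the local holomorphic extension of the orbit map via the BCH formula.
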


\begin{proof} (\cite[Lemma~1]{FNO25a}) It remains to show that each
  $\xi \in \bigcap_{x \in V} \cD(e^{i \cdot \partial U(x)})$ is contained
  in $\cH^\omega_V$. For that, we first observe that the
  holomorphy of the functions $z \mapsto e^{iz \partial U(x)}v$ on a
  neighborhood of the closed unit disc in $\C$ implies that the
  $\cH$-valued power series
  \[ f_\xi(x) := \sum_{n = 0}^\infty \frac{i^n}{n!} \partial U(x)^n \xi \]
  converges for each $x \in V$.
  Further, \cite[Thm.~1.1]{Go69} implies
  that $\xi \in \cH^\infty$, so that the functions
  $x \mapsto \partial U(x)^n \xi = \dd U(x)^n \xi$
  are homogeneous $\cH$-valued polynomials (cf.\ \cite{BS71}). Thus
  \cite[Thm.~5.2]{BS71} shows that the above series
  defines an analytic function   $f_\xi \colon V\to \cH$.  
  It follows in particular that $\xi$ is an analytic vector,
  and the map
  \[ U^\xi_V \colon  G_V \to \cH, \quad
  (g, x) \mapsto   U^\xi (g,x):=  U(g) e^{i \partial U(x)}\xi \]
is defined. It is clearly equivariant.
We claim that it is holomorphic. As it is locally bounded,
it suffices to show that, for each $\eta \in \cH^\omega$, the function
\[ f \colon G_V \to \C, \quad f(g,x) := \la \eta, U^\xi(g,x) \ra   \]
is holomorphic (\cite[Cor.~A.III.3]{Ne99}). We have 
\[ f(g,x) = \la U(g)^{-1}\eta, e^{i \partial U(x)}\xi \ra,  \]
and the orbit map of $\eta$ is analytic, 
therefore $f$ is real analytic. Therefore it suffices to show that it is
holomorphic on some $0$-neighborhood. This follows from the
fact that it is $G$-equivariant and coincides on some $0$-neighborhood
with the local holomorphic extension of the orbit map of~$\xi$.
Here we use that, for $x,y \in \g$ sufficiently small,
the holomorphic extension $U^\xi$ of the $\xi$-orbit map satisfies 
\[ U^\xi(\exp(x * iy)) = U(\exp x) U^\xi(\exp i y)
  = U(\exp x) f_\xi(y) = U^\xi_V(\exp x, y),\]
where $a * b = a + b + \frac{1}{2}[a,b] + \cdots$ denotes the
Baker--Campbell--Hausdorff series. 
\end{proof}

We topologize the space
$\cH^\omega_V$ by identifying it with 
$\cO(G_V, \cH)^G$, the Fr\'echet space of $G$-equivariant holomorphic maps
$F \colon G_V \to \cH$, endowed with the Fr\'echet topology of
  uniform convergence on compact subsets. Now
$\cH^\omega = \bigcup_V \cH^\omega_V$, 
and we topologize $\cH^\omega$ as the locally convex direct limit
of the Fr\'echet spaces $\cH^\omega_V$ (cf.\ \cite{GN25}, \cite{Tr67}).
If the universal complexification $\eta_G \colon  G \to  G_\C$
  is injective, we thus obtain the same topology as in \cite{GKS11}. Note that,
for any monotone
basis  $(V_n)_{n \in \N}$ of convex $0$-neighborhoods in $\g$, we
then have
\[ \cH^\omega \cong \indlim \cH^\omega_{V_n},\]
so that $\cH^\omega$ is a countable locally convex limit of
Fr\'echet spaces. As the evaluation maps
\[ \cO(G_V, \cH)^G \to \cH, \quad F \mapsto F(e,0) \] 
are continuous, the inclusion
$\iota \colon \cH^\omega \to \cH$ 
is continuous. 

We write $\cH^{-\omega}$ for the space  of continuous antilinear functionals 
$\eta\colon \cH^\omega \to \C$ (called {\it hyperfunction vectors})
and
\[ \la \cdot, \cdot \ra \colon \cH^\omega \times \cH^{-\omega} \to \C \]
for the natural sesquilinear pairing that is linear in the second argument.
We endow $\cH^{-\omega}$ with the weak-$*$ topology. 
We then have natural continuous inclusions
\[ \cH^\omega \into \cH \into \cH^{-\omega}.\] 

Our specification of the topology on $\cH^\omega$
differs from the one 
\cite{GKS11} because we do not want to assume that
the universal complexification $\eta_G \colon G \to G_\C$ is injective, 
but both constructions define the same topology.
Moreover,  the arguments in \cite{GKS11} apply with minor changes to
general Lie groups.

We actually have the following chain of complex linear embeddings 
\begin{equation}
  \label{eq:incl4}
\cH^{\omega} \subeq \cH^\infty \subeq \cH \subeq \cH^{-\infty}
\subeq \cH^{-\omega},
\end{equation}
where all inclusions are continuous
and $G$ acts on all spaces 
by representations denoted $U^\omega$, $U^\infty$, $U$, $U^{-\infty}$,
and $U^{-\omega}$, respectively.
These representations can be integrated to the
convolution algebra $C^\infty_c(G) := C^\infty_c(G,\C)$ of
test functions (cf.~\eqref{eq:convol}), for instance
\begin{equation}
  \label{eq:u-infty-phi}
  U^{-\infty}(\phi) := \int_G \phi(g)U^{-\infty}(g)\, dg,
\end{equation}
where $dg$ stands for a left Haar measure on $G$.

\subsection{Direct integral techniques}
\label{app:D}

Here we collect some material from \cite{MN24} and \cite{BN25}.
We refer to \cite{BR87} for the basics on direct integrals;
see also \cite{DD63}.

\index{real subspace!decomposable (in direct integral) \scheiding} 

Let $\cH = \int^\oplus_X \cH_m \, d\mu(m)$ be a direct
integral of Hilbert spaces on a standard measure space
$(X,\mu)$. We call a closed real subspace
$\sH \subeq \cH$ {\it decomposable} if it is of the form
\begin{equation}
  \label{eq:sh-int}
  \sH = \int_X^\oplus \sH_m\, d\mu(m),
\end{equation}
where $(\sH_m)_{m \in X}$ is a measurable field of closed real subspaces.
Now let $(\sH^k)_{k \in K}$ be an at most countable family of decomposable
real subspaces. Then we have (\cite[Lemma~B.3]{MT19}): 
\begin{description}
\item[\rm(DI1)] $\sH' = \int_X^\oplus \sH_m'\, d\mu(m)$. 
\item[\rm(DI2)] $\bigcap_{k \in K} \sH^k = \int_X^\oplus
  \bigcap_{k \in K} \sH_m^k  \, d\mu(m)$ for any at most countable set~$K$. 
\item[\rm(DI3)] $\oline{\sum_k \sH^k} = \int_X^\oplus
  \oline{\sum_k \sH_m^k}  \, d\mu(m).$ 
\end{description}

\begin{lemma} \label{lem:di1}
  The subspace $\sH$ as in \eqref{eq:sh-int} is cyclic/separating/standard
  if and only if $\mu$-almost all $\sH_m$ have this property. 
\end{lemma}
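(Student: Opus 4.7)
My plan is to reduce the three equivalences in Lemma~\ref{lem:di1} to the cyclic and separating cases, since ``standard'' is by definition the conjunction of ``cyclic'' and ``separating'', and an intersection of two ``a.e.'' conditions is still an ``a.e.'' condition. So it suffices to treat cyclicity and separatingness independently.

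The key observation is that multiplication by $i$ on $\cH$ decomposes as the direct integral of multiplication by $i$ on each fiber $\cH_m$. Consequently, if $\sH = \int_X^\oplus \sH_m\, d\mu(m)$ is decomposable, then so is $i\sH$, with fibers $(i\sH)_m = i\sH_m$. I would then apply {\rm(DI2)} and {\rm(DI3)} to the countable (two-element) family $\{\sH, i\sH\}$ to obtain
\[ \sH \cap i\sH = \int_X^\oplus (\sH_m \cap i\sH_m)\, d\mu(m), \qquad
   \oline{\sH + i\sH} = \int_X^\oplus \oline{\sH_m + i\sH_m}\, d\mu(m). \]

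Next I would invoke the standard fact from direct integral theory that a decomposable closed subspace $\int_X^\oplus \sK_m\, d\mu(m)$ equals $\{0\}$ (respectively, equals $\cH$) if and only if $\sK_m = \{0\}$ (respectively, $\sK_m = \cH_m$) for $\mu$-almost every $m \in X$. Applied to the two identities above, this yields: $\sH$ is separating, i.e.\ $\sH \cap i\sH = \{0\}$, if and only if $\sH_m \cap i \sH_m = \{0\}$ for a.e.\ $m$; and $\sH$ is cyclic, i.e.\ $\oline{\sH + i\sH} = \cH$, if and only if $\oline{\sH_m + i\sH_m} = \cH_m$ for a.e.\ $m$. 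Taking the conjunction gives the standard case. Alternatively, the separating half could also be deduced from the cyclic half applied to $\sH'$ by combining Lemma~\ref{lem:vv'}(b) with {\rm(DI1)}.

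The only delicate point is the ``{$\{0\}$} iff a.e.\ {$\{0\}$}'' assertion (and its dual for $\cH$) for decomposable subspaces; this is well-known but worth recording: if the direct integral is nonzero, a nonzero element witnesses a positive-measure subset on which the fiber is nonzero, while if the fibers vanish almost everywhere, the only measurable section of the field is the zero section. The ``equals $\cH$'' version follows by passing to orthogonal complements, since $\sH^{\bot_\R}$ is again decomposable with fibers $\sH_m^{\bot_\R}$ (analogously to {\rm(DI1)}).
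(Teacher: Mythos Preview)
Your proposal is correct and essentially matches the paper's proof. The only minor difference is that for the cyclic case the paper uses duality---combining {\rm(DI1)} with Lemma~\ref{lem:vv'}(b) to reduce to the separating case already proved---whereas you argue directly via {\rm(DI3)}; but you explicitly mention the paper's duality route as an alternative, so the two write-ups are variations on the same idea.
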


\begin{proof} (a) First we deal with the separating property.  By
  (DI2) we have
  \[ \sH \cap i\sH = \int_X^\oplus (\sH_m \cap i \sH_m)\, d\mu(m),\]
  and this space is trivial if and only if $\mu$-almost all spaces
  $\sH_m \cap i \sH_m$ are trivial, which means that
  $\sH_m$ is separating.

  \nin (b) The subspace $\sH$ is cyclic if and only if $\sH'$ is separating.
  By (DI1) and (a) this means that $\mu$-almost all $\sH_m'$ are separating,
  i.e., that $\sH_m$ is cyclic.

  \nin (c) By (a) and (b) $\sH$ is standard if and only if
  $\mu$-almost all $\sH_m$ are cyclic and separating, i.e., standard.
\end{proof}

\begin{lemma} \label{lem:dirint2} For a countable family
  $(\sH^k)_{k \in K}$ of decomposable cyclic closed real subspaces, the
  intersection $\sV := \bigcap_{k \in K} \sH^k$
  is cyclic if and only if, 
  for $\mu$-almost every $m \in X$, the subspace
$\sV_m := \bigcap_{k \in K} \sH_m^k$ is cyclic. 
\end{lemma}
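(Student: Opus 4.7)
The plan is to reduce the statement to the fiberwise cyclicity criterion for decomposable subspaces given in Lemma~\ref{lem:di1}. The key ingredient is property (DI2) from the list preceding that lemma: applied to the countable family $(\sH^k)_{k \in K}$ of decomposable subspaces, it yields
\[ \sV = \bigcap_{k \in K} \sH^k = \int_X^\oplus \Big(\bigcap_{k \in K} \sH_m^k\Big)\, d\mu(m) = \int_X^\oplus \sV_m\, d\mu(m), \]
so that $\sV$ is itself decomposable with fibers $\sV_m$.

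With $\sV$ exhibited as a direct integral of the fibers $\sV_m$, Lemma~\ref{lem:di1} applies verbatim and gives that $\sV$ is cyclic if and only if $\mu$-almost every $\sV_m$ is cyclic, which is the content of the assertion. The countability hypothesis on $K$ enters in an essential way, as it is precisely what allows (DI2) to be invoked; for uncountable families the intersection need not coincide with the naive fiberwise intersection in the measurable sense, and the argument would break. The cyclicity hypothesis on each individual $\sH^k$ plays no role in the equivalence itself, but fixes the natural context in which fiberwise cyclicity of the intersection becomes the meaningful question.

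Since the proof amounts to combining two already-established facts, no serious obstacle is expected. The genuinely substantive points --- the fiberwise characterization of cyclicity (handled via duality between cyclic and separating subspaces in Lemma~\ref{lem:di1}) and the measurability of countable intersections of decomposable subspaces (handled via (DI2), cf.\ \cite[Lemma~B.3]{MT19}) --- have been dispatched earlier in the excerpt, so only the bookkeeping of chaining these two steps remains.
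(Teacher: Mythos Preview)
Your proposal is correct and follows exactly the same approach as the paper: apply (DI2) to write $\sV$ as the direct integral of the fiberwise intersections $\sV_m$, then invoke Lemma~\ref{lem:di1}. Your additional remarks on the role of countability and the irrelevance of the cyclicity hypothesis on the individual $\sH^k$ are accurate observations, though the paper's proof omits them.
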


\begin{proof} By (DI2), we have $\sV = \int_X^\oplus \sV_m\, d\mu(m)$,
  so that the assertion follows from Lemma~\ref{lem:di1}.   
\end{proof}

For a direct integral
\[ (U,\cH) = \int_X^\oplus (U_m, \cH_m)\, d\mu(m) \]
of antiunitary representations of $G_{\tau_h}$, the canonical
standard subspace $\sV = \sV(h, U)\subeq \cH$ from \eqref{eq:def-V(h,U)}
is specified by the decomposable operator $J \Delta^{1/2}
= U(\tau_h) e^{\pi i\, \partial U(h)}$, hence decomposable: 
\begin{equation}
  \label{eq:v-dirint}
  \sV = \int_X^\oplus \sV_m\, d\mu(m).
\end{equation}

\begin{lemma} \label{lem:g-inter}
  Assume that $G$ has at most countably many components.
  For any subset $A \subeq G$ and a real subspace $\sH \subeq \cH$, we put
  \begin{equation}
    \label{eq:VA}
    \sH_A := \bigcap_{g \in A} U(g)\sH.
  \end{equation}
  Then the following assertions hold:
  \begin{description}
  \item[\rm(a)] If $\sH$ is decomposable, then
    $\sH_A = \int_X^\oplus \sH_{m,A}\, d\mu(m)$. 
  \item[\rm(b)] $\sH_A$ is cyclic if and only if $\mu$-almost all
    $\sH_{m,A}$ are cyclic. 
  \end{description}
\end{lemma}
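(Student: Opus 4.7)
\medskip

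My plan is to prove (a) by reducing the potentially uncountable intersection defining $\sH_A$ to a countable one, at which point (DI2) applies, and then to deduce (b) from (a) together with Lemma~\ref{lem:di1}.

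First I would exploit the hypothesis on $G$: a Lie group with at most countably many connected components is second countable, hence so is $A \subeq G$ in the induced topology. Choose a countable dense subset $D \subeq A$. For any vector $v \in \cH$, the orbit map $g \mapsto U(g)^{-1}v$ is norm continuous, and since $\sH$ is closed, the set
\[ \{g \in G \: v \in U(g)\sH\} = \{g \in G \: U(g)^{-1}v \in \sH\} \]
is closed in $G$. Consequently, if $v \in \sH_D$, then this closed set contains $D$, hence contains $\oline D \supeq A$, so $v \in \sH_A$. The reverse inclusion is trivial, so $\sH_A = \sH_D$.

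Next, since $U(g)$ is a decomposable operator on $\cH = \int_X^\oplus \cH_m\, d\mu(m)$, the subspace $U(g)\sH = \int_X^\oplus U_m(g)\sH_m\, d\mu(m)$ is again decomposable. Applying (DI2) to the countable family $\bigl(U(g)\sH\bigr)_{g \in D}$ yields
\[ \sH_D \;=\; \bigcap_{g \in D} U(g)\sH \;=\; \int_X^\oplus \Bigl(\bigcap_{g \in D} U_m(g)\sH_m\Bigr)\, d\mu(m). \]
To finish (a), I would apply the density argument from the first step fiberwise: for $\mu$-almost every $m$, $U_m$ is a strongly continuous unitary representation of $G_{\tau_h}$ and $\sH_m$ is closed, so the same continuity-and-closedness argument gives $\bigcap_{g \in D} U_m(g)\sH_m = \bigcap_{g \in A} U_m(g)\sH_m = \sH_{m,A}$. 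Combining this with the display above yields $\sH_A = \int_X^\oplus \sH_{m,A}\, d\mu(m)$, which is~(a).

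For (b), since $\sH_A$ is decomposable by (a), Lemma~\ref{lem:di1} immediately gives that $\sH_A$ is cyclic in $\cH$ if and only if $\sH_{m,A}$ is cyclic in $\cH_m$ for $\mu$-almost every $m$. The main (and essentially only) obstacle is the passage from the uncountable intersection over $A$ to the countable one over $D$; once that reduction is justified by the continuity-plus-closedness argument, the rest is a direct application of the standard direct-integral identities (DI1)--(DI3) and Lemma~\ref{lem:di1}.
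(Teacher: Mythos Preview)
Your proof is correct and follows essentially the same route as the paper: reduce the intersection over $A$ to one over a countable dense subset using continuity of orbit maps and closedness of $\sH$, apply (DI2), then repeat the density argument fiberwise, and finally invoke Lemma~\ref{lem:di1} for~(b). The only cosmetic difference is that the paper phrases the fiberwise equality $\sH_{m,A}=\sH_{m,D}$ as holding for every $m$, while you (more carefully) say $\mu$-almost every $m$.
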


\begin{proof} (a) As $G$ has at most countably many components,
  it carries a separable metric, so that there exists a countable
  subset $B \subeq A$ which is dense in $A$.
  For $\xi \in \cH$, we have
  \[ \xi \in \sH_A \quad \mbox{ if and only if } \quad
    U(A)^{-1}\xi \subeq \sH.\]
  Now the closedness of $\sH$ and the density of $B$ in $A$ show that
  this is equivalent to $U(B)^{-1}\xi \subeq \sH$, i.e., to
  $\xi \in \sH_B$. This shows that $\sH_A = \sH_B$.
  We likewise obtain $\sH_{m,A} = \sH_{m,B}$ for every $m \in X$.
  Hence the assertion follows by applying (DI2) to the real subspace
  $\sH_B = \sH_A$.

\nin (b) follows from (a) and Lemma~\ref{lem:di1}.   
\end{proof}

\begin{lemma} \label{lem:denseunion}Let $\cH=\int_X^\oplus\cH_xd\mu(x)$, a  direct integral von Neumann algebra $\cA=\int_X^\oplus\cA_xd\mu(x)$ and a strongly continuous, unitary, direct integral representation of a
  Lie group $G$ with countably many connected components,
  $(U,\cH)=\int_X^\oplus(U_x,\cH_x)d\mu(x)$.
  Then, for any subset $N\subset G$, we have
  \[ \bigcap_{g\in N}\cA_g=\int_X^\oplus \bigcap_{g\in N} (\cA_g)_xd\mu(x)
    \quad \mbox{ where } \quad \cA_g=U(g)\cA U(g)^*.\] 
\end{lemma}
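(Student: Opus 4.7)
The plan is to reduce the possibly uncountable intersection over $N$ to a countable one and then invoke the von~Neumann-algebra analog of property (DI2). The hypothesis that $G$ has countably many connected components, combined with each component being second countable, ensures that $G$ is second countable and thus hereditarily separable; hence every subset of $G$, in particular $N$, contains a countable dense subset.

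First I would pick a countable $B \subseteq N$ which is dense in $N$ and show that $\bigcap_{g \in N} \cA_g = \bigcap_{g \in B} \cA_g$. The inclusion $\subseteq$ is trivial. For $\supseteq$, let $M \in \bigcap_{g \in B} \cA_g$ and $g \in N$; pick $g_n \in B$ with $g_n \to g$. Strong continuity of $U$ gives $U(g_n) \to U(g)$ strongly, and since $M$ is bounded, $U(g_n)^* M U(g_n) \to U(g)^* M U(g)$ strongly. As $U(g_n)^* M U(g_n) \in \cA$ for all $n$ and $\cA$ is closed in the strong operator topology, we obtain $U(g)^* M U(g) \in \cA$, i.e.\ $M \in \cA_g$.

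Next I would observe that, because $U(g) = \int_X^\oplus U_x(g)\, d\mu(x)$ is decomposable and $\cA = \int_X^\oplus \cA_x\, d\mu(x)$ is a decomposable von Neumann algebra, the conjugate $\cA_g = U(g)\cA U(g)^*$ is again decomposable, with fibers $(\cA_g)_x = U_x(g)\cA_x U_x(g)^*$ almost everywhere. It remains to apply the direct-integral intersection formula for countable families of decomposable von Neumann algebras: if $\cB^k = \int_X^\oplus \cB^k_x\, d\mu(x)$ for $k$ in a countable index set, then
\[
  \bigcap_k \cB^k \;=\; \int_X^\oplus \bigcap_k \cB^k_x\, d\mu(x).
\]
The inclusion $\supseteq$ is immediate; conversely, any $M$ on the left is decomposable with $M_x \in \cB^k_x$ a.e.\ for each fixed $k$, and since $k$ ranges over a countable set the exceptional null sets can be combined, giving $M_x \in \bigcap_k \cB^k_x$ almost everywhere. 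Applied to $\cB^k = \cA_{g_k}$ for $B = \{g_k\}_{k \in \N}$, this yields the claim.

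The main obstacle is the measurability step inside the last display: one must check that $x \mapsto \bigcap_k \cB^k_x$ is a measurable field of von Neumann algebras so that the direct integral on the right is well defined. This is the von~Neumann-algebra counterpart of the analogous step in the proof of (DI2) for real subspaces (cf.\ Lemma~B.3 in \cite{MT19}) and follows from standard results on measurable fields of von Neumann algebras; it is precisely here that the countability of the family (obtained from Step~1) is essential, since measurability of intersections of fields is typically guaranteed only for countable families.
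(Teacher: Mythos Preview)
Your approach is essentially the same as the paper's: reduce to a countable dense subset $B\subseteq N$ using separability of $G$, then invoke the countable-intersection property for decomposable von Neumann algebras (the paper cites \cite[Prop.~4.4.6(b)]{BR87} for this). Your continuity argument via strong convergence is fine; the paper uses weak operator continuity of $g\mapsto U(g)AU(g)^*$ instead, but both work since von Neumann algebras are closed in either topology.

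There is one small gap. After applying the countable-intersection formula you obtain
\[
\bigcap_{g\in N}\cA_g \;=\; \bigcap_{g\in B}\cA_g \;=\; \int_X^\oplus \bigcap_{g\in B}(\cA_g)_x\, d\mu(x),
\]
but the lemma asserts that the fiber is $\bigcap_{g\in N}(\cA_g)_x$, not $\bigcap_{g\in B}(\cA_g)_x$. You still need the fiberwise equality
\[
\bigcap_{g\in N}(\cA_g)_x \;=\; \bigcap_{g\in B}(\cA_g)_x
\]
for (almost) every $x$. The paper states this explicitly (``We likewise obtain for every $x\in X$\ldots''), and it follows by the very same density/continuity argument applied to the representation $(U_x,\cH_x)$. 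Add this sentence and your proof is complete.
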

\begin{proof}

As $G$ has at most countably many components, 
  it carries a separable metric. Hence there exists a countable
  subset $N_0 \subeq N$ which is dense in $N$. 
  For $A \in B(\cH)$, the map
  \[ F \: G \to B(\cH), \quad F(g) = U(g) A U(g)^*,\]
  is weak operator continuous,
  so that the set of all $g \in G$ with
 $F(g) \in \bigcap_{g \in N_0} \cA_g$ 
  is a closed subset, hence contains $N$. We conclude that
  \[ \bigcap_{g \in N_0} \cA_g = \bigcap_{g \in N} \cA_g. \]
  We likewise obtain for every $x \in X$ the relation
  \[ \bigcap_{g \in N_0} \cA_{x,g}
    = \bigcap_{g \in N} \cA_{x,g} 
    \quad \mbox{ for } \quad \cA_{x,g} = U_x(g) \cA_x U_x(g)^*. \]
  From \cite[Prop.~4.4.6(b)]{BR87} we thus obtain
  \[ 
    \bigcap_{g \in N} \cA_g 
    =  \bigcap_{g \in N_0} \cA_g  
    = \int_X^\oplus  \bigcap_{g \in N_0} \cA_{x,g} \,
    d\mu(x) 
    = \int_X^\oplus  \bigcap_{g \in N} \cA_{x,g} \,
    d\mu(x).\]
  Finally, we observe that, for every $g \in G$
  \[ \cA_g
    = \int_X^\oplus (\cA_g)_x \, d\mu(x)
    = \int_X^\oplus \cA_{x,g}\, d\mu(x) \]
  follows by the uniqueness of the direct integral decomposition.
\end{proof}

\subsection{Some facts on convex cones}
\label{app:E}
  
\begin{lemma} \label{lem:coneint} {\rm(\cite[Lemma~B.1]{MNO23})}
Let $E$ be a finite-dimensional real vector space, 
$C \subeq E$ a closed convex cone and $E_1 \subeq E$ a linear subspace. 
If the interior $C^\circ$ of $C$ intersects $E_1$, then 
$C^\circ \cap E_1$ coincides with the relative interior $C_1^\circ$ 
of the cone $C_1 := C \cap E_1$ in~$E_1$.
\end{lemma}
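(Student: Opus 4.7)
The plan is to prove the two inclusions separately. The inclusion $C^\circ \cap E_1 \subseteq C_1^\circ$ is direct: if $x \in C^\circ \cap E_1$, then any open $E$-neighborhood $U$ of $x$ contained in $C$ meets $E_1$ in an open $E_1$-neighborhood of $x$ contained in $C \cap E_1 = C_1$, so $x$ lies in the relative interior $C_1^\circ$. This inclusion does not use the hypothesis $C^\circ \cap E_1 \ne \eset$.

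For the reverse inclusion $C_1^\circ \subseteq C^\circ \cap E_1$, the hypothesis becomes essential. I would fix some $y_0 \in C^\circ \cap E_1$, which by the first step satisfies $y_0 \in C_1^\circ$. Given $x \in C_1^\circ$, the openness of $C_1^\circ$ in $E_1$ allows me to extend the segment from $y_0$ slightly past $x$: there exists $\lambda > 1$ with
\[ z := (1-\lambda) y_0 + \lambda x \in C_1 \subseteq C. \]
Solving for $x$ gives $x = (1 - \lambda^{-1}) y_0 + \lambda^{-1} z$, exhibiting $x$ as a point on the open segment between $y_0 \in C^\circ$ and $z \in C$.

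The key tool is the standard line-segment principle for convex sets in finite-dimensional spaces: if $y_0 \in C^\circ$ and $z \in \oline{C}$, then every point of the half-open segment $\{(1-t) y_0 + t z : 0 \leq t < 1\}$ lies in $C^\circ$. Applied with $t = \lambda^{-1} \in (0,1)$ this yields $x \in C^\circ$, and since $x \in E_1$ by construction, $x \in C^\circ \cap E_1$, completing the proof.

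I do not anticipate any serious obstacle. The only point deserving care is the extension step, which requires both $x \in C_1^\circ$ (so that a small push in direction $x - y_0$ remains in $C_1$) and $y_0 \in C_1$ (so that the segment starts in $C_1$); both hold by the choice of $y_0$. The line-segment principle itself is a classical fact and requires neither the cone structure of $C$ nor any further non-degeneracy assumption.
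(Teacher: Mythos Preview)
Your proof is correct and follows the standard line-segment argument for relative interiors of convex sets. The paper itself does not prove this lemma but simply cites \cite[Lemma~B.1]{MNO23}; your argument is the natural one and is presumably what appears there.
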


\begin{lemma} \label{lem:ext-eigenvalue}
  Let $V$ be a finite-dimensional real vector space,
  $A \in \End(V)$ diagonalizable, and let $C \subeq V$ be a
  closed convex cone invariant under $e^{\R A}$.
 Let  $\lambda_{\rm min}$ and  $\lambda_{\rm max}$ be the minimal/maximal
 eigenvalues of $A$. For an eigenvalue $\lambda$ of $A$ we write
 $V_\lambda(A)$ for the corresponding eigenspace and
 $p_\lambda \:  V \to V_\lambda(A)$ for the projection along all other
 eigenspaces. Then
 \[ p_{\lambda_{\rm min}}(C) = C \cap V_{\lambda_{\rm min}}(A)
   \quad \mbox{ and }  \quad 
   p_{\lambda_{\rm max}}(C) = C \cap V_{\lambda_{\rm max}}(A).\]

 If $A$ has only two eigenvalues, it follows that
$C = p_{\lambda_{\rm min}}(C) \oplus p_{\lambda_{\rm max}}(C).$ 
\end{lemma}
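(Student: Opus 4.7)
\smallskip

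The plan is to exploit the closedness of $C$ together with the dynamics of $e^{tA}$ to extract eigencomponents as limits. For $x \in C$, write $x = \sum_\lambda x_\lambda$ with $x_\lambda \in V_\lambda(A)$, where the sum runs over the (finite) spectrum of $A$. Since $e^{tA}C = C$ for all $t \in \R$ and $C$ is a cone (hence closed under positive scaling), the vector
\[ e^{-t\lambda_{\max}} e^{tA} x \;=\; \sum_\lambda e^{t(\lambda - \lambda_{\max})} x_\lambda \]
lies in $C$ for every $t \in \R$. Letting $t \to +\infty$, all coefficients with $\lambda < \lambda_{\max}$ tend to $0$, while the $\lambda_{\max}$-term is fixed, so the right-hand side converges to $p_{\lambda_{\max}}(x)$. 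Closedness of $C$ then gives $p_{\lambda_{\max}}(x) \in C$, establishing $p_{\lambda_{\max}}(C) \subeq C \cap V_{\lambda_{\max}}(A)$. The reverse inclusion is immediate, since every $y \in C \cap V_{\lambda_{\max}}(A)$ satisfies $y = p_{\lambda_{\max}}(y) \in p_{\lambda_{\max}}(C)$.

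The argument for $\lambda_{\min}$ is symmetric: letting $t \to -\infty$ in $e^{-t\lambda_{\min}} e^{tA} x = \sum_\lambda e^{t(\lambda - \lambda_{\min})} x_\lambda$, the coefficients $e^{t(\lambda - \lambda_{\min})}$ for $\lambda > \lambda_{\min}$ tend to $0$, and closedness of $C$ yields $p_{\lambda_{\min}}(x) \in C \cap V_{\lambda_{\min}}(A)$; the reverse inclusion is again trivial.

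For the final assertion, assume $A$ has exactly two eigenvalues. Then every $x \in C$ decomposes as $x = p_{\lambda_{\min}}(x) + p_{\lambda_{\max}}(x)$, and by the two identities just established both summands belong to $C$, giving $C \subeq p_{\lambda_{\min}}(C) + p_{\lambda_{\max}}(C)$. The reverse inclusion follows because $C$ is a convex cone, hence closed under addition, and both $p_{\lambda_{\min}}(C)$ and $p_{\lambda_{\max}}(C)$ lie in $C$. The sum is automatically direct since $V_{\lambda_{\min}}(A) \cap V_{\lambda_{\max}}(A) = \{0\}$.

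There is no real obstacle here: the only subtlety is ensuring that we may take the limits inside $C$, which uses precisely that $C$ is both closed and stable under the positive rescaling $e^{-t\lambda_{\max}}$ (respectively $e^{-t\lambda_{\min}}$). This is the same mechanism used in Lemma~\ref{lem:Project}(i), applied now to a general diagonalizable $A$ rather than an Euler element.
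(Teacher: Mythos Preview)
Your proof is correct and follows essentially the same route as the paper: rescale $e^{tA}x$ by $e^{-t\lambda_{\max}}$ and take $t\to\infty$ inside the closed cone to isolate the top eigencomponent, with the reverse inclusion being trivial. The only cosmetic difference is that the paper handles the $\lambda_{\min}$ case by replacing $A$ with $-A$ rather than writing out the $t\to-\infty$ limit, and it leaves the two-eigenvalue consequence to the reader, whereas you spell it out.
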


\begin{proof} Since we can replace $A$ by $-A$, it suffices to verify the
 second assertion. So let $v \in C$ and write it as a sum
  $v = \sum_\lambda v_\lambda$ of $A$-eigenvectors. Then
  \[ v_{\lambda_{\rm max}} = \lim_{t \to \infty} e^{-t \lambda_{\rm max}}
    e^{tA} v \in C \]
  implies that $p_{\lambda_{\rm max}}(C) \subeq C \cap V_{\lambda_{\rm max}}(A),$
  and the other inclusion is trivial.   
\end{proof}

\begin{lemma} \label{lem:cone-endos}
  Let $E$ be a finite-dimensional real vector space
  and $C \subeq E$ be a closed convex cone. In
  the affine group $G := \Aff(E) \cong E \rtimes \GL(E)$, we then have 
  \begin{equation}
    \label{eq:sc=}
 S_C := \{ g \in G \: gC \subeq C \}
 = C \rtimes \{ g \in \GL(E) \: gC \subeq C\}.
  \end{equation}
  If $C$ has interior points, then $S_{C^\circ} = S_C$
  and $C = \oline{C^\circ}$.
\end{lemma}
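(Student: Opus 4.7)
The plan is to write an element $g \in G = \Aff(E)$ as a pair $(v, A)$ with $v \in E$, $A \in \GL(E)$ acting by $g.x = v + Ax$, and then directly translate the condition $g.C \subeq C$ into separate conditions on $v$ and $A$.

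First I would prove the inclusion ``$\subeq$'' in \eqref{eq:sc=}. Assume $g = (v,A) \in S_C$. Evaluating $gC \subeq C$ at the origin (which lies in $C$ since $C$ is a cone) gives $v = g.0 \in C$. To see that also $AC \subeq C$, fix $x \in C$. For every $t > 0$ we have $tx \in C$, hence $v + tAx \in C$, and since $C$ is a cone, dividing by $t$ yields $t^{-1}v + Ax \in C$. Letting $t \to \infty$ and using that $C$ is closed, we get $Ax \in C$. Conversely, if $v \in C$ and $AC \subeq C$, then for any $x \in C$ we have $v + Ax \in C + C \subeq C$, the last inclusion holding because $C$ is a convex cone (so $x,y \in C$ implies $x+y = 2 \cdot \tfrac{1}{2}(x+y) \in C$). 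This gives ``$\supeq$'' and completes the proof of \eqref{eq:sc=}.

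For the second part, assume $C^\circ \not= \eset$. The identity $C = \oline{C^\circ}$ is the standard ``accessibility'' fact for convex sets: given $x \in C$ and any fixed $y \in C^\circ$, convexity implies $(1-t)x + ty \in C^\circ$ for all $t \in (0,1]$, and letting $t \to 0^+$ shows $x \in \oline{C^\circ}$; the reverse inclusion is clear from closedness of $C$.

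It remains to show $S_{C^\circ} = S_C$. Any $g \in G$ is an affine homeomorphism of $E$. Hence if $g \in S_C$, then $g(C^\circ)$ is an open subset of $g(C) \subeq C$, so $g(C^\circ) \subeq C^\circ$, giving $S_C \subeq S_{C^\circ}$. Conversely, if $g \in S_{C^\circ}$, then continuity of $g$ together with $C = \oline{C^\circ}$ gives $g(C) = g(\oline{C^\circ}) \subeq \oline{g(C^\circ)} \subeq \oline{C^\circ} = C$, so $g \in S_C$. I do not expect any serious obstacle here; the only subtlety worth flagging is to make explicit use of the scaling/limit argument in the forward inclusion of \eqref{eq:sc=}, since it is the one step that uses both that $C$ is a cone and that $C$ is closed.
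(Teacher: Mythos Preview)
Your proof is correct and follows essentially the same approach as the paper: evaluate at $0$ to get the translation part in $C$, then use a scaling-and-limit argument to force $AC \subeq C$, and finally pass between $C$ and $C^\circ$ via closure. The only cosmetic difference is that the paper phrases your scaling step in terms of the recession cone $\lim(C)$ (citing \cite[Prop.~V.1.6]{Ne99}) and uses $C + C^\circ \subeq C^\circ$ rather than the open-mapping observation for $S_C \subeq S_{C^\circ}$; your version is slightly more self-contained but not genuinely different.
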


\begin{proof} We write $g = (b,a)$ with $gx = b + ax$.
  Then $g.C \subeq C$ implies   $b = g.0 \in C$.

Moreover, for the recession cone 
\[ \lim(C) 
:= \{ x \in E \: x + C \subeq C \}  
=  \{ x \in E \: (\exists c \in C)\, c + \R_+ x \subeq C \} \] 
(\cite[Prop.~V.1.6]{Ne99}), the relation $g.C \subeq C$ implies
\[ aC = \lim(b + aC) = \lim(g.C) \subeq \lim(C) = C,\]
and this implies \eqref{eq:sc=}.

If $C$ has interior points, then
$g.C^\circ \subeq C^\circ$ and $C = \oline{C^\circ}$ imply
$g.C \subeq C$, so that $S_{C^\circ} \subeq S_C$. Conversely,
$C + C^\circ \subeq C^\circ$ implies that
$S_C \subeq S_{C^\circ}$. 
\end{proof}

\printindex 

\end{document}